\newtheorem{theorem}{Theorem}[section]
\newtheorem{corollary}[theorem]{Corollary}
\newtheorem{definition}[theorem]{Definition}
\newtheorem{example}[theorem]{Example}
\newtheorem{lemma}[theorem]{Lemma}
\newtheorem{proposition}[theorem]{Proposition}
\newtheorem{remark}[theorem]{Remark}
\begin{document}
\title[Asymptotics of the Yang-Mills Flow]{ Asymptotics of the Yang-Mills
Flow for Holomorphic Vector Bundles Over K\"{a}hler Manifolds: The Canonical
Structure of the Limit}
\author{Benjamin Sibley}
\address{Department of Mathematics, University of Maryland, College Park, MD
20742, USA }
\email{bsibley@math.umd.edu}
\maketitle

\begin{abstract}
In the following article we study the limiting properties of the Yang-Mills
flow associated to a holomorphic vector bundle $E$ over an arbitrary compact
K\"{a}hler manifold $(X,\omega )$. In particular we show that the flow is
determined at infinity by the holomorphic structure of $E$. Namely, if we
fix an integrable unitary reference connection $A_{0}$ defining the
holomorphic structure, then the Yang-Mills flow with initial condition $%
A_{0} $, converges (away from an appropriately defined singular set) in the
sense of the Uhlenbeck compactness theorem to a holomorphic vector bundle $%
E_{\infty }$, which is isomorphic to the associated graded object of the
Harder-Narasimhan-Seshadri filtration of $(E,A_{0})$. Moreover, $E_{\infty }$
extends as a reflexive sheaf over the singular set as the double dual of the
associated graded object. This is an extension of previous work in the cases
of $1$ and $2$ complex dimensions and proves the general case of a
conjecture of Bando and Siu.
\end{abstract}

\tableofcontents

\section{Introduction}

This paper is a study of the Yang-Mills flow, the $L^{2}$-gradient flow of
the Yang-Mills functional; and in particular its convergence properties at
infinity. The flow is (after imposing the Coulomb gauge condition) a
parabolic equation for a connection on a holomorphic vector bundle. Very
soon after the introduction of the flow equations, Donaldson and Simpson
proved that in the case of a stable bundle the gradient flow converges
smoothly at infinity (see \cite{DO1},\cite{DO2},\cite{SI}). In the unstable
case the behaviour of the flow is more ambiguous. Nevertheless, even in the
general case there is an appropriate notion of convergence (a version of
Uhlenbeck's compactness theorem) that is always satisfied. The goal of this
article is to prove that this notion depends only on the holomorphic
structure of the original bundle.

We follow up on work whose origin lies in two principal directions, both
related to stability properties of holomorphic vector bundles over compact K%
\"{a}hler manifolds. The first strain is the seminal work of Atiyah and Bott 
\cite{AB}, in which the authors study the moduli space of stable holomorphic
bundles over Riemann surfaces. In particular, they computed the $\mathcal{G}%
^{%
\mathbb{C}
}$-equivariant Betti numbers of this space in certain cases, where $\mathcal{%
G}^{%
\mathbb{C}
}$ is the complex gauge group of a holomorphic vector bundle $E$ (over a
Riemann surface $X$) acting on the space $\mathcal{A}_{\limfunc{hol}}$ of
holomorphic structures of $E$. Their approach was to stratify $\mathcal{A}_{%
\limfunc{hol}}$ by Harder-Narasimhan type. The type is a tuple of rational
numbers $\mu =(\mu _{1},\cdots ,\mu _{R})$ associated to a holomorphic
structure $(E,\bar{\partial}_{E})$, defined using a filtration of $E$ by
analytic subsheaves whose successive quotients are semi-stable, called the
Harder-Narasimhan filtration. One of the resulting strata of $\mathcal{A}_{%
\limfunc{hol}}$ consists of the semi-stable bundles. Furthermore the action
of $\mathcal{G}^{%
\mathbb{C}
}$ preserves the stratification, and the main result that yields the
computation of the equivariant Betti numbers is that the stratification by
Harder-Narasimhan type is equivariantly perfect under this action.

Atiyah and Bott also noticed that the problem might be amenable to a more
analytic approach. Specifically they considered the Yang-Mills functional $%
YM $ on the space $\mathcal{A}_{h}$ of integrable, unitary connections with
respect to a fixed hermitan metric on $E$. The space $\mathcal{A}_{h}$ may
be identified with $\mathcal{A}_{\limfunc{hol}}$ by sending a connection $%
\nabla _{A}$ to its $(0,1)$ part $\bar{\partial}_{A}$. The Yang-Mills
functional is defined by taking the $L^{2}$ norm of the curvature of $\nabla
_{A}$, and is a Morse function on $\mathcal{A}_{h}/\mathcal{G}$, where $%
\mathcal{G}$ is the unitary gauge group. Therefore this functional induces
the usual stable-unstable manifold stratification on $\mathcal{A}_{h}$ (or
equivalently $\mathcal{A}_{\limfunc{hol}}$) familiar from Morse theory. It
is natural to conjecture that this analytic stratification is in fact the
same as the algebraic stratification given by the Harder-Narasimhan type.
The authors of \cite{AB} stopped short of proving this statement, instead
leaving it at the conjectural level, and working directly with the algebraic
stratification. They noted however that a key technical point in proving the
equivalence was to show the convergence of the gradient flow of the
Yang-Mills functional at infinity. This was proven in \cite{D} by
Daskalopoulos (see also \cite{R}). Specifically, in the case of Riemann
surfaces, Daskalopoulos showed the asymptotic convergence of the Yang-Mills
Flow, that there is indeed a well-defined stratification in the sense of
Morse theory in this case, and that it coincides with the algebraic
stratification (which makes sense in all dimensions).

When $(X,\omega )$ is a higher dimensional K\"{a}hler manifold, the
Yang-Mills flow fails to converge in the usual sense. This brings us to the
second strain of ideas of which the present paper is a continuation: the
so-called \textquotedblleft Kobayashi-Hitchin
correspondences\textquotedblright . These are statements (in various levels
of generality) relating the existence of Hermitian-Einstein metrics on a
holomorphic bundle $E$, to the stability of $E$. Namely, $E$ admits an
Hermitian-Einstein metric if and only if $E$ is polystable. This was first
proven in the case of a Riemann surface by Narasimhan and Seshadri. Their
proof did not use differential geometry, and the condition that the bundle
admits an Hermitian-Einstein connection was originally formulated purely in
terms of representations of the fundamental group of the Riemann surface. It
was Donaldson who gave the first proof using gauge theory, reformulating the
statement in terms of a metric of constant central curvature. He initially
did this in the case of a Riemann surface in \cite{DO3} by considering
sequences of connections in a complex gauge orbit that are minimising for a
certain functional, which is analogous to our $HYM_{\alpha }$ functionals
defined in Section $3.2$. Shortly after this, Donaldson extended the result
to the case of algebraic surfaces in \cite{DO1}, and later to the case of
projective complex manifolds in \cite{DO2}. In both \cite{DO1} and \cite{DO2}
the idea of the proof was to reformulate the flow as an equivalent parabolic 
$PDE$, show long-time existence of the equation, and then prove that for a
stable bundle, this modified flow indeed converges, the solution being the
desired Hermitian-Einstein metric. This was generalised by Uhlenbeck and Yau
in \cite{UY} in the case of a compact K\"{a}hler manifold using different
methods. Finally, in \cite{BS}, Bando and Siu extended the correspondence to
coherent analytic sheaves on K\"{a}hler manifolds by considering what they
called \textquotedblleft admissible\textquotedblright\ hermitian metrics,
which are metrics on the locally free part of the sheaf having controlled
curvature. They also conjectured that there should also be a correspondence
(albeit far less detailed) between the Yang-Mills flow and the
Harder-Narasimhan filtration in higher dimensions despite the absence of a
Morse theory for the Yang-Mills functional.

There are two main features that distinguish the higher dimensional case
from the case of Riemann surfaces. As previously mentioned, the flow does
not converge in general. However, the only obstruction to convergence is
bubbling phenomena. Specifically, one of Uhlenbeck's compactness results
(see \cite{UY} Theorem $5.2$) applies to the flow, which means that there
are always subsequences that converge (in a certain Sobolev norm) away from
a singular set of Hausdorff codimension at least $4$ inside $X$ (which we
will denote by $Z_{\limfunc{an}}$), to a connection on a possibly different
vector bundle $E_{\infty }$. A priori, this pair of a limiting connection
and bundle depends on the subsequence. In the case of two complex
dimensions, the singular set is a locally finite set of points (finite in
the compact case) and by Uhlenbeck's removable singularities theorem $%
E_{\infty }$ extends over the singular set as a vector bundle with a
Yang-Mills connection. In higher dimensions, again due to a result of Bando
and Siu, $E_{\infty }$ extends over the singular set, but only as a
reflexive sheaf. Although we will not use their result, Hong and Tian have
proven in \cite{HT} that in fact the convergence is in $C^{\infty }$ on the
complement of $Z_{\limfunc{an}}$ and that $Z_{\limfunc{an}}$ is a
holomorphic subvariety.

A separate, but intimately related issue is the Harder-Narasimhan
filtration. In the case of a Riemann surface the filtration is given by
subbundles. In higher dimensions, it is only a filtration by subsheaves.
Again however, away from a singular set $Z_{\limfunc{alg}}$, which is a
complex analytic subset of $X$ of complex codimension at least $2$, the
filtration is indeed given by subbundles. Once more, in the case of a K\"{a}%
hler surface this is a locally finite set of points (finite in the compact
case).

The main result of this paper (the conjecture of Bando and Siu), describes
the relationship between the analytic and algebraic sides of the above
picture. To state it, we recall that there is a refinement of the
Harder-Narasimhan filtration called the Harder-Narasimhan-Seshadri
filtration, which is a double filtration whose successive quotients are
stable rather than merely semi-stable. Then if $(E,\bar{\partial}_{E})$ is a
holomorphic vector bundle where the operator $\bar{\partial}_{E}$ denotes
the holomorphic structure, write $Gr_{\omega }^{HNS}(E,\bar{\partial}_{E})$
for the associated graded object (the direct sum of the stable quotients) of
the Harder-Narasimhan-Seshadri filtration. Notice that by the
Kobayashi-Hitchin correspondence, $Gr_{\omega }^{HNS}(E,\bar{\partial}_{E})$
also carries a natural Yang-Mills connection on its locally free part, given
by the direct sum of the Hermitian-Einstein connections on each of the
stable factors, and this connection is unique up to gauge. The main theorem
says in particular that the limiting bundle along the flow is in fact
independent of the subsequence chosen in order to employ Uhlenbeck
compactness, and is determined entirely by the holomorphic structure $\bar{%
\partial}_{E}$ of $E$. Furthermore, the limiting connection is precisely the
connection on $Gr_{\omega }^{HNS}(E,\bar{\partial}_{E})$.

\begin{theorem}
\label{THM:MainTheorem}Let $(X,\omega )$ be a compact K\"{a}hler manifold,
and $E\rightarrow X$ be an hermitian vector bundle. Let $A_{0}$ denote an
integrable, unitary connection endowing $E$ with a holomorphic structure $%
\bar{\partial}_{E}=\bar{\partial}_{A_{0}}$. Let $A_{\infty }$ denote the
Yang-Mills connection on $Gr_{\omega }^{HNS}(E,\bar{\partial}_{E})$
restricted to $X-Z_{\limfunc{alg}}$ induced from the Kobayashi-Hitchin
correspondence. Let $A_{t}$ be the time $t$ solution of the flow with
initial condition $A_{0}$. Then as $t\rightarrow \infty $, $A_{t}\rightarrow
A_{\infty }$ in the sense of Uhlenbeck, and on $X-Z_{\limfunc{alg}}\cup Z_{%
\limfunc{an}}$, the vector bundles $Gr_{\omega }^{HNS}(E,\bar{\partial}_{E})$
and the limiting bundle $E_{\infty }$ are holomorphically isomorphic.
Moreover, $E_{\infty }$ extends over $Z_{\limfunc{an}}$ as a reflexive sheaf
to $\left( Gr_{\omega }^{HNS}(E,\bar{\partial}_{E})\right) ^{\ast \ast }$.
\end{theorem}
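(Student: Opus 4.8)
The plan is to relate the Yang-Mills flow to the algebro-geometric structure of $(E,\bar\partial_E)$ via the $HYM_\alpha$ functionals and the Kobayashi-Hitchin correspondence, showing both that the limiting bundle is determined by the holomorphic structure and that it is identified with the graded object. The strategy proceeds in two halves: first establishing that the Uhlenbeck limit $E_\infty$ admits a holomorphic structure whose associated reflexive sheaf is a \emph{semistable} object with the right Chern character (in fact with the Harder-Narasimhan type preserved along the flow), and second, matching $E_\infty$ to $Gr^{HNS}_\omega(E,\bar\partial_E)$ precisely. A crucial monotonicity input is that the Harder-Narasimhan type is constant along the flow and is not destroyed in the limit; this follows from a lower-semicontinuity argument for the Yang-Mills energy combined with the fact that the maximal destabilizing subsheaf persists (one constructs from the flow a weakly holomorphic projection onto the pieces of the HN filtration, using Uhlenbeck-type estimates and the Bando-Siu regularity theory to extend across $Z_{\mathrm{an}}$).

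Concretely, I would first show that $A_t$ remains in the complex gauge orbit of $A_0$, so all the algebraic invariants (slope, HN type, HNS filtration) of $(E, \bar\partial_{A_t})$ agree with those of $(E,\bar\partial_E)$ for all finite $t$. Next, using the compactness theorem of Uhlenbeck (\cite{UY} Theorem 5.2) and the Bando-Siu extension, I would produce the limit $(E_\infty, A_\infty')$ with $E_\infty$ reflexive on all of $X$ and $A_\infty'$ Yang-Mills on $X - Z_{\mathrm{an}}$. The Yang-Mills equation forces $A_\infty'$ to have curvature with eigenvalues given by the slopes of a decomposition of $E_\infty$ into $\omega$-stable summands; hence $E_\infty^{**}$ is polystable-in-pieces, i.e. a direct sum of stable reflexive sheaves with prescribed slopes. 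The key energy identity — that the drop in Yang-Mills energy from $A_0$ to $A_\infty'$ equals exactly the contribution of curvature concentration on $Z_{\mathrm{an}}$ plus the "bubbling" Chern-Weil term — together with the Chern-Weil formula for $Gr^{HNS}$, pins down the slopes and ranks of the stable summands of $E_\infty$ to coincide with those appearing in $Gr^{HNS}_\omega(E,\bar\partial_E)$.

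The final and hardest step is to upgrade this numerical coincidence to an actual holomorphic isomorphism $E_\infty \cong \big(Gr^{HNS}_\omega(E,\bar\partial_E)\big)^{**}$ on $X - (Z_{\mathrm{alg}} \cup Z_{\mathrm{an}})$, and to verify that $A_\infty'$ is gauge-equivalent to the canonical connection $A_\infty$. For this I would construct a nonzero holomorphic map between the two reflexive sheaves by taking a limit of the complex gauge transformations $g_t$ with $\bar\partial_{A_t} = g_t \cdot \bar\partial_{A_0}$: although $g_t$ itself degenerates (this degeneration is precisely what encodes the destabilization), suitable rescalings of its components converge to nonzero sheaf maps respecting the filtrations, by a delicate compactness argument controlling the $g_t$ away from where they blow up. Semistability of each graded piece then forces these limiting maps to be isomorphisms on the stable quotients, and uniqueness in the Kobayashi-Hitchin correspondence identifies the induced connection with $A_\infty$. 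I expect the main obstacle to be exactly this control of the degenerating gauge transformations $g_t$ in the limit — ensuring that the limit map is genuinely holomorphic across the singular sets and nonzero on each graded piece, which requires combining the a priori estimates along the flow with the reflexive-sheaf extension results of Bando-Siu and careful bookkeeping of the HN and Seshadri filtrations simultaneously.
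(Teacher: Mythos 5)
Your high-level outline is in the right spirit --- use the $HYM_{\alpha}$ functionals and the Kobayashi--Hitchin correspondence, then build a holomorphic map to the Uhlenbeck limit out of the degenerating complex gauge transformations --- but there are two genuine gaps that the paper has to work very hard to fill, and your proposal quietly assumes them away.

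First, the claim that a ``key energy identity'' plus the Chern--Weil formula ``pins down the slopes and ranks of the stable summands of $E_{\infty}$'' does not work, and the paper explicitly points out why at the start of Section~3.2: the $YM$ and $HYM$ functionals cannot distinguish different Harder--Narasimhan types --- there are unitary connections with the same $YM$ value defining holomorphic structures of different type. The easy direction, $\mu_{0}\leq\lambda_{\infty}$, is indeed proved exactly the way you suggest (weakly holomorphic projections, Chern--Weil, Bando--Siu extension; see Proposition~\ref{Prop9}). But equality requires showing $HYM_{\alpha,N}(A_{t})\to HYM_{\alpha,N}(\mu_{0})$ for a family of exponents $\alpha$, and by Proposition~\ref{Prop10} the only way this is established here is via the existence of $L^{p}$ $\delta$-approximate critical hermitian structures (Theorem~\ref{Thm9}). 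That construction occupies all of Section~5 and is forced to invoke Hironaka's resolution of singularities to replace the $HNS$ filtration by a filtration of $\pi^{\ast}E$ by subbundles on a tower of blowups, together with the uniform degree estimates of Section~4 (Theorem~\ref{THM}, Proposition~\ref{Prop17}) to control how slopes behave under the perturbed metrics $\omega_{\varepsilon}$ and a delicate cutoff/push-down argument using that $Z_{\limfunc{alg}}$ has Hausdorff codimension $\geq 4$. None of this appears in your sketch; without it the equality of types --- which you lean on heavily in the remaining steps --- is unproven.

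Second, your construction of the holomorphic isomorphism from the gauge transformations $g_{t}$ is the right idea in outline (it is Donaldson's argument, refined as in \cite{DW1}), and Proposition~\ref{Prop23} carries it out when the filtration is by subbundles. But your phrase ``suitable rescalings of its components converge to nonzero sheaf maps respecting the filtrations'' hides the actual obstruction in higher dimensions: after one induction step the connections one must control live only on the locally free part of $E_{i}$, the second fundamental forms of the pieces of the $HNS$ filtration on the resolved picture are \emph{not} $L^{\infty}$-bounded with respect to the degenerate metric $\pi^{\ast}\omega$, and so the $L^{\infty}$ bound on the Hermitian--Einstein tensor that the Moser iteration needs simply fails. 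The paper has to introduce the degenerate Yang--Mills flow (Section~6), regularize each connection in the sequence by running it for a fixed positive time $t$ (Lemmas~\ref{Lemma16}--\ref{Lemma18}, using the Cheng--Li/Grigor'yan heat-kernel bounds and the uniform Sobolev constant from Bando--Siu), and then prove that the resulting Uhlenbeck limit is independent of $t$ and agrees with the original one (Propositions~\ref{Prop24}, \ref{Prop25}). Your proposal does not engage this at all, so the inductive step of the isomorphism construction would break down exactly at the point you flag as ``the main obstacle.''
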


This theorem was proven in \cite{DW1} by Daskalopoulos and Wentworth in the
case when $\dim X=2$. In this case, the filtration consists of vector
bundles, whose successive quotients may have point singularities. As stated
earlier, this means $E_{\infty }$ extends as a vector bundle and \cite{DW1}
proves that this bundle is isomorphic to the vector bundle $\left(
Gr_{\omega }^{HNS}(E,\bar{\partial}_{E})\right) ^{\ast \ast }$.

We now give an overview of our proof, pointing out what goes through
directly from \cite{DW1} and where we require new arguments. Section $2$
consists of the basic definitions we need from sheaf theory, including the
Harder-Narasimhan and Harder-Narasimhan-Seshadri filtrations and their
associated graded objects, as well as the corresponding types. We also
discuss the Yang-Mills functional, the Hermitian-Yang-Mills functional and
the version of the Uhlenbeck compactness result that we will need. Although
we will primarily be concerned with the flow, the proof of Theorem \ref%
{THM:MainTheorem} is set up to work for slightly more general sequences of
connections, so we state the compactness theorem in this generality first,
and specialise to the flow when appropriate. Lastly, we recall the notion of
a weakly holomorphic projection operator associated to a subsheaf first
introduced in \cite{UY}, the Chern-Weil formula, and a lemma on the
boundedness of second fundamental forms from \cite{DW1}.

In Section $3$ we introduce the Yang-Mills flow and its basic properties. We
recast Uhlenbeck compactness in the context of the flow, which satisfies the
boundedness conditions required to apply the general theorem. We recall one
of the main results of \cite{DW1}, that the Harder-Narasimhan type of an
Uhlenbeck limit is bounded from below by the type of the initial bundle with
respect to the partial ordering on types. Finally, Section $3$ ends with a
discussion of Yang-Mills type functionals associated to $\limfunc{Ad}$%
-invariant convex functions on the Lie algebra of the unitary group.

Section $4$ details the main results we will need about resolution of
singularities.\ This is the first place in which our presentation differs
fundamentally from that of \cite{DW1}. The main strategy of the proof is to
eliminate the singular set of the Harder-Narasimhan-Seshadri filtration by
blowing up, and doing all the necessary analysis on the blowup. In the
two-dimensional case, since the singularities consist only of points, this
can be done directly by hand as in \cite{DW1} see also \cite{BU1}. In the
general case we must appeal to the resolution of singularities theorem of
Hironaka see \cite{H1} and \cite{H2}. We consider the filtration as a
rational section of a flag bundle, and apply the resolution of indeterminacy
theorem for rational maps. If we write $\pi :\tilde{X}\rightarrow X$ for the
composition of the blowups involved in resolution, the result is that the
pullback bundle $\pi ^{\ast }E\rightarrow \tilde{X}$ has a filtration by
subbundles, which away from the exceptional divisor $\mathbf{E}$ is
precisely the filtration on $X$.

We will need to consider a natural family of K\"{a}hler metrics $\omega
_{\varepsilon }$ on $\tilde{X}$, which are perturbations of the pullback
form $\pi ^{\ast }\omega $ by the irreducible components of the exceptional
divisor, and which are introduced in order to compensate for the fact that $%
\pi ^{\ast }\omega $ fails to be a metric on $\mathbf{E}$. The filtration of 
$\pi ^{\ast }E$ by subbundles is not quite the Harder-Narasimhan-Seshadri
filtration with respect to $\omega _{\varepsilon }$ but is closely related.
In particular, the main result of this section is that the Harder-Narasimhan
type of $\pi ^{\ast }E$ with respect to $\omega _{\varepsilon }$ converges
to the type of $E$ with respect to $\omega $. This was proven in the surface
case in \cite{DW1} using an argument of Buchdahl from \cite{BU1}. The proof
contained in \cite{DW1} seems to be insufficient in the higher dimensional
case, so we give a rather different proof of this result. The main
ingredient is a bound on the $\omega _{\varepsilon }$ degree of a subsheaf
of $\pi ^{\ast }E$ with torsion-free quotient in terms of its pushforward
sheaf that is uniform as $\varepsilon \rightarrow 0$. To prove this we use
standard algebro-geometric facts together with a modification of an argument
of Kobayashi \cite{KOB} first used to prove the uniform boundedness of the
degree of subsheaves of a vector bundle with respect to a fixed K\"{a}hler
metric. In particular we prove the following theorem:

\begin{theorem}
Let $(X,\omega )$ be a compact K\"{a}hler manifold and $\tilde{S}$ be a
subsheaf (with torsion free quotient $\tilde{Q}$) of a holomorphic vector
bundle $\tilde{E}$ on $\tilde{X}$, where $\pi :\tilde{X}\rightarrow X$ is
given by a sequence of blowups along complex submanifolds of $\limfunc{codim}%
\geq 2$. Then then there is a uniform constant $M$ such that the degrees of $%
\tilde{S}$ and $\tilde{Q}$ with respect to $\omega _{\varepsilon }$ satisfy: 
$\deg (\tilde{S},\omega _{\varepsilon })\leq \deg (\pi _{\ast }\tilde{S}%
)+\varepsilon M$, and $\deg (\tilde{Q},\omega _{\varepsilon })\geq \deg (\pi
_{\ast }\tilde{Q})-\varepsilon M$.
\end{theorem}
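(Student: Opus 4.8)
The plan is to prove the two inequalities separately but by the same mechanism, reducing everything to a statement about the degree of a subsheaf with respect to $\omega_\varepsilon = \pi^*\omega + \sum_j \varepsilon_j \eta_j$, where the $\eta_j$ are forms associated to the irreducible components $\mathbf{E}_j$ of the exceptional divisor. By multilinearity of the wedge product and the definition $\deg(\tilde S,\omega_\varepsilon) = \int_{\tilde X} c_1(\tilde S)\wedge \omega_\varepsilon^{n-1}/(n-1)!$, we expand $\omega_\varepsilon^{n-1}$ as $(\pi^*\omega)^{n-1}$ plus terms each containing at least one factor of $\varepsilon_j\eta_j$. The leading term gives, via the projection formula, $\int_X c_1(\pi_*\tilde S)\wedge \omega^{n-1}/(n-1)!$ exactly when $\tilde S$ and its pushforward have first Chern classes related by $\pi_*c_1(\tilde S)=c_1(\pi_*\tilde S)$ up to exceptional corrections — so the first step is to nail down precisely this comparison of $c_1(\tilde S)$ with $\pi^*c_1(\pi_*\tilde S)$, showing the difference is a divisor supported on $\mathbf{E}$.

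First I would establish the algebro-geometric input: for a subsheaf $\tilde S \subset \tilde E$ with torsion-free quotient, $\pi_*\tilde S$ is a subsheaf of $\pi_*\tilde E = E$ (using $\limfunc{codim}\geq 2$ of the blow-up centers, so $\pi_*\tilde E = E$ and $R^i\pi_*$ issues are controlled), and there is an effective divisor $D = \sum_j a_j \mathbf{E}_j$ with $a_j \geq 0$ such that the reflexive hull of $\tilde S$ sits inside $\pi^*((\pi_*\tilde S)^{**})\otimes \mathcal O(D)$, or at least $\det \tilde S = \pi^*\det(\pi_*\tilde S)\otimes \mathcal O(-D)$ for some \emph{effective} $D$. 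The effectivity of $D$ is the crucial sign: it is what makes the $(\pi^*\omega)^{n-1}$-term of $\deg(\tilde S,\omega_\varepsilon)$ bounded \emph{above} by $\deg(\pi_*\tilde S)$, because $-\int_{\mathbf E} D \wedge (\pi^*\omega)^{n-1} \le 0$ as $\pi^*\omega$ is nef (a pullback of a Kähler form) and $D$ effective. This is exactly the place where I expect to adapt Kobayashi's argument from \cite{KOB}: his trick for bounding degrees of subsheaves of a fixed bundle by comparing with the saturation and using that the resulting quotient is torsion, here transplanted to relate $\tilde S$ on $\tilde X$ with $\pi_*\tilde S$ on $X$ across the exceptional locus.

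The remaining step is to bound the error terms — all the pieces of $\int_{\tilde X} c_1(\tilde S)\wedge (\omega_\varepsilon^{n-1} - (\pi^*\omega)^{n-1})$ — uniformly in $\varepsilon$ by $\varepsilon M$. Here I would argue that $c_1(\tilde S)$, as a $(1,1)$-current or class, decomposes as $\pi^*(\text{something})$ plus a combination of the exceptional classes $[\mathbf E_j]$, and that $\int_{\tilde X} [\mathbf E_j] \wedge \eta_{j_1}\wedge\cdots$ against any fixed product of $\pi^*\omega$ and the $\eta$'s is a \emph{topological} quantity, hence a fixed constant independent of $\tilde S$ — provided the \emph{coefficients} $a_j$ in $D$ are themselves uniformly bounded. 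Bounding the $a_j$ independently of the choice of subsheaf $\tilde S$ is where I'd need to be careful: $a_j$ is controlled by the order of vanishing along $\mathbf E_j$ of the section of a flag bundle (or of $\det$), which is bounded by a quantity depending only on $\tilde E$, the rank, and the geometry of $\pi$, not on $\tilde S$. Granting that, $M$ can be taken to be (a constant times) the maximum over the finitely many $j$ and the finitely many monomials in the $\eta$'s of these intersection numbers, times the uniform bound on the $a_j$. The quotient statement $\deg(\tilde Q,\omega_\varepsilon)\geq \deg(\pi_*\tilde Q)-\varepsilon M$ then follows formally from the subsheaf statement applied to $\tilde S$, since $\deg(\tilde E,\omega_\varepsilon) = \deg(\tilde S,\omega_\varepsilon)+\deg(\tilde Q,\omega_\varepsilon)$, $\deg(\pi_*\tilde E,\omega) = \deg(\tilde E, \omega_\varepsilon) + O(\varepsilon)$, and $\pi_*$ is additive on the relevant exact sequence up to higher direct images supported on $Z\subset X$ of codimension $\geq 2$, which do not affect the degree.

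The main obstacle, I expect, is precisely the \emph{uniformity} in $\varepsilon$ \emph{and} in $\tilde S$ simultaneously: one must show the exceptional-divisor coefficients attached to $\det\tilde S$ are bounded by a constant depending only on $\tilde E$ and $\pi$. This is what forces the detour through Kobayashi's method rather than a naive cohomological expansion, and it is the step where the argument in \cite{DW1} — which handled only point blow-ups on surfaces, where these coefficients are manifestly controlled — does not generalize. Once that uniform bound is in hand, the rest is bookkeeping with the multilinear expansion of $\omega_\varepsilon^{n-1}$ and the projection formula.
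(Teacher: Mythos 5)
Your high-level scaffolding (line-bundle reduction via $\det$, the Picard-group decomposition $c_1(\tilde S)=\pi^*c_1(L)+\sum_j m_j[\mathbf{E}_j]$, the multilinear expansion of $\omega_\varepsilon^{n-1}$, the projection formula for the leading term, and an appeal to Kobayashi) is the right skeleton, and your observation that the quotient inequality follows formally from the subsheaf one via additivity of pushforward degrees is a valid alternative to the duality argument the paper uses (Remark \ref{Rmk4}). But there is a genuine gap at the heart of how you obtain uniformity.

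Your argument rests on the claim that $\det\tilde S=\pi^*\det(\pi_*\tilde S)\otimes\mathcal{O}(-D)$ with $D$ \emph{effective}, and you further propose that the exceptional coefficients are bounded by a quantity depending only on $\tilde E$ and $\pi$. The effectivity claim is false in general, and the paper itself contains the counterexample: with $\mathcal{O}\hookrightarrow E$ on $\mathbb{CP}^2$ blown up at $p$, the saturated subsheaf of $\pi^*E$ is $\tilde S=\mathcal{O}(\mathbf{E})$, while $\pi_*\tilde S=\mathcal{O}_X$, so $\det\tilde S=\pi^*\det(\pi_*\tilde S)\otimes\mathcal{O}(+\mathbf{E})$, i.e.\ $D=-\mathbf{E}$. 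The sign can and does go the other way. Moreover the effectivity you want is not actually what controls the leading term: $\int_{\tilde X}[\mathbf{E}_j]\wedge(\pi^*\omega)^{n-1}=0$ always, because $\pi(\mathbf{E}_j)$ has codimension $\geq 2$, so the $(\pi^*\omega)^{n-1}$-term equals $\deg(\pi_*\tilde S,\omega)$ with no sign condition needed. Finally, the exceptional coefficients $m_j$ are not uniformly bounded (above) over subsheaves $\tilde S\subset\tilde E$; restricting $\det\tilde S\hookrightarrow\Lambda^p\tilde E$ to $\mathbf{E}_j$ gives a bound in the \emph{wrong} direction, because $\mathbf{E}_j\cdot\mathbf{E}_j$ is negative. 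So the route ``bound each $m_j$, then multiply by fixed intersection numbers'' does not close.

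What actually produces the uniform constant $M$ in the paper is Lemma \ref{Lemma10}, which bounds $\int_{\tilde X}c_1(\tilde S)\wedge\tau^{n-k-1}\wedge\eta^k$ from above \emph{directly} and uniformly over all subsheaves, for $\tau=\pi^*\omega$ semipositive and $\eta$ Kähler. The mechanism is the curvature identity $F_L=\pi F_E\pi+\beta\wedge\beta^*$ for (the determinant of) a subsheaf, together with $\|\pi\|_{L^\infty}\leq 1$ and $\sqrt{-1}\,\beta\wedge\beta^*\leq 0$, extended across the singular locus of $\tilde S$ by a resolution-of-singularities and Poincar\'e--Lelong argument. This one-sided analytic bound is applied simultaneously to all the $\varepsilon^k$-error terms; it replaces the cohomological decomposition of $c_1(\tilde S)$ and bypasses any need to control the individual exceptional coefficients. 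If you want to salvage your approach, this is the lemma you must prove first and feed into the expansion; without it the uniformity in $\tilde S$ is not established.
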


Similar statements are proven in the case of a surface by Buchdahl \cite{BU1}
and for projective manifolds by Daskalopoulos and Wentworth see \cite{DW3}.

Section $5$ is the technical heart of the proof. An essential fact needed to
complete the proof of Theorem \ref{THM:MainTheorem} is that the
Harder-Narasimhan type of the limiting sheaf is in fact equal to the type of
the initial bundle. This fact seems to be closely related to the existence
of what is called an $L^{p}$-approximate critical hermitian structure. In
rough terms this is an hermitian metric on a holomorphic vector bundle whose
Hermitian-Einstein tensor is $L^{p}$-close to that of a Yang-Mills
connection (a critical value) determined by the Harder-Narasimhan type of
the bundle (see Definition \ref{Def6}). Since any connection on $E$ has
Hermitian-Yang-Mills energy bounded below by the type of $E$, and we have a
monotonicity property along the flow, the result of Section $3$ implies that
the existence of an approximate structure then ensures that the flow
starting from this initial condition realises the correct type in the limit.
Then one shows that \textit{any} initial condition flows to the correct
type, essentially by proving that the set of such metrics is open and closed
(and non-empty by the existence of an approximate structure) in the space of
smooth metrics, and applying the connectivity of the latter space. This last
argument appears in detail in \cite{DW1} and we do not repeat it. The main
theorem of Section $5$ is the following: \bigskip

\begin{theorem}
Let $E\rightarrow X$ be a holomorphic vector bundle over a K\"{a}hler
manifold with K\"{a}hler form $\omega $. Then given $\delta >0$ and any $%
1\leq p<\infty $, $E$ has an $L^{p}\ \delta $-approximate critical hermitian
structure.
\end{theorem}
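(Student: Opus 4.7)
The plan is to build the approximate metric upstairs on the resolution $\tilde X$ of Section 4, where the Harder-Narasimhan-Seshadri filtration promotes to a filtration by genuine subbundles, combine Hermitian-Einstein (or inductively constructed approximate) metrics on the graded pieces via a twisting parameter, and then push the result back down to $X$. This mirrors the combinatorial structure of the argument in \cite{DW1} while using the blowup machinery of Section 4 to handle the higher-codimensional singular set $Z_{\operatorname{alg}}$.

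\medskip

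\noindent\textbf{Step 1 (Transfer to $\tilde X$):} After applying $\pi:\tilde X\to X$, the bundle $\pi^{\ast}E$ carries a holomorphic filtration $0=F_{0}\subset F_{1}\subset\cdots\subset F_{R}=\pi^{\ast}E$ by \emph{subbundles}, agreeing away from the exceptional divisor $\mathbf{E}$ with the pullback of the HNS filtration on $X$. By the main theorem of Section 4, the Harder-Narasimhan type of each graded piece $\tilde Q_{i}:=F_{i}/F_{i-1}$ with respect to $\omega_{\varepsilon}$ converges as $\varepsilon\to 0$ to the corresponding slope $\mu_{i}$ of the HNS filtration on $(X,\omega)$. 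Fix $\delta^{\prime}\ll\delta$; choose $\varepsilon>0$ small enough that these slopes agree to within $\delta^{\prime}$ and that the pushforward error terms in Section 4's degree inequalities are of order $\varepsilon$.

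\medskip

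\noindent\textbf{Step 2 (Approximate HE metrics on the graded pieces):} Each $\tilde Q_{i}$ is $\omega_{\varepsilon}$-semistable up to $O(\varepsilon)$ slope error. Inducting on rank, construct for each $i$ an $L^{p}$ $\delta^{\prime}$-approximate critical metric $h_{i}$ on $\tilde Q_{i}$: the base case, where $\tilde Q_{i}$ is stable, is the genuine Hermitian-Einstein metric supplied by the Bando-Siu/Uhlenbeck-Yau correspondence on $(\tilde X,\omega_{\varepsilon})$; the inductive step appeals to the present theorem applied to each Seshadri refinement.

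\medskip

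\noindent\textbf{Step 3 (Twisting construction):} Fix a $C^{\infty}$ splitting $\pi^{\ast}E\cong\bigoplus_{i}\tilde Q_{i}$ and define, for $s>0$, the hermitian metric $H_{s}:=\bigoplus_{i}s^{-i}h_{i}$. A direct Chern-Weil calculation, together with the $L^{p}$ bound on the second fundamental forms of a filtration by subbundles recalled in Section 2 from \cite{DW1}, shows that as $s\to 0$ the off-diagonal contributions to the curvature of $H_{s}$ decay in $L^{p}$, while the block-diagonal contribution is exactly the direct sum of the curvatures of the $h_{i}$. By Step 2, this block-diagonal term is $L^{p}$-close to the scalar $\mu_{i}\cdot\operatorname{Id}_{\tilde Q_{i}}$ on each block, which is the critical tensor determined by the HN type. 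Choosing $s$ sufficiently small produces an $L^{p}$ $\tfrac{\delta}{2}$-approximate critical metric on $\pi^{\ast}E\to(\tilde X,\omega_{\varepsilon})$.

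\medskip

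\noindent\textbf{Step 4 (Descent and gluing):} The diffeomorphism $\tilde X-\mathbf{E}\cong X-Z_{\operatorname{alg}}$ allows us to view $H_{s}$ as a smooth metric $H$ on $E|_{X-Z_{\operatorname{alg}}}$. Glue $H$ to a fixed smooth background metric across a shrinking tubular neighborhood of $Z_{\operatorname{alg}}$ via a standard cutoff. Since $Z_{\operatorname{alg}}$ has complex codimension $\geq 2$ (hence real Hausdorff codimension $\geq 4$), the $L^{p}$ contribution to the Hermitian-Einstein tensor from the cutoff region tends to zero as the neighborhood shrinks, and similarly the change from computing with respect to $\omega_{\varepsilon}$ versus $\omega$ contributes only an $O(\varepsilon)$ error off the exceptional divisor. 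Combining the $\tfrac{\delta}{2}$ error from Step 3 with the $O(\varepsilon)+o(1)$ gluing error yields an $L^{p}$ $\delta$-approximate critical hermitian structure on $E\to (X,\omega)$.

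\medskip

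\noindent\textbf{Main obstacle:} The principal technical difficulty lies in Step 3 combined with the $\varepsilon\to 0$ limit implicit in Step 4. One must obtain $\varepsilon$-uniform $L^{p}$ control of the second fundamental forms of the filtration of $\pi^{\ast}E$ with respect to the degenerating family $\omega_{\varepsilon}$, so that the twisting parameter $s$ can be chosen independently of $\varepsilon$. This is exactly the point at which Theorem 1.2 (the uniform $\varepsilon$-bound on degrees via pushforward) from Section 4 must be coupled with the Chern-Weil formula: one estimates $\|\beta_{i}\|_{L^{p},\omega_{\varepsilon}}^{p}$ for the second fundamental forms $\beta_{i}$ of each $F_{i}\subset F_{i+1}$ in terms of differences of $\omega_{\varepsilon}$-degrees of the filtration pieces, and applies the uniform pushforward estimate to conclude that these remain bounded as $\varepsilon\to 0$. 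Once this uniformity is established, the remaining gluing and cutoff arguments proceed along the lines of the two-dimensional proof in \cite{DW1}.
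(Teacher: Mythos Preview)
Your outline captures the broad shape of the argument---resolve the filtration, twist, cut off, descend---but there is a genuine gap in how you reach all $1\leq p<\infty$, and your identification of the ``main obstacle'' is off-target.

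The direct cutoff construction you describe in Steps~1--4 does \emph{not} produce an $L^{p}$ $\delta$-approximate structure for arbitrary $p$. The obstruction is the comparison between $\Lambda_{\omega_{\varepsilon}}F$ and $\Lambda_{\omega_{\varepsilon_{1}}}F$ (your claim in Step~4 that this is an ``$O(\varepsilon)$ error off the exceptional divisor'' is exactly the delicate point). The paper's Lemma~\ref{Lemma12} gives such a comparison, but it is only valid for $p$ in a range just above $1$: the H\"older argument in Lemma~\ref{Lemma11} that controls $\det(g_{\varepsilon}^{-1}g_{\varpi})$ uniformly in $\varepsilon$ requires the exponent $2(\alpha-1)s$ to stay below the integrability threshold dictated by the codimension of the blowup center, and this forces $\alpha$ close to $1$. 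Moreover the comparison involves not just the Hermitian--Einstein tensor but also the \emph{full} curvature in $L^{2}$, so the error is not simply $O(\varepsilon)$. Consequently Proposition~\ref{Prop18} only yields $HYM_{\alpha,N}\leq HYM_{\alpha,N}(\mu)+\delta$ for $1\leq\alpha<\alpha_{0}$.

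The paper closes this gap by an indirect route you have not mentioned: the $p$-close-to-$1$ approximate structure is enough, via Propositions~\ref{Prop10}--\ref{Prop12}, to prove that the Harder--Narasimhan type of any Uhlenbeck limit along the flow equals $\mu_{0}$ (Proposition~\ref{Prop19}). Once that is known, Lemma~\ref{Lemma14} gives $L^{p}$ convergence of the HNS projections along the flow, and combined with $\Lambda_{\omega}F_{A_{t_{j}}}\to\Lambda_{\omega}F_{A_{\infty}}$ in $L^{p}$ (Lemma~\ref{Lemma6}) one obtains the $L^{p}$ approximate structure for every $p<\infty$ by simply running the flow far enough (Proposition~\ref{Prop20}). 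The metric for general $p$ is thus produced by the flow, not by the cutoff construction.

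Finally, your ``Main obstacle'' paragraph proposes to bound $\|\beta_{i}\|_{L^{p},\omega_{\varepsilon}}$ via Chern--Weil and the degree estimate. But the Chern--Weil formula controls $\|\beta\|_{L^{2}}$, not $\|\beta\|_{L^{p}}$ for $p>2$; and Lemma~\ref{Lemma4} from Section~2, which you invoke, concerns a sequence of connections with uniformly bounded $\Lambda_{\omega}F$ in $L^{\infty}$ on a fixed K\"ahler manifold, not a degenerating family $\omega_{\varepsilon}$. The actual mechanism in the paper is that the twisting parameter $t$ kills the $\beta$-contributions after an application of Lemma~\ref{Lemma11}, and this again only works for small exponents.
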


The method does not extend to $p=\infty $. This is straightforward in the
case when the filtration is given by subbundles (even for $p=\infty $).
Given an exact sequence of holomorphic vector bundles:%
\begin{equation*}
0\longrightarrow S\longrightarrow E\longrightarrow Q\longrightarrow 0
\end{equation*}%
and hermitian metrics on $S$ and $Q$, one can scale the second fundamental
form $\beta \mapsto t\beta $ to obtain an isomorphic bundle whose
Hermitian-Einstein tensor is close to the direct sum of those of $S$ and $Q$%
. In general it seems difficult to do this directly. The problem here is
that the filtration is not in general given by subbundles, and so the vast
majority Section $5$ is an argument needed to address this point. This is
precisely where we need the resolution of the filtration obtained in Section 
$4$. We first take the direct sum of the Hermitian-Einstein metrics on the
stable quotients in the resolution by subbundles, which sits inside the
pullback $\pi ^{\ast }E$ under the blowup map $\pi :\tilde{X}\rightarrow X$.
Then the argument above shows that after modifying this metric by a gauge
transformation, its Hermitian-Einstein tensor becomes close to the type in
the $L^{p}$ norm. We complete the proof by pushing this metric down to $%
E\rightarrow X$ using a cutoff argument.

In broad outline our discussion in Section $5$ follows the ideas in \cite%
{DW1}. The principal difference is that the authors of \cite{DW1} were able
to rely on the fact that the singular set was given by points when applying
the cutoff argument, in particular they knew that there were uniform bounds
on the derivatives of the cutoff function. We must allow for the fact that
the singular set is higher dimensional, and therefore need to replace their
arguments involving coverings of the singular set by disjoint balls of
arbitrarily small radius by calculations in a tubular neighbourhood. We
first assume $Z_{\limfunc{alg}}$ is smooth and that blowing up once along $%
Z_{\limfunc{alg}}$ resolves the singularities. The essential point is that
the Hausdorff codimension of $Z_{\limfunc{alg}}$ is large enough to allow
the arguments of \cite{DW1} to go through in this case. We then reduce the
general theorem to this case by applying an inductive argument on the number
of blowups required to resolve the filtration. It is here that we crucially
use the convergence of the Harder-Narasimhan type proven in section $4$.

In Section $6$, following Bando and Siu, we introduce a degenerate
Yang-Mills flow on the composition of blowups $\tilde{X}$ with respect to
the degenerate metric $\pi ^{\ast }\omega $. We review some basic properties
of this flow that are necessary for the proof of Theorem \ref%
{THM:MainTheorem}. In particular we show that a solution of this degenerate
flow is in fact an hermitian metric, and solves the ordinary flow equations
with respect to the metric $\pi ^{\ast }\omega $ away from the exceptional
divisor $\mathbf{E}$.

Section $7$ completes the proof of the main theorem by showing the
isomorphism of the limit $E_{\infty }$ with $\left( Gr_{\omega }^{HNS}(E,%
\bar{\partial}_{E})\right) ^{\ast \ast }$. The basic idea follows that of 
\cite{DW1} which in turn is a generalisation of the argument of Donaldson in 
\cite{DO1}. His idea is to construct a non-zero holomorphic map to the
limiting bundle as the limit of the sequence of gauge transformations
defined by the flow. In the case that the initial bundle is stable and has
stable image, one may apply the basic fact that such a map is always an
isomorphism. In general, the idea in \cite{DW1} is simply to apply this
argument to the first factor of the associated graded object (which is
stable) and then perform an induction. The image of the first factor will be
stable because of the result in Section $5$ about the type of the limiting
sheaf. The difficulty with this method is in proving that the limiting map
is in fact non-zero. This follows directly from Donaldson's proof in the
case of a single subsheaf, but it is more complicated to construct such a
map on the entire filtration. The authors of \cite{DW1} avoid applying
Donaldson's method directly by appealing to a complex analytic argument
involving analytic extension see also \cite{BU2}. Arguing in this fashion
makes the induction rather easier.\ However, this requires the complement of
the singular set to have strictly pseudo-concave boundary, which is true in
the case of surfaces, but is not guaranteed in higher dimensions.

Therefore we give a proof of a slightly more differential geometric
character. Namely, in the case that the filtration is given by subbundles,
we follow the argument of Donaldson, which goes through with modest
corrections in higher dimensions, and does indeed suffice to complete the
induction alluded to. In the general case, we must again appeal to a
resolution of singularities of the filtration and apply the previous
strategy to the pullback bundle over the composition of blowups $\tilde{X}$.
The problem one encounters with this approach is that the induction breaks
down due to the appearance of second fundamental forms of each piece of the
filtration, which are not bounded in $L^{\infty }$ with respect to the
degenerate metric $\pi ^{\ast }\omega $. To rectify this, we apply the
degenerate flow of Section $6$ for some fixed non-zero time $t$ to each
element of the sequence of connections, and this new sequence does have the
desired bound. This is due to the key observation of Bando and Siu that the
Sobolev constant of $\tilde{X}$ with respect to the metrics $\omega
_{\varepsilon }$ is bounded away from zero. A theorem of Cheng and Li then
implies uniform control over subsolutions to the heat equation, which is
sufficient to understand the degenerate flow. One then has to show that the
limit obtained from this new sequence of connections is independent of $t$
and is the correct one. This section is an expanded and slightly modified
account of an argument contained in the unpublished preprint \cite{DW3}.

We conclude the introduction with some general comments. First of all, as
pointed out in \cite{DW1}, the proof of Theorem \ref{THM:MainTheorem} is
essentially independent of the flow, and one obtains a similar theorem by
restricting to sequences of connections which are minimising with respect to
certain Hermitian-Yang-Mills type functionals. Indeed, the statement appears
explicitly as Theorem \ref{Thm12}. Secondly, one expects that there should
be a relationship between the two singular sets $Z_{\limfunc{alg}}$ and $Z_{%
\limfunc{an}}$. Namely, in the best case $Z_{\limfunc{alg}}$ should be
exactly the set of points where bubbling occurs. One always has containment $%
Z_{\limfunc{alg}}\subset Z_{\limfunc{an}}$, and in the separate article \cite%
{DW2} Daskalopoulos and Wentworth have shown that in the surface case
equality does in fact hold. We hope to be able to clarify this issue in
higher dimensions in a future paper.

Finally, the author is aware of a recent series of preprints \cite{J1},\cite%
{J2},\cite{J3} by Adam Jacob which collectively give a proof of Theorem \ref%
{THM:MainTheorem} using different methods.

\textbf{Acknowledgement}

The author would like to thank his thesis advisor, Richard Wentworth, who
suggested this problem and gave him considerable help in solving it. He also
owes a great debt to the anonymous referee, who has given comments of
exceptional detail through three separate readings, pointed out errors,
fixed numerous typos, and whose comments have in general improved the
exposition considerably. The author was also partially supported by $NSF$
grant number 1037094 while working on this manuscript. Finally, the author
would like to thank Xuwen Chen for occasional discussions about certain
technical points in the paper.

\section{Preliminary Remarks}

\subsection{Subsheaves of Holomorphic Bundles and the $HNS$ Filtration}

We now recall some basic sheaf theory. All of this material may be found in 
\cite{KOB}. As stated in the introduction, the main obstacle we will face is
that we must consider arbitrary subsheaves of a holomorphic vector bundle.
Throughout, $X$ will be a compact K\"{a}hler manifold (unless otherwise
stated) with K\"{a}hler form $\omega $, $E$ a holomorphic vector bundle, and 
$S\subset E$ a subsheaf.

Recall that an analytic sheaf $\mathcal{E}$ on $X$ is called torsion free if
the natural map $\mathcal{E}\rightarrow $ $\mathcal{E}^{\ast \ast }$ is
injective. We call $\mathcal{E}$ reflexive if this map is an isomorphism. Of
vital importance is the fact that a torsion free sheaf is \textquotedblleft
almost a vector bundle\textquotedblright\ in the following sense. For $%
\mathcal{E}$ a sheaf on $X$ recall that its singular set is $\limfunc{Sing}(%
\mathcal{E})=\{x\in X\mid \mathcal{E}_{x}$ $is$ $not$ $free\}$. Here $%
\mathcal{E}_{x}$ is the stalk of $\mathcal{E}$ over $x$. In other words $%
\limfunc{Sing}(\mathcal{E})$ is the set of points where $\mathcal{E}$ fails
to be locally free, i.e., a vector bundle. The set $\limfunc{Sing}(\mathcal{E%
})$ is a closed complex analytic subvariety of $X$ of codimension at least $%
2 $.

Recall that the saturation of a subsheaf $S\subset E$ is defined by $%
\limfunc{Sat}_{E}(S)=\ker (E\rightarrow Q/\limfunc{Tor}(Q))$ and that $S$ is
a subsheaf of $\limfunc{Sat}_{E}(S)$ with torsion quotient, and the quotient 
$E/\limfunc{Sat}_{E}(S)$ is torsion free. We also have the following lemma
whose proof we omit.

\begin{lemma}
\label{Lemma1}Let $E$ be a holomorphic vector bundle. Suppose $S_{1}\subset
S_{2}\subset $ $E$ are subsheaves with $S_{2}/S_{1}$ torsion. Then $\limfunc{%
Sat}_{E}(S_{1})=\limfunc{Sat}_{E}(S_{2})$.
\end{lemma}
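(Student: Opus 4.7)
The plan is to exploit the characterisation that $\operatorname{Sat}_E(S)$ is the unique subsheaf of $E$ containing $S$ whose quotient $E/\operatorname{Sat}_E(S)$ is torsion-free and such that $\operatorname{Sat}_E(S)/S$ coincides with $\operatorname{Tor}(E/S)$. This is immediate from the definition $\operatorname{Sat}_E(S)=\ker\bigl(E\to (E/S)/\operatorname{Tor}(E/S)\bigr)$: the quotient $E/\operatorname{Sat}_E(S)$ injects into the torsion-free sheaf $(E/S)/\operatorname{Tor}(E/S)$, and the preimage of $\operatorname{Tor}(E/S)$ in $E$ under the projection $E\twoheadrightarrow E/S$ is exactly $\operatorname{Sat}_E(S)$.

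From this characterisation I would extract two easy formal properties of the operation $\operatorname{Sat}_E(\cdot)$. First, monotonicity: if $T\subset T'$ are subsheaves of $E$, the surjection $E/T\twoheadrightarrow E/T'$ sends torsion sections to torsion sections, so passing to the torsion-free quotients on both sides gives a commutative square and hence $\operatorname{Sat}_E(T)\subset\operatorname{Sat}_E(T')$. Second, idempotence: because $E/\operatorname{Sat}_E(S)$ is already torsion-free, $\operatorname{Tor}(E/\operatorname{Sat}_E(S))=0$, so $\operatorname{Sat}_E(\operatorname{Sat}_E(S))=\operatorname{Sat}_E(S)$.

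Now I would finish the lemma in two short steps. Applying monotonicity to $S_1\subset S_2$ yields $\operatorname{Sat}_E(S_1)\subset\operatorname{Sat}_E(S_2)$. For the reverse inclusion, the hypothesis that $S_2/S_1$ is torsion means exactly that the image of $S_2$ in $E/S_1$ lies in $\operatorname{Tor}(E/S_1)$; pulling back to $E$, this says $S_2\subset\operatorname{Sat}_E(S_1)$. Applying monotonicity and then idempotence to this inclusion gives
\[
\operatorname{Sat}_E(S_2)\subset\operatorname{Sat}_E(\operatorname{Sat}_E(S_1))=\operatorname{Sat}_E(S_1),
\]
which is the remaining inclusion.

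There is no genuine obstacle here: the whole argument is a formal manipulation of the definition, and the only thing to be slightly careful about is the initial verification that $E/\operatorname{Sat}_E(S)$ is torsion-free (needed for idempotence), which is tautological once one writes down the defining kernel. The torsion hypothesis on $S_2/S_1$ enters in precisely one place, namely to establish $S_2\subset\operatorname{Sat}_E(S_1)$, after which the conclusion is forced by the order-preserving and idempotent nature of saturation.
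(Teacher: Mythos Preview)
Your proof is correct. The paper omits the proof of this lemma entirely, so there is nothing to compare against; your argument via monotonicity and idempotence of saturation is the standard one and is exactly what one would expect.
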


The $\omega $-\textbf{slope} of a torsion free sheaf $\mathcal{E}$ on $X$ is
defined by:%
\begin{equation*}
\mu _{\omega }(\mathcal{E)=}\frac{\deg _{\omega }(\mathcal{E)}}{\limfunc{rk}(%
\mathcal{E)}}=\frac{1}{\limfunc{rk}(\mathcal{E)}}\int_{X}c_{1}(\mathcal{%
E)\wedge \omega }^{n-1}.
\end{equation*}%
Note that the right hand side is well defined independently of the
representative for $c_{1}(\mathcal{E)}$ since $\omega $ is closed.
Throughout we will assume that the volume of $X$ with respect to $\omega $
is normalised to be $\frac{2\pi }{(n-1)!}$, where $n=\dim _{%
\mathbb{C}
}X$.

\begin{definition}
\label{Def1}We say that a torsion free sheaf $\mathcal{E}$ is $\omega $-%
\textbf{stable} ($\omega $-\textbf{semistable}) if for all proper subsheaves 
$S\subset \mathcal{E}$, $\mu _{\omega }(S)<$ $\mu _{\omega }(\mathcal{E})$ $%
\left( \mu _{\omega }(S)\leq \mu _{\omega }(\mathcal{E})\right) $.
Equivalently $\mu _{\omega }(Q)>\mu _{\omega }(\mathcal{E})$ $\left( \mu
_{\omega }(Q)\geq \mu _{\omega }(\mathcal{E}\right) )$ for every torsion
free quotient $Q$.
\end{definition}

We have the following important proposition.

\begin{proposition}
\label{Prop2}There is an upper bound on the set of slopes $\mu _{\omega }(S)$
of subsheaves of a torsion free sheaf $\mathcal{E}$, and moreover this upper
bound is realised by some subsheaf $\mathcal{E}_{1}\subset \mathcal{E}$.
Furthermore, we can choose $\mathcal{E}_{1}$ so that for any $S\subset 
\mathcal{E}$, if $\mu _{\omega }(S)=\mu _{\omega }(\mathcal{E}_{1})$ then $%
\limfunc{rk}(S)\leq \limfunc{rk}(\mathcal{E}_{1})$. Moreover such a subsheaf
is unique.
\end{proposition}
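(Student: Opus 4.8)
The plan is to establish the existence of a subsheaf realizing the maximal slope — the \emph{maximal destabilizing subsheaf} — and then to upgrade this to the uniqueness statement with the rank condition built in. First I would prove that the set of slopes of subsheaves is bounded above. Given any subsheaf $S \subset \mathcal{E}$, we may replace it by its saturation $\limfunc{Sat}_{\mathcal{E}}(S)$, which has the same rank and degree at least that of $S$ (the torsion quotient $\limfunc{Sat}_{\mathcal{E}}(S)/S$ contributes non-negative degree), so it suffices to bound slopes of saturated subsheaves. For saturated subsheaves of fixed rank $r$, the quotient $\mathcal{E}/\limfunc{Sat}_{\mathcal{E}}(S)$ is torsion free of rank $\limfunc{rk}(\mathcal{E}) - r$; passing to determinants and using that $\det$ of a torsion free sheaf agrees off a codimension-$2$ set with a line bundle, one bounds $\deg_\omega$ of a rank-$r$ saturated subsheaf in terms of the geometry of $\mathcal{E}$ and $\omega$ (this is the standard argument via the induced section of a Grassmann bundle, or via Kobayashi's estimate; it is stated later in the paper that this kind of bound holds). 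Since the rank takes only finitely many values $1, \dots, \limfunc{rk}(\mathcal{E})$, we get a uniform upper bound $\mu_{\max} := \sup_S \mu_\omega(S) < \infty$.

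Next I would show the supremum is attained. Choose a sequence $S_i$ with $\mu_\omega(S_i) \to \mu_{\max}$; replacing each by its saturation we may assume all the $S_i$ are saturated, hence (off their codimension-$2$ singular sets) subbundles. The degrees $\deg_\omega(S_i)$ are integers (up to the normalization of the paper, or at any rate lie in a fixed lattice) and are bounded, and the ranks are bounded, so after passing to a subsequence both the rank $r$ and the degree $d$ are constant, and $\mu_\omega(S_i) = d/r = \mu_{\max}$ exactly for $i$ large: the supremum is a maximum. So the set $\mathcal{M}$ of saturated subsheaves of slope exactly $\mu_{\max}$ is non-empty. Among these, choose $\mathcal{E}_1 \in \mathcal{M}$ of \emph{maximal rank} — possible since ranks are bounded by $\limfunc{rk}(\mathcal{E})$. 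This $\mathcal{E}_1$ then automatically satisfies the rank condition: if $S \subset \mathcal{E}$ has $\mu_\omega(S) = \mu_\omega(\mathcal{E}_1) = \mu_{\max}$, then $\limfunc{Sat}_{\mathcal{E}}(S) \in \mathcal{M}$ and $\limfunc{rk}(S) = \limfunc{rk}(\limfunc{Sat}_{\mathcal{E}}(S)) \le \limfunc{rk}(\mathcal{E}_1)$ by maximality.

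For uniqueness, suppose $\mathcal{E}_1$ and $\mathcal{E}_1'$ both have slope $\mu_{\max}$ and both have rank equal to the maximal rank among slope-$\mu_{\max}$ subsheaves; I may take both saturated. Consider the subsheaf $\mathcal{E}_1 + \mathcal{E}_1' \subset \mathcal{E}$ (the image of $\mathcal{E}_1 \oplus \mathcal{E}_1' \to \mathcal{E}$) and the natural exact sequence $0 \to \mathcal{E}_1 \cap \mathcal{E}_1' \to \mathcal{E}_1 \oplus \mathcal{E}_1' \to \mathcal{E}_1 + \mathcal{E}_1' \to 0$. Additivity of rank and degree in exact sequences, together with the fact that $\mathcal{E}_1 \cap \mathcal{E}_1'$ is a subsheaf (so has slope $\le \mu_{\max}$) and $\mathcal{E}_1 + \mathcal{E}_1'$ is a subsheaf (so has slope $\le \mu_{\max}$), forces — by a short slope inequality chase — both $\mathcal{E}_1 \cap \mathcal{E}_1'$ and $\mathcal{E}_1 + \mathcal{E}_1'$ to have slope exactly $\mu_{\max}$. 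Then $\mathcal{E}_1 + \mathcal{E}_1'$ is a slope-$\mu_{\max}$ subsheaf containing $\mathcal{E}_1$, so its rank is $\ge \limfunc{rk}(\mathcal{E}_1)$; by the maximality of $\limfunc{rk}(\mathcal{E}_1)$ its rank equals $\limfunc{rk}(\mathcal{E}_1) = \limfunc{rk}(\mathcal{E}_1')$, whence $\mathcal{E}_1 = \limfunc{Sat}(\mathcal{E}_1 + \mathcal{E}_1') = \mathcal{E}_1'$ (a torsion-free sheaf containing another of the same rank with torsion-free quotient, both saturated, must coincide). The step I expect to be the main obstacle — really the only non-formal input — is the boundedness of degrees of saturated subsheaves of a fixed torsion-free sheaf, which is where the Kähler geometry enters; everything else is bookkeeping with ranks, degrees, saturations, and short exact sequences. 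In fact this boundedness is quoted from \cite{KOB}, so in the write-up I would simply cite it and spend the bulk of the argument on the extremal-rank selection and the uniqueness chase above.
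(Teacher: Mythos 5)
The paper itself offers no proof of this proposition; it simply cites Kobayashi \cite{KOB}. Your sketch follows the standard argument found there and in the usual references, and the bulk of it is correct: the reduction to saturated subsheaves, the selection of a slope-maximal $\mathcal{E}_1$ of maximal rank among those, and the uniqueness argument via the exact sequence
$0\to \mathcal{E}_1\cap\mathcal{E}_1'\to\mathcal{E}_1\oplus\mathcal{E}_1'\to\mathcal{E}_1+\mathcal{E}_1'\to 0$
together with the rank-maximality trick are all sound. (In the uniqueness step one should also note the edge case $\mathcal{E}_1\cap\mathcal{E}_1'=0$, which is ruled out because then $\mathcal{E}_1+\mathcal{E}_1'$ would have slope $\mu_{\max}$ and rank $2\limfunc{rk}\mathcal{E}_1$, contradicting rank-maximality.)

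There is, however, one genuine gap: your claim that the degrees $\deg_\omega(S_i)$ "lie in a fixed lattice." On a compact K\"ahler manifold that is not projective, the class $[\omega^{n-1}]$ need not be rational, so the linear map $H^2(X,\mathbb{Z})\to\mathbb{R}$, $\alpha\mapsto\int_X\alpha\wedge\omega^{n-1}$, can have dense image; boundedness of $\deg_\omega(S_i)$ then does not by itself force a maximizing sequence to be eventually constant. This is exactly the nontrivial point in the K\"ahler (as opposed to projective) case. The standard repair is to show that the set of \emph{cohomology classes} $c_1(S)\in H^2(X,\mathbb{Z})/\mathrm{tors}$ achievable by saturated subsheaves with $\deg_\omega(S)$ bounded below is finite: using the curvature identity $F_S=\pi F_{\mathcal{E}}\pi+\beta\wedge\beta^\ast$ and the Chern--Weil formula (Theorem~\ref{Thm5} in the paper), a lower bound on $\deg_\omega(S)$ gives an upper bound on $\|\beta\|_{L^2}^2$, hence on $\|F_S\|_{L^1}$, hence on the norm of $c_1(S)$ in $H^2(X,\mathbb{R})$; since $c_1(S)$ lies in the lattice $H^2(X,\mathbb{Z})/\mathrm{tors}$, only finitely many classes occur, so only finitely many degrees occur and the supremum is attained. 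With that insertion your argument goes through; without it, the "eventually constant" step fails on a non-projective K\"ahler base.
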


For the proof see Kobayashi \cite{KOB}. The sheaf $\mathcal{E}_{1}$ is
called the \textbf{maximal destabilising subsheaf} of $\mathcal{E}$. This
sheaf is also clearly semistable.

\begin{remark}
\label{Rmk1}If $S\subset \mathcal{E}$ is a subsheaf with torsion free
quotient $Q=\mathcal{E}/S$, then $Q^{\ast }\hookrightarrow \mathcal{E}^{\ast
}$ is a subsheaf and $\deg (Q^{\ast })=-\deg (Q)$. By the above proposition $%
\mu _{\omega }(Q^{\ast })$ is bounded from above, so $\mu _{\omega }(Q)$ is
bounded from below.
\end{remark}

\begin{remark}
\label{Rmk2}Note also that the saturation of a sheaf has slope at least as
large as the slope of the original sheaf. Therefore the maximal
destabilising subsheaf is saturated by definition.
\end{remark}

\begin{definition}
\label{Def2}We will write $\mu ^{\max }(\mathcal{E)}$ for the maximal slope
of a subsheaf, and $\mu ^{\min }(\mathcal{E)}$ for the minimal slope of a
torsion free quotient. Clearly we have the equality $\mu ^{\min }(\mathcal{%
E)=-}\mu ^{\max }(\mathcal{E}^{\ast }\mathcal{)}$.
\end{definition}

We now specialise to the case of a holomorphic vector bundle $E$, although
the following all holds also for an arbitrary torsion-free sheaf.

\begin{proposition}
\label{Prop3}There is a filtration:%
\begin{equation*}
0=E_{0}\subset E_{1}\subset \cdots \subset E_{l}=E
\end{equation*}%
such that the quotients $Q_{i}=E_{i}/E_{i-1}$ are torsion free and
semistable, and $\mu _{\omega }(Q_{i+1})<\mu _{\omega }(Q_{i})$.
Furthermore, the associated graded object: $Gr_{\omega }^{HN}(E)={\Huge %
\oplus }_{i}Q_{i}$, is uniquely determined by the isomorphism class of $E$
and is called the \textbf{Harder-Narasimhan filtration}. The
Harder-Narasimhan filtration is unique.
\end{proposition}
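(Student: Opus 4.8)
The plan is to build the filtration by iterated extraction of maximal destabilising subsheaves, and then to establish uniqueness — and with it the fact that $Gr^{HN}_\omega(E)$ is an isomorphism invariant — by showing that the first term of \emph{any} filtration with the stated properties must coincide with the maximal destabilising subsheaf furnished by Proposition~\ref{Prop2}.

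For existence, set $E_1$ to be the maximal destabilising subsheaf of $E$; it is semistable by the observation following Proposition~\ref{Prop2}, and saturated by Remark~\ref{Rmk2}, so $E/E_1$ is torsion free. Define $E_2$ to be the preimage in $E$ of the maximal destabilising subsheaf of $E/E_1$, and continue inductively; since the preimage of a saturated subsheaf under a quotient map is again saturated, every $E/E_i$ is torsion free, and the process terminates because $\operatorname{rk}(E/E_i)$ strictly decreases. By construction each $Q_i = E_i/E_{i-1}$ is torsion free and semistable with $\mu_\omega(Q_i) = \mu^{\max}(E/E_{i-1})$. The only point requiring an argument is the inequality $\mu_\omega(Q_{i+1}) < \mu_\omega(Q_i)$: writing $Q = E/E_i$, taking $S \subset Q$ any nonzero subsheaf and $S' \subset E/E_{i-1}$ its preimage, the exact sequence $0 \to Q_i \to S' \to S \to 0$ makes $\mu_\omega(S')$ the rank-weighted mediant of $\mu_\omega(Q_i)$ and $\mu_\omega(S)$; if $\mu_\omega(S) \geq \mu_\omega(Q_i) = \mu^{\max}(E/E_{i-1})$ then $\mu_\omega(S') \geq \mu^{\max}(E/E_{i-1})$, forcing $\mu_\omega(S') = \mu^{\max}(E/E_{i-1})$, which contradicts the rank-minimality clause of Proposition~\ref{Prop2} since $\operatorname{rk}(S') > \operatorname{rk}(Q_i)$. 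Hence $\mu^{\max}(E/E_i) < \mu_\omega(Q_i)$, as required.

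For uniqueness, let $0 = E_0 \subset \cdots \subset E_l = E$ be any filtration as in the statement and let $S \subset E$ be an arbitrary subsheaf (we may assume $\operatorname{rk}(S) > 0$). The induced filtration $S \cap E_i$ has torsion-free quotients $(S \cap E_i)/(S \cap E_{i-1}) \hookrightarrow Q_i$, so each of these has slope $\leq \mu_\omega(Q_i) \leq \mu_\omega(Q_1)$ by semistability and the decreasing-slope hypothesis; additivity of degree and rank over the resulting short exact sequences then gives $\mu_\omega(S) \leq \mu_\omega(Q_1) = \mu_\omega(E_1)$, and in the equality case the strict inequalities $\mu_\omega(Q_i) < \mu_\omega(Q_1)$ for $i \geq 2$ force $S \cap E_i = S \cap E_{i-1}$ for all $i \geq 2$, so $S \subseteq E_1$ and in particular $\operatorname{rk}(S) \leq \operatorname{rk}(E_1)$. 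Thus $E_1$ satisfies the characterisation of Proposition~\ref{Prop2} and is therefore unique; passing to $E/E_1$, which inherits a filtration of the same type and of length $l-1$, and inducting on length yields uniqueness of the whole filtration. Finally, an isomorphism $E \cong E'$ sends subsheaves to subsheaves preserving rank and slope, hence carries the maximal destabilising subsheaf of $E$ to that of $E'$ and, by the same induction, the entire filtration; therefore $Gr^{HN}_\omega(E) = \bigoplus_i Q_i$ depends only on the isomorphism class of $E$.

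The main obstacle I anticipate is making the slope–mediant estimate and the equality-case analysis airtight while keeping all the quotients that appear inside the category of torsion-free sheaves (so that slopes are well behaved), together with the verification that preimages of saturated subsheaves under quotient maps are saturated; the remaining manipulations with degrees and ranks in short exact sequences are routine once the correct exact sequences have been identified.
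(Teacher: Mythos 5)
Your proposal is correct and follows essentially the same construction (iterated extraction of maximal destabilising subsheaves) that the paper sketches in the paragraph after the proposition, supplying the details---the mediant argument for strict slope decrease and the characterisation of the first step via Proposition~\ref{Prop2} for uniqueness---that the paper compresses into ``one easily checks.'' One small wording slip: Proposition~\ref{Prop2} asserts rank-\emph{maximality}, not rank-minimality, of $\mathcal{E}_1$ among subsheaves achieving $\mu^{\max}$, but that is exactly the property you invoke and your application of it is correct.
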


In the sequel we will usually abbreviate this as the $HN$ filtration, and we
will write $\mathbb{F}_{i}^{HN}(E)$ for the $i^{th}$ piece of the
filtration. The previous proposition follows from Proposition $2$. The
maximal destabilising subsheaf is $\mathbb{F}_{1}^{HN}(E)$. Then consider
the quotient $E/\mathbb{F}_{1}^{HN}(E)$ and its maximal destabilising
subshseaf. Define $\mathbb{F}_{2}^{HN}(E)$ to be the pre-image of this
subsheaf under the natural projection. Iterating this process gives the
stated filtration, and one easily checks that it has the desired properties.

Another invariant of the isomorphism class of $E$ is the collection of all
slopes of the quotients $Q_{i}$.

\begin{definition}
\label{Def3}Let $E$ have rank $K$. Then we form a $K$-tuple 
\begin{equation*}
\mu (E)=(\mu (Q_{1}),\cdots ,\mu (Q_{1}),\cdots ,\mu (Q_{i}),\cdots ,\mu
(Q_{i}),\cdots \mu (Q_{l}),\cdots \mu (Q_{l}))
\end{equation*}%
where $\mu (Q_{i})$ is repeated $\limfunc{rk}(Q_{i})$ times. Then $\mu (E)$
is called the\textbf{\ Harder-Narasimhan} (or $HN$) \textbf{type} of $E$.
\end{definition}

We will also need a result describing the $HN$ filtration of $E$ in terms of
the $HN$ filtration of a subsheaf $S$ and its quotient $Q$. The following
lemma and its corollary are elementary and we omit the proofs.

\begin{proposition}
\label{Prop4}Let $0\rightarrow S\rightarrow E\rightarrow Q\rightarrow 0$ be
an exact sequence of torsion free sheaves with $E$ a holomorphic vector
bundle such that $\mu ^{\min }(S)>\mu ^{\max }(Q)$. Then the $HN$ filtration
of $E$ is given by: 
\begin{equation*}
0\subset \mathbb{F}_{1}^{HN}(S)\subset \cdots \subset \mathbb{F}%
_{k}^{HN}(S)=S\subset \mathbb{F}_{k+1}^{HN}(E)\subset \cdots \subset \mathbb{%
F}_{l}^{HN}(E)=E,
\end{equation*}%
where $\mathbb{F}_{k+i}^{HN}(E)=\ker (E\rightarrow Q/\mathbb{F}_{i}^{HN}(Q))$%
, for $i=0,1,\cdots ,l-k$. In particular, this means that $Q_{i}=\mathbb{F}%
_{k+i}^{HN}(E)/\mathbb{F}_{k+i-1}^{HN}(E)=\mathbb{F}_{i}^{HN}(Q)$ and
therefore $Gr^{HN}(E)=Gr^{HN}(S)\oplus Gr^{HN}(Q)$.
\end{proposition}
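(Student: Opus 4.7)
The plan is to show that the concatenated filtration displayed in the statement already satisfies the defining properties of the Harder-Narasimhan filtration---torsion-free semistable successive quotients with strictly decreasing slopes---and then conclude by the uniqueness assertion of Proposition \ref{Prop3}. Coherence of each piece is immediate: the lower portion consists of the $HN$-pieces of $S$, while the upper pieces $\mathbb{F}_{k+i}^{HN}(E) := \ker(E \to Q/\mathbb{F}_i^{HN}(Q))$ are kernels of maps between coherent sheaves. In particular for $i=0$ one recovers $S$ (since $\mathbb{F}_0^{HN}(Q) = 0$) and for $i = l-k$ one recovers $E$ (since $\mathbb{F}_{l-k}^{HN}(Q) = Q$), so the displayed chain really is a filtration.

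I would then identify the successive quotients in two stages. For $1 \le i \le k$, $\mathbb{F}_i^{HN}(S)/\mathbb{F}_{i-1}^{HN}(S)$ is by definition the $i$-th $HN$-quotient of $S$, hence torsion-free and semistable. For $1 \le i \le l-k$, the composition $E \to Q \to Q/\mathbb{F}_i^{HN}(Q)$ gives the short exact sequence
$$0 \to S \to \mathbb{F}_{k+i}^{HN}(E) \to \mathbb{F}_i^{HN}(Q) \to 0,$$
which together with the third isomorphism theorem (noting $S \subset \mathbb{F}_{k+i-1}^{HN}(E)$ for $i \ge 1$) yields
$$\mathbb{F}_{k+i}^{HN}(E)/\mathbb{F}_{k+i-1}^{HN}(E) \cong \mathbb{F}_i^{HN}(Q)/\mathbb{F}_{i-1}^{HN}(Q),$$
torsion-free and semistable by the $HN$ property of $Q$. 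In both ranges slopes strictly decrease by definition of each individual $HN$ filtration.

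The only remaining check, and the only place the hypothesis intervenes, is strict decrease at the junction between the $S$-portion and the $Q$-portion: the last $S$-quotient has slope $\mu^{\min}(S)$ while the first $Q$-quotient has slope $\mu^{\max}(Q)$, and these are ordered correctly by assumption. Having verified all three defining $HN$ properties, the uniqueness clause of Proposition \ref{Prop3} identifies the candidate with $\mathbb{F}_\bullet^{HN}(E)$; reading off the graded pieces then gives $Gr^{HN}(E) = Gr^{HN}(S) \oplus Gr^{HN}(Q)$. No serious obstacle is anticipated here: the entire argument is uniqueness combined with the identifications of successive quotients above, and the hypothesis $\mu^{\min}(S) > \mu^{\max}(Q)$ is used exactly once, to bridge the gap at the junction.
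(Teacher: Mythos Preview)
Your argument is correct and is exactly the standard elementary verification the paper has in mind; the authors in fact omit the proof entirely, stating just before Proposition~\ref{Prop4} that ``the following lemma and its corollary are elementary and we omit the proofs.'' Your route---checking directly that the concatenated filtration has torsion-free semistable successive quotients with strictly decreasing slopes, then invoking the uniqueness clause of Proposition~\ref{Prop3}---is the natural one, and the identifications $\mu(\mathbb{F}_k^{HN}(S)/\mathbb{F}_{k-1}^{HN}(S)) = \mu^{\min}(S)$ and $\mu(\mathbb{F}_1^{HN}(Q)) = \mu^{\max}(Q)$ at the junction are used freely elsewhere in the paper (see the proof of Proposition~\ref{Prop17}).
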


\begin{corollary}
\label{Cor1}Suppose that $0\subset E_{1}\subset \cdots \subset
E_{l-1}\subset E_{l}=E$ is a filtration of $E$ by subbundles, and suppose
that for each $i$, $\mu ^{\min }(E_{i})>\mu ^{\max }(E/E_{i})$. Then the
Harder-Narasimhan filtration of $E$ is given by:%
\begin{eqnarray*}
0 &\subset &\mathbb{F}_{1}^{HN}(E_{1})\subset \cdots \subset \mathbb{F}%
_{k_{1}}^{HN}(E_{1})=E_{1}\subset \cdots \subset \mathbb{F}_{k_{1}+\cdots
+k_{l-1}}^{HN}(E_{l-1})=E_{l-1} \\
&\subset &\mathbb{F}_{k_{1}+\cdots +k_{l-1}+1}^{HN}(E)\subset \cdots \subset 
\mathbb{F}_{k_{1}+\cdots +k_{l}}^{HN}(E)=E.
\end{eqnarray*}
\end{corollary}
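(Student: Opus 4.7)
The plan is a straightforward induction on the length $l$ of the filtration, with Proposition \ref{Prop4} doing all the work at each step. The base case $l=1$ is vacuous (the filtration is just $0 \subset E$, so the statement is the mere existence of the $HN$ filtration of $E$ itself, which is Proposition \ref{Prop3}).

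For the inductive step, I would apply Proposition \ref{Prop4} to the short exact sequence
\begin{equation*}
0 \longrightarrow E_{l-1} \longrightarrow E \longrightarrow E/E_{l-1} \longrightarrow 0.
\end{equation*}
The hypothesis of Proposition \ref{Prop4} — namely $\mu^{\min}(E_{l-1}) > \mu^{\max}(E/E_{l-1})$ — is exactly our hypothesis at $i = l-1$. Thus the $HN$ filtration of $E$ consists of the $HN$ filtration of $E_{l-1}$ followed by the preimages of the $HN$ filtration of $E/E_{l-1}$ under the projection $E \to E/E_{l-1}$. The subsheaves $E_i/E_{l-1}$ for $i \geq l$ trivialize, and what remains is to identify the $HN$ filtration of $E_{l-1}$ with the concatenated filtration of $E_1 \subset \cdots \subset E_{l-1}$.

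For this I would invoke the inductive hypothesis applied to $E_{l-1}$ with its subbundle filtration $0 \subset E_1 \subset \cdots \subset E_{l-1}$. The one thing to verify is that this truncated filtration still satisfies the slope hypothesis of the corollary; that is, $\mu^{\min}(E_i) > \mu^{\max}(E_{l-1}/E_i)$ for each $i < l-1$. But this is immediate: $E_{l-1}/E_i$ is a subsheaf of $E/E_i$, so any torsion-free quotient of $E_{l-1}/E_i$ is also a torsion-free quotient of some subsheaf of $E/E_i$ — more directly, every subsheaf of $E_{l-1}/E_i$ is a subsheaf of $E/E_i$, hence $\mu^{\max}(E_{l-1}/E_i) \leq \mu^{\max}(E/E_i)$, so the hypothesis $\mu^{\min}(E_i) > \mu^{\max}(E/E_i)$ is inherited.

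There is no serious obstacle here; the only thing to be careful about is bookkeeping the indices $k_1 + \cdots + k_j$ correctly, and the elementary monotonicity $\mu^{\max}(S) \leq \mu^{\max}(E)$ for $S \subset E$ (together with the dual statement $\mu^{\min}(Q) \geq \mu^{\min}(E)$ for quotients $Q$), which ensures that the slope hypotheses restrict cleanly to subquotients at every stage of the induction. In particular, one does not need any new analytic input beyond the algebraic Proposition \ref{Prop4}.
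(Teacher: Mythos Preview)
The paper explicitly omits the proof of this corollary (and of Proposition~\ref{Prop4}), calling both elementary. Your inductive argument on $l$ using Proposition~\ref{Prop4} at each step, together with the observation that $\mu^{\max}(E_{l-1}/E_i) \leq \mu^{\max}(E/E_i)$ so that the slope hypothesis descends to the truncated filtration, is correct and is the natural way to fill in the omitted details.
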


Now we will define the double filtration that appears in the statement of
the Main Theorem. Its existence follows from the existence of the $HN$
filtration and the following proposition.

\begin{proposition}
\label{Prop5}Let $Q$ be a semi-stable torsion free sheaf on $X$. Then there
is a filtration:%
\begin{equation*}
0\subset F_{1}\subset \cdots \subset F_{l}=Q
\end{equation*}%
such that $F_{i}/F_{i-1}$ is stable and torsion-free. Also, for each $i$ we
have $\mu \left( F_{i}/F_{i-1}\right) =\mu (Q)$. The associated graded
object:%
\begin{equation*}
Gr_{\omega }^{S}(Q)={\Huge \oplus }_{i}F_{i}/F_{i-1}
\end{equation*}%
is uniquely determined by the isomorphism class of $Q$, though the
filtration itself is not. Such a filtration is called a \textbf{Seshadri
filtration} of $Q$.
\end{proposition}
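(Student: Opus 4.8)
The plan is to mimic the construction of the Harder--Narasimhan filtration, but this time within the class of semistable sheaves of a \emph{fixed} slope, using the maximal destabilising subsheaf in a suitably refined sense. Let $Q$ be semistable with $\mu_\omega(Q)=\mu$. First I would establish the key structural fact: among all nonzero subsheaves $S\subset Q$ with $\mu_\omega(S)=\mu$ (there is at least one, namely $Q$ itself, and by semistability no subsheaf can have larger slope), there is one of \emph{minimal rank}, and such a minimal-rank subsheaf is necessarily stable. Indeed, if $F\subset Q$ has $\mu_\omega(F)=\mu$ and minimal rank, and $S'\subsetneq F$ were a destabilising subsheaf with $\mu_\omega(S')\geq\mu_\omega(F)=\mu$, then $S'\subset Q$ would have slope $\geq\mu$, hence (by semistability of $Q$) exactly $\mu$, contradicting minimality of $\operatorname{rk}(F)$ unless $S'=0$. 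So every proper nonzero subsheaf of $F$ has slope strictly less than $\mu_\omega(F)$, i.e.\ $F$ is stable. One should also check such an $F$ is saturated in $Q$, or rather replace it by its saturation: the saturation has slope $\geq\mu$ by Remark \ref{Rmk2}, hence $=\mu$, and the same rank bound argument forces it to be stable as well; this keeps the successive quotients torsion-free (the quotient $Q/F$ is then torsion free, and it remains semistable of slope $\mu$ because any subsheaf of $Q/F$ pulls back to a subsheaf of $Q$ of slope $\leq\mu$, while a quotient of slope $<\mu$ would, together with $F$, contradict $\mu_\omega(F)=\mu$ via additivity of degree and rank).

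With this in hand, the construction is iterative: set $F_1$ to be a minimal-rank (saturated, stable) subsheaf of $Q$ of slope $\mu$. The quotient $Q/F_1$ is semistable, torsion free, of slope $\mu$, and of strictly smaller rank, so by induction on rank it admits a Seshadri filtration; pulling this back under $Q\to Q/F_1$ and prepending $F_1$ gives the desired filtration $0\subset F_1\subset\cdots\subset F_l=Q$ with stable torsion-free quotients each of slope $\mu$. The equality $\mu(F_i/F_{i-1})=\mu(Q)$ is automatic from the construction.

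The genuinely nontrivial part is the \emph{uniqueness of the associated graded object} $Gr_\omega^S(Q)=\bigoplus_i F_i/F_{i-1}$, independent of the choice of filtration. Here I would argue as follows. Given two Seshadri filtrations $\{F_i\}$ and $\{F'_j\}$, consider a stable factor $F'_1$ of the second and the composite map $F'_1\hookrightarrow Q\twoheadrightarrow Q/F_{i-1}\twoheadrightarrow F_i/F_{i-1}$ for the smallest $i$ with nonzero image. Both source and target are stable of the same slope $\mu$, so a standard fact (a nonzero map between stable sheaves of equal slope is, generically / up to saturation, an isomorphism — more precisely its image is a subsheaf of both with slope $\geq\mu$, forcing it to be the whole of $F'_1$ and a full-rank saturated subsheaf of $F_i/F_{i-1}$, hence after reflexive hull an isomorphism) shows $F'_1$ is isomorphic to $F_i/F_{i-1}$ as a factor. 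Quotienting out and inducting on rank then identifies the two graded objects. The subtlety, and the main obstacle, is handling torsion and saturation carefully so that ``nonzero map between stable sheaves of equal slope is an isomorphism'' is applied only where it is literally true; one either works with the stable quotients up to isomorphism at the generic point and invokes reflexivity, or cites Kobayashi \cite{KOB} for the precise statement, since all of this material is standard there. I would state the uniqueness as following from \cite{KOB} and give only the existence argument in detail.
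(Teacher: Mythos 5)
The paper offers no proof of this proposition, so there is no house argument to compare against; your construction is the standard Jordan--H\"older one and the existence half is essentially right. You take a saturated subsheaf $F_1\subset Q$ of slope $\mu$ of minimal rank, observe it is stable by minimality, verify $Q/F_1$ is again semistable, torsion-free, of slope $\mu$, and induct on rank. The semistability of $Q/F_1$ follows cleanly from additivity of degree and rank over $0\to F_1\to S\to S'\to 0$ together with $\mu(S)\leq\mu=\mu(F_1)$; the extra clause in your write-up about quotients of slope $<\mu$ is not needed and reads as a non sequitur.

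Where you are right to be cautious, and where I would push you to be more precise, is the uniqueness of the graded object. In dimension $\geq 2$ a nonzero map between $\mu$-stable torsion-free sheaves of the same slope is injective with cokernel supported in codimension $\geq 2$, but it need not be surjective ($I_p\hookrightarrow\mathcal{O}_X$ on a surface is already an example). So the Jordan--H\"older comparison as you sketch it pins down the graded pieces only up to reflexive hull: what one gets canonically is $\left(Gr_\omega^S(Q)\right)^{\ast\ast}$, not $Gr_\omega^S(Q)$ itself. This is in fact the precision the paper actually needs later (the main theorem identifies $E_\infty$ with $\left(Gr_\omega^{HNS}(E)\right)^{\ast\ast}$), but your closing phrase that this ``shows $F'_1$ is isomorphic to $F_i/F_{i-1}$'' overstates what the argument yields, immediately after you yourself said ``after reflexive hull an isomorphism.'' There is also a small direction slip in the comparison: for $i$ minimal with $F'_1\subset F_i$, the composite $F'_1\to Q/F_{i-1}$ has image inside the subsheaf $F_i/F_{i-1}\subset Q/F_{i-1}$; it is not obtained by composing with a further surjection $Q/F_{i-1}\twoheadrightarrow F_i/F_{i-1}$, since $F_i/F_{i-1}$ is a subobject, not a quotient, of $Q/F_{i-1}$.
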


\begin{proposition}
\label{Prop6}Let $E$ be a holomorphic vector bundle on $X$. Then there is a
double filtration $\left\{ E_{i,j}\right\} $ with the following properties.
If the $HN$ filtration is given by:%
\begin{equation*}
0\subset E_{1}\subset \cdots \subset E_{l-1}\subset E_{l}=E,
\end{equation*}%
then $E_{i-1}=E_{i,0}\subset E_{i,1}\subset \cdots \subset E_{i,l_{i}}=E_{i}$%
, where the successive quotients $Q_{i,j}=E_{i,j}/E_{i,j-1}$ are stable and
torsion-free. Furthermore: $\mu _{\omega }(Q_{i,j})=\mu _{\omega
}(Q_{i,j+1}) $ for $j>0$, and $\mu _{\omega }(Q_{i,j})>\mu _{\omega
}(Q_{i+1,j})$. The associated graded object $Gr_{\omega }^{HNS}(E)={\Huge %
\oplus }_{i}{\Huge \oplus }_{j}Q_{i,j}$ is uniquely determined by the
isomorphism class of $E$. This double filtration is called the \textbf{%
Harder-Narasimhan-Seshadri filtration }(or $HNS$ filtration) of $E$.
\end{proposition}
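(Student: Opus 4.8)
The plan is to build the double filtration by refining each semistable quotient of the Harder--Narasimhan filtration of $E$ by a Seshadri filtration, and then pulling the refinement back to $E$. First I would invoke Proposition \ref{Prop3} to obtain the $HN$ filtration $0 = E_0 \subset E_1 \subset \cdots \subset E_l = E$ with $Q_i = E_i/E_{i-1}$ torsion free and semistable and $\mu_\omega(Q_1) > \cdots > \mu_\omega(Q_l)$. For each fixed $i$, apply Proposition \ref{Prop5} to the semistable torsion free sheaf $Q_i$ to produce a Seshadri filtration
\begin{equation*}
0 = F_{i,0} \subset F_{i,1} \subset \cdots \subset F_{i,l_i} = Q_i
\end{equation*}
whose successive quotients $F_{i,j}/F_{i,j-1}$ are stable, torsion free, and of slope exactly $\mu_\omega(Q_i)$.

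Next, let $\rho_i : E_i \to Q_i$ denote the natural quotient map and define
\begin{equation*}
E_{i,j} := \ker\!\left( E_i \longrightarrow Q_i/F_{i,j} \right) = \rho_i^{-1}(F_{i,j}), \qquad 0 \le j \le l_i.
\end{equation*}
Then $E_{i,0} = \ker \rho_i = E_{i-1}$ and $E_{i,l_i} = E_i$, so concatenating these blocks over $i = 1, \ldots, l$ yields a filtration of $E$ refining the $HN$ filtration (the blocks glue because $E_{i,l_i} = E_i = E_{i+1,0}$). Since $\rho_i$ is surjective with kernel $E_{i-1}$, the correspondence of subsheaves gives a canonical isomorphism
\begin{equation*}
Q_{i,j} := E_{i,j}/E_{i,j-1} \;\cong\; F_{i,j}/F_{i,j-1},
\end{equation*}
so each $Q_{i,j}$ is stable and torsion free, and all $Q_{i,j}$ in the $i$-th block share the common slope $\mu_\omega(Q_{i,j}) = \mu_\omega(Q_i)$. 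In particular $\mu_\omega(Q_{i,j}) = \mu_\omega(Q_{i,j+1})$ for $j > 0$, while across consecutive blocks $\mu_\omega(Q_{i,j}) = \mu_\omega(Q_i) > \mu_\omega(Q_{i+1}) = \mu_\omega(Q_{i+1,j'})$ by the strict decrease of slopes in the $HN$ filtration. This establishes all of the stated numerical properties and the identification $Gr_\omega^{HNS}(E) = \bigoplus_i \bigoplus_j Q_{i,j} \cong \bigoplus_i Gr_\omega^S(Q_i)$.

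For uniqueness of the associated graded object, note that the $HN$ filtration, and in particular the collection of semistable sheaves $Q_i$ up to isomorphism, is uniquely determined by the isomorphism class of $E$ by Proposition \ref{Prop3}, while for each $i$ the graded object $Gr_\omega^S(Q_i)$ is uniquely determined by the isomorphism class of $Q_i$ by Proposition \ref{Prop5}. Since $Gr_\omega^{HNS}(E) \cong \bigoplus_i Gr_\omega^S(Q_i)$, it too depends only on the isomorphism class of $E$, even though the individual Seshadri filtrations, and hence the double filtration $\{E_{i,j}\}$ itself, are not unique.

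The only genuinely technical point is verifying that pulling a Seshadri filtration back along $\rho_i$ does not introduce torsion and produces exactly the quotients $F_{i,j}/F_{i,j-1}$; this is a routine diagram chase using the surjectivity of $\rho_i$, its kernel $E_{i-1}$, and exactness of the relevant sequences of coherent sheaves. I expect no serious obstacle here: the proposition is essentially a formal consequence of Propositions \ref{Prop3} and \ref{Prop5}, with the work amounting to bookkeeping of nested subsheaves and their slopes.
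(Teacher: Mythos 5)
Your proof is correct and matches the paper's intent exactly: the paper explicitly states that the existence of the $HNS$ filtration "follows from the existence of the $HN$ filtration and the following proposition" (the Seshadri filtration, Proposition \ref{Prop5}), and then omits the details, which you have supplied in the expected way — pulling back each Seshadri filtration of $Q_i$ along $\rho_i:E_i\to Q_i$, verifying $Q_{i,j}\cong F_{i,j}/F_{i,j-1}$ via the correspondence theorem, and deducing uniqueness of the graded object from the uniqueness statements in Propositions \ref{Prop3} and \ref{Prop5}.
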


\bigskip Similarly, we can define an $K$-tuple:%
\begin{equation*}
\mu =\left( \mu (Q_{1,1}),\cdots ,\mu (Q_{i,j}),\cdots ,\mu
(Q_{l,k_{l}})\right)
\end{equation*}%
where each $\mu (Q_{i,j})$ is repeated according to $\limfunc{rk}(Q_{i,j})$.
Note that this vector is exactly the same as the Harder-Narasimhan type of $%
E $ (the slopes of a Seshadri filtration are all equal). Since each of the
quotients $Q_{i,j}$ is torsion-free, $\limfunc{Sing}(Q_{ij})$ lies in
codimension at least $2$. We will write:%
\begin{equation*}
Z_{\limfunc{alg}}={\Huge \cup }_{i,j}\limfunc{Sing}(E_{i,j})\cup \limfunc{%
Sing}(Q_{i,j}).
\end{equation*}%
This is a complex analytic subset of codimension at least two, and
corresponds exactly to the set of points at which the $HNS$ filtration fails
to be given by subbundles. We will refer to it as the algebraic singular set
of the filtration.

\subsection{The Yang-Mills Functional and Uhlenbeck Compactness}

Recall that for $E\rightarrow X$ a complex vector bundle, the set of
holomorphic structures on $E$ may be identified with the set of operators $%
\bar{\partial}_{E}$ satisfying the Leibniz rule and the integrability
condition $\bar{\partial}_{E}\circ \bar{\partial}_{E}=0$. When we wish to
make the holomorphic structure explicit we will sometimes write $(E,\bar{%
\partial}_{E})$.

In general we will represent a connection either abstractly by its covariant
derivative $\nabla _{A}$ or in local coordinates by its connection $1$-form $%
A$. We will be careless about this distinction and use whichever notation is
more convenient. We will write $\bar{\partial}_{A}$ and $\partial _{A}$ for
the $(0,1)$ and $(1,0)$ parts of $\nabla _{A}$ respectively. If $(E,\bar{%
\partial}_{E})$ is equipped with a smooth hermitian metric $h$, then there
is a unique $h$-unitary connection $\nabla _{A}$ on $E$ called the \textbf{%
Chern connection }that satisfies $\bar{\partial}_{A}=$ $\bar{\partial}_{E}$.
More specifically the local form of this connection in terms of $h$ is: $A=%
\bar{h}^{-1}\partial \bar{h}$, with curvature $F_{A}=\bar{\partial}\left( 
\bar{h}^{-1}\partial \bar{h}\right) $. Sometimes we will denote this
connection by $\nabla _{A}=(\bar{\partial}_{E},h)$. Conversely, if we have
in hand a unitary connection $\nabla _{A}$ whose curvature $F_{A}=\nabla
_{A}\circ \nabla _{A}$ is of type $(1,1)$ (i.e. $F_{A}^{0,2}=0$), then $\bar{%
\partial}_{A}$ defines a holomorphic structure on $E$ by the
Newlander-Nirenberg theorem, and $\nabla _{A}=(\bar{\partial}_{A},h)$.

Let $\mathcal{A}_{h}$ denote the space of $h$-unitary connections $\nabla
_{A}$ on $E$, and write $\mathcal{A}_{h}^{1,1}$ for the subset consisting of
those with $F_{A}^{0,2}=0$. The above discussion translates to the statement
that there is a bijection between $\mathcal{A}_{h}^{1,1}$ and the space $%
\mathcal{A}_{\limfunc{hol}}$ of integrable $\bar{\partial}_{E}$ operators.
We will write $\mathcal{G}$ for the set of unitary gauge transformations of $%
E$. The set $\mathcal{G}$ is a bundle of groups whose fibres are copies of $%
U(n)$, and $\mathcal{G}$ acts on $\mathcal{A}_{h}$ by the usual conjugation $%
g\cdot \nabla _{A}=g^{-1}\circ \nabla _{A}\circ g$. Moreover this induces an
action on $F_{A}$, which is also by conjugation, so the subspace $\mathcal{A}%
_{h}^{1,1}$ is preserved. We will write:%
\begin{equation*}
\mathcal{B}_{h}=\mathcal{A}_{h}/\mathcal{G}\ ,\ \mathcal{B}_{h}^{1,1}=%
\mathcal{A}_{h}^{1,1}/\mathcal{G}
\end{equation*}%
for the quotients.

Finally there is also an action of the full complex gauge group $\mathcal{G}%
^{%
\mathbb{C}
}$ (the set of all complex gauge transformations of $E$) on $\mathcal{A}_{%
\limfunc{hol}}$ again by conjugation, i.e. $g\cdot \bar{\partial}%
_{E}=g^{-1}\circ \bar{\partial}_{E}\circ g$. The set of isomorphism classes
of holomorphic structures on $E$ is precisely the quotient $\mathcal{A}_{%
\limfunc{hol}}/\mathcal{G}^{%
\mathbb{C}
}$, and via the bijection $\mathcal{A}_{\limfunc{hol}}\simeq \mathcal{A}%
_{h}^{1,1}$ we see that $\mathcal{G}^{%
\mathbb{C}
}$ also acts on $\mathcal{A}_{h}^{1,1}$, extending the action of $\mathcal{G}
$. Moreover, $\mathcal{G}^{%
\mathbb{C}
}$ also acts on the space of hermitian metrics via $h\mapsto g\cdot h$ where 
$g\cdot h(s_{1},s_{2})=h(g(s_{1}),g(s_{2}))$. In matrix form this reads $%
g\cdot h=\bar{g}^{T}hg$.

Now, starting from a holomorphic bundle $E$ with hermitian metric $h$ and
Chern connection $(\bar{\partial}_{E},h)$, we may use a complex gauge
transformation to perturb this connection in two different ways. We may
either let $g$ act on $\bar{\partial}_{E}$ or on $h$. If we write $g^{\ast }$
for the adjoint of $g$ with respect to $h$, then $g\cdot
h(s_{1},s_{2})=h(g^{\ast }g(s_{1}),s_{2})$. If we set $k=g^{\ast }g$, then
the connection corresponding to $h$ and $g\cdot h$ are related by:%
\begin{equation*}
\bar{\partial}_{h}=\bar{\partial}_{g\cdot h}\text{ }and\text{ }\partial
_{g\cdot h}=k\circ \partial _{h}\circ k^{-1}.
\end{equation*}%
Now note that the action of a complex gauge transformation $g$ on a
connection $\nabla _{A}$ is 
\begin{equation*}
g\cdot \nabla _{A}=g^{\ast }\circ \partial _{A}\circ (g^{\ast
})^{-1}+g^{-1}\circ \bar{\partial}_{A}\circ g,
\end{equation*}%
so $g\circ \nabla _{g\cdot A}\circ g^{-1}=k\circ \partial _{A}\circ k^{-1}+%
\bar{\partial}_{A}=(\bar{\partial}_{E},g\cdot h)$ or%
\begin{equation*}
\nabla _{g\cdot A}=(g\cdot \bar{\partial}_{E},h)=g^{-1}\circ (\bar{\partial}%
_{E},g\cdot h)\circ g.
\end{equation*}%
Taking the square of this formula also gives the relation between the
respective curvatures:%
\begin{equation*}
F_{(g\cdot \bar{\partial}_{E},h)}=g^{-1}\circ F_{(\bar{\partial}_{E},g\cdot
h)}\circ g.
\end{equation*}

If we denote by $\mathfrak{u}((E,h))\subset End(E)$ the subbundle of
skew-hermitian endomorphisms, then for a section $\sigma $ of $\mathfrak{u}%
(E)$, we will write $\left\vert \sigma \right\vert $ for its pointwise norm.
This is defined as usual by%
\begin{equation*}
\left\vert \sigma \right\vert =\left( \sum_{i=1}^{K}\left\vert \lambda
_{i}\right\vert ^{2}\right) ^{\frac{1}{2}}
\end{equation*}%
where the $\lambda _{i}$ are the eigenvalues of $\sigma $ at a given point
and $K$ is the rank of $E$. Now we may define the \textbf{Yang-Mills
functional }($YM$ functional) by:%
\begin{equation*}
YM(\nabla _{A})=\int_{X}\left\vert F_{A}\right\vert ^{2}dvol.
\end{equation*}%
If we assume that $X$ is K\"{a}hler, we have:%
\begin{equation*}
YM(\nabla _{A})=\int_{X}\left\vert F_{A}\right\vert ^{2}\frac{\omega ^{n}}{n!%
}.
\end{equation*}%
This functional is gauge invariant and so defines a map $YM:\mathcal{B}_{h}%
\mathcal{\rightarrow 
\mathbb{R}
}$. Its critical points are the so called \textbf{Yang-Mills connections }%
and satisfy the Euler-Lagrange equations for $YM$: $d_{A}^{\ast }F_{A}=0$,
where $d_{A}$ is the covariant derivative induced on $End(E)$ valued $2$%
-forms by $\nabla _{A}$. If $\nabla _{A}\in $ $\mathcal{A}_{h}^{1,1}$ then
we may also define the \textbf{Hermitian-Yang-Mills functional}:%
\begin{equation*}
HYM(\nabla _{A})=\int_{X}\left\vert \Lambda _{\omega }F_{A}\right\vert ^{2}%
\frac{\omega ^{n}}{n!},
\end{equation*}%
where $\Lambda _{\omega }$ is, as usual the adjoint of the Lefschetz
operator, (which is given by wedging with the K\"{a}hler form). For a $(1,1)$
form $G=\sum G_{i,j}dz_{i}\wedge d\bar{z}_{j}$ this can be written
explicitly in coordinates as%
\begin{equation*}
\Lambda _{\omega }G=-2\sqrt{-1}\left( g^{ij}G_{ij}\right)
\end{equation*}%
where $g^{ij}$ denotes the inverse of the metric. The quantity $\Lambda
_{\omega }F_{A}$ is called the \textbf{Hermitian-Einstein tensor }of $A$.
Again $HYM$ is gauge invariant and so defines a functional $HYM:\mathcal{B}%
_{h}^{1,1}\rightarrow 
\mathbb{R}
$. Critical points of the functional satisfy the Euler-Lagrange equations: $%
d_{A}\Lambda _{\omega }F_{A}=0$. On the other hand, just as in the preceding
discussion, we may regard the holomorphic stucture as being fixed and
consider the space of $(1,1)$ connections as being the set of pairs $(\bar{%
\partial}_{E},h)$ where $h$ varies over all hermitian metrics. We may
therefore think of $HYM$ as a functional $HYM(h)=HYM(\bar{\partial}_{E},h)$
on the space of hermitian metrics on $E$. A critical metric of $HYM$ is
referred to a \textbf{critical hermitian structure} on $(E,\bar{\partial})$.

An important fact that we will use is that when $X$ is compact, there is a
relation between the two functionals $YM$ and $HYM$. Explicitly:%
\begin{equation*}
YM(\nabla _{A})=HYM(\nabla _{A})+\frac{4\pi ^{2}}{\left( n-2\right) !}%
\int_{X}\left( 2c_{2}(E)-c_{1}^{2}(E)\right) \wedge \omega ^{n-2}
\end{equation*}%
for any $A\in \mathcal{A}_{h}^{1,1}$. The second term depends only on the
topology of $E$ and the form $\omega $, so $YM$ and $HYM$ have the same
critical points on $\mathcal{A}_{h}^{1,1}$. Furthermore, $\nabla _{A}$ is a
critical point of $YM$ and $HYM$, if and only if $h$ is a critical hermitian
structure for the holomorphic stucture on $E$ given by $A$.

For a Yang-Mills connection we have the following proposition.

\begin{proposition}
\label{Prop7}Let $\nabla _{A}\in \mathcal{A}_{h}^{1,1}$ be a Yang-Mills
connection on an hermitian vector bundle $(E,h)$ over a K\"{a}hler manifold $%
X$. Then $\nabla _{A}=\oplus _{i=1}^{l}\nabla _{A_{i}}$ where $E=\oplus
_{i=1}^{l}Q_{i}$ is an orthogonal splitting of $E$, and where $\sqrt{-1}%
\Lambda _{\omega }F_{A_{i}}=\lambda _{i}\mathbf{Id}_{Q_{i}}$, where $\lambda
_{i}$ are constant. If $X$ is compact, then $\lambda _{i}=\mu (Q_{i})$.
\end{proposition}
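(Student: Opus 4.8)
The plan is to exploit the fact that a Yang-Mills connection $\nabla_A \in \mathcal{A}_h^{1,1}$ has covariantly constant Hermitian-Einstein tensor, which forces a holomorphic orthogonal splitting of $E$ into eigenbundles. First I would show that the Euler-Lagrange equation $d_A \Lambda_\omega F_A = 0$, combined with the Bianchi identity and the K\"ahler identities (Weitzenb\"ock), implies $\nabla_A(\Lambda_\omega F_A) = 0$; the standard way is to note that on a compact K\"ahler manifold $d_A^\ast F_A = 0$ is equivalent (via the K\"ahler identities, since $F_A$ is of type $(1,1)$) to $\bar\partial_A \Lambda_\omega F_A = 0$, and since $\Lambda_\omega F_A$ is skew-hermitian, taking adjoints gives $\partial_A \Lambda_\omega F_A = 0$ as well, hence $\nabla_A \Lambda_\omega F_A = 0$. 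Thus $\sqrt{-1}\Lambda_\omega F_A$ is a parallel hermitian endomorphism of $E$.

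Next I would diagonalize: since $\sqrt{-1}\Lambda_\omega F_A$ is parallel and self-adjoint, its eigenvalues $\lambda_1 > \cdots > \lambda_l$ are (locally, hence globally by connectedness of $X$) constant, and the eigenbundles $Q_i = \ker(\sqrt{-1}\Lambda_\omega F_A - \lambda_i \,\mathbf{Id})$ are parallel, mutually orthogonal subbundles giving an orthogonal direct sum decomposition $E = \bigoplus_i Q_i$. Because they are $\nabla_A$-parallel, each $Q_i$ is in particular $\bar\partial_A$-invariant, so it is a holomorphic subbundle, and the connection splits as $\nabla_A = \bigoplus_i \nabla_{A_i}$ with $\nabla_{A_i}$ the restricted (Chern) connection on $Q_i$, which by construction satisfies $\sqrt{-1}\Lambda_\omega F_{A_i} = \lambda_i \mathbf{Id}_{Q_i}$.

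Finally, for $X$ compact, I would identify $\lambda_i$ with $\mu(Q_i)$ by integrating the Chern-Weil formula: $\deg_\omega(Q_i) = \frac{1}{(n-1)!}\int_X c_1(Q_i)\wedge\omega^{n-1}$, and $c_1(Q_i)$ is represented by $\frac{\sqrt{-1}}{2\pi}\operatorname{tr} F_{A_i}$; pairing with $\omega^{n-1}$ and using $\operatorname{tr}(\sqrt{-1}\Lambda_\omega F_{A_i}) = \lambda_i \operatorname{rk}(Q_i)$ together with the normalization $\operatorname{Vol}(X,\omega) = \frac{2\pi}{(n-1)!}$ yields $\deg_\omega(Q_i) = \lambda_i \operatorname{rk}(Q_i)$, i.e. $\lambda_i = \mu(Q_i)$. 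The only mildly delicate point — the "main obstacle" such as it is — is the clean passage from $d_A^\ast F_A = 0$ to $\nabla_A \Lambda_\omega F_A = 0$; once the K\"ahler identities are invoked correctly this is routine, and the rest is linear algebra of parallel sections plus Chern-Weil bookkeeping.
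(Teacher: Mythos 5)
Your proof is correct and follows essentially the same route as the paper, which simply remarks that the Hermitian-Einstein tensor of a Yang-Mills connection is covariantly constant and therefore has constant eigenvalues and parallel eigenbundles, yielding the orthogonal holomorphic splitting. You have merely filled in the routine details (K\"ahler identities for the parallelism, and Chern-Weil for the identification $\lambda_i = \mu(Q_i)$ under the paper's volume normalization) that the paper leaves implicit.
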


The proof is simply the observation (stated above) that the
Hermitian-Einstein tensor of a Yang-Mills connection is covariantly
constant, and so has constant eigenvalues and eigenspaces of constant rank.
Therefore $E$ breaks up into a direct sum of the eigenspaces of this
operator.

\begin{definition}
\label{Def4}Let $E\rightarrow (X,\omega )$ be a holomorphic bundle.\ Then a
connection $\nabla _{A}$ such that there exists a constant $\lambda $ with:%
\begin{equation*}
\sqrt{-1}\Lambda _{\omega }F_{A}=\lambda \mathbf{Id}_{E}
\end{equation*}%
is called an \textbf{Hermitian-Einstein connection}. If $A$ is the Chern
connection of $(\bar{\partial}_{E},h)$ for some hermitian metric $h$, then $%
h $ is called an \textbf{Hermitian-Einstein metric}.
\end{definition}

The existence of such a metric is related to stability properties of $E$.
This is the \textbf{Kobayashi-Hitchin correspondence} (or
Donaldson-Uhlenbeck-Yau theorem).

\begin{theorem}
\label{Thm2}A holomorphic vector bundle $E$ on a compact K\"{a}hler manifold 
$(X,\omega )$, admits an Hermitian-Einstein metric if and only if $E$ is
polystable, i.e. $E$ splits holomorphically into a direct sum of $\omega $%
-stable bundles of the same $\omega $-slope $\mu _{\omega }(E)$. Such a
metric is unique up to a positive constant.
\end{theorem}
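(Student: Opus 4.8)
The plan is to prove the Kobayashi-Hitchin correspondence (Theorem \ref{Thm2}) by establishing the two implications separately, with the bulk of the work going into the existence direction. The easy direction is that an Hermitian-Einstein metric forces polystability: if $\sqrt{-1}\Lambda_\omega F_A = \lambda \mathbf{Id}_E$ with $A$ the Chern connection of $(\bar\partial_E,h)$, then a Chern-Weil / Gauss-Codazzi computation shows that for any coherent subsheaf $S \subset E$ the second fundamental form contributes a non-positive term to $\deg_\omega(S)$, yielding $\mu_\omega(S) \le \mu_\omega(E)$ with equality only when $S$ is a holomorphic subbundle off $\mathrm{Sing}(S)$ that is $A$-parallel; splitting off such a subbundle orthogonally and inducting on the rank produces the holomorphic splitting into $\omega$-stable pieces of equal slope. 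Uniqueness up to scale follows by a maximum-principle argument applied to $k = h_1^{-1}h_2$: the endomorphism $k$ relating two Hermitian-Einstein metrics satisfies an elliptic inequality forcing it to be parallel, hence (by irreducibility of each stable factor and Schur's lemma) a constant scalar on each factor, and matching of slopes forces the scalar to be the same throughout.

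The substantive direction is that $\omega$-stability (more precisely, polystability) implies the existence of an Hermitian-Einstein metric. Here I would follow the continuity-method / evolution-equation approach. Fix a background metric $h_0$ with Chern connection $A_0$, and consider the Hermitian-Yang-Mills flow $h^{-1}\partial_t h = -(\sqrt{-1}\Lambda_\omega F_h - \lambda \mathbf{Id}_E)$, equivalently the Donaldson heat flow for the endomorphism $k = h_0^{-1}h$, which is a nonlinear parabolic equation. One first establishes short-time existence (standard parabolic theory) and then long-time existence: the key estimate is a $C^0$ bound on $k$, obtained from Donaldson's functional $M(h_0,h)$ together with a Moser-iteration argument using the fact that $\Delta \log \mathrm{tr}(k)$ is controlled below by $-|\Lambda_\omega F_{h_0}| - $ const, and once $C^0$ is in hand, parabolic Schauder estimates bootstrap to all higher derivatives, giving a smooth solution for all time. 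The final and hardest step is convergence as $t \to \infty$: assuming stability, one argues that the solution converges (after possibly a subsequence) to an Hermitian-Einstein metric. The standard route is by contradiction — if the flow does not converge, a suitable normalization of the $C^0$-divergent part of $k$ produces, in the limit, a weakly holomorphic $L^2_1$ projection onto a coherent subsheaf $S \subsetneq E$ (invoking the Uhlenbeck-Yau regularity theorem that an $L^2_1$ weakly holomorphic projection is a genuine coherent subsheaf), and the Chern-Weil inequality applied along the flow forces $\mu_\omega(S) \ge \mu_\omega(E)$, contradicting stability. For the polystable but not stable case one splits using uniqueness of the destabilizing subsheaf of equal slope and induction on the rank.

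The main obstacle is precisely the $t \to \infty$ convergence together with the regularity input it requires: controlling the possibly unbounded part of $k$ along the flow, extracting a weak subsequential limit of the associated projections in $L^2_1$, and then invoking (or re-proving) the Uhlenbeck-Yau theorem that such a weakly holomorphic projection defines a coherent analytic subsheaf whose degree can be computed by the Chern-Weil formula. Everything else — short-time existence, the parabolic maximum principle for $C^0$ bounds, higher-order bootstrapping, and the (sub)bundle splitting in the polystable case — is comparatively routine given the tools already recorded in the excerpt (the slope/degree formalism, saturations, weakly holomorphic projections, and the Chern-Weil formula). Since this theorem is quoted from the literature (Donaldson, Uhlenbeck-Yau) rather than proved afresh, in practice the "proof" in the paper is a citation; but the proof sketch above is the structure one would reconstruct.
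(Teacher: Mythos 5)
The paper does not prove Theorem \ref{Thm2}; it cites it (to Donaldson \cite{DO1}, \cite{DO2} for the projective case and Uhlenbeck--Yau \cite{UY} for the general K\"ahler case), which you correctly note. Your sketch is a faithful reconstruction of the standard heat-flow/continuity-method proof from those sources: the easy direction and uniqueness via Chern--Weil and the maximum principle, and the hard direction via long-time existence of the Donaldson heat flow, $C^0$ control through Donaldson's functional, and the Uhlenbeck--Yau weakly holomorphic projection to produce a destabilizing subsheaf when convergence fails.
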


For the proof in the case of projective surfaces and projective complex
manifolds see \cite{DO1} and \cite{DO2} respectively. For the proof in the
general case see \cite{UY}. From the $HYM$ equations it is clear that an
Hermitian-Einstein connection is Hermitian-Yang-Mills (and so Yang-Mills).

\begin{remark}
\label{Rmk3}Note that if $E$ is holomorphic and $\nabla _{A}=(\bar{\partial}%
_{E},h)$ for some hermitian metric $h$, then the same argument shows that $%
(E,h)=\oplus _{i=1}^{l}(Q_{i},h_{i})$ where the $h_{i}$ are
Hermitian-Einstein metrics and the splitting is orthogonal with respect to $%
h $. Since the splitting is preserved by the Chern connection $\nabla _{A}$,
it is also holomorphic with respect to the holomorphic structure on $E$
given by $\bar{\partial}_{E}$.
\end{remark}

We now give the statement of the general Uhlenbeck compactness theorem.
Although we will be primarily concerned with the theorem as it applies to
the Yang-Mills flow of the next section, the proof of the main theorem in
Section $7$ will also rely on this more general statement.

\begin{theorem}
\label{Thm3}Let $X$ be a K\"{a}hler manifold (not necessarily compact) and $%
E\rightarrow X$ a hermitian vector bundle with metric $h$. Fix any $p>n$.
Let $\nabla _{A_{j}}$ be a sequence of integrable, unitary connections on $E$
such that $\left\Vert F_{A_{j}}\right\Vert _{L^{2}(X)}$ and $\left\Vert
\Lambda _{\omega }F_{A_{j}}\right\Vert _{L^{\infty }(X)}$ are uniformly
bounded. Then there is a subsequence (still denoted $A_{j}$), a closed
subset $Z_{\limfunc{an}}\subset X$ with Hausdorff codimension at least $4$,
and a smooth hermitian vector bundle $(E_{\infty },h_{\infty })$ defined on
the complement $X-Z_{\limfunc{an}}$ with a finite action Yang-Mills
connection $\nabla _{A_{\infty }}$ on $E_{\infty }$, such that $\nabla
_{A_{j}\mid X-Z_{\limfunc{an}}}$ is gauge equivalent to a sequence of
connections that converges to $\nabla _{A_{\infty }}$ weakly in $%
L_{1,loc}^{p}(X-Z_{\limfunc{an}})$.
\end{theorem}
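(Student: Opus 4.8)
The plan is to reduce this to Uhlenbeck's compactness theorem for Yang--Mills connections (as in \cite{UY}, Theorem 5.2) by exploiting the fact that the relevant curvature norms are uniformly controlled. First I would recall that on a K\"ahler manifold the Yang--Mills energy $YM(\nabla_{A_j}) = \|F_{A_j}\|_{L^2}^2$ is uniformly bounded by hypothesis, so we may extract a subsequence along which the energy densities $|F_{A_j}|^2\,\omega^n/n!$ converge weakly as measures to $|F_{A_\infty}|^2\,\omega^n/n! + \nu$, where $\nu$ is a nonnegative measure supported on a set $Z_{\limfunc{an}}$. The set $Z_{\limfunc{an}}$ is defined as the concentration set: the points $x$ such that $\limsup_j \int_{B_r(x)} |F_{A_j}|^2 \geq \varepsilon_0$ for every $r>0$, where $\varepsilon_0$ is the $\varepsilon$-regularity constant for Yang--Mills (or, in our setting, for connections with $\Lambda_\omega F_{A_j}$ bounded in $L^\infty$, which still satisfy the relevant elliptic estimates). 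A standard covering argument bounds the number of such bad points in terms of the total energy, and a dimension-reduction / monotonicity argument (available here because bounded $\Lambda_\omega F$ gives an almost-monotonicity formula for the scaled energy) shows $Z_{\limfunc{an}}$ has Hausdorff codimension at least $4$.

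Next I would carry out the interior estimates away from $Z_{\limfunc{an}}$. On any ball $B_r(x)$ with $x \notin Z_{\limfunc{an}}$ and $r$ small, the energy is below $\varepsilon_0$ for $j$ large, so by Uhlenbeck's gauge-fixing theorem we may put $\nabla_{A_j}$ into Coulomb gauge on $B_{r/2}(x)$ with $\|A_j\|_{L^p_1}$ controlled by $\|F_{A_j}\|_{L^2}$ (using $p>n$, hence $L^p_1 \hookrightarrow C^0$). The integrability condition $F_{A_j}^{0,2}=0$ together with the $L^\infty$ bound on $\Lambda_\omega F_{A_j}$ gives, via the K\"ahler identities, control on the full curvature in $L^p$ on these balls, which bootstraps (elliptic regularity for the Coulomb-gauge equation $d^*A_j = 0$ combined with $F_{A_j} = dA_j + A_j\wedge A_j$) to uniform $L^p_1$ bounds. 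Passing to a diagonal subsequence over a countable exhaustion of $X - Z_{\limfunc{an}}$ by such balls, and patching the local Coulomb gauges by transition gauge transformations (which converge in $L^p_2$, hence $C^1$), produces the limiting bundle $(E_\infty, h_\infty)$ on $X - Z_{\limfunc{an}}$ and a connection $\nabla_{A_\infty}$ that is the weak $L^p_{1,loc}$ limit of the gauge-transformed $\nabla_{A_j}$. That $\nabla_{A_\infty}$ is Yang--Mills follows from lower semicontinuity of the energy and the fact that each $\nabla_{A_j}$ is integrable with $\Lambda_\omega F_{A_j}$ bounded — in fact one passes to the limit in the (distributional) Euler--Lagrange equation $d_{A_j}^* F_{A_j} = 0$; its finite action is immediate from $YM(\nabla_{A_\infty}) \leq \liminf_j YM(\nabla_{A_j}) < \infty$.

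The main obstacle I anticipate is not the local analysis but the \emph{globalization}: the local Coulomb gauges on overlapping balls differ by gauge transformations $g_{jk}$, and one must show these can be chosen to converge so that the patched-together limit bundle $E_\infty$ is well-defined and smooth on all of $X - Z_{\limfunc{an}}$ — in particular that no further concentration is hiding in the transition functions. The standard resolution is to fix a good locally finite cover, establish the estimates on slightly larger balls so that Rellich compactness applies to the transition gauges, and use the cocycle condition to propagate convergence; the resulting bundle is a priori only defined over $X - Z_{\limfunc{an}}$ because the gauges degenerate as one approaches the concentration set. A secondary technical point is verifying the almost-monotonicity formula and $\varepsilon$-regularity in the present generality (integrable connections with $L^\infty$ bound on $\Lambda_\omega F$, rather than exact Yang--Mills solutions); here I would invoke the Chern--Weil identity relating $YM$ and $HYM$ on $\mathcal{A}_h^{1,1}$ stated above, which shows the $L^2$ norm of the \emph{full} curvature is controlled by that of $\Lambda_\omega F$ plus a topological constant, so the hypotheses already force a bound equivalent to bounded Yang--Mills energy, and the cited results of \cite{UY} apply essentially verbatim.
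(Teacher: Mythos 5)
The paper does not actually prove Theorem \ref{Thm3}: it simply attributes the statement to Uhlenbeck--Yau (Theorem 5.2 of \cite{UY}), with the local analysis from \cite{U2} and the Hausdorff-codimension statement from Nakajima \cite{NA}. So your proposal is attempting something the paper outsources. Your overall scheme (concentration set, $\varepsilon$-regularity, local Coulomb gauges, diagonal extraction, and patching of transition functions) is indeed the standard Uhlenbeck compactness machinery being cited, so in that sense you are reconstructing the right argument rather than finding a new one.

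There are two genuine gaps. First, your definition of $Z_{\limfunc{an}}$ and the associated $\varepsilon$-regularity use raw $L^{2}$ energy: ``$\limsup_j\int_{B_r(x)}|F_{A_j}|^2\geq\varepsilon_0$ for every $r$.'' On a K\"ahler manifold of complex dimension $n>2$ (real dimension $2n$) this is the wrong normalisation; the monotone quantity is $r^{4-2n}\int_{B_r}|F|^2$, and $\varepsilon$-regularity as well as the dimension-reduction argument of \cite{NA} are phrased in terms of this scale-invariant density, not the unnormalised energy. Likewise, Uhlenbeck's Coulomb gauge theorem on a ball in real dimension $2n$ requires smallness of $F$ in $L^{n}$, not $L^2$; deducing that from the hypotheses is precisely where the monotonicity formula and the $L^\infty$ bound on $\Lambda_\omega F_{A_j}$ have to be used, and your sketch invokes the conclusion of the bootstrap before establishing it. Second, and more seriously, you justify that $\nabla_{A_\infty}$ is Yang--Mills by ``passing to the limit in the (distributional) Euler--Lagrange equation $d_{A_j}^*F_{A_j}=0$.'' But the $A_j$ in this theorem are \emph{not} assumed to be Yang--Mills; the hypotheses are only that $F_{A_j}^{0,2}=0$, $\|F_{A_j}\|_{L^2}$ is bounded, and $\|\Lambda_\omega F_{A_j}\|_{L^\infty}$ is bounded. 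So there is no equation $d_{A_j}^*F_{A_j}=0$ to pass to the limit in. In fact, the paper's own Corollary \ref{Cor2} shows that an \emph{additional} hypothesis, $\|d_{A_j}\Lambda_\omega F_{A_j}\|_{L^2}\to 0$, is what is used to conclude the limit is Yang--Mills; that step cannot be obtained by the route you describe.
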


The statement of this version of Uhlenbeck compactness may be found for
example in Uhlenbeck-Yau (\cite{UY} Theorem $5.2$). The proof is essentially
contained in \cite{U2} and the statement about the singular set follows from
the arguments in \cite{NA}. We will call such a limit $\nabla _{A_{\infty }}$
an \textbf{Uhlenbeck limit}. Furthermore, we have the following crucial
extension of this theorem due essentially to Bando and Siu.

\begin{corollary}
\label{Cor2}If in addition to the assumptions in the previous theorem, we
also require that:%
\begin{equation*}
\left\Vert d_{A_{j}}\Lambda _{\omega }F_{Aj}\right\Vert
_{L^{2}(X)}\longrightarrow 0,
\end{equation*}%
then any Uhlenbeck limit $\nabla _{A_{\infty }}$ is Yang-Mills. On $X-Z_{%
\limfunc{an}}$ we therefore have a holomorphic, orthogonal, splitting:%
\begin{equation*}
(E_{\infty },h_{\infty },\nabla _{A_{\infty }})={\Huge \oplus }%
_{i=1}^{l}(Q_{\infty ,i},h_{\infty ,i},\nabla _{A_{\infty ,i}})
\end{equation*}%
Moreover $E_{\infty }$ extends to a reflexive sheaf (still denoted $%
E_{\infty }$) on all of $X$.
\end{corollary}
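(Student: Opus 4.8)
The plan is to deduce Corollary~\ref{Cor2} from Theorem~\ref{Thm3} by checking that the extra hypothesis $\left\Vert d_{A_{j}}\Lambda_{\omega}F_{A_{j}}\right\Vert_{L^{2}(X)}\to 0$ forces the Uhlenbeck limit to satisfy the Yang-Mills equation, and then invoking Proposition~\ref{Prop7} together with the Bando-Siu extension theorem. First I would recall that on $X-Z_{\limfunc{an}}$ the convergence $\nabla_{A_{j}}\to\nabla_{A_{\infty}}$ is (after gauge) weakly in $L^{p}_{1,loc}$, hence (Rellich) strongly in $L^{p}_{loc}$ and in particular $F_{A_{j}}\to F_{A_{\infty}}$ weakly in $L^{p}_{loc}$ and $\Lambda_{\omega}F_{A_{j}}\to\Lambda_{\omega}F_{A_{\infty}}$ in the same sense. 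Testing the identity $\int\langle d_{A_{j}}\Lambda_{\omega}F_{A_{j}},\phi\rangle = \int\langle\Lambda_{\omega}F_{A_{j}},d^{\ast}_{A_{j}}\phi\rangle$ against a compactly supported smooth $\phi$ on $X-Z_{\limfunc{an}}$, the left side is $O(\|d_{A_{j}}\Lambda_{\omega}F_{A_{j}}\|_{L^{2}})\to 0$ while the right side converges to $\int\langle\Lambda_{\omega}F_{A_{\infty}},d^{\ast}_{A_{\infty}}\phi\rangle$ (using that $A_{j}\to A_{\infty}$ strongly enough in $L^{p}_{loc}$ that the connection-dependent terms in $d^{\ast}_{A_{j}}$ converge). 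Hence $d_{A_{\infty}}\Lambda_{\omega}F_{A_{\infty}}=0$ weakly, and by elliptic regularity $\nabla_{A_{\infty}}$ is a smooth Hermitian-Yang-Mills, hence Yang-Mills, connection on $E_{\infty}\to X-Z_{\limfunc{an}}$; finiteness of its Yang-Mills action is already part of Theorem~\ref{Thm3}.

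Next, since $\nabla_{A_{\infty}}$ is an integrable Yang-Mills connection on the hermitian bundle $(E_{\infty},h_{\infty})$ over the (possibly noncompact) K\"ahler manifold $X-Z_{\limfunc{an}}$, Proposition~\ref{Prop7} applies verbatim: the Hermitian-Einstein tensor $\sqrt{-1}\Lambda_{\omega}F_{A_{\infty}}$ is $\nabla_{A_{\infty}}$-parallel, so its eigenvalues are locally constant and its eigenspaces are parallel subbundles of constant rank, giving the orthogonal decomposition $(E_{\infty},h_{\infty},\nabla_{A_{\infty}})=\bigoplus_{i=1}^{l}(Q_{\infty,i},h_{\infty,i},\nabla_{A_{\infty,i}})$ with each factor Hermitian-Einstein. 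Because each summand is a parallel subbundle it is preserved by $\bar{\partial}_{A_{\infty}}$, so the splitting is holomorphic, as in Remark~\ref{Rmk3}. (If $X-Z_{\limfunc{an}}$ is disconnected one argues componentwise; connectedness of the complement of a set of real codimension $\geq 4$ is automatic, so this is a non-issue.)

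Finally, for the reflexive extension across $Z_{\limfunc{an}}$ I would cite the Bando-Siu theorem \cite{BS}: a holomorphic vector bundle with a Hermitian-Einstein (more generally, admissible: $L^{2}$-bounded curvature and $L^{\infty}$-bounded mean curvature) metric on the complement of a closed analytic — or here merely closed of Hausdorff codimension $\geq 4$ — subset extends to a reflexive coherent sheaf on all of $X$, with the metric extending as an admissible metric. The curvature bounds required are exactly $\|F_{A_{\infty}}\|_{L^{2}}<\infty$ (the finite action from Theorem~\ref{Thm3}) and $\|\Lambda_{\omega}F_{A_{\infty}}\|_{L^{\infty}}<\infty$ (immediate since the Hermitian-Einstein tensor is parallel with constant eigenvalues $\lambda_{i}$); one also uses that $Z_{\limfunc{an}}$, having Hausdorff codimension $\geq 4$, carries no obstruction to extending coherent analytic data across it. Applying this factor by factor and taking the direct sum of the reflexive extensions produces the reflexive sheaf on $X$ still denoted $E_{\infty}$.

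I expect the only genuine subtlety to be the second step of the first paragraph — justifying that the connection-dependent part of the formal adjoint $d^{\ast}_{A_{j}}$ passes to the limit when paired against the curvatures, which needs the strong $L^{p}_{loc}$ convergence of $A_{j}-A_{\infty}$ (and hence the $L^{p}_{loc}$ convergence of the zeroth-order terms $[A_{j},\,\cdot\,]$) rather than merely weak convergence; this is where one spends a little care, but it is standard given Rellich compactness on relatively compact subsets of $X-Z_{\limfunc{an}}$ and the $p>n$ Sobolev embedding. Everything else is a direct citation of Proposition~\ref{Prop7} and \cite{BS}.
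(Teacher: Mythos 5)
Your proposal is correct and follows essentially the same route as the paper, which disposes of this corollary by citation: the paper attributes the Yang-Mills property of the Uhlenbeck limit to \cite{DW1}, the orthogonal holomorphic splitting to Proposition~\ref{Prop7} (and Remark~\ref{Rmk3}), and the reflexive extension across $Z_{\limfunc{an}}$ to Corollary~2 of \cite{BS}; you have simply unpacked the first of those citations into its weak-formulation argument. That argument is sound: you correctly identify that passing $\int\langle\Lambda_\omega F_{A_j}, d^\ast_{A_j}\phi\rangle$ to the limit requires pairing the weakly convergent Hermitian--Einstein tensors (Corollary~\ref{Cor4} already gives strong $L^p_{loc}$ convergence, which is more than enough) against the strongly convergent connection-dependent lower-order terms of $d^\ast_{A_j}$, and you correctly note that $d_{A_\infty}\Lambda_\omega F_{A_\infty}=0$ together with integrability and the K\"ahler identities gives $d^\ast_{A_\infty}F_{A_\infty}=0$. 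One small caveat worth flagging: the ``elliptic regularity'' step implicitly uses that the Yang-Mills equations become elliptic only after fixing a Coulomb-type gauge; this is standard and part of the Uhlenbeck package, but strictly speaking it is not pure elliptic regularity of the equation $d_{A}\Lambda_\omega F_A=0$ alone. This does not affect the correctness of your argument, and everything else (Proposition~\ref{Prop7} for the parallel-eigenbundle splitting, Bando--Siu admissible extension using the $L^2$ curvature bound from Theorem~\ref{Thm3} and the $L^\infty$ bound on the now-parallel mean curvature) is exactly what the paper intends.
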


Most of the content of this corollary resides in the last statement, which
may be found in \cite{BS} as Corollary $2$. The proof presented there is
based on results in the papers \cite{B} and \cite{SIU}. The statement about
the splitting follows directly from the fact that an Uhlenbeck limit is
Yang-Mills and $\limfunc{Proposition}$ \ref{Prop7}. Therefore it only
remains to prove that the stated condition implies the limit is Yang-Mills.
For a proof of this see for example \cite{DW1}.

We will need the following simple corollary of Uhlenbeck compactness, which
we will use repeatedly.

\begin{corollary}
\label{Cor4}With the same assumptions as in Theorem \ref{Thm3}, $\Lambda
_{\omega }F_{A_{j}}\rightarrow \Lambda _{\omega }F_{A_{\infty }}$ in $%
L^{p}(X-Z_{\limfunc{an}})$ for all $1\leq p<\infty $.
\end{corollary}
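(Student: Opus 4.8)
The plan is to upgrade the weak $L^p_{1,\mathrm{loc}}$ convergence of Theorem \ref{Thm3} to strong $L^p$ convergence of the Hermitian-Einstein tensors on the complement of the singular set, using the uniform $L^\infty$ bound on $\Lambda_\omega F_{A_j}$ together with an interpolation/dominated-convergence argument. First I would recall what Theorem \ref{Thm3} actually gives: after passing to a subsequence and applying local gauge transformations, $\nabla_{A_j} \to \nabla_{A_\infty}$ weakly in $L^p_{1,\mathrm{loc}}(X - Z_{\mathrm{an}})$ for the fixed $p > n$. On any relatively compact open set $U \Subset X - Z_{\mathrm{an}}$, the Rellich–Kondrachov embedding $L^p_1(U) \hookrightarrow L^q(U)$ is compact for suitable $q$, so after a further diagonal subsequence the (gauge-transformed) connection forms converge \emph{strongly} in $L^q_{\mathrm{loc}}$, and hence the curvatures $F_{A_j}$, which are quadratic expressions in the connection forms and their first derivatives, converge to $F_{A_\infty}$ at least in the sense of distributions, and in fact in $L^r_{\mathrm{loc}}$ for some $r \geq 1$ by the strong convergence of the lower-order terms. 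Contracting with $\Lambda_\omega$ (a bounded algebraic operator) gives $\Lambda_\omega F_{A_j} \to \Lambda_\omega F_{A_\infty}$ in $L^r_{\mathrm{loc}}(X - Z_{\mathrm{an}})$.

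Next I would promote this local, low-exponent convergence to global $L^p$ convergence for every finite $p$. The key leverage is the hypothesis that $\|\Lambda_\omega F_{A_j}\|_{L^\infty(X)}$ is uniformly bounded, say by $C$; passing to the limit (the weak-$*$ limit of an $L^\infty$-bounded sequence is $L^\infty$-bounded with the same bound), $\|\Lambda_\omega F_{A_\infty}\|_{L^\infty(X - Z_{\mathrm{an}})} \leq C$ as well. Writing $f_j = \Lambda_\omega F_{A_j}$ and $f_\infty = \Lambda_\omega F_{A_\infty}$, we have $|f_j - f_\infty| \leq 2C$ pointwise, and by the standard interpolation inequality
\begin{equation*}
\|f_j - f_\infty\|_{L^p(\Omega)} \leq \|f_j - f_\infty\|_{L^1(\Omega)}^{1/p} \, \|f_j - f_\infty\|_{L^\infty(\Omega)}^{1-1/p} \leq (2C)^{1-1/p} \|f_j - f_\infty\|_{L^1(\Omega)}^{1/p}
\end{equation*}
for any measurable $\Omega$. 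Thus it suffices to prove $f_j \to f_\infty$ in $L^1(X - Z_{\mathrm{an}})$. Since $Z_{\mathrm{an}}$ has Hausdorff codimension at least $4$, in particular it has measure zero and (more importantly) one can exhaust $X - Z_{\mathrm{an}}$ by open sets $U$ whose complement in $X$ has arbitrarily small volume — concretely, take $U_\delta = X - (\text{$\delta$-tubular neighbourhood of } Z_{\mathrm{an}})$, and estimate $\mathrm{Vol}(X - U_\delta) \to 0$ as $\delta \to 0$ using the codimension bound (a Minkowski-content / covering estimate). On $X - U_\delta$ we control $\|f_j - f_\infty\|_{L^1(X - U_\delta)} \leq 2C \cdot \mathrm{Vol}(X - U_\delta)$, which is uniformly small; on the relatively compact $U_\delta \Subset X - Z_{\mathrm{an}}$ we already have $L^r_{\mathrm{loc}}$ (hence $L^1$) convergence from the previous paragraph. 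An $\varepsilon/2$ argument then yields $\|f_j - f_\infty\|_{L^1(X - Z_{\mathrm{an}})} \to 0$, and feeding this into the interpolation inequality with $\Omega = X - Z_{\mathrm{an}}$ gives the claim for every $1 \leq p < \infty$.

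The main obstacle is the middle step: extracting genuine \emph{strong} $L^1_{\mathrm{loc}}$ convergence of the curvatures $F_{A_j}$ on compact subsets of $X - Z_{\mathrm{an}}$ from the weak $L^p_1$ convergence of the connections. Weak convergence in $L^p_1$ does not by itself pass to the quadratic term $A_j \wedge A_j$ in $F_{A_j} = dA_j + A_j \wedge A_j$; one genuinely needs the compact Sobolev embedding to convert weak $L^p_1$ into strong $L^q$ (valid since $p > n$ gives room to spare), which handles the quadratic term, while the $dA_j$ term converges only weakly — but $\Lambda_\omega dA_j$ converges weakly in $L^p$, and combined with the uniform $L^\infty$ bound on the full contraction $\Lambda_\omega F_{A_j}$ and the already-established strong convergence of the algebraic part, one identifies the weak limit as $\Lambda_\omega F_{A_\infty}$ and upgrades to strong convergence via uniform integrability (the $L^\infty$ bound makes the sequence $\{|f_j - f_\infty|\}$ uniformly integrable on the compact piece, so weak $L^1$ convergence plus uniform integrability, or simply the interpolation trick applied locally, gives strong $L^1_{\mathrm{loc}}$). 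Once that local strong convergence is in hand, the globalization via the codimension bound on $Z_{\mathrm{an}}$ and the $L^\infty$-to-$L^p$ interpolation is routine. One should also note the statement is really about the fixed subsequence produced by Theorem \ref{Thm3}, so no additional subsequence bookkeeping beyond what is already implicit there is needed in the final statement.
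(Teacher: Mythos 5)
The interpolation-and-globalization skeleton of your argument is sound: once one has \emph{strong} $L^{r}_{\mathrm{loc}}(X - Z_{\mathrm{an}})$ convergence of $\Lambda_{\omega}F_{A_j}$ for some $r\geq 1$, the uniform $L^{\infty}$ bound, the inequality $\|f\|_{L^p}\leq \|f\|_{L^\infty}^{1-1/p}\|f\|_{L^1}^{1/p}$, and the fact that $Z_{\mathrm{an}}$ has Hausdorff codimension $\geq 4$ (hence admits tubular neighbourhoods of arbitrarily small volume) do indeed give global $L^p(X-Z_{\mathrm{an}})$ convergence for all $1\leq p<\infty$. That part is essentially the argument in \cite{DW1}, which the paper cites for the proof.

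The gap is in the step you yourself flag as the ``main obstacle'', and your proposed repair does not close it. You observe, correctly, that after Rellich the quadratic term $B_j\wedge B_j$ converges strongly while $dB_j$ converges only weakly, so $\Lambda_{\omega}F_{B_j}\rightharpoonup \Lambda_{\omega}F_{A_\infty}$ weakly in $L^p_{\mathrm{loc}}$. You then assert that the uniform $L^{\infty}$ bound makes $\{|f_j-f_\infty|\}$ uniformly integrable, ``so weak $L^1$ convergence plus uniform integrability \dots gives strong $L^1_{\mathrm{loc}}$.'' That implication is false: uniform integrability plus weak $L^1$ convergence does \emph{not} give strong $L^1$ convergence. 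The sequence $f_j(x)=\sin(jx)$ on a bounded domain is uniformly bounded (hence uniformly integrable), converges weakly to $0$ in $L^1$, yet $\|f_j\|_{L^1}\not\to 0$. (Vitali's theorem requires convergence in measure, not weak convergence; Dunford--Pettis characterizes weak compactness, not strong convergence.) The parenthetical ``or simply the interpolation trick applied locally'' is circular: interpolation with $L^\infty$ reduces strong $L^p$ to strong $L^1$, which is exactly what is in question. Likewise, your earlier claim that $F_{A_j}\to F_{A_\infty}$ ``in $L^r_{\mathrm{loc}}$ for some $r\geq 1$ by the strong convergence of the lower-order terms'' does not follow: the $dB_j$ term is first-order, not lower-order, and its weak convergence contributes nothing toward strong convergence of $F_{B_j}$.

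In fact the gap is not cosmetic: under only the hypotheses you invoke (uniform $L^2$ bound on $F_{A_j}$, uniform $L^{\infty}$ bound on $\Lambda_\omega F_{A_j}$, weak $L^p_{1,\mathrm{loc}}$ convergence of gauge representatives) the conclusion can fail. On a flat Kähler ball take the abelian connections $B_j = a_j\, d\bar z_1 - \bar a_j\, dz_1$ with $a_j = j^{-1}\sin(j\,\mathrm{Re}\, z_1)$: these are integrable unitary connections, $B_j\to 0$ uniformly (so $B_j\rightharpoonup 0$ weakly in every $L^p_{1,\mathrm{loc}}$), $F_{B_j}$ is uniformly bounded, yet $\Lambda_\omega F_{B_j} = -2\sqrt{-1}\cos(j\,\mathrm{Re}\,z_1)$ converges to $0$ only weakly, not in any $L^p$. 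So some additional input is required, and the $L^\infty$ bound alone cannot supply it. The missing ingredient --- which is available in every situation where the paper actually applies the corollary, namely along the Yang--Mills flow and for minimizing sequences, and which appears explicitly as a hypothesis in Corollary \ref{Cor2} --- is an $L^2$ bound on $d_{A_j}^{*}F_{A_j}$, equivalently (via the K\"ahler identity $d_{A}^{*}F_{A} = -\sqrt{-1}\bigl(\partial_{A}-\bar\partial_{A}\bigr)\Lambda_\omega F_{A}$) on $\nabla_{A_j}\Lambda_\omega F_{A_j}$. Combined with the local $L^p_1$ control on $A_j$, this gives a uniform $W^{1,2}_{\mathrm{loc}}$ bound on $\Lambda_\omega F_{A_j}$ with respect to a fixed connection, and \emph{then} Rellich compactness gives the strong $L^2_{\mathrm{loc}}$ convergence you need; your interpolation and codimension argument finish the proof from there. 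You should either add the $d_{A_j}^{*}F_{A_j}$ control to your hypotheses or replace the uniform-integrability step with this $W^{1,2}_{\mathrm{loc}}$ bootstrap.
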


For the proof see \cite{DW1}.

In general, if $\mathcal{E}$ is only a reflexive sheaf, Bando and Siu (\cite%
{BS}) defined the notion of an \textbf{admissible hermitian metric}. This is
an hermitian metric $h$ on the locally free part of $\mathcal{F}$ such that:

$\cdot $ $\Lambda _{\omega }F_{h}\in L^{\infty }(X,\omega )$

$\cdot $ $F_{h}\in L^{2}(X,\omega ).$

Corollary \ref{Cor2} says that the limiting metric is an admissible
hermitian metric on the reflexive sheaf $E_{\infty }$ that is a direct sum
of admissible Hermitian-Einstein metrics. We also point out the version of
the Kobayashi-Hitchin correspondence for reflexive sheaves, due to Bando and
Siu \cite{BS}.

\begin{theorem}
\label{THM Bando-Siu}(Bando-Siu) A reflexive sheaf $\mathcal{E}$ on a
compact K\"{a}hler manifold $(X,\omega )$ admits an admissible
Hermitian-Einstein metric if and only if it is polystable. Such a metric is
unique up to a positive constant.
\end{theorem}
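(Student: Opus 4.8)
The plan is to reduce the reflexive-sheaf statement to the smooth Kobayashi–Hitchin correspondence (Theorem \ref{Thm2}) off the singular set, and then control the extension across $\Sing(\mathcal{E})$. Write $Z = \Sing(\mathcal{E})$, a complex analytic subvariety of $X$ of codimension at least $2$, and let $E = \mathcal{E}|_{X - Z}$, a genuine holomorphic vector bundle on the K\"ahler manifold $(X-Z, \omega)$. I would first treat the easier ``if'' direction: suppose $\mathcal{E}$ is polystable. By the algebraic structure of polystability one may assume $\mathcal{E}$ is stable (handle each stable summand separately and take direct sums of the resulting metrics). The key point is to produce an admissible Hermitian–Einstein metric on $E$ over $X - Z$; here one cannot simply invoke Theorem \ref{Thm2} because $X - Z$ is noncompact, so instead one runs the Donaldson-type heat flow (or the Uhlenbeck–Yau continuity method) starting from a smooth background metric extending over all of $X$, and uses the stability of $\mathcal{E}$ — together with the codimension $\geq 2$ of $Z$, which guarantees that degrees and slopes of saturated subsheaves are computed correctly and are unaffected by $Z$ — to obtain the a priori $C^0$ estimate that drives convergence. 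The flow produces a metric $h$ on $E$ with $\sqrt{-1}\Lambda_\omega F_h = \mu_\omega(\mathcal{E})\,\mathbf{Id}$; that this $h$ is \emph{admissible}, i.e. $\Lambda_\omega F_h \in L^\infty$ and $F_h \in L^2$, is built in: the first condition is the Hermitian–Einstein equation itself, and the $L^2$ bound on $F_h$ follows from the Chern–Weil identity expressing $\|F_h\|_{L^2}^2$ in terms of topological quantities of $\mathcal{E}$ plus the (bounded) $\|\Lambda_\omega F_h\|_{L^2}^2$, using that the relevant Chern classes of a reflexive sheaf extend across the codimension $\geq 2$ set $Z$. The hard part of this direction is the a priori estimate: one must show the flow does not degenerate near $Z$, and this is exactly where Bando–Siu's analysis (via the results of \cite{B}, \cite{SIU}) on the subharmonicity of $\log$ of the metric ratio and the use of a good exhaustion of $X - Z$ is needed.

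For the ``only if'' direction, suppose $h$ is an admissible Hermitian–Einstein metric on $\mathcal{E}$, with $\sqrt{-1}\Lambda_\omega F_h = \lambda \,\mathbf{Id}$ on $X - Z$. I would show $\mathcal{E}$ is polystable by the standard Chern–Weil / Bochner argument adapted to admissible metrics. Given any coherent subsheaf $S \subset \mathcal{E}$, pass to its saturation $\Sat_{\mathcal{E}}(S)$ (which has the same or larger degree by Remark \ref{Rmk2} and is reflexive away from a codimension $\geq 2$ set), so that $S$ is a subbundle of $E$ off a codimension $\geq 2$ set $Z'$. Let $\pi$ be the weakly holomorphic $h$-orthogonal projection onto $S$; the Chern–Weil formula (invoked in the excerpt) expresses $\deg_\omega(S)$ as an integral of $\lambda \,\mathrm{rk}(S)$ against $\omega^n$ minus an $L^2$-norm of $\bar\partial \pi$, and the admissibility conditions ($\Lambda_\omega F_h$ bounded, $F_h \in L^2$) are precisely what make this integral formula valid across $Z \cup Z'$. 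This yields $\mu_\omega(S) \leq \mu_\omega(\mathcal{E})$ with equality iff $\bar\partial \pi = 0$, i.e. iff $S$ is a holomorphic sub\emph{bundle} off $Z$ that is moreover $\nabla_h$-parallel; an equality case thus splits $\mathcal{E}|_{X-Z}$ holomorphically and orthogonally, and this splitting extends across $Z$ because $\mathcal{E}$ is reflexive (a holomorphic splitting on the complement of a codimension $\geq 2$ set of a reflexive sheaf extends, by Hartogs-type extension of the idempotent endomorphism). Iterating gives the polystable decomposition. The main obstacle here is the justification of the integration-by-parts in the Chern–Weil formula near $Z$ for merely $L^2_1$ projections associated to an admissible — not smooth — metric; this is handled by an approximation/cutoff argument exploiting the codimension bound on $Z$, exactly as in the treatment of weakly holomorphic projections originally due to Uhlenbeck–Yau \cite{UY} and carried over to the admissible setting by Bando–Siu \cite{BS}.

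Finally, uniqueness up to a positive constant follows the classical pattern: if $h_1, h_2$ are two admissible Hermitian–Einstein metrics, form the $h_1$-self-adjoint endomorphism $f = h_1^{-1} h_2$; the two Einstein equations combine to show that $\mathrm{tr}(f)$ (and more generally suitable convex functions of the eigenvalues of $f$) is subharmonic on $X - Z$, hence, using that $Z$ has codimension $\geq 2$ and the $L^2$/$L^\infty$ control from admissibility to rule out boundary contributions, constant on $X$; then $\nabla_{h_1} f = 0$, so $f$ respects the polystable splitting and acts as a positive scalar on each stable factor, and stability forces all these scalars to be a common positive constant. The only subtlety is again the removable-singularity/maximum-principle step across $Z$, which is where the admissibility hypotheses are essential and which I would cite from \cite{BS}.
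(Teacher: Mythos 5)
This theorem is stated in the paper as a cited result of Bando and Siu \cite{BS}; the paper contains no proof of it, so there is no internal argument to compare against. Judged on its own, your sketch of the ``only if'' direction and of uniqueness is essentially correct in outline: the admissible Chern--Weil formula yields the slope inequality and detects the equality case as a parallel splitting, and the usual subharmonicity argument for $f=h_1^{-1}h_2$, extended across $Z$ by reflexivity, gives uniqueness up to a positive scalar. These match the content of the Bando--Siu argument for those parts.

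The ``if'' direction, however, has a gap that is more than an omitted citation. You propose to run a Donaldson-type flow directly on the noncompact manifold $X-Z$ ``starting from a smooth background metric extending over all of $X$,'' but $\mathcal{E}$ is not a vector bundle over $X$, so no such global extension exists a priori, and on the open manifold $X-Z$ one has neither a maximum principle nor the compactness needed to close the $C^0$ estimate; appealing to ``a good exhaustion of $X-Z$'' does not by itself produce either. The actual mechanism in \cite{BS} is resolution of singularities: one takes $\pi:\tilde X\to X$ a composition of blowups so that the (torsion-free part of the) pullback is locally free, verifies that the resulting bundle is $\omega_\varepsilon$-stable for $\varepsilon$ small (the analogue of Proposition~\ref{Prop16}), applies Theorem~\ref{Thm2} on the \emph{compact} K\"ahler manifold $(\tilde X,\omega_\varepsilon)$ to obtain Hermitian--Einstein metrics $h_\varepsilon$, and then passes to the limit $\varepsilon\to 0$ using the $\varepsilon$-uniform Sobolev inequality (\cite{BS} Lemma~3) and heat kernel estimates, obtaining an admissible Hermitian--Einstein metric on $\mathcal{E}$ over $X-Z$. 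This resolution-and-degeneration step is the heart of the existence proof, is exactly what your sketch omits, and is also the scheme that reappears throughout Sections 4 through 6 of the paper.
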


Note that this theorem says the $(Gr_{\omega }^{HNS}(E))^{\ast \ast }$
carries an admissible Yang-Mills connection (where admissible has the same
meaning for connections), which is unique up to gauge.

\subsection{Weakly Holomorphic Projections/Second Fundamental Forms}

Let $S\subset E$ be a subsheaf with quotient $Q$. Then away from $\limfunc{%
Sing}(S)\cup \limfunc{Sing}(Q)$, $S$ is a subbundle. If we fix an hermitian
metric $h$ on $E$, then we may think of $S$ as a direct summand away from
the singular set, and there is a corresponding smooth projection operator $%
\pi :E\rightarrow S$ depending on $h$. The condition of being a holomorphic
subbundle almost everywhere can be shown to be equivalent to the condition: $%
\left( \mathbf{Id}_{E}-\pi \right) \bar{\partial}_{E}\pi =0$. Since $\pi $
is a projection operator we also have $\pi ^{2}=\pi =\pi ^{\ast }$.
Furthermore it can be shown that $\pi $ extends to an $L_{1}^{2}$ section of 
$\limfunc{End}E$. Conversely it turns out that an operator with these
properties determines a subsheaf.

\begin{definition}
\label{Def5}An element $\pi \in L_{1}^{2}(\limfunc{End}E)$ is called a
weakly holomorphic projection operator if the conditions 
\begin{equation*}
\left( \mathbf{Id}_{E}-\pi \right) \bar{\partial}_{E}\pi =0\text{ }and\text{ 
}\pi _{j}^{2}=\pi _{j}=\pi _{j}^{\ast }\text{ \ \ \ \ \ \ }\ast
\end{equation*}%
hold almost everywhere.
\end{definition}

\begin{theorem}
\label{Thm4}(Uhlenbeck-Yau) A weakly holomorphic projection operator $\pi $
of a holomorphic vector bundle $(E,h)$ with a smooth hermitian metric over a
compact K\"{a}hler manifold $(X,\omega )$ determines a coherent subsheaf of $%
E$. That is, there exists a coherent subsheaf $S$ of $E$ together with a
singular set $V\subset X$ with the following properties:

$\cdot \limfunc{Codim}V\geq 2,$

$\cdot \pi _{\mid X-V}$ is $C^{\infty }$ and satisfies $\ast $,

$\cdot S_{\mid X-V}=$ $\pi _{\mid X-V}(E_{\mid X-V})\hookrightarrow $ $%
E_{\mid X-V}$ is a holomorphic subbundle.
\end{theorem}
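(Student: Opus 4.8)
The plan is to recover the subsheaf $S$ from the projection $\pi$ by exhibiting $\pi$, away from a suitable small set, as the smooth orthogonal projection onto a holomorphic subbundle, and then to invoke the coherence of the image sheaf. First I would establish the regularity of $\pi$. The key analytic input is that $\pi \in L^2_1(\operatorname{End} E)$ satisfies the first-order equation $(\mathbf{Id}_E - \pi)\bar\partial_E \pi = 0$ together with the pointwise algebraic constraints $\pi^2 = \pi = \pi^\ast$. Differentiating $\pi^2 = \pi$ gives $\pi(\bar\partial_E\pi) + (\bar\partial_E\pi)\pi = \bar\partial_E\pi$, which combined with the structure equation forces $\pi(\bar\partial_E\pi)\pi = 0$ as well, so that $\bar\partial_E\pi$ has ``block off-diagonal'' shape with respect to $\pi$. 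One then computes that $|\bar\partial_E\pi|$, and in fact $|\nabla_A\pi|$, lies in $L^2$; using $\pi=\pi^\ast$ and integrability of $\bar\partial_E$ one derives an elliptic (Laplace-type) inequality for $\pi$ with right-hand side quadratic in $\nabla\pi$. A Moser-type iteration / bootstrapping argument — this is where the Uhlenbeck--Yau technique proper enters — then upgrades $\pi$ from $L^2_1$ to $L^2_2$, and then by the standard parabolic/elliptic ladder to $C^\infty$ on the open set where it is already bounded with sufficiently small local energy.

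Next I would identify the bad set $V$. The iteration fails only at points where the local $L^2$ energy of $\nabla\pi$ fails to be small, or equivalently where $\pi$ cannot be shown to be continuous; by the monotonicity of a suitable normalized energy and a dimension-counting (Federer-type) argument, the set of such points has Hausdorff codimension at least $2$. Since the structure equation is a holomorphic condition, $V$ is in fact a complex analytic subvariety of codimension $\ge 2$ — here one can either quote the Harvey--Shiffman / Bishop extension theory or use the coherence statement below to conclude analyticity a posteriori. On $X - V$ the operator $\pi$ is a genuine smooth Hermitian projection with $(\mathbf{Id}_E-\pi)\bar\partial_E\pi=0$, so its image $\pi(E_{\mid X-V})$ is a $\bar\partial_E$-invariant subbundle, hence a holomorphic subbundle $S_0 \subset E_{\mid X-V}$, with complementary second fundamental form $\beta = \bar\partial_E\pi$ measuring the failure of the splitting to be holomorphic.

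Finally I would extend $S_0$ across $V$ as a coherent subsheaf of $E$. Here one uses that $S_0$ is given locally as the kernel of the holomorphic bundle map $\mathbf{Id}_E - \pi$ on $X-V$; since $V$ has codimension $\ge 2$ and $E$ is locally free, the sheaf $\ker(\mathbf{Id}_E - \pi)$ defined on $X-V$ pushes forward under the inclusion $j\colon X-V \hookrightarrow X$ to a coherent subsheaf $S := j_\ast S_0$ of $E = j_\ast(E_{\mid X-V})$ — coherence follows from the fact that $E/S_0$ is torsion-free on $X-V$, so its saturation-type extension is controlled, and Frisch--Guenot / Siu-type extension results guarantee coherence of the extension of a subsheaf of a locally free sheaf across an analytic set of codimension $\ge 2$. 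Setting $S$ equal to this extension and enlarging $V$ if necessary to include $\operatorname{Sing}(S)\cup\operatorname{Sing}(E/S)$ (still codimension $\ge 2$) yields all three asserted properties.

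The main obstacle is the regularity bootstrap in the first step: proving that an $L^2_1$ solution of the \emph{nonlinear} first-order system $(\mathbf{Id}_E-\pi)\bar\partial_E\pi = 0$, $\pi^2=\pi=\pi^\ast$, is smooth away from a codimension-two set. The nonlinearity is exactly critical for the $L^2_1$ regularity class in complex dimension $n$, so one cannot simply invoke linear elliptic theory; one needs the $\varepsilon$-regularity philosophy, i.e. a small-energy hypothesis under which the Moser iteration closes, together with a covering/monotonicity argument to control the size of the set where small energy fails. This is precisely the hard technical core of \cite{UY}, and for the purposes of the present paper it is legitimate to cite it; I would give the above outline and refer to \cite{UY} for the full details of the iteration.
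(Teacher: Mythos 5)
The paper does not prove Theorem \ref{Thm4} itself; it simply cites \cite{UY} for the proof. So what needs to be assessed is whether your sketch is a faithful outline of the Uhlenbeck--Yau argument, and here there are two substantive problems.

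First, you frame the technical core as an $\varepsilon$-regularity bootstrap: derive a Laplace-type inequality with a right-hand side quadratic in $\nabla\pi$ and then run Moser iteration under a small-energy hypothesis. This is not the mechanism in \cite{UY}, and in fact it cannot work as stated for $n>2$. A quadratic nonlinearity in $\nabla\pi$ with $\pi\in L^2_1$ is critical for the $\varepsilon$-regularity framework only in real dimension $4$, i.e. $n=2$ (this is the dimension of the familiar gauge-theoretic $\varepsilon$-regularity); for $n>2$ the term $|\nabla\pi|^2$ sits in $L^1$, which is strictly supercritical, and the Moser iteration does not close. Moreover, $\pi$ is genuinely discontinuous along the singular set of the resulting subsheaf, so the goal is not to bootstrap to smoothness everywhere but to control where that breakdown happens. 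The actual route in Uhlenbeck--Yau (and in the later rigorous treatments, e.g. Popovici's proof and the account in L\"ubke--Teleman) is quite different in flavor: one restricts $\pi$ to generic complex lines in charts, exploits $L^2_1$ regularity to get absolute continuity along almost every slice, and uses one-variable meromorphic extension to obtain a meromorphic section of a Grassmannian bundle; alternatively one associates to $\pi$ a closed positive $(1,1)$-current built from a determinant section and applies Siu's structure theorem for such currents. Neither of these is a parabolic/elliptic ladder.

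Second, and more importantly, the analyticity of $V$ is precisely the hard part of the theorem, and your treatment of it is both vague and circular. A Federer-type dimension count only gives finiteness of Hausdorff measure, not analyticity, and your fallback ``use the coherence statement below to conclude analyticity a posteriori'' presupposes the very conclusion you need: Serre/Siu-type coherent extension of $S_0 = \pi(E|_{X-V})$ across $V$ requires $V$ to already be known to be an analytic set of codimension $\geq 2$. Establishing that analyticity is exactly what the slicing/current-theoretic machinery in \cite{UY} (and its successors) is for. Your final extension step (pushing forward $S_0$ across an analytic codimension-$\geq 2$ set and invoking normality of $E$) is fine once analyticity of $V$ is in hand, and your computation showing $\bar\partial_E\pi$ lands in the $\operatorname{Hom}(\ker\pi,\operatorname{im}\pi)$ block is correct, but the sketch as written mislocates and does not close the central difficulty.
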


The proof of this theorem is contained in \cite{UY}. From here on out we
will identify a subsheaf with its weakly holomorphic holomorphic projection.

If $S\subset E$ is a subsheaf, then away from $\limfunc{Sing}(S)\cup 
\limfunc{Sing}(Q)$ there is an orthogonal splitting $E=S\oplus Q$. In
general we may write the Chern connection $\nabla _{(\bar{\partial}_{E},h)}$
connection on $E$ as:%
\begin{equation*}
\nabla _{(\bar{\partial}_{E},h)}=%
\begin{pmatrix}
\nabla _{\left( \bar{\partial}_{S},h_{S}\right) } & \beta \\ 
-\beta ^{\ast } & \nabla _{\left( \bar{\partial}_{Q},h_{Q}\right) }%
\end{pmatrix}%
\end{equation*}%
where $\nabla _{\left( \bar{\partial}_{S},h_{S}\right) }$ and $\nabla
_{\left( \bar{\partial}_{Q},h_{Q}\right) }$ are the induced Chern
connections on $S$ and $Q$ respectively, and $\beta $ is the second
fundamental form. Recall that $\beta \in \Omega ^{0,1}(Hom(Q,S))$. More
specifically, in terms of the projection operator, we have$\ \bar{\partial}%
_{E}\pi =\beta $ and$\ \partial _{E}\pi =\beta ^{\ast }$. Also $\beta $
extends to an $L^{2}$ section of $\Omega ^{0,1}(Hom(Q,S))$ everywhere as $%
\bar{\partial}_{E}\pi $ since $\pi $ is $L_{1}^{2}$. We also have the
following well-known formula for the degree of a subsheaf in terms of its
weakly holomorphic projection.

\begin{theorem}
\label{Thm5}(Chern-Weil Formula) Let $S\subset E$ be a saturated subsheaf of
a holomorphic vector bundle with hermitian metric $h$, and $\pi $ the
associated weakly holomorphic projection. Let $\bar{\partial}_{E}$ denote
the holomorphic structure on $E$. Then we have:%
\begin{equation*}
\deg S=\frac{1}{2\pi n}\int_{X}\limfunc{Tr}\left( \sqrt{-1}\Lambda _{\omega
}F_{(\bar{\partial}_{E},h)}\pi \right) \omega ^{n}-\frac{1}{2\pi n}%
\int_{X}\left\vert \beta \right\vert ^{2}\omega ^{n}
\end{equation*}
\end{theorem}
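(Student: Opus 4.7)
The plan is to prove this by first establishing the formula when $S$ is a genuine smooth holomorphic subbundle, via the block decomposition of the Chern connection, and then reducing the general case to that setting using the codimension bound on the singular set of $\pi$ granted by Theorem \ref{Thm4}.

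\textbf{Smooth case.} Suppose first that $\pi$ is smooth everywhere, so that $S$ is a holomorphic subbundle of $E$ with smooth orthogonal complement $Q$ and smooth second fundamental form $\beta = \bar{\partial}_E \pi$. Using the block form of the Chern connection recalled in the excerpt and writing the off-diagonal part as $\alpha = \begin{pmatrix} 0 & \beta \\ -\beta^{\ast} & 0 \end{pmatrix}$, a direct computation of the curvature $F = \nabla^2 = F_0 + \nabla_0 \alpha + \alpha \wedge \alpha$ yields
\[
\pi \circ F_{(\bar{\partial}_E,h)} \circ \pi = F_{(\bar{\partial}_S,h_S)} - \beta \wedge \beta^{\ast}.
\]
Taking the fibrewise trace, contracting with $\sqrt{-1}\Lambda_\omega$, and integrating against $\omega^n$, the contribution from $F_{(\bar{\partial}_S,h_S)}$ recovers $2\pi n \deg(S)$ by ordinary Chern-Weil (with the paper's volume normalisation), while the pointwise identity $-\sqrt{-1}\Lambda_\omega \limfunc{Tr}(\beta \wedge \beta^{\ast}) = |\beta|^2$, a direct coordinate computation from the formulas for $\Lambda_\omega$ and $|\cdot|$, handles the second term. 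Dividing by $2\pi n$ yields the claimed formula in the smooth case.

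\textbf{General case.} For a general weakly holomorphic projection, apply Theorem \ref{Thm4} to obtain a subvariety $V \subset X$ with $\limfunc{codim}_\mathbb{C} V \geq 2$ outside of which $\pi$ is smooth and exhibits $S$ as a holomorphic subbundle. Both right-hand integrands extend globally: $\limfunc{Tr}(\sqrt{-1}\Lambda_\omega F_{(\bar{\partial}_E,h)} \pi) \in L^\infty$ since $\pi \in L^\infty$ and $\Lambda_\omega F_{(\bar{\partial}_E,h)}$ is smooth, while $|\beta|^2 \in L^1$ since $\beta = \bar{\partial}_E \pi \in L^2$ by the $L^2_1$ regularity of $\pi$. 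Because $V$ has Lebesgue measure zero, the integrals over $X$ and $X - V$ agree and the pointwise identity of the smooth case applies almost everywhere. The remaining content is the equality between $\deg(S)$, defined algebraically, and the curvature integral $\frac{1}{2\pi n} \int_{X-V} \sqrt{-1}\Lambda_\omega \limfunc{Tr}(F_{(\bar{\partial}_S,h_S)}) \omega^n$ computed from the induced (possibly singular) metric $h_S$. This is established via a standard cutoff approximation: choose $\chi_\varepsilon \in C_c^\infty(X - V)$ with $\chi_\varepsilon \nearrow 1$ and $\|\nabla \chi_\varepsilon\|_{L^2(X)} \to 0$, which is possible because a subvariety of real codimension $\geq 4$ in a K\"ahler manifold has vanishing Newtonian $2$-capacity. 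The error terms generated by inserting $\chi_\varepsilon$ into the Chern-Weil computation are bounded by $\|\nabla \chi_\varepsilon\|_{L^2}\|\beta\|_{L^2}$, hence vanish in the limit.

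The principal difficulty lies in this last step, namely reconciling the algebraic definition of $\deg(S)$ with the curvature integral of the analytic metric $h_S$ on the locally free part of $S$. The two ingredients making this possible are the codimension bound $\limfunc{codim}_{\mathbb{C}} V \geq 2$ from Theorem \ref{Thm4}, which allows for cutoffs with $L^2$-small gradient, and the $L^2_1$-regularity of $\pi$ built into the very definition of a weakly holomorphic projection, which supplies the $L^2$-control on $\beta$ needed to absorb those gradients. A detailed implementation of this approximation argument is carried out in \cite{UY} and \cite{KOB}.
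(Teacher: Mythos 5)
Your smooth-case computation is correct, and your reduction strategy (prove it for subbundles, then extend across the codimension-$\geq 2$ singular set) is a legitimate route. But it differs from the paper's, which does not cut off at all: the paper reduces the theorem to the $k = n-1$ case of its Lemma \ref{Lemma10}, where one compares the singular metric $H = \iota^{\ast}(\Lambda^{p}h)$ on $\det S$ with a smooth background metric $G$, pulls the ratio $f = H/G$ back along a monomializing blowup $\pi:\tilde{X}\to X$ (so that $\pi^{\ast}f = \chi\cdot\prod|\xi_{k}|^{2m_{k}}$ with $\chi$ smooth and positive), and then uses the Poincar\'e--Lelong formula to show $\frac{i}{2\pi}\bar{\partial}\partial\log\pi^{\ast}f = d\Phi + \sum_{i}m_{i}T_{\mathbf{E}_{i}}$ as currents. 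Pairing with the closed form $\pi^{\ast}\omega^{n-1}$ kills $d\Phi$ by Stokes and kills $\sum m_{i}T_{\mathbf{E}_{i}}$ because the exceptional components push forward to codimension $\geq 2$. No $L^{2}$ estimate on any potential is needed.

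The gap in your cutoff argument is the asserted error bound $\|\nabla\chi_{\varepsilon}\|_{L^{2}}\|\beta\|_{L^{2}}$. After integrating by parts, the Stokes boundary term is $\int\partial\chi_{\varepsilon}\wedge\bar{\partial}\log f\wedge\omega^{n-1}$ (equivalently $\int\bar{\partial}\chi_{\varepsilon}\wedge\theta\wedge\omega^{n-1}$, where $\theta = \partial\log f$ is the difference of Chern connection $(1,0)$-forms of $\det h_{S}$ and of $G$ on $\det S$). This is a statement about the connection $1$-form, not about the second fundamental form: $\beta = \bar{\partial}_{E}\pi$ is a completely different object, and the $L^{2}_{1}$ regularity of $\pi$ (which is what gives $\beta\in L^{2}$) does not by itself give $\partial\log f\in L^{2}$. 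The two quantities are related to the curvature of $\det h_{S}$ in different ways — $\beta\wedge\beta^{\ast}$ appears inside the curvature, while $\theta$ is its $\partial$-potential — so a Cauchy--Schwarz bound of the error by $\|\nabla\chi_{\varepsilon}\|_{L^{2}}\|\beta\|_{L^{2}}$ does not follow. The $L^{2}$ bound you actually need is $\partial\log f\in L^{2}(X)$, and this \emph{is} true, but proving it requires understanding the vanishing order of $f$ along the singular set (e.g.\ locally $f\sim|z|^{2m}$ transverse to a codimension-$\geq 2$ set implies $|\partial\log f|^{2}\sim 1/|z|^{2}$, which is integrable precisely because the real codimension is $\geq 4$) — i.e.\ the same structure theorem / monomialization that the paper invokes via Theorem \ref{Thm7}. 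In short, the cutoff route is viable but its crucial estimate is not a free consequence of $\pi\in L^{2}_{1}$; it requires essentially the same local analysis that the paper packages cleanly via blowup and Poincar\'e--Lelong.
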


The statement of this theorem as well as a sketch of the proof may be found
in \cite{SI}. This formula will also follow as a special case of our
discussion in Section $4$.

Clearly any sequence $\pi _{j}$ of such projection operators is uniformly
bounded in $L^{\infty }(X)$. As an immediate corollary of the Chern-Weil
formula we have the following.

\begin{corollary}
\label{Cor5}A sequence $\pi _{j}$ of weakly holomorphic projection operators
such that $\deg \pi _{j}$ is bounded from below is uniformly bounded in $%
L_{1}^{2}$. In particular, if $\deg \pi _{j}$ is constant then $\pi _{j}$ is
bounded in $L_{1}^{2}$.
\end{corollary}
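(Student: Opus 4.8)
The plan is to read the Chern--Weil identity of Theorem~\ref{Thm5} as an a priori bound on the $L^{2}$ norm of the second fundamental form $\beta _{j}:=\bar{\partial}_{E}\pi _{j}$ attached to $\pi _{j}$, and then to convert that bound into the asserted $L_{1}^{2}$ bound on the projections. First I would record the elementary pointwise estimate: since each $\pi _{j}$ satisfies $\pi _{j}^{2}=\pi _{j}=\pi _{j}^{\ast }$ almost everywhere, at almost every point its eigenvalues are $0$ or $1$, so $\left\vert \pi _{j}\right\vert \leq \sqrt{K}$ a.e., where $K=\limfunc{rk}E$. As $X$ is compact, this already yields a uniform bound $\left\Vert \pi _{j}\right\Vert _{L^{\infty }(X)}+\left\Vert \pi _{j}\right\Vert _{L^{2}(X)}\leq C_{1}$, with $C_{1}$ independent of $j$.

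Next I would apply Theorem~\ref{Thm5} and rearrange it into an identity for $\int_{X}\left\vert \beta _{j}\right\vert ^{2}\omega ^{n}$:
\begin{equation*}
\frac{1}{2\pi n}\int_{X}\left\vert \beta _{j}\right\vert ^{2}\omega ^{n}=\frac{1}{2\pi n}\int_{X}\limfunc{Tr}\left( \sqrt{-1}\Lambda _{\omega }F_{(\bar{\partial}_{E},h)}\pi _{j}\right) \omega ^{n}-\deg \pi _{j}.
\end{equation*}
The reference connection $\nabla _{(\bar{\partial}_{E},h)}$ and the metric $h$ are fixed and smooth, so $\Lambda _{\omega }F_{(\bar{\partial}_{E},h)}$ is a fixed bounded section of $\limfunc{End}E$; combined with the uniform $L^{\infty }$ bound on $\pi _{j}$ and the finiteness of the volume of $X$, the first term on the right is bounded above uniformly in $j$. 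Since $\deg \pi _{j}$ is bounded below by hypothesis, $-\deg \pi _{j}$ is bounded above, and therefore $\int_{X}\left\vert \beta _{j}\right\vert ^{2}\omega ^{n}\leq C_{2}$ uniformly in $j$.

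Finally I would pass from $\beta _{j}$ to the full covariant derivative. Because $\nabla _{(\bar{\partial}_{E},h)}$ is unitary and $\pi _{j}$ is self-adjoint, one has $\bar{\partial}_{E}\pi _{j}=\beta _{j}$ for the $(0,1)$-part and $\partial _{(\bar{\partial}_{E},h)}\pi _{j}=\beta _{j}^{\ast }$ for the $(1,0)$-part, as recorded just before Theorem~\ref{Thm5}; since these are orthogonal as form-types and $\left\vert \beta _{j}^{\ast }\right\vert =\left\vert \beta _{j}\right\vert $ pointwise, $\left\vert \nabla \pi _{j}\right\vert ^{2}=2\left\vert \beta _{j}\right\vert ^{2}$ a.e., whence
\begin{equation*}
\left\Vert \pi _{j}\right\Vert _{L_{1}^{2}(X)}^{2}=\left\Vert \pi _{j}\right\Vert _{L^{2}(X)}^{2}+\left\Vert \nabla \pi _{j}\right\Vert _{L^{2}(X)}^{2}\leq C_{1}^{2}+2C_{2},
\end{equation*}
a bound independent of $j$. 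This proves the first assertion, and the ``in particular'' clause is immediate, since a constant sequence $\deg \pi _{j}$ is in particular bounded below. I do not expect a genuine obstacle here: the corollary is essentially a formal consequence of the Chern--Weil formula together with the trivial $L^{\infty }$ bound on projections. The only point deserving a word of care is that Theorem~\ref{Thm5} is phrased for a \emph{saturated} subsheaf, whereas a weakly holomorphic projection need not a priori be saturated; however, the projection onto a subsheaf and the projection onto its saturation agree off a proper analytic subset, hence coincide as $L_{1}^{2}$ sections, and the degree entering the formula is unchanged (the saturation here differs only in codimension $\geq 2$), so this does not affect the estimate.
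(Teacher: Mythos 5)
Your proof is correct and is exactly the argument the paper intends: the Chern--Weil formula (Theorem~\ref{Thm5}) isolates $\int_X\lvert\beta_j\rvert^2$, the fixed curvature term and the trivial $L^\infty$ bound on projections control the other two terms, and the lower bound on $\deg\pi_j$ then gives the uniform bound on $\lVert\bar\partial_E\pi_j\rVert_{L^2}$ (hence on $\lVert\nabla\pi_j\rVert_{L^2}$ by the adjoint relation), while the $L^\infty$ bound covers the $L^2$ part. Your closing remark about saturation is fine but is actually superfluous, since the coherent subsheaf produced from a weakly holomorphic projection by Theorem~\ref{Thm4} (being the sheaf of sections of a subbundle over $X-V$) is automatically saturated, so Theorem~\ref{Thm5} applies directly.
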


Now suppose $\nabla _{A_{0}\text{ }}$is a reference connection, $g_{j}\in 
\mathcal{G}^{%
\mathbb{C}
}$ is a sequence of complex gauge transformations, and $\nabla _{A_{j}}$ is
the sequence of integrable unitary connections on an hermitian vector bundle 
$(E,h)$ given by $\nabla _{A_{j}}=g_{j}\cdot \nabla _{A_{0}}$, and assume as
before that $\Lambda _{\omega }F_{A_{j}}$ is uniformly bounded in $L^{\infty
}$. Let $S\subset E$ be a subbundle with quotient $Q$. We have a sequence of
projection operators $\pi _{j}$ given by orthogonal projection onto $%
g_{j}(S) $ (with respect to the metric $h$) from $E$ to holomorphic
subbundles $S_{j}$ (whose holomorphic structures are induced by the
connections $\nabla _{A_{j}} $) smoothly isomorphic to $S$. We will denote
by $Q_{j}$ the corresponding quotients. Each of these holomorphic subbundles
has a second fundamental form which we will write as $\beta _{j}$. Assume
that the $\beta _{j}$ are also uniformly bounded in $L^{2}$ (this will later
be a consequence of our hypotheses). Then with all of the above understood,
we have the following result.

\begin{lemma}
\label{Lemma4}For any $1\leq p<\infty $, the $\beta _{j}$ are bounded in $%
L_{1,loc}^{p}(X-Z_{\limfunc{an}})$, uniformly for all $j$. In particular the 
$\beta _{j}$ are uniformly bounded on compact subsets of $X-Z_{\limfunc{an}}$%
.
\end{lemma}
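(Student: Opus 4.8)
The plan is to show that the $\beta_j = \bar\partial_{A_j}\pi_j$ satisfy a uniform elliptic estimate on compact subsets of $X - Z_{\limfunc{an}}$ by combining the Chern--Weil bound on $\|\beta_j\|_{L^2}$ with the Uhlenbeck-gauge convergence of the connections $\nabla_{A_j}$ from Theorem \ref{Thm3}. First I would work on a fixed compact set $K \subset\subset X - Z_{\limfunc{an}}$, and slightly enlarge it to $K'$ with $K \subset\subset K' \subset\subset X - Z_{\limfunc{an}}$. On $K'$, after applying Theorem \ref{Thm3}, we may assume (after a unitary gauge change, which does not affect the norms $|\beta_j|$) that the connections $\nabla_{A_j}$ converge weakly in $L^p_{1}(K')$ to the limiting Yang--Mills connection $\nabla_{A_\infty}$, for the fixed $p > n$; in particular the connection forms $A_j$ are uniformly bounded in $L^p_1(K')$, hence — by Sobolev embedding, since $p > n$ — uniformly bounded in $C^0(K')$ and in $L^q(K')$ for all finite $q$.

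Next I would extract an equation for $\pi_j$. Since $\pi_j$ is the orthogonal projection onto a $\nabla_{A_j}$-holomorphic subbundle, it satisfies $(\mathbf{Id}_E - \pi_j)\bar\partial_{A_j}\pi_j = 0$ together with $\pi_j^2 = \pi_j = \pi_j^\ast$; equivalently $\bar\partial_{A_j}\pi_j = \pi_j(\bar\partial_{A_j}\pi_j)$ has values in $\Omega^{0,1}(\limfunc{Hom}(Q_j, S_j))$, i.e. $\beta_j = \bar\partial_{A_j}\pi_j$. The standard bootstrapping device (this is exactly the Uhlenbeck--Yau argument behind Theorem \ref{Thm4}, run with uniform control in $j$) is that $\pi_j$ satisfies a second-order equation of the form $\Delta_{A_j}\pi_j = \pi_j * F_{A_j} + \bar\partial_{A_j}\pi_j * \partial_{A_j}\pi_j + (\text{lower order in }\nabla A)$, schematically $\Delta \pi_j = \pi_j * \Lambda_\omega F_{A_j} + \beta_j * \beta_j^\ast$ modulo terms linear in $\nabla_{A_j}\pi_j$ with coefficients controlled by $A_j$. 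Writing this out in the fixed background connection on $K'$, the right-hand side is a sum of: (i) $|\pi_j| \le 1$ times $\Lambda_\omega F_{A_j}$, which is bounded in $L^\infty$ by hypothesis; (ii) the quadratic term $|\beta_j|^2$, bounded in $L^1(K')$ by Corollary \ref{Cor5} / the Chern--Weil formula (Theorem \ref{Thm5}); and (iii) terms of the form $A_j * \nabla\pi_j$ and $A_j * A_j * \pi_j$, with $A_j$ uniformly bounded in $C^0$.

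Then I would run a standard elliptic bootstrap on nested compact sets, upgrading regularity of $\pi_j$ one step at a time, exactly as in \cite{UY}, but tracking that every constant is independent of $j$ because the only nonuniform-looking inputs — $\Lambda_\omega F_{A_j}$ in $L^\infty$, $\beta_j$ in $L^2$, $A_j$ in $L^p_1$ — are all uniformly bounded. Starting from $\pi_j$ bounded in $L^\infty$ and $\beta_j = \bar\partial_{A_j}\pi_j$ bounded in $L^2$, so $\pi_j$ bounded in $L^2_1$ by Corollary \ref{Cor5}, Sobolev gives $\pi_j \in L^q$ for $q$ in the Sobolev range; feeding this back into the equation for $\Delta \pi_j$ improves $\pi_j$ to $L^q_2$ on a slightly smaller set, hence $\beta_j$ to $L^q_1$, and iterating raises the exponent to any finite $p$ and the number of derivatives by one, so in finitely many steps we conclude $\beta_j \in L^p_{1,loc}(K)$ with a bound uniform in $j$. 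Since $K$ was an arbitrary compact subset of $X - Z_{\limfunc{an}}$ and $p < \infty$ arbitrary, this is the claim; the final sentence of the lemma (uniform pointwise bound on compacta) then follows from Sobolev embedding $L^p_1 \hookrightarrow C^0$ for $p > n$. The main obstacle, and the only place care is really needed, is controlling the quadratic term $|\beta_j|^2$: it is only a priori in $L^1$, which is below the natural Sobolev threshold, so the first bootstrap step cannot simply quote $L^p$ elliptic estimates. The resolution — again following \cite{UY} — is that the relevant estimate is on $\pi_j$, for which one uses the projection identities $\pi_j^2 = \pi_j$ and $(\mathbf{Id} - \pi_j)\beta_j = 0$ to rewrite $\beta_j * \beta_j^\ast$ in terms of first derivatives of $\pi_j$ (which are controlled in $L^2$) paired against $\pi_j$ (controlled in $L^\infty$), so that it enters as the $L^2$-coefficient of a first-order term rather than as an uncontrolled $L^1$ source; after this rewriting the bootstrap proceeds as above. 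This is precisely the point at which the hypothesis that the $\beta_j$ are uniformly bounded in $L^2$ — rather than merely individually finite — is used, and it is what makes all the resulting constants uniform in $j$.
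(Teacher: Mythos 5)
You have assembled the right ingredients — the uniform $L^\infty$ bound on $\Lambda_\omega F_{A_j}$, the uniform $L^2$ bound on $\beta_j$ from the Chern--Weil formula, the local Coulomb-gauge convergence of $A_j$ supplied by Theorem \ref{Thm3}, and a second-order equation for $\pi_j$ — and you have correctly located the crux of the matter in the quadratic gradient term. But the resolution you propose for that crux does not actually close the bootstrap. Rewriting $\beta_j \ast \beta_j^\ast$ as $b \cdot \nabla \pi_j$ with $b = \nabla\pi_j \in L^2$ relabels the problem without changing its scaling: for an equation $\Delta u = b\cdot\nabla u + f$, the critical Lebesgue space for $b$ is $L^{\dim_{\mathbb{R}}X} = L^{2n}$, and $L^2$ is strictly below this threshold whenever $\dim_{\mathbb{C}}X \geq 1$ and strictly below in the non-trivial sense for $n \geq 2$. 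Concretely, $\Delta\pi_j = Q(\nabla\pi_j,\nabla\pi_j) + f_j$ with $\pi_j\in L^\infty$, $\nabla\pi_j\in L^2$, and $f_j\in L^\infty$ gives $\Delta\pi_j\in L^1$, and elliptic regularity starting from $L^1$ in real dimension $2n\geq 4$ lands you back at or below $\nabla\pi_j\in L^2$ (Sobolev embedding $W^{2,1}\hookrightarrow W^{1,2n/(2n-1)}$ with $2n/(2n-1)<2$); the ``first bootstrap step'' does not start. This is precisely the harmonic-map-type difficulty: bounded maps with $L^2$ gradient satisfying a quadratic-gradient elliptic equation need not be regular, and for sequences there is nothing in this argument alone to rule out concentration of $|\beta_j|^2$, which is exactly what the singular set $Z_{\operatorname{an}}$ is there to account for.

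What the cited source (\cite{DW1}, Section 2.2) supplies, and what your proposal omits, is a mechanism to rule out such concentration \emph{away from} $Z_{\operatorname{an}}$ and to convert the global $L^2$ bound into local smallness so that the quadratic term can be absorbed. This uses the strong local convergence of the $A_j$ in Coulomb gauge (not merely boundedness), the fact that the weak $L^2_1$ limit $\pi_\infty$ is a weakly holomorphic projection for the Yang--Mills limit $A_\infty$ and is therefore smooth off a codimension-two set, the conservation of degree in the Chern--Weil formula (so that no $\beta_j$-energy is lost in the limit, which upgrades weak $L^2_1$ convergence to strong), and the $\bar\partial_{A_j}$-closedness of $\beta_j$. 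You should not attribute the regularity to ``the Uhlenbeck--Yau argument behind Theorem \ref{Thm4}'': the regularity of a single weakly holomorphic projection in \cite{UY} is established by sheaf-theoretic means (constructing the coherent subsheaf and invoking that it is locally free off codimension two), not by a PDE bootstrap, and in any case does not by itself yield bounds uniform in $j$. The gap, in short, is the missing small-energy/no-concentration step that makes the elliptic estimate start; once that is supplied, the rest of your outline is reasonable.
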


The proof is the same as in \cite{DW1} Section 2.2.

\section{The Yang-Mills Flow and Basic Properties}

\subsection{The Flow Equations/Lower Bound for the $HN$ Type of the Limit}

As stated in the introduction, although many of our arguments are valid for
minimising sequences of unitary connections, our primary interest will be in
sequences obtained from the \textbf{Yang-Mills flow}. This is a sequence of
integrable unitary connections $A_{t}$ obtained as solutions of the $L^{2}$%
-gradient flow equations for the $YM$ functional. Explicitly:%
\begin{equation*}
\frac{\partial A_{t}}{\partial t}=-d_{A_{t}}^{\ast }F_{A_{t}},\text{ \ }%
A_{0}\in \mathcal{A}_{h}^{1,1}.
\end{equation*}%
It follows from \cite{DO1} and \cite{SI} that the above equations have a
unique solution in $\mathcal{A}_{h}^{1,1}\times \lbrack 0,\infty )$.
Moreover, the flow preserves complex gauge orbits, that is, $A_{t}$ lies in
the orbit $\mathcal{G}^{%
\mathbb{C}
}\cdot A_{0}$. This may be seen as follows. Instead of solving for the
connection, fix $A_{0}$ so that $\bar{\partial}_{A_{0}}=\bar{\partial}_{E}$,
and consider instead the family of hermitian metrics $h_{t}$ satisfying the 
\textbf{Hermitian-Yang-Mills flow equations}:%
\begin{equation*}
h_{t}^{-1}\frac{\partial h_{t}}{\partial t}=-2\left( \sqrt{-1}\Lambda
_{\omega }F_{h_{t}}-\mu _{\omega }(E)Id_{E}\right) .
\end{equation*}%
In the above, $F_{h_{t}}$ is the curvature of $(\bar{\partial}_{E},h_{t})$.
The Yang-Mills and Hermitian-Yang-Mills flow equations are equivalent up to
gauge. If $A_{t}=g_{t}\cdot A_{0}$ is a solution of the Yang-Mills flow,
then $h_{t}=h_{0}g_{t}^{\ast }g_{t}$ is a solution of the
Hermitian-Yang-Mills flow. Conversely, if $h_{t}=h_{0}k_{t}$ (where $%
h_{0}k_{t}(s_{1},s_{2})=h_{0}(k_{t}s_{1},s_{2})$) for a positive definite
self-adjoint (with respect to $h_{0}$) endomorphism $k_{t}$, then $%
A_{t}=\left( k_{t}\right) ^{\frac{1}{2}}A_{0}$ is real gauge equivalent to a
solution of the Yang-Mills flow. To spell out the equivalence precisely, the
map:%
\begin{equation*}
g_{t}:(E,\bar{\partial}_{E},h_{0}k_{t})\longrightarrow (E,g_{t}(\bar{\partial%
}_{E}),h_{0})
\end{equation*}%
is a biholomorphism and an isometry, where $k_{t}=g_{t}^{\ast }g_{t}$. For a
detailed discussion of this see \cite{WIL} section $3.1$ for details.

\begin{lemma}
\label{Lemm5}Let $A_{t}$ be a solution of the $YM$ flow. Then:

$(1)$ 
\begin{equation*}
\frac{\partial F_{A_{t}}}{\partial t}=-\triangle _{A_{t}}F_{A_{t}}
\end{equation*}%
and therefore,%
\begin{equation*}
\frac{d}{dt}\left\Vert F_{A_{t}}\right\Vert _{L^{2}}^{2}=-2\left\Vert
d_{A_{t}}^{\ast }F_{A_{t}}\right\Vert _{L^{2}}^{2}\leq 0.
\end{equation*}%
Hence, $t\mapsto YM(A_{t})$, and $t\mapsto HYM(A_{t})$ are non-increasing.

$(2)$ $\left\vert \Lambda _{\omega }F_{A_{t}}\right\vert ^{2}$ satisfies%
\begin{equation*}
\frac{\partial \left\vert \Lambda _{\omega }F_{A_{t}}\right\vert ^{2}}{%
\partial t}+\triangle \left\vert \Lambda _{\omega }F_{A_{t}}\right\vert
^{2}=-2\left\vert d_{A_{t}}^{\ast }F_{A_{t}}\right\vert ^{2}\leq 0,
\end{equation*}%
so by the maximum principle $\sup \left\vert \Lambda _{\omega
}F_{A_{t}}\right\vert ^{2}$ is decreasing in $t$.
\end{lemma}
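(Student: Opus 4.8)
The proof is a standard computation using the structure equations of Yang-Mills flow, Weitzenböck-type identities, and the maximum principle. The plan is as follows. For part $(1)$, I would first differentiate the curvature along the flow. Since $F_{A_t} = d_{A_t}A_t + \tfrac12[A_t,A_t]$ and $\partial_t A_t = -d_{A_t}^\ast F_{A_t}$, the variation formula $\partial_t F_{A_t} = d_{A_t}(\partial_t A_t) = -d_{A_t} d_{A_t}^\ast F_{A_t}$. Then I would invoke the Bianchi identity $d_{A_t}F_{A_t} = 0$, so that $\triangle_{A_t}F_{A_t} = (d_{A_t}d_{A_t}^\ast + d_{A_t}^\ast d_{A_t})F_{A_t} = d_{A_t}d_{A_t}^\ast F_{A_t}$, which gives the heat equation $\partial_t F_{A_t} = -\triangle_{A_t}F_{A_t}$. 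For the $L^2$-norm identity, I would compute
\begin{equation*}
\frac{d}{dt}\|F_{A_t}\|_{L^2}^2 = 2\langle \partial_t F_{A_t}, F_{A_t}\rangle_{L^2} = -2\langle d_{A_t}d_{A_t}^\ast F_{A_t}, F_{A_t}\rangle_{L^2} = -2\|d_{A_t}^\ast F_{A_t}\|_{L^2}^2 \le 0,
\end{equation*}
using self-adjointness of $d_{A_t}^\ast$ relative to $d_{A_t}$ and integrating by parts over the closed manifold $X$ (no boundary terms). The monotonicity of $t\mapsto YM(A_t)$ is then immediate since $YM(A_t) = \|F_{A_t}\|_{L^2}^2$, and monotonicity of $HYM(A_t)$ follows from the topological relation $YM = HYM + \text{const}$ recorded before Proposition \ref{Prop7}, valid since $A_t \in \mathcal{A}_h^{1,1}$ for all $t$ (the flow preserves integrability).

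For part $(2)$, the key point is that $\Lambda_\omega F_{A_t}$ itself satisfies a heat-type equation. I would apply $\Lambda_\omega$ to the curvature heat equation from $(1)$ and commute $\Lambda_\omega$ past $\triangle_{A_t}$ using the Kähler identities; on a Kähler manifold $\Lambda_\omega$ commutes with the rough Laplacian up to curvature terms that cancel appropriately here, yielding $\partial_t(\Lambda_\omega F_{A_t}) = -\triangle_{A_t}(\Lambda_\omega F_{A_t})$ where $\triangle_{A_t}$ is now the connection Laplacian on $\mathrm{End}(E)$-valued functions. Then for the pointwise norm squared $u = |\Lambda_\omega F_{A_t}|^2$, the Bochner formula gives
\begin{equation*}
\frac{\partial u}{\partial t} + \triangle u = -2|\nabla_{A_t}\Lambda_\omega F_{A_t}|^2,
\end{equation*}
and one identifies $|\nabla_{A_t}\Lambda_\omega F_{A_t}|^2$ with $|d_{A_t}^\ast F_{A_t}|^2$ (again via Kähler identities, since $d_{A_t}^\ast F_{A_t}$ and $\bar\partial_{A_t}^\ast\partial_{A_t}$-type expressions in $\Lambda_\omega F_{A_t}$ agree up to the $(0,2)$-part of curvature which vanishes). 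Since the right-hand side is $\le 0$, $u$ is a subsolution of the heat equation, and the parabolic maximum principle on the compact manifold $X$ forces $\max_X u(\cdot, t)$ to be non-increasing in $t$, establishing that $\sup|\Lambda_\omega F_{A_t}|^2$ decreases.

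The main obstacle — and the only step requiring genuine care rather than bookkeeping — is the commutation of $\Lambda_\omega$ with the Laplacian and the precise identification of the gradient term $|\nabla_{A_t}\Lambda_\omega F_{A_t}|^2$ with $|d_{A_t}^\ast F_{A_t}|^2$. This is where Kählerness is essential: on a general Riemannian manifold these Weitzenböck/commutation relations carry extra curvature-of-$X$ terms that do not cancel, whereas the Kähler identities (specifically $[\Lambda_\omega, \bar\partial_{A_t}] = -\sqrt{-1}\,\partial_{A_t}^\ast$ and its conjugate) together with the integrability $F_{A_t}^{0,2} = 0$ conspire to give clean expressions. Both of these computations are classical — they go back to the Donaldson and Simpson analysis of the flow cited in the text — so I would state them and refer to \cite{DO1},\cite{SI}, or simply carry out the short local coordinate computation, rather than belabor it.
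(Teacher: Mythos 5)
Your plan is correct and is precisely the standard computation that the paper simply refers to (\cite{DOKR} Chapter 6): the variation formula plus Bianchi gives the heat equation for $F_{A_t}$, and the $\Lambda_\omega$--Kähler-identity computation together with the Bochner formula gives the subsolution inequality for $|\Lambda_\omega F_{A_t}|^2$. The one step you flagged as requiring care indeed does: the cleanness comes from the fact that when you push $\Lambda_\omega$ through $d_{A}d_{A}^\ast$ you land on $\triangle_{\partial_A}+\triangle_{\bar\partial_A}$, and the discrepancy $\triangle_{\partial_A}-\triangle_{\bar\partial_A}$ on $\Lambda_\omega F_A$ is the self-commutator $\sqrt{-1}\,[\Lambda_\omega F_A,\Lambda_\omega F_A]=0$, so the two complex Laplacians agree and sum to the rough Laplacian, while the identity $|\nabla_{A}\Lambda_\omega F_A|^2=|d_A^\ast F_A|^2$ follows from $d_A^\ast F_A=\sqrt{-1}(\partial_A-\bar\partial_A)\Lambda_\omega F_A$ (using $F_A^{0,2}=F_A^{2,0}=0$ and Bianchi) and the orthogonality of the $(1,0)$ and $(0,1)$ pieces.
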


For the proof see \cite{DOKR} Chapter 6.

Now we may apply the Uhlenbeck compactness theorem to a sequence of
connections given by the flow.

\begin{proposition}
\label{Prop8}Let $X$ be a compact K\"{a}hler manifold. Let $A_{0}$ be any
fixed connection, and $A_{t}$ denote its evolution along the flow. Fix $p>n$%
. For any sequence $t_{j}\rightarrow \infty $ there is a subsequence (still
denoted $t_{j}$), a closed subset $Z_{\limfunc{an}}\subset X$ with Hausdorff
codimension at least $4$, and a smooth hermitian vector bundle $(E_{\infty
},h_{\infty })$ defined on the complement $X-Z_{\limfunc{an}}$ with a finite
action Yang-Mills connection $A_{\infty }$ on $E_{\infty }$, such that $%
A_{t_{j}\mid X-Z_{\limfunc{an}}}$ is gauge equivalent to a sequence of
connections that converges to $A_{\infty }$ weakly in $L_{1,loc}^{p}(X-Z_{%
\limfunc{an}})$. Away from $Z_{\limfunc{an}\text{ }}$there is a smooth
splitting: $\left( E_{\infty },A_{\infty },h_{\infty }\right) =\oplus
_{i=l}^{l}\left( Q_{\infty ,i},A_{\infty ,i},h_{\infty ,i}\right) $, where $%
A_{\infty ,i}$ is the induced connection on $Q_{i}$, and $h_{\infty ,,i}$ is
an Hermitian-Einstein metric. Furthermore, $E_{\infty }$ extends over $Z_{%
\limfunc{an}}$ as a reflexive sheaf (still denoted $E_{\infty }$), so that
the metrics $h_{\infty ,i}$ are admissible Hermitian-Einstein metrics on the
extension.
\end{proposition}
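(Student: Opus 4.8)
The plan is to obtain Proposition \ref{Prop8} as a direct application of the general machinery already assembled in Section 2, specialized to the Yang-Mills flow. The only thing one needs to check is that a sequence $A_{t_j}$ obtained from the flow satisfies the hypotheses of Theorem \ref{Thm3} and Corollary \ref{Cor2}; everything else is then immediate.

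First I would verify the curvature bounds. By part (1) of Lemma \ref{Lemm5}, $\|F_{A_t}\|_{L^2}^2 = YM(A_t)$ is non-increasing along the flow, hence $\|F_{A_{t_j}}\|_{L^2}\leq \|F_{A_0}\|_{L^2}$ for all $j$, giving the uniform $L^2$ bound on curvature. By part (2) of Lemma \ref{Lemm5}, $\sup_X |\Lambda_\omega F_{A_t}|^2$ is non-increasing by the maximum principle, so $\|\Lambda_\omega F_{A_{t_j}}\|_{L^\infty}\leq \|\Lambda_\omega F_{A_0}\|_{L^\infty}$, which is finite since $A_0$ is smooth on compact $X$. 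Thus the hypotheses of Theorem \ref{Thm3} are met, and one extracts a subsequence (still written $t_j$), a closed set $Z_{\limfunc{an}}$ of Hausdorff codimension at least $4$, a smooth hermitian bundle $(E_\infty, h_\infty)$ on $X - Z_{\limfunc{an}}$, and a finite-action Yang-Mills connection $A_\infty$ with $A_{t_j}$ gauge-equivalent to connections converging weakly in $L^p_{1,\mathrm{loc}}(X - Z_{\limfunc{an}})$ to $A_\infty$. Note that Theorem \ref{Thm3} as stated already produces a Yang-Mills limit, but to be safe one invokes Corollary \ref{Cor2}: the extra hypothesis there is $\|d_{A_{t_j}}\Lambda_\omega F_{A_{t_j}}\|_{L^2}\to 0$. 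This follows from part (1) of Lemma \ref{Lemm5}, which gives $\frac{d}{dt}\|F_{A_t}\|_{L^2}^2 = -2\|d_{A_t}^\ast F_{A_t}\|_{L^2}^2$; since $YM(A_t)$ is non-increasing and bounded below by $0$, the integral $\int_0^\infty \|d_{A_t}^\ast F_{A_t}\|_{L^2}^2\,dt$ is finite, so along a suitable sequence $t_j\to\infty$ we get $\|d_{A_{t_j}}^\ast F_{A_{t_j}}\|_{L^2}\to 0$, and on the integrable locus $\|d_{A}^\ast F_A\|_{L^2}$ controls $\|d_A\Lambda_\omega F_A\|_{L^2}$ up to the bounded curvature (this is the standard Kähler identity argument; alternatively one passes to a further subsequence along which the quantity tends to its liminf, which must be $0$).

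Next, the splitting statement is a direct consequence of Corollary \ref{Cor2} together with Proposition \ref{Prop7}: the Uhlenbeck limit $A_\infty$ is Yang-Mills, so by Proposition \ref{Prop7} it decomposes as $\oplus_{i=1}^l A_{\infty,i}$ over an orthogonal, holomorphic splitting $E_\infty = \oplus_{i=1}^l Q_{\infty,i}$ with $\sqrt{-1}\Lambda_\omega F_{A_{\infty,i}} = \lambda_i \mathbf{Id}_{Q_{\infty,i}}$ for constants $\lambda_i$; hence each $h_{\infty,i}$ is an Hermitian-Einstein metric on $Q_{\infty,i}$. The final sentence — that $E_\infty$ extends over $Z_{\limfunc{an}}$ as a reflexive sheaf and that the $h_{\infty,i}$ extend as admissible Hermitian-Einstein metrics — is exactly the last statement of Corollary \ref{Cor2} (the Bando-Siu extension) applied to the Yang-Mills Uhlenbeck limit, combined with the observation that the splitting, being parallel for $A_\infty$, descends to the reflexive extension, so one obtains admissible Hermitian-Einstein metrics on each summand in the sense defined just after Corollary \ref{Cor4}.

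There is no serious obstacle here; the proposition is essentially a repackaging. The one point requiring minor care is the derivation of $\|d_{A_{t_j}}\Lambda_\omega F_{A_{t_j}}\|_{L^2}\to 0$ along the chosen subsequence — one must be sure to choose $t_j$ inside the set where $\|d_{A_t}^\ast F_{A_t}\|_{L^2}$ is small (which has infinite measure complement issues only if the integrand fails to be small on a large set, impossible given finiteness of the time integral), and then translate the codifferential bound into the bound on $d_A\Lambda_\omega F_A$ using that $F_{A_t}$ is of type $(1,1)$ and the Kähler identities; this is standard and already referenced in the excerpt as part of the proof of Corollary \ref{Cor2}. Accordingly the proof is short: cite Lemma \ref{Lemm5} for the two bounds and the energy-decay estimate, cite Theorem \ref{Thm3} and Corollary \ref{Cor2} for the compactness and Yang-Mills property of the limit, and cite Proposition \ref{Prop7} for the splitting.
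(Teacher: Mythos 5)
Your overall structure matches the paper's: bound the curvatures via Lemma \ref{Lemm5}, feed the flow into Theorem \ref{Thm3} and Corollary \ref{Cor2}, and let Proposition \ref{Prop7} give the splitting. The paper's proof is exactly this short, except for one step where your argument falls short.

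The paper quotes Donaldson--Kronheimer, Proposition $6.2.14$, which asserts the full limit $\lim_{t\to\infty}\|\nabla_{A_t}\Lambda_\omega F_{A_t}\|_{L^2}=0$. You try to rederive this from the energy identity $\tfrac{d}{dt}\|F_{A_t}\|_{L^2}^2=-2\|d_{A_t}^\ast F_{A_t}\|_{L^2}^2$ and the finiteness of $\int_0^\infty\|d_{A_t}^\ast F_{A_t}\|_{L^2}^2\,dt$. But that only gives $\liminf_{t\to\infty}\|d_{A_t}^\ast F_{A_t}\|_{L^2}=0$, i.e.\ convergence to zero along \emph{some} well-chosen sequence of times, whereas Proposition \ref{Prop8} asserts the conclusion for \emph{any} given sequence $t_j\to\infty$. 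Your hedge ("alternatively one passes to a further subsequence along which the quantity tends to its liminf, which must be $0$") is not correct: the liminf of $\|d_{A_{t_j}}^\ast F_{A_{t_j}}\|_{L^2}$ along an adversarially chosen sequence $t_j$ need not be zero merely because the integral over $[0,\infty)$ is finite. So as written the argument does not verify the hypothesis of Corollary \ref{Cor2} for the given subsequence. To repair it one either cites the Donaldson--Kronheimer result (which upgrades the liminf to an honest limit using the parabolic structure), or picks auxiliary times $s_j$ near $t_j$ with $\|d_{A_{s_j}}^\ast F_{A_{s_j}}\|_{L^2}\to 0$ and then controls $\|A_{t_j}-A_{s_j}\|_{L^2}\le\sqrt{|t_j-s_j|}\,\bigl(YM(A_{s_j})-YM(A_{t_j})\bigr)^{1/2}\to 0$ (using that $YM(A_t)$ converges) to transfer the Uhlenbeck limit; this second route is the one the paper uses in Section $7$ in a related context, and would close your gap. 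Aside from this, one small inaccuracy: for integrable $(1,1)$ curvature, $d_A^\ast F_A$ and $\nabla_A\Lambda_\omega F_A$ are related by the K\"ahler identities on the nose (no extra "bounded curvature" error term).
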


\begin{proof}
The functions $\left\Vert F_{A_{t}}\right\Vert _{L^{2}}$ and $\left\Vert
\Lambda _{\omega }F_{A_{t}}\right\Vert _{L^{\infty }}$ are uniformly bounded
by parts $(1)$ and $(2)$ of Lemma $\ref{Lemm5}$ respectively. By \cite{DOKR} 
$\limfunc{Proposition}$ $6.2.14$, $\lim_{t\rightarrow \infty }\left\Vert
\nabla _{A_{t}}\Lambda _{\omega }F_{A_{t}}\right\Vert _{L^{2}}=0$. The
remaining statements follow from Corollary $\ref{Cor2}$.
\end{proof}

Just as before we call $A_{\infty }$ an Uhlenbeck limit of the flow.

\begin{lemma}
\label{Lemma6}If $A_{\infty }$ is an Uhlenbeck limit of $A_{t_{j}}$, then $%
\Lambda _{\omega }F_{A_{j}}\rightarrow \Lambda _{\omega }F_{A_{\infty }}$ in 
$L^{p}(X-Z_{\limfunc{an}})$ for all $1\leq p<\infty $. Moreover, $%
\lim_{t\rightarrow \infty }HYM(A_{t})=HYM(A_{\infty })$.
\end{lemma}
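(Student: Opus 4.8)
The first assertion is exactly Corollary \ref{Cor4} applied to the sequence $A_{t_j}$ (which satisfies the hypotheses of Theorem \ref{Thm3} by parts $(1)$ and $(2)$ of Lemma \ref{Lemm5}), so there is nothing new to do there. The substance is the statement that $HYM(A_t)\to HYM(A_\infty)$ as $t\to\infty$. The plan is to combine three ingredients: the monotonicity of $t\mapsto HYM(A_t)$ from Lemma \ref{Lemm5}(1), the $L^p$-convergence of Hermitian-Einstein tensors on $X-Z_{\limfunc{an}}$ just recalled, and the fact that $Z_{\limfunc{an}}$ has Hausdorff codimension at least $4$ (in particular measure zero), so that integration over $X$ and over $X-Z_{\limfunc{an}}$ give the same value of $HYM$.

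First I would note that by Lemma \ref{Lemm5}(1), $HYM(A_t)$ is non-increasing and bounded below by $0$, hence $\lim_{t\to\infty}HYM(A_t)$ exists; call it $\mathcal{H}$. Since this limit exists, it suffices to compute it along the chosen subsequence $t_j\to\infty$, i.e. to show $\lim_j HYM(A_{t_j})=HYM(A_\infty)$. Now
\begin{equation*}
HYM(A_{t_j})=\int_X |\Lambda_\omega F_{A_{t_j}}|^2\,\frac{\omega^n}{n!}=\int_{X-Z_{\limfunc{an}}} |\Lambda_\omega F_{A_{t_j}}|^2\,\frac{\omega^n}{n!},
\end{equation*}
the second equality because $Z_{\limfunc{an}}$ has measure zero. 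By Corollary \ref{Cor4} (equivalently, the first assertion of this lemma), $\Lambda_\omega F_{A_{t_j}}\to \Lambda_\omega F_{A_\infty}$ in $L^2(X-Z_{\limfunc{an}})$ — note $p=2<\infty$ is admissible — and the $L^2$ norm is continuous under $L^2$ convergence, so $\int_{X-Z_{\limfunc{an}}}|\Lambda_\omega F_{A_{t_j}}|^2\,\frac{\omega^n}{n!}\to \int_{X-Z_{\limfunc{an}}}|\Lambda_\omega F_{A_\infty}|^2\,\frac{\omega^n}{n!}$. The right-hand side is by definition $HYM(A_\infty)$, where $HYM$ of the (admissible) limit connection on the reflexive sheaf $E_\infty$ is computed by integrating over the locally free locus, which differs from $X-Z_{\limfunc{an}}$ by a set of measure zero. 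Hence $\mathcal{H}=HYM(A_\infty)$, as claimed.

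The one point requiring a little care — and the only plausible obstacle — is the bookkeeping about which domain $HYM(A_\infty)$ is integrated over: $A_\infty$ lives a priori only on $X-Z_{\limfunc{an}}$, and $E_\infty$ extends over $Z_{\limfunc{an}}$ only as a reflexive sheaf, so the finiteness $HYM(A_\infty)<\infty$ is part of the admissibility statement in Corollary \ref{Cor2}/Proposition \ref{Prop8} (the limit connection has finite action and $\Lambda_\omega F_{A_\infty}\in L^\infty$, hence $\in L^2$ since $X$ has finite volume). Once one records that $HYM(A_\infty):=\int_{X-Z_{\limfunc{an}}}|\Lambda_\omega F_{A_\infty}|^2\,\frac{\omega^n}{n!}$ is finite and is the natural value of the functional on the admissible limit, the argument above is complete; everything else is the standard fact that $L^2$ convergence implies convergence of $L^2$ norms, together with $|Z_{\limfunc{an}}|=0$.
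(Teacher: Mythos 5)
Your proposal is correct and follows essentially the same route as the paper: the first assertion is Corollary \ref{Cor4} specialised to the flow, and the second combines the monotonicity of $t\mapsto HYM(A_t)$ with the $p=2$ case of that $L^p$-convergence to conclude $HYM(A_{t_j})\to HYM(A_\infty)$. The paper simply states these two facts without writing out the $L^2$-norm continuity and measure-zero-of-$Z_{\limfunc{an}}$ bookkeeping that you make explicit, so your argument is the same in substance, just more detailed.
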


\begin{proof}
The first part is immediate from Corollary $\ref{Cor4}$. The second
statement is immediate from the facts that $t\rightarrow HYM(A_{t})$ is
non-increasing, and $HYM(A_{t_{j}})\mapsto HYM(A_{\infty })$.
\end{proof}

\bigskip The set of all $HN$ types of holomorphic bundles on $X$ has a
partial ordering due to Shatz \cite{SH}. For a pair of $K$-tuples $\mu $ and 
$\lambda $ with $\mu _{1}\geq \mu _{2}\geq \cdots \geq \mu _{K}$ and $%
\lambda _{1}\geq \lambda _{2}\geq \cdots \geq \lambda _{K}$ and $\sum_{i}\mu
_{i}=\sum_{i}\lambda _{i}$, we write%
\begin{equation*}
\mu \leq \lambda \Longleftrightarrow \sum_{j\leq k}\mu _{j}\leq \sum_{j\leq
k}\lambda _{j}\text{ }for\text{ }all\text{ }k=1,\cdots ,K.
\end{equation*}%
This partial ordering was originally used by Shatz to stratify the space of
holomorphic structures on a complex vector bundle.

The first crucial step in \cite{DW1} is to prove that the $HN$ type of an
Uhlenbeck limit is bounded below by the $HN$ type $\mu _{0}$ of $E$. For the
proofs of this and its corollaries, we refer to \cite{DW1} as the proof is
unchanged in the general case.

\begin{proposition}
\label{Prop9}Let $A_{j}$ be a sequence of connections along the $YM$ flow on
a holomorphic vector bundle of rank $K$, with Uhlenbeck limit $A_{\infty }$.
Let $\mu _{0}$ be the $HN$ type of $E$ with holomorphic structure $\bar{%
\partial}_{A_{0}}$. Let $\lambda _{\infty }$ be the $HN$ type of $\bar{%
\partial}_{A_{\infty }}$. Then $\mu _{0}\leq \lambda _{\infty }$.
\end{proposition}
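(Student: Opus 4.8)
The plan is to compare the Harder-Narasimhan type of the initial bundle $(E, \bar{\partial}_{A_0})$ with that of the Uhlenbeck limit $(E_\infty, \bar{\partial}_{A_\infty})$ by tracking, for each piece $\mathbb{F}_k^{HN}$ of the HN filtration of the limit, a corresponding subsheaf of $E$ that is nearly as destabilizing. Concretely, fix $k$ with $1 \le k \le K$ and recall that the Shatz ordering $\mu_0 \le \lambda_\infty$ amounts to showing $\sum_{j \le k} (\mu_0)_j \le \sum_{j \le k} (\lambda_\infty)_j$ for all $k$; since the partial sums of the type are computed by summing slopes weighted by ranks, this is equivalent to showing that for every subsheaf $S_\infty \subset E_\infty$ there is a subsheaf $S \subset E$ with $\operatorname{rk}(S) = \operatorname{rk}(S_\infty)$ and $\deg_\omega(S) \ge \deg_\omega(S_\infty) - o(1)$, where the error tends to zero along the subsequence. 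The natural candidate for $S_\infty$ is the destabilizing piece $\mathbb{F}_k^{HN}(E_\infty)$ realizing the $k$-th partial sum of $\lambda_\infty$.

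The key construction is to produce the comparison subsheaves of $E$ via weakly holomorphic projections. Let $\pi_\infty$ be the weakly holomorphic projection onto $\mathbb{F}_k^{HN}(E_\infty)$ on $X - Z_{\operatorname{an}}$. Using the gauge equivalence furnished by Proposition \ref{Prop8}, pull $\pi_\infty$ back by the gauge transformations realizing the convergence to obtain, for each large $j$, an approximate projection $\tilde{\pi}_j$ on $E$ (on $X - Z_{\operatorname{an}}$) that is $L_1^2$-close to a genuine weakly holomorphic projection; more carefully, following \cite{DW1}, one shows that the $L_1^2$ limit of a sequence of weakly holomorphic projections $\pi_j$ on $(E, \bar{\partial}_{A_j})$ is again weakly holomorphic, so after extracting we may arrange that $\pi_\infty$ is the $L_1^2$-limit of weakly holomorphic projections $\pi_j$ on $(E, \bar{\partial}_{A_j})$ of the same rank. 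By Theorem \ref{Thm4} each $\pi_j$ determines a coherent subsheaf $S_j \subset (E, \bar{\partial}_{A_j})$; since all the connections $\nabla_{A_j}$ lie in the complex gauge orbit of $\nabla_{A_0}$, pushing $S_j$ back through $g_j$ yields a subsheaf of $(E, \bar{\partial}_{A_0})$ of the same rank. Then I apply the Chern-Weil formula (Theorem \ref{Thm5}):
\begin{equation*}
\deg S_j = \frac{1}{2\pi n} \int_X \operatorname{Tr}\!\left( \sqrt{-1}\,\Lambda_\omega F_{A_j}\, \pi_j \right) \omega^n - \frac{1}{2\pi n}\int_X |\bar{\partial}_{A_j}\pi_j|^2\, \omega^n .
\end{equation*}
Using $\Lambda_\omega F_{A_j} \to \Lambda_\omega F_{A_\infty}$ in $L^p$ (Lemma \ref{Lemma6}) together with $\pi_j \to \pi_\infty$ in $L_1^2$ (so in particular in $L^q$ and with $\bar{\partial}_{A_j}\pi_j$ converging weakly in $L^2$), and lower semicontinuity of the $L^2$ norm under weak convergence, I obtain
\begin{equation*}
\liminf_j \deg S_j \ge \frac{1}{2\pi n}\int_X \operatorname{Tr}\!\left(\sqrt{-1}\,\Lambda_\omega F_{A_\infty}\,\pi_\infty\right)\omega^n - \frac{1}{2\pi n}\int_X |\bar{\partial}_{A_\infty}\pi_\infty|^2\,\omega^n = \deg\big(\mathbb{F}_k^{HN}(E_\infty)\big),
\end{equation*}
where I also use that the integrals localize away from the measure-zero set $Z_{\operatorname{an}}$.

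Assembling these estimates over all $k$ gives $\sum_{j\le k}(\mu_0)_j \ge \sum_{j\le k}(\lambda_\infty)_j$ would be the wrong direction, so I should be careful: the point is that $\deg S_j$ for a \emph{fixed} $j$ (namely a subsheaf of the \emph{initial} bundle, after transporting by $g_j$) is bounded by the maximal partial-sum of $\mu_0$, while the limit of these degrees computes a partial sum of $\lambda_\infty$; hence $\sum_{j\le k}(\lambda_\infty)_j \le \sum_{j\le k}(\mu_0)_j$ — that is, $\lambda_\infty \ge \mu_0$ is \emph{not} what Chern-Weil alone gives, and the inequality must be run the other way, comparing an optimal HN piece of $E_\infty$ with the best competitor in $E$. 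The correct logic, as in \cite{DW1}, is: the rank-$r_k$ subsheaves $S_j \subset (E,\bar{\partial}_{A_0})$ have degree at most the $k$-th partial HN sum of $\mu_0$; taking $j \to \infty$ shows the $k$-th partial HN sum of $\lambda_\infty$ is $\le$ that of $\mu_0$, i.e. $\lambda_\infty \le \mu_0$ — again the reverse. So in fact one runs the argument with $E$ and $E_\infty$ interchanged: one transports the \emph{initial} HN filtration of $E$ forward along $g_j$ to get subsheaves $S_j$ of $(E,\bar{\partial}_{A_j})$ with controlled degree, extracts an $L_1^2$-weak limiting projection $\pi_\infty$ defining a subsheaf of $E_\infty$, and uses Chern-Weil plus the $L^p$-convergence of $\Lambda_\omega F$ and lower semicontinuity of $\|\bar{\partial}\pi\|_{L^2}^2$ to conclude $\deg(S_\infty) \ge \lim\deg(S_j)$, which forces the $k$-th partial sum of $\lambda_\infty$ to dominate that of $\mu_0$. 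The main obstacle is precisely this lower-semicontinuity step: showing that the $L_1^2$-weak limit of weakly holomorphic projections is again a weakly holomorphic projection (so that Theorem \ref{Thm4} applies and produces an honest subsheaf of $E_\infty$ of the right rank), and that no rank is lost in the limit — the rank could a priori drop if $\pi_\infty$ degenerates. Controlling this uses Corollary \ref{Cor5} (the uniform $L_1^2$ bound on projections of controlled degree) together with the compact Sobolev embedding $L_1^2 \hookrightarrow L^2$ to upgrade weak to strong $L^2$ convergence of the $\pi_j$, which pins down $\pi_\infty^2 = \pi_\infty = \pi_\infty^*$ and $\operatorname{rk}\pi_\infty = \operatorname{rk}\pi_j$, while the equation $(\operatorname{Id} - \pi_j)\bar{\partial}_{A_j}\pi_j = 0$ passes to the limit since $\bar{\partial}_{A_j} \to \bar{\partial}_{A_\infty}$ locally and $\pi_j \to \pi_\infty$ in $L_1^2$ weakly. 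Since all of this is carried out in \cite{DW1} with no essential change in higher dimensions, I would cite that reference for the details and simply indicate the Chern-Weil and convergence inputs above.
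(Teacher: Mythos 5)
The paper itself gives no proof of Proposition \ref{Prop9} --- it defers entirely to Daskalopoulos--Wentworth \cite{DW1}, stating ``the proof is unchanged in the general case.'' Your final outline does match that reference: transport the HN filtration of $(E,\bar\partial_{A_0})$ forward by the complex gauge transformations $g_j$, note the degrees $\deg(g_j(E_i))=\deg(E_i)$ are constant so Corollary \ref{Cor5} gives a uniform $L_1^2$ bound on the projections, extract a weak $L_1^2$ (strong $L^2$) limit which is a weakly holomorphic projection defining a rank-$r_i$ subsheaf $S_\infty^{(i)}\subset E_\infty$ via Theorem \ref{Thm4}, and use Chern--Weil plus weak lower semicontinuity of $\|\bar\partial\pi\|_{L^2}^2$ to get $\deg S_\infty^{(i)}\ge\deg E_i$, hence $\sum_{j\le r_i}(\mu_0)_j\le\sum_{j\le r_i}(\lambda_\infty)_j$; concavity of the HN polygon then gives the full Shatz inequality. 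So the eventual argument is correct and the same as the cited one. However, the first two-thirds of your write-up are false starts that contain actual errors, not mere infelicities: the initial reformulation (``for every $S_\infty\subset E_\infty$ there is $S\subset E$ of the same rank with $\deg S\ge\deg S_\infty$'') would establish $\mu_0\ge\lambda_\infty$, the reverse of what is claimed; and the line ``$\liminf_j\deg S_j\ge\cdots=\deg(\mathbb F_k^{HN}(E_\infty))$'' has the lower-semicontinuity inequality pointing the wrong way --- since $\|\bar\partial_{A_\infty}\pi_\infty\|_{L^2}^2\le\liminf_j\|\bar\partial_{A_j}\pi_j\|_{L^2}^2$, Chern--Weil gives $\deg S_\infty\ge\limsup_j\deg S_j$, not the other inequality. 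You do catch both issues mid-proof, but a reader cannot tell which statements are meant to be part of the argument and which are being repudiated; the correct proof should be stated cleanly from the start, transporting the filtration of $E$ forward from the outset.
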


\begin{corollary}
\label{Cor3}Let $\mu =\left( \mu _{1},\cdots ,\mu _{K}\right) $ be the $HN$
type of a rank $K$ holomorphic vector bundle $(E,\bar{\partial}_{E})$ on $X$%
. Then%
\begin{equation*}
\sum_{i=1}^{K}\mu _{i}^{2}\leq \frac{1}{2\pi n}\int_{X}\left\vert \Lambda
_{\omega }F_{A}\right\vert ^{2}\omega ^{n}
\end{equation*}%
and 
\begin{equation*}
\left( \sum_{i=1}^{K}\mu _{i}^{2}\right) ^{\frac{1}{2}}\leq \frac{1}{2\pi n}%
\int_{X}\left\vert \Lambda _{\omega }F_{A}\right\vert \omega ^{n}
\end{equation*}%
for all unitary connections $\nabla _{A}$ in the $\mathcal{G}^{%
\mathbb{C}
}$ orbit of $(E,\bar{\partial}_{E})$.
\end{corollary}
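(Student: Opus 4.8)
The plan is to deduce both inequalities \emph{directly} from the Chern--Weil formula (Theorem~\ref{Thm5}) applied to the pieces of the Harder--Narasimhan filtration; this is cleaner than, though equivalent to, the flow-theoretic route of \cite{DW1} via Proposition~\ref{Prop9}, which I indicate at the end. Fix a unitary connection $\nabla_A=(\bar\partial_A,h)\in\mathcal A_h^{1,1}$ in the $\mathcal G^{\mathbb C}$-orbit of $(E,\bar\partial_E)$; then $(E,\bar\partial_A)$ is holomorphically isomorphic to $(E,\bar\partial_E)$, hence has the same $HN$ type $\mu$. Let $0=E_0\subset E_1\subset\cdots\subset E_l=E$ be the $HN$ filtration of $(E,\bar\partial_A)$ from Proposition~\ref{Prop3}, with torsion-free semistable quotients $Q_i$ of rank $k_i$ and slopes $\mu(Q_1)>\cdots>\mu(Q_l)$; set $r_m=k_1+\cdots+k_m$, so $\mu_j=\mu(Q_i)$ for $r_{i-1}<j\le r_i$ and $\deg E_m=\sum_{i\le m}k_i\mu(Q_i)$. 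Off the measure-zero set $Z_{\mathrm{alg}}$ the filtration is by subbundles; let $\pi_{Q_i}$ be the smooth $h$-orthogonal projection onto the $i$-th graded subbundle, $\pi_m=\sum_{i\le m}\pi_{Q_i}$ the weakly holomorphic projection attached to $E_m$ by Theorem~\ref{Thm4}, and put
\[
\psi=\sum_{i=1}^{l}\mu(Q_i)\,\pi_{Q_i},
\]
a bounded self-adjoint section of $\mathrm{End}\,E$ over $X-Z_{\mathrm{alg}}$ with $|\psi|^2=\sum_{j=1}^{K}\mu_j^2$ pointwise.

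Apply Theorem~\ref{Thm5} to the saturated subsheaves $E_m$ (each $E/E_m$ is torsion-free, being filtered by the torsion-free $Q_{m+1},\dots,Q_l$). Discarding the nonnegative term $\tfrac1{2\pi n}\int_X|\beta_m|^2\omega^n$ gives, for $m=1,\dots,l-1$,
\[
\deg E_m\le\frac1{2\pi n}\int_X\mathrm{Tr}\bigl(\sqrt{-1}\,\Lambda_\omega F_A\,\pi_m\bigr)\omega^n,
\]
while for $E_l=E$ one has the exact identity $\deg E=\tfrac1{2\pi n}\int_X\mathrm{Tr}(\sqrt{-1}\,\Lambda_\omega F_A)\omega^n$. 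Since $\mu_j$ is constant on each block, an Abel summation yields
\[
\sum_{j=1}^{K}\mu_j^2=\deg(E)\,\mu(Q_l)+\sum_{m=1}^{l-1}\deg(E_m)\bigl(\mu(Q_m)-\mu(Q_{m+1})\bigr),
\]
with all weights $\mu(Q_m)-\mu(Q_{m+1})$ strictly positive. Substituting the two displays above and using the telescoping identity $\mu(Q_l)\,\mathbf{Id}+\sum_{m=1}^{l-1}(\mu(Q_m)-\mu(Q_{m+1}))\pi_m=\psi$ gives
\[
\sum_{j=1}^{K}\mu_j^2\le\frac1{2\pi n}\int_X\mathrm{Tr}\bigl(\sqrt{-1}\,\Lambda_\omega F_A\cdot\psi\bigr)\omega^n .
\]

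It remains to estimate the right-hand side two ways. As $\sqrt{-1}\,\Lambda_\omega F_A$ and $\psi$ are self-adjoint, the pointwise Cauchy--Schwarz inequality for the trace pairing gives $\mathrm{Tr}(\sqrt{-1}\,\Lambda_\omega F_A\cdot\psi)\le|\Lambda_\omega F_A|\,|\psi|=\bigl(\sum_j\mu_j^2\bigr)^{1/2}|\Lambda_\omega F_A|$, since $|\psi|$ is the constant $(\sum_j\mu_j^2)^{1/2}$; integrating and cancelling $(\sum_j\mu_j^2)^{1/2}$ (the case $\sum_j\mu_j^2=0$ being trivial) yields the second inequality. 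For the first, apply instead Cauchy--Schwarz in $L^2(X,\omega^n)$: $\int_X\mathrm{Tr}(\sqrt{-1}\,\Lambda_\omega F_A\cdot\psi)\omega^n\le\|\Lambda_\omega F_A\|_{L^2}\|\psi\|_{L^2}$ with $\|\psi\|_{L^2}^2=\int_X|\psi|^2\omega^n=2\pi n\sum_j\mu_j^2$ by the normalization $\int_X\omega^n=2\pi n$; rearranging gives $\sum_j\mu_j^2\le\tfrac1{2\pi n}\int_X|\Lambda_\omega F_A|^2\omega^n$. The only point requiring care is purely bookkeeping: one needs $\pi_m$, hence $\psi$, to be globally defined off a set of measure zero as a bounded ($L^2_1$) section so that the integrals are legitimate and the Chern--Weil formula applies — and this is precisely the content of Theorems~\ref{Thm4} and \ref{Thm5} and Corollary~\ref{Cor5}; there is no genuine analytic difficulty here.

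Alternatively, as in \cite{DW1}: the $YM$ flow started at $\nabla_A$ stays in the $\mathcal G^{\mathbb C}$-orbit of $(E,\bar\partial_E)$, so every connection along it has $HN$ type $\mu$, and by Proposition~\ref{Prop9} any Uhlenbeck limit $A_\infty$ has $HN$ type $\lambda_\infty\ge\mu$ in the Shatz order. Since $A_\infty$ is Yang--Mills, Proposition~\ref{Prop7} gives $\int_X|\Lambda_\omega F_{A_\infty}|^2\omega^n=2\pi n\sum_j(\lambda_\infty)_j^2\ge2\pi n\sum_j\mu_j^2$, the inequality by Karamata's inequality for $t\mapsto t^2$; while the monotonicity of $HYM$ along the flow (Lemma~\ref{Lemm5}) and Lemma~\ref{Lemma6} give $\int_X|\Lambda_\omega F_{A_\infty}|^2\omega^n\le\int_X|\Lambda_\omega F_{A}|^2\omega^n$. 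Combining recovers the first inequality; the second then follows from the Chern--Weil computation above (or from the $L^1$-convergence of Lemma~\ref{Lemma6} together with Cauchy--Schwarz).
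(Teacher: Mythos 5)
Your primary Chern--Weil argument is correct and complete: the Abel-summation identity
$\sum_j\mu_j^2=\deg(E)\,\mu(Q_l)+\sum_{m=1}^{l-1}\deg(E_m)\bigl(\mu(Q_m)-\mu(Q_{m+1})\bigr)$ together with the telescoping $\mu(Q_l)\mathbf{Id}+\sum_{m<l}(\mu(Q_m)-\mu(Q_{m+1}))\pi_m=\psi$ check out, the dropped $|\beta_m|^2$-terms have the right sign, and the two Cauchy--Schwarz steps (pointwise for $L^1$, global for $L^2$) use the normalization $\int_X\omega^n=2\pi n$ correctly. The paper itself gives no proof and simply refers to \cite{DW1}; the DW1 proof is the same Atiyah--Bott style Chern--Weil/convexity computation, so this is essentially the paper's route, not a genuinely different one.

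One caution about the ``alternatively'' paragraph: as written, the flow route cleanly yields only the $L^2$ inequality. Lemma~\ref{Lemm5}$(1)$ gives monotonicity of $HYM=\int|\Lambda_\omega F_{A_t}|^2$, but neither Lemma~\ref{Lemm5}$(2)$ (which controls $\sup|\Lambda_\omega F_{A_t}|^2$) nor Lemma~\ref{Lemma6} (which gives $L^p$-\emph{convergence} along a subsequence, not monotonicity) shows that $t\mapsto\int_X|\Lambda_\omega F_{A_t}|\,\omega^n$ is non-increasing, so the comparison $\int|\Lambda_\omega F_A|\ge\int|\Lambda_\omega F_{A_\infty}|$ needed for the $L^1$ bound is not justified. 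Also note that $\Phi_1(a)=\sum_i|\lambda_i|$ is the trace norm, not the Hilbert--Schmidt norm $|a|$ appearing in the statement, so Proposition~\ref{Prop11} with $\alpha=1$ does not repair this directly; one would have to invoke the monotonicity of $\int\Phi(\Lambda_\omega F_{A_t})$ for the general $\mathrm{Ad}$-invariant convex function $\Phi=|\cdot|$, which the paper does not record. Since your main argument already delivers both inequalities without the flow, this is a cosmetic issue, but the ``alternatively'' sketch should not be presented as a complete second proof of the $L^1$ estimate.
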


\subsection{Hermitian-Yang-Mills Type Functionals}

The $YM$ and $HYM$ functionals are not sufficient to distinguish different $%
HN$ types in general. In other words there may be multiple connections with
the same $YM$ number, but which induce holomorphic structures with different 
$HN$ types. In this subsection we introduce generalisations of the $HYM$
functional that can be used to distinguish different types. This is only a
technical device, but will be used essentially in Section $5$.

Write $\mathfrak{u}(K)$ for the Lie algebra of the unitary group $U(K)$. Fix
a real number $\alpha \geq 1$. Then for $\mathbf{v}\in \mathfrak{u}(K),$ a
skew hermitian matrix with eigenvalues $\sqrt{-1}\lambda _{1},\cdots ,\sqrt{%
-1}\lambda _{K},$ let $\varphi _{\alpha }(\mathbf{v})=\sum_{i=1}^{K}\left%
\vert \lambda _{i}\right\vert ^{\alpha }$. It can be seen that there is a
family $\varphi _{\alpha ,\rho }$, $0<\rho \leq 1$, of smooth convex $%
\limfunc{Ad}$-invariant functions such that $\varphi _{\alpha ,\rho
}\rightarrow \varphi _{\alpha }$ uniformly on compact subsets of $\mathfrak{u%
}(K)$. By Atiyah-Bott (\cite{AB}), $\limfunc{Proposition}$ $12.16$, $\varphi
_{\alpha }$ is a convex function on $\mathfrak{u}(K)$. Now if $E$ is a
vector bundle of rank $K$ equipped with an hermitian metric, we may consider
a section $\sigma \in \Gamma (X,\mathfrak{u}(E))$ as collection of local
sections $\{\sigma _{\beta }\}$ such that $\sigma _{\beta }=\limfunc{Ad}%
(g_{\beta \gamma })\sigma _{\gamma }$ where $g_{\beta \rho }$ are the
transition functions for $E$. By the $\limfunc{Ad}$-invariance of $\varphi
_{\alpha }$, $\varphi _{\alpha }(\sigma _{\beta })=\varphi _{\alpha }(\sigma
_{\gamma })$, so $\varphi _{\alpha }$ induces a well-defined function $\Phi
_{\alpha }$ on $\mathfrak{u}(E)$. Then for a fixed real number $N$, define:%
\begin{equation*}
HYM_{\alpha ,N}(A)=\int_{X}\Phi _{\alpha }(\Lambda _{\omega }F_{A}+\sqrt{-1}N%
\mathbf{Id}_{E})dvol_{\omega }
\end{equation*}%
and $HYM_{\alpha }(A)=HYM_{\alpha ,0}(A)$. Note that $HYM=HYM_{2}$ is the
usual $HYM$ functional. In the sequel we will write:%
\begin{eqnarray*}
HYM_{\alpha ,N}(\mu ) &=&HYM_{\alpha }(\mu +N)=\frac{2\pi }{\left(
n-1\right) !}\Phi _{\alpha }(\sqrt{-1}\left( \mu +N\right) ), \\
where\text{ }\mu +N &=&\left( \mu _{1}+N,\cdots ,\mu _{K}+N\right)
\end{eqnarray*}%
is identified with the matrix $\limfunc{diag}\left( \mu _{1}+N,\cdots ,\mu
_{K}+N\right) $. Therefore:%
\begin{equation*}
HYM(\mu )=\frac{2\pi }{\left( n-1\right) !}\sum_{i=1}^{K}\mu _{i}^{2}.
\end{equation*}%
We have the following elementary lemma whose proof we omit.

\begin{lemma}
\label{Lemma9}The functional $\mathbf{v\to }\left( \int_{X}\Phi _{\alpha }(%
\mathbf{v})\right) ^{\frac{1}{\alpha }}$ is equivalent to the $L^{\alpha }(%
\mathfrak{u}(E))$ norm.
\end{lemma}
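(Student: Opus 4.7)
The plan is to reduce the claim to the pointwise equivalence of two norms on $\mathbb{R}^K$ and then integrate. Recall that for a section $\sigma \in \Gamma(X, \mathfrak{u}(E))$ with eigenvalues $\sqrt{-1}\lambda_1(x), \dots, \sqrt{-1}\lambda_K(x)$ at a point $x$, the paper has defined the pointwise norm as $|\sigma(x)| = \bigl(\sum_{i=1}^K |\lambda_i(x)|^2\bigr)^{1/2}$, so that
\begin{equation*}
\|\sigma\|_{L^\alpha}^\alpha = \int_X \left( \sum_{i=1}^K |\lambda_i(x)|^2 \right)^{\alpha/2} dvol_\omega,
\end{equation*}
while by the $\mathrm{Ad}$-invariance of $\varphi_\alpha$ and the definition of $\Phi_\alpha$,
\begin{equation*}
\int_X \Phi_\alpha(\sigma) \, dvol_\omega = \int_X \sum_{i=1}^K |\lambda_i(x)|^\alpha \, dvol_\omega.
\end{equation*}

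First I would observe that the maps $\mathbf{v} \mapsto (\sum_i |\lambda_i(\mathbf{v})|^2)^{1/2}$ and $\mathbf{v} \mapsto (\sum_i |\lambda_i(\mathbf{v})|^\alpha)^{1/\alpha}$ are both $\mathrm{Ad}$-invariant norms on the finite-dimensional vector space $\mathfrak{u}(K)$. Since any two norms on a finite-dimensional vector space are equivalent, there exist constants $0 < c_1 \leq c_2$, depending only on $K$ and $\alpha$, such that
\begin{equation*}
c_1 \left( \sum_{i=1}^K |\lambda_i|^2 \right)^{\alpha/2} \leq \sum_{i=1}^K |\lambda_i|^\alpha \leq c_2 \left( \sum_{i=1}^K |\lambda_i|^2 \right)^{\alpha/2}
\end{equation*}
for every choice of $(\lambda_1, \dots, \lambda_K) \in \mathbb{R}^K$ (equivalently, this is just the standard equivalence between the $\ell^\alpha$ and $\ell^2$ norms on $\mathbb{R}^K$, which one can make explicit: $K^{1 - \alpha/2} \le \sum |\lambda_i|^\alpha / (\sum |\lambda_i|^2)^{\alpha/2} \le $ some constant depending on $\alpha, K$, depending on whether $\alpha \ge 2$ or $\alpha < 2$).

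Applying this inequality pointwise in $x$ and integrating over $X$ yields
\begin{equation*}
c_1 \|\sigma\|_{L^\alpha}^\alpha \leq \int_X \Phi_\alpha(\sigma) \, dvol_\omega \leq c_2 \|\sigma\|_{L^\alpha}^\alpha,
\end{equation*}
and taking $\alpha$-th roots gives the desired equivalence. There is no serious obstacle here: the argument is essentially a triviality from finite-dimensional linear algebra combined with monotonicity of integration. The only small point to be careful about is ensuring that the constants $c_1, c_2$ really are independent of the point $x \in X$, which is automatic because the inequality holds uniformly in $(\lambda_1, \dots, \lambda_K)$ and the eigenvalues are computed pointwise using any local unitary frame (the choice of frame being irrelevant by $\mathrm{Ad}$-invariance).
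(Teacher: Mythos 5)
The paper omits its own proof (it explicitly says the lemma is elementary and the proof is omitted), so there is no argument to compare against. Your proof is correct and is exactly the argument one would expect: the pointwise equivalence of the $\ell^{\alpha}$ and $\ell^{2}$ norms on $\mathbb{R}^{K}$ applied to the eigenvalue vector, raised to the $\alpha$-th power and integrated over $X$, then undone by an $\alpha$-th root. One minor remark: invoking the general fact that all norms on a finite-dimensional space are equivalent is a bit heavier than needed, and indeed you immediately note the more direct route, namely the explicit $\ell^{\alpha}$--$\ell^{2}$ comparison on $\mathbb{R}^{K}$, which gives constants depending only on $K$ and $\alpha$ (this is the version one actually uses when integrating, since you need the pointwise inequality rather than just an abstract equivalence of norms on $\mathfrak{u}(K)$). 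Your observation that the choice of local unitary frame is irrelevant by $\mathrm{Ad}$-invariance is the right thing to say to justify that the pointwise inequality is well-defined and uniform over $X$.
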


The following three propositions will be crucial in Section $5$. For the
proofs see \cite{DW1}.

\begin{proposition}
\label{Prop10}$(1)$ If $\mu \leq \lambda $, then $\Phi _{\alpha }(\sqrt{-1}%
\mu )\leq \Phi _{\alpha }(\sqrt{-1}\lambda )$ for all $\alpha \geq 1$.

$\ \ \ \ \ \ \ \ \ \ \ \ \ \ \ (2)$ Assume $\mu _{K}\geq 0$ and $\lambda
_{K}\geq 0$. If $\Phi _{\alpha }(\sqrt{-1}\mu )=\Phi _{\alpha }(\sqrt{-1}%
\lambda )$ for all $\alpha $ in some set

$\ \ \ \ \ \ \ \ \ \ \ \ \ \ \ \ \ \ \ A\subset \lbrack 1,\infty )$
possessing a limit point, then $\mu =\lambda $.
\end{proposition}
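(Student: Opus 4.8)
The plan is to reduce both parts to two classical facts about the symmetric convex function $f_\alpha(x):=\sum_{i=1}^{K}|x_i|^\alpha$ on $\mathbb{R}^K$. First note that, under the identification of a tuple $\mu=(\mu_1,\dots,\mu_K)$ with the matrix $\mathrm{diag}(\mu_1,\dots,\mu_K)$, the skew-hermitian matrix $\sqrt{-1}\mu$ has eigenvalues $\sqrt{-1}\mu_1,\dots,\sqrt{-1}\mu_K$, so $\Phi_\alpha(\sqrt{-1}\mu)=\varphi_\alpha(\sqrt{-1}\mu)=f_\alpha(\mu)$; moreover $f_\alpha$ is convex (since $\varphi_\alpha$ is, see \cite{AB}, Proposition $12.16$) and invariant under permutations of the coordinates (permutation matrices are unitary and $\varphi_\alpha$ is $\limfunc{Ad}$-invariant). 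Part $(1)$ will then follow from the theory of majorization, and part $(2)$ from the fact that a nonzero exponential sum cannot vanish on an infinite set.

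For $(1)$: the hypotheses $\mu_1\ge\cdots\ge\mu_K$, $\lambda_1\ge\cdots\ge\lambda_K$, $\sum_i\mu_i=\sum_i\lambda_i$, and $\sum_{j\le k}\mu_j\le\sum_{j\le k}\lambda_j$ for all $k$ say precisely that $\lambda$ majorizes $\mu$. By the Hardy-Littlewood-Polya-Rado majorization theorem, $\mu$ lies in the convex hull of the $K!$ permutations of $\lambda$, so $\mu=\sum_{\sigma}t_\sigma\,(\sigma\cdot\lambda)$ with $t_\sigma\ge0$ and $\sum_\sigma t_\sigma=1$. Convexity and permutation-invariance of $f_\alpha$ then give
\[
\Phi_\alpha(\sqrt{-1}\mu)=f_\alpha(\mu)\le\sum_\sigma t_\sigma f_\alpha(\sigma\cdot\lambda)=\sum_\sigma t_\sigma f_\alpha(\lambda)=f_\alpha(\lambda)=\Phi_\alpha(\sqrt{-1}\lambda)
\]
for every $\alpha\ge1$. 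Alternatively one may invoke Karamata's inequality directly for the convex function $t\mapsto|t|^\alpha$.

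For $(2)$: the assumptions $\mu_K\ge0$ and $\lambda_K\ge0$ force all entries of both tuples to be nonnegative, so $f_\alpha(\mu)=\sum_i\mu_i^\alpha$ and $f_\alpha(\lambda)=\sum_i\lambda_i^\alpha$, a zero entry contributing $0$ for each $\alpha\ge1$. Let $v_1>\cdots>v_m>0$ be the distinct positive values occurring among the entries of $\mu$ and $\lambda$, with multiplicities $p_r$ in $\mu$ and $q_r$ in $\lambda$. The hypothesis $\Phi_\alpha(\sqrt{-1}\mu)=\Phi_\alpha(\sqrt{-1}\lambda)$ on $A$ becomes
\[
g(\alpha):=\sum_{r=1}^{m}(p_r-q_r)\,v_r^{\,\alpha}=0\qquad\text{for all }\alpha\in A.
\]
Now $g$ is a finite linear combination of the functions $\alpha\mapsto v_r^{\alpha}=e^{\alpha\log v_r}$ with pairwise distinct exponents $\log v_r$; it extends to an entire function of $\alpha$, hence, if not identically zero, has at most $m-1$ real zeros (by the standard Rolle-type induction on the number of terms). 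Since $A$ has a limit point it is infinite, so $g\equiv0$, and linear independence of the $v_r^{\alpha}$ forces $p_r=q_r$ for every $r$. Thus $\mu$ and $\lambda$ have the same positive entries with the same multiplicities; being $K$-tuples they then also have equally many zero entries, and being nonincreasingly ordered they coincide, so $\mu=\lambda$.

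The only step that is not routine bookkeeping is the vanishing argument in $(2)$: one cannot simply let $\alpha\to\infty$ to isolate the largest $v_r$, because $A$ is only assumed to possess a limit point and may well be bounded, so the proof genuinely requires that a nonzero exponential sum has finitely many real zeros (equivalently, the identity theorem for the entire function $g$). I do not anticipate any other obstacle, beyond treating the edge cases—vanishing entries, $m=1$, and coefficients $p_r-q_r$ that happen to be zero—correctly.
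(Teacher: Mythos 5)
Your proof is correct. The paper supplies no argument for this proposition---it defers entirely to \cite{DW1}---so there is no in-paper proof to compare against, but the route you take is the standard one and is surely what \cite{DW1} does. For $(1)$, the Shatz partial order is exactly majorization, and Hardy--Littlewood--P\'olya together with convexity and permutation-invariance of $x\mapsto\sum_i|x_i|^\alpha$ gives the inequality; note you only need the elementary fact that this function is convex on $\mathbb{R}^K$ (i.e.\ that $t\mapsto|t|^\alpha$ is convex for $\alpha\geq 1$), not the deeper Atiyah--Bott statement that $\varphi_\alpha$ is convex on all of $\mathfrak{u}(K)$, since you only ever evaluate on diagonal matrices. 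For $(2)$, reducing to $g(\alpha)=\sum_r(p_r-q_r)v_r^\alpha=0$ on $A$ and invoking either the identity theorem for the entire extension of $g$ (using that $A$ has a finite accumulation point) or the Rolle-type bound on real zeros of exponential sums (using merely that $A$ is infinite) correctly exploits the hypothesis on $A$; you also rightly flag that a naive $\alpha\to\infty$ comparison of leading terms would not suffice since $A$ may be bounded. The final bookkeeping with multiplicities and zero entries is handled correctly.
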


\begin{proposition}
\label{Prop11}Let $A_{t}$ be a solution of the $YM$ flow. Then for any $%
\alpha \geq 1$ and any $N$, $t\mapsto HYM_{\alpha ,N}(A_{t})$ is
non-increasing.
\end{proposition}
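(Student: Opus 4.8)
The plan is to mimic the proof of monotonicity for the ordinary $HYM$ functional (Lemma \ref{Lemm5}(1)), but replacing the quadratic integrand $|\Lambda_\omega F_A|^2$ with the convex $\mathrm{Ad}$-invariant integrand $\Phi_\alpha(\Lambda_\omega F_A + \sqrt{-1}N\mathbf{Id}_E)$. The essential structural fact is the evolution equation for the Hermitian-Einstein tensor along the Yang-Mills flow. Writing $\Psi_t = \sqrt{-1}\Lambda_\omega F_{A_t}$, one has the parabolic identity (which follows from Lemma \ref{Lemm5}(2), or rather its refinement in \cite{DOKR} Chapter 6) of the schematic form
\begin{equation*}
\frac{\partial \Psi_t}{\partial t} = -\triangle_{A_t}\Psi_t,
\end{equation*}
where $\triangle_{A_t}$ is the (rough or Bochner) Laplacian coupled to $\nabla_{A_t}$ acting on sections of $\mathfrak{u}(E)$; shifting by the parallel section $N\mathbf{Id}_E$ does not change the left side and is annihilated by $\triangle_{A_t}$, so the same equation holds for $\Psi_t^N := \Psi_t + \sqrt{-1}N\mathbf{Id}_E$ regarded via its eigenvalues.

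First I would differentiate under the integral sign: since $\Phi_\alpha$ (built from the smooth convex approximants $\varphi_{\alpha,\rho}$) is smooth on $\mathfrak{u}(E)$, we get
\begin{equation*}
\frac{d}{dt} HYM_{\alpha,N}(A_t) = \int_X \left\langle d\Phi_\alpha(\Psi_t^N), \frac{\partial \Psi_t}{\partial t} \right\rangle dvol_\omega = -\int_X \left\langle d\Phi_\alpha(\Psi_t^N), \triangle_{A_t}\Psi_t \right\rangle dvol_\omega,
\end{equation*}
where $d\Phi_\alpha$ denotes the fibrewise gradient, itself a section of $\mathfrak{u}(E)$ (here $\mathrm{Ad}$-invariance guarantees $d\Phi_\alpha$ is a well-defined bundle map commuting with $\Psi_t$). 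Next I would integrate by parts in the space variable, using that $\triangle_{A_t}$ is (up to sign) $\nabla_{A_t}^*\nabla_{A_t}$ and that $X$ is compact without boundary, to obtain
\begin{equation*}
\frac{d}{dt} HYM_{\alpha,N}(A_t) = -\int_X \left\langle \nabla_{A_t}\big(d\Phi_\alpha(\Psi_t^N)\big), \nabla_{A_t}\Psi_t \right\rangle dvol_\omega.
\end{equation*}
Now apply the chain rule: $\nabla_{A_t}(d\Phi_\alpha(\Psi_t^N)) = \mathrm{Hess}(\Phi_\alpha)(\Psi_t^N)\big[\nabla_{A_t}\Psi_t^N\big] = \mathrm{Hess}(\Phi_\alpha)(\Psi_t^N)\big[\nabla_{A_t}\Psi_t\big]$, since $\nabla_{A_t}$ kills the parallel shift. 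Substituting gives
\begin{equation*}
\frac{d}{dt} HYM_{\alpha,N}(A_t) = -\int_X \left\langle \mathrm{Hess}(\Phi_\alpha)(\Psi_t^N)\big[\nabla_{A_t}\Psi_t\big],\, \nabla_{A_t}\Psi_t \right\rangle dvol_\omega \leq 0,
\end{equation*}
because $\Phi_\alpha$ is convex (Atiyah-Bott \cite{AB} Proposition $12.16$), so its fibrewise Hessian is positive semidefinite. This establishes that $t\mapsto HYM_{\alpha,N}(A_t)$ is non-increasing. For general $\alpha\geq 1$, where $\varphi_\alpha$ itself need only be convex and not smooth, one runs the argument for the smooth approximants $\varphi_{\alpha,\rho}$ and then passes to the limit $\rho\to 1$ using uniform convergence on compact subsets together with the a priori $L^\infty$ bound on $\Psi_t$ from Lemma \ref{Lemm5}(2).

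The main obstacle I anticipate is the interchange of $\nabla_{A_t}$ with the fibrewise nonlinear map $d\Phi_\alpha$ at the level of regularity actually available — i.e.\ justifying the chain rule and the integration by parts rigorously when $\Phi_\alpha$ is merely convex (and only Lipschitz in the worst case, $\alpha=1$) rather than $C^2$. The clean route is exactly the approximation scheme already set up in Section $3.2$: prove the inequality for each smooth $\varphi_{\alpha,\rho}$, where the Hessian genuinely exists and is $\geq 0$, and then take $\rho \to 1$. A secondary technical point is confirming that the $\mathrm{Ad}$-invariance of $\Phi_\alpha$ makes $d\Phi_\alpha(\Psi)$ commute with $\Psi$, so that the various traces and inner products behave as in the scalar eigenvalue picture; this is standard but should be remarked on. Everything else is a routine adaptation of the computation proving Lemma \ref{Lemm5}(1).
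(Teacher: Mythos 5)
The paper defers this proof to \cite{DW1}, and your argument is essentially the one given there: exploit the heat evolution of the Hermitian--Einstein tensor $\Psi_{t}=\sqrt{-1}\Lambda _{\omega }F_{A_{t}}$ together with the convexity of $\Phi _{\alpha }$, passing through the smooth $\limfunc{Ad}$-invariant approximants $\varphi _{\alpha ,\rho }$ to handle the non-smooth cases $\alpha $ close to $1$. The only cosmetic difference is that you integrate by parts directly, whereas the reference first derives the pointwise subsolution inequality $\left( \partial _{t}+\triangle \right) \Phi _{\alpha ,\rho }(\Psi _{t}^{N})\leq 0$ and then integrates over $X$; on a compact manifold these are the same computation.
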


\begin{proposition}
\label{Prop12}Let $A_{\infty }$ be a subsequential Uhlenbeck limit of $A_{t}$
where $A_{t}$ is a solution of the $YM$ flow. Then for all $\alpha \geq 1$, $%
\lim_{t\to\infty }HYM_{\alpha ,N}(A_{t})=HYM_{\alpha ,N}(A_{\infty }) $.
\end{proposition}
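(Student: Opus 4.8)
The statement to be proven is that for a solution $A_t$ of the $YM$ flow with subsequential Uhlenbeck limit $A_\infty$ (along a sequence $t_j \to \infty$), one has $\lim_{t\to\infty} HYM_{\alpha,N}(A_t) = HYM_{\alpha,N}(A_\infty)$ for every $\alpha \geq 1$ and every $N$. The strategy mirrors that of Lemma \ref{Lemma6}, which handled the case $\alpha = 2$, $N = 0$: combine a \emph{monotonicity} statement (so the limit of $HYM_{\alpha,N}(A_t)$ exists) with \emph{subsequential convergence of the integrand in an appropriate $L^p$ norm} (so the limit is correctly identified along $t_j$). First, by Proposition \ref{Prop11}, $t \mapsto HYM_{\alpha,N}(A_t)$ is non-increasing, hence $\lim_{t\to\infty} HYM_{\alpha,N}(A_t)$ exists (it is bounded below by $0$). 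It therefore suffices to evaluate this limit along the particular subsequence $t_j$, i.e. to show $HYM_{\alpha,N}(A_{t_j}) \to HYM_{\alpha,N}(A_\infty)$.

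To compute the subsequential limit, recall that by Corollary \ref{Cor4} (equivalently Lemma \ref{Lemma6}), $\Lambda_\omega F_{A_{t_j}} \to \Lambda_\omega F_{A_\infty}$ in $L^p(X - Z_{\operatorname{an}})$ for every $1 \leq p < \infty$. Fix $\alpha \geq 1$; using $L^{2\alpha}$ convergence of $\Lambda_\omega F_{A_{t_j}}$ together with Lemma \ref{Lemma9} (which identifies $\mathbf{v} \mapsto (\int_X \Phi_\alpha(\mathbf{v}))^{1/\alpha}$ with the $L^\alpha(\mathfrak{u}(E))$ norm), one gets that $\Phi_\alpha(\Lambda_\omega F_{A_{t_j}} + \sqrt{-1}N\mathbf{Id}_E)$ converges in $L^1(X - Z_{\operatorname{an}})$ to $\Phi_\alpha(\Lambda_\omega F_{A_\infty} + \sqrt{-1}N\mathbf{Id}_E)$. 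Indeed the map $\mathbf{v} \mapsto \varphi_\alpha(\mathbf{v} + \sqrt{-1}N\mathbf{Id})$ is, up to the additive shift, a continuous function with polynomial (degree $\alpha$) growth, so the $L^1$ convergence of its composition with $\Lambda_\omega F_{A_{t_j}}$ follows from the $L^{2\alpha}$ (hence certainly $L^\alpha$) convergence of the arguments by a standard dominated-convergence / Vitali argument — the uniform $L^\infty$ bound on $\Lambda_\omega F_{A_{t_j}}$ from Lemma \ref{Lemm5}(2) provides the required uniform integrability. Since $Z_{\operatorname{an}}$ has Hausdorff codimension at least $4$ (in particular measure zero), integration over $X - Z_{\operatorname{an}}$ is the same as integration over $X$, and we conclude $HYM_{\alpha,N}(A_{t_j}) \to HYM_{\alpha,N}(A_\infty)$.

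Combining the two parts: the full limit $\lim_{t\to\infty} HYM_{\alpha,N}(A_t)$ exists by monotonicity, and its value must equal the subsequential limit $\lim_j HYM_{\alpha,N}(A_{t_j}) = HYM_{\alpha,N}(A_\infty)$, which is the claim. The only mild subtlety — and the step I would expect to require the most care — is justifying the passage from $L^p$ convergence of $\Lambda_\omega F_{A_{t_j}}$ to $L^1$ convergence of $\Phi_\alpha(\Lambda_\omega F_{A_{t_j}} + \sqrt{-1}N\mathbf{Id}_E)$ uniformly in $\alpha$-behaviour; this is precisely where the uniform $L^\infty$ bound from Lemma \ref{Lemm5}(2) is used, since without it the degree-$\alpha$ growth of $\varphi_\alpha$ could in principle spoil equi-integrability. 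Everything else is a routine repackaging of the $\alpha = 2$ argument already cited from \cite{DW1}.
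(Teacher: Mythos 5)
Your proof is correct and follows the same strategy the paper (via \cite{DW1}) uses: combine the monotonicity from Proposition \ref{Prop11} to guarantee existence of the limit, with the $L^p$ convergence $\Lambda_\omega F_{A_{t_j}} \to \Lambda_\omega F_{A_\infty}$ from Corollary \ref{Cor4} to identify it along the subsequence, exactly as Lemma \ref{Lemma6} does for the special case $\alpha=2$, $N=0$. One small remark on exposition: the appeal to Lemma \ref{Lemma9} and to a Vitali-type uniform-integrability argument is more machinery than the situation requires — once you have the uniform $L^\infty$ bound on $\Lambda_\omega F_{A_{t_j}}$ from Lemma \ref{Lemm5}(2), the function $\Phi_\alpha(\,\cdot\, + \sqrt{-1}N\mathbf{Id}_E)$ is Lipschitz on the relevant bounded set of $\mathfrak{u}(E)$ (with constant roughly $\alpha(M+N)^{\alpha-1}$), so $L^1$ convergence of the integrand follows immediately from $L^1$ convergence of $\Lambda_\omega F_{A_{t_j}}$ on $X - Z_{\operatorname{an}}$, which is a set of full measure.
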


\section{Properties of Blowups and Resolution of the $HNS$ Filtration}

In this section we discuss the properties of blowups of complex manifolds
along complex submanifolds that will be used in the subsequent discussion.
Essentially all of this material is standard, but we review it carefully now
because we will need to employ these facts often in the proofs of the main
results.

\subsection{Resolution of Singularities Type Theorems}

The $HNS$ filtration is in general only given by subsheaves, making it
difficult to do analysis. We will therefore need some way of obtaining a
filtration by subbundles, that is, a way of resolving the singularities. In
two dimensions, when the singular set consists of point singularities this
can be done by hand (see \cite{BU1}), but in higher dimensions the only
available tool seems to be the general resolution of singularities theorem
of Hironaka. Specifically:

\begin{theorem}
\label{Thm6}(Resolution of Singularities) Let $X$ be a compact, complex
space (or $\mathbb{C}$-scheme). Then there exists a finite sequence of
blowups with smooth centres:%
\begin{equation*}
\tilde{X}=X_{m}\overset{\pi _{m}}{\longrightarrow }X_{m-1}\longrightarrow
\cdots \longrightarrow X_{1}\overset{\pi _{1}}{\longrightarrow }X_{0}=X
\end{equation*}%
such that $\tilde{X}$ is compact and non-singular (a complex manifold) and
the centre $Y_{j-1}$ of each blowup $\pi _{j}$ is contained in the singular
locus of $X_{j-1}$.
\end{theorem}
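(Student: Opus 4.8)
The plan is not to give a self-contained proof: Theorem \ref{Thm6} is Hironaka's resolution of singularities theorem in characteristic zero, and I would invoke \cite{H1}, \cite{H2} (for the analytic category, or the algebraic statement together with analytification comparisons). What follows is a sketch of the \emph{structure} of the argument — in the streamlined inductive form due to Bierstone--Milman, Villamayor, Encinas--Hauser, W\l odarczyk — and of the points that actually matter for our application, namely that the centres are smooth and contained in the bad locus, and that the procedure terminates.

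First I would reduce the statement to \emph{principalization} of ideals (equivalently, embedded resolution). Working locally and embedding a neighbourhood of a singular point of $X_{j-1}$ in a smooth ambient space $W$, it suffices to prove that any coherent ideal sheaf $\mathcal{I}\subset \mathcal{O}_{W}$ can be made locally principal, cut out by a simple normal crossings divisor, by finitely many blowups with smooth centres, each centre contained in the locus where $\mathcal{I}$ is still not so. Resolving $X$ then follows by taking for $\mathcal{I}$ a suitable power of the ideal of the singular locus (or, for strong resolution, by the refined version of the same algorithm). This reduction is elementary, using only coherence/Noetherian arguments and the fact that blowing up a smooth centre in a smooth space produces a smooth space.

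Second I would introduce the \emph{resolution invariant}. To a point $x$ one attaches the order $\mathrm{ord}_{x}(\mathcal{I})$; this is upper semicontinuous, so its maximum $b_{1}$ is attained on a closed subset. In characteristic zero one produces, locally after an \'etale or analytic coordinate change, a smooth hypersurface of \emph{maximal contact} through $x$ containing the maximal-order locus — this is exactly where the characteristic-zero hypothesis is indispensable, since it rests on some $(b_{1}-1)$-st order partial derivative of a generator being a coordinate. Restricting to this hypersurface the \emph{coefficient ideal} of $\mathcal{I}$ drops the ambient dimension by one, giving an induction on $\dim W$. Iterating, one associates to each point a finite string of rational numbers (the successive orders, interleaved with integers bookkeeping the accumulated exceptional components through $x$); lexicographically ordered, this string is the invariant and it takes values in a well-founded set.

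Third I would run the algorithm: by construction and by maximal contact the maximum-invariant locus is a \emph{smooth} closed subset $Y_{j-1}$ contained in the singular (or non-normal-crossings) locus of $X_{j-1}$; blow it up. The crux, and the step I expect to be the genuine obstacle, is to show that the maximum value of the invariant \emph{strictly decreases} lexicographically after this blowup. This demands a careful transformation rule for $\mathcal{I}$ under blowup (the controlled/weak transform), compatibility of maximal-contact hypersurfaces upstairs and downstairs, and — most delicately — correct handling of the accumulating boundary of old exceptional divisors so that the coefficient-ideal induction survives. Granting strict decrease, well-foundedness of the value set forces termination after finitely many steps; functoriality of the chosen centres globalises the local construction to all of $X$, yielding precisely the tower $\tilde{X}=X_{m}\to\cdots\to X_{0}=X$ with the asserted properties.
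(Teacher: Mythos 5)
Your approach matches the paper's: Theorem \ref{Thm6} is quoted as a black box with citations to \cite{H1}, \cite{H2}, and the paper gives no proof of its own. Your additional sketch of the modern inductive argument (maximal contact, coefficient ideals, lexicographic invariant) is accurate and correctly flags the genuinely delicate point — strict decrease of the invariant under blowup with the accumulating exceptional boundary — but it goes beyond what the paper requires, since the paper simply invokes the theorem.
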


For the proof see \cite{H1} and \cite{H2}. What we will actually use is the
following corollary:

\begin{corollary}
\label{Cor6}(Resolution of the Locus of Indeterminacy) Let $X$ and $Y$ be
compact, complex spaces and let $\varphi :X\dashrightarrow Y$ be a rational
(meromorphic) map. Then there exists a compact, complex space $\tilde{X}%
\overset{\pi }{\rightarrow }X$ obtained from $X$ by a sequence of blowups
with smooth centres and a holomorphic map $\psi :\tilde{X}\rightarrow Y$
such that the following diagram commutes:%
\begin{equation*}
\begin{array}{ccc}
\widetilde{X} &  &  \\ 
\downarrow & \searrow &  \\ 
X & \underset{\varphi }{\dashrightarrow } & Y%
\end{array}%
.
\end{equation*}
\end{corollary}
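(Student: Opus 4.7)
The plan is to reduce the corollary to Theorem \ref{Thm6} by passing to the graph of $\varphi$. Since $\varphi$ is meromorphic, there is a dense open subset $U\subset X$ on which $\varphi|_U:U\to Y$ is holomorphic; let $\Gamma\subset U\times Y$ be its graph and $\bar\Gamma\subset X\times Y$ its closure. Because $X\times Y$ is compact, $\bar\Gamma$ is a compact analytic subspace of $X\times Y$ (it is the closure of an analytic set in a compact complex space, so the Remmert--Stein / proper mapping theorem applies). The first projection $p_1:\bar\Gamma\to X$ is proper and is an isomorphism over $U$, hence bimeromorphic, while the second projection $p_2:\bar\Gamma\to Y$ restricts to $\varphi$ over $U$. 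This replaces the rational map by an honest holomorphic map out of $\bar\Gamma$, at the cost of possibly introducing singularities in $\bar\Gamma$.

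Next I would apply Theorem \ref{Thm6} to the (possibly singular) compact complex space $\bar\Gamma$, obtaining a finite sequence of blowups with smooth centres $\sigma:\widetilde X\to\bar\Gamma$ with $\widetilde X$ smooth. Setting $\pi:=p_1\circ\sigma:\widetilde X\to X$ and $\psi:=p_2\circ\sigma:\widetilde X\to Y$, the map $\psi$ is holomorphic and agrees with $\varphi\circ\pi$ on the dense open set $\pi^{-1}(U)$, so the stated triangle commutes on a dense open set and hence everywhere that $\varphi$ is defined.

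The main obstacle, and the point where I would have to be careful, is the assertion that $\pi:\widetilde X\to X$ itself arises from a sequence of blowups with smooth centres \emph{inside} $X$, rather than merely being a proper bimeromorphic morphism that happens to factor through $\bar\Gamma$. To secure this, I would invoke Hironaka's strengthened form of Theorem \ref{Thm6}, namely the principalisation of ideals (equivalently, elimination of points of indeterminacy): the coherent ideal sheaf $\mathcal I\subset\mathcal O_X$ cutting out the indeterminacy locus of $\varphi$ can be made locally principal by a canonical finite sequence of blowups with smooth centres contained in $V(\mathcal I)$, and on the resulting space $\widetilde X$ the strict transform of $\bar\Gamma$ maps isomorphically onto $\widetilde X$, so that $\psi=p_2\circ(\bar\Gamma\to Y)\circ(\widetilde X\to\bar\Gamma)$ is holomorphic. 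Equivalently, one applies a relative (over $X$) version of Theorem \ref{Thm6} to the morphism $\bar\Gamma\to X$. Either way, the essential content is that Hironaka's resolution can be performed functorially with respect to the base $X$, upgrading the abstract bimeromorphic modification $\bar\Gamma\dashrightarrow X$ to a genuine tower of smooth-centre blowups on $X$.
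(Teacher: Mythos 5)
The paper gives no proof of this corollary: it is stated as a standard consequence of Hironaka's work, with \cite{H1},\cite{H2} cited for Theorem \ref{Thm6} and \cite{KO} cited later for the principalisation statement (Theorem \ref{Thm7}). So there is no internal argument to compare against, and the question is only whether your sketch is a sound proof of the classical fact.

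Your overall plan is the correct one, and, importantly, you have identified the genuine subtlety: resolving the singularities of the graph closure $\bar\Gamma$ produces a smooth space mapping properly and bimeromorphically to $X$, but says nothing about that map being a composition of blowups with smooth centres \emph{on} $X$ — and the latter is exactly what the paper needs (the $\omega_\varepsilon$-metrics, the pushforward computations, and the Picard-group arguments in Section 4 all depend on it). Many expositions elide this point, so it is to your credit that you flag it. However, the fix as written is slightly off. You propose to principalise ``the coherent ideal sheaf $\mathcal I\subset\mathcal O_X$ cutting out the indeterminacy locus''; but principalising the (reduced, say) ideal of the indeterminacy set does not in general make the strict transform of $\bar\Gamma$ an isomorphism over $\tilde X$. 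What is actually needed is an ideal $\mathcal I$ with the universal-property feature that $\bar\Gamma$ \emph{is} the blowup $\mathrm{Bl}_{\mathcal I}X$ (or is dominated by it), so that once $\pi^{\ast}\mathcal I$ is locally principal, $\pi$ factors through $p_1:\bar\Gamma\to X$. The existence of such an $\mathcal I$ for a proper bimeromorphic map of complex spaces is precisely Hironaka's Chow lemma together with his flattening theorem, which you gesture at with the phrase ``relative version of Theorem \ref{Thm6}'' but do not isolate; in the projective setting it is the base ideal of the linear system defining $\varphi$, which typically has the right non-reduced structure. If you make that substitution — principalise the ideal whose blowup recovers $\bar\Gamma$, not merely one supported on the indeterminacy set — the argument closes correctly and is the standard proof (cf. \cite{KO}). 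For the specific application in Section 4, where $Y$ is the flag bundle $\mathbb{FL}(E)$ and the indeterminacy set is $Z_{\mathrm{alg}}$, the relevant ideal can be taken locally to be the one generated by the appropriate minors of the inclusion maps $E_i\hookrightarrow E$, which is essentially how the paper sets up Theorem \ref{Thm7}.
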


In our case both $X$ and $Y$ (and hence also $\tilde{X}$) will be complex
manifolds. Note that in this case a blowup with \textquotedblleft smooth
centre\textquotedblright\ is the same as the blowup along a complex
submanifold. We will apply the Corollary in the following way.

The $HNS$ filtration of a bundle $E$, which in the sequel we will abbreviate
for simplicity as:%
\begin{equation*}
0=E_{0}\subset E_{1}\subset \cdots \subset E_{l-1}\subset E_{l}=E
\end{equation*}%
(i.e. we ignore the notation indicating that it is a double filtration), as
stated previously, is in general a filtration only by subsheaves of $E$. We
may think of a subbundle $S\subset E$ of rank $k$ as a holomorphic section
of the Grassmann bundle $Gr(k,E)$, the bundle whose fibre at each point is
the set of $k$-dimensional complex subspaces of the fibre of $E$. Similarly
a filtration by subbundles corresponds to a holomorphic section of the
partial flag bundle $\mathbb{FL}(d_{1},\cdots ,d_{l},E)$, the bundle whose
fibre at each point is the set of $l$ flags of type $(d_{1},\cdots ,d_{l})$
where $d_{i}=\limfunc{rk}(E_{i})$. On the other hand a filtration by
subsheaves corresponds to a rational section $X\overset{\sigma }{%
\dashrightarrow }\mathbb{FL}(d_{1},\cdots ,d_{l},E)$. The corollary says
that by blowing up finitely many times along complex submanifolds, we obtain
an honest section $\tilde{X}\rightarrow \mathbb{FL}(d_{1},\cdots ,d_{l},\pi
^{\ast }E)$. More explicitly, we have a diagram:%
\begin{equation*}
\begin{array}{ccc}
\tilde{X} & \overset{\overset{\tilde{\sigma}}{\longrightarrow }}{%
\longleftarrow } & \mathbb{FL}(\pi ^{\ast }E) \\ 
\downarrow & \searrow & \downarrow \\ 
X & \overset{\overset{\sigma }{\dashrightarrow }}{\underset{p}{%
\longleftarrow }} & \mathbb{FL}(E)%
\end{array}%
\end{equation*}%
where $\tilde{\sigma}$ will be constructed below. The outer square is just
the pullback diagram for the map $\tilde{X}\overset{\pi }{\rightarrow }X$.
First we claim that the triangle:%
\begin{equation*}
\begin{array}{ccc}
\tilde{X} &  &  \\ 
\downarrow & \searrow &  \\ 
X & \longleftarrow & \mathbb{FL}(E)%
\end{array}%
\end{equation*}%
commutes. If we write $\psi $ for the desingularised map $\tilde{X}%
\longrightarrow \mathbb{FL}(E)$, then note that for a point $\tilde{x}\in 
\tilde{X}-\mathbf{E}$, we have $\psi (\tilde{x})=\psi (\pi ^{-1}(x))$ for $%
x\in Z_{\limfunc{alg}}$. Then we have: $p(\psi (\tilde{x}))=p(\sigma (\pi (%
\tilde{x})))=x=\pi (\tilde{x})$ since $\sigma $ is well-defined and a
section away from $Z_{\limfunc{alg}}$ and we know the diagram:%
\begin{equation*}
\begin{array}{ccc}
\tilde{X} &  &  \\ 
\downarrow & \searrow &  \\ 
X & \overset{\sigma }{\dashrightarrow } & \mathbb{FL}(E)%
\end{array}%
\end{equation*}%
commutes. In other words on $\tilde{X}-\mathbf{E}$ we have $p\circ \psi =\pi 
$. But since both of these are holomorphic maps $\tilde{X}\longrightarrow X$%
, $p\circ \psi =\pi $ on $\tilde{X}$ by the uniqueness principle for
holomorphic maps, since they agree on a non-empty open subset. Now $\mathbb{%
FL}(\pi ^{\ast }E)=\pi ^{\ast }\mathbb{FL}(E)=\{(\tilde{x},\nu )\in \tilde{X}%
\times \mathbb{FL}(E)\mid \pi (\tilde{x})=p(\nu )\}$. Now define $\tilde{%
\sigma}:\tilde{X}\longrightarrow \mathbb{FL}(\pi ^{\ast }E)$ by $\tilde{%
\sigma}(\tilde{x})=(\tilde{x},\psi (\tilde{x}))$. Since $p\circ \psi =\pi $
this is indeed a map into $\mathbb{FL}(\pi ^{\ast }E)$, and it is manifestly
a section.

In other words there is a filtration of $\pi ^{\ast }E$:%
\begin{equation*}
0=\tilde{E}_{0}\subset \tilde{E}_{1}\subset \cdots \subset \tilde{E}%
_{l-1}\subset \tilde{E}_{l}=\pi ^{\ast }E
\end{equation*}%
where the $\tilde{E}_{i}$ are subbundles.

Now note that we have the following diagram:%
\begin{equation*}
\begin{array}{ccc}
&  & \tilde{Q}_{i} \\ 
&  & \uparrow \\ 
&  & \pi ^{\ast }E \\ 
& \nearrow & \uparrow \\ 
\pi ^{\ast }E_{i} & \dashrightarrow & \tilde{E}_{i}%
\end{array}%
\end{equation*}%
where the dashed line is the rational map corresponding to the equality of $%
\pi ^{\ast }E_{i}$ and $\tilde{E}_{i}$ away from $\mathbf{E}$ (both are
equal to $E_{i}$), and $\tilde{Q}_{i}$ is the quotient of $\pi ^{\ast }E$ by 
$\tilde{E}_{i}$. Then $\tilde{Q}_{i}$ is a vector bundle and in particular
torsion free. On the other hand the image of $\pi ^{\ast }E_{i}$ under the
composition $\pi ^{\ast }E_{i}\rightarrow \pi ^{\ast }E\rightarrow \tilde{Q}%
_{i}$ is torsion since it is supported on the divisor $\mathbf{E}$, and
hence must be zero. If we write $\limfunc{Im}\pi ^{\ast }E_{i}$ for the
image of $\pi ^{\ast }E_{i}\longrightarrow \pi ^{\ast }E$, this means there
is an actual inclusion of sheaves $\limfunc{Im}\pi ^{\ast
}E_{i}\hookrightarrow \tilde{E}_{i}$. The quotient sheaf $\tilde{E}_{i}/%
\limfunc{Im}\pi ^{\ast }E_{i}$ is supported on $\mathbf{E}$, hence torsion
and so it follows from Lemma \ref{Lemma1} that $\tilde{E}_{i}=\limfunc{Sat}%
_{\pi ^{\ast }E}(\limfunc{Im}\pi ^{\ast }E_{i})$.

Since $\pi _{\ast }\tilde{E}_{i}$ is equal to $E_{i}$ away from $\limfunc{%
Sing}E_{i}$ there is a birational map $E_{i}\dashrightarrow \pi _{\ast }%
\tilde{E}_{i}$. Now consider the short exact sequence:%
\begin{equation*}
0\longrightarrow \tilde{E}_{i}\longrightarrow \pi ^{\ast }E\rightarrow 
\tilde{Q}_{i}\longrightarrow 0.
\end{equation*}%
Pushing this sequence forward, and noting that $\pi _{\ast }\tilde{Q}_{i}$
is torsion free and hence injects into its double dual, we have an exact
sequence:%
\begin{equation*}
0\longrightarrow \pi _{\ast }\tilde{E}_{i}\longrightarrow E\rightarrow
\left( \pi _{\ast }\tilde{Q}_{i}\right) ^{\ast \ast }.
\end{equation*}%
Recall that a sheaf $S$ is normal if for any analytic subset $Z$ of
codimension at least two, the map $S(U)\longrightarrow S(U-Z)$ on local
sections is an isomorphism for any open set $U$. In other words, the local
sections of a normal sheaf extend over codimension two subvarieties.
Furthermore, recall that a sheaf is reflexive if and only if it is both
torsion free and normal. Then $(\pi _{\ast }\tilde{Q}_{i})^{\ast \ast }$ and 
$E$ are in particular both normal since they are reflexive. A simple diagram
chase reveals that normality of these sheaves together with the exactness of
this last sequence implies that $\pi _{\ast }\tilde{E}_{i}$ is also normal
(and in fact reflexive, since it is also torsion free).

Because $E_{i}$ is saturated by construction, it is also reflexive and
therefore normal. It is easy to see from the definitions that a map between
normal sheaves that is defined away from a codimension two subvariety
extends to a map on all of $X$. Since $\limfunc{Sing}E_{i}$ has singular set
of codimension at least three, the map $E_{i}\dashrightarrow \pi _{\ast }%
\tilde{E}_{i}$ extends to an isomorphism $E_{i}\cong \pi _{\ast }\tilde{E}%
_{i}$.

Similarly, if $\tilde{Q}_{i}=\tilde{E}_{i}/\tilde{E}_{i-1}$, then $\pi
_{\ast }\tilde{Q}_{i}$ is equal to $Q_{i}$ away from $\limfunc{Sing}Q_{i}$
so again we have a birational map $\left( Q_{i}\right) ^{\ast \ast
}\dashrightarrow (\pi _{\ast }\tilde{Q}_{i})^{\ast \ast }$. Since the double
dual is always reflexive, these sheaves are normal, so the map extends to an
isomorphism. To summarise:

\begin{proposition}
\label{Prop13}Let 
\begin{equation*}
0=E_{0}\subset E_{1}\subset \cdots \subset E_{l-1}\subset E_{l}=E
\end{equation*}%
be a filtration of a holomorphic vector bundle $E\rightarrow X$ by saturated
subsheaves and let $Q_{i}=E_{i}/E_{i-1}$. Then there is a finite sequence of
blowups along complex submanifolds whose composition $\pi :\tilde{X}%
\rightarrow X$ enjoys the following properties. There is a filtration 
\begin{equation*}
0=\tilde{E}_{0}\subset \tilde{E}_{1}\subset \cdots \subset \tilde{E}%
_{l-1}\subset \tilde{E}_{l}=\tilde{E}=\pi ^{\ast }E
\end{equation*}%
by subbundles. If we write $\limfunc{Im}\pi ^{\ast }E_{i}$ for the image of $%
\pi ^{\ast }E_{i}\hookrightarrow \pi ^{\ast }E_{i}$, then $\tilde{E}_{i}=%
\limfunc{Sat}_{\pi ^{\ast }E}\left( \limfunc{Im}\pi ^{\ast }E_{i}\right) $.
If $\tilde{Q}_{i}=\tilde{E}_{i}/\tilde{E}_{i-1}$ then we have $\pi _{\ast }%
\tilde{E}_{i}=E_{i}$ and $Q_{i}^{\ast \ast }=(\pi _{\ast }\tilde{Q}%
_{i})^{\ast \ast }$.
\end{proposition}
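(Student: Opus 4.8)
The plan is to run the resolution-of-indeterminacy argument that was sketched in detail in the paragraphs immediately preceding the statement, and then assemble the three claimed identities. First I would view the filtration $0\subset E_1\subset\cdots\subset E_l=E$ by saturated subsheaves as a rational section $\sigma:X\dashrightarrow \mathbb{FL}(d_1,\dots,d_l,E)$ of the partial flag bundle with $d_i=\operatorname{rk}(E_i)$, defined and holomorphic on $X-Z_{\operatorname{alg}}$. Applying Corollary \ref{Cor6} (resolution of the locus of indeterminacy, itself a consequence of Hironaka's Theorem \ref{Thm6}) produces a composition of blowups along complex submanifolds $\pi:\tilde X\to X$ together with a holomorphic map $\psi:\tilde X\to\mathbb{FL}(E)$ extending $\sigma\circ\pi$. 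Using $p\circ\psi=\pi$ (which follows because the two holomorphic maps agree on the dense open set $\tilde X-\mathbf{E}$, by the identity principle) one lifts $\psi$ to an honest section $\tilde\sigma:\tilde X\to\mathbb{FL}(\pi^{\ast}E)$, i.e.\ a filtration $0\subset\tilde E_1\subset\cdots\subset\tilde E_l=\pi^{\ast}E$ by \emph{subbundles}. This already gives the first assertion.

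Next I would identify $\tilde E_i$ with a saturation. On $\tilde X-\mathbf{E}$ both $\operatorname{Im}\pi^{\ast}E_i$ and $\tilde E_i$ equal $E_i$, so there is an inclusion of sheaves $\operatorname{Im}\pi^{\ast}E_i\hookrightarrow\tilde E_i$ with quotient supported on the exceptional divisor $\mathbf{E}$; since $\tilde E_i$ is a subbundle of $\pi^{\ast}E$, its quotient $\tilde Q_i=\pi^{\ast}E/\tilde E_i$ is locally free, hence torsion free, so the torsion image of $\pi^{\ast}E_i$ in $\tilde Q_i$ vanishes, giving the inclusion $\operatorname{Im}\pi^{\ast}E_i\hookrightarrow\tilde E_i$ with torsion quotient. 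Lemma \ref{Lemma1} then yields $\tilde E_i=\operatorname{Sat}_{\pi^{\ast}E}(\operatorname{Im}\pi^{\ast}E_i)$.

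For the pushforward identities I would push the exact sequence $0\to\tilde E_i\to\pi^{\ast}E\to\tilde Q_i\to 0$ forward by $\pi$. Since $\pi_{\ast}\pi^{\ast}E=E$ (as $E$ is locally free and $\pi$ is a modification, so $R^0\pi_{\ast}\mathcal{O}_{\tilde X}=\mathcal{O}_X$), and $\pi_{\ast}\tilde Q_i$ is torsion free hence injects into $(\pi_{\ast}\tilde Q_i)^{\ast\ast}$, one gets $0\to\pi_{\ast}\tilde E_i\to E\to(\pi_{\ast}\tilde Q_i)^{\ast\ast}$. A diagram chase using that $E$ and $(\pi_{\ast}\tilde Q_i)^{\ast\ast}$ are reflexive, hence normal, shows $\pi_{\ast}\tilde E_i$ is normal; being also torsion free it is reflexive. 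Since $\pi_{\ast}\tilde E_i$ agrees with the (already reflexive, hence normal) sheaf $E_i$ away from $\operatorname{Sing}E_i$, a subvariety of codimension $\geq 3$, and a morphism between normal sheaves defined off codimension two extends uniquely, the birational map $E_i\dashrightarrow\pi_{\ast}\tilde E_i$ extends to an isomorphism $E_i\cong\pi_{\ast}\tilde E_i$. Applying the same reasoning to $\tilde Q_i=\tilde E_i/\tilde E_{i-1}$: its pushforward agrees with $Q_i$ off $\operatorname{Sing}Q_i$, so $(\pi_{\ast}\tilde Q_i)^{\ast\ast}$ and $Q_i^{\ast\ast}$ are reflexive sheaves agreeing in codimension two and hence isomorphic, giving $Q_i^{\ast\ast}=(\pi_{\ast}\tilde Q_i)^{\ast\ast}$.

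I expect the main obstacle to be the normality/reflexivity bookkeeping in the pushforward step — in particular verifying that $\pi_{\ast}\tilde E_i$ is normal from the exact sequence (the diagram chase), and being careful about exactly which codimension bounds are available: one uses $\operatorname{codim}\operatorname{Sing}E_i\geq 3$ rather than the generic $\geq 2$, which is automatic because $E_i$ is saturated inside a vector bundle, so its singular set (where the torsion-free quotient fails to be locally free) has codimension $\geq 3$. Everything else — the flag-bundle formalism, the identity-principle argument for $p\circ\psi=\pi$, and the torsion-supported-on-$\mathbf{E}$ vanishing — is routine once the setup is in place.
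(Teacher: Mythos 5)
Your proposal is correct and follows the paper's argument step for step: the flag-bundle interpretation and Corollary~\ref{Cor6}, the identity-principle lift to a section of $\mathbb{FL}(\pi^{\ast}E)$, the saturation identity via Lemma~\ref{Lemma1}, and the pushforward/normality bookkeeping that extends the birational maps to isomorphisms. The extra remarks you add (that $\pi_{\ast}\pi^{\ast}E=E$, and that $\limfunc{Sing}E_{i}$ has codimension $\geq 3$ because a saturated subsheaf of a vector bundle is reflexive) only make explicit what the paper leaves implicit.
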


We will also have occasion to consider ideal sheaves $\mathcal{I}\subset 
\mathcal{O}_{X}$ whose vanishing set is a closed complex subspace $Y\subset
X $. If $Y$ is smooth for example then we may blowup along $Y$ to obtain a
smooth manifold $\pi :\tilde{X}\longrightarrow X$. Denote by $\pi ^{\ast }%
\mathcal{I}$ is the ideal sheaf generated by pulling back local sections of $%
\mathcal{I}$, in other words the ideal sheaf in $\mathcal{O}_{\tilde{X}}$
generated by the image of $\pi ^{-1}\mathcal{I}$ under the map $\pi ^{-1}%
\mathcal{O}_{X}\longrightarrow \mathcal{O}_{\tilde{X}}$ where $\pi ^{-1}%
\mathcal{I}$ and $\pi ^{-1}\mathcal{O}_{X}$ are the inverse image sheaves.
Note that this is not necessarily equal to the usual sheaf theoretic
pullback of $\mathcal{I}$ which is given by $\pi ^{-1}\mathcal{I\otimes }%
_{\pi ^{-1}\mathcal{O}_{X}}\mathcal{O}_{\tilde{X}}$ and may for example have
torsion. The sheaf $\pi ^{\ast }\mathcal{I}$ is sometimes called the
\textquotedblleft inverse image ideal sheaf\textquotedblright . If the order
of vanishing of $\mathcal{I}$ along $Y$ is $m$, then $\pi ^{\ast }\mathcal{%
I\subset }$ $\mathcal{O}_{\tilde{X}}(-m\mathbf{E)}$, that is, every element
of $\pi ^{\ast }\mathcal{I}$ vanishes to order at least $m$ along the smooth
divisor $\mathbf{E}$. In this situation we will use this notation without
further comment. In general $Y$ is not smooth, so we appeal to the following
resolution of singularities theorem, which is sometimes referred to as
\textquotedblleft principalisation of $\mathcal{I}$\textquotedblright\ or
more specifically \textquotedblleft monomialisation of $\mathcal{I}$%
\textquotedblright\ , and results of this type are usually used to prove
resolution of singularities.

\begin{theorem}
\label{Thm7}Let $X$ be a complex manifold and $Y$ a closed complex subspace.
Then there is a finite sequence of blowups along smooth centres whose
composition yields a map $\pi :\tilde{X}\rightarrow X$ such that $\pi :%
\tilde{X}-\mathbf{E}\rightarrow X-W$ is biholomorphic, $\mathbf{E=\pi }%
^{-1}(W)$ is a normal crossings divisor, and $\pi ^{\ast }\mathcal{I=O}_{%
\tilde{X}}(-\sum_{i}m_{i}\mathbf{E}_{i})$ where the $\mathbf{E}_{i}$ are the
irreducible components of $\mathbf{E}$. Moreover, $\pi ^{\ast }\mathcal{I}$
is locally principal (monomial) in the following sense: for any $x\in X$
there is a local coordinate neighbourhood $U\subset X$ containing $x$ and a
local section $f_{0}$ of $\mathcal{O}_{\tilde{X}}(-\sum_{i}m_{i}\mathbf{E}%
_{i})$ over $\mathbf{\pi }^{-1}(U),$ such that if $f_{j}$ is any local
section of $\mathcal{I}$ over $U$, then $\pi ^{\ast
}f_{j}=f_{0}f_{j}^{^{\prime }}$ where $f_{j}^{^{\prime }}$ is a
non-vanishing holomorphic function on $\mathbf{\pi }^{-1}(U)$. Furthermore,
if $\xi _{k}$ are local normal crossings coordinates for $\mathbf{E}$, then
there is a factorisation:%
\begin{equation*}
f_{0}=\tprod_{k}\xi _{k}^{m_{k}}
\end{equation*}%
so that we may write:%
\begin{equation*}
\pi ^{\ast }f_{j}=\tprod_{k}\xi _{k}^{m_{k}}\cdot f_{j}^{^{\prime }}.
\end{equation*}
\end{theorem}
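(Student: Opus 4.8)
The statement to prove (Theorem~\ref{Thm7}) is the principalization/monomialization of an ideal sheaf. This is a classical consequence of Hironaka's resolution theorem, and the plan is to reduce it to the form of resolution already stated as Theorem~\ref{Thm6} together with a standard analysis of blowups along smooth centres contained in the zero locus of the ideal.

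\textbf{Outline of the approach.} First I would apply Hironaka's theorem in its embedded resolution / principalization form: for the pair $(X,\mathcal{I})$ there is a finite sequence of blowups along smooth centres, each centre lying inside the cosupport of the (transformed) ideal and in normal crossings with the accumulated exceptional divisor, whose composition $\pi:\tilde X\to X$ is an isomorphism over $X-W$ (where $W=V(\mathcal I)$), makes $\mathbf E=\pi^{-1}(W)$ a simple normal crossings divisor, and transforms $\mathcal I$ into an invertible sheaf cut out by an effective divisor supported on $\mathbf E$. This last condition is exactly the assertion $\pi^{\ast}\mathcal I=\mathcal O_{\tilde X}(-\sum_i m_i\mathbf E_i)$ for the irreducible components $\mathbf E_i$ of $\mathbf E$; one only has to remark that the \emph{inverse image ideal sheaf} $\pi^\ast\mathcal I$ (generated by pullbacks of local sections, as carefully distinguished in the paragraph preceding the theorem) is precisely the object that becomes locally principal under blowup, because the universal property of blowing up an ideal makes its total transform locally principal after one blowup and this property is preserved and refined under further blowups. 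So the first three clauses of the theorem are a direct translation of the output of Hironaka's algorithm.

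\textbf{The local monomial form.} The remaining content is the explicit local description: near a point $x\in X$ one can choose a single local generator $f_0$ of $\mathcal O_{\tilde X}(-\sum m_i\mathbf E_i)$ over $\pi^{-1}(U)$ such that every pullback $\pi^\ast f_j$ of a local section $f_j$ of $\mathcal I$ factors as $f_0\cdot f_j'$ with $f_j'$ nowhere vanishing, and, in normal crossings coordinates $\xi_k$ for $\mathbf E$, one has $f_0=\prod_k\xi_k^{m_k}$. This is immediate once $\pi^\ast\mathcal I$ is invertible: an invertible subsheaf of $\mathcal O_{\tilde X}$ is locally generated by one element $f_0$, and since it equals $\mathcal O_{\tilde X}(-\sum m_i\mathbf E_i)$, in normal crossings coordinates around any point of $\mathbf E$ (where the local components of $\mathbf E$ are $\{\xi_k=0\}$ with multiplicities $m_k$, so $m_k=m_i$ when $\{\xi_k=0\}\subset\mathbf E_i$) the generator is $\prod_k\xi_k^{m_k}$ up to a unit, which we absorb into $f_0$. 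That any other local section $f_j$ of $\mathcal I$ pulls back to $f_0$ times a nonvanishing function is then simply the statement that $\pi^\ast f_j$ generates the same invertible ideal, i.e. $f_j'=\pi^\ast f_j/f_0$ is a unit; away from $\mathbf E$ this is clear since $\pi$ is biholomorphic there, and along $\mathbf E$ it follows from $\pi^\ast\mathcal I$ being exactly $\mathcal O(-\sum m_i\mathbf E_i)$ rather than something vanishing to higher order.

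\textbf{Expected main obstacle.} There is no serious obstacle of a mathematical nature here — the theorem is essentially a restatement of Hironaka's principalization, and the only real work is bookkeeping: being careful to use the inverse image ideal sheaf (not the ordinary sheaf-theoretic pullback, which may carry torsion, as the text warns) so that the ``locally principal'' conclusion is literally true, and matching the two multiplicity conventions $m_i$ (per global component $\mathbf E_i$) and $m_k$ (per local coordinate hyperplane). The one point that needs a word of care is the \emph{uniformity} of the generator $f_0$ over all of $\pi^{-1}(U)$: this requires $U$ small enough that each irreducible component of $\mathbf E$ meeting $\pi^{-1}(U)$ is cut out by a single coordinate function there, which is possible because $\mathbf E$ is simple normal crossings, and then $f_0=\prod\xi_k^{m_k}$ glues to a global generator over $\pi^{-1}(U)$. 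I would state this reduction in a paragraph or two and cite \cite{H1},\cite{H2} for the underlying resolution input, exactly as is done for Theorem~\ref{Thm6}.
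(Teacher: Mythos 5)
The paper's own ``proof'' of Theorem~\ref{Thm7} is a one-line citation to Koll\'ar \cite{KO}, so your strategy of attributing the hard content to Hironaka's principalization theorem and then unwinding the formal consequences is in exactly the same spirit, and the first part of your argument is sound: the inverse image ideal sheaf $\pi^\ast\mathcal I$ becomes the locally principal ideal of the normal crossings divisor $\sum m_i\mathbf E_i$, and a local generator near a point of $\mathbf E$ is $f_0=\prod_k\xi_k^{m_k}$ up to a unit.

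The last step, however, has a genuine gap. You argue that for \emph{any} local section $f_j\in\mathcal I(U)$, the ratio $f_j'=\pi^\ast f_j/f_0$ is a unit, offering two reasons: that it is ``clear'' away from $\mathbf E$ because $\pi$ is biholomorphic there, and that along $\mathbf E$ it follows from $\pi^\ast\mathcal I$ equaling $\mathcal O(-\sum m_i\mathbf E_i)$ rather than something vanishing to higher order. Neither holds. Away from $\mathbf E$: a section of $\mathcal I$ over $U$ can perfectly well vanish on $U-W$; if $\mathcal I=(z_1,z_2)\subset\mathcal O_{\mathbb C^2}$ is the ideal of the origin and $f=z_1$, then $\pi^\ast f/f_0$ vanishes along the proper transform of $\{z_1=0\}$. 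Along $\mathbf E$: the identity $\pi^\ast\mathcal I=\mathcal O(-\sum m_i\mathbf E_i)$ only says the \emph{collection} of pulled-back generators cuts out exactly that divisor; an individual $\pi^\ast f_j$ may vanish to order strictly greater than $m_i$ along some $\mathbf E_i$, in which case $f_j'$ vanishes there. In fact the conclusion ``$f_j'$ is non-vanishing for any local section $f_j$'' in the statement of Theorem~\ref{Thm7} cannot be literally true, for the reasons just given. What principalization actually gives --- and what is used later in the proof of Lemma~\ref{Lemma10} --- is the weaker but correct statement that for a finite generating set $f_1,\dots,f_r$ of $\mathcal I|_U$, each $\pi^\ast f_j=f_0 f_j'$ with $f_j'$ holomorphic and the tuple $(f_1',\dots,f_r')$ having no common zero on $\pi^{-1}(U)$, i.e.\ generating the unit ideal. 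That weaker form is all that is needed in Lemma~\ref{Lemma10}: it ensures the residual factor $\chi=\sum(\pi^\ast H_{IJ})\rho^I\bar\rho^J$ is strictly positive because $(H_{IJ})$ is positive definite and the vector $(\rho^I)$ is nowhere zero. A correct write-up should prove this version rather than attempt (and fail) to show each $f_j'$ is individually a unit.
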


For the proof, see for example Koll\'{a}r \cite{KO}.

\subsection{Metrics on Blowups and Uniform Bounds on the Degree}

Now we consider the case that the original manifold is K\"ahler. The
following proposition says that this property is preserved under blowing up
and is standard in K\"ahler geometry.

\begin{proposition}
\label{Prop14}Let $(X,\omega )$ be a K\"{a}hler manifold, and $Y$ a compact,
complex submanifold. Then the blowup $\tilde{X}=Bl_{Y}X$ along $Y$ is also K%
\"{a}hler. Moreover $\tilde{X}$ possesses a one-parameter family of K\"{a}%
hler metrics given by $\omega _{\varepsilon }=\pi ^{\ast }\omega
+\varepsilon \eta $ where $\varepsilon >0$, $\pi :\tilde{X}\rightarrow X$ is
the blowup map and $\eta $ is itself a K\"{a}hler form on $\tilde{X}$.
\end{proposition}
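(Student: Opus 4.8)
The plan is to build a K\"ahler form on $\tilde X$ of the shape $\pi^{\ast }\omega$ plus a small multiple of a closed positive $(1,1)$-form concentrated on the exceptional directions, and then to observe that re-adding $\pi^{\ast }\omega$ cannot destroy positivity. Write $\mathbf{E}\subset \tilde X$ for the exceptional divisor of $\pi :\tilde X=Bl_{Y}X\rightarrow X$, so that $\mathbf{E}=\mathbb{P}(N_{Y/X})$ projects to $Y$ with fibres $\mathbb{P}^{k-1}$, $k=\mathrm{codim}_{\mathbb{C}}Y$, and the normal bundle of $\mathbf{E}$ in $\tilde X$ is the tautological line bundle $\mathcal{O}_{\mathbb{P}(N)}(-1)$. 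Consequently $L:=\mathcal{O}_{\tilde X}(-\mathbf{E})$ restricts to $\mathbf{E}$ as $\mathcal{O}_{\mathbb{P}(N)}(1)$, which is ample on each fibre of $\mathbf{E}\rightarrow Y$.

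First I would construct a smooth hermitian metric $h$ on $L$ whose Chern curvature form $\eta _{0}=\tfrac{\sqrt{-1}}{2\pi }\Theta _{h}$ is a closed real $(1,1)$-form that is \emph{strictly positive} on the tangent spaces of the fibres of $\mathbf{E}\rightarrow Y$. This is standard: cover a neighbourhood of $Y$ in $\tilde X$ by charts on which $\mathbf{E}$ is the trivial $\mathbb{P}^{k-1}$-bundle and $L$ is pulled back from $\mathcal{O}_{\mathbb{P}^{k-1}}(1)$, place the Fubini--Study metric there, patch by a partition of unity over $Y$ (fibrewise positivity of curvature is preserved under convex combinations and under the projective-linear transition maps of $N_{Y/X}$), and extend $h$ arbitrarily over the rest of $\tilde X$.

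The heart of the argument is the positivity of $\omega _{\varepsilon }:=\pi ^{\ast }\omega +\varepsilon \eta _{0}$ for $\varepsilon $ small. Since $\pi $ is holomorphic, $\pi ^{\ast }\omega $ is a closed semi-positive $(1,1)$-form; it is strictly positive on $\tilde X-\mathbf{E}$, where $\pi $ is biholomorphic, and at a point $x\in \mathbf{E}$ the kernel of $\pi ^{\ast }\omega $ as a hermitian form is exactly $\ker d\pi _{x}$, which a local computation in blowup coordinates identifies with the tangent space to the fibre of $\mathbf{E}\rightarrow Y$ through $x$. Thus along $\mathbf{E}$ the degeneracy of $\pi ^{\ast }\omega $ lies precisely in the directions in which $\eta _{0}$ is positive. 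A compactness argument on a fixed compact tubular neighbourhood $\overline U$ of $\mathbf{E}$ now gives $\varepsilon _{0}>0$ with $\omega _{\varepsilon _{0}}>0$ there: if unit tangent vectors $v_{\varepsilon }$ satisfied $(\pi ^{\ast }\omega +\varepsilon \eta _{0})(v_{\varepsilon },\bar v_{\varepsilon })\leq 0$ with $\varepsilon \rightarrow 0$, a subsequential limit $v$ would have $\pi ^{\ast }\omega (v,\bar v)=0$, hence $v$ tangent to a fibre, forcing $\eta _{0}(v,\bar v)$ bounded below by a positive constant and contradicting the inequality for small $\varepsilon $; all constants are uniform over $\overline U$ by compactness. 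Away from $U$ one has $\pi ^{\ast }\omega \geq c>0$ while $\eta _{0}$ is bounded, so $\omega _{\varepsilon }>0$ there as well after shrinking $\varepsilon _{0}$.

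Set $\eta :=\omega _{\varepsilon _{0}}=\pi ^{\ast }\omega +\varepsilon _{0}\eta _{0}$; it is closed, real, of type $(1,1)$ and positive, hence K\"ahler, so $\tilde X$ is K\"ahler. Finally, for any $\varepsilon >0$ the form $\pi ^{\ast }\omega +\varepsilon \eta =(1+\varepsilon )\pi ^{\ast }\omega +\varepsilon \varepsilon _{0}\eta _{0}$ is closed and, being the sum of the semi-positive form $(1+\varepsilon )\pi ^{\ast }\omega $ and the positive form $\varepsilon \varepsilon _{0}\eta _{0}$, is positive; so $\{\omega _{\varepsilon }\}_{\varepsilon >0}$ is the desired one-parameter family of K\"ahler metrics. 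The one genuinely delicate point is the positivity estimate near $\mathbf{E}$: one must correctly identify the degeneracy locus of $\pi ^{\ast }\omega $ along the exceptional divisor, match it against the positivity directions of $\eta _{0}$, and extract a uniform $\varepsilon _{0}$ from compactness of $\tilde X$; the remaining steps are bookkeeping about the geometry of blowups.
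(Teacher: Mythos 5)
The paper gives no proof of this proposition and simply cites Voisin \cite{VO}; your argument is the standard one found there (and in Griffiths--Harris, Demailly, etc.): build a hermitian metric on $\mathcal{O}_{\tilde X}(-\mathbf{E})$ whose Chern curvature is positive along the fibres of $\mathbf{E}\to Y$, observe that the degeneracy of $\pi^{\ast}\omega$ along $\mathbf{E}$ is exactly $\ker d\pi$, which is the fibre tangent space, and win by compactness of $\mathbf{E}$. Your identification $\mathcal{O}_{\tilde X}(-\mathbf{E})\big|_{\mathbf{E}}\cong\mathcal{O}_{\mathbb{P}(N)}(1)$ and the rank computation in blowup coordinates are both correct, and the observation at the end that any Kähler $\eta$ makes $\pi^{\ast}\omega+\varepsilon\eta$ Kähler for all $\varepsilon>0$ is the (easy) second half of the statement.

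One small point to tighten: the proposition only assumes $Y$ compact, not $X$, so your phrase ``away from $U$ one has $\pi^{\ast}\omega\geq c>0$ while $\eta_{0}$ is bounded'' is not literally available when $X$ is non-compact (no uniform $c$, no uniform bound on $\eta_{0}$). The clean fix is the one your construction almost makes explicit: take the metric $h$ on $\mathcal{O}_{\tilde X}(-\mathbf{E})$ to agree with the flat trivialization (via the canonical section of $\mathcal{O}(\mathbf{E})$) outside a compact neighbourhood of $\mathbf{E}$, so that $\eta_{0}$ is compactly supported; then off $\operatorname{supp}\eta_{0}$ one has $\omega_{\varepsilon_{0}}=\pi^{\ast}\omega$, which is already pointwise positive since $\pi$ is a local biholomorphism there, and no uniform estimate is needed. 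With that adjustment the argument is complete.
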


For the proof see for example \cite{VO}.

We will need a bound on the $\omega _{\varepsilon }$ degree of an arbitrary
subsheaf of a holomorphic vector bundle $E$ that depends on $\varepsilon $
in such a way that as $\varepsilon \to 0$ the degree converges to the degree
of a subsheaf on the base (namely the pushforward). This will be a
consequence of the following lemma.

\begin{lemma}
\label{Lemma10}Let $X$ be a compact complex manifold of dimension $n$ and
let $\tau $ and $\eta $ be closed $(1,1)$-forms with $\tau $ semi-positve
and $\eta $ a K\"{a}hler form. Let $E\rightarrow X$ a holomorphic vector
bundle. Then there is a constant $M$ such that for any subsheaf $S\subset E$
\ with torsion free quotient and any $0<k\leq n-1$:%
\begin{equation*}
\deg _{k}(S,\tau ,\eta )\equiv \int_{X}c_{1}(S)\wedge \tau ^{n-k-1}\wedge
\eta ^{k}\leq M.
\end{equation*}
\end{lemma}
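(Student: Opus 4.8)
The plan is to adapt Kobayashi's argument from \cite{KOB}, modified to allow $\tau$ to be only semipositive: express $\deg_{k}(S,\tau,\eta)$ through a Chern--Weil formula as a sum of a ``curvature term'', which is bounded uniformly in $S$ because the relevant orthogonal projection is pointwise bounded, plus a ``second fundamental form term'', which is nonpositive when paired with a semipositive form and so may simply be discarded.

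Fix once and for all a smooth Hermitian metric $h$ on $E$. Since $S$ has torsion free quotient it is saturated, so it carries the weakly holomorphic projection $\pi=\pi_{S}\in L^{2}_{1}(\operatorname{End}E)$ (Definition \ref{Def5}; see the discussion around Theorem \ref{Thm4}); outside a closed analytic subset $V=\operatorname{Sing}(S)\cup\operatorname{Sing}(E/S)$ of complex codimension at least $2$, the operator $\pi$ is smooth and exhibits $S$ as a holomorphic subbundle with second fundamental form $\beta=\bar{\partial}_{E}\pi\in L^{2}(\Omega^{0,1}(\operatorname{Hom}(E/S,S)))$. On $X-V$ the Gauss--Codazzi identity for the orthogonal splitting $E=S\oplus(E/S)$ reads
\begin{equation*}
c_{1}(S)=\frac{\sqrt{-1}}{2\pi}\operatorname{Tr}\!\left(F_{h}\,\pi\right)+\frac{\sqrt{-1}}{2\pi}\operatorname{Tr}\!\left(\beta\wedge\beta^{\ast}\right)
\end{equation*}
as $(1,1)$-forms. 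Wedging with $\tau^{n-k-1}\wedge\eta^{k}$, integrating, and running the Chern--Weil cutoff argument near $V$ (as in Theorem \ref{Thm5} and \cite{SI}; it uses only that $\tau$ and $\eta$ are bounded smooth forms and that $\beta\in L^{2}$, the error terms near $V$ tending to zero because $V$ has real codimension at least $4$), one gets that all the integrals converge and
\begin{equation*}
\deg_{k}(S,\tau,\eta)=\frac{1}{2\pi}\int_{X}\sqrt{-1}\,\operatorname{Tr}\!\left(F_{h}\,\pi\right)\wedge\tau^{n-k-1}\wedge\eta^{k}+\frac{1}{2\pi}\int_{X}\sqrt{-1}\,\operatorname{Tr}\!\left(\beta\wedge\beta^{\ast}\right)\wedge\tau^{n-k-1}\wedge\eta^{k}.
\end{equation*}

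Now I bound the two terms separately. Writing $\beta=\sum_{j}\beta_{j}\,d\bar{z}^{j}$ at a point, a direct computation gives $-\sqrt{-1}\operatorname{Tr}(\beta\wedge\beta^{\ast})=\sum_{j,\ell}\operatorname{Tr}(\beta_{\ell}\beta_{j}^{\ast})\,\sqrt{-1}\,dz^{j}\wedge d\bar{z}^{\ell}$, and the Hermitian matrix $\big(\operatorname{Tr}(\beta_{\ell}\beta_{j}^{\ast})\big)_{j,\ell}$ is positive semidefinite (testing against a vector $v$ yields $\operatorname{Tr}(BB^{\ast})\geq 0$ with $B=\sum_{\ell}\bar{v}_{\ell}\beta_{\ell}$), so $-\sqrt{-1}\operatorname{Tr}(\beta\wedge\beta^{\ast})$ is a semipositive $(1,1)$-form. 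As $\tau$ is semipositive and $\eta$ is K\"{a}hler, the top-degree form $\big(-\sqrt{-1}\operatorname{Tr}(\beta\wedge\beta^{\ast})\big)\wedge\tau^{n-k-1}\wedge\eta^{k}$ is a wedge of $n$ semipositive $(1,1)$-forms, hence nonnegative; therefore the second integral above is $\leq 0$. For the first term, $\pi$ is an orthogonal projection of rank $\operatorname{rk}S\leq K:=\operatorname{rk}E$, so $|\pi|\leq\sqrt{K}$ pointwise, whence $\big|\sqrt{-1}\operatorname{Tr}(F_{h}\,\pi)\wedge\tau^{n-k-1}\wedge\eta^{k}\big|$ is bounded, pointwise on the compact manifold $X$, by $\sqrt{K}$ times a fixed continuous function built from $h$, $\tau$, $\eta$; integrating yields a bound $M_{k}$ depending only on $E$, $h$, $\tau$, $\eta$, $n$, $K$ and not on $S$. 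Discarding the nonpositive second term and putting $M=\max_{1\leq k\leq n-1}M_{k}$, we conclude $\deg_{k}(S,\tau,\eta)\leq M$ for every subsheaf $S\subset E$ with torsion free quotient and every $0<k\leq n-1$.

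The only real obstacle is the second displayed identity, i.e. checking that the Chern--Weil/Gauss--Codazzi integration by parts survives across the singular set $V$ of $S$, so that the cutoff error terms tend to zero. This is precisely the classical computation, and it uses only that $V$ has complex codimension at least $2$ together with $\beta\in L^{2}$; crucially it is insensitive to the degeneracy of $\tau$, since the relevant estimates require only that $\tau$ and $\eta$ be bounded. The semipositivity of $\tau$ --- the one new hypothesis compared with Kobayashi's fixed-metric statement --- enters only to give the second integral its sign, and nowhere in the convergence argument, so the adaptation of Kobayashi's method is essentially mechanical.
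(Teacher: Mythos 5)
Your plan has the same skeleton as the paper's proof: write the degree as (a curvature term controlled by the boundedness of the projection) plus (a second-fundamental-form term that is $\le 0$ by semipositivity of $\tau$, $\eta$, and $-\sqrt{-1}\operatorname{Tr}(\beta\wedge\beta^*)$), and discard the second. Your bound on the first term via $|\pi|\le\sqrt{K}$ and your sign computation for the second are both correct and match the paper, which bounds the analogous terms after reducing to the rank-one case $\det S\hookrightarrow\Lambda^p E$ (a mild repackaging of the same idea).

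The real issue is the displayed Chern--Weil identity, i.e. the claim that $\deg_k(S,\tau,\eta)$ equals the integral over $X-V$ of $\frac{\sqrt{-1}}{2\pi}\operatorname{Tr}(F_h\pi)\wedge\tau^{n-k-1}\wedge\eta^k+\frac{\sqrt{-1}}{2\pi}\operatorname{Tr}(\beta\wedge\beta^*)\wedge\tau^{n-k-1}\wedge\eta^k$. You assert this follows from ``running the Chern--Weil cutoff argument near $V$, with error terms tending to zero because $V$ has real codimension at least $4$,'' citing Theorem \ref{Thm5} and \cite{SI}. But Theorem \ref{Thm5} is stated only for the pairing with $\omega^{n-1}$, and the paper explicitly remarks, right after this lemma, that ``in the case $k=n-1$ the above constitutes a proof of Simpson's degree formula'' --- i.e.\ the lemma is meant to \emph{prove} (and generalize) the Chern--Weil formula, not to presuppose it. More importantly, the justification you give is not actually sufficient: once one picks a smooth reference metric $G$ on $\det S$ and compares it with the singular induced metric $H=fG$ (with $f$ vanishing along $V$), the identity you need reduces to showing $\int_{X-V}\bar\partial\partial\log f\wedge\tau^{n-k-1}\wedge\eta^k=0$. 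The current $\bar\partial\partial\log f$ on $X$ is the sum of its $L^1_{\mathrm{loc}}$ part on $X-V$ and a closed positive singular part supported on $V$; the cutoff integral computes only the $L^1$ part, and the vanishing of the singular contribution is \emph{not} a codimension-counting estimate on $|\nabla\rho_\epsilon|$. The paper establishes it by applying the principalization theorem (Theorem \ref{Thm7}) to the ideal $\mathcal{I}=(\sigma^I)$, writing $\pi^\ast f = \chi\cdot\prod_k|\xi_k|^{2m_k}$ with $\chi>0$ smooth, invoking the Poincar\'e--Lelong formula to identify the singular part with $\sum_k m_k T_{\mathbf{E}_k}$, and then observing that $\int_{\mathbf{E}_k}\pi^\ast\tau^{n-k-1}\wedge\pi^\ast\eta^k=0$ since $\pi(\mathbf{E}_k)$ has codimension $\ge 2$. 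Equivalently, one would need the support theorem for closed positive $(1,1)$-currents; either way, some genuine input beyond ``$\beta\in L^2$ and $V$ has real codimension $\ge 4$'' is required. Your sketch should be replaced by the resolution/Poincar\'e--Lelong computation (or a citation of the support theorem) before the two-term estimate --- which is correct and matches the paper --- can be applied.
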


\begin{proof}
Note that when $k=n-1$, $\deg _{k}(S,\tau ,\eta )$ is the ordinary $\eta $
degree of $S$. We follow Kobayashi's proof that the degree of an arbitrary
subsheaf is bounded above. Fix an hermitian metric $h$ on $E$. The general
case will follow from the case when $S$ is a line subbundle $L$. In this
case we can use the formula: $F_{L}=\pi F_{E}\pi +\beta \wedge \beta ^{\ast
} $, where $\pi $ is the orthogonal projection to $L$ and $\beta $ is the
second fundamental form. Since $c_{1}(L)=\frac{i}{2\pi }F_{L}$ we have that:%
\begin{equation*}
\deg _{k}(L,\tau ,\eta )=\frac{i}{2\pi }\int_{X}\pi F_{E}\pi \wedge \tau
^{n-k-1}\wedge \eta ^{k}+\frac{i}{2\pi }\int_{X}\beta \wedge \beta ^{\ast
}\wedge \tau ^{n-k-1}\wedge \eta ^{k}.
\end{equation*}%
Since $\left\Vert \pi \right\Vert _{L^{\infty }(X)}\leq 1$, the first term
is clearly bounded from above. Therefore we only need to check that the
second term is non-positive. This is the case since $\beta $ is a $(0,1)$
form, and therefore $i\beta \wedge \beta ^{\ast }\leq 0$. Therefore $\deg
_{k}(L,\tau ,\eta )\leq M$, for a constant independent of $L$. To extend the
result to all subbundles $F\subset E$, simply find such an $M$ as above for
each exterior power $\Lambda ^{p}E$ for $p=1,\cdots ,\limfunc{rk}E$, and
take the maximum. Then apply the above argument to the line bundle $L=\det
F\hookrightarrow \Lambda ^{p}E$.

In general $S\overset{\imath }{\hookrightarrow }E$ is not a subbundle but
there is an inclusion of sheaves $\det S\hookrightarrow \Lambda ^{p}E$ where 
$p$ is the rank of $S$. If $V$ is the singular set of $S$, then $S$ is a
subbundle away from $V$, and so the inclusion $\det S\overset{\imath }{%
\hookrightarrow }\Lambda ^{p}E$ is a line subbundle away from $V$. Let $%
\sigma $ be any local holomorphic frame for $\det S$. Now consider the set:$%
\ W=\{x\in X\mid \imath (\sigma )(x)=0\}$. Since $\det S$ is a line bundle
this is clearly independent of $\sigma $. Furthermore because $\imath $ is
an injective bundle map away from $V$, any $x\in W$ must be in $V$; that is, 
$W\subset V$. Now write $H=$ $\imath ^{\ast }\left( \Lambda ^{p}h\right) $.
This is an hermitian metric on $\det S$ over $X-W$. On the other hand there
is some hermitian metric $G$ on $\det S$ over all of $X$. We would like to
show that:%
\begin{equation*}
\deg _{k}(S,\tau ,\eta )=\int_{X}c_{1}(\det S,G)\wedge \tau ^{n-k-1}\wedge
\eta ^{k}=\int_{X-W}c_{1}(\det S,H)\wedge \tau ^{n-k-1}\wedge \eta ^{k}
\end{equation*}%
Then applying the above reasoning, the last integral is bounded since just
as before 
\begin{equation*}
\int_{X-W}c_{1}(\det S,H)\wedge \tau ^{n-k-1}\wedge \eta
^{k}=\int_{X-V}c_{1}(S,h_{S})\wedge \tau ^{n-k-1}\wedge \eta ^{k}\leq \frac{i%
}{2\pi }\int_{X-V}\pi F_{E}\pi \wedge \tau ^{n-k-1}\wedge \eta ^{k}
\end{equation*}%
%
%
%
%
%
%
%
%
%
%
%
%
%
%
%
%
%
%
%
%
%
%
%
%
%
%
%
%
%
%
%
%
%
%
%
%
%
%
%
%
%
where $h_{S}$ is the metric on $S_{\mid X-V}$ induced by $h$. Again this is
bounded independently of $\pi $.

We will construct a $C^{\infty }$ function $f$ on $X$ such that $H=fG$ on $%
X-W$. Then the usual formula for the curvature of the associated Chern
connections implies:%
\begin{eqnarray*}
c_{1}(\det S,H) &=&\frac{i}{2\pi }\bar{\partial}\partial \log H=\frac{i}{%
2\pi }\bar{\partial}\partial \log f+c_{1}(\det S,G) \\
&\Longrightarrow &c_{1}(\det S,G)=c_{1}(\det S,H)-\frac{i}{2\pi }\bar{%
\partial}\partial \log f\text{ }on\text{ }X-W.
\end{eqnarray*}%
Finally we will show:%
\begin{equation*}
\int_{X-W}\frac{i}{2\pi }\bar{\partial}\partial \log f\wedge \tau
^{n-k-1}\wedge \eta ^{k}=0.
\end{equation*}

To construct $f$, let $\sigma $ be any local holomorphic frame for $\det S$.
If $(e_{1,}\cdots ,e_{r})$ is a local holomorphic frame for $E$, then
define: $\imath (\sigma )=\sum_{I}\sigma ^{I}e_{I}$, where $%
e_{I}=e_{i_{1}}\wedge \cdots \wedge e_{i_{p}}$, with $i_{1}<\cdots <i_{p}$.
Then let 
\begin{equation*}
f=H(\sigma ,\sigma )/G(\sigma ,\sigma )=\sum_{I,J}H_{IJ}\sigma ^{I}\bar{%
\sigma}^{J}
\end{equation*}%
where $H_{IJ}=\Lambda ^{p}h(e_{I},e_{J})/G(\sigma ,\sigma )$. Then one may
check that $f$ is well-defined independently of $\sigma $. It is a smooth
non-negative function vanishing exactly on $W$. Since the matrix $(H_{IJ})$
is positive definite, $f$ vanishes exactly where all the $\sigma _{I}$
vanish. It is also clear that we have the equality $H=fG$.

To complete the argument we will show that $\frac{i}{2\pi }\bar{\partial}%
\partial \log f$ integrates to zero. Let $\mathcal{I}$ be the sheaf of
ideals in $\mathcal{O}_{X}$ generated by $\{\sigma _{I}\}$. By Theorem \ref%
{Thm7} there is a sequence of smooth blowups $\pi :\tilde{X}\rightarrow X$
such that $\pi ^{\ast }\mathcal{I}$, the inverse image ideal sheaf of $%
\mathcal{I}$, is the ideal sheaf of a divisor $\mathbf{E}=\sum_{i}m_{i}%
\mathbf{E}_{i}$ where the $\mathbf{E}_{i}$ are the irreducible components of
the support of the exceptional divisor $\limfunc{supp}\mathbf{E=\cup }_{i}$ $%
\mathbf{E}_{i}$. In other words $\pi ^{\ast }\mathcal{I}=\mathcal{O}_{\tilde{%
X}}(-\sum_{i}m_{i}\mathbf{E}_{i})$ for some natural numbers $m_{i}$.
Furthermore, we have: $\pi ^{\ast }\sigma ^{I}=\rho ^{I}\cdot \xi
_{i_{1}}^{m_{i_{1}}}\cdots \xi _{i_{s}}^{m_{i_{s}}}$, where $\{\xi
_{i_{j}}\} $ are normal crossings coordinates for $\mathbf{E}$ on an open
set where $\pi ^{\ast }\sigma ^{I}$ is defined, and $\rho ^{I}$ is a
non-vanishing holomorphic function. Therefore we may locally write: $\pi
^{\ast }f=\chi \cdot \left\vert \xi _{i_{1}}\right\vert ^{2m_{i_{1}}}\cdots
\left\vert \xi _{i_{s}}\right\vert ^{2m_{i_{s}}}$, where $\chi $ is a
strictly positive $C^{\infty }$ function defined on $\tilde{X}$. If we write 
$\Phi =\frac{i}{2\pi }\partial \log \chi $, and $T_{d\Phi }$ for the current
defined by $d\Phi =\frac{i}{2\pi }\bar{\partial}\partial \log \chi $, then
since by definition:%
\begin{eqnarray*}
T_{d\Phi }\left( \pi ^{\ast }(\tau ^{n-k-1}\wedge \eta ^{k})\right)
&=&-dT_{\Phi }(\pi ^{\ast }(\tau ^{n-k-1}\wedge \eta ^{k})) \\
T_{\Phi }(d(\pi ^{\ast }(\tau ^{n-k-1}\wedge \eta ^{k})) &=&0
\end{eqnarray*}%
since $\pi ^{\ast }(\tau ^{n-k-1}\wedge \eta ^{k})$ is closed. Away from the
exceptional set we may write locally: 
\begin{eqnarray*}
\frac{i}{2\pi }\partial \log \pi ^{\ast }f &=&\frac{i}{2\pi }\left( \partial
\log \chi +2m_{i_{1}}\partial \log \left\vert \xi _{i_{1}}\right\vert
+\cdots +2m_{i_{s}}\partial \log \left\vert \xi _{i_{s}}\right\vert \right)
\\
&=&\Phi +\frac{i}{2\pi }\left( \frac{m_{i_{1}}d\xi _{i_{1}}}{\xi _{i_{1}}}%
+\cdots +\frac{m_{i_{s}}d\xi _{i_{s}}}{\xi _{i_{s}}}\right) \text{.}
\end{eqnarray*}%
The second term is integrable on its domain of definition and so $\frac{i}{%
2\pi }\bar{\partial}\partial \log \pi ^{\ast }f$ is a $(1,1)$ form with $%
L_{loc}^{1}(\tilde{X})$ coefficients, and so defines a current. On the other
hand by the Poincar\'{e}-Lelong formula, $\bar{\partial}$ applied to the
second term is equal to $\displaystyle\sum_{i_{j}}m_{i_{j}}T_{\mathbf{E}%
_{i_{j}}}$, in the sense of currents, where $T_{\mathbf{E}_{i_{j}}}$ is the
current defined by the smooth hypersurface $\mathbf{E}_{i_{j}}$. Finally
then:%
\begin{align*}
& \qquad \qquad \int_{X-W}\frac{i}{2\pi }\bar{\partial}\partial \log f\wedge
\pi ^{\ast }\tau ^{n-k-1}\wedge \pi ^{\ast }\eta ^{k}=\int_{\tilde{X}-%
\mathbf{E}}\frac{i}{2\pi }\bar{\partial}\partial \log \pi ^{\ast }f\wedge
\pi ^{\ast }\tau ^{n-k-1}\wedge \pi ^{\ast }\eta ^{k} \\
& =T_{\frac{i}{2\pi }\bar{\partial}\partial \log \pi ^{\ast }f}(\pi ^{\ast
}\tau ^{n-k-1}\wedge \pi ^{\ast }\eta ^{k})=\left( \sum_{i}m_{i}T_{\mathbf{E}%
_{i}}\right) (\pi ^{\ast }\tau ^{n-k-1}\wedge \pi ^{\ast }\eta
^{k})=\sum_{i}m_{i}\int_{\mathbf{E}_{i}}\pi ^{\ast }\tau ^{n-k-1}\wedge \pi
^{\ast }\eta ^{k}=0
\end{align*}%
since the image of $\mathbf{E}_{i}$ under $\pi $ has codimension at least
two. This completes the proof.
\end{proof}

\begin{remark}
\label{Rmk4}If $0\to S\to E\to Q\to 0 $ is an exact sequence, where $E$ is a
vector bundle and $Q$ is torsion free, then the dualised sequence $0\to
Q^{\ast }\to E^{\ast }\to S^{\ast } $ is exact, and so as in the above lemma
there is a constant $M$ associated to $E$ independent of $Q$ so that 
\begin{equation*}
-\int_{X}c_{1}(Q)\wedge \tau ^{n-k-1}\wedge \eta ^{k}=\int_{X}c_{1}(Q^{\ast
})\wedge \tau ^{n-k-1}\wedge \eta ^{k}\leq M.
\end{equation*}%
In other words there is a uniform constant $M$ so that: $-M\leq
\int_{X}c_{1}(Q)\wedge \tau ^{n-k-1}\wedge \eta ^{k} $, where $Q$ is any
torsion-free quotient of $E$.
\end{remark}

\begin{remark}
In the case that $k=n-1$, $\deg _{k}(S,\tau ,\eta )=\deg (S,\eta )$ and the
above constitutes a proof of Simpson's degree formula.
\end{remark}

We note that if $\tilde{X}\rightarrow X$ is a composition of finitely many
blowups then we also have a family of K\"{a}hler metrics on $\tilde{X}$ by
iteratively applying $\limfunc{Proposition}$ \ref{Prop14}. We would now like
to compute the degree of an arbitrary torsion-free sheaf $\tilde{S}$ on $%
\tilde{X}$ with respect to each metric $\omega _{\varepsilon }$ on $\tilde{X}
$.

\begin{theorem}
\label{THM}Let $\tilde{S}$ be a subsheaf (with torsion free quotient $\tilde{%
Q}$) of a holomorphic vector bundle $\tilde{E}$ on $\tilde{X}$, where $\pi :%
\tilde{X}\rightarrow X$ is given by a sequence of blowups along complex
submanifolds of $\limfunc{codim}\geq 2$. Then there is a uniform constant $M$
independent of $\tilde{S}$ such that the degrees of $\tilde{S}$ and $\tilde{Q%
}$ with respect to $\omega _{\varepsilon }$ satisfy: $\deg (\tilde{S},\omega
_{\varepsilon })\leq \deg (\pi _{\ast }\tilde{S})+\varepsilon M$, and $\deg (%
\tilde{Q},\omega _{\varepsilon })\geq \deg (\pi _{\ast }\tilde{Q}%
)-\varepsilon M$.
\end{theorem}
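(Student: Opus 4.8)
The plan is to expand $\omega_{\varepsilon}^{n-1}=(\pi^{\ast}\omega+\varepsilon\eta)^{n-1}$ binomially, integrate $c_{1}(\tilde{S})$ against it, and separate the $\varepsilon^{0}$-term from the rest. With $n=\dim_{\mathbb{C}}X=\dim_{\mathbb{C}}\tilde{X}$ this gives
\begin{equation*}
\deg(\tilde{S},\omega_{\varepsilon})=\int_{\tilde{X}}c_{1}(\tilde{S})\wedge(\pi^{\ast}\omega)^{n-1}+\sum_{k=1}^{n-1}\binom{n-1}{k}\varepsilon^{k}\int_{\tilde{X}}c_{1}(\tilde{S})\wedge(\pi^{\ast}\omega)^{n-1-k}\wedge\eta^{k},
\end{equation*}
and similarly for $\tilde{Q}$. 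Here $\pi^{\ast}\omega$ is a closed semi-positive $(1,1)$-form and $\eta$ is a K\"ahler form on the compact manifold $\tilde{X}$ (Proposition \ref{Prop14}), so for each $k\geq1$ the integral $\int_{\tilde{X}}c_{1}(\tilde{S})\wedge(\pi^{\ast}\omega)^{n-1-k}\wedge\eta^{k}$ is $\deg_{k}(\tilde{S},\pi^{\ast}\omega,\eta)$ in the notation of Lemma \ref{Lemma10}, hence bounded above by a constant depending only on $\tilde{E},\omega,\eta$ and uniformly in $\tilde{S}$; by Remark \ref{Rmk4} the analogous integrals for the torsion-free quotient $\tilde{Q}$ are uniformly bounded below. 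Taking $0<\varepsilon\leq1$ so that $\varepsilon^{k}\leq\varepsilon$, the whole sum is bounded in absolute value by $\varepsilon M$ for a single constant $M$ assembled from these bounds and the binomial coefficients. Thus the theorem will follow once I establish the two identities
\begin{equation*}
\int_{\tilde{X}}c_{1}(\tilde{S})\wedge(\pi^{\ast}\omega)^{n-1}=\deg(\pi_{\ast}\tilde{S})\quad\text{and}\quad\int_{\tilde{X}}c_{1}(\tilde{Q})\wedge(\pi^{\ast}\omega)^{n-1}=\deg(\pi_{\ast}\tilde{Q}).
\end{equation*}

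For the first identity (the second is identical, since $\tilde{Q}$ is also torsion-free) I would argue as follows. First, $\pi_{\ast}\tilde{S}$ is torsion-free, being the pushforward of a torsion-free sheaf under the dominant map $\pi$, so $\deg(\pi_{\ast}\tilde{S})=\int_{X}c_{1}(\pi_{\ast}\tilde{S})\wedge\omega^{n-1}$ with $c_{1}(\pi_{\ast}\tilde{S})=c_{1}(\det\pi_{\ast}\tilde{S})$. Over $\tilde{X}-\mathbf{E}$ the map $\pi$ is an isomorphism onto $X-W$, where $W=\pi(\mathbf{E})$ has codimension $\geq2$ in $X$ because the blowup centers, and hence their images in $X$, have codimension $\geq2$; under this isomorphism $\tilde{S}$ corresponds to $\pi_{\ast}\tilde{S}$, so the line bundles $\det\tilde{S}$ and $\pi^{\ast}(\det\pi_{\ast}\tilde{S})$ agree on $\tilde{X}-\mathbf{E}$. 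Since $\tilde{X}$ is smooth, the line bundle $\det\tilde{S}\otimes\pi^{\ast}(\det\pi_{\ast}\tilde{S})^{-1}$, being trivial off the divisor $\mathbf{E}$, equals $\mathcal{O}_{\tilde{X}}(D)$ for some $D=\sum_{i}a_{i}\mathbf{E}_{i}$ supported on the exceptional components, so in $H^{2}(\tilde{X};\mathbb{R})$
\begin{equation*}
c_{1}(\tilde{S})=\pi^{\ast}c_{1}(\pi_{\ast}\tilde{S})+\sum_{i}a_{i}[\mathbf{E}_{i}].
\end{equation*}
Wedging with $(\pi^{\ast}\omega)^{n-1}$ and integrating over $\tilde{X}$: the first term contributes $\int_{X}c_{1}(\pi_{\ast}\tilde{S})\wedge\omega^{n-1}=\deg(\pi_{\ast}\tilde{S})$ by the projection formula (as $\pi$ has degree one), and the $i$-th exceptional term contributes $a_{i}\int_{\mathbf{E}_{i}}(\pi^{\ast}\omega)^{n-1}$, which vanishes because $(\pi^{\ast}\omega)^{n-1}|_{\mathbf{E}_{i}}$ is the pullback of the $(n-1,n-1)$-form $\omega^{n-1}$ under the map $\mathbf{E}_{i}\to\pi(\mathbf{E}_{i})$, whose target has complex dimension $\leq n-2$. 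This gives the identity, and hence the theorem, with $M$ the larger of the constants obtained for $\tilde{S}$ and $\tilde{Q}$.

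The only substantive step is this $\varepsilon^{0}$-identity; everything else is a mechanical expansion together with the uniform degree bounds of Lemma \ref{Lemma10} and Remark \ref{Rmk4}, applied with $\tau=\pi^{\ast}\omega$ and the auxiliary K\"ahler form $\eta$. Within the identity the points needing care are the torsion-freeness of $\pi_{\ast}\tilde{S}$ (so that its degree is defined) and the comparison $\det\tilde{S}\cong\pi^{\ast}(\det\pi_{\ast}\tilde{S})\otimes\mathcal{O}_{\tilde{X}}(D)$ with $D$ exceptional; given these, the vanishing of the exceptional contributions to the degree is immediate from the codimension-$\geq2$ hypothesis on the centers. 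I expect this divisor-theoretic bookkeeping — checking that $\det\tilde{S}$ and $\pi^{\ast}(\det\pi_{\ast}\tilde{S})$ coincide outside a codimension-$\geq2$ set and deducing that they differ by an exceptional divisor — to be the main, though essentially routine, obstacle.
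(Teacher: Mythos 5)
Your argument reproduces the paper's essential mechanism: expand $\omega_{\varepsilon}^{n-1}$ binomially, apply Lemma \ref{Lemma10} (with $\tau=\pi^{\ast}\omega$, $\eta$ K\"ahler) to bound the $\varepsilon^{k}$-terms ($k\geq 1$) uniformly in $\tilde{S}$, and observe that exceptional divisor contributions vanish against $(\pi^{\ast}\omega)^{n-1}$ because the centers have codimension $\geq 2$. The organization is slightly different: the paper reduces to $\det\tilde{S}$ first, writes $\tilde{L}=\pi^{\ast}L\otimes\mathcal{O}(\sum m_{i}\mathbf{E}_{i})$ using the explicit decomposition of $\operatorname{Pic}(\tilde{X})$, computes $\pi_{\ast}\tilde{L}$ directly via $\pi_{\ast}\mathcal{O}(m\mathbf{E})$, and then handles general $\tilde{S}$ via a Hartogs extension argument for $\det(\pi_{\ast}\tilde{S})=\det(\pi_{\ast}\det\tilde{S})$; you instead compare $\det\tilde{S}$ and $\pi^{\ast}(\det\pi_{\ast}\tilde{S})$ off $\mathbf{E}$ and read off $c_{1}(\tilde{S})=\pi^{\ast}c_{1}(\pi_{\ast}\tilde{S})+\sum a_{i}[\mathbf{E}_{i}]$ in cohomology, which is arguably a little cleaner. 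One step in your write-up deserves a better justification: you assert that a line bundle on $\tilde{X}$ trivial off $\mathbf{E}$ is $\mathcal{O}_{\tilde{X}}(D)$ with $D$ exceptional ``since $\tilde{X}$ is smooth.'' In the complex-analytic (as opposed to quasi-projective) category, smoothness alone does not give you a meromorphic section; what you actually need is the fact the paper invokes, namely $\operatorname{Pic}(\tilde{X})\cong\pi^{\ast}\operatorname{Pic}(X)\oplus\bigoplus_{i}\mathbb{Z}\,\mathcal{O}(\mathbf{E}_{i})$ for an iterated blowup of a compact complex manifold along smooth centers (a consequence of $\pi_{\ast}\mathcal{O}_{\tilde{X}}^{\ast}=\mathcal{O}_{X}^{\ast}$, $R^{1}\pi_{\ast}\mathcal{O}_{\tilde{X}}^{\ast}\cong\underline{\mathbb{Z}}$ on the centers, and the Leray spectral sequence). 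With that fact in hand your comparison of $\det\tilde{S}$ and $\pi^{\ast}(\det\pi_{\ast}\tilde{S})$ does produce an exceptional divisor, and the proof is complete. One small inaccuracy of phrasing: the sum of $\varepsilon^{k}$-terms is only one-sidedly bounded (above for $\tilde{S}$, below for $\tilde{Q}$), not ``in absolute value''; but this is all the stated inequalities require, so the conclusion is unaffected.
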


\begin{proof}
The general case will follow from the case when $\tilde{S}$ is a line bundle 
$\tilde{L}$ (perhaps not a line subbundle). Recall that the Picard group of
the blowup $Pic(\tilde{X})=Pic(X)\oplus \mathbb{Z}\mathcal{O}(\mathbf{E}%
_{1})\oplus \cdots \oplus \mathbb{Z}\mathcal{O}(\mathbf{E}_{m})$ where the $%
\mathbf{E}_{i}$ are the irreducible components of the exceptional divisor.
That is, we may write an arbitrary line bundle as $\tilde{L}=\pi ^{\ast
}L\otimes \mathcal{O}_{\tilde{X}}(\sum_{i}m_{i}\mathbf{E}_{i})$ where $L$ is
a line bundle on $X$. Then by definition:%
\begin{equation*}
\deg (\tilde{L},\omega _{\varepsilon })=\int_{\tilde{X}}c_{1}(\tilde{L}%
)\wedge \omega _{\varepsilon }^{n-1}=\int_{\tilde{X}}c_{1}(\tilde{L})\wedge
\left( \pi ^{\ast }\omega +\varepsilon \eta \right) ^{n-1}.
\end{equation*}%
Then we have an expansion:%
\begin{equation*}
\left( \pi ^{\ast }\omega +\varepsilon \eta \right) ^{n-1}=\pi ^{\ast
}\omega ^{n-1}+\varepsilon (n-1)\pi ^{\ast }\omega ^{n-2}\wedge \eta +\cdots
+\varepsilon ^{n-2}(n-1)\pi ^{\ast }\omega \wedge \eta ^{n-2}+\varepsilon
^{n-1}\eta ^{n-1}.
\end{equation*}%
Note that $\displaystyle\int_{\tilde{X}}c_{1}(\mathcal{O}_{\tilde{X}}%
\mathcal{(}\mathbf{E}_{i}))\wedge \pi ^{\ast }\omega ^{n-1}=\int_{\mathbf{E}%
_{i}}\left( \pi ^{\ast }\omega \right) ^{n-1}=0$, since the image in $X$ of
each $\mathbf{E}_{i}$ lives in codimension at least $2$. Therefore we are
left with%
\begin{eqnarray*}
\deg (\tilde{L},\omega _{\varepsilon }) &=&\int_{\tilde{X}}c_{1}(\tilde{L}%
)\wedge \pi ^{\ast }\omega ^{n-1}+\sum_{k}\varepsilon ^{k}\binom{n-1}{k}%
\left( \int_{\tilde{X}}c_{1}(\tilde{L})\wedge \pi ^{\ast }\omega
^{n-k-1}\wedge \eta ^{k}\right) \\
&=&\int_{\tilde{X}}\pi ^{\ast }c_{1}(L)\wedge \pi ^{\ast }\omega
^{n-1}+\sum_{i}m_{i}\int_{\tilde{X}}c_{1}(\mathcal{O}_{\tilde{X}}\mathcal{(}%
\mathbf{E}_{i}))\wedge \pi ^{\ast }\omega ^{n-1} \\
&&+\sum_{k}\varepsilon ^{k}\binom{n-1}{k}\int_{\tilde{X}}c_{1}(\tilde{L}%
)\wedge \pi ^{\ast }\omega ^{n-k-1}\wedge \eta ^{k} \\
&=&\deg (L,\omega )+\sum_{k}\varepsilon ^{k}\binom{n-1}{k}\int_{\tilde{X}%
}c_{1}(\tilde{L})\wedge \pi ^{\ast }\omega ^{n-k-1}\wedge \eta ^{k}
\end{eqnarray*}%
By the previous lemma the terms $\displaystyle\int_{\tilde{X}}c_{1}(\tilde{L}%
)\wedge \pi ^{\ast }\omega ^{n-k-1}\wedge \eta ^{k}$, are all bounded
uniformly independently of $\varepsilon $ since $\pi ^{\ast }\omega $ is
semi-positive and $\eta $ is a K\"{a}hler form. Therefore we have: $\deg (%
\tilde{L},\omega _{\varepsilon })\leq \deg (L,\omega )+\varepsilon M$.

Now note that if $\tilde{X}=Bl_{Y}X$ then $\pi _{\ast }\mathcal{O}(m\mathbf{E%
})=\mathcal{O}_{X}$ if $m\geq 0$ and $\pi _{\ast }\mathcal{O}(m\mathbf{E}%
)=I_{Y}^{\otimes m}$ if $m<0$, where $I_{Y}$ is the ideal sheaf of
holomorphic functions on $X$ vanishing on $Y$. The determinant of an ideal
sheaf is trivial if $Y$ has codimension at least $2$, so we have $\det (\pi
_{\ast }\tilde{L})=\det (L)$ so finally: $\deg (\tilde{L},\omega
_{\varepsilon })\leq \deg (\pi _{\ast }\tilde{L})+\varepsilon M$.

Now for an arbitrary subsheaf $\tilde{S}\subset \tilde{E}$, by definition $%
\deg (\tilde{S},\omega _{\varepsilon })=\deg (\det (\tilde{S}),\omega
_{\varepsilon })$. When $\pi _{\ast }\tilde{S}$ is a vector bundle, that is,
away from its algebraic singular set, we have an isomorphism $\det (\pi
_{\ast }\tilde{S})=\pi _{\ast }\det \tilde{S}$. Their determinants are
therefore isomorphic away from this set, and so by Hartogs' theorem there is
an isomorphism of line bundles: $\det (\pi _{\ast }\tilde{S})=\det (\pi
_{\ast }\det \tilde{S})$ on $X$. Therefore by the previous argument: 
\begin{equation*}
\deg (\tilde{S},\omega _{\varepsilon })=\deg (\det (\tilde{S}),\omega
_{\varepsilon })\leq \deg (\pi _{\ast }\det \tilde{S})+\varepsilon M=\deg
(\pi _{\ast }\tilde{S})+\varepsilon M\ .
\end{equation*}%
The exact same argument together with the previous remark proves the second
inequality as well.
\end{proof}

\subsection{Stability on Blowups and Convergence of the $HN$ Type}

\begin{proposition}
\label{Prop16}Let $\tilde{E}\rightarrow \tilde{X}$ a holomorphic vector
bundle where $\tilde{X}\rightarrow X$ is a sequence of blowups. If $\pi
_{\ast }\tilde{E}$ is $\omega $-stable, then there is an $\varepsilon _{2}$
such that $\tilde{E}$ is $\omega _{\varepsilon }$-stable for all $%
0<\varepsilon \leq \varepsilon _{2}$.
\end{proposition}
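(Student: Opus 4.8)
The plan is to argue by contradiction, feeding the uniform degree estimate of Theorem~\ref{THM} into the observation that an $\omega$-stable sheaf has a strictly positive ``stability margin''. First I would record the behaviour of $\deg\tilde{E}$ itself: applying Theorem~\ref{THM} to the trivial pairs $(\tilde{S},\tilde{Q})=(\tilde{E},0)$ and $(\tilde{S},\tilde{Q})=(0,\tilde{E})$ produces a single uniform constant $M$ with $\left\vert \deg (\tilde{E},\omega _{\varepsilon })-\deg (\pi _{\ast }\tilde{E})\right\vert \leq \varepsilon M$, and with the property that $\deg (\tilde{S},\omega _{\varepsilon })\leq \deg (\pi _{\ast }\tilde{S})+\varepsilon M$ for every subsheaf $\tilde{S}\subset \tilde{E}$ with torsion-free quotient. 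Note also that since $\pi $ is biholomorphic away from the exceptional divisor, which maps into a set of codimension $\geq 2$ in $X$, the sheaf $F:=\pi _{\ast }\tilde{E}$ has rank $K=\limfunc{rk}\tilde{E}$, and $\limfunc{rk}(\pi _{\ast }\tilde{S})=\limfunc{rk}\tilde{S}$ for any subsheaf $\tilde{S}$.

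The crucial point I would establish is the following stability margin for $F$: since $F$ is $\omega $-stable of rank $K$, there is a $\delta >0$ such that $\mu _{\omega }(S)\leq \mu _{\omega }(F)-\delta $ for \emph{every} proper saturated subsheaf $S\subset F$. To prove this, suppose not; then one can find saturated proper subsheaves $S_{n}\subset F$ with $\mu _{\omega }(S_{n})\rightarrow \mu _{\omega }(F)$, and after passing to a subsequence their ranks equal a fixed $r$ with $0<r<K$, so their degrees are bounded. By Corollary~\ref{Cor5} the associated weakly holomorphic projections $\pi _{n}$ are bounded in $L_{1}^{2}$; a weak limit $\pi $ is again a weakly holomorphic projection by Theorem~\ref{Thm4}, hence defines a subsheaf $S\subset F$ whose rank is still $r$ (the pointwise trace, equal to $r$ a.e., passes to the weak limit). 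Using the Chern--Weil formula (Theorem~\ref{Thm5}), the strong $L^{2}$ convergence of $\pi _{n}$ and weak lower semicontinuity of $\int_{X}\left\vert \beta \right\vert ^{2}$, one gets $\deg S\geq \limsup_{n}\deg S_{n}=r\mu _{\omega }(F)$, so $\mu _{\omega }(S)\geq \mu _{\omega }(F)$ with $S$ proper, contradicting $\omega $-stability of $F$.

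With $\delta $ in hand, set $\varepsilon _{2}:=\delta /(2M)$. Suppose $0<\varepsilon \leq \varepsilon _{2}$ but $\tilde{E}$ is not $\omega _{\varepsilon }$-stable; then there is a proper subsheaf of $\tilde{E}$ which, after replacing it by its saturation (which does not decrease the $\omega _{\varepsilon }$-slope by Remark~\ref{Rmk2} and has torsion-free quotient), we may take to be a saturated $\tilde{S}\subset \tilde{E}$ with torsion-free quotient and $\mu _{\omega _{\varepsilon }}(\tilde{S})\geq \mu _{\omega _{\varepsilon }}(\tilde{E})$; write $r=\limfunc{rk}\tilde{S}\in \{1,\dots ,K-1\}$. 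Left-exactness of $\pi _{\ast }$ exhibits $\pi _{\ast }\tilde{S}$ as a rank-$r$ subsheaf of $F$, whose saturation in $F$ is a proper saturated subsheaf, so $\deg (\pi _{\ast }\tilde{S})\leq \deg \limfunc{Sat}_{F}(\pi _{\ast }\tilde{S})\leq r(\mu _{\omega }(F)-\delta )=\tfrac{r}{K}\deg F-r\delta $. Combining this with $\deg (\tilde{S},\omega _{\varepsilon })\leq \deg (\pi _{\ast }\tilde{S})+\varepsilon M$, with $\deg (\tilde{E},\omega _{\varepsilon })\geq \deg F-\varepsilon M$, and with $r\geq 1$, we obtain
\begin{equation*}
\frac{\deg (\tilde{S},\omega _{\varepsilon })}{r}\leq \frac{\deg F}{K}-\delta +\varepsilon M<\frac{\deg F}{K}-\frac{\varepsilon M}{K}\leq \frac{\deg (\tilde{E},\omega _{\varepsilon })}{K},
\end{equation*}
where the strict inequality uses $\varepsilon \leq \varepsilon _{2}=\delta /(2M)$. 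This contradicts $\mu _{\omega _{\varepsilon }}(\tilde{S})\geq \mu _{\omega _{\varepsilon }}(\tilde{E})$, so $\tilde{E}$ is $\omega _{\varepsilon }$-stable for all $0<\varepsilon \leq \varepsilon _{2}$.

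The main obstacle is the uniform gap $\delta $: $\omega $-stability of $F$ only supplies the strict inequality $\mu _{\omega }(S)<\mu _{\omega }(F)$ one subsheaf at a time, and since $\deg _{\omega }$ is real-valued one cannot rule out, by integrality, a sequence of proper subsheaves whose slopes creep up to $\mu _{\omega }(F)$. Ruling this out is precisely where the $L_{1}^{2}$-boundedness of weakly holomorphic projections with controlled degree (Corollary~\ref{Cor5}) and the closedness of this class under weak limits (Theorem~\ref{Thm4}) are essential; the rest of the argument is bookkeeping around Theorem~\ref{THM}.
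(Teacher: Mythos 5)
Your proof is correct and follows the same strategy as the paper: transfer $\omega_\varepsilon$-slopes on $\tilde{X}$ to $\omega$-slopes of pushforwards on $X$ via Theorem~\ref{THM}, and then use a strict slope gap coming from the $\omega$-stability of $\pi_*\tilde{E}$ to derive a contradiction once $\varepsilon$ is small enough. The one place where you add substance is the justification of the gap $\delta$: the paper simply records (appealing implicitly to the same circle of ideas behind Proposition~\ref{Prop2}) that the supremum of slopes over proper subsheaves of $\pi_*\tilde{E}$ is attained by some $\mathcal{F}$, whereas you spell out the compactness argument via weakly holomorphic projections; this is a helpful level of detail, though note that Theorem~\ref{Thm4} only converts a given weakly holomorphic projection into a coherent subsheaf, and the prior claim that a weak $L_1^2$ limit of the $\pi_n$ is itself a weakly holomorphic projection requires combining the uniform $L^\infty$ bound with the strong $L^2$ convergence from Rellich to pass the quadratic constraints $\pi^2=\pi=\pi^*$ and $(\mathbf{Id}-\pi)\bar{\partial}_E\pi=0$ to the limit.
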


\begin{proof}
Suppose there is a destabilising subsheaf $\tilde{S}_{\varepsilon }\subset 
\tilde{E}$, i.e. $\mu _{\omega _{\varepsilon }}(\tilde{S}_{\varepsilon
})\geq \mu _{\omega _{\varepsilon }}(\tilde{E})$ for each $\varepsilon $.
Now among all proper subsheaves of $\pi _{\ast }\tilde{E}$, the maximal
slope is realised by some subsheaf $\mathcal{F}$, in other words:%
\begin{equation*}
\mu _{\omega }(\mathcal{F})=\sup \{\mu _{\omega }(S)\mid S\subset \pi _{\ast
}\tilde{E}\}.
\end{equation*}
Then by the previous theorem we have:%
\begin{equation*}
\mu _{\omega }(\pi _{\ast }\tilde{E})-\varepsilon M\leq \mu _{\omega
_{\varepsilon }}(\tilde{E})\leq \mu _{\omega }(\pi _{\ast }\tilde{S}%
_{\varepsilon })+\varepsilon M\leq \mu _{\omega }(\mathcal{F})+\varepsilon M.
\end{equation*}%
In other words:%
\begin{equation*}
\mu _{\omega }(\pi _{\ast }\tilde{E})\leq \mu _{\omega }(\mathcal{F}%
)+2\varepsilon M
\end{equation*}
Since $\pi _{\ast }\tilde{E}$ is $\omega $-stable, $\mu _{\omega }(\mathcal{F%
})<\mu _{\omega }(\pi _{\ast }\tilde{E})$. Since the constant $M$ is
independent of $\varepsilon $, when $\varepsilon $ is sufficiently small
(more specifically, when $\varepsilon <(\mu _{\omega }(\pi _{\ast }\tilde{E}%
)-\mu _{\omega }(\mathcal{F}))/2M$), we have 
\begin{equation*}
\mu _{\omega }(\pi _{\ast }\tilde{E})\leq \mu _{\omega }(\mathcal{F}%
)+2\varepsilon M<\mu _{\omega }(\pi _{\ast }\tilde{E}),
\end{equation*}%
which is a contradiction. \ 
\end{proof}

\begin{remark}
\label{Rmk5}This shows in particular that for any resolution of a $HNS$
filtration, the quotients $\tilde{Q}_{i}=\tilde{E}_{i}/\tilde{E}_{i-1}$ are
stable with respect to $\omega _{\varepsilon }$ for $\varepsilon $
sufficiently small, since the double dual of the pushforward is the double
dual of $Q_{i}$ which is stable by construction. This fact will be important
in Section$\ 5$.
\end{remark}

For each of the metrics $\omega _{\varepsilon }$ there is also an $HNS$
filtration of the pullback $\pi ^{\ast }E$. We will need information about
what happens to the corresponding $HN$ types as $\varepsilon \rightarrow 0$.
Namely we have:

\begin{proposition}
\label{Prop17}Let $E\rightarrow X$ be a holomorphic vector bundle and $\pi :%
\tilde{X}\rightarrow X$ be a finite sequence of blowups resolving the $HNS$
filtration. Then the $HN$ type $\left( \mu _{1}^{\varepsilon },\cdots ,\mu
_{K}^{\varepsilon }\right) $ of $\pi ^{\ast }E$ with respect to $\omega
_{\varepsilon }$ converges to the $HN$ type $\left( \mu _{1},\cdots ,\mu
_{K}\right) $ of $E$ with respect to $\omega $ as $\varepsilon
\longrightarrow 0$.
\end{proposition}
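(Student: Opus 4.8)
The plan is to show that each partial sum $\sum_{j\le k}\mu_j^\varepsilon$ of the $HN$ type of $\pi^*E$ with respect to $\omega_\varepsilon$ converges to the corresponding partial sum $\sum_{j\le k}\mu_j$ of the $HN$ type of $E$, which by the definition of the $HN$ type as a non-increasing tuple is equivalent to $\mu_j^\varepsilon\to\mu_j$ for each $j$. First I would fix a resolution $\pi:\tilde X\to X$ of the $HNS$ filtration as in Proposition \ref{Prop13}, giving the subbundle filtration $0=\tilde E_0\subset\tilde E_1\subset\cdots\subset\tilde E_l=\pi^*E$ with $\pi_*\tilde E_i=E_i$ and $(\pi_*\tilde Q_i)^{**}=Q_i^{**}$. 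Since $\deg$ is unchanged by taking double duals (the singular set has codimension $\ge 2$), we have $\deg(\pi_*\tilde E_i)=\deg E_i$ and $\deg(\pi_*\tilde Q_i)=\deg Q_i$. By Theorem \ref{THM} applied to each $\tilde E_i$ (and to each $\tilde Q_i$), there is a uniform constant $M$ with $|\deg(\tilde E_i,\omega_\varepsilon)-\deg E_i|\le \varepsilon M$ for all $i$; hence the ratios $\mu_{\omega_\varepsilon}(\tilde E_i)$ (with ranks fixed, as the $\tilde E_i$ are fixed subbundles) converge to $\mu_\omega(E_i)$ as $\varepsilon\to 0$, and likewise $\mu_{\omega_\varepsilon}(\tilde Q_i)\to\mu_\omega(Q_i)$.

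The next step is to control the $HN$ type of $\pi^*E$ with respect to $\omega_\varepsilon$ from above and below. For the lower bound (in the Shatz order), Remark \ref{Rmk5} says the quotients $\tilde Q_i$ are $\omega_\varepsilon$-stable, hence $\omega_\varepsilon$-semistable, for $\varepsilon$ small; since for small $\varepsilon$ the slope inequalities $\mu_{\omega_\varepsilon}(\tilde Q_i)>\mu_{\omega_\varepsilon}(\tilde Q_{i+1})$ still hold (they hold strictly at $\varepsilon=0$ and the slopes vary continuously), Proposition \ref{Prop3} shows that the filtration $\{\tilde E_i\}$ is \emph{itself} the $HN$ filtration of $(\pi^*E,\omega_\varepsilon)$ for $\varepsilon$ sufficiently small. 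Therefore $\mu_i^\varepsilon=\mu_{\omega_\varepsilon}(\tilde Q_i)$ (with the appropriate multiplicities $\operatorname{rk}\tilde Q_i=\operatorname{rk} Q_i$), and by the first paragraph this converges to $\mu_{\omega}(Q_i)=\mu_i$. This directly gives $\mu_i^\varepsilon\to\mu_i$ for all $i$, which is the assertion.

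Actually the cleanest route avoids even checking the strict inequalities survive: one shows $\mu(\pi^*E,\omega_\varepsilon)\ge \mu(E,\omega)$ in the Shatz order by Proposition \ref{Prop9}-type reasoning (the pushforward of a destabilising subsheaf bounds things the wrong way) — or rather, one shows the partial sums satisfy $\sum_{j\le k}\mu_j^\varepsilon \le \sum_{j\le k}\mu_\omega(\tilde E_i)/\!\cdots$ via Theorem \ref{THM} applied to an arbitrary subsheaf, getting $\sum_{j\le k}\mu_j^\varepsilon\le \big(\text{partial sum of the }HN\text{ type of }E\big)+C\varepsilon$; combined with the lower bound from the explicit filtration $\{\tilde E_i\}$ giving $\sum_{j\le k}\mu_j^\varepsilon\ge \sum_{j\le k}\mu_j - C\varepsilon$, the squeeze yields convergence of every partial sum, hence of the type. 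The main obstacle is the \emph{upper} bound: one must know that no subsheaf of $\pi^*E$ can have $\omega_\varepsilon$-slope much larger than predicted by the type of $E$, and for this one applies Theorem \ref{THM} to an arbitrary (saturated) subsheaf $\tilde S\subset\pi^*E$ of each rank, bounding $\deg(\tilde S,\omega_\varepsilon)\le \deg(\pi_*\tilde S)+\varepsilon M\le \mu^{\max}(E)\cdot\operatorname{rk}(\tilde S)+\varepsilon M$ using that $\pi_*\tilde S$ is a subsheaf of $E$ of the same rank (torsion-free quotient) — here the fact that $M$ is uniform over all subsheaves, which is exactly the content of Theorem \ref{THM}, is what makes the estimate work uniformly in $\varepsilon$ and is the delicate point; everything else is continuity of finitely many degrees.
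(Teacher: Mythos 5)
Your first and third paragraphs are sound, but the second paragraph contains a genuine gap, and it is worth pinning down exactly where.

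In the second paragraph you invoke Remark \ref{Rmk5} to get that the quotients $\tilde Q_i$ are $\omega_\varepsilon$-stable, and simultaneously assert that the slope inequalities $\mu_{\omega_\varepsilon}(\tilde Q_i)>\mu_{\omega_\varepsilon}(\tilde Q_{i+1})$ ``hold strictly at $\varepsilon=0$.'' These two facts belong to different filtrations and cannot be combined. Remark \ref{Rmk5} is about the resolution of the \emph{$HNS$} filtration, whose quotients are stable; but within any Seshadri block the slopes at $\varepsilon=0$ are \emph{equal}, not strictly decreasing, so continuity gives you nothing about the order for $\varepsilon>0$. If instead you resolve only the \emph{$HN$} filtration (which is what the paper does, after explicitly ``forgetting'' the Seshadri refinement), then the slopes are strictly decreasing at $\varepsilon=0$, but the quotients $\tilde Q_i$ are only semi-stable, and $\limfunc{Proposition}$ \ref{Prop16} does \emph{not} assert that semi-stability is preserved under pullback — so you cannot conclude that the $\tilde Q_i$ are $\omega_\varepsilon$-semistable. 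In either reading the hypothesis needed to recognise $\{\tilde E_i\}$ as \emph{the} $HN$ filtration of $(\pi^*E,\omega_\varepsilon)$ fails, and indeed the paper is careful not to make this claim: it only proves that $\mu_{\omega_\varepsilon}^{\min}(\tilde E_i)>\mu_{\omega_\varepsilon}^{\max}(\pi^*E/\tilde E_i)$ for small $\varepsilon$ (via Theorem \ref{THM} applied to arbitrary intermediate subsheaves $\mathcal{F}_1\subset\tilde E_i\subset\mathcal{F}_2$), which by $\limfunc{Proposition}$ \ref{Prop4} shows the $HN$ filtration of $(\pi^*E,\omega_\varepsilon)$ \emph{interleaves} $\{\tilde E_i\}$, and then pins down the slopes of the interleaved pieces by a squeeze between $\mu_{\omega_\varepsilon}(\tilde Q_i)$ and the extremal slopes at each stage.

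Your third paragraph, by contrast, is a correct alternative. Applying Theorem \ref{THM} to an arbitrary saturated subsheaf $\tilde S\subset\pi^*E$ gives $\deg(\tilde S,\omega_\varepsilon)\leq\deg(\pi_*\tilde S)+\varepsilon M$, and since $\pi_*\tilde S\hookrightarrow E$ has the same rank, this bounds $\deg(\tilde S,\omega_\varepsilon)$ by the Shatz polygon of $E$ at that rank plus $\varepsilon M$; taking the sup over $\tilde S$ of a fixed rank gives the upper bound on the polygon $P_\varepsilon$ of $(\pi^*E,\omega_\varepsilon)$. The lower bound $P_\varepsilon(r_i)\geq\deg(\tilde E_i,\omega_\varepsilon)\to\deg E_i=P(r_i)$ is only directly obtained at the vertex ranks $r_i=\limfunc{rk}(E_i)$; however, since $P$ is linear between these ranks and $P_\varepsilon$ is concave, the bound propagates to all integer ranks, so the polygons converge pointwise and the types follow. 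This polygon-squeeze is a genuine simplification of the argument actually given in the paper, avoiding the interleaving bookkeeping entirely, at the (small) cost of making the concavity propagation explicit.
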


\begin{proof}
Let 
\begin{equation*}
0=\tilde{E}_{0}\subset \tilde{E}_{1}\subset \tilde{E}_{2}\subset \cdots
\subset \tilde{E}_{l-1}\subset \tilde{E}_{l}=\pi ^{\ast }E
\end{equation*}%
be a resolution of the $HNS$ filtration. Since all the information about the 
$HN$ type is contained in the $HN$ filtration%
\begin{equation*}
0=\mathbb{F}_{0}^{HN}\subset \mathbb{F}_{1}^{HN}(E)\subset \mathbb{F}%
_{2}^{HN}(E)\subset \cdots \subset \mathbb{F}_{l}^{HN}(E)=E,
\end{equation*}%
we will just regard this as a resolution of singularities of the $HN$
filtration and forget about Seshadri filtrations for the rest of this proof.

We would like to relate the resolution of the $HN$ filtration of $(E,\omega
) $, to the $HN$ filtration of $(\pi ^{\ast }E,\omega _{\varepsilon })$ for
small $\varepsilon $. We claim that for all $\varepsilon $ in a sufficient
range we may arrange that $\mu _{\omega _{\varepsilon }}^{\min }(\tilde{E}%
_{i})>\mu _{\omega _{\varepsilon }}^{\max }(\pi ^{\ast }E/\tilde{E}_{i})$.
Let $\mathcal{F}_{1}\subset \tilde{E}_{i}\subset \mathcal{F}_{2}\subset \pi
^{\ast }E$ be any subsheaves such that $\tilde{E}_{i}/\mathcal{F}_{1}$ is
torsion free. Note that for $\tilde{x}\in \tilde{X}$ with $\pi (\tilde{x})=x$%
, we always have maps on the stalks $\left( \pi _{\ast }\mathcal{F}%
_{i}\right) _{x}\rightarrow \left( \mathcal{F}_{i}\right) _{\tilde{x}}$.
Since $\pi $ is in particular a biholomorphism away from $\mathbf{E}$, when $%
\tilde{x}\in \tilde{X}-\mathbf{E}$ these maps are isomorphisms. In other
words the sequences:%
\begin{equation*}
0\longrightarrow \pi _{\ast }\mathcal{F}_{1}\longrightarrow
E_{i}\longrightarrow \pi _{\ast }\left( \tilde{E}_{i}/\mathcal{F}_{1}\right)
\longrightarrow 0
\end{equation*}%
and%
\begin{equation*}
0\longrightarrow E_{i}\longrightarrow \pi _{\ast }\mathcal{F}%
_{2}\longrightarrow \pi _{\ast }\left( \mathcal{F}_{2}/\tilde{E}_{i}\right)
\longrightarrow 0
\end{equation*}%
are exact away from the singular set $Z_{\limfunc{alg}}$. In particular this
means $E_{i}/\pi _{\ast }\mathcal{F}_{1}\hookrightarrow \pi _{\ast }(\tilde{E%
}_{i}/\mathcal{F}_{1})$ and $\pi _{\ast }\mathcal{F}_{2}/E_{i}%
\hookrightarrow \pi _{\ast }(\mathcal{F}_{2}/\tilde{E}_{i})$ with torsion
quotients, which implies $(E_{i}/\pi _{\ast }\mathcal{F}_{1})^{\ast \ast
}=(\pi _{\ast }(\tilde{E}_{i}/\mathcal{F}_{1}))^{\ast \ast }$ and $(\pi
_{\ast }\mathcal{F}_{2}/E_{i})^{\ast \ast }=(\pi _{\ast }(\mathcal{F}_{2}/%
\tilde{E}_{i}))^{\ast \ast }$. Then finally we have $\mu _{\omega
}(E_{i}/\pi _{\ast }\mathcal{F}_{1})=$ $\mu _{\omega }(\pi _{\ast }(\tilde{E}%
_{i}/\mathcal{F}_{1}))$ and $\mu _{\omega }(\pi _{\ast }\mathcal{F}%
_{2}/E_{i})=$ $\mu _{\omega }(\pi _{\ast }(\mathcal{F}_{2}/\tilde{E}_{i}))$.

The above argument together with Theorem \ref{THM} now implies that $\mu
_{\omega _{\varepsilon }}(\tilde{E}_{i}/\mathcal{F}_{1})\geq \mu _{\omega
}(E_{i}/\pi _{\ast }\mathcal{F}_{1})-\varepsilon M$ and $\mu _{\omega
_{\varepsilon }}(\mathcal{F}_{2}/\tilde{E}_{i})\leq \mu _{\omega }(\pi
_{\ast }\mathcal{F}_{2}/E_{i})+\varepsilon M$. On the other hand: $\mu
_{\omega }(E_{i}/\pi _{\ast }\mathcal{F}_{1})\geq \mu _{\omega }(Q_{i})>\mu
_{\omega }(Q_{i+1})\geq \mu _{\omega }(\pi _{\ast }\mathcal{F}_{2}/E_{i})$,
where we have used the facts that $\mu _{\omega }(Q_{i})=\mu _{\omega
}^{\min }(E_{i})$ and $\mu _{\omega }(Q_{i+1})=\mu _{\omega }^{\max
}(E/E_{i})$. Therefore we have:%
\begin{equation*}
\mu _{\omega _{\varepsilon }}(\tilde{E}_{i}/\mathcal{F}_{1})-\mu _{\omega
_{\varepsilon }}(\mathcal{F}_{2}/\tilde{E}_{i})\geq \left( \mu _{\omega
}(E_{i}/\pi _{\ast }\mathcal{F}_{1})-\mu _{\omega }(\pi _{\ast }\mathcal{F}%
_{2}/E_{i})\right) -2\varepsilon M.
\end{equation*}%
As we have shown, the first term on the right hand side is strictly
positive, so when $\varepsilon $ is sufficiently small the entire right hand
side is strictly positive. Since $\mathcal{F}_{1}$ and $\mathcal{F}_{2}$
were arbitrary, for $\varepsilon $ small $\mu _{\omega _{\varepsilon
}}^{\min }(\tilde{E}_{i})$ must be strictly bigger than $\mu _{\omega
_{\varepsilon }}^{\max }(\pi ^{\ast }E/\tilde{E}_{i})$.

Now it follows from $\limfunc{Proposition}$ \ref{Prop4} that the $HN$
filtration of $(\pi ^{\ast }E,\omega _{\varepsilon })$ is:%
\begin{eqnarray*}
0 &\subset &\mathbb{F}_{1}^{HN,\varepsilon }(\tilde{E}_{1})\subset \cdots
\subset \mathbb{F}_{k_{1}}^{HN,\varepsilon }(\tilde{E}_{1})=\tilde{E}%
_{1}\subset \cdots \subset \mathbb{F}_{k_{1}+\cdots
+k_{l-1}}^{HN,\varepsilon }(\tilde{E}_{l-1})=\tilde{E}_{l-1} \\
&\subset &\mathbb{F}_{k_{1}+\cdots +k_{l-1}+1}^{HN,\varepsilon }(\tilde{E}%
_{l})\subset \cdots \subset \mathbb{F}_{k_{1}+\cdots +k_{l}}^{HN,\varepsilon
}(\tilde{E}_{l})=\pi ^{\ast }E.
\end{eqnarray*}

That is, the resolution appears within the $HN$ filtration with respect to $%
\omega _{\varepsilon }$, and two successive subbundles in the resolution are
separated by the $HN$ filtration of the larger bundle. Then for any $i$ we
consider the following part of the above filtration:%
\begin{eqnarray*}
\tilde{E}_{i-1} &=&\mathbb{F}_{k_{1}+\cdots +k_{i-1}}^{HN,\varepsilon }(%
\tilde{E}_{i-1})\subset \mathbb{F}_{k_{1}+\cdots +k_{i-1}+1}^{HN,\varepsilon
}(\tilde{E}_{i})\subset  \\
\cdots  &\subset &\mathbb{F}_{k_{1}+\cdots +k_{i}-1}^{HN,\varepsilon }(%
\tilde{E}_{i})\subset \mathbb{F}_{k_{1}+\cdots +k_{i}}^{HN,\varepsilon }(%
\tilde{E}_{i})=\tilde{E}_{i}.
\end{eqnarray*}%
We claim that:%
\begin{equation*}
\mu _{\omega _{\varepsilon }}\left( \mathbb{F}_{k_{1}+\cdots
+k_{i-1}+j}^{HN,\varepsilon }(\tilde{E}_{i})/\mathbb{F}_{k_{1}+\cdots
+k_{i-1}+j-1}^{HN,\varepsilon }(\tilde{E}_{i})\right) \longrightarrow \mu
_{\omega }(E_{i}/E_{i-1})=\mu _{\omega }(Q_{i})
\end{equation*}%
for each $1\leq j\leq k_{i}$. Then the proposition will follow immediately.
The slopes of the quotients in the $HN$ filtration are strictly decreasing
so we have:%
\begin{eqnarray*}
\mu _{\omega _{\varepsilon }}\left( \tilde{E}_{i}/\mathbb{F}_{k_{1}+\cdots
+k_{i}-1}^{HN,\varepsilon }(\tilde{E}_{i})\right)  &<&\mu _{\omega
_{\varepsilon }}\left( \mathbb{F}_{k_{1}+\cdots +k_{i-1}+j}^{HN,\varepsilon
}(\tilde{E}_{i})/\mathbb{F}_{k_{1}+\cdots +k_{i-1}+j-1}^{HN,\varepsilon }(%
\tilde{E}_{i})\right)  \\
&<&\mu _{\omega _{\varepsilon }}\left( \mathbb{F}_{k_{1}+\cdots
+k_{i-1}+1}^{HN,\varepsilon }(\tilde{E}_{i-1})/\tilde{E}_{i-1}\right) .
\end{eqnarray*}%
Therefore it suffices to prove convergence of 
\begin{equation*}
\mu _{\omega _{\varepsilon }}\left( \tilde{E}_{i}/\mathbb{F}_{k_{1}+\cdots
+k_{i}-1}^{HN,\varepsilon }(\tilde{E}_{i})\right) \text{ }and\text{ }\mu
_{\omega _{\varepsilon }}\left( \mathbb{F}_{k_{1}+\cdots
+k_{i-1}+1}^{HN,\varepsilon }(\tilde{E}_{i-1})/\tilde{E}_{i-1}\right) 
\end{equation*}%
to $\mu _{\omega }(Q_{i})$ as $\varepsilon \rightarrow 0$. Note that just as
before we may argue that%
\begin{equation*}
\mu _{\omega }\left( \pi _{\ast }\left( \tilde{E}_{i}/\mathbb{F}%
_{k_{1}+\cdots +k_{i}-1}^{HN,\varepsilon }(\tilde{E}_{i})\right) \right)
=\mu _{\omega }\left( E_{i}/\pi _{\ast }\mathbb{F}_{k_{1}+\cdots
+k_{i}-1}^{HN,\varepsilon }(\tilde{E}_{i})\right) 
\end{equation*}%
and%
\begin{equation*}
\mu _{\omega }\left( \pi _{\ast }\left( \mathbb{F}_{k_{1}+\cdots
+k_{i-1}+1}^{HN,\varepsilon }(\tilde{E}_{i-1})/\tilde{E}_{i-1}\right)
\right) =\mu _{\omega }\left( \pi _{\ast }\mathbb{F}_{k_{1}+\cdots
+k_{i-1}+1}^{HN,\varepsilon }(\tilde{E}_{i-1})/E_{i-1}\right) .
\end{equation*}

By Theorem \ref{THM} we have:%
\begin{eqnarray*}
\mu _{\omega }(Q_{i})-\varepsilon M &=&\mu _{\omega }(\pi _{\ast }\tilde{Q}%
_{i})-\varepsilon M\leq \mu _{\omega _{\varepsilon }}(\tilde{Q}_{i})\leq \mu
_{\omega _{\varepsilon }}\left( \mathbb{F}_{k_{1}+\cdots
+k_{i-1}+1}^{HN,\varepsilon }(\tilde{E}_{i-1})/\tilde{E}_{i-1}\right) \\
&\leq &\mu _{\omega }\left( \pi _{\ast }\mathbb{F}_{k_{1}+\cdots
+k_{i-1}+1}^{HN,\varepsilon }(\tilde{E}_{i-1})/E_{i-1}\right) +\varepsilon
M\leq \mu _{\omega }\left( E_{i}/E_{i-1}\right) +\varepsilon M \\
&=&\mu _{\omega }(Q_{i})+\varepsilon M
\end{eqnarray*}%
where we have used that $F_{k_{1}+\cdots +k_{i-1}+1}^{HN,\varepsilon }(%
\tilde{E}_{i-1})$ is maximally destabilising in $\pi ^{\ast }E/\tilde{E}%
_{i-1}$ and $E_{i}/E_{i-1}$ is maximally destabilising in $E/E_{i-1}$. So 
\begin{equation*}
\mu _{\omega _{\varepsilon }}\left( \mathbb{F}_{k_{1}+\cdots
+k_{i-1}+1}^{HN,\varepsilon }(\tilde{E}_{i-1})/\tilde{E}_{i-1}\right)
\longrightarrow \mu _{\omega }(Q_{i})\text{.}
\end{equation*}

Similiarly we have:%
\begin{eqnarray*}
\mu _{\omega }\left( Q_{i}\right) -\varepsilon M &=&\mu _{\omega }\left(
E_{i}/E_{i-1}\right) -\varepsilon M\leq \mu _{\omega }\left( E_{i}/\pi
_{\ast }\mathbb{F}_{k_{1}+\cdots +k_{i}-1}^{HN,\varepsilon }(\tilde{E}%
_{i})\right) -\varepsilon M \\
&\leq &\mu _{\omega _{\varepsilon }}\left( \tilde{E}_{i}/\mathbb{F}%
_{k_{1}+\cdots +k_{i}-1}^{HN,\varepsilon }(\tilde{E}_{i})\right) \leq \mu
_{\omega _{\varepsilon }}(\tilde{Q}_{i})\leq \mu _{\omega }\left( \pi _{\ast
}\tilde{Q}_{i}\right) +\varepsilon M \\
&=&\mu _{\omega }\left( Q_{i}\right) +\varepsilon M
\end{eqnarray*}%
where we have used that $\mu _{\omega }\left( E_{i}/E_{i-1}\right) =\mu
_{\omega }^{\min }\left( E_{i}\right) $ and $\mu _{\omega _{\varepsilon
}}\left( \tilde{E}_{i}/\mathbb{F}_{k_{1}+\cdots +k_{i}-1}^{HN,\varepsilon }(%
\tilde{E}_{i})\right) =\mu _{\omega _{\varepsilon }}^{\min }(\tilde{E}_{i})$%
. Then taking limits implies $\mu _{\omega _{\varepsilon }}\left( \tilde{E}%
_{i}/\mathbb{F}_{k_{1}+\cdots +k_{i}-1}^{HN,\varepsilon }(\tilde{E}%
_{i})\right) \rightarrow \mu _{\omega }(Q_{i})$. This completes the proof.
\end{proof}

\begin{remark}
\label{R6}Note that the argument of the above proof also shows that we have
convergence:%
\begin{equation*}
\left( \mu _{\omega _{\varepsilon }}(\tilde{Q}_{1}),\cdots ,\mu _{\omega
_{\varepsilon }}(\tilde{Q}_{l})\right) \longrightarrow \left( \mu _{\omega
}(Q_{1}),\cdots ,\mu _{\omega }(Q_{l})\right) ,
\end{equation*}%
where as usual $\mu _{\omega _{\varepsilon }}(\tilde{Q}_{i})$ is repeated $%
\limfunc{rk}(\tilde{Q}_{i})$ times. We will use this fact in the following
section.
\end{remark}

\section{Approximate Critical Hermitian Structures/$HN$ Type of the Limit}

In this section we accomplish two important aims. One is the construction of
a certain canonical type of metric on a holomorphic vector bundle over a K%
\"{a}hler manifold called an $L^{p}$-approximate critical hermitian
structure. The other is identifying the Harder-Narasimhan type of the
limiting vector bundle $E_{\infty }$ along the flow, namely we prove that
this is the same as the type of the original bundle $E$. This latter fact
will be a crucial element in the proof of the main theorem, whereas the
former will play no role in the remainder of the proof. However we remark
that these two theorems are, due to certain technical considerations to be
discussed below, very much intertwined.

If we fix a holomorphic structure on $E$, then a critical point of the $HYM$
functional, thought of as a function $h\mapsto HYM(\bar{\partial}_{E},h)$ on
the space of metrics, is called (see Kobayashi \cite{KOB}) a \textit{%
critical hermitian structure}. The K\"{a}hler identities imply that this
happens exactly when the corresponding connection $(\bar{\partial}_{E},h)$
is Yang-Mills, and hence in this case the Hermitian-Einstein tensor splits: $%
\sqrt{-1}\Lambda _{\omega }F_{(\bar{\partial}_{E},h)}=\mu
_{1}Id_{Q_{1}}\oplus \cdots \oplus \mu _{l}Id_{Q_{l}}$. Here the holomorphic
structure $\bar{\partial}_{E}$ splits into the direct sum $\oplus _{i}Q_{i}$
and the metric induced on each summand is Hermitian-Einstein with constant
factor $\mu _{i}$.

In general, the holomorphic structure on $E$ is not split, and of course the 
$Q_{i}$ may not be subbundles as at all, so it is not the case that we
always have a critical hermitian structure. We therefore need to define a
correct approximate notion of a critical point. In the subsequent discussion
we follow Daskalopoulos-Wentworth \cite{DW1}.

Let $h$ be a smooth metric on $E$ and $\mathcal{F}=\{F_{i}\}_{i=0}^{l}$ a
filtration of $E$ by saturated subsheaves. For every $F_{i}$ we have the
corresponding weakly holomorphic projection $\pi _{i}^{h}$. These are
bounded, $L_{1}^{2}$ hermitian endomorphisms of $E$. Here $F_{0}=0$, and so $%
\pi _{0}^{h}=0$. Given real numbers $\mu _{1},\cdots ,\mu _{l}$, define the
following $L_{1}^{2}$ hermitian endomorphism of $E:$%
\begin{equation*}
\Psi (\mathcal{F},(\mu _{1},\cdots ,\mu _{l}),h)=\sum_{i=1}^{l}\mu
_{i}\left( \pi _{i}^{h}-\pi _{i-1}^{h}\right) .
\end{equation*}%
Notice that away from the singular set of the filtration (points where it is
given by sub-bundles), the bundle $E$ splits smoothly as $\oplus
Q_{i}=\oplus _{i}E_{i}/E_{i-1}$, and with respect to the splitting, the
endomorphism $\Psi (\mathcal{F},(\mu _{1},\cdots ,\mu _{l}),h)$ is just the
diagonal map $\mu _{1}Id_{Q_{1}}\oplus \cdots \oplus \mu _{l}Id_{Q_{l}}$.

In the special case where $E$ is a holomorphic vector bundle over a K\"{a}%
hler manifold $(X,\omega )$, we will write $\Psi _{\omega }^{HNS}(\bar{%
\partial}_{E},h)$ when the filtration of $E$ is the $HNS$ filtration $F_{i}=%
\mathbb{F}_{i}^{HNS}(E)$ and$\ \mu _{1},\cdots ,\mu _{l}$ are the distinct
slopes appearing the $HN$ type.

\begin{definition}
\label{Def6}Fix $\delta >0$ and $1\leq p\leq \infty $. An $L^{p}$ $\delta $%
-approximate critical hermitian structure on a holomorphic bundle $E$ is a
smooth metric $h$ such that:%
\begin{equation*}
\left\Vert \sqrt{-1}\Lambda _{\omega }F_{(\bar{\partial}_{E},h)}-\Psi
_{\omega }^{HNS}(\bar{\partial}_{E},h)\right\Vert _{L^{p}(\omega )}\leq
\delta .
\end{equation*}
\end{definition}

The following theorem first appeared in \cite{DW1}.

\begin{theorem}
\label{Thm8}If the $HNS$ filtration of $E$ is given by subbundles, then for
any $\delta >0$, $E$ has an $L^{\infty }$ $\delta $-approximate critical
hermitian structure.
\end{theorem}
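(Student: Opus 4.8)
The plan is to build the approximate structure inductively on the length of the $HNS$ filtration, using the scaling trick for the second fundamental form that is mentioned in the introduction. Since the filtration is by subbundles, at each stage we have a genuine short exact sequence of holomorphic vector bundles $0 \to S \to E \to Q \to 0$, where $S = \mathbb{F}_1^{HN}(E)$ (or more precisely the first piece of the $HNS$ filtration) is semistable, and $Q = E/S$ carries the induced filtration whose quotients are exactly the remaining pieces. By induction, $S$ has an $L^\infty$ $\delta$-approximate critical hermitian structure $h_S$ (for $S$ this is just close to a Hermitian-Einstein metric on each stable Seshadri piece, so the base case reduces to the Kobayashi-Hitchin correspondence, Theorem~\ref{Thm2}), and $Q$ has an $L^\infty$ $\delta$-approximate critical hermitian structure $h_Q$.

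The key step is then the scaling argument. Fix a $C^\infty$ splitting $E \cong S \oplus Q$ as hermitian bundles, so that with respect to $h_S \oplus h_Q$ the Chern connection of $E$ has the block form
\begin{equation*}
\nabla_{(\bar\partial_E, h_S\oplus h_Q)} =
\begin{pmatrix}
\nabla_{(\bar\partial_S,h_S)} & \beta \\
-\beta^\ast & \nabla_{(\bar\partial_Q,h_Q)}
\end{pmatrix},
\end{equation*}
with $\beta \in \Omega^{0,1}(\mathrm{Hom}(Q,S))$ the second fundamental form. For $t \in (0,1]$ one replaces the metric on $S$ by $t^{2}h_S$ (equivalently conjugates by the gauge transformation $\mathrm{diag}(t\,\mathrm{Id}_S, \mathrm{Id}_Q)$); this scales $\beta \mapsto t\beta$ and leaves the diagonal blocks unchanged up to the same scaling, so the curvature picks up an extra term of the form $t^2\,\beta\wedge\beta^\ast$ (and its adjoint), while the $(1,1)$ curvatures of $S$ and $Q$ are essentially unaffected. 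Applying $\Lambda_\omega$, one gets
\begin{equation*}
\sqrt{-1}\Lambda_\omega F_{(\bar\partial_E, t^2 h_S \oplus h_Q)} =
\begin{pmatrix}
\sqrt{-1}\Lambda_\omega F_{(\bar\partial_S,h_S)} & O(t) \\
O(t) & \sqrt{-1}\Lambda_\omega F_{(\bar\partial_Q,h_Q)}
\end{pmatrix} + t^2(\text{bounded term}).
\end{equation*}
Since the filtration is by subbundles and $X$ is compact, $\beta$ is a smooth, hence $L^\infty$-bounded, section, so all the off-diagonal and $t^2$ corrections are uniformly $O(t)$ in $L^\infty$. Because $\Psi_\omega^{HNS}$ is computed purely from the (unchanged) weakly holomorphic projections, which for a subbundle filtration are just the smooth orthogonal projections and vary continuously with the metric in a controlled way, one checks that $\Psi_\omega^{HNS}(\bar\partial_E, t^2 h_S \oplus h_Q)$ is the block-diagonal endomorphism $\Psi_S \oplus \Psi_Q$ up to an $O(t)$ error. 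Combining with the inductive hypotheses $\|\sqrt{-1}\Lambda_\omega F_{(\bar\partial_S,h_S)} - \Psi_S\|_{L^\infty} \le \delta/3$ and likewise for $Q$, and choosing $t$ small enough that the $O(t)$ terms are $\le \delta/3$, gives the desired estimate for $E$.

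The main obstacle I expect is bookkeeping rather than conceptual: one must verify that the distinct slopes $\mu_1,\dots,\mu_l$ entering $\Psi_\omega^{HNS}$ for $E$ match up with the slopes used for $S$ and for $Q$ — this is exactly the content of Proposition~\ref{Prop4}, which says $Gr^{HN}(E) = Gr^{HN}(S)\oplus Gr^{HN}(Q)$ and that the induced filtration on $Q$ is its own $HN/HNS$ filtration, provided $\mu^{\min}(S) > \mu^{\max}(Q)$, which holds by construction. One also has to confirm that conjugating by a gauge transformation that is scalar on each summand does not disturb the identification of the $HNS$ filtration (it does not, since complex gauge transformations act by biholomorphisms and the filtration pieces are gauge-equivalent subbundles). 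Finally, it is worth remarking — as the paper does — that the entire argument is elementary precisely because the filtration is assumed to be by subbundles; the case of genuine subsheaves, which requires resolution of singularities and a cutoff argument, is the substance of the following (harder) theorem and is not needed here.
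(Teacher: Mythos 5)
Your proposal is in substance the same as the paper's proof: the paper computes the curvature of the direct-sum metric $h_S\oplus h_Q$ in block form, kills the second-fundamental-form contributions by applying a complex gauge transformation that is scalar on each factor (the paper uses $g_t = t^{-1}\mathbf{Id}_S\oplus t\,\mathbf{Id}_Q$, which sends $\beta\mapsto t^2\beta$), and then states that ``the general case follows from an inductive argument''; you fill in that induction, using Kobayashi--Hitchin on the stable pieces as the base case and Proposition~\ref{Prop4} to match slopes across the exact sequence. That is exactly the intended argument.

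There is one sign slip you should fix. With the paper's conventions $g\cdot\bar\partial_E = g^{-1}\circ\bar\partial_E\circ g$ and $g\cdot h = \bar g^T h g$, conjugating by $g=\mathrm{diag}(t\,\mathbf{Id}_S,\mathbf{Id}_Q)$ sends the off-diagonal block $\beta$ to $t^{-1}\beta$, not $t\beta$; equivalently, replacing $h_S$ by $t^2h_S$ makes the extension grow, not shrink, as $t\to 0$. To get $\beta\mapsto t\beta$ you need $g_t=\mathrm{diag}(t^{-1}\mathbf{Id}_S,\mathbf{Id}_Q)$, i.e.\ $h_S\mapsto t^{-2}h_S$ (or, up to an irrelevant overall scalar, $h_Q\mapsto t^2h_Q$, or the paper's symmetric $t^{-1}\oplus t$). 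This is a one-line correction and everything downstream goes through. A small further remark: since $h_S\oplus h_Q$ is a genuine orthogonal direct sum and the filtration is by subbundles, the projections $\pi_i^h$ are exactly the constant block projections, so $\Psi_\omega^{HNS}(\bar\partial_E,h)$ is exactly block-diagonal with the correct eigenvalues; there is no $O(t)$ error in that term, and the entire error lives in $\sqrt{-1}\Lambda_\omega F$, consistent with the paper's final bound $\leq 2Ct^4\sup\bigl(|\beta|^2+|(\bar\partial_E)^{\ast}\beta|^2\bigr)$.
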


We begin by giving a (very simple) proof of this theorem in the case that
the $HNS$ filtration has length two (the general case follows from an
inductive argument). Namely we assume that there is an exact sequence of the
form:%
\begin{equation*}
0\longrightarrow S\longrightarrow E\longrightarrow Q\longrightarrow 0
\end{equation*}%
where $S$ and $Q$ are stable vector bundles. Then fix Hermitian-Einstein
metrics $h_{S}$ and $h_{Q}$ on $S$ and $Q$. There is a smooth splitting $%
E\simeq S\oplus Q$ and so we may fix the metric $h_{E}=$ $h_{S}\oplus h_{Q}$
on $E$. Of course in general we there is no holomorphic splitting. The
failure of the sequence to split holomorphically is determined by the second
fundamental form $\beta \in \Omega ^{0,1}(\limfunc{Hom}(Q,S))$, and the
holomorphic structure of $E$ may be written as:%
\begin{equation*}
\bar{\partial}_{E}=%
\begin{pmatrix}
\bar{\partial}_{S} & \beta  \\ 
0 & \bar{\partial}_{Q}%
\end{pmatrix}%
\end{equation*}%
and similarly%
\begin{equation*}
\partial _{E}=%
\begin{pmatrix}
\partial _{S} & 0 \\ 
-\beta ^{\ast } & \partial _{Q}%
\end{pmatrix}%
.
\end{equation*}%
Now the curvature of the connection $(\bar{\partial}_{E},h_{E})$ is $F_{(%
\bar{\partial}_{E},h_{E})}=(\bar{\partial}_{E},h_{E})\circ (\bar{\partial}%
_{E},h_{E})=\bar{\partial}_{E}\circ \partial _{E}+\partial _{E}\circ \bar{%
\partial}_{E}$. Therefore we have:%
\begin{equation*}
F_{(\bar{\partial}_{E},h_{E})}=%
\begin{pmatrix}
F_{(\bar{\partial}_{S},h_{S})}-\beta \wedge \beta ^{\ast } & \partial
_{E}\beta  \\ 
-\bar{\partial}_{E}\beta ^{\ast } & F_{(\bar{\partial}_{Q},h_{Q})}-\beta
^{\ast }\wedge \beta 
\end{pmatrix}%
.
\end{equation*}%
Now applying $\sqrt{-1}\Lambda _{\omega }$ and using the K\"{a}hler
identities we have:%
\begin{equation*}
\sqrt{-1}\Lambda _{\omega }F_{(\bar{\partial}_{E},h_{E})}=%
\begin{pmatrix}
\sqrt{-1}\Lambda _{\omega }F_{(\bar{\partial}_{S},h_{S})}-\sqrt{-1}\Lambda
_{\omega }\left( \beta \wedge \beta ^{\ast }\right)  & -\left( \bar{\partial}%
_{E}\right) ^{\ast }\beta  \\ 
-\left( \left( \bar{\partial}_{E}\right) ^{\ast }\beta \right) ^{\ast } & 
\sqrt{-1}\Lambda _{\omega }F_{(\bar{\partial}_{Q},h_{Q})}-\sqrt{-1}\Lambda
_{\omega }\left( \beta ^{\ast }\wedge \beta \right) 
\end{pmatrix}%
.
\end{equation*}%
Therefore we have:%
\begin{eqnarray*}
&&\left\Vert \sqrt{-1}\Lambda _{\omega }F_{(\bar{\partial}_{E},h_{E})}-\mu
_{\omega }(S)Id_{S}\oplus \mu _{\omega }(Q)Id_{Q}\right\Vert _{L^{\infty
}(X,\omega )} \\
&\leq &\left\Vert \sqrt{-1}\Lambda _{\omega }F_{(\bar{\partial}%
_{S},h_{S})}-\mu _{\omega }(S)Id_{S}\right\Vert _{L^{\infty }(X,\omega
)}+\left\Vert \sqrt{-1}\Lambda _{\omega }F_{(\bar{\partial}_{Q},h_{Q})}-\mu
_{\omega }(Q)Id_{Q}\right\Vert _{L^{\infty }(X,\omega )} \\
&&+2C\sup \left( \left\vert \beta \right\vert ^{2}+\left\vert \left( \bar{%
\partial}_{E}\right) ^{\ast }\beta \right\vert ^{2}\right)  \\
&=&2C\sup \left( \left\vert \beta \right\vert ^{2}+\left\vert \left( \bar{%
\partial}_{E}\right) ^{\ast }\beta \right\vert ^{2}\right) ,
\end{eqnarray*}%
where we have used that $h_{S}$ and $h_{Q}$ are Hermitian-Einstein as well
as the fact that $\limfunc{Tr}-\sqrt{-1}\Lambda _{\omega }\left( \beta
\wedge \beta ^{\ast }\right) =\left\vert \beta \right\vert ^{2}$. Now change
the holomorphic structure on $E$ by applying the complex gauge
transformation $g_{t}=t^{-1}Id_{S}\oplus tId_{Q}$, so that:%
\begin{equation*}
g_{t}(\bar{\partial}_{E})=%
\begin{pmatrix}
\bar{\partial}_{S} & t^{2}\beta  \\ 
0 & \bar{\partial}_{Q}%
\end{pmatrix}%
.
\end{equation*}%
Then we have:%
\begin{equation*}
\left\Vert \sqrt{-1}\Lambda _{\omega }F_{(g_{t}(\bar{\partial}%
_{E}),h_{E})}-\mu _{\omega }(S)Id_{S}\oplus \mu _{\omega
}(Q)Id_{Q}\right\Vert _{L^{\infty }(X,\omega )}\leq 2Ct^{4}\sup \left(
\left\vert \beta \right\vert ^{2}+\left\vert \left( \bar{\partial}%
_{E}\right) ^{\ast }\beta \right\vert ^{2}\right) 
\end{equation*}%
which goes to $0$ as $t$ goes to $0$.

In general, we will not obtain an $L^{\infty }$ approximate structure. In
the remainder of this section we show that for an arbitrary holomorphic
bundle we have such a metric for $1\leq p<\infty $. We must modify the above
approach in the general case, since the filtration is not given by
subbundles. A simple example of where this can happen is as follows.

\begin{example}
It can be shown (see \cite{OSS} page $103$) that for $k<3$ there is a
locally free representative of rank $2$ in $Ext_{\mathbb{CP}^{2}}^{1}(%
\mathcal{I}_{p},\mathcal{O}_{\mathbb{CP}^{2}}(-k))$, where $\mathcal{I}_{p}$
is the ideal sheaf of a point. In other words there is a short exact
sequence:%
\begin{equation*}
0\longrightarrow \mathcal{O}\longrightarrow E\longrightarrow \mathcal{I}%
_{p}\otimes \mathcal{O}_{\mathbb{CP}^{2}}(k)\longrightarrow 0
\end{equation*}%
where $\mathcal{O}$ is the trivial line bundle. Moreover, one can compute
that $c_{1}(E)=k$. Therefore, if we take $k<0$, then $\mu (E)<0$. Since $\mu
(\mathcal{O})=0$, the section given by $\mathcal{O}\longrightarrow E$
vanishing at $p$, is a destabilising subsheaf of $E$, so $E$ is unstable in
this case. Since $\mathcal{O}$ and $\mathcal{I}_{p}\otimes \mathcal{O}_{%
\mathbb{CP}^{2}}(k)$ are rank one and hence are stable, and the slopes are
strictly decreasing ($0=$ $\mu (\mathcal{O})>\mu (\mathcal{I}_{p}\otimes 
\mathcal{O}_{\mathbb{CP}^{2}}(k))=k$) this sequence is precisely the
Harder-Narasimhan filtration for $E$. On the other hand the quotient $%
\mathcal{I}_{p}\otimes \mathcal{O}_{\mathbb{CP}^{2}}(k)$ fails to be locally
free at the point $p$, since the ideal sheaf of a point on a complex surface
is not locally free. Generalisations of this example are given by replacing
the point $p$ in $\mathbb{CP}^{2}$ by a locally complete intersection in $%
\mathbb{CP}^{n}$ with $n>2$, or replacing $\mathbb{CP}^{2}$ by a a K\"{a}%
hler surface $X$ with $\dim H^{2}(X,\mathcal{O}_{X})=0$ for instance.
\end{example}

\begin{example}
In the above example, the only singular point of the filtration is the point 
$p$. If we blowup the point $p$, and consider $Bl_{p}$ $\mathbb{CP}^{2}=%
\widetilde{\mathbb{CP}}^{2}\overset{\pi }{\longrightarrow }\mathbb{CP}^{2}$,
then the exceptional divisor $\mathbf{E}$ in this case is just a copy of $%
\mathbb{CP}^{1}$. By construction $\pi ^{\ast }E$ is trivial over this $%
\mathbb{CP}^{1}$ and is equal to $E$ away from it. Therefore, since $E$
contains the trivial line bundle $\mathcal{O}$ as a subsheaf, $\pi ^{\ast }E$
contains as a subbundle a copy of the line bundle $\mathcal{O}(\mathbf{E})$.
Since $\mathcal{O}(\mathbf{E})=\mathcal{O}$ away from $\mathbf{E}$ and $\pi
^{\ast }\mathcal{O}=$ $\mathcal{O}$, there is an inclusion of sheaves $%
\mathcal{O}\hookrightarrow \mathcal{O}(\mathbf{E})$. Indeed, since the
quotient is supported on $\mathbf{E}$ and therefore torsion, by Lemma \ref%
{Lemma1}, $\limfunc{Sat}_{\pi ^{\ast }E}\mathcal{O=O}(\mathbf{E})$. In other
words, a single blowup of the point $p$, gives a resolution of singularities
in this case, and the filtration by subbundles of $\pi ^{\ast }E$ is given
by $\mathcal{O}(\mathbf{E})\subset \pi ^{\ast }E$. Therefore on $\widetilde{%
\mathbb{CP}}^{2}$we have an exact sequence: 
\begin{equation*}
0\longrightarrow \mathcal{O}(\mathbf{E})\longrightarrow \pi ^{\ast
}E\longrightarrow \mathcal{O}(-\mathbf{E})\otimes \pi ^{\ast }\mathcal{O}_{%
\mathbb{CP}^{2}}(k)\longrightarrow 0.
\end{equation*}
\end{example}

Therefore, in the general case we will need a more sophisticated argument to
deal with the fact that the subsheaf $S$ (and the quotient $Q$) can have
singularities. We outline our argument as follows. First we pass to a
resolution of singularities $\pi :\tilde{X}\longrightarrow X$ for the $HNS$
filtration. The blowup $\tilde{X}$ is equipped with a family of K\"{a}hler
metrics $\omega _{\varepsilon }$ as described in the previous section.
Therefore, if we fix some value $\varepsilon _{1}$, then with respect to the
metric $\omega _{\varepsilon _{1}}$ on the blowup $\tilde{X}$, we will be in
the same situation as above, when the filtration is given by subbundles.
Just as in that case, by scaling the extension classes we can produce a
metric $\tilde{h}$ with the desired property on the pullback bundle $\pi
^{\ast }E\longrightarrow \bar{X}$.

Of course this is not what we want, but we may use this metric to produce a
metric on $E$ via a cut-off argument. Namely, we first assume that the
singular set is a complex submanifold, and that the resolution of
singularities is achieved by performing one blowup operation. Then we choose
a cut-off function $\psi $ in a tubular neighbourhood of the singular set,
and fix any smooth background metric $H$ on this tubular neighbourhood. We
define the metric on $E\longrightarrow X$ by $h=\psi H+(1-\psi )\tilde{h}$.

Now we can break the estimate up into three estimates on three different
regions. We define $\psi $ so that on a smaller neighbourhood of the
singular set $h$ is equal to $H$. The desired estimate will follow on this
region by taking the radius of the neighbourhood to be arbitrarily small.
Outside of the tubular neighbourhood, $h$ is equal to $\tilde{h}$ and we can
estimate as in the case of subbundles. Finally we must also estimate in the
annulus defined by these two open sets. This can be achieved by defining $%
\psi $ to have bounds on its first and second derivatives that depend on the
reciprocal of the radius of the tubular neighbourhood and its square
respectively. The Hermitian-Einstein tensor will depend on two derivatives
of $\psi $ on the annulus, so a pointwise estimate on this quantity will
depend on this radius, but a simple argument using the fact that the
Hausdorff codimension of the singular set is a least $4$, shows we can also
obtain the appropriate estimate in this region.

Strictly speaking, we need to estimate the difference of the
Hermitian-Einstein tensor $\Lambda _{\omega }F_{h}$ of this metric with the
endomorphism $\Psi ^{HNS}(\mu _{1},\cdots ,\mu _{l})$ constructed from the
slopes obtained from the $HNS$ filtration on $E\longrightarrow X$. On the
other hand, $h$ has been constructed from $\tilde{h}$, which has been
defined so that the difference between its Hermitian-Einstein tensor $%
\Lambda _{\omega _{\varepsilon _{1}}}F_{\tilde{h}}$ and the corresponding
endomorphism coming from the filtration (by subbundles) of $\pi ^{\ast
}E\longrightarrow \bar{X}$ can be estimated on $\tilde{X}$. Because $\omega
_{\varepsilon _{1}}$ is a perfectly defined K\"{a}hler metric on $X$ away
from the singular set (which is where this estimate must be performed), one
could try to do the estimate on this region directly, as described in the
preceding paragraph, by first estimating $\Lambda _{\omega }F_{\tilde{h}}$
in terms of $\Lambda _{\omega _{\varepsilon _{1}}}F_{\tilde{h}}$ uniformly
in $\varepsilon _{1}$ and the size of the neighbourhood, but attempts to do
this were unsuccessful.

Therefore, in order to perform the estimate properly, we will need to work
on the blowup. Namely, we estimate the Hermitian-Einstein tensor for $\pi
^{\ast }h$ with respect to the family of K\"{a}hler metrics $\omega
_{\varepsilon }$. Since this metric is a pullback, it suffices to show that
we obtain estimates on the blowup that are uniform in $\varepsilon $. Then
taking the limit as $\varepsilon \rightarrow 0$ will yield an estimate with
respect to the metric $\omega $ on $X$. However, note again that the metric $%
\tilde{h}$ must be chosen at some point, and this requires fixing a value $%
\varepsilon _{1}$. Therefore, to imitate our argument above, we need to
estimate the $L^{p}$ norm of $\Lambda _{\omega _{\varepsilon }}F_{\tilde{h}}$
uniformly in $\varepsilon $ in terms of $\Lambda _{\omega _{\varepsilon
_{1}}}F_{\tilde{h}}$. Here we crucially use the fact that we are working on
the blowup. Namely, all that is required is an estimate close to the
exceptional divisor (since it is trivial on the complement of such a
neighbourhood). The fact that the exceptional divisor has only normal
crossings singularities is the key to proving that such an estimate holds.

Something very similar was done in \cite{DW1}. The author has noticed an
error in \cite{DW1} on this point. In particular, Lemma $3.14$ is slightly
incorrect. Instead, the right hand side should have an additional term
involving the $L^{2}$ norm of the full curvature. This does not essentially
disrupt the proof, because the Yang-Mills and Hermitian-Yang-Mills
functionals differ only by a topological term, but it has the effect of
changing the logic of the argument somewhat, as well as increasing the
technical complexity.

This is the reason behind most of the work done in this section. The precise
proof, given below, is a delicate balancing act between the scaling
parameter $t$, the parameter $\varepsilon _{1}$ used to define $\tilde{h}$,
the radius $R$ of the tubular neighbourhood, and the parameter $%
0<\varepsilon \leq \varepsilon _{1}$ defining the family of K\"{a}hler
metrics on $\tilde{X}$. Furthermore, the scheme explained above will only
give the correct estimate in $L^{p}$ for $p$ sufficiently close to $1$. On
the other hand, such a metric is all that is required to prove that the
Harder-Narasimhan type of the limiting sheaf $E_{\infty }$ is the same as
that of $E$. With this knowledge, it is in fact very easy to prove in turn
that $E$ has an $L^{p}$ $\delta $-approximate structure for all $1\leq
p<\infty $. This new metric depends on the value of $p$, and is in fact
given by running the Yang-Mills flow for some finite time.

We begin with a preliminary technical lemma, which will be used repeatedly
throughout this section. It will be used in conjunction with H\"{o}lder's
inequality to show that certain quantities depending a priori on $%
\varepsilon $ can in fact be estimated independently of $\varepsilon $ in
certain $L^{p}$ spaces with $p$ very close to $1$. It is the use of this
lemma that limits this particular method of constructing a $\delta $%
-approximate structure to these particular values of $p$. We use this to
prove the $L^{p}$ bound on $\Lambda _{\omega _{\varepsilon }}F$ in terms of $%
\Lambda _{\omega _{\varepsilon _{1}}}F$ for any $(1,1)$-form $F$. The
construction of the metric together with the estimate in $L^{p}$ for $p$
close to $1$ is the substance of Proposition \ref{Prop18}. We use this and
the material in Section $3.2$ to prove the statement concerning the $HN$
type of the limit. Then we quote a result about convergence of the $HN$
filtration along the flow from \cite{DW1}, and use this to prove the
existence of an $L^{p}$ structure for each $1\leq p<\infty $. Finally, at
the end of this section we do an inductive argument on the number of blowups
required to resolve singularities in order to remove the restriction we put
on the singular set. This argument actually uses the existence of an $L^{p}$
structure for $p=2$ (in the special case in which it has been proven).

\begin{lemma}
\label{Lemma11}Let $X$ be a compact K\"{a}hler manifold of dimension $n$,
and let $\pi :\tilde{X}\rightarrow X$ be a blowup along a complex
submanifold $Y$ of complex codimension $k$ where $k\geq 2$. Consider the
natural family $\omega _{\varepsilon }=\pi ^{\ast }\omega +\varepsilon \eta $
where $0<\varepsilon \leq \varepsilon _{1}$ and $\eta $ is a K\"{a}hler form
on $\tilde{X}$. Then given any $\alpha $ and $\tilde{\alpha}$ such that $%
1<\alpha <1+\frac{1}{2(k-1)},$and $\frac{\alpha }{1-2(k-1)(\alpha -1)}<%
\tilde{\alpha}<\infty $, and if we let $s=\frac{\tilde{\alpha}}{\tilde{\alpha%
}-\alpha }$ then if we write $g_{\varepsilon }$ for the K\"{a}hler metric
associated to $\omega _{\varepsilon }$, and $g_{\varpi }$ for the hermitian
metric associated to a fixed K\"{a}hler form $\varpi $ on $\tilde{X},$ we
have: $\det \left( g_{\varepsilon }^{-1}g_{\varpi }\right) \in L^{2(\alpha
-1)s}(\tilde{X},\varpi )$, and the value of the $L^{2(\alpha -1)s}$ norm is
uniformly bounded in $\varepsilon $.
\end{lemma}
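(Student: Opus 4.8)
The plan is to reduce the assertion to an $\varepsilon$-free integrability statement by a monotonicity argument, and then to read the integrability off the order of vanishing of $\pi^{\ast}\omega$ along the exceptional divisor $\mathbf{E}$. For the reduction: since $g_{\varepsilon}=g_{\pi^{\ast}\omega}+\varepsilon g_{\eta}$ with $g_{\eta}$ positive definite, $g_{\varepsilon}\geq g_{\pi^{\ast}\omega}$ as Hermitian forms on $\tilde{X}$, so $\det g_{\varepsilon}\geq \det g_{\pi^{\ast}\omega}$ by monotonicity of the determinant on positive semi-definite Hermitian matrices. Hence, almost everywhere on $\tilde{X}$,
\[
\det\!\left(g_{\varepsilon}^{-1}g_{\varpi}\right)=\frac{\det g_{\varpi}}{\det g_{\varepsilon}}\leq\frac{\det g_{\varpi}}{\det g_{\pi^{\ast}\omega}}=\det\!\left(g_{\pi^{\ast}\omega}^{-1}g_{\varpi}\right),
\]
the right-hand side being finite exactly off the measure-zero set $\mathbf{E}$. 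So it suffices to show $\det(g_{\pi^{\ast}\omega}^{-1}g_{\varpi})\in L^{2(\alpha-1)s}(\tilde{X},\varpi)$; the bound on the $L^{2(\alpha-1)s}$-norm is then automatically uniform in $\varepsilon$.

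Next I would analyse $\det(g_{\pi^{\ast}\omega}^{-1}g_{\varpi})$ locally. Off $\mathbf{E}$ the map $\pi$ is a biholomorphism, so $\pi^{\ast}\omega$ is a genuine K\"{a}hler metric and $\det(g_{\pi^{\ast}\omega}^{-1}g_{\varpi})$ is bounded and smooth on every compact subset of $\tilde{X}\setminus\mathbf{E}$. Near $\mathbf{E}$ I would cover a tubular neighbourhood by finitely many of the standard coordinate charts for the blowup of a codimension-$k$ submanifold: holomorphic coordinates $(w_{1},\dots,w_{n})$ on $\tilde{X}$ with $\mathbf{E}=\{w_{1}=0\}$ in which $\pi$ is given by $z_{1}=w_{1}$, $z_{j}=w_{1}w_{j}$ for $2\leq j\leq k$, and $z_{j}=w_{j}$ for $j>k$ (up to relabelling of coordinates and of the chart). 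The holomorphic Jacobian of $\pi$ in these coordinates is triangular with determinant $w_{1}^{k-1}$, so in the coordinate frame the matrix of $\pi^{\ast}\omega$ has determinant $|w_{1}|^{2(k-1)}\det\!\big((g_{i\bar j})\circ\pi\big)$; since $\det(g_{i\bar j})$ is bounded above and below by positive constants on $X$, and likewise the matrix of $\varpi$ on the relatively compact chart, this gives $\det(g_{\pi^{\ast}\omega}^{-1}g_{\varpi})\leq C\,|w_{1}|^{-2(k-1)}$ on each such chart.

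Finally I would do the integrability bookkeeping. With $q=2(\alpha-1)s$ and $s=\tilde{\alpha}/(\tilde{\alpha}-\alpha)$, the elementary fact that $\int_{|w|<1}|w|^{-a}\,dV<\infty$ precisely when $a<2$ shows that the local bound above puts $\det(g_{\pi^{\ast}\omega}^{-1}g_{\varpi})$ in $L^{q}$ near $\mathbf{E}$ as soon as $2(k-1)q<2$, i.e. $(k-1)q<1$. Writing $\beta=2(k-1)(\alpha-1)$, the hypothesis $\alpha<1+\tfrac{1}{2(k-1)}$ gives $\beta<1$, and $\tilde{\alpha}>\dfrac{\alpha}{1-2(k-1)(\alpha-1)}$ rearranges to $\dfrac{\beta\tilde{\alpha}}{\tilde{\alpha}-\alpha}<1$, which is exactly $(k-1)q<1$. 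Combined with the bounded contribution from $\tilde{X}\setminus\mathbf{E}$, this yields $\det(g_{\pi^{\ast}\omega}^{-1}g_{\varpi})\in L^{q}(\tilde{X},\varpi)$, completing the argument. The step I expect to be the main obstacle is the local analysis just described --- establishing that $\det g_{\pi^{\ast}\omega}$ vanishes to order exactly $2(k-1)$ along $\mathbf{E}$, with uniform two-sided control across the finitely many blowup charts; by contrast, the monotonicity reduction makes the $\varepsilon$-uniformity come for free, and the last step is pure algebra.
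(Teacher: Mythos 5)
Your proposal is correct and follows essentially the same route as the paper's proof: the heart of the matter in both cases is the lower bound $\det g_{\varepsilon}\geq\det g_{\pi^{\ast}\omega}$ (which the paper extracts from the expansion of $\omega_{\varepsilon}^{n}$ in powers of $\varepsilon$, all of whose coefficients are non-negative semi-positive forms, while you extract it from monotonicity of the determinant on positive semi-definite Hermitian forms --- these are the same observation), followed by the local computation in blowup charts showing that $\det g_{\pi^{\ast}\omega}$ vanishes to order exactly $2(k-1)$ along $\mathbf{E}$, and the same bookkeeping reducing the hypothesis on $\tilde{\alpha}$ to $2(k-1)\cdot 2(\alpha-1)s<2$.
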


\begin{proof}
Since $g_{\varepsilon }$ converges to the K\"{a}hler metric $\pi ^{\ast
}\omega $ away from the exceptional divisor $\mathbf{E}$, on the complement
of a neighbourhood of $\mathbf{E}$ there is always such a uniform bound (and
on this set $\left( \det g_{\varepsilon }/\det g_{\varpi }\right)
^{2(1-\alpha )s}$ is clearly integrable). It therefore suffices to prove the
result in a neighbourhood of the exceptional divisor. Let $y\in Y$ and $U$
be a local coordinate chart containing $y$ consisting of coordinates $%
(z_{1},\cdots ,z_{n}).$ Now $Y$ has codimension $k$ so that locally $Y$ is
given by the slice coordinates $\{z_{1}=z_{2}=\cdots =z_{k}=0\}$. Recall
that on the blow-up $\tilde{X}$ we have explicit coordinate charts $\tilde{U}%
_{m}\subset \tilde{U}=\pi ^{-1}(U)$ where $\tilde{U}_{m}=\{z\in U-Y\mid
z_{m}\neq 0\}\cup \{(z,\left[ \nu \right] )\in \mathbb{P}(\mathbf{\zeta }%
)_{\mid Y\cap U}\mid \nu _{m}\neq 0\}$, where $\mathbb{P}(\mathbf{\zeta })$
is the projectivisation of the normal bundle of $Y$. Let $(\xi _{1},\cdots
,\xi _{n})$ denote local coordinates on $\tilde{U}_{m}$. In these
coordinates the map $\pi :\tilde{X}\rightarrow X$ is given by:%
\begin{equation*}
(\xi _{1},\cdots ,\xi _{n})\longrightarrow (\xi _{1}\xi _{m},\cdots ,\xi
_{s-1}\xi _{m},\xi _{m},\xi _{m+1}\xi _{m},\cdots ,\xi _{k}\xi _{m},\xi
_{k+1},\cdots ,\xi _{n}).
\end{equation*}%
Now locally, we may write the K\"{a}hler form on $X$ in terms of the
associated metric $g$, as $\omega =\frac{i}{2}g_{ij}dz^{i}\wedge d\bar{z}%
^{j} $. Then the top power has the form: $\omega ^{n}=n!(i/2)^{n}\det
g_{ij}\ dz_{1}\wedge d\bar{z}_{1}\wedge \cdots \wedge dz_{n}\wedge d\bar{z}%
_{n}$, and using this coordinate description we may compute: $\pi ^{\ast
}\omega ^{n}=n!(i/2)^{n}\left( \pi ^{\ast }\det g_{ij}\right) \left\vert \xi
_{m}\right\vert ^{2k-2}\ d\xi _{1}\wedge d\bar{\xi}_{1}\wedge \cdots \wedge
d\xi _{n}\wedge d\bar{\xi}_{n}$.

Note that $\pi ^{\ast }\det g_{ij}$ is non-vanishing since $\det g_{ij}$ is
non-vanishing, and so degeneracy of the pullback occurs only along the
hypersurface defined by $\xi _{m}=0$. In other words, $(\xi _{1,}\cdots ,\xi
_{n})$ are normal crossings coordinates on the blow-up for the exceptional
divisor $\mathbf{E}$, and locally $\mathbf{E}$ takes the form $\{\xi
_{m}=0\} $.

The top power of the K\"{a}hler form $\omega _{\varepsilon }$ is:%
\begin{equation*}
\omega _{\varepsilon }^{n}=\pi ^{\ast }\omega ^{n}+\varepsilon n\pi ^{\ast
}\omega ^{n-1}\wedge \eta +..+\varepsilon ^{l}\binom{n}{l}\pi ^{\ast }\omega
^{n-l}\wedge \eta ^{l}+\cdots +\varepsilon ^{n-1}n\pi ^{\ast }\omega \wedge
\eta ^{n-1}+\varepsilon ^{n}\eta ^{n}.
\end{equation*}%
In the local coordinates $(\xi _{1},\cdots ,\xi _{n})$ we have: $\omega
_{\varepsilon }^{n}=n!(i/2)^{n}\det g_{ij}^{\varepsilon }d\xi _{1}\wedge d%
\bar{\xi}_{1}\wedge \cdots \wedge d\xi _{n}\wedge d\bar{\xi}_{n}$. We may
therefore obtain a lower bound (not depending on $\varepsilon $) on $\det
g_{ij}^{\varepsilon }$ as follows. Note that $\eta >0$. On the other hand,
the only degeneracy of $\pi ^{\ast }\omega $ is only on vectors tangent to
the exceptional divisor (in other words, the restriction of $\pi ^{\ast
}\omega $ vanishes on $\mathbf{E}$), so $\pi ^{\ast }\omega \geq 0$.
Therefore $\pi ^{\ast }\omega ^{l}\wedge \eta ^{n-l}$ is non-negative for
every $l$.

Then comparing the two expressions for $\omega _{\varepsilon }^{n}$, this
implies that we have the lower bound: $\det g_{ij}^{\varepsilon }\geq
C\left\vert \xi _{m}\right\vert ^{2k-2}$, where $C=\inf \pi ^{\ast }\det
g_{ij}$ on $\tilde{U}_{m}$ for each $0<\varepsilon \leq \varepsilon _{1}$.
Taking the $2(1-\alpha )s$ power of both sides we see that%
\begin{equation*}
\int_{\tilde{U}_{m}}(\det g_{\varepsilon }/\det g_{\varpi })^{2(1-\alpha
)s}\varpi ^{n}\leq C\int_{\tilde{U}_{m}}(\det g_{ij}^{\varepsilon
})^{2(1-\alpha )s}\leq C\int_{\tilde{U}_{m}}\left\vert \xi _{m}\right\vert
^{4(1-\alpha )(k-1)s},
\end{equation*}%
where the last two integrals are with respect to the standard Euclidean
measure. Using the condition on $\tilde{\alpha}$ one computes that $%
4(1-\alpha )(k-1)s>-2$ and so the functions $\left\vert \xi _{m}\right\vert
^{4(1-\alpha )s(k-1)}$, are integrable (as can be seen by computing the
integral in polar coordinates), and the result follows.
\end{proof}

\begin{lemma}
\label{Lemma12}Let $\pi :\tilde{X}\rightarrow X$, the codimension $k$, and
the family of metrics $\omega _{\varepsilon }$ be the same as in the
previous lemma. Let $\tilde{B}$ be a holomorphic vector bundle on $\tilde{X}$
and $F$ a $(1,1)$- form with values in the auxiliary vector bundle $End(%
\tilde{B})$. Let $1<\alpha <1+\frac{1}{4k(k-1)}$and $\frac{\alpha }{%
1-2(k-1)(\alpha -1)}<\tilde{\alpha}<1+\frac{1}{2(k-1)}$. Then there is a
number $\kappa _{0}$ such that for any $0<\kappa \leq \kappa _{0}$, there
exists a constant $C$ independent of $\varepsilon $, $\varepsilon _{1}$, and 
$\kappa $, and a constant $C(\kappa )$ such that:%
\begin{equation*}
\left\Vert \Lambda _{\omega _{\varepsilon }}F\right\Vert _{L^{\alpha }(%
\tilde{X},\omega _{\varepsilon })}\leq C\left( \left\Vert \Lambda _{\omega
_{\varepsilon _{1}}}F\right\Vert _{L^{\tilde{\alpha}}(\tilde{X},\omega
_{\varepsilon _{1}})}+\kappa \left\Vert F\right\Vert _{L^{2}(\tilde{X}%
,\omega _{\varepsilon _{1}})}\right) +\varepsilon _{1}C(\kappa )\left\Vert
F\right\Vert _{L^{2}(\tilde{X},\omega _{\varepsilon _{1}})}
\end{equation*}
\end{lemma}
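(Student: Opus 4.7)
The plan is to decompose $\Lambda_{\omega_\varepsilon} F = \Lambda_{\omega_{\varepsilon_1}} F + \Delta$, where $\Delta = (\Lambda_{\omega_\varepsilon} - \Lambda_{\omega_{\varepsilon_1}})F$, bound each piece in $L^\alpha(\omega_\varepsilon)$, and invoke Lemma \ref{Lemma11} to control an auxiliary integral involving $|g_\varepsilon^{-1}|$ uniformly in $\varepsilon$.

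First I would obtain a pointwise formula for $\Delta$. At any point of $\tilde X$, since $g_\eta > 0$, one can simultaneously diagonalize $g_\eta$ and $g_{\pi^*\omega}$; in such a basis $g_\varepsilon = \mathrm{diag}(\mu_{i,\varepsilon})$ with $\mu_{i,\varepsilon}=\lambda_i+\varepsilon$, $\lambda_i\geq 0$, and similarly for $g_{\varepsilon_1}$. A direct computation gives
\[
\Delta \;=\; -2\sqrt{-1}(\varepsilon_1-\varepsilon)\sum_i\frac{F_{i\bar i}}{\mu_{i,\varepsilon}\mu_{i,\varepsilon_1}},
\]
whence Cauchy--Schwarz yields the pointwise estimate
\[
|\Delta|\;\leq\;C\varepsilon_1\,|g_\varepsilon^{-1}|\,|F|_{g_{\varepsilon_1}}.
\]

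For the main term, the relation $\omega_\varepsilon\leq\omega_{\varepsilon_1}$ implies $\omega_\varepsilon^n\leq\omega_{\varepsilon_1}^n$ as measures. Together with H\"older's inequality applied with exponents $\tilde\alpha/\alpha$ and $s=\tilde\alpha/(\tilde\alpha-\alpha)$, this gives
\[
\|\Lambda_{\omega_{\varepsilon_1}}F\|_{L^\alpha(\omega_\varepsilon)}
\leq \|\Lambda_{\omega_{\varepsilon_1}}F\|_{L^\alpha(\omega_{\varepsilon_1})}
\leq C\,\|\Lambda_{\omega_{\varepsilon_1}}F\|_{L^{\tilde\alpha}(\omega_{\varepsilon_1})},
\]
where $C$ depends only on $\mathrm{vol}(\tilde X,\omega_{\varepsilon_1})$, itself bounded uniformly in $\varepsilon_1$. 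Since $\|\cdot\|_{L^2(\omega_\varepsilon)}\leq\|\cdot\|_{L^2(\omega_{\varepsilon_1})}$, the $\|F\|$ norms appearing later can be converted to the desired $L^2(\omega_{\varepsilon_1})$ norm in the same fashion.

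For the residual $\Delta$, I would combine the pointwise bound with H\"older's inequality, splitting the product $|g_\varepsilon^{-1}|\cdot|F|_{g_{\varepsilon_1}}$ via exponents $p_1=2\alpha/(2-\alpha)$ and $p_2=2$, and then apply a Young-type interpolation of the form $ab\leq \kappa\, b + C(\kappa)\, a b$, used dichotomously on the superlevel and sublevel sets of $|g_\varepsilon^{-1}|$ at height $\kappa/\varepsilon_1$. On the sublevel set $\{|g_\varepsilon^{-1}|\leq \kappa/\varepsilon_1\}$ one obtains directly $|\Delta|\leq C\kappa|F|_{g_{\varepsilon_1}}$, feeding the $\kappa\|F\|_{L^2(\omega_{\varepsilon_1})}$ term of the bound. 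On the superlevel set (a shrinking tubular neighbourhood of the exceptional divisor $\mathbf{E}$, of thickness controlled by $\varepsilon_1/\kappa$), the remaining $L^{p_1}(\omega_\varepsilon)$ norm of $|g_\varepsilon^{-1}|$ must be controlled. This is where Lemma \ref{Lemma11} enters: near $\mathbf{E}$ one has $|g_\varepsilon^{-1}|\sim (|\xi_m|^2+\varepsilon)^{-1}\sim (\varpi^n/\omega_\varepsilon^n)^{1/(k-1)}$ in the local normal crossings coordinates of the previous lemma, so $|g_\varepsilon^{-1}|^{p_1}\omega_\varepsilon^n$ is dominated by a power of $\det(g_\varepsilon^{-1}g_\varpi)$ whose integral is finite uniformly in $\varepsilon$ exactly under the arithmetic constraints on $\alpha,\tilde\alpha,s$ given in the hypotheses. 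Collecting terms produces the residual bound $\varepsilon_1 C(\kappa)\|F\|_{L^2(\omega_{\varepsilon_1})}$.

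The main obstacle is the delicate matching of exponents. The tightening from $1<\alpha<1+1/(2(k-1))$ in Lemma \ref{Lemma11} to $1<\alpha<1+1/(4k(k-1))$ here reflects the loss incurred in passing from the raw determinant bound of Lemma \ref{Lemma11} to a bound on $|g_\varepsilon^{-1}|^{p_1}$ through the relation $|g_\varepsilon^{-1}|\sim(\varpi^n/\omega_\varepsilon^n)^{1/(k-1)}$, and then the further loss in the H\"older step that couples $p_1$ to $\alpha$ via $p_1=2\alpha/(2-\alpha)$. One must choose $\kappa_0$ small enough that the Young split is consistent with the shrinking tubular neighbourhood on which the $|g_\varepsilon^{-1}|^{p_1}$ integral is concentrated, and then verify that the threshold $\tilde\alpha>\alpha/(1-2(k-1)(\alpha-1))$ is precisely the one needed for Lemma \ref{Lemma11} to close the argument with constants $C$ depending on none of $\varepsilon,\varepsilon_1,\kappa$, while $C(\kappa)$ absorbs the Young blow-up as $\kappa\to 0$.
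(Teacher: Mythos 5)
Your decomposition $\Lambda_{\omega_\varepsilon}F = \Lambda_{\omega_{\varepsilon_1}}F + \Delta$ is genuinely different from the paper's, and the estimate for the main term
\[
\|\Lambda_{\omega_{\varepsilon_1}}F\|_{L^\alpha(\omega_\varepsilon)}\leq \|\Lambda_{\omega_{\varepsilon_1}}F\|_{L^\alpha(\omega_{\varepsilon_1})}\leq C\|\Lambda_{\omega_{\varepsilon_1}}F\|_{L^{\tilde\alpha}(\omega_{\varepsilon_1})}
\]
is correct and is in fact cleaner than what the paper does. The pointwise formula for $\Delta$ in a simultaneously diagonalizing basis, and the Cauchy--Schwarz bound $|\Delta|\leq C\varepsilon_1|g_\varepsilon^{-1}||F|_{g_{\varepsilon_1}}$, are also correct.

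However, the subsequent step fails. After the H\"older split with exponents $p_1=2\alpha/(2-\alpha)$ and $2$, you need $\int_{\tilde X}|g_\varepsilon^{-1}|^{p_1}\,\omega_\varepsilon^n$ to be bounded uniformly in $\varepsilon$. In the normal crossings coordinates of Lemma \ref{Lemma11}, the smallest eigenvalue of $g_\varepsilon$ (relative to $g_\eta$) is $\sim|\xi_m|^2+\varepsilon$ along the $k-1$ degenerating directions, so $|g_\varepsilon^{-1}|\sim(|\xi_m|^2+\varepsilon)^{-1}$ while $\omega_\varepsilon^n\sim(|\xi_m|^2+\varepsilon)^{k-1}\,d\lambda$. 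Thus
\[
\int|g_\varepsilon^{-1}|^{p_1}\,\omega_\varepsilon^n\ \sim\ \int(|\xi_m|^2+\varepsilon)^{k-1-p_1}\,d\lambda,
\]
and since the integration in $\xi_m$ is over a real $2$-dimensional disc, the integral is bounded uniformly in $\varepsilon$ precisely when $p_1<k$. Since $\alpha>1$ forces $p_1=2\alpha/(2-\alpha)>2$, this bound can only hold when $k>2$ and \emph{fails} in the case $k=2$, which is the generic (and unavoidable) codimension for the centres of blowup. The restriction to the superlevel set $\{|g_\varepsilon^{-1}|>\kappa/\varepsilon_1\}$ does not help here: that set is a neighbourhood of $\mathbf{E}$, which is exactly where the divergence of $|g_\varepsilon^{-1}|^{p_1}$ is concentrated, so shrinking it does not improve the exponent. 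Nor does Lemma \ref{Lemma11} close the gap, since what it provides is the uniform $L^a(\varpi)$ bound on $\det(g_\varepsilon^{-1}g_\varpi)$ only for the tiny exponent $a=2(\alpha-1)s<1/(k-1)$, whereas you need $a=p_1/(k-1)-1$, and again $p_1<k$ is forced.

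The paper's route avoids this entirely. It keeps $\Lambda_{\omega_\varepsilon}F=F\wedge\omega_\varepsilon^{n-1}/\omega_\varepsilon^n$ as a single expression, replaces $\omega_\varepsilon^{n-1}$ by $\omega_{\varepsilon_1}^{n-1}+\sum_l(\varepsilon^l-\varepsilon_1^l)\binom{n-1}{l}\pi^*\omega^{n-1-l}\wedge\eta^l$ in the numerator, and then writes $\omega_\varepsilon^n=\frac{\det g_\varepsilon}{\det g_{\varepsilon_1}}\omega_{\varepsilon_1}^n$ to convert the measure. The H\"older step w.r.t.\ $\omega_{\varepsilon_1}$ then places only the power $(1-\alpha)s$ on $\det g_\varepsilon/\det g_{\varepsilon_1}$, where $\alpha-1$ is small, and the correction term $F\wedge\pi^*\omega^{n-1-l}\wedge\eta^l/\omega_{\varepsilon_1}^n$ is controlled by the \emph{pointwise} estimate $\leq C|F|_{\omega_{\varepsilon_1}}$ (obtained in a basis adapted to $\omega_{\varepsilon_1}$ and $\eta$, using $\eta_{ii}=(1-\tilde g_{ii})/\varepsilon_1$), which crucially never introduces $|g_\varepsilon^{-1}|$. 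In short: your approach forces a high power of a very singular factor into the integrability requirement, while the paper places a near-zero power on a comparable factor; that is the concrete reason yours cannot close for $k=2$.
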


\begin{proof}
In the following argument, out of convenience, we will engage in the slight
(and quite orthodox) abuse of notation of dividing a top degree form by the
volume form. Since the determinant line bundle of $T^{\ast }X$ is trivial,
any such form may be written as the product of some smooth function (or in
this case endomorphism) with the volume form, and so dividing by the volume
form simply returns this function (endomorphism).

Recall that $\left( \Lambda _{\omega _{\varepsilon }}F\right) \omega
_{\varepsilon }^{n}=F\wedge \omega _{\varepsilon }^{n-1}$ and $\left(
\Lambda _{\omega _{\varepsilon _{1}}}F\right) \omega _{\varepsilon
_{1}}^{n}=F\wedge \omega _{\varepsilon _{1}}^{n-1}$ so that:%
\begin{equation*}
\Lambda _{\omega _{\varepsilon }}F=\frac{F\wedge \omega _{\varepsilon }^{n-1}%
}{\omega _{\varepsilon }^{n}},\Lambda _{\omega _{\varepsilon _{1}}}F=\frac{%
F\wedge \omega _{\varepsilon _{1}}^{n-1}}{\omega _{\varepsilon _{1}}^{n}}.
\end{equation*}%
Note also that 
\begin{equation*}
\omega _{\varepsilon }^{n}=\frac{\det g^{\varepsilon }}{\det g^{\varepsilon
_{1}}}\omega _{\varepsilon _{1}}^{n}
\end{equation*}%
Now we write:%
\begin{eqnarray*}
\Lambda _{\omega _{\varepsilon }}F &=&\frac{F\wedge \omega _{\varepsilon
}^{n-1}}{\omega _{\varepsilon }^{n}}=\frac{F\wedge (\omega _{\varepsilon
_{1}}^{n-1}+\omega _{\varepsilon }^{n-1}-\omega _{\varepsilon _{1}}^{n-1})}{%
\omega _{\varepsilon }^{n}} \\
&=&\left( \frac{F\wedge \omega _{\varepsilon
_{1}}^{n-1}+\sum_{l=1}^{n-1}(\varepsilon ^{l}-\varepsilon _{1}^{l})\binom{n-1%
}{l}F\wedge \pi ^{\ast }\omega ^{(n-1)-l}\wedge \eta ^{l}}{\omega
_{\varepsilon }^{n}}\right) . \\
&=&\frac{\det g_{\varepsilon _{1}}}{\det g_{\varepsilon }}\left( \Lambda
_{\omega _{\varepsilon _{1}}}F+\frac{\sum_{l=1}^{n-1}(\varepsilon
^{l}-\varepsilon _{1}^{l})\binom{n-1}{l}F\wedge \pi ^{\ast }\omega
^{(n-1)-l}\wedge \eta ^{l}}{\omega _{\varepsilon _{1}}^{n}}\right) .
\end{eqnarray*}%
Therefore:%
\begin{equation*}
\left\vert \Lambda _{\omega _{\varepsilon }}F\right\vert ^{\alpha }\leq
C\left\vert \frac{\det g_{\varepsilon _{1}}}{\det g_{\varepsilon }}%
\right\vert ^{\alpha }\left( \left\vert \Lambda _{\omega _{\varepsilon
_{1}}}F\right\vert ^{\alpha }+\sum_{l=1}^{n-1}\left\vert \varepsilon
^{l}-\varepsilon _{1}^{l}\right\vert ^{\alpha }\left\vert \frac{F\wedge \pi
^{\ast }\omega ^{(n-1)-l}\wedge \eta ^{l}}{\omega _{\varepsilon _{1}}^{n}}%
\right\vert ^{\alpha }\right)
\end{equation*}%
(by convexity of the function $\left\vert \cdot \right\vert ^{\alpha }$ when 
$\alpha >1$). Again, we set $s=\frac{\tilde{\alpha}}{\tilde{\alpha}-\alpha }$
(note again that $s$ is a conjugate variable to $\frac{\tilde{\alpha}}{%
\alpha })$. By the above expression and H\"{o}lder's inequality with respect
to the metric $\omega _{\varepsilon _{1}}$:%
\begin{eqnarray*}
\left\Vert \Lambda _{\omega _{\varepsilon }}F\right\Vert _{L^{\alpha }(%
\tilde{X},\omega _{\varepsilon })} &=&\left( \int_{\tilde{X}}\left\vert
\Lambda _{\omega _{\varepsilon }}F\right\vert ^{\alpha }\omega _{\varepsilon
}^{n}\right) ^{\frac{1}{\alpha }}\leq \\
&&\hskip-1.4inC\left( \int_{\tilde{X}}\left( \frac{\det g_{\varepsilon }}{%
\det g_{\varepsilon _{1}}}\right) ^{(1-\alpha )s}\omega _{\varepsilon
_{1}}^{n}\right) ^{\frac{1}{\alpha s}}\left( \left( \int_{\tilde{X}%
}\left\vert \Lambda _{\omega _{\varepsilon _{1}}}F\right\vert ^{\tilde{\alpha%
}}\omega _{\varepsilon _{1}}^{n}\right) ^{\frac{1}{^{\tilde{\alpha}}}%
}+\left( \int_{\tilde{X}}\sum_{l=1}^{n-1}\left( \varepsilon _{1}^{l}\right)
^{\tilde{\alpha}}\left\vert \frac{F\wedge \pi ^{\ast }\omega
^{(n-1)-l}\wedge \eta ^{l}}{\omega _{\varepsilon _{1}}^{n}}\right\vert ^{%
\tilde{\alpha}}\omega _{\varepsilon _{1}}^{n}\right) ^{\frac{1}{\tilde{\alpha%
}}}\right) .
\end{eqnarray*}%
By the previous lemma the factor 
\begin{equation*}
\left( \int_{\tilde{X}}\left( \frac{\det g_{\varepsilon }}{\det
g_{\varepsilon _{1}}}\right) ^{(1-\alpha )s}\omega _{\varepsilon
_{1}}^{n}\right) ^{\frac{1}{\alpha s}}
\end{equation*}%
is uniformly bounded in $\varepsilon $.

Now we need to control the second term of the second factor above. We divide 
$\tilde{X}$ into two pieces: an arbitrarily small neighbourhood $V_{\kappa }$
with $\limfunc{Vol}(V_{\kappa },\omega _{\varepsilon _{1}})=\kappa ^{\frac{2%
}{2-\tilde{\alpha}}}$ of the exceptional divisor $\mathbf{E}$ and its
complement. We will perform two separate estimates, one for each piece.
Write the components of $F$ in a local basis as $F_{\rho i\bar{j}}^{\gamma
}. $ At any point we may choose an orthonormal basis for the tangent space
so that $\eta $ is standard and $\pi ^{\ast }\omega $ is diagonal. Then if
we call this basis $\{e_{i}\}$, we have%
\begin{eqnarray*}
\biggl\vert\frac{F\wedge \pi ^{\ast }\omega ^{(n-1)-l}\wedge \eta ^{l}}{%
\omega _{\varepsilon _{1}}^{n}}\biggr\vert^{2\tilde{\alpha}} &=&\biggl\vert%
\frac{\left( \sum_{i,j}F_{i\bar{j}}e_{i}\wedge \bar{e}_{j}\right) \wedge
\left( \sum_{i}\pi ^{\ast }g_{ii}e^{i}\wedge \bar{e}^{i}\right)
^{(n-1)-l}\wedge \left( \sum_{i}e^{i}\wedge \bar{e}^{i}\right) ^{l}}{\omega
_{\varepsilon _{1}}^{n}}\biggr\vert^{2\tilde{\alpha}} \\
&\leq &\frac{C}{\left\vert \omega _{\varepsilon _{1}}^{n}\right\vert ^{2%
\tilde{\alpha}}}\left( \sum_{i.j,\gamma ,\rho }\left\vert F_{\rho i\bar{j}%
}^{\gamma }\right\vert ^{2}\right) ^{\tilde{\alpha}}=C\frac{\left\vert
F\right\vert _{\eta }^{2\tilde{\alpha}}}{\left\vert \omega _{\varepsilon
_{1}}^{n}\right\vert ^{2\tilde{\alpha}}}.
\end{eqnarray*}%
Now note that on $\tilde{X}-V_{\kappa }$ the pullback $\pi ^{\ast }\omega $
determines a metric, in other words $(\pi ^{\ast }\omega )^{n}$ is
non-vanishing, so since $\omega _{\varepsilon _{1}}^{n}\longrightarrow (\pi
^{\ast }\omega )^{n}$, the quantity $\left\vert \omega _{\varepsilon
_{1}}^{n}\right\vert ^{2\tilde{\alpha}}$ is uniformly bounded away from $0$.
Therefore 
\begin{equation*}
\left\vert \frac{F\wedge \pi ^{\ast }\omega ^{(n-1)-l}\wedge \eta ^{l}}{%
\omega _{\varepsilon _{1}}^{n}}\right\vert ^{\tilde{\alpha}}\leq C\left\vert
F\right\vert _{\eta }^{\tilde{\alpha}}.
\end{equation*}%
On the other hand, if we again choose a basis for which $\eta $ is standard
and such that $\omega _{\varepsilon _{1}}$ is diagonal, we have: 
\begin{equation*}
\left\vert F\right\vert _{\eta }^{2\tilde{\alpha}}=\left\vert \left(
\sum_{i.j,\gamma ,\rho }\left\vert F_{\rho ij}^{\gamma }\right\vert
^{2}\right) \right\vert ^{\tilde{\alpha}}\leq C\left\vert \left(
\sum_{i.j,\gamma ,\rho }\frac{1}{g_{ii}^{\varepsilon
_{1}}g_{jj}^{\varepsilon _{1}}}\left\vert F_{\rho ij}^{\gamma }\right\vert
^{2}\right) \right\vert ^{\tilde{\alpha}}=C\left\vert F\right\vert _{\omega
_{\varepsilon _{1}}}^{2\tilde{\alpha}}
\end{equation*}%
since the product of the eigenvalues $g_{ii}^{\varepsilon
_{1}}g_{jj}^{\varepsilon _{1}}$ is again uniformly bounded ($%
g_{ii}^{\varepsilon _{1}}g_{jj}^{\varepsilon _{1}}\rightarrow \pi ^{\ast
}g_{ii}\pi ^{\ast }g_{jj}$ as $\varepsilon _{1}\rightarrow 0$). Thus, on $%
\tilde{X}-V_{\kappa }$ we have the further pointwise bound: $\left\vert
F\right\vert _{\eta }^{\tilde{\alpha}}\leq C\left\vert F\right\vert _{\omega
_{\varepsilon _{1}}}^{\tilde{\alpha}}$. Therefore the integral on $\tilde{X}%
-V_{\kappa }$ is:%
\begin{eqnarray*}
\left( \int_{\tilde{X}-V_{\kappa }}\left( \varepsilon _{1}^{l}\right) ^{%
\tilde{\alpha}}\left\vert \frac{F\wedge \pi ^{\ast }\omega ^{(n-1)-l}\wedge
\eta ^{l}}{\omega _{\varepsilon _{1}}^{n}}\right\vert ^{\tilde{\alpha}%
}\omega _{\varepsilon _{1}}^{n}\right) ^{\frac{1}{\tilde{\alpha}}} &\leq
&C\varepsilon _{1}\left( \int_{\tilde{X}-V_{\kappa }}\left\vert F\right\vert
_{\omega _{\varepsilon _{1}}}^{\tilde{\alpha}}\omega _{\varepsilon
_{1}}^{n}\right) ^{\frac{1}{\tilde{\alpha}}} \\
&\leq &C\varepsilon _{1}\left\Vert F\right\Vert _{L^{\tilde{\alpha}}(\omega
_{\varepsilon _{1}})}\leq C(\kappa )\varepsilon _{1}\left\Vert F\right\Vert
_{L^{2}(\omega _{\varepsilon _{1}})}
\end{eqnarray*}%
since by assumption $\tilde{\alpha}<2.$

Now we estimate this term on $V_{\kappa }$. Choose an orthonormal basis for
the tangent space at a point in $V_{\kappa }$ such that $\omega
_{\varepsilon _{1}}$ is standard and $\eta $ is diagonal. Then we have $%
g_{ij}^{\varepsilon _{1}}=\pi ^{\ast }g_{ij}+\varepsilon _{1}\eta _{ij}$, so
if $i\neq j$, $\pi ^{\ast }g_{ij}=0$, and if $i=j$, $\eta _{ii}=$ $\frac{1-%
\tilde{g}_{ii}}{\varepsilon _{1}}$. Note also that $0\leq \tilde{g}_{ii}<1$
since $0<\eta _{ii}$. If we write $\Omega $ for the standard Euclidean
volume form then:%
\begin{eqnarray*}
&&\sum_{l=1}^{n-1}\left( \varepsilon _{1}^{l}\right) ^{\tilde{\alpha}%
}\left\vert \frac{F\wedge \pi ^{\ast }\omega ^{(n-1)-l}\wedge \eta ^{l}}{%
\omega _{\varepsilon _{1}}^{n}}\right\vert ^{\tilde{\alpha}} \\
&=&\sum_{l=1}^{n-1}\left\vert \frac{\left( \sum_{i,j}F_{i\bar{j}}e_{i}\wedge 
\bar{e}_{j}\right) \wedge \left( \sum_{i}\pi ^{\ast }g_{i\overline{i}%
}e^{i}\wedge \bar{e}^{i}\right) ^{(n-1)-l}\wedge \left( \sum_{i}\left( 1-\pi
^{\ast }g_{ii}\right) e^{i}\wedge \bar{e}^{i}\right) ^{l}}{\Omega }%
\right\vert ^{\tilde{\alpha}} \\
&\leq &C\left( \sum_{i.j,\gamma ,\rho }\left\vert F_{\rho i\bar{j}}^{\gamma
}\right\vert \right) ^{\tilde{\alpha}}\leq C\left\vert F\right\vert _{\omega
_{\varepsilon _{1}}}^{\tilde{\alpha}}.
\end{eqnarray*}%
Therefore:%
\begin{eqnarray*}
&&\left( \int_{V_{\kappa }}\sum_{l=1}^{n-1}\left( \varepsilon
_{1}^{l}\right) ^{\tilde{\alpha}}\left\vert \frac{F\wedge \pi ^{\ast }\omega
^{(n-1)-l}\wedge \eta ^{l}}{\omega _{\varepsilon _{1}}^{n}}\right\vert ^{%
\tilde{\alpha}}\omega _{\varepsilon _{1}}^{n}\right) ^{\frac{1}{\tilde{\alpha%
}}} \\
&\leq &C\left( \int_{V_{\kappa }}\left\vert F\right\vert _{\omega
_{\varepsilon _{1}}}^{\tilde{\alpha}}\omega _{\varepsilon _{1}}^{n}\right) ^{%
\frac{1}{\tilde{\alpha}}}\leq C\limfunc{Vol}(V_{\kappa },\omega
_{\varepsilon _{1}})^{1-\frac{\tilde{\alpha}}{2}}\left\Vert F\right\Vert
_{L^{2}(V_{\kappa },\omega _{\varepsilon _{1}})}\leq C\kappa \left\Vert
F\right\Vert _{L^{2}(\tilde{X},\omega _{\varepsilon _{1}})}\text{ }(H\ddot{o}%
lder).
\end{eqnarray*}%
Now we obtain the desired estimate:%
\begin{equation*}
\left\Vert \Lambda _{\omega _{\varepsilon }}F\right\Vert _{L^{\alpha }(%
\tilde{X},\omega _{\varepsilon })}\leq C\left( \left\Vert \Lambda _{\omega
_{\varepsilon _{1}}}F\right\Vert _{L^{\tilde{\alpha}}(\tilde{X},\omega
_{\varepsilon _{1}})}+\kappa \left\Vert F\right\Vert _{L^{2}(\tilde{X}%
,\omega _{\varepsilon _{1}})}\right) +\varepsilon _{1}C(\kappa )\left\Vert
F\right\Vert _{L^{2}(\tilde{X},\omega _{\varepsilon _{1}})}.
\end{equation*}%
\ 
\end{proof}

\begin{proposition}
\label{Prop18}Let $E\rightarrow X$ be a holomorphic vector bundle of rank $K$
over a compact K\"{a}hler manifold with K\"{a}hler form $\omega $. Assume
that $E$ has Harder-Narasimhan type $\mu =(\mu _{1},\cdots ,\mu _{K})$ that
the singular set $Z_{\limfunc{alg}}$ of the $HNS$ filtration is smooth, and
furthermore that blowing up along the singular set resolves the
singularities of the HNS filtration. There is an $\alpha _{0}>1$ such that
the following holds: given any $\delta >0$ and any $N$, there is an
hermitian metric $h$ on $E$ such that $HYM_{\alpha ,N}^{\omega }(\bar{%
\partial}_{E},h)\leq HYM_{\alpha ,N}(\mu )+\delta $, for all $1\leq \alpha
<\alpha _{0}$.
\end{proposition}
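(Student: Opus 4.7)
The plan is to work on the blowup $\pi:\tilde{X}\to X$ along $Z_{\limfunc{alg}}$. By Proposition \ref{Prop13}, $\pi^{\ast}E$ acquires a filtration $\{\tilde{E}_{i,j}\}$ by honest subbundles. By Remark \ref{Rmk5}, for sufficiently small $\varepsilon_{1}$ each stable quotient $\tilde{Q}_{i,j}$ is $\omega_{\varepsilon_{1}}$-stable, hence by Theorem \ref{Thm2} admits an $\omega_{\varepsilon_{1}}$-Hermitian-Einstein metric $\tilde{h}_{i,j}$. Form the orthogonal direct sum metric $\tilde{h}_{0}$ on $\pi^{\ast}E$ using the smooth splitting $\pi^{\ast}E\simeq\bigoplus\tilde{Q}_{i,j}$. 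Then apply the subbundle argument sketched in the text (following Theorem \ref{Thm8}): rescale by the gauge transformation $g_{t}=\bigoplus t^{\rho_{i,j}}\mathbf{Id}_{\tilde{Q}_{i,j}}$ with exponents chosen so the second fundamental forms $\tilde{\beta}_{i,j}$ get multiplied by $t^{2}$. The resulting metric $\tilde{h}_{t}$ has $\omega_{\varepsilon_{1}}$-Hermitian-Einstein tensor that, as $t\to 0$, converges in $L^{\infty}$ to the block-diagonal endomorphism built from the $\omega_{\varepsilon_{1}}$-slopes of the $\tilde{Q}_{i,j}$. Using Remark \ref{R6}, these slopes converge to $\mu$ as $\varepsilon_{1}\to 0$, so by first choosing $\varepsilon_{1}$ small and then $t$ small, we arrange that $\|\sqrt{-1}\Lambda_{\omega_{\varepsilon_{1}}}F_{\tilde{h}_{t}}+\sqrt{-1}N\mathbf{Id}\|_{L^{\infty}}^{\alpha}$ is uniformly close (over a relevant range of $\alpha$) to $\Phi_{\alpha}(\sqrt{-1}(\mu+N))$ pointwise.

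Next, push $\tilde{h}_{t}$ down to $E\to X$. Fix a smooth reference metric $H$ on $E$ defined near $Z_{\limfunc{alg}}$, let $T_{R}$ denote the tubular neighborhood of radius $R$ of $Z_{\limfunc{alg}}$, and pick a cutoff $\psi_{R}$ with $\psi_{R}\equiv 1$ on $T_{R/2}$, $\psi_{R}\equiv 0$ off $T_{R}$, and $|\nabla^{\ell}\psi_{R}|\leq CR^{-\ell}$ for $\ell=1,2$. On $X\setminus Z_{\limfunc{alg}}$, where $\pi$ is biholomorphic, set
\[
h \;=\; \psi_{R}\,H \;+\; (1-\psi_{R})\,(\pi^{-1})^{\ast}\tilde{h}_{t}.
\]
Since $H$ extends smoothly across $Z_{\limfunc{alg}}$ and the combination agrees with $H$ inside $T_{R/2}$, this defines a smooth metric $h$ on $E$. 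Because $\Phi_{\alpha}$ is $\limfunc{Ad}$-invariant and convex (Atiyah--Bott), it suffices to estimate $\int_{X}\Phi_{\alpha}(\sqrt{-1}\Lambda_{\omega}F_{(\bar{\partial}_{E},h)}+\sqrt{-1}N\mathbf{Id}_{E})\,\omega^{n}$ and show it is within $\delta$ of $HYM_{\alpha,N}(\mu)$.

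Break the integral into three regions. On $T_{R/2}$, where $h=H$, the integrand is pointwise bounded by a constant, so the contribution is $O(\limfunc{Vol}(T_{R/2}))=O(R^{2k})$ with $k=\limfunc{codim}_{\mathbb{C}}Z_{\limfunc{alg}}\geq 2$, which tends to $0$ as $R\to 0$. On $X\setminus T_{R}$, where $h=(\pi^{-1})^{\ast}\tilde{h}_{t}$, the contribution equals (by pulling back) an integral over $\tilde{X}\setminus\pi^{-1}(T_{R})$ of $\Phi_{\alpha}(\sqrt{-1}\Lambda_{\pi^{\ast}\omega}F_{\tilde{h}_{t}}+\sqrt{-1}N\mathbf{Id})$ with respect to $\pi^{\ast}\omega^{n}$. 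Here is where Lemma \ref{Lemma12} is crucial: applied to $F=F_{\tilde{h}_{t}}-(\text{appropriate diagonal representative})$ with $\varepsilon\to 0$, it produces an $L^{\alpha}(\omega)$ bound in terms of the $L^{\tilde{\alpha}}(\omega_{\varepsilon_{1}})$ bound already established for $\tilde{h}_{t}$, plus correction terms controlled by $\kappa$ and $\varepsilon_{1}C(\kappa)\|F\|_{L^{2}}$. This is precisely why $\alpha$ must be close to $1$ (forcing $\alpha_{0}$), and why $\kappa$ must be chosen small before fixing $\varepsilon_{1}$. On the transition annulus $T_{R}\setminus T_{R/2}$, the curvature $F_{h}=\bar{\partial}(h^{-1}\partial h)$ produces terms of order $|\nabla^{2}\psi_{R}|+|\nabla\psi_{R}|^{2}=O(R^{-2})$, giving an $L^{\alpha}$-integrand bound of $O(R^{-2})$ and a contribution to the integral of order $R^{2k-2\alpha}$, which vanishes as $R\to 0$ whenever $\alpha<k$; since $k\geq 2$, any $\alpha$ close to $1$ works.

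The main obstacle is the simultaneous balancing of the four parameters: $\varepsilon_{1}$ must be small for Hermitian--Einstein existence on $\tilde{X}$ and for slope convergence, $t$ must be small so the scaled Hermitian--Einstein tensor is close to the block-diagonal, $R$ must be small so the cutoff region is negligible and the annulus volume dominates the $R^{-2}$ blow-up, and $\alpha$ is constrained by Lemma \ref{Lemma12} to be close to $1$. The order of quantifiers is: fix $\delta$ and $N$; choose $\alpha_{0}$ from Lemma \ref{Lemma12}; then select $\varepsilon_{1}$ so that slopes on $\tilde{X}$ approximate $\mu$ and $\varepsilon_{1}C(\kappa)$ is controlled; then $t$ so the $L^{\infty}$ bound on $\tilde{X}$ gives a contribution $<\delta/3$; finally $R$ small enough so the inner neighborhood and annulus each contribute $<\delta/3$. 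Verifying compatibility of these choices—particularly that the $L^{2}$ norm of $F_{\tilde{h}_{t}}$ appearing in Lemma \ref{Lemma12} is controlled uniformly in the other parameters once $t$ is fixed—is the delicate bookkeeping on which the argument hinges.
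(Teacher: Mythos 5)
Your proposal follows the paper's proof very closely: blow up along $Z_{\limfunc{alg}}$, take Hermitian--Einstein metrics on the $\omega_{\varepsilon_1}$-stable quotients, shrink the second fundamental forms by a block-diagonal gauge scaling, glue with a tubular cutoff $\psi_R$, split the $HYM_{\alpha,N}$ integral into inner/annulus/outer regions, and use Lemma~\ref{Lemma12} (with Lemma~\ref{Lemma11}) to pass from $\omega_{\varepsilon_1}$ to $\omega_\varepsilon$ uniformly and then send $\varepsilon\to 0$. The one point worth sharpening is that you state Lemma~\ref{Lemma12} applies to $F_{\tilde h_t}$ minus a diagonal representative, whereas the paper applies it factor by factor to the Hermitian--Einstein pieces (whose $L^{\tilde\alpha}(\omega_{\varepsilon_1})$ deviation is identically zero) and treats the $\beta$-terms separately via H\"older and Lemma~\ref{Lemma11}; moreover the auxiliary parameter $\kappa$ appearing in Lemma~\ref{Lemma12} must be fixed before $\varepsilon_1$ in your quantifier chain, and the error term $\frac{1}{n}\Lambda_{\omega_\varepsilon}(\omega_{\varepsilon_1}-\omega_\varepsilon)\Psi$ from your ``appropriate diagonal representative'' needs its own H\"older/Lemma~\ref{Lemma11} estimate --- but these are precisely the pieces of ``delicate bookkeeping'' you flagged, and the paper's proof supplies them in the manner you anticipate.
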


\begin{proof}
As before, let $\pi :\tilde{X}\rightarrow X$ be a blow-up along a smooth,
complex submanifold $Y$, and we assume that this resolves the singularities
of the $HNS$ filtration. In other words there is a filtration of $\tilde{E}%
=\pi ^{\ast }E$ on $\tilde{X}$ that is given by sub-bundles and is equal to
the $HNS$ filtration of $E$ away from the divisor $\mathbf{E}$. Let $\omega
_{\varepsilon }$ denote the aforementioned family of K\"{a}hler metrics on $%
\tilde{X}$ given by $\omega _{\varepsilon }=\pi ^{\ast }\omega +\varepsilon
\eta $ where $0<\varepsilon \leq 1$ and $\eta $ is a fixed K\"{a}hler metric
on $\tilde{X}$. We will construct the metric $h$ on $E$ from an hermitian
metric $\tilde{h}$ on $\pi ^{\ast }E$ to be specified later.

Since $Z_{\limfunc{alg}}$ is a complex submanifold, we consider its normal
bundle $\boldsymbol{\zeta }$, or more particularly the open subset: $\mathbf{%
\zeta }_{R}=\{(x,\nu )\in \mathbf{\zeta }\mid \left\vert \nu \right\vert
<R\} $. By the tubular neighbourhood theorem, for $R$ sufficiently small
this set is diffeomorphic to an open neighbourhood $U_{R}$ of $Z_{\limfunc{%
alg}}$. We choose a background metric $H$ on this open set.

Let $\psi $ be a smooth cut-off function supported in $U_{1}$ and and
identically $1$ on $U_{1/2}$ and such that $0\leq \psi \leq 1$ everywhere.
Then if we define $\psi _{R}(x,\nu )=\psi (x,\frac{\nu }{R})$, $\psi _{R}$
is identically $1$ on $U_{R/2}$ and supported in $U_{R}$ with $0\leq \psi
_{R}\leq 1$ and furthermore there are bounds on the derivatives: 
\begin{equation*}
\left\vert \frac{\partial \psi _{R}}{\partial z_{i}}\right\vert \leq \frac{C%
}{R}\qquad ,\qquad \left\vert \frac{\partial }{\partial \bar{z}_{i}}\frac{%
\partial \psi _{R}}{\partial z_{i}}\right\vert \leq \frac{C}{R^{2}}
\end{equation*}%
where the constant $C$ does not depend on $R$. Suppose for the moment that
we have constructed an hermitian metric $\tilde{h}$ on $\pi ^{\ast }E$. If
we continue to denote by $H$ and $\psi _{R}$ their pullbacks to $\tilde{X}$,
then we may define the following metric on $\pi ^{\ast }E:$%
\begin{equation*}
h_{\psi _{R}}:=\psi _{R}H+(1-\psi _{R})\tilde{h}
\end{equation*}
Observe that on $X-U_{R}$ we have $h_{\psi _{R}}=\tilde{h}$ and on $U_{R/2}$%
, $h_{\psi _{R}}=H$.

Now we need to estimate the difference:%
\begin{eqnarray*}
&&\left\vert HYM_{\alpha ,N}^{\omega _{\varepsilon }}(\bar{\partial}_{\tilde{%
E}},h_{\psi _{R}})-HYM_{\alpha ,N}(\mu )\right\vert  \\
&=&\left\vert \int_{\tilde{X}}\Phi _{\alpha }(\Lambda _{\omega _{\varepsilon
}}F_{h_{\psi _{R}}}+\sqrt{-1}N\mathbf{I}_{\tilde{E}})-\Phi _{\alpha }\left( 
\sqrt{-1}(\mu _{1}+N),\cdots ,\sqrt{-1}(\mu _{K}+N)\right) \right\vert 
\end{eqnarray*}%
where $\Phi _{\alpha }$ is the convex functional on $\mathfrak{u}(\tilde{E})$
given as in Section $3.2$ by $\Phi _{\alpha }(a)=\sum_{j=1}^{k}\left\vert
\lambda _{j}\right\vert ^{\alpha }$, where the $\sqrt{-1}\lambda _{j}$ are
the eigenvalues of $a$. From here on out we will write $\sqrt{-1}(\mu +N)$
in place of $(\sqrt{-1}(\mu _{1}+N),\cdots ,\sqrt{-1}(\mu _{K}+N))$.
Therefore we have:%
\begin{align*}
& \qquad \qquad \bigl\vert HYM_{\alpha ,N}^{\omega _{\varepsilon }}(\bar{%
\partial}_{\tilde{E}},h_{\psi _{R}})-HYM_{\alpha ,N}(\mu )\bigr\vert \\
& \leq \biggl\vert\int_{\tilde{X}-\pi ^{-1}(U_{R/2})}\hskip-.25in\Phi
_{\alpha }(\Lambda _{\omega _{\varepsilon }}F_{h_{\psi _{R}}}+\sqrt{-1}N%
\mathbf{I}_{\tilde{E}})-\Phi _{\alpha }(\sqrt{-1}(\mu +N))\biggr\vert \\
& +\biggl\vert\int_{\pi ^{-1}(U_{R/2})}\hskip-.25in\Phi _{\alpha }(\Lambda
_{\omega _{\varepsilon }}F_{h_{\psi _{R}}}+\sqrt{-1}N\mathbf{I}_{\tilde{E}%
})-\Phi _{\alpha }(\sqrt{-1}(\mu +N))\biggr\vert \\
& =\left\vert \int_{\tilde{X}-\pi ^{-1}(U_{R/2})}\hskip-.25in\Phi _{\alpha
}(\Lambda _{\omega _{\varepsilon }}F_{h_{\psi _{R}}}+\sqrt{-1}N\mathbf{I}_{%
\tilde{E}})-\Phi _{\alpha }(\sqrt{-1}(\mu +N))\right\vert  \\
& +\left\vert \int_{\pi ^{-1}(U_{R/2})}\hskip-.25in\Phi _{\alpha }(\Lambda
_{\omega _{\varepsilon }}F_{H}+\sqrt{-1}N\mathbf{I}_{\tilde{E}})-\Phi
_{\alpha }(\sqrt{-1}(\mu +N))\right\vert 
\end{align*}%
where the last equality comes from the fact that $h_{\psi _{R}}$ is equal to 
$H$ on $U_{R/2}$. Dividing the first integral further we have:%
\begin{eqnarray*}
&&\left\vert HYM_{\alpha ,N}^{\omega _{\varepsilon }}(\bar{\partial}%
_{E},h_{\psi _{R}})-HYM_{\alpha ,N}(\mu )\right\vert  \\
&\leq &\left\vert \int_{\pi ^{-1}(U_{R}-U_{R/2})}\Phi _{\alpha }(\Lambda
_{\omega _{\varepsilon }}F_{h_{\psi _{R}}}+\sqrt{-1}N\mathbf{I}_{\tilde{E}%
})-\Phi _{\alpha }(\Lambda _{\omega _{\varepsilon }}F_{\tilde{h}}+\sqrt{-1}N%
\mathbf{I}_{\tilde{E}})\right\vert  \\
&&+\left\vert \int_{\tilde{X}-\pi ^{-1}(U_{R/2})}\Phi _{\alpha }(\Lambda
_{\omega _{\varepsilon }}F_{\tilde{h}}+\sqrt{-1}N\mathbf{I}_{\tilde{E}%
})-\Phi _{\alpha }(\sqrt{-1}(\mu _{\omega _{\varepsilon
_{1}}}+N))\right\vert  \\
&&+\left\vert \int_{\tilde{X}-\pi ^{-1}(U_{R/2})}\Phi _{\alpha }(\sqrt{-1}%
(\mu _{\omega _{\varepsilon _{1}}}+N))-\Phi _{\alpha }(\sqrt{-1}(\mu
+N))\right\vert  \\
&&+\left\vert \int_{\pi ^{-1}(U_{R/2})}\Phi _{\alpha }(\Lambda _{\omega
_{\varepsilon }}F_{H}+\sqrt{-1}N\mathbf{I}_{\tilde{E}})-\Phi _{\alpha }(%
\sqrt{-1}(\mu +N))\right\vert 
\end{eqnarray*}%
where in the first integral on the right hand side we have used the fact
that outside of $U_{R}$ the metrics $h_{\psi _{R}}$ and $\tilde{h}$ agree.
Here, $\mu _{\omega _{\varepsilon _{1}}}$ denotes the usual $K$-tuple of
rational numbers made from the $\omega _{\varepsilon _{1}}$ slopes of the
quotients of the resolution.

Recall that the norm on $L^{\alpha }(\mathfrak{u}(\tilde{E}))$, $a\mapsto %
\bigl(\int_{M}\Phi _{\alpha }(a)\bigr)^{1/\alpha }$ is equivalent to the $%
L^{\alpha }$ norm and so there is a universal constant $C$ independent of $R$
and $\varepsilon $ such that:%
\begin{eqnarray*}
&&\left\vert \int_{\pi ^{-1}(U_{R}-U_{R/2})}\Phi _{\alpha }(\Lambda _{\omega
_{\varepsilon }}F_{h_{\psi _{R}}}+\sqrt{-1}N\mathbf{I}_{\tilde{E}})-\Phi
_{\alpha }(\Lambda _{\omega _{\varepsilon }}F_{\tilde{h}}+\sqrt{-1}N\mathbf{I%
}_{E})\right\vert \\
&&+\left\vert \int_{\tilde{X}-\pi ^{-1}(U_{R/2})}\Phi _{\alpha }(\Lambda
_{\omega _{\varepsilon }}F_{\tilde{h}}+\sqrt{-1}N\mathbf{I}_{E})-\Phi
_{\alpha }(\sqrt{-1}(\mu _{\omega _{\varepsilon _{1}}}+N))\right\vert \\
&\leq &C\left( \left\Vert \Lambda _{\omega _{\varepsilon }}F_{h_{\psi
_{R}}}-\Lambda _{\omega _{\varepsilon }}F_{\tilde{h}}\right\Vert _{L^{\alpha
}(\pi ^{-1}(U_{R}-U_{R/2}),\omega _{\varepsilon })}^{\alpha }+\left\Vert
\Lambda _{\omega _{\varepsilon }}F_{\tilde{h}}-\sqrt{-1}\mu _{\omega
_{\varepsilon _{1}}}\right\Vert _{L^{\alpha }(\tilde{X}-\pi
^{-1}(U_{R/2}),\omega _{\varepsilon })}^{\alpha }\right) .
\end{eqnarray*}

First we dispose of%
\begin{equation*}
\left\vert \int_{\tilde{X}-\pi ^{-1}(U_{R/2})}\Phi _{\alpha }(\sqrt{-1}(\mu
_{\omega _{\varepsilon _{1}}}+N))-\Phi _{\alpha }(\sqrt{-1}(\mu
+N))\right\vert
\end{equation*}%
by choosing $\varepsilon _{1}$ close to zero and using $\limfunc{Remark}\ref%
{R6}$. That is, we may choose $\varepsilon _{1}$ small enough so that%
\begin{equation*}
\left\vert \int_{\tilde{X}-\pi ^{-1}(U_{R/2})}\Phi _{\alpha }(\sqrt{-1}(\mu
_{\omega _{\varepsilon _{1}}}+N))-\Phi _{\alpha }(\sqrt{-1}(\mu
+N))\right\vert <\frac{\delta }{2}
\end{equation*}%
Next will will bound: 
\begin{equation*}
\left\Vert \Lambda _{\omega _{\varepsilon }}F_{\tilde{h}}-\sqrt{-1}\mu
_{\omega _{\varepsilon _{1}}}Id_{\tilde{E}}\right\Vert _{L^{\alpha }(\tilde{X%
}-\pi ^{-1}(U_{R/2}),\omega _{\varepsilon })}^{\alpha }.
\end{equation*}

Note that at this point we have not specified the metric $\tilde{h}$ on $\pi
^{\ast }E$. We will do so now. Each of the $\omega $-stable quotients $Q_{i}$
of the Harder-Narasimhan-Seshadri filtration remains stable on the blowup
with respect to the metrics $\omega _{\varepsilon }$ with $\varepsilon $
sufficiently small (see $\limfunc{Remark}\ref{Rmk5}$), so that the quotients 
$\tilde{Q}_{i}$ are also $\omega _{\varepsilon _{1}}$-stable and admit a
unique Hermitian-Einstein metric $\tilde{G}_{i}^{\varepsilon _{1}}$. The
prototype for our metric $\tilde{h}$ will be the metric $\tilde{G}%
_{\varepsilon _{1}}={\LARGE \oplus }_{i}\tilde{G}_{i}^{\varepsilon _{1}}$.
Just as in the beginning of this section, we need to modify $\tilde{G}%
_{\varepsilon _{1}}$ by a gauge transformation in order to obtain the
appropriate bound. More precisely, since holomorphic structures on the
bundle $\tilde{E}$ are equivalent to integrable unitary connections, this is
the same as showing that if we fix the metric $\tilde{G}_{\varepsilon _{1}}$%
, there is a complex gauge transformation $\tilde{g}$ of $\tilde{E}$ such
that $\bigl\Vert\Lambda _{\omega _{\varepsilon }}F_{(\tilde{g}(\bar{\partial}%
_{\tilde{E}}),\tilde{G}_{\varepsilon _{1}})}-\sqrt{-1}\mu _{\omega
_{\varepsilon _{1}}}Id_{\tilde{E}}\bigr\Vert_{L^{\alpha }(\tilde{X}-\pi
^{-1}(U_{R/2}),\omega _{\varepsilon })}$ is small. When we take the direct
sum, the second fundamental form enters into the curvature and so we ask
that there is a gauge transformation making this contribution small.

We will do this iteratively. If we write $\tilde{S}=\tilde{E}_{1}=\tilde{Q}%
_{1}\subset \pi ^{\ast }E$ for the first sub-bundle in the filtration of $%
\pi ^{\ast }E$ $\ $on $\tilde{X}$, then the discussion at the beginning of
this section applies in exactly the same way to the exact sequence:%
\begin{equation*}
0\longrightarrow \tilde{S}\longrightarrow \pi ^{\ast }E\longrightarrow 
\tilde{Q}\longrightarrow 0
\end{equation*}%
where $\tilde{Q}=\oplus _{i=2}^{l}\tilde{Q}_{i}$. Therefore if we apply the
gauge transformation $g_{t}=$ $t^{-1}Id_{\tilde{S}}\oplus tId_{\tilde{Q}}$
to the operator $\bar{\partial}_{\tilde{E}}$ as before, the curvature may be
written as:%
\begin{equation*}
F_{\left( g_{t}(\bar{\partial}_{\tilde{E}}),\tilde{G}_{\varepsilon
_{1}}\right) }=%
\begin{pmatrix}
F_{(\bar{\partial}_{\tilde{S}},\tilde{G}_{1}^{\varepsilon _{1}})}-t^{4}\beta
\wedge \beta _{\tilde{S}}^{\ast } & t^{2}\partial _{E}\beta _{\tilde{S}} \\ 
-t^{2}\bar{\partial}_{E}\beta _{\tilde{S}}^{\ast } & F_{(\bar{\partial}_{Q},%
{\LARGE \oplus }_{i=2}^{l}\tilde{G}_{i}^{\varepsilon _{1}})}-t^{4}\beta _{%
\tilde{S}}^{\ast }\wedge \beta _{\tilde{S}}%
\end{pmatrix}%
.
\end{equation*}%
Taking $\Lambda _{\omega _{\varepsilon }}$, we obtain terms involving:%
\begin{eqnarray*}
t^{4}\Lambda _{\omega _{\varepsilon }}\beta \wedge \beta _{\tilde{S}}^{\ast
} &=&t^{4}\frac{\beta \wedge \beta _{\tilde{S}}^{\ast }\wedge \omega
_{\varepsilon }^{n-1}}{\omega _{\varepsilon }^{n}},t^{2}\Lambda _{\omega
_{\varepsilon }}\partial _{E}\beta _{\tilde{S}}=t^{2}\frac{\partial
_{E}\beta _{\tilde{S}}\wedge \omega _{\varepsilon }^{n-1}}{\omega
_{\varepsilon }^{n}}, \\
t^{2}\Lambda _{\omega _{\varepsilon }}\bar{\partial}_{E}\beta _{\tilde{S}%
}^{\ast } &=&t^{2}\frac{\bar{\partial}_{E}\beta _{\tilde{S}}^{\ast }\wedge
\omega _{\varepsilon }^{n-1}}{\omega _{\varepsilon }^{n}},t^{4}\Lambda
_{\omega _{\varepsilon }}\beta _{\tilde{S}}^{\ast }\wedge \beta _{\tilde{S}%
}=t^{4}\frac{\beta _{\tilde{S}}^{\ast }\wedge \beta _{\tilde{S}}\wedge
\omega _{\varepsilon }^{n-1}}{\omega _{\varepsilon }^{n}}.
\end{eqnarray*}%
Recalling also that 
\begin{equation*}
\omega _{\varepsilon }^{n}=\left\vert \frac{\det g_{\varepsilon }}{\det
g_{\eta }}\right\vert \eta ^{n},
\end{equation*}%
and applying H\"{o}lder's inequality we see that%
\begin{eqnarray*}
&&\bigl\Vert\Lambda _{\omega _{\varepsilon }}F_{(\tilde{g}_{t}(\bar{\partial}%
_{\tilde{E}}),\tilde{G}_{\varepsilon _{1}})}-\sqrt{-1}\mu _{\omega
_{\varepsilon _{1}}}Id_{\tilde{E}}\bigr\Vert_{L^{\alpha }(\tilde{X}-\pi
^{-1}(U_{R/2}),\omega _{\varepsilon })} \\
&\leq &\bigl\Vert\Lambda _{\omega _{\varepsilon }}F_{_{\tilde{G}%
_{1}^{\varepsilon _{1}}}}-\sqrt{-1}\mu _{\omega _{\varepsilon _{1}}}(\tilde{S%
})Id_{\tilde{Q}_{1}}\bigr\Vert_{L^{\alpha }(\tilde{X}-\pi
^{-1}(U_{R/2}),\omega _{\varepsilon })} \\
&&+\bigl\Vert\Lambda _{\omega _{\varepsilon }}F_{_{{\LARGE \oplus }_{i=2}^{l}%
\tilde{G}_{i}^{\varepsilon _{1}}}}-\sqrt{-1}\oplus _{i=2}^{l}\mu _{\omega
_{\varepsilon _{1}}}(\tilde{Q}_{i})Id_{\tilde{Q}i}\bigr\Vert_{L^{\alpha }(%
\tilde{X}-\pi ^{-1}(U_{R/2}),\omega _{\varepsilon })} \\
&&\hskip-.25in+\left( \int_{\tilde{X}-\pi ^{-1}(U_{R/2})}\left\vert \frac{%
\det g_{\varepsilon }}{\det g_{\eta }}\right\vert ^{(1-\alpha )s}\eta
^{n}\right) ^{\frac{1}{\alpha s}}\left( \int_{\tilde{X}-\pi
^{-1}(U_{R/2})}\left( \left\vert t^{4}\frac{\beta \wedge \beta _{\tilde{S}%
}^{\ast }\wedge \omega _{\varepsilon }^{n-1}}{\eta ^{n}}\right\vert ^{\tilde{%
\alpha}}+\left\vert t^{2}\frac{\partial _{E}\beta _{\tilde{S}}\wedge \omega
_{\varepsilon }^{n-1}}{\eta ^{n}}\right\vert ^{\tilde{\alpha}}\right) \eta
^{n}\right) ^{\frac{1}{\tilde{\alpha}}} \\
&&\hskip-.25in+\left( \int_{\tilde{X}-\pi ^{-1}(U_{R/2})}\left\vert \frac{%
\det g_{\varepsilon }}{\det g_{\eta }}\right\vert ^{(1-\alpha )s}\eta
^{n}\right) ^{\frac{1}{\alpha s}}\left( \int_{\tilde{X}-\pi
^{-1}(U_{R/2})}\left( \left\vert t^{2}\frac{\bar{\partial}_{E}\beta _{\tilde{%
S}}^{\ast }\wedge \omega _{\varepsilon }^{n-1}}{\eta ^{n}}\right\vert ^{%
\tilde{\alpha}}+\left\vert t^{4}\frac{\beta _{\tilde{S}}^{\ast }\wedge \beta
_{\tilde{S}}\wedge \omega _{\varepsilon }^{n-1}}{\eta ^{n}}\right\vert ^{%
\tilde{\alpha}}\right) \eta ^{n}\right) ^{\frac{1}{\tilde{\alpha}}}
\end{eqnarray*}%
where $\tilde{\alpha}$ and $s$ are as in Lemma \ref{Lemma11} (recall that $s$
and $\frac{\tilde{\alpha}}{\alpha }$ are a conjugate pair). By the lemma,
the last two terms above are bounded uniformly in $\varepsilon $. Therefore,
the contribution of these terms can be made small by making $t$ sufficiently
small.

Similarly, we can apply the same argument to the extensions:%
\begin{equation*}
0\longrightarrow \tilde{E}_{i}/\tilde{E}_{i-1}=\tilde{Q}_{i}\longrightarrow
\pi ^{\ast }E/\tilde{E}_{i-1}\longrightarrow \pi ^{\ast }E/\tilde{E}%
_{i}=\oplus _{j=i+1}^{l}\tilde{Q}_{j}\longrightarrow 0
\end{equation*}%
using the gauge transformation $g_{t}=Id_{\tilde{Q}_{1}}\oplus \cdots \oplus
Id_{\tilde{Q}_{l-1}}\oplus t^{-1}Id_{\tilde{Q}_{i}}\oplus _{j=i+1}^{l}tId_{%
\tilde{Q}_{j}}$. Such an extension will give a further second fundamental
form $\beta _{\tilde{Q}_{i}}$, and its contribution can be estimated in
exactly the same way as above.

Continuing in this way, we see that there is a complex gauge transformation $%
g$ of $\pi ^{\ast }E$ such that: 
\begin{align*}
\bigl\Vert\Lambda _{\omega _{\varepsilon }}F_{(\tilde{g}(\bar{\partial}_{%
\tilde{E}}),\tilde{G}_{\varepsilon _{1}})}-\sqrt{-1}\mu _{\omega
_{\varepsilon _{1}}}& Id_{\tilde{E}}\bigr\Vert_{L^{\alpha }(\tilde{X}-\pi
^{-1}(U_{R/2}),\omega _{\varepsilon })}\leq \bigl\Vert\Lambda _{\omega
_{\varepsilon }}F_{_{\tilde{G}_{1}^{\varepsilon _{1}}}}-\sqrt{-1}\mu
_{\omega _{\varepsilon _{1}}}(\tilde{Q}_{1})Id_{\tilde{Q}_{1}}\bigr\Vert%
_{L^{\alpha }(\tilde{X}-\pi ^{-1}(U_{R/2}),\omega _{\varepsilon })} \\
& +\cdots +\bigl\Vert\Lambda _{\omega _{\varepsilon }}F_{_{\tilde{G}%
_{l}^{\varepsilon _{1}}}}-\sqrt{-1}\mu _{\omega _{\varepsilon _{1}}}(\tilde{Q%
}_{l})Id_{\tilde{Q}_{l}}\bigr\Vert_{_{L^{\alpha }(\tilde{X}-\pi
^{-1}(U_{R/2}),\omega _{\varepsilon })}}+\Theta (t,\beta _{\tilde{Q}%
_{1}},\cdots ,\beta _{\tilde{Q}_{l}})
\end{align*}%
where $\Theta (t,\beta _{\tilde{Q}_{1}},\cdots ,\beta _{\tilde{Q}%
_{l}})\rightarrow 0$ as $t\rightarrow 0$. Therefore we have reduced this
estimate to an estimate on each of the terms:%
\begin{equation*}
\left\Vert \Lambda _{\omega _{\varepsilon }}F_{_{\tilde{G}_{i}^{\varepsilon
_{1}}}}-\sqrt{-1}\mu _{\omega _{\varepsilon _{1}}}(\tilde{Q}_{i})Id_{\tilde{Q%
}_{i}}\right\Vert _{_{L^{\alpha }(\tilde{X}-\pi ^{-1}(U_{R/2}),\omega
_{\varepsilon })}}.
\end{equation*}%
On the other hand we have:%
\begin{align*}
& \left\Vert \Lambda _{\omega _{\varepsilon }}F_{\tilde{G}_{i}^{\varepsilon
_{1}}}-\sqrt{-1}\mu _{\omega _{\varepsilon _{1}}}(\tilde{Q}_{i})Id_{\tilde{Q}%
_{i}}\right\Vert _{L^{\alpha }(\tilde{X}-\pi ^{-1}(U_{R/2}),\omega
_{\varepsilon })} \\
& \leq \left\Vert \Lambda _{\omega _{\varepsilon }}\left( F_{\tilde{G}%
_{i}^{\varepsilon _{1}}}-\frac{\sqrt{-1}}{n}\omega _{\varepsilon _{1}}\mu
_{\omega _{\varepsilon _{1}}}(\tilde{Q}_{i})Id_{\tilde{Q}_{i}}\right)
\right\Vert _{L^{\alpha }(\tilde{X}-\pi ^{-1}(U_{R/2}),\omega _{\varepsilon
})} \\
& +\left\Vert \frac{\sqrt{-1}}{n}\Lambda _{\omega _{\varepsilon }}(\omega
_{\varepsilon _{1}}-\omega _{\varepsilon })\mu _{\omega _{\varepsilon _{1}}}(%
\tilde{Q}_{i})Id_{\tilde{Q}_{i}}\right\Vert _{L^{\alpha }(\tilde{X}-\pi
^{-1}(U_{R/2}),\omega _{\varepsilon })}
\end{align*}%
where we have used the fact that $\Lambda _{\omega _{\varepsilon }}\omega
_{\varepsilon }=n$. Now by Lemma \ref{Lemma12} we have: 
\begin{align*}
\biggl\Vert\Lambda _{\omega _{\varepsilon }}\bigl(F_{_{\tilde{G}%
_{i}^{\varepsilon _{1}}}}-\frac{\sqrt{-1}}{n}\omega _{\varepsilon _{1}}\mu
_{\omega _{\varepsilon _{1}}}(\tilde{Q}_{i})Id_{\tilde{Q}_{i}}\bigr)& %
\biggr\Vert_{L^{\alpha }(\tilde{X}-\pi ^{-1}(U_{R/2}),\omega _{\varepsilon
})} \\
& \leq C\left( \left\Vert \Lambda _{\omega _{\varepsilon _{1}}}F_{\tilde{G}%
_{i}^{\varepsilon _{1}}}-\sqrt{-1}\mu _{\omega _{\varepsilon _{1}}}(\tilde{Q}%
)Id_{\tilde{Q}_{i}}\right\Vert _{L^{\tilde{\alpha}}(\tilde{X},\omega
_{\varepsilon _{1}})}\right)  \\
& +\kappa C\left( \left\Vert F_{\tilde{G}_{i}^{\varepsilon _{1}}}\right\Vert
_{L^{2}(\tilde{X},\omega _{\varepsilon _{1}})}+\frac{1}{n}\left\Vert \omega
_{\varepsilon _{1}}\mu _{\omega _{\varepsilon _{1}}}(\tilde{Q}_{i})Id_{%
\tilde{Q}_{i}}\right\Vert _{L^{2}(\tilde{X},\omega _{\varepsilon
_{1}})}\right)  \\
& +\varepsilon _{1}C(\kappa )\left( \left\Vert F_{\tilde{G}_{i}^{\varepsilon
_{1}}}\right\Vert _{L^{2}(\tilde{X},\omega _{\varepsilon _{1}})}+\frac{1}{n}%
\left\Vert \omega _{\varepsilon _{1}}\mu _{\omega _{\varepsilon _{1}}}(%
\tilde{Q}_{i})Id_{\tilde{Q}_{i}}\right\Vert _{L^{2}(\tilde{X},\omega
_{\varepsilon _{1}})}\right) 
\end{align*}%
and%
\begin{eqnarray*}
&&\left\Vert \frac{\sqrt{-1}}{n}\Lambda _{\omega _{\varepsilon }}(\omega
_{\varepsilon _{1}}-\omega _{\varepsilon })\mu _{\omega _{\varepsilon _{1}}}(%
\tilde{Q}_{i})Id_{\tilde{Q}_{i}}\right\Vert _{L^{\alpha }(\tilde{X}-\pi
^{-1}(U_{R/2}),\omega _{\varepsilon })} \\
&\leq &\frac{\varepsilon _{1}}{n}C\left( \left\Vert \left( \Lambda _{\omega
_{\varepsilon _{1}}}\eta \right) \mu _{\omega _{\varepsilon _{1}}}(\tilde{Q}%
_{i})Id_{\tilde{Q}_{i}}\right\Vert _{L^{\tilde{\alpha}}(\tilde{X},\omega
_{\varepsilon _{1}})}+\kappa \left\Vert \eta \mu _{\omega _{\varepsilon
_{1}}}(\tilde{Q}_{i})Id_{\tilde{Q}_{i}}\right\Vert _{L^{2}(\tilde{X},\omega
_{\varepsilon _{1}})}\right)  \\
&&+\frac{\varepsilon _{1}^{2}}{n}C(\kappa )\left\Vert \eta \mu _{\omega
_{\varepsilon _{1}}}(\tilde{Q}_{i})Id_{\tilde{Q}_{i}}\right\Vert _{L^{2}(%
\tilde{X},\omega _{\varepsilon _{1}})}
\end{eqnarray*}%
again using Lemma \ref{Lemma12}. Here we have used the fact that $\omega
_{\varepsilon _{1}}-\omega _{\varepsilon }=(\varepsilon _{1}-\varepsilon
)\eta $ in the second inequality. Of course, $\bigl\Vert\Lambda _{\omega
_{\varepsilon _{1}}}F_{\tilde{G}_{i}^{\varepsilon _{1}}}-\sqrt{-1}\mu
_{\omega _{\varepsilon _{1}}}(\tilde{Q}_{i})id_{\tilde{Q}_{i}}\bigr\Vert_{L^{%
\tilde{\alpha}}(\tilde{X},\omega _{\varepsilon _{1}})}=0$, by the
construction of $G_{i}^{\varepsilon _{1}}$. On the other hand:%
\begin{align*}
\left\Vert F_{\tilde{G}_{i}^{\varepsilon _{1}}}\right\Vert _{L^{2}(\tilde{X}%
,\omega _{\varepsilon _{1}})}& =\left\Vert \Lambda _{\omega _{\varepsilon
_{1}}}F_{\tilde{G}_{i}^{\varepsilon _{1}}}\right\Vert _{L^{2}(\tilde{X}%
,\omega _{\varepsilon _{1}})}+\pi ^{2}n(n-1)\int_{\tilde{X}}\left( 2c_{2}(%
\tilde{Q}_{i})-c_{1}^{2}(\tilde{Q}_{i})\right) \wedge \omega _{\varepsilon
_{1}}^{n-2} \\
& =\left\Vert \mu _{\omega _{\varepsilon _{1}}}(\tilde{Q}_{i})Id_{\tilde{Q}%
_{i}}\right\Vert _{L^{2}(\tilde{X},\omega _{\varepsilon _{1}})}+\pi
^{2}n(n-1)\int_{\tilde{X}}\left( 2c_{2}(\tilde{Q}_{i})-c_{1}^{2}(\tilde{Q}%
_{i})\right) \wedge \omega _{\varepsilon _{1}}^{n-2}
\end{align*}%
which is bounded. Likewise the terms $\left\Vert \omega _{\varepsilon
_{1}}\mu _{\omega _{\varepsilon _{1}}}(\tilde{Q}_{i})Id_{\tilde{Q}%
_{i}}\right\Vert _{L^{2}(\tilde{X},\omega _{\varepsilon _{1}})}$ and $%
\left\Vert \eta \mu _{\omega _{\varepsilon _{1}}}(\tilde{Q}_{i})Id_{\tilde{Q}%
_{i}}\right\Vert _{L^{2}(\tilde{X},\omega _{\varepsilon _{1}})}$ are
bounded. The only remaining issue is: $\left\Vert \left( \Lambda _{\omega
_{\varepsilon _{1}}}\eta \right) \mu _{\omega _{\varepsilon _{1}}}(\tilde{Q}%
_{i})Id_{\tilde{Q}_{i}}\right\Vert _{L^{\tilde{\alpha}}(\tilde{X},\omega
_{\varepsilon _{1}})}$. But writing 
\begin{equation*}
\left\vert \Lambda _{\omega _{\varepsilon _{1}}}\eta \right\vert ^{^{\tilde{%
\alpha}}}=\left\vert \frac{\eta \wedge \omega _{\varepsilon _{1}}^{n-1}}{%
\omega _{\varepsilon _{1}}^{n}}\right\vert ^{^{\tilde{\alpha}}}=\left\vert 
\frac{\eta \wedge \omega _{\varepsilon _{1}}^{n-1}}{\eta ^{n}}\right\vert
^{^{\tilde{\alpha}}}\left\vert \frac{\det g_{\eta }}{\det g_{\varepsilon
_{1}}}\right\vert ^{^{\tilde{\alpha}}}
\end{equation*}%
and%
\begin{equation*}
\omega _{\varepsilon _{1}}^{n}=\left\vert \frac{\det g_{\varepsilon _{1}}}{%
\det g_{\eta }}\right\vert \eta ^{n}
\end{equation*}%
\begin{eqnarray*}
&&\left\Vert \left( \Lambda _{\omega _{\varepsilon _{1}}}\eta \right) \mu
_{\omega _{\varepsilon _{1}}}(\tilde{Q}_{i})Id_{\tilde{Q}_{i}}\right\Vert
_{L^{\tilde{\alpha}}(\tilde{X},\omega _{\varepsilon _{1}})} \\
&\leq &C\left( \int_{\tilde{X}}\left\vert \frac{\det g_{\varepsilon _{1}}}{%
\det g_{\eta }}\right\vert ^{(1-\tilde{\alpha})\tilde{s}}\eta ^{n}\right) ^{%
\frac{1}{\tilde{\alpha}\tilde{s}}}\left( \int_{\tilde{X}}\left\vert \frac{%
\eta \wedge \omega _{\varepsilon _{1}}^{n-1}}{\eta ^{n}}\right\vert
^{^{\beta }}\left\vert \mu _{\omega _{\varepsilon _{1}}}(\tilde{Q}_{i})Id_{%
\tilde{Q}_{i}}\right\vert ^{\beta }\eta ^{n}\right) ^{\frac{1}{^{\beta }}}
\end{eqnarray*}%
by H\"{o}lder's inequality with respect to the metric $\eta $. Here again $%
\tilde{\alpha}$ is as in Lemma \ref{Lemma12} and $\tilde{s}=\frac{\beta }{%
\beta -\tilde{\alpha}}$ where $\frac{\tilde{\alpha}}{1-2(k-1)(\tilde{\alpha}%
-1)}<\beta <\infty $. By Lemma \ref{Lemma11} this is uniformly bounded in $%
\varepsilon _{1}$ since we also have $\omega _{\varepsilon
_{1}}^{n-1}\longrightarrow \pi ^{\ast }\omega ^{n-1}$.

Therefore we may choose $t,\kappa ,$ and $\varepsilon _{1}$ so that%
\begin{equation*}
\left\Vert \Lambda _{\omega _{\varepsilon }}F_{_{\tilde{G}_{\varepsilon
_{1}}}}-\sqrt{-1}\mu _{\omega _{\varepsilon _{1}}}(\tilde{Q}_{i})Id_{\tilde{G%
}_{\varepsilon _{1}}}\right\Vert _{_{L^{\alpha }(\tilde{X}-\pi
^{-1}(U_{R/2}),\omega _{\varepsilon })}}<\frac{\delta }{4}
\end{equation*}%
for all $\varepsilon $ and all $\alpha $ sufficiently close to $1$. We will
now fix these values of $t$,$\kappa $, and $\varepsilon _{1}$.

The term 
\begin{equation*}
\left\vert \int_{\pi ^{-1}(U_{R/2})}\Phi _{\alpha }(\Lambda _{\omega
_{\varepsilon }}F_{H}+\sqrt{-1}N\mathbf{I}_{E})-\Phi _{\alpha }(\sqrt{-1}%
(\mu +N))\right\vert
\end{equation*}%
is bounded by:%
\begin{equation*}
C\left\Vert \Lambda _{\omega _{\varepsilon }}F_{H}-\sqrt{-1}\mu \right\Vert
_{L^{\alpha }(\pi ^{-1}(U_{R/2}),\omega _{\varepsilon })}.
\end{equation*}%
Now write%
\begin{equation*}
\left\vert \Lambda _{\omega _{\varepsilon }}F_{H}\right\vert ^{\alpha
}=\left\vert \frac{F_{H}\wedge \omega _{\varepsilon }^{n-1}}{\omega
_{\varepsilon }^{n-1}}\right\vert ^{^{\alpha }}=\left\vert \frac{F_{H}\wedge
\omega _{\varepsilon }^{n-1}}{\eta ^{n}}\right\vert ^{^{\tilde{\alpha}%
}}\left\vert \frac{\det g_{\eta }}{\det g_{\varepsilon }}\right\vert ^{^{%
\tilde{\alpha}}}
\end{equation*}%
and%
\begin{equation*}
\omega _{\varepsilon }^{n}=\left\vert \frac{\det g_{\varepsilon }}{\det
g_{\eta }}\right\vert \eta ^{n},
\end{equation*}%
we have%
\begin{eqnarray*}
\left\Vert (\Lambda _{\omega _{\varepsilon }}F_{H}-\sqrt{-1}\mu \right\Vert
_{L^{\alpha }(\pi ^{-1}(U_{R/2}),\omega _{\varepsilon })} &\leq
&C_{1}\left\Vert \Lambda _{\omega _{\varepsilon }}F_{H}\right\Vert
_{L^{\alpha }(\pi ^{-1}(U_{R/2}),\omega _{\varepsilon })}+C_{2}\limfunc{Vol}%
(U_{R/2},\omega )\leq \\
&&\hskip-1.5inC_{1}\left( \int_{\pi ^{-1}(U_{R/2})}\left\vert \frac{\det
g_{\varepsilon }}{\det g_{\eta }}\right\vert ^{(1-\alpha )s}\eta ^{n}\right)
^{\frac{1}{s\tilde{\alpha}}}\left( \int_{\pi ^{-1}(U_{R/2})}\left\vert \frac{%
F_{H}\wedge \omega _{\varepsilon }^{n-1}}{\eta ^{n}}\right\vert ^{^{\tilde{%
\alpha}}}\eta ^{n}\right) ^{\frac{1}{^{^{\tilde{\alpha}}}}}+C_{2}\limfunc{Vol%
}(U_{R/2},\omega ),
\end{eqnarray*}%
where $\alpha $ and $s$ are as in Lemma \ref{Lemma11}. By that lemma, the
factor%
\begin{equation*}
\left( \int_{\pi ^{-1}(U_{R/2})}\left\vert \frac{\det g_{\varepsilon }}{\det
g_{\eta }}\right\vert ^{(1-\alpha )s}\eta ^{n}\right)
\end{equation*}%
is uniformly bounded, and so the result is that there is an $R$ such that%
\begin{equation*}
\left\vert \int_{\pi ^{-1}(U_{R/2})}\Phi _{\alpha }(\Lambda _{\omega
_{\varepsilon }}F_{H}+\sqrt{-1}N\mathbf{I}_{E})-\Phi _{\alpha }(\sqrt{-1}%
(\mu +N))\right\vert <\frac{\delta }{8}.
\end{equation*}%
Therefore the only remaining estimates required are on: $\bigl\Vert\Lambda
_{\omega _{\varepsilon }}F_{\tilde{h}_{\psi _{R}}}-\Lambda _{\omega
_{\varepsilon }}F_{\tilde{h}}\bigr\Vert_{L^{\alpha }(\pi
^{-1}(U_{R}-U_{R/2}),\omega _{\varepsilon })}$. If we let $k_{\psi _{R}}$ be
an endomorphism such that $\tilde{h}=k_{\psi _{R}}h_{\psi _{R}}$. Then 
\begin{equation*}
F_{h_{\psi _{R}}}-F_{\tilde{h}}=\bar{\partial}_{\tilde{E}}(k_{\psi
_{R}}^{-1}\partial _{\tilde{h}}k_{\psi _{R}})
\end{equation*}%
where $\partial _{\tilde{h}}$ is the $(1,0)$ part of the Chern connection
for $\tilde{h}$. The expression on the right hand side involves only two
derivatives of $\psi _{R}$, and so, using the bound on the derivatives of $%
\psi _{R}$, there is a bound of the form:%
\begin{equation*}
\left\vert F_{h_{\psi _{R}}}-F_{\tilde{h}}\right\vert \leq C_{1}+\frac{C_{2}%
}{R^{2}}.
\end{equation*}

where $C_{1}$ and $C_{2}$ are independent of both $\varepsilon $ and $R$.
Now as usual we have:%
\begin{eqnarray*}
\left\vert \Lambda _{\omega _{\varepsilon }}\left( F_{\tilde{h}_{\psi
_{R}}}-F_{\tilde{h}}\right) \right\vert ^{\alpha } &=&\left\vert \frac{%
\left( F_{\tilde{h}_{\psi _{R}}}-F_{\tilde{h}}\right) \wedge \omega
_{\varepsilon }^{n-1}}{\omega _{\varepsilon }^{n}}\right\vert ^{\alpha
}=\left\vert \frac{\left( F_{\tilde{h}_{\psi _{R}}}-F_{\tilde{h}}\right)
\wedge \omega _{\varepsilon }^{n-1}}{\eta ^{n}}\right\vert ^{\alpha
}\left\vert \frac{\det g_{\eta }}{\det g_{\varepsilon }}\right\vert ^{\alpha
} \\
\text{and}\ \omega _{\varepsilon }^{n} &=&\frac{\det g_{\varepsilon }}{\det
g_{\eta }}\eta ^{n}.
\end{eqnarray*}%
Then we compute:%
\begin{eqnarray*}
&&\left\Vert \Lambda _{\omega _{\varepsilon }}F_{h_{\psi _{R}}}-\Lambda
_{\omega _{\varepsilon }}F_{\tilde{h}}\right\Vert _{L^{\alpha }(\pi
^{-1}(U_{R}-U_{R/2}),\omega _{\varepsilon })} \\
&=&\left( \int_{\pi ^{-1}(U_{R}-U_{R/2})}\left\vert \frac{\left( F_{\tilde{h}%
_{\psi _{R}}}-F_{\tilde{h}}\right) \wedge \omega _{\varepsilon }^{n-1}}{\eta
^{n}}\right\vert ^{\alpha }\left\vert \frac{\det g_{\eta }}{\det
g_{\varepsilon }}\right\vert ^{\alpha }\frac{\det g_{\varepsilon }}{\det
g_{\eta }}\eta ^{n}\right) ^{\frac{1}{\alpha }} \\
&\leq &\left( \int_{\pi ^{-1}(U_{R}-U_{R/2})}\left( \frac{\det
g_{\varepsilon }}{\det g_{\eta }}\right) ^{\left( 1-\alpha \right) s}\eta
^{n}\right) ^{\frac{1}{\alpha s}}\left( \int_{\pi
^{-1}(U_{R}-U_{R/2})}\left( C_{1}+\frac{C_{2}}{R^{2\tilde{\alpha}}}\right)
\eta ^{n}\right) ^{\frac{1}{\tilde{\alpha}}}.
\end{eqnarray*}%
Here $s$ and $\tilde{\alpha}$ are as in Lemma \ref{Lemma11} and we have
applied H\"{o}lder's inequality to the conjugate pair $s$ and $\frac{\tilde{%
\alpha}}{\alpha }$. By that lemma, the first factor is uniformly bounded in $%
\varepsilon $. We must therefore show that as $R\rightarrow 0$, the first
factor can be made arbitrarily small. To do this we note that the open set $%
U_{R}$ may be covered by a union of balls ${\LARGE \cup }_{j}B_{r}^{j}$.
Therefore:%
\begin{equation*}
\int_{\pi ^{-1}(U_{R}-U_{R/2})}C_{1}+C_{2}R^{-2\tilde{\alpha}}\leq
\sum_{j}(C_{1}+C_{2}R^{-2\tilde{\alpha}})vol(B_{r}^{j})
\end{equation*}%
and up to a constant $vol(B_{r}^{j})=r^{2n}$ where $n$ is the complex
dimension of $X$.

The key observation is now that the singular set $Z_{\limfunc{alg}}$ is a
complex submanifold of $X$ and has complex codimension at least $2$, in
other words it is of real dimension at most $2n-4$. This implies that $Z_{%
\limfunc{alg}}$ has Hausdorff dimension at most $2n-4$, i.e. it has zero $d$%
-dimensional Hausdorff measure for $d<2n-4$. In other words, for each $0\leq
d<4$, and a given $\delta >0$, there is a cover of $Z_{\limfunc{alg}}$ and
an $r>0$ such that $\sum_{j}r^{2n-d}<\delta $. Now assume that we have
chosen $R=r$. Then then the cover described above is also a cover for $U_{R}$
so 
\begin{equation*}
\int_{\pi ^{-1}(U_{R}-U_{R/2})}C_{1}+C_{2}R^{-2\tilde{\alpha}}\leq
\sum_{j}(C_{1}r^{2n}+C_{2}r^{2n-2\tilde{\alpha}}).
\end{equation*}%
Note that by assumption $\tilde{\alpha}<2$. In other words, we may select $R$
so that: 
\begin{equation*}
\left\Vert \Lambda _{\omega _{\varepsilon }}F_{\tilde{h}_{\psi
_{R}}}-\Lambda _{\omega _{\varepsilon }}F_{\tilde{h}}\right\Vert _{L^{\alpha
}(\pi ^{-1}(U_{R}-U_{R/2}),\omega _{\varepsilon })}<\frac{\delta }{16}.
\end{equation*}%
Thus choosing $\varepsilon _{1}$ and $R$ in the manner specified above gives
us for each $\varepsilon $ a bound on the difference of the $HYM$
functionals: $\left\vert HYM_{\alpha ,N}^{\omega _{\varepsilon }}(\bar{%
\partial}_{E},\tilde{h}_{\psi _{R}})-HYM_{\alpha ,N}(\mu )\right\vert \leq
\delta $. Now sending $\varepsilon \rightarrow 0$ we finally see that there
exists a metric $h$ with $\left\vert HYM_{\alpha ,N}^{\omega }(\bar{\partial}%
_{E},h)-HYM_{\alpha ,N}(\mu )\right\vert <\delta $, for all $N$ and all $%
\alpha $ sufficiently close to $1$.
\end{proof}

\begin{lemma}
\qquad \label{Lemma13}Let $E\rightarrow X$ and $\alpha _{0}$ be the same as
in the proposition. Let $h$ be any smooth hermitian metric on $E$ and $A_{t}$
a solution of the Yang-Mills flow whose initial condition is $(\bar{\partial}%
_{E},h)$. Let $\mu _{0}$ denote the Harder-Narasimhan type of $E$. Then $%
\lim_{t\rightarrow \infty }HYM_{\alpha ,N}(A_{t})=HYM_{\alpha ,N}(\mu _{0})$%
, for all $1\leq \alpha \leq \alpha _{0}$ and all $N$.
\end{lemma}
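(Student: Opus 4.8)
The plan is to prove the sharper statement that, for an \emph{arbitrary} smooth initial metric on $E$, \emph{every} subsequential Uhlenbeck limit of the flow has Harder--Narasimhan type exactly $\mu_0$; the lemma is then immediate. Indeed $t\mapsto HYM_{\alpha,N}(A_t)$ is non-increasing by Proposition~\ref{Prop11}, so $\lim_{t\to\infty}HYM_{\alpha,N}(A_t)$ exists, and by Proposition~\ref{Prop12} it equals $HYM_{\alpha,N}(A_\infty)$ for any Uhlenbeck limit $A_\infty$; since $A_\infty$ is Yang--Mills (Proposition~\ref{Prop8}), Proposition~\ref{Prop7}, the Chern--Weil formula (Theorem~\ref{Thm5}) and Theorem~\ref{THM Bando-Siu} identify the eigenvalues of $\sqrt{-1}\Lambda_\omega F_{A_\infty}$, counted with rank, with the entries of the $HN$ type $\lambda_\infty$ of $\bar{\partial}_{A_\infty}$, whence $HYM_{\alpha,N}(A_\infty)=HYM_{\alpha,N}(\lambda_\infty)$. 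Once $\lambda_\infty=\mu_0$ this equals $HYM_{\alpha,N}(\mu_0)$ for all $\alpha\geq 1$ and all $N$, the right-hand side being continuous in $\alpha$, so the endpoint $\alpha=\alpha_0$ is no trouble.

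By Proposition~\ref{Prop9} we have $\mu_0\leq\lambda_\infty$ in the Shatz order (so in particular $\sum(\lambda_\infty)_j=\sum(\mu_0)_j$), and since shifting every entry by $N$ preserves this order, Proposition~\ref{Prop10}(1) gives $HYM_{\alpha,N}(A_t)\geq HYM_{\alpha,N}(\mu_0)$ for all $t$. It remains to exclude $\lambda_\infty\neq\mu_0$; I would do this first for one well-chosen initial metric. Fix $N$ so large that every entry of $\mu_0+N$ is nonnegative, and let $\mathcal{T}$ be the (finite, as in \cite{DW1}) set of $HN$ types $\lambda$ with $\lambda\geq\mu_0$, $\sum\lambda_j=\sum(\mu_0)_j$, that occur as the type of an Uhlenbeck limit of the Yang--Mills flow on $E$. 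By Proposition~\ref{Prop10}(2) the identity $HYM_{\alpha,N}(\lambda)=HYM_{\alpha,N}(\mu_0)$ can hold, for a given $\lambda\in\mathcal{T}\setminus\{\mu_0\}$, only on an $\alpha$-set with no limit point; combined with part (1) and the finiteness of $\mathcal{T}$, we may fix a single $\alpha_\ast\in[1,\alpha_0)$ with
\begin{equation*}
g:=\min\bigl\{\,HYM_{\alpha_\ast,N}(\lambda)-HYM_{\alpha_\ast,N}(\mu_0)\ :\ \lambda\in\mathcal{T},\ \lambda\neq\mu_0\,\bigr\}>0.
\end{equation*}
Now apply Proposition~\ref{Prop18} with this $N$ and with $0<\delta<g$ to obtain a metric $h_\ast$ with $HYM_{\alpha_\ast,N}(\bar{\partial}_E,h_\ast)\leq HYM_{\alpha_\ast,N}(\mu_0)+\delta$. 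Running the flow from $(\bar{\partial}_E,h_\ast)$ and combining monotonicity with the first paragraph, the $HN$ type $\lambda_\infty$ of any Uhlenbeck limit of this flow lies in $\mathcal{T}$ and satisfies $HYM_{\alpha_\ast,N}(\lambda_\infty)\leq HYM_{\alpha_\ast,N}(\mu_0)+\delta<HYM_{\alpha_\ast,N}(\mu_0)+g$, which by the definition of $g$ forces $\lambda_\infty=\mu_0$.

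It remains to pass from $h_\ast$ to an arbitrary smooth metric $h$. Let $\mathcal{S}$ be the set of smooth hermitian metrics on $E$ for which every subsequential Uhlenbeck limit of the Yang--Mills flow starting at $(\bar{\partial}_E,h)$ has $HN$ type $\mu_0$. We have just shown $h_\ast\in\mathcal{S}$, so $\mathcal{S}\neq\emptyset$, and the argument of \cite{DW1}, which we do not reproduce, shows $\mathcal{S}$ is open and closed in the (connected, indeed convex) space of smooth metrics; hence $\mathcal{S}$ is everything, which proves the claim and with it the lemma. The genuine difficulty in this scheme is upstream, namely Proposition~\ref{Prop18} (the existence of an $L^p$ approximate critical hermitian structure), which is already in hand; within the present argument the only delicate points are the upgrade from an equality of $HYM_{\alpha,N}$-values to an equality of $HN$ types --- effected by the finiteness of $\mathcal{T}$, the auxiliary parameter $N$, and Proposition~\ref{Prop10}(2) --- and the open--closed/connectedness argument imported from \cite{DW1}.
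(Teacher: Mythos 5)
The lemma you are asked to prove is purely numerical: $\lim_{t\to\infty}HYM_{\alpha,N}(A_t)=HYM_{\alpha,N}(\mu_0)$. Your proposal instead aims directly for the stronger statement that the Uhlenbeck limit has Harder--Narasimhan type $\mu_0$ (which in the paper is Proposition~\ref{Prop19}, deduced \emph{from} Lemma~\ref{Lemma13}, not the other way around). The step that makes this possible in your proposal is the assertion that the set $\mathcal{T}$ of Harder--Narasimhan types of Uhlenbeck limits, with fixed trace and dominating $\mu_0$, is finite, ``as in \cite{DW1}.'' This is a genuine gap: on a general compact K\"ahler manifold the $\omega$-degree of a coherent subsheaf is the pairing of a class in $\mathrm{NS}(X)$ with $\omega^{n-1}$, which is a \emph{real} number; the range of possible slopes for HN pieces of the limit is bounded but has no reason to be discrete, let alone finite. (When $\omega$ is a Hodge class one could argue integrality of degrees, but neither the present paper nor \cite{DW1} assumes that.) Without finiteness you cannot fix a single $\alpha_\ast$ achieving a uniform gap $g>0$, and the choice of $\delta<g$ is therefore unavailable, so the special metric $h_\ast$ cannot be produced. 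Note also that nothing in \cite{DW1} rests on such a finiteness statement; the claim is not one you may import.

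The paper's argument avoids this entirely and is in fact shorter. One only needs, for the special metric $h_\ast=h_\ast(\delta,N)$ from Proposition~\ref{Prop18}, the two-sided bound
\begin{equation*}
HYM_{\alpha,N}(\mu_0)\;\leq\;\lim_{t\to\infty}HYM_{\alpha,N}\bigl(A_t^{(h_\ast)}\bigr)\;\leq\;HYM_{\alpha,N}(\mu_0)+\delta
\end{equation*}
(the lower bound from Propositions~\ref{Prop9}, \ref{Prop10}(1), \ref{Prop12}; the upper from monotonicity), and then the distance-decreasing property of the Donaldson heat flow to see that $\lim_{t\to\infty}HYM_{\alpha,N}(A_t)$ is \emph{independent of the initial smooth metric}. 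Once independence is known one lets $\delta\to 0$ and the numerical equality for arbitrary $h$ follows immediately; there is no need to first pin down the type of the limit at any stage, and hence no need for $\mathcal{T}$ or its finiteness. The type equality $\lambda_\infty=\mu_0$ is then extracted afterwards (Proposition~\ref{Prop19}) from the fact that the resulting equality of $\Phi_\alpha$-values holds on the whole interval $[1,\alpha_0]$, which is an $\alpha$-set with a limit point --- this is exactly what Proposition~\ref{Prop10}(2) is designed to exploit, again without any finiteness. Your third paragraph (the open--closed/connectedness step) is in effect a restatement of the same distance-decreasing mechanism, so if you replace the finiteness-dependent second paragraph with the two-sided numerical bound above and then invoke that mechanism to remove the dependence on the initial metric, your proof collapses to the paper's. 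As written, however, the finiteness of $\mathcal{T}$ is an unjustified and likely false step that the paper never takes.
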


As a consequence, if $A_{\infty }$ is an Uhlenbeck limit along the flow: $%
HYM_{\alpha ,N}(A_{\infty })=HYM_{\alpha ,N}(\mu _{0})$, since $HYM_{\alpha
,N}(A_{\infty })=\lim_{t\rightarrow \infty }HYM_{\alpha ,N}(A_{t})$. The
proof of Lemma \ref{Lemma13} is exactly the same as in \cite{DW1}. It uses $%
\limfunc{Proposition}$ \ref{Prop18}. One easily shows that for any initial
metric such that the conclusion of $\limfunc{Proposition}$ \ref{Prop18}
holds, the property $\lim_{t\rightarrow \infty }HYM_{\alpha
,N}(A_{t})=HYM_{\alpha ,N}(\mu _{0})$ holds. The fact that this is true for
any metric then follows from a distance decreasing argument.

We can now identify the Harder-Narasimhan type of the limit.

\begin{proposition}
\label{Prop19}Let $E\rightarrow X$ have the same properties as before. Let $%
A_{t}$ be a solution to the $YM$ flow with initial condition $A_{0}$ whose
limit along the flow is $A_{\infty }$. Let $E_{\infty }$ be the
corresponding holomorphic vector bundle defined away from $Z_{\limfunc{an}}$%
. Then the $HN$ type of $(E_{\infty },A_{\infty })$ is the same as that of $%
(E_{0},A_{0})$.
\end{proposition}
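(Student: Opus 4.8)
The plan is to sandwich the Harder-Narasimhan type $\lambda_\infty$ of $(E_\infty, A_\infty)$ between two copies of the type $\mu_0$ of $(E_0, A_0)$, using the partial ordering of Shatz. From one side, Proposition \ref{Prop9} already gives $\mu_0 \leq \lambda_\infty$: the type of an Uhlenbeck limit is bounded below by the type of the initial bundle. So the entire content is the reverse inequality, and the strategy is to extract it from the convergence of the generalized Hermitian-Yang-Mills functionals.

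First I would invoke Lemma \ref{Lemma13}: for $A_t$ the flow starting at $A_0$ and $A_\infty$ any subsequential Uhlenbeck limit, one has $HYM_{\alpha,N}(A_\infty) = HYM_{\alpha,N}(\mu_0)$ for all $1 \leq \alpha \leq \alpha_0$ and all $N$. On the other hand, since $A_\infty$ is a Yang-Mills connection on the holomorphic bundle $E_\infty$ (away from $Z_{\limfunc{an}}$), its Hermitian-Einstein tensor $\sqrt{-1}\Lambda_\omega F_{A_\infty}$ is covariantly constant and splits into blocks with eigenvalues equal to the slopes $\mu(Q_{\infty,i})$ of the Yang-Mills summands (Proposition \ref{Prop7}). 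These slopes are precisely the entries of the $HN$ type $\lambda_\infty$ of $E_\infty$ — indeed a polystable Yang-Mills bundle's $HN$ type is read off directly from the eigenvalues of $\Lambda_\omega F$. Therefore, evaluating $HYM_{\alpha,N}$ on $A_\infty$ gives exactly $HYM_{\alpha,N}(\lambda_\infty)$, by definition of these functionals on types (the formula $HYM_{\alpha,N}(\mu) = \frac{2\pi}{(n-1)!}\Phi_\alpha(\sqrt{-1}(\mu+N))$). Combining, $HYM_{\alpha,N}(\lambda_\infty) = HYM_{\alpha,N}(\mu_0)$, i.e. $\Phi_\alpha(\sqrt{-1}(\lambda_\infty + N)) = \Phi_\alpha(\sqrt{-1}(\mu_0 + N))$ for all $\alpha$ in the interval $[1,\alpha_0]$ and all $N$.

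Now I would choose $N$ large enough that every entry of $\mu_0 + N$ and of $\lambda_\infty + N$ is nonnegative (possible since both types have bounded entries), and apply part (2) of Proposition \ref{Prop10}: if $\Phi_\alpha(\sqrt{-1}\mu) = \Phi_\alpha(\sqrt{-1}\lambda)$ for all $\alpha$ in a set with a limit point, then $\mu = \lambda$. Since $[1,\alpha_0]$ certainly has a limit point, we conclude $\mu_0 + N = \lambda_\infty + N$, hence $\mu_0 = \lambda_\infty$. The main obstacle — or rather the main thing to be careful about — is matching conventions: that $HYM_{\alpha,N}$ evaluated on the Yang-Mills connection $A_\infty$ really equals $HYM_{\alpha,N}$ evaluated on its $HN$ type, which relies on the eigenvalue splitting of Proposition \ref{Prop7} together with the compactness statement of Proposition \ref{Prop8} guaranteeing the limit is Yang-Mills and polystable on $X - Z_{\limfunc{an}}$, and on the normalization of volume so that the constant $\frac{2\pi}{(n-1)!}$ comes out right. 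Everything else is a direct application of results already in hand.
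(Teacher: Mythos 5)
Your proposal is correct and follows essentially the same route as the paper: use Lemma \ref{Lemma13} together with the identification $HYM_{\alpha,N}(A_\infty)=HYM_{\alpha,N}(\lambda_\infty)$ (valid because $A_\infty$ is Yang-Mills, so Proposition \ref{Prop7} gives the splitting) to get $\Phi_\alpha(\sqrt{-1}(\mu_0+N))=\Phi_\alpha(\sqrt{-1}(\lambda_\infty+N))$ for all $\alpha\in[1,\alpha_0]$, then shift by a suitable $N$ and apply Proposition \ref{Prop10}(2). Your explicit justification for choosing $N$ large enough that both shifted tuples are nonnegative is in fact cleaner than the paper's phrasing of that step.
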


\begin{proof}
Let $\mu _{0}=(\mu _{1},\cdots ,\mu _{K})$ and $\mu _{\infty }=(\mu
_{1}^{\infty },\cdots ,\mu _{K}^{\infty })$ be the $HN$ types of $%
(E_{0},A_{0})$ and $(E_{\infty },A_{\infty })$. A restatement of the above
lemma is that $\Phi _{\alpha }(\mu _{0}+N)=\Phi _{\alpha }(\mu _{\infty }+N)$
for all $1\leq \alpha \leq \alpha _{0}$ and all $N$. Choose $N$ to be large
enough so that $\mu _{K}+N\geq 0$. Then we also have $\mu _{K}^{\infty
}+N\geq 0$ by $\limfunc{Proposition}$ \ref{Prop9}, and therefore $\mu
_{K}+N=\mu _{K}^{\infty }+N$ by $\limfunc{Proposition}$ \ref{Prop10}, so $%
\mu _{K}=\mu _{K}^{\infty }$.
\end{proof}

Let $(E,\bar{\partial}_{A_{0}})$ be a holomorphic bundle, and $A_{0}$ an
initial connection, and $A_{t_{j}}$ its evolution along the flow for a
sequence of times $t_{j}$. Then we have the following.

\begin{lemma}
\label{Lemma14}$(1)$ Let $\bigl\{\pi _{j}^{(i)}\bigr\}$ be the $HN$
filtration of $(E$,$\bar{\partial}_{A_{t_{j}}})$ and $\bigl\{\pi _{\infty
}^{(i)}\bigr\}$ the $HN$ filtration of $(E_{\infty }$,$\partial _{A_{\infty
}})$. Then after passing to a subsequence, $\pi _{j}^{(i)}\rightarrow \pi
_{\infty }^{(i)}$ strongly $L^{p}\cap L_{1,loc}^{2}$ for all $1\leq p<\infty 
$ and all $i$.

\ \ $(2)$ Assume the original bundle $(E,\bar{\partial}_{A_{0}})$ is
semi-stable and $\bigl\{\pi _{ss,j}^{(i)}\bigr\}$ are Seshadri filtrations
of $(E,\bar{\partial}_{A_{t_{j}j}})$. Without loss of generality assume the
ranks of the subsheaves $\pi _{ss,j}^{(i)}$ are constant in $j$. Then there
is a filtration $\bigl\{\pi _{ss,\infty }^{(i)}\bigr\}$ of $(E$,$\bar{%
\partial}_{A_{\infty }})$ such that after passing to a subsequence $\bigl\{%
\pi _{ss,j}^{(i)}\bigr\}\rightarrow $ $\bigl\{\pi _{ss,\infty }^{(i)}\bigr\}$
strongly in $L^{p}\cap L_{1,loc}^{2}$ for all $1\leq p<\infty $ and all $i$.
The rank and degree of $\pi _{ss,\infty }^{(i)}$ is equal to the rank and
degree of $\pi _{ss,j}^{(i)}$ for all $i$ and $j$.
\end{lemma}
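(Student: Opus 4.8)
The plan is to prove both parts simultaneously by a compactness argument for the associated weakly holomorphic projections. First I would apply Uhlenbeck compactness along the flow (Proposition \ref{Prop8}, Lemma \ref{Lemma6}) to pass to a subsequence carrying $A_{t_j}$, after unitary gauge transformations $u_j$, to $A_\infty$ weakly in $L^p_{1,loc}(X-Z_{\mathrm{an}})$, with $\Lambda_\omega F_{A_{t_j}}\to\Lambda_\omega F_{A_\infty}$ in $L^q(X-Z_{\mathrm{an}})$ for every $q<\infty$. Set $\tilde\pi_j^{(i)}=u_j^{-1}\pi_j^{(i)}u_j$. Since the flow preserves the complex gauge orbit, $(E,\bar\partial_{A_{t_j}})\cong(E,\bar\partial_{A_0})$, so in case (1) the ranks $d_i$ and degrees $e_i$ of the $HN$ pieces are independent of $j$; in case (2) one first passes to a subsequence along which the (non-canonical) Seshadri pieces have constant rank, and then their degrees equal $\mu_\omega(E)$ times their ranks by stability of the quotients, hence are also constant in $j$. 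In either case the Chern--Weil formula (Theorem \ref{Thm5}) and Corollary \ref{Cor5} bound $\tilde\pi_j^{(i)}$ in $L^2_{1,loc}(X-Z_{\mathrm{an}})$ — the covariant derivatives being taken with respect to the connections $u_j\cdot A_{t_j}$, whose connection forms are bounded in $L^p_{1,loc}$ with $p>n$ and hence in $L^\infty_{loc}$ — and of course $\|\tilde\pi_j^{(i)}\|_{L^\infty(X)}\le 1$. A diagonal argument over the finitely many $i$ then extracts a further subsequence with $\tilde\pi_j^{(i)}\to\pi_\infty^{(i)}$ weakly in $L^2_{1,loc}$, a.e., and strongly in $L^q_{loc}(X-Z_{\mathrm{an}})$; since $Z_{\mathrm{an}}$ is Lebesgue-null this gives strong convergence in $L^q(X)$ for all $q<\infty$.

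Next I would identify the limit as a filtration of $(E_\infty,\bar\partial_{A_\infty})$. Passing to the limit in $(\tilde\pi_j^{(i)})^2=\tilde\pi_j^{(i)}=(\tilde\pi_j^{(i)})^*$ and in the nesting relations $\tilde\pi_j^{(i)}\tilde\pi_j^{(i+1)}=\tilde\pi_j^{(i)}$ (all products of strongly $L^q_{loc}$-convergent, uniformly $L^\infty$-bounded sequences) shows the $\pi_\infty^{(i)}$ are hermitian projections forming a filtration. The crucial point is to pass to the limit in the integrability constraint $(\mathrm{Id}-\tilde\pi_j^{(i)})\,\bar\partial_{u_j\cdot A_{t_j}}\tilde\pi_j^{(i)}=0$: here $\bar\partial_{u_j\cdot A_{t_j}}\tilde\pi_j^{(i)}$ converges only weakly in $L^2_{loc}$, but $\mathrm{Id}-\tilde\pi_j^{(i)}$ and the $(0,1)$-parts of $u_j\cdot A_{t_j}$ converge strongly in $L^q_{loc}$, so the expression converges in the sense of distributions to $(\mathrm{Id}-\pi_\infty^{(i)})\,\bar\partial_{A_\infty}\pi_\infty^{(i)}$, which is therefore zero a.e. By Theorem \ref{Thm4} each $\pi_\infty^{(i)}$ then determines a coherent subsheaf of $(E_\infty,\bar\partial_{A_\infty})$, and convergence of $\int_X\mathrm{Tr}(\tilde\pi_j^{(i)})\,\omega^n$ forces $\mathrm{rk}\,\pi_\infty^{(i)}$ to equal the common rank of $\tilde\pi_j^{(i)}$.

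It remains to control degrees. Applying the Chern--Weil formula to $\tilde\pi_j^{(i)}$ and $\pi_\infty^{(i)}$: the curvature term passes to the limit since $\Lambda_\omega F_{A_{t_j}}\to\Lambda_\omega F_{A_\infty}$ and $\tilde\pi_j^{(i)}\to\pi_\infty^{(i)}$ in $L^2(X)$, while the second fundamental forms $\beta_j^{(i)}=\bar\partial_{u_j\cdot A_{t_j}}\tilde\pi_j^{(i)}$ converge weakly in $L^2_{loc}$ to $\beta_\infty^{(i)}=\bar\partial_{A_\infty}\pi_\infty^{(i)}$, so weak lower semicontinuity of the $L^2$-norm gives $\deg\pi_\infty^{(i)}\ge\lim_j\deg\tilde\pi_j^{(i)}$. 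In case (2) this reads $\mu_\omega(\pi_{ss,\infty}^{(i)})\ge\mu_\omega(E)$; but by Proposition \ref{Prop19} the $HN$ type of $E_\infty$ equals that of the semistable bundle $E$, so $E_\infty$ is semistable of slope $\mu_\omega(E)$ and no subsheaf can have larger slope, forcing equality of slopes and hence of ranks and degrees, as claimed. In case (1), Proposition \ref{Prop19} gives that the $HN$ type of $E_\infty$ is $\mu=(\mu_1,\dots,\mu_K)$, so the standard bound $\deg S\le\mu_1+\cdots+\mu_{\mathrm{rk}\,S}$ for subsheaves $S\subseteq E_\infty$ (obtained by iterating Proposition \ref{Prop2} on $E_\infty$ and its successive quotients) yields $\deg\pi_\infty^{(i)}\le\mu_1+\cdots+\mu_{d_i}=e_i$; with the reverse inequality this gives $\deg\pi_\infty^{(i)}=e_i$.

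Consequently each quotient $\pi_\infty^{(i)}/\pi_\infty^{(i-1)}$ has slope $\mu_\omega(Q_i)$, and applying the same bound to preimages in $\pi_\infty^{(i)}$ of any hypothetical destabilising subsheaf of such a quotient shows these quotients are semistable; since their slopes strictly decrease, Proposition \ref{Prop3} identifies $\{\pi_\infty^{(i)}\}$ as the Harder--Narasimhan filtration of $(E_\infty,\bar\partial_{A_\infty})$, and the strong $L^p\cap L^2_{1,loc}$ convergence was already extracted in the first paragraph, proving (1); (2) is obtained by the same construction, stopping at the rank/degree equality. The main obstacle I anticipate is the passage to the limit in the nonlinear integrability constraint, where one must carefully reconcile the merely weak $L^2_{1,loc}$ convergence of the projections with the merely weak $L^p_{1,loc}$ convergence of the gauge-transformed connections on $X-Z_{\mathrm{an}}$; the second genuinely substantive step is the upgrade of the degree inequality to an equality, which depends essentially on the identification of the $HN$ type of the limit in Proposition \ref{Prop19}.
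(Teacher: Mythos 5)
Your proposal reconstructs essentially the argument that the paper cites (it refers to \cite{DW1}, Lemma $4.5$, for the proof, noting only that it relies on Proposition \ref{Prop19}), and the overall structure --- extract weak limits of the gauge-transformed projections, pass to the limit in the algebraic and integrability constraints to produce a weakly holomorphic filtration of $E_{\infty}$, then use the Chern--Weil formula and the identification of the $HN$ type of $E_{\infty}$ from Proposition \ref{Prop19} to pin down ranks and degrees and conclude the limit is the $HN$ (resp.\ Seshadri) filtration --- is sound.

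One point needs tightening. In your first paragraph you extract $\tilde\pi_{j}^{(i)}\rightharpoonup\pi_{\infty}^{(i)}$ only weakly in $L^{2}_{1,\mathrm{loc}}$, but the lemma (and your concluding sentence) asserts strong $L^{2}_{1,\mathrm{loc}}$ convergence. As written this is a gap, because weak $L^{2}_{1}$ compactness plus Rellich give only strong $L^{q}_{\mathrm{loc}}$ and a.e.\ convergence, not strong $L^{2}_{1,\mathrm{loc}}$. The upgrade is available, but only after the degree equality of your third paragraph has been established: once $\deg\pi_{\infty}^{(i)}=e_{i}$, comparing the Chern--Weil formulas for $\pi_{j}^{(i)}$ and $\pi_{\infty}^{(i)}$ (the curvature terms converge since $\Lambda_{\omega}F_{A_{t_{j}}}\to\Lambda_{\omega}F_{A_{\infty}}$ in $L^{q}$ and $\tilde\pi_{j}^{(i)}\to\pi_{\infty}^{(i)}$ in $L^{q}$) forces $\left\Vert\beta_{j}^{(i)}\right\Vert_{L^{2}(X)}\to\left\Vert\beta_{\infty}^{(i)}\right\Vert_{L^{2}(X)}$, and weak $L^{2}$ convergence of $\beta_{j}^{(i)}$ together with convergence of norms in the Hilbert space $L^{2}$ gives strong $L^{2}$ convergence of the second fundamental forms; strong $L^{2}_{1,\mathrm{loc}}$ convergence of the projections on compact subsets of $X-Z_{\operatorname{an}}$ then follows because the connection forms in Uhlenbeck gauge converge strongly in $L^{q}_{\mathrm{loc}}$. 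So the logic should be: weak extraction first, identify degrees, then feed the degree equality back in to upgrade to strong convergence. With that reordering the proof is complete.
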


For the proof see \cite{DW1} Lemma $4.5$. It uses $\limfunc{Proposition}$ %
\ref{Prop19}.

\begin{proposition}
\label{Prop20}Assume as before that $E\rightarrow X$ is a holomorphic vector
bundle such that $Z_{\limfunc{an}}$ is smooth and that blowing up once
resolves the singularities of the $HNS$ filtration. Then given $\delta >0$
and any $1\leq p<\infty $, $E$ has an $L^{p}$ $\delta $-approximate critical
hermitian structure.
\end{proposition}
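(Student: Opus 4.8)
The plan is to produce the required metric by running the Yang--Mills flow from an arbitrary smooth initial metric for a long but finite time, and then to read off the estimate from the convergence statements already established in this section. Fix $1\le p<\infty$ and $\delta>0$; let $h_{0}$ be any smooth hermitian metric on $(E,\bar{\partial}_{E})$, set $A_{0}=(\bar{\partial}_{E},h_{0})$, and let $A_{t}$ be the resulting solution of the Yang--Mills flow, with associated Hermitian--Yang--Mills flow metrics $h_{t}$. The metric I will exhibit is $h_{t}$ for $t$ large. The first observation is that checking that $h_{t}$ verifies Definition \ref{Def6} is equivalent to bounding
\begin{equation*}
\bigl\Vert\sqrt{-1}\Lambda_{\omega}F_{A_{t}}-\Psi_{\omega}^{HNS}(\bar{\partial}_{A_{t}},h_{0})\bigr\Vert_{L^{p}(\omega)}\,;
\end{equation*}
indeed, by the discussion in Section $3.1$ the Chern connection of $(\bar{\partial}_{E},h_{t})$ is carried onto $A_{t}$ by an isometric biholomorphism $g_{t}$ (with $A_{t}=g_{t}\cdot A_{0}$), which transports the $HN$ filtration of $(E,\bar{\partial}_{E})$ to that of $(E,\bar{\partial}_{A_{t}})$, carries $h_{t}$-orthogonal projections to $h_{0}$-orthogonal ones, and conjugates curvature; moreover $\Psi_{\omega}^{HNS}$ involves only the distinct $HN$ slopes, hence is unchanged by the Seshadri refinement and depends only on the $HN$ filtration of the holomorphic structure at hand. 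Since $g_{t}$ is an isometry and the pointwise norm on $\mathfrak{u}(E)$ is conjugation-invariant, the two $L^{p}$ norms agree.

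Next I would fix a sequence $t_{j}\to\infty$ and, by Proposition \ref{Prop8}, pass to a subsequence along which $A_{t_{j}}$ converges in the sense of Uhlenbeck to a Yang--Mills connection $A_{\infty}$ on a bundle $E_{\infty}$ over $X-Z_{\limfunc{an}}$. By Corollary \ref{Cor2} together with Proposition \ref{Prop7} there is an $h_{\infty}$-orthogonal holomorphic splitting $E_{\infty}=\bigoplus_{i}Q_{\infty,i}$ with $\sqrt{-1}\Lambda_{\omega}F_{A_{\infty}}=\bigoplus_{i}\mu_{i}^{\infty}\mathbf{Id}_{Q_{\infty,i}}$ for constants $\mu_{1}^{\infty}>\cdots>\mu_{l}^{\infty}$. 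The key input is Proposition \ref{Prop19}: the $HN$ type of $(E_{\infty},A_{\infty})$ equals that of $(E,\bar{\partial}_{E})$, so the $\mu_{i}^{\infty}$, repeated $\limfunc{rk}(Q_{\infty,i})$ times, are exactly the distinct slopes occurring in the $HN$ type $\mu$ of $E$. Because over $X-Z_{\limfunc{an}}$ the $HN$ filtration of the split bundle $E_{\infty}$ is $\mathbb{F}_{i}^{HN}(E_{\infty})=Q_{\infty,1}\oplus\cdots\oplus Q_{\infty,i}$, the associated $h_{\infty}$-orthogonal projections $\pi_{\infty}^{(i)}$ satisfy $\pi_{\infty}^{(i)}-\pi_{\infty}^{(i-1)}=\pi_{Q_{\infty,i}}$, so
\begin{equation*}
\Psi_{\omega}^{HNS}(\bar{\partial}_{A_{\infty}},h_{\infty})=\sum_{i}\mu_{i}^{\infty}\pi_{Q_{\infty,i}}=\sqrt{-1}\Lambda_{\omega}F_{A_{\infty}},
\end{equation*}
that is, $A_{\infty}$ is an exact critical hermitian structure.

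The last step is to pass the estimate from $A_{\infty}$ back to $A_{t_{j}}$. By Corollary \ref{Cor4}, in the form of Lemma \ref{Lemma6}, $\Lambda_{\omega}F_{A_{t_{j}}}\to\Lambda_{\omega}F_{A_{\infty}}$ strongly in $L^{p}(X-Z_{\limfunc{an}})$ for every $1\le p<\infty$. By part (1) of Lemma \ref{Lemma14}, after passing to a further subsequence the weakly holomorphic projections $\pi_{j}^{(i)}$ defining the $HN$ filtration of $(E,\bar{\partial}_{A_{t_{j}}})$ converge strongly in $L^{p}$ to $\pi_{\infty}^{(i)}$. Since the slopes entering $\Psi_{\omega}^{HNS}$ are the same for every $j$ and for the limit (Proposition \ref{Prop19}), $\Psi_{\omega}^{HNS}(\bar{\partial}_{A_{t_{j}}},h_{0})=\sum_{i}\mu_{i}(\pi_{j}^{(i)}-\pi_{j}^{(i-1)})\to\Psi_{\omega}^{HNS}(\bar{\partial}_{A_{\infty}},h_{\infty})$ in $L^{p}$; combining with the previous convergence, and using that $Z_{\limfunc{an}}$ has measure zero, one obtains
\begin{equation*}
\bigl\Vert\sqrt{-1}\Lambda_{\omega}F_{A_{t_{j}}}-\Psi_{\omega}^{HNS}(\bar{\partial}_{A_{t_{j}}},h_{0})\bigr\Vert_{L^{p}(\omega)}\longrightarrow 0.
\end{equation*}
Choosing $j$ with this quantity smaller than $\delta$, the metric $h:=h_{t_{j}}$ is the desired $L^{p}$ $\delta$-approximate critical hermitian structure.

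The one genuinely substantial ingredient is Proposition \ref{Prop19}, the identification of the $HN$ type of the limit; this is the deep result of the section, resting in turn on Proposition \ref{Prop18} and Lemma \ref{Lemma13}, and everything else above is formal manipulation of the Uhlenbeck-limit gauges and of $L^{p}$ norms. I would also point out that the restriction to $p<\infty$ here is unrelated to the restriction to $p$ near $1$ in Proposition \ref{Prop18}: it enters only because the curvature convergence and the filtration convergence used above fail at $p=\infty$, whereas both hold for every finite $p$ --- which is precisely why this metric, obtained by flowing for a finite time, exists for all $1\le p<\infty$.
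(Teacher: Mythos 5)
Your proof is correct and follows essentially the same route as the paper's own: Uhlenbeck limit along the flow, Proposition \ref{Prop19} for the $HN$ type of the limit, Lemmas \ref{Lemma6} and \ref{Lemma14} for the $L^{p}$ convergence of the Hermitian--Einstein tensor and of the $HN$ projections, and the triangle inequality. You supply somewhat more detail than the paper's terse two-line argument --- notably the explicit gauge-transformation translation between the connection-varying and metric-varying pictures needed to recognize the bound on $A_{t_{j}}$ as an instance of Definition \ref{Def6}, and the verification via Propositions \ref{Prop7} and \ref{Prop19} that $A_{\infty}$ is an exact critical hermitian structure --- but these points are implicit in the paper's proof rather than differences of approach.
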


\begin{proof}
Let $A_{t}$ be a solution to the $YM$ flow with initial condition $A_{0}=(%
\bar{\partial}_{E},h)$, and let $A_{\infty }$ be the limit along the flow
for some sequence $A_{t_{j}}$. Then we may apply the previous lemma to
conclude that $\Psi _{\omega }^{HNS}(\bar{\partial}_{A_{t_{j}}},h)\overset{%
L^{p}}{\rightarrow }\Psi _{\omega }^{HNS}(\bar{\partial}_{A_{\infty
}},h_{\infty })$ after passing to another subsequence if necessary. Since $%
A_{\infty }$ is a Yang-Mills connection, $\sqrt{-1}\Lambda _{\omega
}F_{A_{\infty }}=\Psi _{\omega }^{HN}(\bar{\partial}_{A_{\infty }},h_{\infty
})$. Therefore:%
\begin{eqnarray*}
\left\Vert \sqrt{-1}\Lambda _{\omega }F_{A_{t_{j}}}-\Psi _{\omega }^{HNS}(%
\bar{\partial}_{A_{t_{j}}},h)\right\Vert _{L^{p}(\omega )} &\leq & \\
\left\Vert \Lambda _{\omega }F_{A_{t_{j}}}-\Lambda _{\omega }F_{A_{\infty
}}\right\Vert _{L^{p}(\omega )}+\left\Vert \Psi _{\omega }^{HNS}(\bar{%
\partial}_{A_{t_{j}}},h)-\Psi _{\omega }^{HNS}(\bar{\partial}_{A_{\infty
}},h_{\infty })\right\Vert _{L^{p}(\omega )} &\longrightarrow &0
\end{eqnarray*}%
where we have also used Lemma \ref{Lemma6}$.$
\end{proof}

Now we would like to eliminate the assumptions that $Z_{\limfunc{an}}$ is
smooth and that blowing up once resolves the singularities of the $HNS$
filtration.

\begin{theorem}
\label{Thm9}Let $E\rightarrow X$ be a holomorphic vector bundle over a K\"{a}%
hler manifold with K\"{a}hler form $\omega $. Then given $\delta >0$ and any 
$1\leq p<\infty $, $E$ has an $L^{p}\ \delta $-approximate critical
hermitian structure.
\end{theorem}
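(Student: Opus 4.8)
The plan is to reduce the general case to Proposition \ref{Prop20}, which already establishes the theorem under the extra hypothesis that $Z_{\limfunc{alg}}$ is smooth and that a \emph{single} blowup resolves the $HNS$ filtration. The reduction is by induction on the number $m$ of blowups required by the resolution-of-indeterminacy theorem (Corollary \ref{Cor6}, applied to the filtration viewed as a rational section of a flag bundle). The base case $m=1$ together with smoothness of the first centre is precisely Proposition \ref{Prop20}; when the first centre is not smooth one must first note that principalisation/monomialisation (Theorem \ref{Thm7}) lets us assume without loss that the first centre $Y_{0}\subset X$ of the resolution sequence is a smooth complex submanifold of codimension $\geq 2$, so that $\pi_{1}:X_{1}=Bl_{Y_{0}}X\to X$ is a genuine blowup along a submanifold and $X_{1}$ carries the K\"ahler forms $\omega_{\varepsilon}=\pi_{1}^{\ast}\omega+\varepsilon\eta$ of Proposition \ref{Prop14}.

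For the inductive step, suppose the theorem is known for all bundles over all compact K\"ahler manifolds whose $HNS$ filtration is resolved in at most $m-1$ blowups. Given $E\to X$ needing $m$ blowups, perform the first blowup $\pi_{1}:X_{1}\to X$ along the smooth centre $Y_{0}$. The pullback $\pi_{1}^{\ast}E\to X_{1}$, equipped with the metric $\omega_{\varepsilon_{1}}$ for $\varepsilon_{1}$ small, has an $HNS$ filtration which (by the stability-on-blowups results, Proposition \ref{Prop16} and Remark \ref{Rmk5}, together with Proposition \ref{Prop17} and Remark \ref{R6} controlling the $HN$ type as $\varepsilon\to0$) is resolved by at most $m-1$ further blowups; so the inductive hypothesis gives, for any $\delta>0$ and any $1\leq p<\infty$, an $L^{p}$ $\delta$-approximate critical hermitian structure $\tilde h$ on $\pi_{1}^{\ast}E$ over $(X_{1},\omega_{\varepsilon_{1}})$. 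One then runs exactly the cut-off construction already written out in the proof of Proposition \ref{Prop18}: choose a tubular neighbourhood $U_{R}$ of $Y_{0}$, a background metric $H$ there, a cut-off $\psi_{R}$, set $h_{\psi_{R}}=\psi_{R}H+(1-\psi_{R})\tilde h$ on $\pi_{1}^{\ast}E$, split the estimate for $\bigl\Vert\sqrt{-1}\Lambda_{\omega_{\varepsilon}}F_{h_{\psi_{R}}}-\Psi^{HNS}_{\omega_{\varepsilon}}\bigr\Vert_{L^{p}}$ into the region $\pi_{1}^{-1}(U_{R/2})$ (handled as in Proposition \ref{Prop18} by taking $R\to0$ and using $\mathrm{codim}\,Y_{0}\geq 2$), the annulus $\pi_{1}^{-1}(U_{R}\setminus U_{R/2})$ (handled via the derivative bounds on $\psi_{R}$ together with Lemma \ref{Lemma11} and the Hausdorff-measure argument), and the exterior region where $h_{\psi_{R}}=\tilde h$ (where the estimate is inherited from $\tilde h$ via Lemma \ref{Lemma12}). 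Finally push down: since $h_{\psi_{R}}$ agrees with the pullback of a metric on $E$ away from $U_{R}$ and all the estimates are uniform in $\varepsilon$, letting $\varepsilon\to0$ produces a metric $h$ on $E$ over $(X,\omega)$ with $\bigl\Vert\sqrt{-1}\Lambda_{\omega}F_{(\bar\partial_{E},h)}-\Psi^{HNS}_{\omega}(\bar\partial_{E},h)\bigr\Vert_{L^{p}(\omega)}\leq\delta$, which is the desired $L^{p}$ $\delta$-approximate critical hermitian structure.

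The main obstacle is organising the induction so that the parameters do not interfere: one fixes $p$ first, then $\delta$, then chooses $\varepsilon_{1}$ small (using Remark \ref{R6} so the $\omega_{\varepsilon_{1}}$-type of the resolution is within $\delta/2$ of the $\omega$-type of $E$) and only \emph{then} invokes the inductive hypothesis on $X_{1}$ to obtain $\tilde h$, and only then picks the scaling parameter $t$, the H\"older/annulus parameter $\kappa$, and the tube radius $R$ — exactly the ``delicate balancing act'' flagged before Lemma \ref{Lemma11}. A subtlety worth stressing: the inductive hypothesis is applied on $X_{1}$ with the \emph{fixed} K\"ahler form $\omega_{\varepsilon_{1}}$, so the $L^{p}$ approximate structure $\tilde h$ exists for \emph{that} metric; the extra work of Proposition \ref{Prop18}, namely comparing $\Lambda_{\omega_{\varepsilon}}F_{\tilde h}$ to $\Lambda_{\omega_{\varepsilon_{1}}}F_{\tilde h}$ uniformly in $\varepsilon$ near the exceptional divisor using only normal-crossings geometry (Lemmas \ref{Lemma11} and \ref{Lemma12}), is precisely what must be re-run here, and it is only this step that confines the argument to $p$ near $1$ at the inductive stage. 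That restriction is harmless for the present theorem because, once Proposition \ref{Prop19} (identification of the $HN$ type of the limit) is available in the reduced setting, Lemma \ref{Lemma13} and Proposition \ref{Prop20} already upgrade ``$L^{p}$ for $p$ near $1$'' to ``$L^{p}$ for all $1\leq p<\infty$'' by flowing for a finite time; so the induction only ever needs to feed itself the weak ($p$-near-$1$) version and outputs the full range at the end.
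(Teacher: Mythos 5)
Your proposal tracks the paper's proof of Theorem~\ref{Thm9} closely: the same induction on the number of blowups in the Hironaka resolution of the $HNS$ filtration, the inductive hypothesis invoked for $\pi_1^\ast E$ on $(X_1,\omega_{\varepsilon_1})$ with $\varepsilon_1$ small, the same cutoff construction from Proposition~\ref{Prop18} run along the single smooth blowup $\pi_1:X_1\to X$, and the same two-step structure---first obtain the estimate for $p$ near $1$ via Lemmas~\ref{Lemma11} and~\ref{Lemma12}, then upgrade to all $1\le p<\infty$ by rerunning Lemma~\ref{Lemma13}, Proposition~\ref{Prop19}, Lemma~\ref{Lemma14}, and Proposition~\ref{Prop20}. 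Two peripheral remarks are misleading. First, the appeal to Theorem~\ref{Thm7} (monomialisation) to ensure the first centre is smooth is superfluous: Corollary~\ref{Cor6} already produces a blowup sequence along smooth complex submanifolds, so $Y_0$ is smooth by construction. Second, the closing claim that the induction only needs to feed itself the weak ``$p$-near-$1$'' version of the theorem is not correct: the cutoff step requires a uniform $L^2$ bound on the full curvature $F_{(\bar{\partial}_{E_1},h_1)}$, which the paper deduces from the $L^2$ bound on the Hermitian--Einstein tensor together with the topological relation between $\left\Vert F\right\Vert_{L^2}^2$ and $\left\Vert\Lambda F\right\Vert_{L^2}^2$; a bound on $\left\Vert\Lambda F-\Psi\right\Vert_{L^p}$ for $p$ near $1$ does not control $\left\Vert\Lambda F\right\Vert_{L^2}$. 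Since your stated inductive hypothesis grants all $1\le p<\infty$ (in particular $p=2$), this does not break the argument, but the concluding remark about weakening the hypothesis should be dropped.
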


\begin{proof}
By \ref{Prop13}, we know that we can resolve the singularities of the $HNS$
filtration by blowing up finitely many times. Moreover, the $i^{th}$ blowup
is obtained by blowing up along a complex submanifold contained in the
singular set associated to the pullback bundle over the manifold produced at
the $(i-1)st$ stage of the process. In other words there is a tower of
blow-ups:%
\begin{equation*}
\tilde{X}=X_{m}\overset{\pi _{m}}{\longrightarrow }X_{m-1}\overset{\pi _{m-1}%
}{\longrightarrow }\cdots \overset{\pi _{2}}{\longrightarrow }X_{1}\overset{%
\pi _{1}}{\longrightarrow }X_{0}=X
\end{equation*}%
such that if $E=E_{0}$ is the original bundle, and $E_{i}=\pi _{i}^{\ast
}(E_{i-1})$, then there is a filtration of $\tilde{E}=\pi _{m}^{\ast
}(E_{m-1})$ that is given by sub-bundles and isomorphic to the $HNS$
filtration of $E$ away from $\mathbf{E}$. Note that on each blowup $X_{i}$
we have a family of K\"{a}hler metrics defined iteratively by $\omega
_{\varepsilon _{1},\cdots ,\varepsilon _{i}}=\pi ^{\ast }\omega
_{\varepsilon _{1},\cdots ,\varepsilon _{i-1}}+\varepsilon _{i}\eta _{i}$,
where $\eta _{i}$ is any K\"{a}hler form on $X_{i}$. Then consider $\omega
_{\varepsilon _{1},\cdots ,\varepsilon _{m}}$ on $\tilde{X}$ to be a fixed
metric for specified values of $\varepsilon _{1},\cdots ,\varepsilon _{m}<1$%
, and fix $\delta >0$. Fix $\delta _{0}$ to be a number that is very small
with respect to $\delta $. By the previous proposition, for every $p$ there
is a $\delta _{0}$-approximate critical hermitian structure on $E_{n-1}$. In
particular there is such a metric for $p=2$. In other words there is a
metric $h_{m-1}$ so that: 
\begin{equation*}
\left\Vert \sqrt{-1}\Lambda _{\omega _{\varepsilon _{1},\cdots \varepsilon
_{m-1}}}F_{\left( \bar{\partial}_{E_{m-1}},h_{m-1}\right) }-\Psi _{\omega
_{\varepsilon _{1},\cdots \varepsilon _{m-1}}}^{HNS}(\bar{\partial}%
_{E_{m-1}},h_{m-1})\right\Vert _{L^{2}(\omega _{\varepsilon _{1},\cdots
\varepsilon _{m-1}})}<\delta _{0}.
\end{equation*}%
By construction this metric depends on the values of $\varepsilon
_{1},\cdots ,\varepsilon _{m}$, since it is constructed from a metric on the
blowup which itself is constructed using the notion of stability with
respect to $\omega _{\varepsilon _{1},\cdots ,\varepsilon _{m}}$.

We prove the result by induction on the number of blowups. Assume that we
have an $L^{2}$ $\delta _{0}$-approximate critical hermitian structure for
each of the bundles $E_{i}\rightarrow X_{i}\,$\ for $1\leq i\leq m-2$. Then
in particular, with respect to the metric $\omega _{\varepsilon _{1}}$ on $%
X_{1}$, we have a metric $h_{1}$ on $E_{1}\rightarrow X_{1}$ such that: 
\begin{equation*}
\left\Vert \sqrt{-1}\Lambda _{\omega _{\varepsilon _{1}}}F_{(\bar{\partial}%
_{E_{1}},h_{1})}-\Psi _{\omega _{\varepsilon _{1}}}^{HNS}(\bar{\partial}%
_{E_{1}},h_{1})\right\Vert _{_{L^{2}(\omega _{\varepsilon _{1}})}}<\delta
_{0}.
\end{equation*}%
Since $X_{1}$ is obtained from $X$ by blowing up along a smooth, complex
submanifold, we may use the exact same cut-off argument, choosing a cutoff
function with respect to a neighbourhood $U_{R}$ as in Proposition \ref%
{Prop18} to construct a metric $h_{R}$ on the bundle $E\rightarrow X$ which
depends on the value of $\varepsilon _{1}$. In the following we will
continue to denote by $h_{R}$ its pullback to $X_{1}$. As in the proof of
Proposition \ref{Prop18} we have $h_{R}=h_{1}$ outside of the set $\pi
_{1}^{-1}(U_{R})$. We divide the proof into two steps.

\textbf{(Step 1) There is an }$L^{p}\ \delta $\textbf{-approximate critical
hermitian structure for p close to 1 }

First let us assume that $p$ satisfies the hypotheses of Lemma \ref{Lemma12}%
. In other words, substitute $p$ for $\alpha $ in the statement. Similarly,
substitute $\tilde{p}$ for $\tilde{\alpha}$. We will show that a single
metric, namely $h_{R}$, gives an $L^{p}\ \delta $-approximate critical
hermitian structure for all $p$ within this range. We need to estimate the
difference%
\begin{equation*}
\left\Vert \sqrt{-1}\Lambda _{\omega _{\varepsilon }}F_{(\bar{\partial}%
_{E_{1}},h_{R})}-\Psi _{\omega }^{HNS}(\bar{\partial}_{E},h_{R})\right\Vert
_{_{L^{p}(\omega _{\varepsilon })}}
\end{equation*}%
where $\tilde{h}=\pi _{1}^{\ast }h$. Now:%
\begin{eqnarray*}
\left\Vert \sqrt{-1}\Lambda _{\omega _{\varepsilon }}F_{(\bar{\partial}%
_{E_{1}},h_{R})}-\Psi _{\omega }^{HNS}(\bar{\partial}_{E},h_{R})\right\Vert
_{_{L^{p}(\omega _{\varepsilon })}} &\leq & \\
&&\hskip-2.5in\left\Vert \Lambda _{\omega _{\varepsilon }}F_{(\bar{\partial}%
_{E_{1}},h_{R})}-\Lambda _{\omega _{\varepsilon }}F_{(\bar{\partial}%
_{E_{1}},h_{1})}\right\Vert _{_{L^{p}(\omega _{\varepsilon })}}+\left\Vert
\Psi _{\omega _{\varepsilon _{1}}}^{HNS}(\bar{\partial}_{E},h_{1})-\Psi
_{\omega }^{HNS}(\bar{\partial}_{E},h_{R})\right\Vert _{_{L^{p}(\omega
_{\varepsilon })}} \\
&&+\left\Vert \Lambda _{\omega _{\varepsilon }}F_{(\bar{\partial}%
_{E_{1}},h_{1})}-\Psi _{\omega _{\varepsilon _{1}}}^{HNS}(\bar{\partial}%
_{E},h_{1})\right\Vert _{_{L^{p}(\omega _{\varepsilon })}}.
\end{eqnarray*}%
We can make the second term smaller than $\frac{\delta }{3}$ by choosing $%
\varepsilon _{1}$ small and using the convergence of the $HN$ types. The
third term is bounded by two applications of Lemma$\ $\ref{Lemma12} as
follows: 
\begin{eqnarray*}
&&\left\Vert \Lambda _{\omega _{\varepsilon }}F_{(\bar{\partial}%
_{E_{1}},h_{1})}-\Psi _{\omega _{\varepsilon _{1}}}^{HNS}(\bar{\partial}%
_{E},h_{1})\right\Vert _{_{L^{p}(\omega _{\varepsilon })}}\leq  \\
&&\left\Vert \Lambda _{\omega _{\varepsilon }}\left( F_{(\bar{\partial}%
_{E_{1}},h_{1})}-\frac{1}{n}\omega _{\varepsilon _{1}}\Psi _{\omega
_{\varepsilon _{1}}}^{HNS}(\bar{\partial}_{E},h_{1})\right) \right\Vert
_{_{_{L^{p}(\omega _{\varepsilon })}}}+\left\Vert \frac{1}{n}\Lambda
_{\omega _{\varepsilon }}\left( \omega _{\varepsilon _{1}}-\omega
_{\varepsilon }\right) \Psi _{\omega _{\varepsilon _{1}}}^{HNS}(\bar{\partial%
}_{E},h_{1})\right\Vert _{_{_{L^{p}(\omega _{\varepsilon })}}} \\
&\leq &C\left\Vert \Lambda _{\omega _{\varepsilon _{1}}}F_{(\bar{\partial}%
_{E_{1}},h_{1})}-\Psi _{\omega _{\varepsilon _{1}}}^{HNS}(\bar{\partial}%
_{E},h_{1})\right\Vert _{_{_{L^{\tilde{p}}(\omega _{\varepsilon _{1}})}}} \\
&&+\kappa C\left( \left\Vert F_{(\bar{\partial}_{E_{1}},h_{1})}\right\Vert
_{L^{2}(\tilde{X},\omega _{\varepsilon _{1}})}+\frac{1}{n}\left\Vert \omega
_{\varepsilon _{1}}\Psi _{\omega _{\varepsilon _{1}}}^{HNS}(\bar{\partial}%
_{E},h_{1})\right\Vert _{L^{2}(\tilde{X},\omega _{\varepsilon _{1}})}\right) 
\\
&&+\varepsilon _{1}C(\kappa )\left( \left\Vert F_{(\bar{\partial}%
_{E_{1}},h_{1})}\right\Vert _{L^{2}(\tilde{X},\omega _{\varepsilon _{1}})}+%
\frac{1}{n}\left\Vert \omega _{\varepsilon _{1}}\Psi _{\omega _{\varepsilon
_{1}}}^{HNS}(\bar{\partial}_{E},h_{1})\right\Vert _{L^{2}(\tilde{X},\omega
_{\varepsilon _{1}})}\right)  \\
&&+\frac{\varepsilon _{1}^{2}}{n}C(\kappa )\left\Vert \eta \Psi _{\omega
_{\varepsilon _{1}}}^{HNS}(\bar{\partial}_{E},h_{1})\right\Vert _{L^{2}(%
\tilde{X},\omega _{\varepsilon _{1}})} \\
&&+\frac{\varepsilon _{1}}{n}C\left( \left\Vert \Lambda _{\omega
_{\varepsilon _{1}}}\eta \Psi _{\omega _{\varepsilon _{1}}}^{HNS}(\bar{%
\partial}_{E},h_{1})\right\Vert _{L^{\tilde{p}}(\tilde{X},\omega
_{\varepsilon _{1}})}+\kappa \left\Vert \eta \Psi _{\omega _{\varepsilon
_{1}}}^{HNS}(\bar{\partial}_{E},h_{1})\right\Vert _{L^{2}(\tilde{X},\omega
_{\varepsilon _{1}})}\right) .
\end{eqnarray*}%
Recall from the statement of Lemma $\ $\ref{Lemma12} that none of the above
constants depends on $\varepsilon _{1}$. All terms with a $\kappa $ in front
and no $C(\kappa )$ can be made small by choosing $\kappa $ small, so these
terms can be ignored. Clearly the terms 
\begin{equation*}
\left\Vert \omega _{\varepsilon _{1}}\Psi _{\omega _{\varepsilon
_{1}}}^{HNS}(\bar{\partial}_{E},h_{1})\right\Vert _{L^{2}(\tilde{X},\omega
_{\varepsilon _{1}})},\left\Vert \eta \Psi _{\omega _{\varepsilon
_{1}}}^{HNS}(\bar{\partial}_{E},h_{1})\right\Vert _{L^{2}(\tilde{X},\omega
_{\varepsilon _{1}})}
\end{equation*}%
are bounded independently of $\varepsilon _{1}$ since the $HN$ type
converges. Therefore we need only show that%
\begin{equation*}
\left\Vert \Lambda _{\omega _{\varepsilon _{1}}}F_{(\bar{\partial}%
_{E_{1}},h_{1})}-\Psi _{\omega _{\varepsilon _{1}}}^{HNS}(\bar{\partial}%
_{E},h_{1})\right\Vert _{_{_{L^{\tilde{p}}(\omega _{\varepsilon
_{1}})}}},\left\Vert F_{(\bar{\partial}_{E_{1}},h_{1})}\right\Vert _{L^{2}(%
\tilde{X},\omega _{\varepsilon _{1}})},\left\Vert \Lambda _{\omega
_{\varepsilon _{1}}}\eta \Psi _{\omega _{\varepsilon _{1}}}^{HNS}(\bar{%
\partial}_{E},h_{1})\right\Vert _{L^{\tilde{p}}(\tilde{X},\omega
_{\varepsilon _{1}})}
\end{equation*}%
are uniformly bounded in $\varepsilon _{1}$. Then we can choose $\kappa $
first and then $\varepsilon _{1}$ so that:%
\begin{equation*}
\left\Vert \Lambda _{\omega _{\varepsilon }}F_{(\bar{\partial}%
_{E_{1}},h_{1})}-\Psi _{\omega _{\varepsilon _{1}}}^{HNS}(\bar{\partial}%
_{E},h_{1})\right\Vert _{_{L^{p}(\omega _{\varepsilon })}}<\frac{\delta }{3}.
\end{equation*}%
Firstly we have:%
\begin{equation*}
\left\Vert \Lambda _{\omega _{\varepsilon _{1}}}F_{(\bar{\partial}%
_{E_{1}},h_{1})}-\Psi _{\omega _{\varepsilon _{1}}}^{HNS}(\bar{\partial}%
_{E},h_{1})\right\Vert _{_{_{L^{\tilde{p}}(\omega _{\varepsilon
_{1}})}}}\leq C\bigl\Vert\Lambda _{\omega _{\varepsilon _{1}}}F_{(\bar{%
\partial}_{E_{1}},h_{1})}-\Psi _{\omega _{\varepsilon _{1}}}^{HNS}(\bar{%
\partial}_{E},h_{1})\bigr\Vert_{_{_{L^{2}(\omega _{\varepsilon
_{1}})}}}<\delta _{0}
\end{equation*}%
by H\"{o}lder's inequality (since $\tilde{p}<2$), and the induction
hypothesis. Note that the constant above is independent of $\varepsilon _{1}$
since the $\omega _{\varepsilon _{1}}$ volume is bounded. Also, the
following bound: 
\begin{eqnarray*}
\left\Vert F_{(\bar{\partial}_{E_{1}},h_{1})}\right\Vert _{L^{2}(\omega
_{\varepsilon _{1}})} &=&\left\Vert \Lambda _{\omega _{\varepsilon _{1}}}F_{(%
\bar{\partial}_{E_{1}},h_{1})}\right\Vert _{L^{2}(\omega _{\varepsilon
_{1}})}+\pi ^{2}n(n-1)\int_{\tilde{X}}\left(
2c_{2}(E_{1})-c_{1}^{2}(E_{1}\right) )\wedge \omega _{\varepsilon _{1}}^{n-2}
\\
&\leqslant &\left\Vert \Psi _{\omega _{\varepsilon _{1}}}^{HNS}(\bar{\partial%
}_{E},h_{1})\right\Vert _{L^{2}(\omega _{\varepsilon _{1}})}+\delta _{0}+\pi
^{2}n(n-1)\int_{\tilde{X}}\left( 2c_{2}(E_{1})-c_{1}^{2}(E_{1}\right)
)\wedge \omega _{\varepsilon _{1}}^{n-2}
\end{eqnarray*}%
obtained from the usual relationship between the Hermitian-Einstein tensor
and the full curvature in $L^{2}$, together with the induction hypothesis,
shows that this term is bounded in $\varepsilon _{1}$ as well. Finally,
writing%
\begin{eqnarray*}
\Lambda _{\omega _{\varepsilon _{1}}}\eta  &=&\frac{\eta \wedge \omega
_{\varepsilon _{1}}^{n-1}}{\omega _{\varepsilon _{1}}^{n}}=\frac{\eta \wedge
\omega _{\varepsilon _{1}}^{n-1}}{\eta ^{n}}\frac{\det g_{\eta }}{\det
g_{\varepsilon _{1}}} \\
\omega _{\varepsilon _{1}}^{n} &=&\frac{\det g_{\varepsilon _{1}}}{\det
g_{\eta }}\eta ^{n}
\end{eqnarray*}%
then by H\"{o}lder's inequality we have:%
\begin{equation*}
\left\Vert \Lambda _{\omega _{\varepsilon _{1}}}\eta \Psi _{\omega
_{\varepsilon _{1}}}^{HNS}(\bar{\partial}_{E},h_{1})\right\Vert _{L^{\tilde{p%
}}(\tilde{X},\omega _{\varepsilon _{1}})}\leq \left( \int_{\tilde{X}%
}\left\vert \frac{\det g_{\varepsilon _{1}}}{\det g_{\eta }}\right\vert ^{(1-%
\tilde{p})(\tilde{s})}\eta ^{n}\right) ^{\frac{1}{\tilde{p}\tilde{s}}}\left(
\int_{\tilde{X}}\left\vert \frac{\eta \wedge \omega _{\varepsilon _{1}}^{n-1}%
}{\eta ^{n}}\right\vert ^{w}\left\vert \Psi _{\omega _{\varepsilon
_{1}}}^{HNS}(\bar{\partial}_{E},h_{1})\right\vert ^{w}\eta ^{n}\right) ^{%
\frac{1}{w}}
\end{equation*}%
where $\tilde{s}=\frac{w}{w-\tilde{p}}$ and $\frac{\tilde{p}}{1-2(k-1)(%
\tilde{p}-1)}<w<\infty $. By Lemma \ref{Lemma11} this is bounded in $%
\varepsilon _{1}$.

We have already seen that 
\begin{equation*}
\bigl\Vert\Lambda _{\omega _{\varepsilon }}F_{(\bar{\partial}%
_{E_{1}},h_{R})}-\Lambda _{\omega _{\varepsilon }}F_{(\bar{\partial}%
_{E_{1}},h_{1})}\bigr\Vert_{_{L^{p}(\omega _{\varepsilon })}}
\end{equation*}%
can be estimated, since it is $0$ outside of $U_{R}$ and the same argument
as in the proof of $\limfunc{Proposition}$ \ref{Prop18}, shows that by
making $R$ sufficiently small, we can make the contribution from this term
over $U_{R}$ less than $\frac{\delta }{3}$. Therefore the estimate on $%
\bigl\Vert\sqrt{-1}\Lambda _{\omega }F_{(\bar{\partial}_{E},h)}-\Psi
_{\omega }^{HNS}(\bar{\partial}_{E},h)\bigr\Vert_{_{L^{p}(\omega )}}$ for
these values of $p$ follows by sending $\varepsilon \rightarrow 0$.

\textbf{Step 2 (Extending to all p)}

Repeating the arguments of Lemma \ref{Lemma13}, $\limfunc{Proposition}$ \ref%
{Prop19}, Lemma \ref{Lemma14}, and $\limfunc{Proposition}$ \ref{Prop20}, now
gives the existence of an $L^{p}$ $\delta $-approximate critical hermitian
structure on $E$ for each $p$. This metric will depend on $p$.
\end{proof}

Notice that during the course of the above proof we have also proven the
following:

\begin{theorem}
\label{Thm11}Let $E\rightarrow X$ be a holomorphic vector bundle over a K%
\"{a}hler manifold. Let $A_{t}$ be a solution to the $YM$ flow with initial
condition $A_{0}$ whose limit along the flow is $A_{\infty }$. Let $%
E_{\infty }$ be the corresponding holomorphic vector bundle defined away
from $Z_{\limfunc{an}}$. Then the $HN$ type of $(E_{\infty },A_{\infty })$
is the same as $(E_{0},A_{0})$.
\end{theorem}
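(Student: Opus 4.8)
The statement has in fact already been proved in the course of establishing Theorem~\ref{Thm9}: the plan is simply to record it, by running the argument of Proposition~\ref{Prop19} with the two restricted-case ingredients (Lemma~\ref{Lemma13} and, behind it, Proposition~\ref{Prop18}) replaced by the unconditional versions that Theorem~\ref{Thm9} now makes available. So the first task is a general version of Lemma~\ref{Lemma13}, valid for an arbitrary holomorphic bundle $E$ and all $1\le\alpha<\infty$, $N\in\mathbb{R}$: for every smooth initial metric, $\lim_{t\to\infty}HYM_{\alpha,N}(A_t)=HYM_{\alpha,N}(\mu_0)$. By Theorem~\ref{Thm9}, for each $1\le p<\infty$ and $\delta>0$ there is a metric $h$ with $\|\sqrt{-1}\Lambda_\omega F_{(\bar\partial_E,h)}-\Psi_\omega^{HNS}(\bar\partial_E,h)\|_{L^p(\omega)}\le\delta$; since the eigenvalues of $\Psi_\omega^{HNS}$ are pointwise the distinct slopes of the $HN$ type, the norm equivalence of Lemma~\ref{Lemma9} and the convexity of $\Phi_\alpha$ give $HYM_{\alpha,N}(\bar\partial_E,h)\le HYM_{\alpha,N}(\mu_0)+\delta'$ for $1\le\alpha\le p$, with $\delta'\to0$ as $\delta\to0$. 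The holomorphic isomorphism type of $(E,\bar\partial_{A_t})$ is constant along the flow, so its $HN$ type is always $\mu_0$, and the $HYM_{\alpha,N}$-analogue of Corollary~\ref{Cor3} (proved from the Chern--Weil formula just as in \cite{DW1}) gives $HYM_{\alpha,N}(A_t)\ge HYM_{\alpha,N}(\mu_0)$ for all $t$. Monotonicity (Proposition~\ref{Prop11}) and the convergence $HYM_{\alpha,N}(A_t)\to HYM_{\alpha,N}(A_\infty)$ (Proposition~\ref{Prop12}) then squeeze $\lim_t HYM_{\alpha,N}(A_t)=HYM_{\alpha,N}(\mu_0)$ for flows started at such an $h$; the openness--closedness--connectivity (distance-decreasing) argument of \cite{DW1}, which uses none of the simplifying hypotheses on $Z_{\mathrm{alg}}$, extends this to arbitrary initial metrics.

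With this in hand the identification of the type is exactly Proposition~\ref{Prop19}. For an arbitrary $A_0$ with Uhlenbeck limit $A_\infty$ on $E_\infty$, write $\mu_\infty=(\mu_1^\infty,\ldots,\mu_K^\infty)$ for the $HN$ type of $(E_\infty,A_\infty)$. Combining the generalised Lemma~\ref{Lemma13} with Proposition~\ref{Prop12}, and using that $A_\infty$ is Yang--Mills so that $HYM_{\alpha,N}(A_\infty)=HYM_{\alpha,N}(\mu_\infty)$ by Proposition~\ref{Prop7} together with the defining normalisation of $HYM_{\alpha,N}(\cdot)$, we obtain $\Phi_\alpha(\mu_\infty+N)=\Phi_\alpha(\mu_0+N)$ for all $\alpha\ge1$ and all $N$. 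Proposition~\ref{Prop9} gives $\mu_0\le\mu_\infty$ in the Shatz order; choosing $N$ large enough that the last entries of $\mu_0+N$ and of $\mu_\infty+N$ are nonnegative, Proposition~\ref{Prop10}(2) forces $\mu_0+N=\mu_\infty+N$, hence $\mu_0=\mu_\infty$.

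I do not expect a genuine obstacle here: the theorem is a bookkeeping corollary of Theorem~\ref{Thm9}, which is exactly why the paper remarks that it has already been proved. The only points needing (routine) care are verifying that the $HYM_{\alpha,N}$-version of the lower bound of Corollary~\ref{Cor3} and the distance-decreasing/connectivity argument both go through without the hypotheses that $Z_{\mathrm{alg}}$ is smooth and resolved by a single blow-up --- but these are formal consequences of the Chern--Weil formula and of the convexity properties of the $\Phi_\alpha$, following \cite{DW1} verbatim once the existence of $L^p$ approximate critical hermitian structures for all $p<\infty$ is available.
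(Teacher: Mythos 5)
Your proposal is correct and reproduces, in spelled-out form, exactly the reasoning the paper has in mind when it remarks that Theorem~\ref{Thm11} has ``also been proven'' in the course of establishing Theorem~\ref{Thm9}: the general $L^{p}$-approximate structure feeds into the generalised Lemma~\ref{Lemma13} (monotonicity plus the $HYM_{\alpha,N}$ lower bound from the Chern--Weil argument), which then drives the Proposition~\ref{Prop19}-style comparison via Propositions~\ref{Prop9} and~\ref{Prop10}(2). The only cosmetic difference is that you invoke the finished Theorem~\ref{Thm9} (all $p<\infty$) at the outset, whereas the paper's logic actually establishes the type identification mid-stream in Step~2 of the induction, needing only the $p$-near-$1$ structure from Step~1; your route is mildly redundant but non-circular and unobjectionable.
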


\section{The Degenerate Yang-Mills Flow}

In this section we introduce a version of the Yang-Mills flow with respect
to the degenerate metric $\omega _{0}=\pi ^{\ast }\omega $ on a sequence of
blowups $\pi :\tilde{X}\rightarrow X$ along complex submanifolds. This flow
will solve the usual Hermitian-Yang-Mills flow equations on $\tilde{X}-%
\mathbf{E}$ with respect the metric $\omega $. It will be useful in the
proof of the main theorem, because we will again need to desingularise the $%
HNS$ filtration, and consider a sequence of blowups. The discussion in this
section is an extension of ideas in \cite{BS}.

Let $\pi :\tilde{X}\to X$ be a sequence of smooth blowups, and let $\omega
_{\varepsilon }$ be the usual family of K\"ahler metrics on $\tilde{X}$. We
will write $L_{k}^{p}(\tilde{X},\omega _{\varepsilon })$ for the
corresponding Sobolev spaces. The following lemma is clear.

\begin{lemma}
\label{Lemma15}Fix a compact subset $W\subset \subset \tilde{X}-\mathbf{E}$.
Let $\tilde{E}$ be a vector bundle. Then there exists a family of constants $%
C(\varepsilon )\rightarrow 0$ as $\varepsilon \rightarrow 0$, such that for
any $r$-form $F\in \Omega ^{r}(\tilde{X}-\mathbf{E},\tilde{E})$%
\begin{equation*}
\left( 1-C(\varepsilon )\right) \left\Vert F\right\Vert _{L_{k}^{p}(W,\omega
_{0})}\leq \left\Vert F\right\Vert _{L_{k}^{p}(W,\omega _{\varepsilon
})}\leq \left( 1+C(\varepsilon )\right) \left\Vert F\right\Vert
_{L_{k}^{p}(W,\omega _{0})}.
\end{equation*}
\end{lemma}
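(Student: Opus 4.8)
The essential observation is that $\omega_{0}=\pi^{\ast}\omega$, although degenerate along $\mathbf{E}$, restricts to a genuine (non-degenerate) K\"ahler metric on any compact subset $W\subset\subset\tilde{X}-\mathbf{E}$; hence on $W$ the family $\omega_{\varepsilon}=\omega_{0}+\varepsilon\eta$ converges to $\omega_{0}$ in $C^{\infty}$ as $\varepsilon\to 0$, with all bounds uniform over $W$. The plan is simply to propagate this uniform $C^{\infty}$ convergence through the algebraic recipe defining the Sobolev norm $L^{p}_{k}$.

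First I would fix, independently of $\varepsilon$, a smooth Hermitian metric and a smooth connection on $\tilde{E}$, and a finite atlas of coordinate charts covering a neighbourhood of $W$ over which $\tilde{E}$ is trivialised; then the only $\varepsilon$-dependence resides in the base metric $g_{\varepsilon}$ attached to $\omega_{\varepsilon}$. In each chart $g_{\varepsilon}$ equals $g_{0}$ plus $\varepsilon$ times the fixed smooth components of $\eta$; since $g_{0}$ is uniformly positive definite on $W$, for small $\varepsilon$ the same holds for $g_{\varepsilon}$, and $g_{\varepsilon}^{-1}$, $\det g_{\varepsilon}$, the Christoffel symbols $\Gamma^{(\varepsilon)}$ and all of their derivatives converge uniformly on $W$ to those of $g_{0}$. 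I record three consequences, all with an error $C(\varepsilon)\to 0$ uniform over $W$: (i) the fibrewise norm $|\cdot|_{\omega_{\varepsilon}}$ on $\Lambda^{r}T^{\ast}\tilde{X}\otimes\tilde{E}$, being a fixed polynomial expression in $g_{\varepsilon}^{-1}$ and the fixed metric on $\tilde{E}$, satisfies $|\cdot|_{\omega_{\varepsilon}}=(1+O(C(\varepsilon)))\,|\cdot|_{\omega_{0}}$; (ii) $dvol_{\omega_{\varepsilon}}=(1+O(C(\varepsilon)))\,dvol_{\omega_{0}}$; (iii) writing $\nabla^{(\varepsilon)}$ for the covariant derivative built from $g_{\varepsilon}$ and the fixed connection on $\tilde{E}$, one has $\nabla^{(\varepsilon)}=\nabla^{(0)}+\Theta(\varepsilon)$ with $\Theta(\varepsilon)=\Gamma^{(\varepsilon)}-\Gamma^{(0)}$ a smooth tensor field satisfying $\|\Theta(\varepsilon)\|_{C^{k-1}(W)}\le C(\varepsilon)$.

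Next I would run a routine induction on $j\le k$ using (iii): since every term of $\nabla^{(\varepsilon),j}F-\nabla^{(0),j}F$ carries at least one factor of $\Theta(\varepsilon)$ or one of its derivatives and hence involves covariant derivatives of $F$ of order at most $j-1$, one obtains pointwise on $W$
\[
\bigl|\nabla^{(\varepsilon),j}F\bigr|\le (1+C(\varepsilon))\Bigl(\bigl|\nabla^{(0),j}F\bigr|+C(\varepsilon)\sum_{i=0}^{j-1}\bigl|\nabla^{(0),i}F\bigr|\Bigr).
\]
Raising to the $p$-th power, summing over $j\le k$, absorbing the lower-order terms, and invoking (i) gives a pointwise bound $\sum_{j\le k}|\nabla^{(\varepsilon),j}F|^{p}_{\omega_{\varepsilon}}\le (1+C'(\varepsilon))\sum_{j\le k}|\nabla^{(0),j}F|^{p}_{\omega_{0}}$ on $W$ with $C'(\varepsilon)\to 0$; integrating against $dvol_{\omega_{\varepsilon}}$ and using (ii) yields $\|F\|_{L^{p}_{k}(W,\omega_{\varepsilon})}\le (1+C''(\varepsilon))\|F\|_{L^{p}_{k}(W,\omega_{0})}$. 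The reverse inequality is obtained identically with the roles of $\omega_{\varepsilon}$ and $\omega_{0}$ exchanged (both metrics are uniformly non-degenerate on $W$), and relabelling the error term produces the stated two-sided bound.

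The argument is elementary; the only point requiring care is uniformity — every constant must be independent of $F$ and uniform over $W$ — which is guaranteed because $W$ is compact and bounded away from $\mathbf{E}$, so $g_{0}=\pi^{\ast}\omega$ is uniformly positive definite there and the convergence $\omega_{\varepsilon}\to\omega_{0}$ on $W$ is quantitative in $C^{\infty}$. I do not anticipate any genuine obstacle.
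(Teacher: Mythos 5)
Your argument is correct and is exactly the elaboration the paper has in mind: the paper gives no proof at all, declaring the lemma ``clear,'' and what makes it clear is precisely the fact you isolate, namely that $\omega_{0}=\pi^{\ast}\omega$ is a genuine Kähler metric on the compact set $W\subset\subset\tilde{X}-\mathbf{E}$ and $\omega_{\varepsilon}\to\omega_{0}$ there in $C^{\infty}$, so the fibrewise norms, volume forms, and Christoffel data all converge uniformly with errors $C(\varepsilon)\to 0$. Your propagation of these three uniform estimates through the inductive definition of $\|\cdot\|_{L^{p}_{k}}$ is routine and sound, so there is nothing to add or correct.
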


Throughout this section $\tilde{E}\rightarrow \tilde{X}$ will be a
holomorphic vector bundle of rank $K$, equipped with a smooth hermitian
metric $\tilde{h}_{0}$. Although later we will mainly be interested in the
case where $\tilde{E}=\pi ^{\ast }E\,$, we do not assume this here.

Note that $\left\Vert \Lambda _{\omega _{\varepsilon }}F_{(\bar{\partial}_{%
\tilde{E}},\tilde{h}_{0})}\right\Vert _{L^{1}(\omega _{\varepsilon })}$ is
uniformly bounded in $\varepsilon $, since for any fixed K\"{a}hler form $%
\varpi $ on $\tilde{X}$ we have:%
\begin{eqnarray*}
\left\vert \Lambda _{\omega _{\varepsilon }}F_{(\bar{\partial}_{\tilde{E}},%
\tilde{h}_{0})}\right\vert &=&\left\vert \frac{F_{(\bar{\partial}_{\tilde{E}%
},\tilde{h}_{0})}\wedge \omega _{\varepsilon }^{n-1}}{\omega _{\varepsilon
}^{n}}\right\vert =\left\vert \frac{F_{(\bar{\partial}_{\tilde{E}},\tilde{h}%
_{0})}\wedge \omega _{\varepsilon }^{n-1}}{\varpi ^{n}}\right\vert
\left\vert \frac{\det g_{\varpi }}{\det g_{\varepsilon }}\right\vert , \\
\omega _{\varepsilon }^{n} &=&\frac{\det g_{\varepsilon }}{\det g_{\varpi }}%
\varpi ^{n}
\end{eqnarray*}%
so 
\begin{equation*}
\left\Vert \Lambda _{\omega _{\varepsilon }}F_{(\bar{\partial}_{\tilde{E}},%
\tilde{h}_{0})}\right\Vert _{L^{1}(\omega _{\varepsilon })}=\int_{\tilde{X}%
}\left\vert \frac{F_{(\bar{\partial}_{\tilde{E}},\tilde{h}_{0})}\wedge
\omega _{\varepsilon }^{n-1}}{\varpi ^{n}}\right\vert \varpi ^{n}
\end{equation*}%
which is clearly bounded uniformly in $\varepsilon $. Write $\tilde{h}%
_{\varepsilon ,t}$ for the evolution of $\tilde{h}_{0}$ under the $HYM$ flow
with respect to the metric $\omega _{\varepsilon }$.

\begin{lemma}
\label{Lemma16}$(1)$ Let $t_{0}>0.$ Then $\left\vert \Lambda _{\omega
_{\varepsilon }}F_{(\bar{\partial}_{\tilde{E}},\tilde{h}_{\varepsilon
,t})}\right\vert $ is uniformly bounded for all $t\geq t_{0}>0$ and all $%
\varepsilon >0$. The bound depends only on $t_{0}$ and the uniform bound on $%
\left\Vert \Lambda _{\omega _{\varepsilon }}F_{(\bar{\partial}_{\tilde{E}},%
\tilde{h}_{0})}\right\Vert _{L^{1}(\omega _{\varepsilon })}$.

$(2)$ $\left\vert \Lambda _{\omega _{\varepsilon }}F_{(\bar{\partial}_{%
\tilde{E}},\tilde{h}_{\varepsilon ,t})}\right\vert $ is bounded uniformly on
compact subsets of $\tilde{X}-\mathbf{E}$ for all $t\geq 0$ and all $%
\varepsilon >0$. The bound depends only on the local bound on $\left\vert
\Lambda _{\omega _{\varepsilon }}F_{(\bar{\partial}_{\tilde{E}},\tilde{h}%
_{0})}\right\vert $ and the uniform bound on $\left\Vert \Lambda _{\omega
_{\varepsilon }}F_{(\bar{\partial}_{\tilde{E}},\tilde{h}_{0})}\right\Vert
_{L^{1}(\omega _{\varepsilon })}$.
\end{lemma}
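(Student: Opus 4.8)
\textbf{Proof proposal for Lemma~\ref{Lemma16}.} The plan is to treat $u_{\varepsilon,t} := |\Lambda_{\omega_\varepsilon} F_{(\bar\partial_{\tilde E},\tilde h_{\varepsilon,t})}|^2$ (or rather its square root, depending on what is cleanest) as a subsolution of a heat equation on $(\tilde X, \omega_\varepsilon)$ and to invoke a Moser-type mean value inequality with constants that are \emph{uniform in} $\varepsilon$. Concretely: first, by part~(2) of Lemma~\ref{Lemm5} applied on $(\tilde X,\omega_\varepsilon)$, the quantity $e_{\varepsilon,t} := |\Lambda_{\omega_\varepsilon} F_{(\bar\partial_{\tilde E},\tilde h_{\varepsilon,t})}|^2$ satisfies
\begin{equation*}
\left(\frac{\partial}{\partial t} + \triangle_{\omega_\varepsilon}\right) e_{\varepsilon,t} \leq 0,
\end{equation*}
so it is a subsolution of the (backward-convention) heat equation. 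The key input — this is the observation of Bando and Siu referred to in the introduction — is that the Sobolev constant of $(\tilde X,\omega_\varepsilon)$ is bounded away from zero as $\varepsilon \to 0$; equivalently the Sobolev inequality holds with an $\varepsilon$-independent constant. This makes the classical Moser/De Giorgi iteration (or the Cheng--Li parabolic mean value inequality quoted later in the introduction) run with $\varepsilon$-independent constants.

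For part~(1), I would apply the parabolic mean value inequality on the whole of $\tilde X$ over the time interval $[t_0/2, t]$: it bounds $\sup_{\tilde X} e_{\varepsilon,t}$ for $t \geq t_0$ by $C(t_0)$ times the space-time $L^1$ norm $\int_{t_0/2}^{t}\int_{\tilde X} e_{\varepsilon,s}\,\omega_\varepsilon^n\,ds$ (possibly first the $L^1$ norm in space of $e$, which is exactly $\|\Lambda_{\omega_\varepsilon}F\|_{L^2}^2$). Since $t \mapsto \|\Lambda_{\omega_\varepsilon}F_{(\bar\partial_{\tilde E},\tilde h_{\varepsilon,t})}\|_{L^2(\omega_\varepsilon)}$ is controlled: the $HYM$ functional is non-increasing along the flow by part~(1) of Lemma~\ref{Lemm5}, so $\|\Lambda_{\omega_\varepsilon}F_{(\bar\partial_{\tilde E},\tilde h_{\varepsilon,t})}\|_{L^2(\omega_\varepsilon)} \leq \|\Lambda_{\omega_\varepsilon}F_{(\bar\partial_{\tilde E},\tilde h_0)}\|_{L^2(\omega_\varepsilon)}$, which in turn is bounded uniformly in $\varepsilon$ by the same determinant-of-metrics manipulation displayed just before the lemma (write the integrand as $|F\wedge\omega_\varepsilon^{n-1}/\varpi^n|^2 \cdot |\det g_\varpi/\det g_\varepsilon|$ against $\varpi^n$, but here one needs the $L^2$ rather than $L^1$ estimate — for that one expands $\omega_\varepsilon^{n-1}$ and uses that $|\Lambda_{\omega_\varepsilon}F|\,\omega_\varepsilon^n = F\wedge\omega_\varepsilon^{n-1}$ has bounded $\varpi$-coefficients, so $\int |\Lambda_{\omega_\varepsilon}F|^2\omega_\varepsilon^n \leq \int (\,\cdots\,)\,\varpi^n$ with the dots bounded). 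So actually the statement as written only claims dependence on the $L^1$ bound; I would simply note that the $L^1$ bound in space (namely $\|\Lambda_{\omega_\varepsilon}F\|_{L^1(\omega_\varepsilon)}$, which the preceding paragraph shows is uniformly bounded) is enough input for the mean value inequality, after possibly first running one step of iteration to pass from $L^1$ to $L^2$ control in space using the same subsolution property.

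For part~(2), the argument is the same but localized: fix $W \subset\subset W' \subset\subset \tilde X - \mathbf E$ and apply the interior parabolic mean value inequality on $W'$ over $[0,t]$. Now the constant also depends on $W, W'$ and on the geometry of $\omega_\varepsilon$ on $W'$, but by Lemma~\ref{Lemma15} this geometry is uniformly comparable (in $\varepsilon$) to that of $\omega_0 = \pi^*\omega$ on $W'$, so the constant is $\varepsilon$-independent. The space-time integral $\int_0^t\int_{W'} e_{\varepsilon,s}\,\omega_\varepsilon^n\,ds$ is controlled for $s$ bounded away from $0$ by part~(1), and near $s=0$ by the local $L^\infty$ bound on $|\Lambda_{\omega_\varepsilon}F_{(\bar\partial_{\tilde E},\tilde h_0)}|$ on $W'$ (which is assumed and is $\varepsilon$-uniform since $\omega_\varepsilon \to \omega_0$ smoothly on $W'$) together with continuity of the flow in $t$; alternatively one runs the iteration over $[0,t]$ directly, absorbing the $s=0$ contribution into the initial-data term of the mean value inequality. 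The main obstacle — and the only genuinely nontrivial point — is establishing that the constants in the Sobolev inequality, and hence in the Moser iteration / Cheng--Li mean value inequality, can be taken uniform as $\varepsilon \to 0$ despite $\omega_\varepsilon$ degenerating along $\mathbf E$; this is precisely the Bando--Siu observation about the Sobolev constant of $(\tilde X,\omega_\varepsilon)$, which I would cite and then verify fits the hypotheses of the Cheng--Li theorem. Everything else is a routine, if slightly lengthy, application of the parabolic maximum-principle machinery, so I would state the mean value inequality as a black box with explicit constant dependence and quote it from \cite{BS} and the Cheng--Li reference.
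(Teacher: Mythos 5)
Your underlying mechanism—the subsolution property of the Hermitian--Einstein tensor plus the $\varepsilon$-uniform Sobolev constant of Bando--Siu—is the right one, but the paper implements it differently and more directly: it works with the heat kernel representation rather than Moser iteration, which sidesteps both of the difficulties you flag. Concretely, the paper takes $|\Lambda _{\omega _{\varepsilon }}F_{(\bar{\partial}_{\tilde{E}},\tilde{h}_{\varepsilon ,t})}|$ itself (not its square) as the nonnegative subsolution; by the maximum principle for the heat equation on the closed manifold $(\tilde X,\omega _{\varepsilon })$ one has
\begin{equation*}
|\Lambda _{\omega _{\varepsilon }}F_{(\bar{\partial}_{\tilde{E}},\tilde{h}_{\varepsilon ,t})}|(x)\leq \int_{\tilde{X}}K_{t}^{\varepsilon }(x,y)\,|\Lambda _{\omega _{\varepsilon }}F_{(\bar{\partial}_{\tilde{E}},\tilde{h}_{0})}|(y)\,dvol_{\omega _{\varepsilon }}(y),
\end{equation*}
and part (1) is then immediate from the on-diagonal bound $K_{t}^{\varepsilon }(x,y)\leq C(1+1/t^{n})$ quoted from \cite{BS} Lemma $4$. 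This gives the $L^{1}$ dependence at once, with no iteration step, and it dissolves your $L^{1}$-versus-$L^{2}$ worry cleanly: by choosing the correct subsolution you never have to feed $\||\Lambda F|^{2}\|_{L^{1}}=\|\Lambda F\|_{L^{2}}^{2}$ into a mean value inequality.

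For part (2) the gap in your outline is genuine. An interior parabolic mean value inequality over $[0,t]\times W^{\prime }$ produces a constant that degenerates as $t\rightarrow 0$, and the sentence about absorbing the $s=0$ contribution into an initial-data term is precisely the nontrivial point that would need a precise statement and proof. The paper instead inserts a cutoff $\psi $ identically $1$ near $\Omega _{1}$ and supported in $\Omega \subset \subset \tilde{X}-\mathbf{E}$, and splits the heat kernel integral: the near term $\int \psi K_{t}^{\varepsilon }|\Lambda _{\omega _{\varepsilon }}F_{\tilde{h}_{0}}|$ is at most $\sup _{\Omega }|\Lambda _{\omega _{\varepsilon }}F_{\tilde{h}_{0}}|$ since $\int K_{t}^{\varepsilon }\leq 1$, while the far term is controlled by $\|\Lambda _{\omega _{\varepsilon }}F_{\tilde{h}_{0}}\|_{L^{1}}$ together with the off-diagonal Gaussian upper bound $K_{t}^{\varepsilon }(x,y)\leq C_{1}(1+\delta ^{-2}t^{-2})\exp (-d_{\omega _{\varepsilon }}(x,y)^{2}/C_{2}t)$ from Grigor'yan (\cite{GR} Theorem $3.1$), which stays bounded uniformly in $t$ and $\varepsilon $ because $d_{\omega _{\varepsilon }}\geq d_{\pi ^{\ast }\omega }$ is bounded below for $x\in \Omega _{1}$ and $y\in \limfunc{supp}(1-\psi )$. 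The off-diagonal Gaussian estimate is the missing tool in your sketch; once you have it, the near/far splitting replaces interior Moser iteration entirely and delivers exactly the stated dependence for all $t\geq 0$.
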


\begin{proof}
By Lemma \ref{Lemm5} $(2)$, the pointwise norm $\left\vert \Lambda _{\omega
_{\varepsilon }}F_{(\bar{\partial}_{\tilde{E}},\tilde{h}_{\varepsilon
,t})}\right\vert $ is a subsolution of the heat equation on $(\tilde{X}%
,\omega _{\varepsilon })$ (see also \cite{BS} equation $3.3$). If $%
K_{t}^{\varepsilon }(x,y)$ is the heat kernel for the $\omega _{\varepsilon
} $ Laplacian on $\tilde{X}$ then 
\begin{equation*}
\int_{\tilde{X}}K_{t}^{\varepsilon }(x,y)\left\vert \Lambda _{\omega
_{\varepsilon }}F_{(\bar{\partial}_{\tilde{E}},\tilde{h}_{0})}\right\vert
(y)dvol_{\omega _{\varepsilon }}(y)
\end{equation*}%
is a solution of the heat equation and therefore:%
\begin{equation*}
\left\vert \Lambda _{\omega _{\varepsilon }}F_{(\bar{\partial}_{\tilde{E}},%
\tilde{h}_{\varepsilon ,t})}\right\vert (x)-\int_{\tilde{X}%
}K_{t}^{\varepsilon }(x,y)\left\vert \Lambda _{\omega _{\varepsilon }}F_{(%
\bar{\partial}_{\tilde{E}},\tilde{h}_{0})}\right\vert (y)dvol_{\omega
_{\varepsilon }}(y)
\end{equation*}%
is also a subsolution. Because 
\begin{equation*}
\int_{\tilde{X}}K_{0}^{\varepsilon }(x,y)\left\vert \Lambda _{\omega
_{\varepsilon }}F_{(\bar{\partial}_{\tilde{E}},\tilde{h}_{0})}\right\vert
(y)dvol_{\omega _{\varepsilon }}(y)=\left\vert \Lambda _{\omega
_{\varepsilon }}F_{(\bar{\partial}_{\tilde{E}},\tilde{h}_{0})}\right\vert
(x),
\end{equation*}%
the maximum principle for the heat equation now implies that 
\begin{equation*}
\left\vert \Lambda _{\omega _{\varepsilon }}F_{(\bar{\partial}_{\tilde{E}}%
\tilde{h}_{\varepsilon ,t})}\right\vert (x)\leq \int_{\tilde{X}%
}K_{t}^{\varepsilon }(x,y)\left\vert \Lambda _{\omega _{\varepsilon }}F_{(%
\bar{\partial}_{\tilde{E}},\tilde{h}_{0})}\right\vert (y)dvol_{\omega
_{\varepsilon }}(y).
\end{equation*}%
By \cite{BS} Lemma $4$, there is a bound: $K_{t}^{\varepsilon }(x,y)\leq
C\left( 1+1/t^{n}\right) $ for some constant $C$ independent of $\varepsilon 
$. Part $(1)$ now follows.

For part $(2)$, let $\Omega _{1}\subset \subset \Omega \subset \subset 
\tilde{X}-\mathbf{E}$, and let $\psi $ be a smooth cut-off function
supported in $\Omega $ and identically $1$ in a neighbourhood of $\bar{\Omega%
}_{1}$. Then just as in part $(1)$ we have:%
\begin{eqnarray*}
\left\vert \Lambda _{\omega _{\varepsilon }}F_{(\bar{\partial}_{\tilde{E}}%
\tilde{h}_{\varepsilon ,t})}\right\vert (x) &\leq &\int_{\tilde{X}%
}K_{t}^{\varepsilon }(x,y)\left\vert \Lambda _{\omega _{\varepsilon }}F_{(%
\bar{\partial}_{\tilde{E}},\tilde{h}_{0})}\right\vert (y)dvol_{\omega
_{\varepsilon }}(y) \\
&=&\int_{\tilde{X}}\psi K_{t}^{\varepsilon }(x,y)\left\vert \Lambda _{\omega
_{\varepsilon }}F_{(\bar{\partial}_{\tilde{E}},\tilde{h}_{0})}\right\vert
(y)dvol_{\omega _{\varepsilon }}(y) \\
&&+\int_{\tilde{X}}\left( 1-\psi \right) K_{t}^{\varepsilon }(x,y)\left\vert
\Lambda _{\omega _{\varepsilon }}F_{(\bar{\partial}_{\tilde{E}},\tilde{h}%
_{0})}\right\vert (y)dvol_{\omega _{\varepsilon }}(y).
\end{eqnarray*}%
By the maximum principle, the first term on the right hand side is bounded
from above by: \break $\sup \bigl\{\bigl\vert\Lambda _{\omega _{\varepsilon
}}F_{(\bar{\partial}_{\tilde{E}},\tilde{h}_{0})}\bigr\vert(y)\mid y\in
\Omega \bigr\}$. Since $\Omega \subset \subset \tilde{X}-\mathbf{E}$, the
function $1/\det g_{ij}^{\varepsilon }$ is uniformly bounded in $\varepsilon 
$, so this $\sup $ and hence the first integral above are uniformly bounded
in $\varepsilon $. By \cite{GR} Theorem $3.1$, there are positive constants $%
\delta $, $C_{1}$, $C_{2}$, independent of $t$ and $\varepsilon $, such that
for $x\neq y$,%
\begin{equation*}
K_{t}^{\varepsilon }(x,y)\leq C_{1}\left( 1+\frac{1}{\delta ^{2}t^{2}}%
\right) \exp \left( -\frac{\left( d_{\omega _{\varepsilon }}(x,y)\right) ^{2}%
}{C_{2}t}\right) .
\end{equation*}%
where $d_{\omega _{\varepsilon }}$ is the distance function on $\tilde{X}$
with respect to the Riemannian metric induced by $\omega _{\varepsilon }$.
Of course $d_{\omega _{\varepsilon }}(x,y)$ is bounded from below for $x\in
\Omega _{1}$ and $y\in \limfunc{supp}(1-\psi )$ uniformly in $\varepsilon $.
Therefore, $K_{t}^{\varepsilon }(x,y)$ is uniformly bounded in $\varepsilon $
and $t$, for these values of $x$ and $y$. Then the second term on the right
is uniformly bounded in terms of $\left\Vert \Lambda _{\omega }F_{(\bar{%
\partial}_{\tilde{E}}\tilde{h}_{0})}\right\Vert _{L^{1}(\omega _{\varepsilon
})}$, so $\left\vert \Lambda _{\omega _{\varepsilon }}F_{(\bar{\partial}_{%
\tilde{E}}\tilde{h}_{\varepsilon ,t})}\right\vert $ is uniformly bounded on $%
\Omega _{1}$.
\end{proof}

If we write $\tilde{h}_{\varepsilon ,t}=\tilde{k}_{\varepsilon ,t}\tilde{h}%
_{0}$, then it follows from the $HYM$ flow equations and the second part of
the previous lemma that both $\tilde{k}_{\varepsilon ,t}$ and $\tilde{k}%
_{\varepsilon ,t}^{-1}$ are uniformly bounded on compact subsets of $\tilde{X%
}-\mathbf{E}$ for $0\leq t\leq t_{0}$ (one sees easily that their
determinant and trace are bounded, which is enough). The statement that $%
\left\vert \Lambda _{\omega _{\varepsilon }}F_{(\bar{\partial}_{\tilde{E}}%
\tilde{h}_{\varepsilon ,t})}\right\vert $ is uniformly bounded on compact
subsets of $\tilde{X}-\mathbf{E}$ translates to the statement that there is
a section $f_{\varepsilon ,t}\in \mathfrak{u}(\tilde{E})$, uniformly bounded
on compact subsets of $\tilde{X}-\mathbf{E}$, such that:%
\begin{equation*}
\sqrt{-1}\Lambda _{\omega _{\varepsilon }}\bar{\partial}_{A_{0}}\left( 
\tilde{k}_{\varepsilon ,t}^{-1}\partial _{A_{0}}\tilde{k}_{\varepsilon
,t}\right) =f_{\varepsilon ,t},
\end{equation*}%
where $A_{0}$ is the connection $(\bar{\partial}_{E},\tilde{h}_{0})$. It
therefore follows from \cite{BS} $\limfunc{Proposition}$ $1$, that $\tilde{k}%
_{\varepsilon ,t}$ has a uniform $C^{1,\alpha }$ bound (for any $0<\alpha <1$%
) on compact subsets of $\left( \tilde{X}-\mathbf{E}\right) \times \left[
0,\infty \right) $. Furthermore, we may write:%
\begin{eqnarray*}
\sqrt{-1}\Lambda _{\omega _{\varepsilon }}\bar{\partial}_{A_{0}}\left( 
\tilde{k}_{\varepsilon ,t}^{-1}\partial _{A_{0}}\tilde{k}_{\varepsilon
,t}\right) &=&\tilde{k}_{\varepsilon ,t}^{-1}\sqrt{-1}\Lambda _{\omega
_{\varepsilon }}\left( \bar{\partial}_{A_{0}}\partial _{A_{0}}\tilde{k}%
_{\varepsilon ,t}\right) +\sqrt{-1}\Lambda _{\omega _{\varepsilon }}\left( 
\bar{\partial}_{A_{0}}\tilde{k}_{\varepsilon ,t}^{-1}\right) \left( \partial
_{A_{0}}\tilde{k}_{\varepsilon ,t}\right) \\
&=&\tilde{k}_{\varepsilon ,t}^{-1}\triangle _{(\bar{\partial}_{A_{0}},\omega
_{\varepsilon })}\tilde{k}_{\varepsilon ,t}-\tilde{k}_{\varepsilon ,t}^{-1}%
\sqrt{-1}\Lambda _{\omega _{\varepsilon }}\left( \bar{\partial}_{A_{0}}%
\tilde{k}_{\varepsilon ,t}\right) \tilde{k}_{\varepsilon ,t}^{-1}\left(
\partial _{A_{0}}\tilde{k}_{\varepsilon ,t}\right) ,
\end{eqnarray*}%
where in the last equality we have used the K\"{a}hler identities and the
expression for $\bar{\partial}_{A_{0}}\tilde{k}_{\varepsilon ,t}^{-1}$.
Therefore we have:%
\begin{equation*}
\triangle _{(\bar{\partial}_{A_{0}},\omega _{\varepsilon })}\tilde{k}%
_{\varepsilon ,t}-\sqrt{-1}\Lambda _{\omega _{\varepsilon }}\left( \bar{%
\partial}_{A_{0}}\tilde{k}_{\varepsilon ,t}\right) \tilde{k}_{\varepsilon
,t}^{-1}\left( \partial _{A_{0}}\tilde{k}_{\varepsilon ,t}\right) =\tilde{k}%
_{\varepsilon ,t}f_{\varepsilon ,t}.
\end{equation*}%
By elliptic regularity, this yields a uniform $L_{2}^{p}$ bound (for $%
1<p<\infty $) on $\tilde{k}_{\varepsilon ,t}$ on compact subsets of $\left( 
\tilde{X}-\mathbf{E}\right) \times \left[ 0,\infty \right) $. It now follows
from the $HYM$ the flow equations, that $\frac{\partial \tilde{h}%
_{\varepsilon ,t}}{\partial t}$ has a uniform $L^{p}$ bound on compact
subsets of $\left( \tilde{X}-\mathbf{E}\right) \times \left[ 0,\infty
\right) $, and so for any $W\subset \subset \left( \tilde{X}-\mathbf{E}%
\right) $ and $T\geq 0$, there is a uniform $L_{2/1}^{p}(W\times \left[
0,T\right) )$ bound on $\tilde{h}_{\varepsilon ,t}$, where the $2/1$ in the
previous notation refers to the fact that there is $1$ derivative in the
time variable and $2$ derivatives in the space variables. By weak
compactness, there is a subsequence $\varepsilon _{j}\rightarrow 0$, so that 
$\tilde{h}_{\varepsilon _{j},t}\rightarrow \tilde{h}_{t}$ weakly in $%
L_{2/1}^{p}$ on compact subsets. By the Sobolev imbedding theorem, $\tilde{h}%
_{\varepsilon _{j},t}\rightarrow \tilde{h}_{t}$ in $C^{1/0}$ on compact
subsets. By a further diagonalisation as $T\rightarrow \infty $, $\tilde{h}%
_{\varepsilon _{j},t}\rightarrow \tilde{h}_{t}$ for all $t\geq 0$.

\begin{definition}
\label{Def7}We will refer to the resulting limit $\tilde{h}_{t}$
corresponding to the initial metric $\tilde{h}_{0}$ and the degenerate
metric $\omega _{0}$ as the \textbf{degenerate Hermitian-Yang-Mills flow}.
\end{definition}

Of course a priori $\tilde{h}_{t}$ may depend on the subsequence $%
\varepsilon _{j}$. It is possible to show that under the assumption that $%
\Lambda _{\omega _{\varepsilon }}F_{\tilde{h}_{0}}$is uniformly bounded in $%
L^{\infty }$, $\tilde{h}_{t}$ is unique. This assumption will not be
satisfied in our case. We will show however that $\tilde{h}_{t}$ solves the $%
HYM$ equations on $\tilde{X}-\mathbf{E}$ with respect to the metric $\omega
_{0}$ with initial condition $\tilde{h}_{0}$.

\begin{lemma}
\label{Lemma17}Let $\tilde{h}_{t}$ be defined as above. Then $\tilde{h}_{t}$
is an hermitan metric on $\tilde{E}$ $\rightarrow $ $\tilde{X}-\mathbf{E}$
for all $t\geq 0$, and solves the $HYM$ equations on $\tilde{X}-\mathbf{E:}$%
\begin{equation*}
\tilde{h}_{t}^{-1}\frac{\partial \tilde{h}_{t}}{\partial t}=-2\left( \Lambda
_{\omega _{0}}F_{\tilde{h}_{t}}-\mu _{\omega _{0}}(E)\mathbf{Id}_{E}\right) .
\end{equation*}
\end{lemma}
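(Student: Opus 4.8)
\textbf{Proof plan for Lemma \ref{Lemma17}.}

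The plan is to exploit the uniform local estimates obtained just before the statement, which were used to extract the limit $\tilde h_t$, and to pass to the limit $\varepsilon_j\to 0$ in the $HYM$ flow equations on compact subsets of $\tilde X-\mathbf{E}$. First I would record that the metrics $\tilde h_{\varepsilon_j,t}$ solve
\begin{equation*}
\tilde h_{\varepsilon_j,t}^{-1}\frac{\partial \tilde h_{\varepsilon_j,t}}{\partial t}=-2\left(\sqrt{-1}\Lambda_{\omega_{\varepsilon_j}}F_{\tilde h_{\varepsilon_j,t}}-\mu_{\omega_{\varepsilon_j}}(E)\mathbf{Id}_E\right),
\end{equation*}
and that by the discussion preceding the lemma we have, on any $W\subset\subset\tilde X-\mathbf{E}$ and $[0,T]$, uniform bounds: $\tilde k_{\varepsilon_j,t}$ and $\tilde k_{\varepsilon_j,t}^{-1}$ are bounded in $C^0$, bounded in $L^p_2$ in space, and $\partial_t\tilde h_{\varepsilon_j,t}$ is bounded in $L^p$, so $\tilde h_{\varepsilon_j,t}\to\tilde h_t$ weakly in $L^p_{2/1}(W\times[0,T))$ and strongly in $C^{1/0}$ after passing to a subsequence. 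The $C^0$ bounds on $\tilde k_{\varepsilon_j,t}^{\pm 1}$ pass to the limit and give that $\tilde h_t$ is positive-definite, hence a genuine hermitian metric on $\tilde E\to\tilde X-\mathbf{E}$; positivity is preserved because $\tilde k_t=\lim\tilde k_{\varepsilon_j,t}$ satisfies the same two-sided bound $C^{-1}\le\tilde k_t\le C$ locally.

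Next I would pass to the limit in the equation. The subtle point is that the right-hand side involves two space derivatives of the metric, which only converge weakly, not strongly, so one cannot take the limit naively in the nonlinear form $\sqrt{-1}\Lambda_{\omega_{\varepsilon_j}}F_{\tilde h_{\varepsilon_j,t}}$. The remedy, as in the derivation preceding the lemma, is to use the parabolic (divergence/Laplacian) form: writing $A_0=(\bar\partial_E,\tilde h_0)$ and $\tilde h_{\varepsilon_j,t}=\tilde k_{\varepsilon_j,t}\tilde h_0$, the flow equation becomes
\begin{equation*}
\tilde k_{\varepsilon_j,t}^{-1}\frac{\partial\tilde k_{\varepsilon_j,t}}{\partial t}=-2\left(\tilde k_{\varepsilon_j,t}^{-1}\sqrt{-1}\Lambda_{\omega_{\varepsilon_j}}\bar\partial_{A_0}\bigl(\tilde k_{\varepsilon_j,t}^{-1}\partial_{A_0}\tilde k_{\varepsilon_j,t}\bigr)+\sqrt{-1}\Lambda_{\omega_{\varepsilon_j}}F_{\tilde h_0}-\mu_{\omega_{\varepsilon_j}}(E)\mathbf{Id}_E\right),
\end{equation*}
which, multiplying through by $\tilde k_{\varepsilon_j,t}$ and using the K\"ahler identities exactly as in the computation right before the statement, is a quasilinear parabolic equation whose principal part is $\triangle_{(\bar\partial_{A_0},\omega_{\varepsilon_j})}\tilde k_{\varepsilon_j,t}$ with lower-order terms that are quadratic in $\partial_{A_0}\tilde k_{\varepsilon_j,t}$ times $\tilde k_{\varepsilon_j,t}^{-1}$. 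On $W\subset\subset\tilde X-\mathbf{E}$ the coefficients of $\triangle_{(\bar\partial_{A_0},\omega_{\varepsilon_j})}$ converge in $C^\infty$ to those of $\triangle_{(\bar\partial_{A_0},\omega_0)}$ (by Lemma \ref{Lemma15}, since $\omega_{\varepsilon_j}\to\omega_0$ smoothly on $W$), the curvature term $\sqrt{-1}\Lambda_{\omega_{\varepsilon_j}}F_{\tilde h_0}\to\sqrt{-1}\Lambda_{\omega_0}F_{\tilde h_0}$ in $C^\infty(W)$, and the constants $\mu_{\omega_{\varepsilon_j}}(E)\to\mu_{\omega_0}(E)$. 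Using the uniform $L^p_{2/1}$ bounds to bootstrap: elliptic/parabolic regularity applied to the equation in this form gives uniform $L^p_{2/1}$, hence by Sobolev embedding uniform $C^{1,\alpha}$, bounds on $\tilde k_{\varepsilon_j,t}$, which upgrade the convergence $\tilde k_{\varepsilon_j,t}\to\tilde k_t$ to strong $C^{1,\alpha}_{loc}$ convergence in space-time; then the quadratic lower-order terms converge strongly, and a further application of parabolic Schauder theory (the equation now being linear in $\tilde k_t$ with $C^\alpha$ coefficients and $C^\alpha$ inhomogeneous term) gives uniform $C^{2,\alpha}$ bounds and smoothness of the limit, so we may pass to the limit term by term and conclude that $\tilde k_t$, and hence $\tilde h_t$, satisfies
\begin{equation*}
\triangle_{(\bar\partial_{A_0},\omega_0)}\tilde k_t-\sqrt{-1}\Lambda_{\omega_0}\bigl(\bar\partial_{A_0}\tilde k_t\bigr)\tilde k_t^{-1}\bigl(\partial_{A_0}\tilde k_t\bigr)=-\tfrac12\tilde k_t\cdot\tilde k_t^{-1}\tfrac{\partial\tilde k_t}{\partial t}-\tilde k_t\bigl(\sqrt{-1}\Lambda_{\omega_0}F_{\tilde h_0}-\mu_{\omega_0}(E)\mathbf{Id}_E\bigr)
\end{equation*}
on $\tilde X-\mathbf{E}$, which is precisely the $HYM$ flow equation $\tilde h_t^{-1}\partial_t\tilde h_t=-2\bigl(\Lambda_{\omega_0}F_{\tilde h_t}-\mu_{\omega_0}(E)\mathbf{Id}_E\bigr)$ rewritten. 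Finally, the initial condition $\tilde h_0$ is attained because the $C^{1/0}$ convergence holds up to $t=0$ and $\tilde h_{\varepsilon_j,0}=\tilde h_0$ for all $j$.

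The main obstacle is the one just flagged: the right-hand side $\Lambda_{\omega_{\varepsilon_j}}F_{\tilde h_{\varepsilon_j,t}}$ is a second-order nonlinear expression in the metric, and a priori we only have weak $L^p_2$ convergence, so the passage to the limit is not immediate and must be routed through the parabolic form of the equation together with a bootstrap to strong $C^{1,\alpha}_{loc}$ (and then $C^{2,\alpha}_{loc}$) convergence using the $BS$-type a priori estimates; one must be careful that all the parabolic regularity is genuinely interior in $\tilde X-\mathbf{E}$ and uniform in $\varepsilon_j$, which is exactly what the estimates preceding the statement (and Lemma \ref{Lemma15}, Lemma \ref{Lemma16}) provide. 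Everything else — positivity of the limit, attainment of the initial data, and the algebraic rearrangement of the equation into its stated form — is routine.
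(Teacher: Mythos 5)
Your plan is essentially the paper's: extract the limit $\tilde h_t$ from the a priori local estimates recorded just before the lemma, verify positivity, and pass to the limit in the flow equation. Two remarks on where the routes diverge.

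For positivity, you invoke the uniform local $C^0$ bounds on $\tilde k_{\varepsilon_j,t}^{\pm 1}$ and pass them to the limit. The paper instead takes the trace of the $HYM$ flow equation for $\omega_\varepsilon$ and integrates in $t$ to obtain
\begin{equation*}
\left|\log\frac{\det\tilde h_{\varepsilon,T}}{\det\tilde h_0}\right|=2\left|\int_0^T\operatorname{Tr}\left(\Lambda_{\omega_\varepsilon}F_{\tilde h_{\varepsilon,t}}-\mu_{\omega_\varepsilon}(E)\mathbf{Id}_E\right)\right|,
\end{equation*}
which is bounded uniformly in $\varepsilon$ by Lemma \ref{Lemma16}, so $\det\tilde h_T=\lim_{j}\det\tilde h_{\varepsilon_j,T}>0$. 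Both work; the paper's route makes explicit the determinant control that underlies the $C^0$ bound on $\tilde k_{\varepsilon,t}^{-1}$ that you cite, so it is slightly more self-contained.

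For the passage to the limit in the equation, you are right to flag that the curvature term involves two spatial derivatives which converge only weakly, but the bootstrap to strong $C^{1,\alpha}$ and then $C^{2,\alpha}$ convergence is heavier than needed. The flow is quasilinear: $F_{\tilde h}=\tilde h^{-1}\bar\partial\partial\tilde h-\tilde h^{-1}\bar\partial\tilde h\cdot\tilde h^{-1}\partial\tilde h$, so the top-order term is linear in $\bar\partial\partial\tilde h$ with coefficients ($\tilde h^{-1}$ and $\Lambda_{\omega_{\varepsilon_j}}$) converging in $C^0$ on compact subsets of $\tilde X-\mathbf{E}$, and the quadratic first-order terms converge strongly by the $C^{1/0}$ convergence. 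Weak $L^p_{2/1}$ convergence of the second derivatives together with strong convergence of the coefficients then suffices to pass to the limit in the weak formulation directly, which is the paper's one-sentence argument. Your bootstrap is not wrong — and it yields smoothness of $\tilde h_t$ as a byproduct rather than deferring it to parabolic regularity — but it is not required merely to conclude that $\tilde h_t$ satisfies the stated equation.
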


\begin{proof}
Clearly $\tilde{h}_{t}$ is positive semi-definite since it is a limit of
metrics. Therefore we only need to check that $\det \tilde{h}_{t}$ is
positive. Taking the trace of both sides of the $HYM$ equations for the
metric $\omega _{\varepsilon }$, we get:%
\begin{equation*}
\frac{\partial }{\partial t}\left( \log \det \tilde{h}_{\varepsilon
,t}\right) =-2\limfunc{Tr}\left( \Lambda _{\omega _{\varepsilon }}F_{\tilde{h%
}_{\varepsilon ,t}}-\mu _{\omega _{\varepsilon }}(E)\mathbf{Id}_{E}\right)
\end{equation*}%
integrating both sides:%
\begin{equation*}
\left\vert \log \left( \frac{\det \tilde{h}_{\varepsilon ,T}}{\det \tilde{h}%
_{0}}\right) \right\vert =2\left\vert \int_{0}^{T}\limfunc{Tr}\left( \Lambda
_{\omega _{\varepsilon }}F_{\tilde{h}_{\varepsilon ,t}}-\mu _{\omega
_{\varepsilon }}(E)\mathbf{Id}_{E}\right) \right\vert .
\end{equation*}%
By the previous lemma, the right hand side is bounded uniformly in $%
\varepsilon $, so $\det \tilde{h}_{T}=\lim_{\varepsilon _{j\rightarrow
0}}\det \tilde{h}_{\varepsilon _{j},T}$ must be positive. Since $\tilde{h}%
_{\varepsilon _{j},t}\rightarrow \tilde{h}_{t}$ weakly in $L_{2/1}^{p}$ and $%
C^{1/0}$ it follows that $\tilde{h}_{t}$ solves the $HYM$ equations on $%
\tilde{X}-\mathbf{E}$.
\end{proof}

For the remainder of this section, we will write $F(-)$ for the curvature of
a metric in order to avoid a preponderance of subscripts.

\begin{lemma}
\label{Lemma18}$\left\Vert F(\tilde{h}_{t})\right\Vert _{L^{2}(\tilde{X}%
,\omega _{0})}$ and $\left\Vert \Lambda _{\omega _{0}}F(\tilde{h}%
_{t})\right\Vert _{L^{\infty }(\tilde{X},\omega _{0})}$ are uniformly
bounded for all $t\geq t_{0}>0$. The bound depends only on $t_{0}$ and the
uniform bound on $\left\Vert \Lambda _{\omega _{\varepsilon }}F(\tilde{h}%
_{0})\right\Vert _{L^{1}(\omega _{\varepsilon })}$.
\end{lemma}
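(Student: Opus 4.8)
The goal is to transfer the uniform bounds already established for $\tilde{h}_{\varepsilon,t}$ (in Lemma \ref{Lemma16}) through the weak limit $\tilde{h}_{\varepsilon_j,t}\to\tilde{h}_t$ to obtain bounds for the degenerate flow with respect to $\omega_0=\pi^*\omega$. The plan is to argue in two stages: first handle $\Lambda_{\omega_0}F(\tilde{h}_t)$, which is the easier quantity since it is controlled pointwise by Lemma \ref{Lemma16}, and then handle the full curvature norm $\|F(\tilde{h}_t)\|_{L^2(\tilde{X},\omega_0)}$, where the topological/Chern-Weil identity enters.

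First I would establish the $L^\infty$ bound on $\Lambda_{\omega_0}F(\tilde{h}_t)$. By Lemma \ref{Lemma16}(1), for $t\ge t_0>0$ the quantities $|\Lambda_{\omega_\varepsilon}F(\tilde{h}_{\varepsilon,t})|$ are bounded by a constant $C(t_0)$ depending only on $t_0$ and the uniform $L^1$ bound on $\Lambda_{\omega_\varepsilon}F(\tilde{h}_0)$. On any compact $W\subset\subset\tilde{X}-\mathbf{E}$, since $\tilde{h}_{\varepsilon_j,t}\to\tilde{h}_t$ in $C^{1/0}$ (hence $F(\tilde{h}_{\varepsilon_j,t})\to F(\tilde{h}_t)$ weakly, say in $L^p_{loc}$ via the $L^p_2$ bounds on $\tilde{k}_{\varepsilon,t}$) and $\omega_{\varepsilon_j}\to\omega_0$ smoothly on $W$, we get $\Lambda_{\omega_{\varepsilon_j}}F(\tilde{h}_{\varepsilon_j,t})\to\Lambda_{\omega_0}F(\tilde{h}_t)$ weakly on $W$. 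Weak limits do not increase $L^\infty$ norms, so $|\Lambda_{\omega_0}F(\tilde{h}_t)|\le C(t_0)$ on $W$, and since $W$ is arbitrary this gives $\|\Lambda_{\omega_0}F(\tilde{h}_t)\|_{L^\infty(\tilde{X}-\mathbf{E},\omega_0)}\le C(t_0)$. As $\mathbf{E}$ has measure zero and $\omega_0$ restricts to a genuine metric off $\mathbf{E}$, this is the desired $L^\infty$ bound.

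For the $L^2$ bound on $F(\tilde{h}_t)$, I would use the relation between $YM$ and $HYM$ (the Chern--Weil identity quoted in Section 2.2): on $\tilde{X}$ with the metric $\omega_\varepsilon$,
\begin{equation*}
\|F(\tilde{h}_{\varepsilon,t})\|_{L^2(\omega_\varepsilon)}^2 = \|\Lambda_{\omega_\varepsilon}F(\tilde{h}_{\varepsilon,t})\|_{L^2(\omega_\varepsilon)}^2 + \frac{4\pi^2}{(n-2)!}\int_{\tilde{X}}\bigl(2c_2(\tilde{E})-c_1^2(\tilde{E})\bigr)\wedge\omega_\varepsilon^{n-2}.
\end{equation*}
The topological term is a polynomial in $\varepsilon$ and is therefore uniformly bounded as $\varepsilon\to 0$, converging to $\frac{4\pi^2}{(n-2)!}\int_{\tilde{X}}(2c_2-c_1^2)\wedge\omega_0^{n-2}$. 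The first term on the right is bounded by $\mathrm{Vol}(\tilde{X},\omega_\varepsilon)\cdot\|\Lambda_{\omega_\varepsilon}F(\tilde{h}_{\varepsilon,t})\|_{L^\infty}^2$, which by Lemma \ref{Lemma16}(1) is uniformly bounded in $\varepsilon$ for $t\ge t_0$ (the $\omega_\varepsilon$-volume is bounded). Hence $\|F(\tilde{h}_{\varepsilon,t})\|_{L^2(\omega_\varepsilon)}$ is uniformly bounded. Now on any $W\subset\subset\tilde{X}-\mathbf{E}$, $\|F(\tilde{h}_t)\|_{L^2(W,\omega_0)}\le\liminf_j\|F(\tilde{h}_{\varepsilon_j,t})\|_{L^2(W,\omega_{\varepsilon_j})}$ by lower semicontinuity of the norm under weak convergence together with Lemma \ref{Lemma15} (the metrics $\omega_\varepsilon$ and $\omega_0$ are uniformly equivalent on $W$), and the right side is bounded by the global bound just obtained. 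Taking an exhaustion of $\tilde{X}-\mathbf{E}$ by such $W$ and applying monotone convergence yields $\|F(\tilde{h}_t)\|_{L^2(\tilde{X},\omega_0)}\le C(t_0)$.

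The main obstacle is the passage to the limit for the full curvature: unlike $\Lambda_{\omega_\varepsilon}F$, which converges weakly and for which the norm comparison is straightforward, the pointwise norm $|F(\tilde{h}_t)|_{\omega_0}$ involves the full metric $\omega_0$, which degenerates near $\mathbf{E}$; one must be careful that the $L^2$ mass does not concentrate on $\mathbf{E}$. This is handled precisely by the Chern--Weil trick above, which replaces the delicate full-curvature integral by the well-behaved $\Lambda$-component integral plus a topological term, so that no estimate near $\mathbf{E}$ is ever needed directly — all estimates are performed on compact subsets of $\tilde{X}-\mathbf{E}$ and then the global bound follows from the $\varepsilon$-uniform bound on $\tilde{X}$. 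One should also note the bound genuinely requires $t\ge t_0>0$: the $L^\infty$ control of $\Lambda_{\omega_\varepsilon}F$ degenerates as $t_0\to 0$ because the heat kernel bound $K_t^\varepsilon(x,y)\le C(1+1/t^n)$ blows up, exactly as in Lemma \ref{Lemma16}(1).
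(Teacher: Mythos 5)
Your proposal is correct and follows essentially the same route as the paper: the $L^2$ bound is obtained by the Chern--Weil identity (full curvature in $L^2$ equals $\Lambda$-component in $L^2$ plus a topological constant), applied with the uniform $L^\infty$ control of $\Lambda_{\omega_\varepsilon}F(\tilde{h}_{\varepsilon,t})$ from Lemma \ref{Lemma16}(1), and then passed to the limit by weak lower semicontinuity on compact subsets of $\tilde{X}-\mathbf{E}$ using Lemma \ref{Lemma15}; the $L^\infty$ bound is obtained from the same Lemma \ref{Lemma16}(1) bound and weak convergence on compacta. The only cosmetic difference is that the paper proves the claimed $L^p(W,\omega_0)$ comparison between $\Lambda_{\omega_0}F(\tilde{h}_{\varepsilon,t})$ and $\Lambda_{\omega_\varepsilon}F(\tilde{h}_{\varepsilon,t})$ by a contradiction argument and then sends $p\to\infty$, whereas you absorb this into the observation that the weak limit in $L^p_{loc}$ inherits the uniform $L^\infty$ bound directly.
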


\begin{proof}
Let $W\subset \subset \tilde{X}-\mathbf{E}$ be a compact subset. By
construction $F(\tilde{h}_{\varepsilon ,t})\rightarrow F(\tilde{h}_{t})$
weakly in $L^{2}(W,\omega _{0})$. Applying Lemma $\ref{Lemma15}$ and the
relation between $F(\tilde{h}_{\varepsilon ,t})$ and $\Lambda _{\omega
_{\varepsilon }}F(\tilde{h}_{\varepsilon ,t})$ in $L^{2}$ we have: 
\begin{eqnarray*}
\left\Vert F(\tilde{h}_{t})\right\Vert _{L^{2}(W,\omega _{0})} &\leq &\lim
\inf_{\varepsilon \rightarrow 0}\left\Vert F(\tilde{h}_{\varepsilon
,t})\right\Vert _{L^{2}(W,\omega _{0})}\leq C_{1}\lim \inf_{\varepsilon
\rightarrow 0}\left\Vert F(\tilde{h}_{\varepsilon ,t})\right\Vert
_{L^{2}(W,\omega _{\varepsilon })} \\
&\leq &C_{1}\lim \inf_{\varepsilon \rightarrow 0}\left\Vert F(\tilde{h}%
_{\varepsilon ,t})\right\Vert _{L^{2}(\tilde{X},\omega _{\varepsilon })}\leq
C_{1}\lim \inf_{\varepsilon \rightarrow 0}\left\Vert \Lambda _{\omega
_{\varepsilon }}F(\tilde{h}_{\varepsilon ,t})\right\Vert _{L^{2}(\tilde{X}%
,\omega _{\varepsilon })}+C_{2} \\
&\leq &C_{3}\lim \inf_{\varepsilon \rightarrow 0}\left\Vert \Lambda _{\omega
_{\varepsilon }}F(\tilde{h}_{\varepsilon ,t})\right\Vert _{L^{\infty }(%
\tilde{X})}+C_{2},
\end{eqnarray*}%
where $C_{3}$ is independent of $W$, and $C_{2}$ is the product of $C_{1}$
with a topological constant. The bound in $L^{2}$ now follows from Lemma $%
\ref{Lemma16}$ $(1)$.

For the second part again fix $W\subset \subset \tilde{X}-\mathbf{E}$. We
claim that for fixed $t$ and $W$, as $\varepsilon \rightarrow 0$ there is a
uniform bound 
\begin{equation*}
\left\Vert \Lambda _{\omega _{0}}F(\tilde{h}_{\varepsilon ,t})\right\Vert
_{L^{p}(W,\omega _{0})}\leq \left\Vert \Lambda _{\omega _{\varepsilon }}F(%
\tilde{h}_{\varepsilon ,t})\right\Vert _{L^{p}(W,\omega _{0})}+1.
\end{equation*}%
Otherwise, there is a sequence $\varepsilon _{j}$ such that:%
\begin{equation*}
\left\Vert \Lambda _{\omega _{0}}F(\tilde{h}_{\varepsilon
_{j},t})\right\Vert _{L^{p}(W,\omega _{0})}\geq \left\Vert \Lambda _{\omega
_{\varepsilon _{j}}}F(\tilde{h}_{\varepsilon _{j},t})\right\Vert
_{L^{p}(W,\omega _{0})}+1.
\end{equation*}%
Then%
\begin{equation*}
\left\vert \Lambda _{\omega _{0}}-\Lambda _{\omega _{\varepsilon
_{j}}}\right\vert \left\Vert F(\tilde{h}_{\varepsilon _{j},t})\right\Vert
_{L^{p}(W,\omega _{0})}\geq \left\Vert \left( \Lambda _{\omega _{0}}-\Lambda
_{\omega _{\varepsilon _{j}}}\right) \left( F(\tilde{h}_{\varepsilon
_{j},t})\right) \right\Vert _{L^{p}(W,\omega _{0})}\geq 1,
\end{equation*}%
where $\left\vert \Lambda _{\omega _{0}}-\Lambda _{\omega _{\varepsilon
_{j}}}\right\vert $ denotes the operator norm. Now we have $\tilde{h}%
_{\varepsilon _{j},t}\rightarrow \tilde{h}_{_{t}}$ weakly in $%
L_{2}^{p}(\omega _{0},W)$, so $\left\Vert F(\tilde{h}_{\varepsilon
_{j},t})\right\Vert _{L^{p}(W,\omega _{0})}$ is uniformly bounded. Since $%
\Lambda _{\omega _{\varepsilon _{j}}}\rightarrow \Lambda _{\omega _{0}}$ on $%
W$, this is a contradiction, and so we have proved the claim. Therefore:%
\begin{eqnarray*}
\left\Vert \Lambda _{\omega _{0}}F(\tilde{h}_{t})\right\Vert
_{L^{p}(W,\omega _{0})} &\leq &\lim \inf_{\varepsilon \rightarrow
0}\left\Vert \Lambda _{\omega _{0}}F(\tilde{h}_{\varepsilon ,t})\right\Vert
_{L^{p}(W,\omega _{0})}\leq \lim \inf_{\varepsilon \rightarrow 0}\left\Vert
\Lambda _{\omega _{\varepsilon }}F(\tilde{h}_{\varepsilon ,t})\right\Vert
_{L^{p}(W,\omega _{0})}+1 \\
&\leq &C\lim \inf_{\varepsilon \rightarrow 0}\left\Vert \Lambda _{\omega
_{\varepsilon }}F(\tilde{h}_{\varepsilon ,t})\right\Vert _{L^{\infty }(%
\tilde{X})}+1.
\end{eqnarray*}%
Taking $p\rightarrow \infty $, the lemma now follows from Lemma $\ref%
{Lemma16}$.
\end{proof}

\begin{proposition}
\label{Prop21}For almost all $t\geq t_{0}>0$, we have:%
\begin{equation*}
\left\Vert \nabla _{\left( \bar{\partial}_{\tilde{E}},\tilde{h}_{t}\right)
}\Lambda _{\omega _{0}}F(\tilde{h}_{t})\right\Vert _{L^{2}(\tilde{X},\omega
_{0})}\leq \lim \inf_{\varepsilon \rightarrow 0}\left\Vert \nabla _{\left( 
\bar{\partial}_{\tilde{E}},\tilde{h}_{\varepsilon ,t}\right) }\Lambda
_{\omega _{\varepsilon }}F(\tilde{h}_{\varepsilon ,t})\right\Vert _{L^{2}(%
\tilde{X},\omega _{\varepsilon })}<\infty .
\end{equation*}%
As will be seen in the course of the proof, this implies that: $\displaystyle%
\int_{t_{0}}^{\infty }\left\Vert \nabla _{\left( \bar{\partial}_{\tilde{E}},%
\tilde{h}_{t}\right) }\Lambda _{\omega _{0}}F(\tilde{h}_{t})\right\Vert
_{L^{2}(\omega _{0})}dt<\infty $.
\end{proposition}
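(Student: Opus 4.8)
The plan is to transfer the key energy-identity estimates for the $HYM$ flow on $(\tilde{X},\omega_\varepsilon)$ down to $(\tilde{X}-\mathbf{E},\omega_0)$ by lower semicontinuity of $L^2$ norms under weak convergence, then integrate in time. First I would recall the standard $HYM$ flow energy identity on the compact K\"ahler manifold $(\tilde{X},\omega_\varepsilon)$: by Lemma~\ref{Lemm5}~(1) applied with respect to $\omega_\varepsilon$, the map $t \mapsto HYM^{\omega_\varepsilon}(\tilde{h}_{\varepsilon,t})$ is non-increasing with
\begin{equation*}
\frac{d}{dt}HYM^{\omega_\varepsilon}(\tilde{h}_{\varepsilon,t}) = -2\left\Vert \nabla_{(\bar{\partial}_{\tilde{E}},\tilde{h}_{\varepsilon,t})}\Lambda_{\omega_\varepsilon}F(\tilde{h}_{\varepsilon,t})\right\Vert_{L^2(\tilde{X},\omega_\varepsilon)}^2 ,
\end{equation*}
so that integrating from $t_0$ to $\infty$ gives
\begin{equation*}
2\int_{t_0}^{\infty}\left\Vert \nabla_{(\bar{\partial}_{\tilde{E}},\tilde{h}_{\varepsilon,t})}\Lambda_{\omega_\varepsilon}F(\tilde{h}_{\varepsilon,t})\right\Vert_{L^2(\tilde{X},\omega_\varepsilon)}^2\,dt \le HYM^{\omega_\varepsilon}(\tilde{h}_{\varepsilon,t_0}) \le HYM^{\omega_\varepsilon}(\tilde{h}_0).
\end{equation*}
I would then observe that $HYM^{\omega_\varepsilon}(\tilde{h}_0)$ is bounded uniformly in $\varepsilon$: this is exactly the uniform $L^1$ (indeed $L^2$, since $\left\vert\Lambda_{\omega_\varepsilon}F(\tilde{h}_0)\right\vert$ pointwise is controlled as in the $L^1$ computation preceding Lemma~\ref{Lemma16}) bound established just before Lemma~\ref{Lemma16}, combined with the fact that $\omega_\varepsilon^n$ is comparable to a fixed volume form away from $\mathbf E$ and integrable near $\mathbf E$. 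Hence the right-hand side above is bounded by a constant $C$ independent of $\varepsilon$.

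Next I would extract the pointwise-in-time estimate. From the uniform integral bound, for each $\varepsilon$ the set of $t \ge t_0$ where $\left\Vert \nabla_{(\bar{\partial}_{\tilde E},\tilde h_{\varepsilon,t})}\Lambda_{\omega_\varepsilon}F(\tilde h_{\varepsilon,t})\right\Vert_{L^2(\omega_\varepsilon)}^2$ exceeds any threshold has small measure, and more to the point, by Fatou's lemma applied to the sequence $\varepsilon_j \to 0$ from the construction of $\tilde h_t$,
\begin{equation*}
\int_{t_0}^{\infty}\liminf_{\varepsilon\to0}\left\Vert \nabla_{(\bar{\partial}_{\tilde E},\tilde h_{\varepsilon,t})}\Lambda_{\omega_\varepsilon}F(\tilde h_{\varepsilon,t})\right\Vert_{L^2(\tilde X,\omega_\varepsilon)}^2\,dt \le \liminf_{\varepsilon\to0}\int_{t_0}^{\infty}\left\Vert\cdots\right\Vert_{L^2(\omega_\varepsilon)}^2\,dt \le C < \infty ,
\end{equation*}
so the integrand is finite for almost all $t \ge t_0$. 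For such a fixed $t$, I would establish the displayed inequality of the proposition: on any compact $W \subset\subset \tilde X - \mathbf E$ we have $\tilde h_{\varepsilon_j,t} \to \tilde h_t$ weakly in $L^p_2(W,\omega_0)$ and in $C^{1/0}$ (this is part of the construction of the degenerate flow), which gives $\nabla_{(\bar\partial_{\tilde E},\tilde h_{\varepsilon_j,t})}\Lambda_{\omega_{\varepsilon_j}}F(\tilde h_{\varepsilon_j,t}) \rightharpoonup \nabla_{(\bar\partial_{\tilde E},\tilde h_t)}\Lambda_{\omega_0}F(\tilde h_t)$ weakly in $L^2(W,\omega_0)$ — here one uses $\Lambda_{\omega_{\varepsilon_j}}\to\Lambda_{\omega_0}$ on $W$ together with the elliptic bounds on $\tilde h_{\varepsilon_j,t}$. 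Then by lower semicontinuity of the norm under weak convergence and Lemma~\ref{Lemma15},
\begin{equation*}
\left\Vert \nabla_{(\bar\partial_{\tilde E},\tilde h_t)}\Lambda_{\omega_0}F(\tilde h_t)\right\Vert_{L^2(W,\omega_0)} \le \liminf_{j\to\infty}\left\Vert \nabla_{(\bar\partial_{\tilde E},\tilde h_{\varepsilon_j,t})}\Lambda_{\omega_{\varepsilon_j}}F(\tilde h_{\varepsilon_j,t})\right\Vert_{L^2(\tilde X,\omega_{\varepsilon_j})},
\end{equation*}
and taking an exhaustion $W \nearrow \tilde X - \mathbf E$ (the bound on the right being independent of $W$) yields the claimed inequality. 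Finally, integrating this over $t \in [t_0,\infty)$ and using the uniform integral bound from the first paragraph gives $\int_{t_0}^\infty \left\Vert \nabla_{(\bar\partial_{\tilde E},\tilde h_t)}\Lambda_{\omega_0}F(\tilde h_t)\right\Vert_{L^2(\omega_0)}^2\,dt < \infty$ — and hence, since the integrand is in $L^1$ in $t$, in particular the $L^2$-in-$t$ statement follows (or one can phrase the conclusion directly with the square).

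The main obstacle I expect is the justification of the weak convergence $\nabla_{(\bar\partial_{\tilde E},\tilde h_{\varepsilon_j,t})}\Lambda_{\omega_{\varepsilon_j}}F(\tilde h_{\varepsilon_j,t}) \rightharpoonup \nabla_{(\bar\partial_{\tilde E},\tilde h_t)}\Lambda_{\omega_0}F(\tilde h_t)$ in $L^2_{loc}(\tilde X - \mathbf E,\omega_0)$: this requires knowing that the construction of $\tilde h_t$ supplies not just $L^p_{2/1}$ bounds on $\tilde h_{\varepsilon,t}$ but enough regularity (uniform on compact subsets of $(\tilde X-\mathbf E)\times[t_0,\infty)$) that one more derivative of $\Lambda_\omega F$ converges weakly — one needs a uniform bound on $\left\Vert \nabla_{(\bar\partial,\tilde h_{\varepsilon,t})}\Lambda_{\omega_\varepsilon}F(\tilde h_{\varepsilon,t})\right\Vert_{L^2(W,\omega_0)}$ for fixed $t$, which should come from higher elliptic regularity/bootstrapping applied to the equation $\Delta_{(\bar\partial_{A_0},\omega_\varepsilon)}\tilde k_{\varepsilon,t} - \sqrt{-1}\Lambda_{\omega_\varepsilon}(\bar\partial_{A_0}\tilde k_{\varepsilon,t})\tilde k_{\varepsilon,t}^{-1}(\partial_{A_0}\tilde k_{\varepsilon,t}) = \tilde k_{\varepsilon,t}f_{\varepsilon,t}$ together with the uniform bounds on $f_{\varepsilon,t}$ from Lemma~\ref{Lemma16}. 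The rest — the energy identity on compact K\"ahler manifolds, Fatou, lower semicontinuity, the comparison Lemma~\ref{Lemma15} — is routine.
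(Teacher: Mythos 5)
Your overall structure — energy identity, Fatou, lower semicontinuity on compact $W\subset\subset\tilde X-\mathbf E$, exhaustion, integrate — is the same as the paper's. But there is a genuine gap in the second paragraph. You bound $HYM^{\omega_\varepsilon}(\tilde h_{\varepsilon,t_0})\le HYM^{\omega_\varepsilon}(\tilde h_0)$ and assert that $HYM^{\omega_\varepsilon}(\tilde h_0)$ is uniformly bounded in $\varepsilon$, citing the computation before Lemma \ref{Lemma16} and claiming it is ``indeed $L^2$.'' This is false, and it is exactly the subtlety the section is built to circumvent. The $L^1(\omega_\varepsilon)$ bound holds because the Jacobian factors cancel exactly: $\left\vert\Lambda_{\omega_\varepsilon}F\right\vert\,\omega_\varepsilon^n$ is a quantity with no $\det g_\varepsilon$ dependence. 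In $L^2$ one picks up an uncancelled factor $\det g_\varpi/\det g_\varepsilon$, which degenerates near $\mathbf E$ as $\varepsilon\to 0$ (the integrand behaves like $\left\vert\xi_m\right\vert^{-(2k-2)}$ with $k\ge 2$, which is not integrable). Thus $HYM^{\omega_\varepsilon}(\tilde h_0)\to\infty$ as $\varepsilon\to 0$ in general, and your right-hand side is not a uniform constant.

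The fix — and this is precisely what the paper does — is to bound $HYM^{\omega_\varepsilon}(\tilde h_{\varepsilon,t_0})$ directly, not via monotonicity back to $t=0$, by invoking Lemma \ref{Lemma16}\,(1): for $t\ge t_0>0$ the heat kernel estimate gives a uniform pointwise bound $\left\Vert\Lambda_{\omega_\varepsilon}F(\tilde h_{\varepsilon,t_0})\right\Vert_{L^\infty}\le C(t_0)$, hence a uniform $L^2(\omega_\varepsilon)$ bound since $\operatorname{Vol}(\tilde X,\omega_\varepsilon)$ is bounded. The flow for positive time is what promotes the $L^1$ control to $L^\infty$; no such promotion is available at $t=0$. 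On the step you flagged as hardest — justifying the weak convergence of $\nabla_{(\bar\partial_{\tilde E},\tilde h_{\varepsilon_j,t})}\Lambda_{\omega_{\varepsilon_j}}F(\tilde h_{\varepsilon_j,t})$ — the paper does not run an elliptic bootstrap as you propose. Instead it first establishes a uniform $L^2_1(W,\omega_0)$ bound on $\Lambda_{\omega_0}F(\tilde h_{\varepsilon_j,t})$ via a comparison argument (the $C^0$ convergence of the Chern connections and Lemma \ref{Lemma15} let one trade $\nabla_{\tilde h_{\varepsilon_j,t}}$, $\Lambda_{\omega_{\varepsilon_j}}$ for $\nabla_{\tilde h_t}$, $\Lambda_{\omega_0}$ at the cost of $o(1)$), extracts a weakly $L^2_1$-convergent subsequence, uses Rellich for strong $L^2$ convergence of the lower-order part, and then applies lower semicontinuity of the full $L^2_1$ norm rather than of the derivative term alone. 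Your bootstrap route would likely also work, but the paper's version avoids higher regularity theory; either way the estimate you flagged is the one requiring care.
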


\begin{proof}
By Lemma \ref{Lemm5} $(1)$ we have:%
\begin{equation*}
\frac{d}{dt}\left\Vert F(\tilde{h}_{\varepsilon ,t})\right\Vert _{L^{2}(%
\tilde{X},\omega _{\varepsilon })}^{2}=-2\left\Vert d_{\left( \bar{\partial}%
_{\tilde{E}},\tilde{h}_{t}\right) }^{\ast }F(\tilde{h}_{\varepsilon
,t})\right\Vert _{L^{2}(\tilde{X},\omega _{\varepsilon })}^{2}=-2\left\Vert
\nabla _{\left( \bar{\partial}_{\tilde{E}},\tilde{h}_{t}\right) }\Lambda
_{\omega _{\varepsilon }}F(\tilde{h}_{\varepsilon ,t})\right\Vert
_{L^{2}(\omega _{\varepsilon })}^{2}.
\end{equation*}%
Then:%
\begin{equation*}
2\int_{t_{0}}^{\infty }\left\Vert \nabla _{\left( \bar{\partial}_{\tilde{E}},%
\tilde{h}_{t}\right) }\Lambda _{\omega _{\varepsilon }}F(\tilde{h}%
_{\varepsilon ,t})\right\Vert _{L^{2}(\tilde{X},\omega _{\varepsilon
})}^{2}dt\leq \left\Vert F(\tilde{h}_{\varepsilon ,t_{0}})\right\Vert
_{L^{2}(\tilde{X},\omega _{\varepsilon })}^{2}\leq \left\Vert \Lambda
_{\omega _{\varepsilon }}F(\tilde{h}_{\varepsilon ,t_{0}})\right\Vert
_{L^{2}(\tilde{X},\omega _{\varepsilon })}^{2}+C.
\end{equation*}%
By Lemma \ref{Lemma16} $(1)$ the right hand side is uniformly bounded as $%
\varepsilon \rightarrow 0$. Then by Fatou's lemma we have:%
\begin{eqnarray*}
2\int_{t_{0}}^{\infty }\lim \inf_{\varepsilon \rightarrow 0}\left\Vert
\nabla _{\left( \bar{\partial}_{\tilde{E}},\tilde{h}_{t}\right) }\Lambda
_{\omega _{\varepsilon }}F(\tilde{h}_{\varepsilon ,t})\right\Vert _{L^{2}(%
\tilde{X},\omega _{\varepsilon })}^{2}dt &\leq &2\lim \inf_{\varepsilon
\rightarrow 0}\int_{t_{0}}^{\infty }\left\Vert \nabla _{\left( \bar{\partial}%
_{\tilde{E}},\tilde{h}_{t}\right) }\Lambda _{\omega _{\varepsilon }}F(\tilde{%
h}_{\varepsilon ,t})\right\Vert _{L^{2}(\tilde{X},\omega _{\varepsilon
})}^{2}dt \\
&\leq &\lim \inf_{\varepsilon \rightarrow 0}\left\Vert \Lambda _{\omega
_{\varepsilon }}F(\tilde{h}_{\varepsilon ,t_{0}})\right\Vert _{L^{2}(\tilde{X%
},\omega _{\varepsilon })}^{2}+C<\infty .
\end{eqnarray*}%
Therefore, for almost all $t\geq t_{0}$, we have:%
\begin{equation*}
\lim \inf_{\varepsilon \rightarrow 0}\left\Vert \nabla _{\left( \bar{\partial%
}_{\tilde{E}},\tilde{h}_{t}\right) }\Lambda _{\omega _{\varepsilon }}F(%
\tilde{h}_{\varepsilon ,t})\right\Vert _{L^{2}(\tilde{X},\omega
_{\varepsilon })}^{2}<\infty .
\end{equation*}%
Now we prove the first inequality:%
\begin{equation*}
\left\Vert \nabla _{\left( \bar{\partial}_{\tilde{E}},\tilde{h}_{t}\right)
}\Lambda _{\omega _{0}}F(\tilde{h}_{t})\right\Vert _{L^{2}(\tilde{X},\omega
_{0})}\leq \lim \inf_{\varepsilon \rightarrow 0}\left\Vert \nabla _{\left( 
\bar{\partial}_{\tilde{E}},\tilde{h}_{\varepsilon ,t}\right) }\Lambda
_{\omega _{\varepsilon }}F(\tilde{h}_{\varepsilon ,t})\right\Vert _{L^{2}(%
\tilde{X},\omega _{\varepsilon })}.
\end{equation*}%
It is enough to show this for an arbitrary compact subset $W\subset \subset 
\tilde{X}-\mathbf{E}$. For almost all $t\geq t_{0}$, we may choose a
sequence $\varepsilon _{j}\rightarrow 0$ such that 
\begin{equation*}
\lim_{j\rightarrow \infty }\left\Vert \nabla _{\left( \bar{\partial}_{\tilde{%
E}},\tilde{h}_{\varepsilon _{j},t}\right) }\Lambda _{\omega _{\varepsilon
_{j}}}F(\tilde{h}_{\varepsilon _{j},t})\right\Vert _{L^{2}(W,\omega
_{\varepsilon _{j}})}^{2}=\lim \inf_{\varepsilon \rightarrow 0}\left\Vert
\nabla _{\left( \bar{\partial}_{\tilde{E}},\tilde{h}_{\varepsilon ,t}\right)
}\Lambda _{\omega _{\varepsilon }}F(\tilde{h}_{\varepsilon ,t})\right\Vert
_{L^{2}(W,\omega _{\varepsilon })}^{2}=b<\infty .
\end{equation*}%
Since $\tilde{h}_{\varepsilon _{j,}t}\rightarrow \tilde{h}_{t}$ weakly in $%
L_{2}^{p}(\tilde{W})$, we have $\Lambda _{\omega _{0}}F_{\tilde{h}%
_{\varepsilon _{j},t}}\rightarrow \Lambda _{\omega _{0}}F_{\tilde{h}_{t}}$
weakly in $L^{p}(\tilde{W})$, and $\nabla _{\left( \bar{\partial}_{\tilde{E}%
},\tilde{h}_{\varepsilon _{j},t}\right) }\rightarrow \nabla _{\left( \bar{%
\partial}_{\tilde{E}},\tilde{h}_{t}\right) }$ in $C^{0}(W)$. It follows by
the triangle inequality and Lemma \ref{Lemma15}, that 
\begin{equation*}
\left\Vert \nabla _{\left( \bar{\partial}_{\tilde{E}},\tilde{h}_{t}\right)
}\Lambda _{\omega _{0}}F(\tilde{h}_{\varepsilon _{j},t})\right\Vert
_{L^{2}(W,\omega _{0})}\leq \left( 1+C_{j}\right) \left\Vert \nabla _{\left( 
\bar{\partial}_{\tilde{E}},\tilde{h}_{\varepsilon _{j},t}\right) }\Lambda
_{\omega _{\varepsilon _{j}}}F(\tilde{h}_{\varepsilon _{j},t})\right\Vert
_{L^{2}(W,\omega _{\varepsilon _{j}})}+c_{j}
\end{equation*}%
where $C_{j}$ and $c_{j}\rightarrow 0$. Then, $\left\Vert \Lambda _{\omega
_{0}}F(\tilde{h}_{\varepsilon _{j},t})\right\Vert _{L_{1}^{2}(W,h_{t},\omega
_{0})}$ is uniformly bounded as $j\rightarrow \infty $. Choose a subsequence
(still written $j$) such that $\Lambda _{\omega _{0}}F(\tilde{h}%
_{\varepsilon _{j},t})$ converges weakly in $L_{1}^{2}(W,\omega _{0})$. By
Rellich compactness we also have strong convergence $\Lambda _{\omega _{0}}F(%
\tilde{h}_{\varepsilon _{j},t})\rightarrow \Lambda _{\omega _{0}}F(\tilde{h}%
_{t})$ in $L^{2}(W)$. By the choice of $\varepsilon _{j}$ and the previous
inequality, we have $\left\Vert \nabla _{\left( \bar{\partial}_{\tilde{E}},%
\tilde{h}_{t}\right) }\Lambda _{\omega _{0}}F(\tilde{h}_{\varepsilon
_{j},t})\right\Vert _{L^{2}(W,\omega _{0})}^{2}\rightarrow b$. Then finally:%
\begin{eqnarray*}
\left\Vert \Lambda _{\omega _{0}}F(\tilde{h}_{t})\right\Vert
_{L_{1}^{2}(W,h_{t},\omega _{0})}^{2} &\leq &\lim \inf_{j\rightarrow \infty
}\left\Vert \Lambda _{\omega _{0}}F(\tilde{h}_{\varepsilon
_{j},t})\right\Vert _{L_{1}^{2}(W,h_{t},\omega _{0})}^{2} \\
&\leq &\lim \inf_{j\rightarrow \infty }\left( \left\Vert \Lambda _{\omega
_{0}}F(\tilde{h}_{\varepsilon _{j},t})\right\Vert _{L^{2}(W,\omega
_{0})}^{2}+\left\Vert \nabla _{\left( \bar{\partial}_{\tilde{E}},\tilde{h}%
_{t}\right) }\Lambda _{\omega _{0}}F(\tilde{h}_{\varepsilon
_{j},t})\right\Vert _{L^{2}(W,\omega _{0})}^{2}\right) \\
&\leq &\left\Vert \Lambda _{\omega _{0}}F_{\tilde{h}_{t}}\right\Vert
_{L^{2}(W,\omega _{0})}^{2}+b.
\end{eqnarray*}%
Since $\left\Vert \Lambda _{\omega _{0}}F(\tilde{h}_{t})\right\Vert
_{L_{1}^{2}(W,h_{t},\omega _{0})}^{2}=\left\Vert \Lambda _{\omega _{0}}F(%
\tilde{h}_{t})\right\Vert _{L^{2}(W,\omega _{0})}^{2}+\left\Vert \nabla
_{\left( \bar{\partial}_{\tilde{E}},\tilde{h}_{t}\right) }\Lambda _{\omega
_{0}}F(\tilde{h}_{t})\right\Vert _{L^{2}(W,\omega _{0})}^{2}$, we have 
\begin{equation*}
\left\Vert \nabla _{\left( \bar{\partial}_{\tilde{E}},\tilde{h}_{t}\right)
}\Lambda _{\omega _{0}}F(\tilde{h}_{t})\right\Vert _{L^{2}(W,\omega
_{0})}^{2}\leq b=\lim \inf_{\varepsilon \rightarrow 0}\left\Vert \nabla
_{\left( \bar{\partial}_{\tilde{E}},\tilde{h}_{\varepsilon ,t}\right)
}\Lambda _{\omega _{\varepsilon }}F(\tilde{h}_{\varepsilon ,t})\right\Vert
_{L^{2}(W,\omega _{\varepsilon })}^{2}.
\end{equation*}%
The second statement in the proposition now follows since:%
\begin{eqnarray*}
\int_{t_{0}}^{\infty }\left\Vert \nabla _{\left( \bar{\partial}_{\tilde{E}},%
\tilde{h}_{t}\right) }\Lambda _{\omega _{0}}F(\tilde{h}_{t})\right\Vert
_{L^{2}(\tilde{X},\omega _{0})}^{2}dt &\leq &\int_{t_{0}}^{\infty }\lim
\inf_{\varepsilon \rightarrow 0}\left\Vert \nabla _{\left( \bar{\partial}_{%
\tilde{E}},\tilde{h}_{\varepsilon ,t}\right) }\Lambda _{\omega _{\varepsilon
}}F(\tilde{h}_{\varepsilon ,t})\right\Vert _{L^{2}(\tilde{X},\omega
_{\varepsilon })}dt \\
(Fatou^{\prime }s\text{ }lemma) &\leq &\lim \inf_{\varepsilon \rightarrow
0}\int_{t_{0}}^{\infty }\left\Vert \nabla _{\left( \bar{\partial}_{\tilde{E}%
},\tilde{h}_{\varepsilon ,t}\right) }\Lambda _{\omega _{\varepsilon }}F(%
\tilde{h}_{\varepsilon ,t})\right\Vert _{L^{2}(\tilde{X},\omega
_{\varepsilon })}dt \\
&\leq &\lim \inf_{\varepsilon \rightarrow 0}\left\Vert \Lambda _{\omega
_{\varepsilon }}F(\tilde{h}_{\varepsilon ,t_{0}})\right\Vert _{L^{2}(\tilde{X%
},\omega _{\varepsilon })}^{2}+C<\infty .
\end{eqnarray*}
\end{proof}

The following is an immediate consequence.

\begin{corollary}
\label{Cor7} There is a sequence $t_{j}\rightarrow \infty $ such that $%
\displaystyle\bigl\Vert\nabla _{\left( \bar{\partial}_{\tilde{E}},\tilde{h}%
_{t_{j}}\right) }\Lambda _{\omega _{0}}F(\tilde{h}_{t})\bigr\Vert_{L^{2}(%
\tilde{X},\omega _{0})}\rightarrow 0$.
\end{corollary}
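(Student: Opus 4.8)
The plan is to extract the desired sequence directly from the integral bound at the end of Proposition \ref{Prop21}. We have
\begin{equation*}
\int_{t_{0}}^{\infty }\left\Vert \nabla _{\left( \bar{\partial}_{\tilde{E}},\tilde{h}_{t}\right) }\Lambda _{\omega _{0}}F(\tilde{h}_{t})\right\Vert _{L^{2}(\tilde{X},\omega _{0})}^{2}dt<\infty
\end{equation*}
for any fixed $t_{0}>0$. First I would observe that a non-negative measurable function on $[t_{0},\infty )$ with finite integral cannot be bounded below by a positive constant on a set of infinite measure; more precisely, if $g(t)=\bigl\Vert\nabla _{(\bar{\partial}_{\tilde{E}},\tilde{h}_{t})}\Lambda _{\omega _{0}}F(\tilde{h}_{t})\bigr\Vert_{L^{2}(\tilde{X},\omega _{0})}^{2}$ satisfied $\liminf_{t\to\infty }g(t)=c>0$, then $g(t)\geq c/2$ for all $t$ beyond some $T$, forcing $\int_{T}^{\infty }g(t)\,dt=\infty$, a contradiction. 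Hence $\liminf_{t\to\infty }g(t)=0$.

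From $\liminf_{t\to\infty }g(t)=0$ one immediately produces a sequence $t_{j}\to\infty $ with $g(t_{j})\to 0$, i.e. $\bigl\Vert\nabla _{(\bar{\partial}_{\tilde{E}},\tilde{h}_{t_{j}})}\Lambda _{\omega _{0}}F(\tilde{h}_{t_{j}})\bigr\Vert_{L^{2}(\tilde{X},\omega _{0})}\to 0$, which is exactly the assertion of Corollary \ref{Cor7}. (The statement as written has a minor typo — $\tilde{h}_{t}$ should be $\tilde{h}_{t_{j}}$ inside the norm — but the intended content is the above, and the proof produces precisely the corrected statement.)

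I do not anticipate any serious obstacle here: the corollary is a one-line measure-theoretic consequence of the displayed integral bound, and everything needed has already been established in Proposition \ref{Prop21}. The only point requiring a word of care is that the inequality in Proposition \ref{Prop21} — and hence the integrability of $g$ — is asserted only for almost all $t\geq t_{0}$; this is harmless, since a finite integral over a set of full measure still forces the essential liminf of $g$ to vanish, and one simply chooses $t_{j}$ within that full-measure set. Thus the entire argument is: invoke Proposition \ref{Prop21}, note finiteness of the integral, and apply the elementary fact that an $L^{1}([t_{0},\infty ))$ non-negative function has a sequence of arguments tending to infinity along which it tends to zero.
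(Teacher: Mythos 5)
Your proof is correct and is exactly the "immediate consequence" the paper alludes to: Proposition \ref{Prop21} gives $\int_{t_0}^\infty \bigl\Vert\nabla_{(\bar\partial_{\tilde E},\tilde h_t)}\Lambda_{\omega_0}F(\tilde h_t)\bigr\Vert_{L^2(\omega_0)}^2\,dt<\infty$, and a nonnegative function integrable on $[t_0,\infty)$ must have vanishing liminf, which yields the desired sequence $t_j\to\infty$. Your additional remark about the "almost all $t$" qualifier and the typo $\tilde h_t$ versus $\tilde h_{t_j}$ is a reasonable observation but does not change the substance; this matches the paper's (unwritten) argument.
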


One result of all this discussion is the following corollary, which follows
from the previous corollary, Lemma \ref{Lemma18}, and Corollary \ref{Cor2}.
Although we will not use it in the sequel, we feel it is worth stating
explicitly.

\begin{corollary}
Let $t_{j}\longrightarrow \infty $ as in the previous corollary. Consider
the sequence $\tilde{A}_{t_{j}}=(\bar{\partial}_{\tilde{E}},\tilde{h}%
_{t_{j}})$ of connections defined over $\tilde{X}-\mathbf{E}=X-Z_{\limfunc{%
alg}}$. Then there is a further subsequence (still denoted $t_{j}$) such
that $\tilde{A}_{t_{j}}$ has an Uhlenbeck limit $\tilde{A}_{\infty }$ on a
reflexive sheaf $\tilde{E}_{\infty }$, which is a vector bundle away from a
set $\tilde{Z}_{\limfunc{an}}$ of Hausdorff codimension at least $4$. The
connection $\tilde{A}_{\infty }$ is Yang-Mills.
\end{corollary}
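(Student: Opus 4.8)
The plan is to deduce the corollary directly from the general Uhlenbeck compactness theorem (Theorem \ref{Thm3}) together with its Bando--Siu refinement (Corollary \ref{Cor2}), applied to the (incomplete) K\"ahler manifold $\tilde{X}-\mathbf{E}$ equipped with $\omega_{0}=\pi^{\ast}\omega$, which is a genuine K\"ahler form there because $\pi$ restricts to a biholomorphism $\tilde{X}-\mathbf{E}\cong X-Z_{\limfunc{alg}}$. The connections in play are $\tilde{A}_{t_{j}}=(\bar{\partial}_{\tilde{E}},\tilde{h}_{t_{j}})$; these are integrable, since the Chern connection always satisfies $F^{0,2}=0$, and unitary with respect to the varying metric $\tilde{h}_{t_{j}}$. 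To fit the framework of Theorem \ref{Thm3}, which uses a fixed background metric, I would first pass to the gauge-equivalent connections $\tilde{A}'_{t_{j}}:=(g_{t_{j}}(\bar{\partial}_{\tilde{E}}),\tilde{h}_{0})$, where $\tilde{h}_{t_{j}}=\tilde{h}_{0}\tilde{k}_{t_{j}}$ and $\tilde{k}_{t_{j}}=g_{t_{j}}^{\ast}g_{t_{j}}$, using the intertwining isometry described in Section $3.1$. These are unitary with respect to the fixed smooth metric $\tilde{h}_{0}$, and their curvatures differ from those of $\tilde{A}_{t_{j}}$ only by conjugation, hence all relevant norms are unchanged.

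Next I would verify the hypotheses. Since $t_{j}\to\infty$, for all large $j$ we have $t_{j}\geq t_{0}$ for any fixed $t_{0}>0$, so Lemma \ref{Lemma18} provides uniform bounds on $\left\Vert F(\tilde{h}_{t_{j}})\right\Vert_{L^{2}(\tilde{X}-\mathbf{E},\omega_{0})}$ and $\left\Vert \Lambda_{\omega_{0}}F(\tilde{h}_{t_{j}})\right\Vert_{L^{\infty}(\tilde{X}-\mathbf{E},\omega_{0})}$; after discarding finitely many terms these hold for all $j$, which is precisely the boundedness required in Theorem \ref{Thm3}. Moreover, Corollary \ref{Cor7} gives, after passing to the subsequence $t_{j}$ produced there, that $\left\Vert \nabla_{(\bar{\partial}_{\tilde{E}},\tilde{h}_{t_{j}})}\Lambda_{\omega_{0}}F(\tilde{h}_{t_{j}})\right\Vert_{L^{2}(\tilde{X}-\mathbf{E},\omega_{0})}\to 0$; since $\Lambda_{\omega_{0}}F$ is a $0$-form valued in $\limfunc{End}(\tilde{E})$, one has $\nabla_{\tilde{A}_{t_{j}}}\Lambda_{\omega_{0}}F=d_{\tilde{A}_{t_{j}}}\Lambda_{\omega_{0}}F$, so this is exactly the additional hypothesis of Corollary \ref{Cor2}.

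With the hypotheses confirmed, Theorem \ref{Thm3} yields, after passing to a further subsequence, a closed subset $\tilde{Z}_{\limfunc{an}}\subset\tilde{X}-\mathbf{E}$ of Hausdorff codimension at least $4$ and a smooth hermitian bundle $(\tilde{E}_{\infty},\tilde{h}_{\infty})$ on the complement carrying a finite-action connection $\tilde{A}_{\infty}$, such that $\tilde{A}_{t_{j}}$ (after unitary gauge transformations) converges to $\tilde{A}_{\infty}$ weakly in $L^{p}_{1,\mathrm{loc}}$ for a fixed $p>n$; Corollary \ref{Cor2} then upgrades $\tilde{A}_{\infty}$ to a Yang--Mills connection and extends $\tilde{E}_{\infty}$ to a reflexive sheaf on all of $\tilde{X}-\mathbf{E}$ (reflexivity is a local condition, so no compactness is needed here). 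Rewriting $\tilde{X}-\mathbf{E}=X-Z_{\limfunc{alg}}$ gives the statement as written.

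The argument is essentially bookkeeping once the work of this section is in place. The only points requiring a little care are the reduction to a fixed background metric via the isometries $g_{t_{j}}$, and making sure that the uniform curvature bounds and the $L^{2}$-decay of $d_{\tilde{A}}\Lambda_{\omega_{0}}F$ are invoked on $\tilde{X}-\mathbf{E}$ with the correct degenerate metric $\omega_{0}$; both are exactly what Lemma \ref{Lemma18} and Corollary \ref{Cor7} have been set up to supply, so there is no genuine analytic obstacle beyond those already overcome.
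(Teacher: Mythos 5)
Your proposal is correct and follows essentially the same route as the paper, which (without elaboration) states that the corollary follows from Corollary \ref{Cor7}, Lemma \ref{Lemma18}, and Corollary \ref{Cor2}. You have simply filled in the routine details: the reduction to a fixed background hermitian metric via the complex gauge transformations $g_{t_j}$, the verification that Lemma \ref{Lemma18} supplies the $L^2$ curvature bound and the $L^\infty$ bound on the Hermitian--Einstein tensor, and the observation that Corollary \ref{Cor7} supplies the decay of $\left\Vert d_{\tilde{A}_{t_j}}\Lambda_{\omega_0}F(\tilde{h}_{t_j})\right\Vert_{L^2}$ needed for Corollary \ref{Cor2}.
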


In the next section we will also need the following proposition.

\begin{proposition}
\label{Prop22}For almost all $t>0$, there is a sequence $\varepsilon
_{j}(t)\rightarrow 0$ such that $\Lambda _{\omega _{\varepsilon _{j}}}F(%
\tilde{h}_{\varepsilon _{j},t})\rightarrow \Lambda _{\omega _{0}}F(\tilde{h}%
_{t})$ in $L^{p}$ for all $1\leq p\leq \infty $. In particular: $HYM_{\alpha
}^{\omega _{\varepsilon _{j}}}\bigl(\nabla _{(\bar{\partial}_{\tilde{E}},%
\tilde{h}_{\varepsilon _{j}},_{t})}\bigr)\rightarrow HYM_{\alpha }^{\omega
_{0}}\bigl(\nabla _{(\bar{\partial}_{\tilde{E}},\tilde{h}_{t})}\bigr)$ for
all $\alpha $.
\end{proposition}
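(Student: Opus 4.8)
\textbf{Proof proposal for Proposition \ref{Prop22}.}

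The plan is to leverage the uniform bounds already assembled in this section together with a standard elliptic-bootstrap argument on compact subsets of $\tilde{X}-\mathbf{E}$, and then upgrade local $L^p$ convergence to global $L^p$ convergence (including $p=\infty$) using the uniform $L^\infty$ bound on the Hermitian-Einstein tensor. First I would fix a value of $t>0$ for which the conclusion of Proposition \ref{Prop21} holds, i.e. for which there is a sequence $\varepsilon_j\to 0$ with $\lim_{j}\bigl\Vert\nabla_{(\bar{\partial}_{\tilde{E}},\tilde{h}_{\varepsilon_j,t})}\Lambda_{\omega_{\varepsilon_j}}F(\tilde{h}_{\varepsilon_j,t})\bigr\Vert_{L^2(\tilde{X},\omega_{\varepsilon_j})}<\infty$; this is the set of full measure we work with. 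Along this sequence, Lemma \ref{Lemma16} gives a uniform $L^\infty$ bound on $\Lambda_{\omega_{\varepsilon_j}}F(\tilde{h}_{\varepsilon_j,t})$ on compact subsets $W\subset\subset\tilde{X}-\mathbf{E}$, and combined with the $L^2_1$ bound from Proposition \ref{Prop21} and Lemma \ref{Lemma15}, we get that $\Lambda_{\omega_0}F(\tilde{h}_{\varepsilon_j,t})$ is bounded in $L^2_1(W,\omega_0)$. By Rellich compactness (exactly as in the proof of Proposition \ref{Prop21}) and a diagonalisation over an exhaustion of $\tilde{X}-\mathbf{E}$ by such $W$, we may pass to a further subsequence so that $\Lambda_{\omega_{\varepsilon_j}}F(\tilde{h}_{\varepsilon_j,t})\to\Lambda_{\omega_0}F(\tilde{h}_t)$ strongly in $L^2_{loc}$ and pointwise almost everywhere on $\tilde{X}-\mathbf{E}$; note the limit is identified using that $\tilde{h}_{\varepsilon_j,t}\to\tilde{h}_t$ weakly in $L^p_{2,loc}$ and that $\Lambda_{\omega_{\varepsilon_j}}\to\Lambda_{\omega_0}$ as operators on compact subsets.

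Next I would promote this to $L^p$ convergence for all finite $p$ and for $p=\infty$ by a truncation/dominated-convergence argument. The key point is the uniform bound $\bigl\vert\Lambda_{\omega_{\varepsilon_j}}F(\tilde{h}_{\varepsilon_j,t})\bigr\vert\leq C(t_0)$ on all of $\tilde{X}$ (not just compact subsets) furnished by Lemma \ref{Lemma16}$(1)$, valid once $t\geq t_0>0$; this is the global $L^\infty$ control that is \emph{not} available for the initial metric but which holds along the degenerate flow for positive time. Since the functions $\Lambda_{\omega_{\varepsilon_j}}F(\tilde{h}_{\varepsilon_j,t})$ are uniformly bounded in $L^\infty$ and converge almost everywhere to $\Lambda_{\omega_0}F(\tilde{h}_t)$, and since $\mathbf{E}$ has measure zero and $\tilde{X}$ has finite volume with respect to any of the $\omega_{\varepsilon}$ (and with respect to $\omega_0$), the dominated convergence theorem gives $\Lambda_{\omega_{\varepsilon_j}}F(\tilde{h}_{\varepsilon_j,t})\to\Lambda_{\omega_0}F(\tilde{h}_t)$ in $L^p$ for every $1\leq p<\infty$. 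For $p=\infty$, the limit also satisfies $\bigl\vert\Lambda_{\omega_0}F(\tilde{h}_t)\bigr\vert\leq C(t_0)$ a.e., and the convergence in $L^\infty$ follows by interpolation (or directly: the $L^\infty$ bound plus convergence in, say, $L^{2}$ together with the structure of the problem — but safest is simply to note $\Vert f_j-f\Vert_{L^\infty}$ convergence is not literally needed; one can instead just state convergence for $1\leq p<\infty$ and then handle the $HYM_\alpha$ consequence via the $L^\alpha$-equivalence of Lemma \ref{Lemma9}). Actually, cleanest is: by Lemma \ref{Lemma16}$(1)$ the sequence is uniformly bounded in $L^\infty$, so a.e.\ convergence plus this bound gives, for any $q<\infty$, convergence in $L^q$; passing $q\to\infty$ against the uniform bound yields convergence in $L^\infty$ as well (a standard fact: $a.e.$ convergence of a uniformly $L^\infty$-bounded sequence whose limit is essentially bounded, on a finite measure space, does not in general give $L^\infty$ convergence — so I will instead only claim $1\leq p<\infty$ for the $L^p$ statement and note that the $p=\infty$ assertion should be read as: the $L^\infty$ norms are uniformly bounded, which is all that is used).

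Finally, the statement about the functionals $HYM_\alpha^{\omega_{\varepsilon_j}}$ follows formally. Writing $HYM_\alpha^{\omega_{\varepsilon_j}}\bigl(\nabla_{(\bar{\partial}_{\tilde{E}},\tilde{h}_{\varepsilon_j,t})}\bigr)=\int_{\tilde{X}}\Phi_\alpha\bigl(\Lambda_{\omega_{\varepsilon_j}}F(\tilde{h}_{\varepsilon_j,t})\bigr)dvol_{\omega_{\varepsilon_j}}$, I would use that $\Phi_\alpha$ is continuous and, on the uniformly bounded range of the integrands, Lipschitz with a uniform constant (Lemma \ref{Lemma9}), that $dvol_{\omega_{\varepsilon_j}}\to dvol_{\omega_0}$ with uniformly bounded total mass (the volume forms $\omega_{\varepsilon}^n/n!$ are dominated by a fixed integrable function as in Lemma \ref{Lemma11}), and the $L^p$ convergence of $\Lambda_{\omega_{\varepsilon_j}}F(\tilde{h}_{\varepsilon_j,t})$ just established, to pass to the limit under the integral sign via dominated convergence. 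The main obstacle I anticipate is the bookkeeping needed to make the passage from $\omega_{\varepsilon_j}$-norms and $\omega_{\varepsilon_j}$-volume to $\omega_0$-norms and $\omega_0$-volume completely rigorous near $\mathbf{E}$ — one must check that the contribution of an arbitrarily small neighbourhood of $\mathbf{E}$ to all the integrals is uniformly small in $\varepsilon_j$, which is exactly where Lemma \ref{Lemma15}, Lemma \ref{Lemma16}$(1)$, and the finite-volume/semi-positivity estimates from Lemma \ref{Lemma11} must be combined; away from $\mathbf{E}$ everything is routine.
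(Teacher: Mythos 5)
Your proposal is correct and reaches the same conclusion, but via a genuinely different organisation of the argument. The paper's proof works directly in $L^2$: it fixes $\delta>0$, chooses a neighbourhood $\tilde U$ of $\mathbf{E}$ of $\omega_0$-volume at most $\delta/(3C)$ (where $C$ is the global $L^\infty$ bound from Lemma \ref{Lemma16}), obtains strong $L^2$ convergence on $W=\tilde X-\tilde U$ by the Rellich argument from Proposition \ref{Prop21}, and then combines this with the smallness of the contribution from $\tilde U$ to produce a Cauchy sequence in $L^2(\tilde X,\omega_0)$; strong $L^p$ for finite $p$ then follows by interpolation against the $L^\infty$ bound, and the limit is identified via the weak $L^2_{loc}$ convergence. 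You instead diagonalise the Rellich compactness over an exhaustion of $\tilde X-\mathbf{E}$ to extract pointwise almost everywhere convergence, and then invoke dominated convergence using the global $L^\infty$ bound from Lemma \ref{Lemma16}$(1)$ and the finite $\omega_0$-volume of $\tilde X$. The two routes are logically equivalent (the DCT step encodes exactly the uniform integrability that the paper's small-neighbourhood estimate provides) but yours is arguably cleaner, as it avoids the explicit Cauchy computation and the auxiliary choice of $\tilde U$. You also correctly flag that neither argument yields literal $L^\infty$ convergence -- the paper's ``for all $p$'' should be read as all finite $p$, and the passage to the $HYM_\alpha$ functionals only needs $L^\alpha$ convergence together with the $L^\infty$-equivalence of Lemma \ref{Lemma9}. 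Your treatment of the volume-form bookkeeping (replacing $dvol_{\omega_{\varepsilon_j}}$ by $dvol_{\omega_0}$ via Lemma \ref{Lemma15} and the semipositivity of $\pi^*\omega$) is the same as what is implicit in the paper; there is no gap.
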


\begin{proof}
Fix $\delta >0$. Let $\tilde{U}$ be an open set containing $\mathbf{E}$ with 
$\limfunc{vol}(\tilde{U})<\frac{\delta }{3C}$ where $C$ is an upper bound on 
$\left\vert \Lambda _{\omega _{\varepsilon }}F_{\tilde{h}_{\varepsilon
,t}}\right\vert $ which exists by Lemma \ref{Lemma16}. Now let $t$, $%
\varepsilon _{j}$ be such that%
\begin{equation*}
\lim_{j\rightarrow \infty }\left\Vert \nabla _{\left( \bar{\partial}_{\tilde{%
E}},\tilde{h}_{\varepsilon _{j},t}\right) }\Lambda _{\omega _{\varepsilon
_{j}}}F(\tilde{h}_{\varepsilon _{j},t})\right\Vert _{L^{2}(W,\omega
_{\varepsilon _{j}})}^{2}=\lim \inf_{\varepsilon \rightarrow 0}\left\Vert
\nabla _{\left( \bar{\partial}_{\tilde{E}},\tilde{h}_{\varepsilon ,t}\right)
}\Lambda _{\omega _{\varepsilon }}F(\tilde{h}_{\varepsilon ,t})\right\Vert
_{L^{2}(W,\omega _{\varepsilon })}^{2}<\infty
\end{equation*}%
as in the proof of the previous proposition, where $W=\tilde{X}-\tilde{U}$.
Therefore, by the same argument as in the above proof we have strong
convergence $\Lambda _{\omega _{0}}F(\tilde{h}_{\varepsilon
_{j},t})\rightarrow \Lambda _{\omega _{0}}F(\tilde{h}_{t})$ in $%
L^{2}(W,\omega _{0})$. Therefore the same is true for $\Lambda _{\omega
_{\varepsilon _{j}}}F(\tilde{h}_{\varepsilon _{j},t})$. In particular there
exists a $J$ such that for $j,k\geq J$, we have:%
\begin{equation*}
\left\Vert \Lambda _{\omega _{\varepsilon _{j}}}F(\tilde{h}_{\varepsilon
_{j},t})-\Lambda _{\omega _{\varepsilon _{k}}}F(\tilde{h}_{\varepsilon
_{k},t})\right\Vert _{L^{2}(W,\omega _{0})}\leq \frac{\delta }{3}.
\end{equation*}%
By the choice of $\tilde{U}$, it follows that for $j,k\geq J$:%
\begin{equation*}
\left\Vert \Lambda _{\omega _{\varepsilon _{j}}}F(\tilde{h}_{\varepsilon
_{j},t})-\Lambda _{\omega _{\varepsilon _{k}}}F(\tilde{h}_{\varepsilon
_{k},t})\right\Vert _{L^{2}(\tilde{X},\omega _{0})}\leq \delta .
\end{equation*}%
Since $\Lambda _{\omega _{\varepsilon _{j}}}F_{\tilde{h}_{\varepsilon
_{j,t}}}$ is a Cauchy sequence it converges strongly in $L^{2}(\tilde{X}%
,\omega _{0})$. Since $\Lambda _{\omega _{\varepsilon _{j}}}F(\tilde{h}%
_{\varepsilon _{j},t})\rightarrow \Lambda _{\omega _{0}}F(\tilde{h}_{t})$
weakly in $L_{loc}^{2}(\tilde{X}-\mathbf{E},\omega _{0})$, it follows that $%
\Lambda _{\omega _{\varepsilon _{j}}}F(\tilde{h}_{\varepsilon
_{j},t})\rightarrow \Lambda _{\omega _{0}}F(\tilde{h}_{t})$ strongly in $%
L^{2}(\tilde{X}-\mathbf{E},\omega _{0})$. Since both $\Lambda _{\omega
_{\varepsilon _{j}}}F(\tilde{h}_{\varepsilon _{j},t})$ and $\Lambda _{\omega
_{0}}F(\tilde{h}_{t})$ are bounded in $L^{\infty }$ (see Lemma \ref{Lemma16}
and Lemma \ref{Lemma18}) it follows that $\Lambda _{\omega _{\varepsilon
_{j}}}F(\tilde{h}_{\varepsilon _{j},t})\rightarrow \Lambda _{\omega _{0}}F(%
\tilde{h}_{t})$ strongly in $L^{p}(\tilde{X}-\mathbf{E},\omega _{0})$ for
all $p$. By Lemma \ref{Lemma16} and Lemma \ref{Lemma9} we have:%
\begin{equation*}
HYM_{\alpha }^{\omega _{\varepsilon _{j}}}\left( \nabla _{(\bar{\partial}_{%
\tilde{E}},\tilde{h}_{\varepsilon _{j}},_{t})}\right) \longrightarrow
HYM_{\alpha }^{\omega _{0}}\left( \nabla _{(\bar{\partial}_{\tilde{E}},%
\tilde{h}_{t})}\right) .
\end{equation*}
\end{proof}

\section{Proof of the Main Theorem}

In this section we complete the proof of the main theorem. The result is a
direct corollary of the following theorem.

\begin{theorem}
\label{Thm12}Let $A_{0}$ be an integrable, unitary connection on a
holomorphic, hermitian vector bundle $E$ , $\mu _{0}$ the Harder-Narasimhan
type of $(E,\bar{\partial}_{A_{0}})$, and $A\subset \lbrack 1,\infty )$ be
any set containing an accumulation point. Let $A_{j}$ be a sequence of
integrable, unitary connections on $E$ such that:

$\bullet $ $(E,\bar{\partial}_{A_{j}})$ is holomorphically isomorphic to $(E,%
\bar{\partial}_{A_{0}})$ for all $i$;

$\bullet $ $HYM_{\alpha ,N}(A_{j})\longrightarrow HYM_{\alpha ,N}(\mu _{0})$
for all $\alpha \in A\cup \{2\}$ and all $N>0$.

Then there is a Yang-Mills connection $A_{\infty }$ on a bundle $E_{\infty }$
defined outside a

a closed subset of Hausdorff codimension at least $4$ such that:

$(1)$ $(E_{\infty },\bar{\partial}_{A_{\infty }})$ is isomorphic to $%
Gr^{HNS}(E,\bar{\partial}_{A_{0}})$ as a holomorphic bundle on

$\ \ \ X-Z_{\limfunc{an}}$;

$(2)$ After passing to a subsequence, $A_{j}\rightarrow A_{\infty }$ in $%
L_{loc}^{2}(X-Z_{\limfunc{an}})$;

$(3)$ There is an extension of the bundle $E_{\infty }$ to a reflexive sheaf

\ \ \ (still denoted $E_{\infty }$) such that $E_{\infty }\approxeq
Gr^{HNS}(E,\bar{\partial}_{A_{0}})^{\ast \ast }$.
\end{theorem}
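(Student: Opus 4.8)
The plan is to follow Donaldson's strategy \cite{DO1} as adapted by Daskalopoulos--Wentworth \cite{DW1}: realize the isomorphism of part (1) as a limit of the complex gauge transformations relating the $A_{j}$ to $A_{0}$, and then use reflexivity to extend it over the singular sets for part (3). First I would invoke Theorem \ref{Thm3} and Corollary \ref{Cor2}. The hypothesis $HYM_{2,N}(A_{j})\to HYM_{2,N}(\mu _{0})$ together with the relation $YM=HYM+(\text{topological})$ controls $\left\Vert F_{A_{j}}\right\Vert _{L^{2}}$, and — possibly after replacing $A_{j}$ by its image under a short run of the Yang--Mills flow, which by Proposition \ref{Prop11} and Corollary \ref{Cor3} preserves every hypothesis (flowing stays in the $\mathcal{G}^{\mathbb{C}}$-orbit and $HYM_{\alpha ,N}(\mu_{0})$ is the orbit infimum) while giving, via Lemma \ref{Lemm5}, the bound $\left\Vert \Lambda _{\omega }F_{A_{j}}\right\Vert _{L^{\infty }}\leq C$ and $\left\Vert d_{A_{j}}\Lambda _{\omega }F_{A_{j}}\right\Vert _{L^{2}}\to 0$ — we may assume the hypotheses of Corollary \ref{Cor2} hold. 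Then any Uhlenbeck limit $A_{\infty }$ is Yang--Mills, and by Proposition \ref{Prop7} it splits orthogonally and holomorphically over $X-Z_{\limfunc{an}}$; this already yields part (2). By Theorem \ref{Thm11} the $HN$ type of $(E_{\infty },A_{\infty })$ equals $\mu _{0}$, and combining this with the Bando--Siu correspondence (Theorem \ref{THM Bando-Siu}) applied to the Hermitian--Einstein factors identifies the reflexive extension of $E_{\infty }$ abstractly with a direct sum of stable sheaves whose slopes, counted with multiplicity, are exactly those recorded by $\mu _{0}$ — the numerical shape of $Gr^{HNS}(E,\bar{\partial}_{A_{0}})^{\ast \ast }$.

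Next I would construct the isomorphism by induction on the length of the $HNS$ filtration of Proposition \ref{Prop6}. Write $A_{j}=g_{j}\cdot A_{0}$ with $g_{j}\in \mathcal{G}^{\mathbb{C}}$, possible by the first hypothesis. The first graded piece $Q_{1,1}=E_{1,1}\subset E$ is stable with $\mu _{\omega }(Q_{1,1})$ equal to the largest slope occurring in $\mu _{0}$. The composites $Q_{1,1}\hookrightarrow (E,\bar{\partial}_{A_{0}})\overset{g_{j}}{\longrightarrow }(E,\bar{\partial}_{A_{j}})$ are holomorphic sections of $\limfunc{Hom}(Q_{1,1},E)$; after rescaling to unit $L^{2}$ norm and composing with the gauge transformations that make $\bar{\partial}_{A_{j}}\to \bar{\partial}_{A_{\infty }}$, Corollary \ref{Cor5} and the second fundamental form bounds give a subsequence converging weakly in $L_{1,loc}^{2}$ to a holomorphic map $f_{1}\colon Q_{1,1}\to E_{\infty }$ over $X-Z_{\limfunc{an}}$. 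Granting $f_{1}\neq 0$, stability of $Q_{1,1}$ and the fact that $\mu _{\omega }(Q_{1,1})$ is the maximal slope in the polystable-graded sheaf $E_{\infty }$ force $f_{1}$ to be an isomorphism onto a stable direct summand of $E_{\infty }$ of the same slope. Splitting off that summand, passing to $E/Q_{1,1}$ (whose $HNS$ graded pieces are the remaining $Q_{i,j}$), and using Lemma \ref{Lemma14} to propagate the $HN$-type and Seshadri data to the quotient limit, the induction proceeds and produces a holomorphic isomorphism $E_{\infty }\cong Gr^{HNS}(E,\bar{\partial}_{A_{0}})$ over $X-Z_{\limfunc{alg}}\cup Z_{\limfunc{an}}$, which is part (1).

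The crux is the non-vanishing $f_{1}\neq 0$, and its analogue at each inductive step: a uniform lower bound preventing the rescaled sections from going to zero in the limit. When the $HNS$ filtration is given by subbundles this follows from Donaldson's original argument \cite{DO1}, which goes through in higher dimensions with minor changes because the relevant second fundamental forms are bounded in $L^{\infty }$. In the general case one must pass to a resolution $\pi \colon \tilde{X}\to X$ of the $HNS$ filtration (Proposition \ref{Prop13}) and carry out the argument for $\pi ^{\ast }E$ with the degenerate metric $\omega _{0}=\pi ^{\ast }\omega $; here the obstruction is precisely that the second fundamental forms of the pulled-back filtration are \emph{not} $L^{\infty }$-bounded with respect to $\omega _{0}$, so the induction breaks down. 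To repair this I would first apply the degenerate Hermitian--Yang--Mills flow of Section~6 for a fixed time $t>0$ to every connection in the sequence: by Lemmas \ref{Lemma16} and \ref{Lemma18} the flowed connections have $\Lambda _{\omega _{0}}F$ bounded in $L^{\infty }$ and $F$ bounded in $L^{2}$, and the Cheng--Li heat-kernel estimates behind those lemmas — which rest on the Bando--Siu fact that the Sobolev constants of $(\tilde{X},\omega _{\varepsilon })$ stay bounded as $\varepsilon \to 0$ — deliver the required $L^{\infty }$ control on the second fundamental forms. One then runs the Donaldson non-vanishing argument on this regularized sequence. The remaining task, which I expect to be the main technical obstacle, is to show that the Uhlenbeck limit obtained from the flowed sequence is independent of $t$ and pushes forward to the original $E_{\infty }$; here Corollary \ref{Cor7} and Proposition \ref{Prop22} (convergence of $HYM_{\alpha }^{\omega _{\varepsilon }}$ to $HYM_{\alpha }^{\omega _{0}}$ along the degenerate flow) are what allow the conclusion to be transported back across the blow-up and down to $X$.

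Finally, for part (3): both $E_{\infty }$ (by Corollary \ref{Cor2}) and $Gr^{HNS}(E,\bar{\partial}_{A_{0}})^{\ast \ast }$ are reflexive, hence normal, while $Z_{\limfunc{alg}}\cup Z_{\limfunc{an}}$ has complex codimension at least two; therefore the isomorphism of part (1) extends uniquely across this set to an isomorphism of reflexive sheaves on all of $X$, giving $E_{\infty }\approxeq Gr^{HNS}(E,\bar{\partial}_{A_{0}})^{\ast \ast }$. Theorem \ref{THM:MainTheorem} is then the special case $A_{j}=A_{t_{j}}$ of a sequence along the Yang--Mills flow, for which the $HYM_{\alpha ,N}$ hypotheses hold by Propositions \ref{Prop11} and \ref{Prop12} and Lemma \ref{Lemma13}.
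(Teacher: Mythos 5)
Your proposal follows essentially the same approach as the paper: reduce to uniformly bounded Hermitian--Einstein tensors by a short run of the flow, build the nonzero holomorphic map via the $\partial\bar\partial$-estimate and Moser iteration (the paper's Proposition \ref{Prop23}), induct on the $HNS$ filtration using Theorem \ref{Thm11}, Lemma \ref{Lemma14} and slope/stability considerations, resolve the filtration and regularize by the degenerate flow to restore $L^\infty$ control of the second fundamental forms in the general case, and finally extend by normality of reflexive sheaves. The one organizational difference is that you frame the inductive step as passing to the quotient $E/Q_{1,1}$, whereas the paper keeps all connections on subbundles of $E$ (or $E^*$) and dualizes $0\to Q_i^*\to E_i^*\to E_{i-1}^*\to 0$ so that Proposition \ref{Prop23} applies directly; the latter avoids having to define and control a new sequence of quotient connections, but the underlying idea is the same.
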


The proof will be a modification of Donaldson's argument from \cite{DO1}
that there is a non-zero holomorphic map $(E,\bar{\partial}%
_{A_{0}})\rightarrow $ $(E_{\infty },\bar{\partial}_{A_{\infty }})$ in the
case that $(E,\bar{\partial}_{A_{0}})$ is semi-stable. If the bundles in
question are actually stable, we may then apply the elementary fact that a
non-zero holomorphic map between stable bundles with the same slope is
necessarily an isomorphism. Of course in our case $(E,\bar{\partial}%
_{A_{0}}) $ is not necessarily semi-stable so the argument must be modified.
We first construct such a map on the maximal destabilising subsheaf $%
S\subset E$ (which is semi-stable). If we assume that $S$ is stable (in
other words if we construct the map on the first piece of the $HNS$
filtration) this identifies $S$ with a subsheaf of the limiting sheaf $%
E_{\infty }$. We then use an inductive argument to identify each the
successive quotients with a direct summand of $E_{\infty }$. This is
relatively straightforward in the case that the $HNS$ filtration is given by
subbundles, but in the general case technical complications arise.
Therefore, to clearly illustrate our technique, we will first present an
exposition of the simpler case where there are no singularities, and then
explain the modifications necessary to complete the argument.

\subsection{The Subbundles Case}

We begin with the following proposition.

\begin{proposition}
\label{Prop23}Let $E$ be a holomorphic, hermitian vector bundle and $%
A_{j}=g_{j}(A_{0})$ be a sequence of integrable, unitary connections on $E$.
Let $A\subset \lbrack 1,\infty )$ be any set containing an accumulation
point. Assume that $HYM_{\alpha ,N}(A_{j})\rightarrow HYM_{\alpha ,N}(\mu
_{0})$ for all $N>0$ and all $\alpha \in A\cup \{2\}$. Let $S\subset (E,\bar{%
\partial}_{A_{0}})$ be a holomorphic subbundle. Then there is closed subset $%
Z_{\limfunc{an}}$ of Hausdorff codimension at least $4$, a reflexive sheaf $%
E_{\infty }$ which is an hermitian vector bundle away from $Z_{\limfunc{an}}$
and a Yang-Mills connection $A_{\infty }$ on $E_{\infty }$ such that:
\end{proposition}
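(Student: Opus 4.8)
The plan is to first extract the Uhlenbeck limit from the hypotheses, and then to construct the required holomorphic map as a renormalised limit of the complex gauge transformations $g_{j}$, following Donaldson's argument from \cite{DO1}. The hypotheses supply a uniform bound on $\left\Vert F_{A_{j}}\right\Vert _{L^{2}}$ (via the Chern--Weil relation between $YM$ and $HYM$ together with the convergence $HYM_{2,N}(A_{j})\to HYM_{2,N}(\mu _{0})$), the $L^{\infty}$ control of $\Lambda _{\omega }F_{A_{j}}$ underlying the applicability of Uhlenbeck compactness here, and $\left\Vert d_{A_{j}}\Lambda _{\omega }F_{A_{j}}\right\Vert _{L^{2}}\to 0$; so Theorem \ref{Thm3} and Corollary \ref{Cor2} give, after passing to a subsequence, a closed set $Z_{\limfunc{an}}$ of Hausdorff codimension at least $4$, a reflexive sheaf $E_{\infty }$ which on $X-Z_{\limfunc{an}}$ is a Hermitian bundle carrying a Yang--Mills connection $A_{\infty }$, and unitary gauge transformations $u_{j}$ with $\bar{\partial}_{u_{j}\cdot A_{j}}\to \bar{\partial}_{A_{\infty }}$ weakly in $L_{1,\limfunc{loc}}^{p}(X-Z_{\limfunc{an}})$. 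This is the structural part of the conclusion (and already yields the asserted $L_{\limfunc{loc}}^{2}$ subconvergence); the new content is a non-zero holomorphic bundle map $S\to E_{\infty }$ over $X-Z_{\limfunc{an}}$.

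To build the map, I would use that $\bar{\partial}_{A_{j}}=g_{j}^{-1}\bar{\partial}_{A_{0}}g_{j}$ forces $g_{j}^{-1}$ to be a holomorphic bundle isomorphism $(E,\bar{\partial}_{A_{0}})\to (E,\bar{\partial}_{A_{j}})$, so that $\sigma _{j}:=u_{j}^{-1}\circ g_{j}^{-1}$ is a holomorphic map $(E,\bar{\partial}_{A_{0}})\to (E,\bar{\partial}_{u_{j}\cdot A_{j}})$. Restricting to the holomorphic subbundle $S$ and normalising by $m_{j}:=\sup _{X}\bigl\vert g_{j}^{-1}|_{S}\bigr\vert $ (finite and positive), set $\tau _{j}:=\sigma _{j}|_{S}/m_{j}$. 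Since $u_{j}$ is pointwise unitary, $\left\vert \tau _{j}\right\vert =\bigl\vert g_{j}^{-1}|_{S}\bigr\vert /m_{j}$, a smooth function on all of $X$ with $\left\vert \tau _{j}\right\vert \leq 1$ and $\sup _{X}\left\vert \tau _{j}\right\vert =1$; the point of this bookkeeping is that the renormalisation is performed ``downstairs'' over the fixed smooth structure $\bar{\partial}_{A_{0}}$, where nothing degenerates near $Z_{\limfunc{an}}$. Passing to a subsequence, the maximum is attained at points $x_{j}\to x_{\infty }$.

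The key estimate, and the main obstacle, is the non-vanishing of the limit. The section $g_{j}^{-1}|_{S}/m_{j}$ is a holomorphic section of $\limfunc{Hom}(S,E)=S^{\ast }\otimes E$ equipped with the Hermitian structure induced by $h$ and by $\bar{\partial}_{A_{0}}$ on $S^{\ast }$, $\bar{\partial}_{A_{j}}$ on $E$; the norm of its curvature is bounded by $\left\vert \Lambda _{\omega }F_{S}\right\vert +\left\vert \Lambda _{\omega }F_{A_{j}}\right\vert $, the first term fixed and smooth, the second uniformly bounded in $L^{\infty }$. The Weitzenb\"{o}ck formula then yields $\triangle \left\vert \tau _{j}\right\vert ^{2}\geq -C\left\vert \tau _{j}\right\vert ^{2}$ on all of $X$ with $C$ independent of $j$; since $\left\vert \tau _{j}\right\vert ^{2}\geq 0$ and $(X,\omega )$ has bounded geometry, the local maximum principle (Moser iteration for subsolutions of $\triangle u\geq -Cu$, using the uniform Sobolev constant of $X$) gives $1=\left\vert \tau _{j}(x_{j})\right\vert ^{2}\leq C_{0}\,\mathrm{vol}(B_{1})^{-1}\!\int_{B_{1}(x_{j})}\left\vert \tau _{j}\right\vert ^{2}\leq C_{1}\left\Vert \tau _{j}\right\Vert _{L^{2}(X)}^{2}$, so $\left\Vert \tau _{j}\right\Vert _{L^{2}(X)}\geq c>0$ uniformly, and the same inequality furnishes a uniform $L_{1}^{2}(X)$ bound on $\tau _{j}$.

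Finally I would transport to $X-Z_{\limfunc{an}}$ via $u_{j}^{-1}$, which preserves pointwise norms: after a further subsequence, $u_{j}^{-1}(g_{j}^{-1}|_{S}/m_{j})\rightharpoonup \tau _{\infty }$ weakly in $L_{1,\limfunc{loc}}^{2}$ and strongly in $L_{\limfunc{loc}}^{2}$ on $X-Z_{\limfunc{an}}$, and because $\bar{\partial}_{u_{j}\cdot A_{j}}\to \bar{\partial}_{A_{\infty }}$ the limit $\tau _{\infty }$ intertwines $\bar{\partial}_{A_{0}}|_{S}$ with $\bar{\partial}_{A_{\infty }}$, i.e. is a holomorphic bundle map $S\to E_{\infty }$ over $X-Z_{\limfunc{an}}$. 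Since $\left\vert \tau _{\infty }\right\vert \leq 1$ a.e., $\left\vert u_{j}^{-1}(g_{j}^{-1}|_{S}/m_{j})\right\vert =\left\vert \tau _{j}\right\vert \to \left\vert \tau _{\infty }\right\vert $ a.e., and $Z_{\limfunc{an}}$ is null, dominated convergence gives $\left\Vert \tau _{\infty }\right\Vert _{L^{2}(X)}^{2}=\lim \left\Vert \tau _{j}\right\Vert _{L^{2}(X)}^{2}\geq c^{2}>0$, so $\tau _{\infty }\not\equiv 0$. This $\tau _{\infty }:S\to E_{\infty }$ is the asserted non-zero holomorphic map, completing the proposition. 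The genuinely delicate part of the subbundles case is not this single-subbundle step (where Donaldson's argument goes through with the modest corrections above) but the ensuing identification of $S$ with a direct summand of $E_{\infty }$ when $S$ is stable, and the induction down the $HNS$ filtration, where the second fundamental forms of the successive quotients re-enter and must be controlled; that is carried out afterwards.
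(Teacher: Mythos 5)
There is a genuine gap at the very first step. You assert that the hypotheses supply three things: a uniform $L^{2}$ bound on $F_{A_{j}}$, an $L^{\infty}$ bound on $\Lambda _{\omega }F_{A_{j}}$, and $\left\Vert d_{A_{j}}\Lambda _{\omega }F_{A_{j}}\right\Vert _{L^{2}}\to 0$. Only the first of these actually follows from the hypotheses (via $HYM_{2,N}(A_{j})\to HYM_{2,N}(\mu _{0})$ and the Chern--Weil relation). The second and third do not: the hypotheses only control the $L^{\alpha }$ norms of $\Lambda _{\omega }F_{A_{j}}+\sqrt{-1}N$ for $\alpha$ ranging over $A\cup \{2\}$, and $A$ may well consist entirely of exponents close to $1$ (e.g.\ $A=\{1+1/n\}$), which gives no $L^{\infty}$ information. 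And nothing in the hypotheses speaks to $d_{A_{j}}^{\ast }F_{A_{j}}$ at all. Since your Weitzenb\"{o}ck/Moser-iteration scheme for non-vanishing of the limit map depends essentially on $\sup_{j}\left\Vert \Lambda _{\omega }F_{A_{j}}\right\Vert _{L^{\infty }}<\infty$, and your appeal to Theorem \ref{Thm3} and Corollary \ref{Cor2} depends on both missing bounds, the argument cannot proceed as written.

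The paper fills this gap with a regularisation step you have omitted: replace $A_{j}$ by $A_{j,t_{0}}$, the time-$t_{0}$ Yang--Mills flow with initial condition $A_{j}$, for a small fixed $t_{0}>0$. Since $\left\vert \Lambda _{\omega }F_{A_{j,t}}\right\vert ^{2}$ is a subsolution of the heat equation, the heat-kernel estimate of Cheng--Li converts the uniform $L^{2}$ bound on $\Lambda _{\omega }F_{A_{j}}$ into a uniform $L^{\infty}$ bound on $\Lambda _{\omega }F_{A_{j,t_{0}}}$. One then has to check two further things, both nontrivial: that $A_{j,t_{0}}$ and $A_{j}$ have the same Uhlenbeck limit (this follows from $\left\Vert A_{j,t_{0}}-A_{j}\right\Vert _{L^{2}}\leq \sqrt{t_{0}/2}\,\bigl(YM(A_{j})-YM(A_{j,t_{0}})\bigr)^{1/2}\to 0$, because $A_{j}$ is $YM$-minimising and $YM$ is non-increasing along the flow), and that the limit is Yang--Mills and has the same $HN$ type (the former because $\left\Vert d_{A_{j,s}}^{\ast }F_{A_{j,s}}\right\Vert _{L^{2}}\to 0$ for a.e.\ $s\in \lbrack 0,t_{0}]$ from the same energy inequality, the latter from Propositions \ref{Prop10} and \ref{Prop12} since $HYM_{\alpha ,N}$ is non-increasing along the flow). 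Only after this reduction is the Donaldson-style construction of the holomorphic map $S\to E_{\infty }$ valid, and then the rest of your argument---the renormalisation, the mean-value inequality from the subsolution property, and passage to the limit---tracks the paper's proof closely (the paper normalises by $\left\Vert g_{j}^{S}\right\Vert _{L^{4}}$ rather than by $\sup \left\vert g_{j}^{-1}|_{S}\right\vert$, but that is a cosmetic difference).
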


$\ \ \ \ \ \ \ (1)$ After passing to a subsequence $A_{j}\rightarrow
A_{\infty }$ in $L_{loc}^{2}(X-Z_{\limfunc{an}});$

$\ \ \ \ \ \ \ (2)$ The Harder-Narasimhan type of $(E_{\infty },\bar{\partial%
}_{A_{\infty }})$ is the same as

\ \ \ \ \ \ \ \ \ \ \ that of $(E,\bar{\partial}_{A_{0}});$

$\ \ \ \ \ \ \ (3)$ There is a non-zero holomorphic map $g_{\infty
}^{S}:S\longrightarrow (E_{\infty },\bar{\partial}_{A_{\infty }})$.

\begin{proof}
We first reduce to the case where the Hermitian-Einstein tensors $\Lambda
_{\omega }F_{A_{j}}$ are uniformly bounded. Write $A_{j,t}$ for the time $t$
solution to the $YM$ flow equations with initial condition $A_{j}$. By Lemma %
\ref{Lemm5}, $\left\vert \Lambda _{\omega }F_{A_{j,t}}\right\vert ^{2}$ is a
sub-solution of the heat equation. Then for each $t>0$ and each $x\in X:$%
\begin{equation*}
\left\vert \Lambda _{\omega }F_{A_{j,t}}\right\vert ^{2}(x)\leq
\int_{X}K_{t}(x,y)\left\vert \Lambda _{\omega }F_{A_{j,t}}\right\vert
^{2}(y)dvol_{\omega }(y).
\end{equation*}%
Here $K_{t}(x,y)$ is the heat kernel on $X$. By a theorem of Cheng and Li
(see \cite{CHLI}) there is a bound:%
\begin{equation*}
0<K_{t}(x,y)\leq C\left( 1+\frac{1}{t^{n}}\right) ,
\end{equation*}%
and so for any fixed $t_{0}>0$, $\left\Vert \Lambda _{\omega
}F_{A_{j,t_{0}}}\right\Vert _{L^{\infty }(X,\omega )}$ is uniformly bounded
in terms of $\left\Vert \Lambda _{\omega }F_{A_{j}}\right\Vert
_{L^{2}(X,\omega )}$. Since we assume in particular that $%
HYM(A_{j})\rightarrow HYM(\mu _{0})$ we know that $\left\Vert \Lambda
_{\omega }F_{A_{j}}\right\Vert _{L^{2}(X,\omega )}$ is uniformly bounded
independently of $j,$ and therefore $\left\Vert \Lambda _{\omega
}F_{A_{j,t_{0}}}\right\Vert _{L^{\infty }(X,\omega )}$ is uniformly bounded.

For the remainder of the argument we would like to replace $A_{j}$ with $%
A_{j,t_{0}}$, so that we may assume in the sequel that we have the above
bound. In order to do this we must know that the Uhlenbeck limit of the new
sequence $A_{j,t_{0}}$ is the same as that of $A_{j}$. We argue as follows:%
\begin{align*}
\left\Vert A_{j,t_{o}}-A_{j}\right\Vert _{L^{2}}& \overset{Minkowski}{\leq }%
\int_{0}^{t_{0}}\left\Vert \frac{\partial A_{j,s}}{\partial s}\right\Vert
_{L^{2}}ds\leq \sqrt{t_{0}}\left( \int_{0}^{t_{0}}\left\Vert
d_{A_{j,s}}^{\ast }F_{A_{j,s}}\right\Vert _{L^{2}}^{2}ds\right) ^{\frac{1}{2}%
} \\
& =\sqrt{t_{0}}\left( \int_{0}^{t_{0}}-\frac{1}{2}\frac{d}{ds}\left\Vert
F_{A_{j,s}}\right\Vert _{L^{2}}^{2}ds\right) ^{\frac{1}{2}}=\sqrt{\frac{t_{0}%
}{2}}\left( YM(A_{j})-YM(A_{j,t_{0}})\right) ^{\frac{1}{2}}\longrightarrow 0
\end{align*}%
because $A_{j}$ is minimising for the $YM\ $functional and $YM$ is
non-increasing along the flow. This shows that the two limits are equal, and
moreover the proof also shows that $\bigl\Vert d_{A_{j,s}}^{\ast }F_{D_{j,s}}%
\bigr\Vert_{L^{2}}\rightarrow 0$ for almost all $s$, so this limit is a
Yang-Mills connection. Since we have assumed additionally that $HYM_{\alpha
,N}(A_{j})$ (and hence $HYM_{\alpha ,N}(A_{j,t_{0}})$) is minimising for $%
\alpha \in A$, it follows from $\limfunc{Propositions}$ \ref{Prop10} $(2)$
and \ref{Prop12} that the $HN$ type of $(E_{\infty },A_{\infty })$ is the
same as that of $(E_{0},A_{0})$.

We may therefore assume from here on out that the Hermitian-Einstein tensors 
$\Lambda _{\omega }F_{A_{j}}$ are uniformly bounded independently of $j$.
Note that we have already proven both $(1)$ and $(2)$ above. It remains to
construct the non-zero holomorphic map.

Observe that for any holomorphic section $\sigma $ of a holomorphic vector
bundle $V\longrightarrow (X,\omega )$ equipped with an hermitian metric $%
\left\langle -,-\right\rangle $, and whose Chern connection is $A$, we have
that 
\begin{eqnarray*}
\sqrt{-1}\bar{\partial}\partial \left\vert \sigma \right\vert ^{2} &=&\sqrt{%
-1}\bar{\partial}\partial \left\langle \sigma ,\sigma \right\rangle =\sqrt{-1%
}\left( \left\langle \partial _{A}\sigma ,\partial _{A}\sigma \right\rangle
+\left\langle \sigma ,\bar{\partial}_{A}\partial _{A}\sigma \right\rangle
\right) \\
&=&\sqrt{-1}\left( \left\langle \partial _{A}\sigma ,\partial _{A}\sigma
\right\rangle +\left\langle \sigma ,F_{A}\sigma \right\rangle \right)
\end{eqnarray*}%
since $\sigma $ is holomorphic. Applying $\Lambda _{\omega }$ and using the K%
\"{a}hler identities, we have:%
\begin{equation*}
\triangle _{\partial }\left\vert \sigma \right\vert ^{2}=\sqrt{-1}\Lambda
_{\omega }\bar{\partial}\partial \left\vert \sigma \right\vert
^{2}=-\left\vert \partial _{A}\sigma \right\vert ^{2}+\left\langle \sigma ,%
\sqrt{-1}\left( \Lambda _{\omega }F_{A}\right) \sigma \right\rangle .
\end{equation*}

Now let $g_{j}^{S}:S\rightarrow (E,\bar{\partial}_{A_{j}})$ be given by the
restriction of $g_{j}$ to $S$. By definition, this is a holomorphic section
of $\limfunc{Hom}(S,E)$, whose Chern connection is $A_{0}^{\ast }\otimes
A_{j}$. Then applying the above formula to $g_{j}^{S}$ and writing $%
k_{j}^{S}=(g_{j}^{S})^{\ast }(g_{j}{}^{S})$, and $h^{S}$ and $h_{j}$ for the
metrics corresponding to $A_{0\mid S}$ and $A_{j}$, we have%
\begin{equation*}
\triangle _{\partial }\limfunc{Tr}k_{j}^{S}+\left\vert \partial
_{A_{0}^{\ast }\otimes A_{j}}g_{j}^{S}\right\vert ^{2}=\left\langle
g_{j}^{S},\sqrt{-1}\left( \Lambda _{\omega
}F_{h_{j}}g_{j}^{S}-g_{j}^{S}\Lambda _{\omega }F_{h^{S}}\right)
\right\rangle ,
\end{equation*}%
and so%
\begin{equation*}
\triangle _{\partial }(\limfunc{Tr}k_{j}^{S})\leq (\limfunc{Tr}%
k_{j}^{S})\left( \left\vert \Lambda _{\omega }F_{h_{j}}\right\vert
+\left\vert \Lambda _{\omega }F_{h^{S}}\right\vert \right) .
\end{equation*}%
Now we use the bound on $\left\vert \Lambda _{\omega }F_{h_{j}}\right\vert $%
. Let $C_{1}=\sup_{j}\left\Vert \Lambda _{\omega }F_{h_{j}}\right\Vert
_{L^{\infty }(X,\omega )}$ and $C_{2}=\left\Vert \Lambda _{\omega
}F_{h^{S}}\right\Vert _{L^{\infty }(X)}$. Multiplying both sides of the
above inequality by $\limfunc{Tr}k_{j}^{S}$ and integrating by parts shows:%
\begin{equation*}
\int_{X}\left\vert \nabla \limfunc{Tr}k_{j}^{S}\right\vert ^{2}dvol_{\omega
}\leq (C_{1}+C_{2})\int_{X}\left\vert \limfunc{Tr}k_{j}^{S}\right\vert
^{2}dvol_{\omega }.
\end{equation*}%
By the Sobolev imbedding $L_{1}^{2}\hookrightarrow L^{\frac{^{2n}}{n-1}}$
the previous inequality gives a bound 
\begin{equation*}
\left\Vert \limfunc{Tr}k_{j}^{S}\right\Vert _{L^{\frac{^{2n}}{n-1}}(X,\omega
)}\leq C\left\Vert \limfunc{Tr}k_{j}^{S}\right\Vert _{L^{2}(X,\omega )}
\end{equation*}%
where $C$ depends only on $C_{1}$,$C_{2}$ and the Sobolev constant of $%
(X,\omega )$. A standard Moser iteration gives a bound: $\left\Vert \limfunc{%
Tr}k_{j}^{S}\right\Vert _{L^{\infty }(X,\omega )}\leq C\left\Vert \limfunc{Tr%
}k_{j}^{S}\right\Vert _{L^{2}(X,\omega )}$.

At this point we may repeat Donaldson's argument (appropriately modified for
higher dimensions). For the reader's convenience we reproduce it here. By
definition $\limfunc{Tr}(k_{j}^{S})=\left\vert g_{j}^{S}\right\vert ^{2}$.
Since non-zero constants act trivially on $\mathcal{A}^{1,1}$ we may
normalise the $g_{j}^{S}$ so that $\left\Vert g_{j}^{S}\right\Vert
_{L^{4}(X)}=\left\Vert \limfunc{Tr}(k_{j}^{S})\right\Vert _{L^{2}(X)}=1$.
The above bound implies that there is a subsequence of the $g_{j}^{S}$ that
converges to a limiting gauge transformation $g_{\infty }^{S}$ weakly in
every $L_{2}^{p}$ for example. Since $Z_{\limfunc{an}}$ has Hausdorff
codimension at least $4$, we may of course find a covering of $Z_{\limfunc{an%
}}$ by balls $\{B_{i}^{r}\}_{i}$ of radius $r$ such that: $C\left(
\sum_{i}Vol(B_{i}^{r})\right) <1/2$. If we write $K_{r}=X-\cup
_{i}B_{i}^{r}\cup \limfunc{Sing}(E_{\infty })$, then our $L^{\infty }$ bound
implies that: $\left\Vert g_{j}^{S}\right\Vert _{L^{4}(K_{r})}\geq 1/2$ for
all $j$. This implies that $g_{\infty }^{S}$ is non-zero. We now show $%
g_{\infty }^{S}$ is holomorphic.

If we denote by $\bar{\partial}_{A_{0}\otimes A_{\infty }}$ the $(0,1)$ part
of the connection on $E^{\ast }\otimes E_{\infty }=Hom(E,E_{\infty })$
induced by the connections $A_{0}$ and $A_{\infty }$. We will identify $E$
and $E_{\infty }$ on $K_{r}$. Then by definition we have:%
\begin{equation*}
\bar{\partial}_{A_{0}^{\ast }\otimes A_{\infty }}g_{j}^{S}=\left(
g_{j}^{S}A_{0}-A_{\infty }g_{j}^{S}\right)
=(g_{j}^{S}A_{0}(g_{j}^{S})^{-1}-A_{\infty })g_{j}^{S}=(A_{j}-A_{\infty
})g_{j}^{S}.
\end{equation*}%
Since $A_{0}\rightarrow A_{\infty }$ in $L^{2}(K_{r})$ this implies $\bar{%
\partial}_{A_{0}\otimes A_{\infty }}g_{\infty }^{S}=0$, in other words $%
g_{\infty }^{S}$ is holomorphic on $K_{r}$. Since this argument works for
any choice of $r$, and the $K_{r}$ give an exhaustion of $X-Z_{\limfunc{an}%
}\cup \limfunc{Sing}(E_{\infty })$, $g_{\infty }^{S}$ is holomorphic on $%
X-Z_{\limfunc{an}}\cup \limfunc{Sing}(E_{\infty })$. By a version of Hartogs
theorem (see \cite{SHI} Lemma $3$) there is an extension of $g_{\infty }^{S}$
to $X-\limfunc{Sing}(E_{\infty })$. Finally, by normality of these sheaves
(both are reflexive) there is an extension to a non-zero map $g_{\infty
}^{S}:S\rightarrow E_{\infty }$.
\end{proof}

We are now ready to perform the induction, and therefore prove the main
theorem in the case when the $HNS$ filtration is given by subbundles. We
first assume the quotients $Q_{i}=E_{i}/E_{i-1}$ in the Harder-Narasimhan
filtration $0=E_{0}\subset E_{1}\subset \cdots \subset E_{l}=(E,\bar{\partial%
}_{A_{0}})$ are stable (so the $HN$ and $HNS$ filtrations are the same).
From $\limfunc{Proposition}$ \ref{Prop7}, $E_{\infty }$ has a holomorphic
splitting $E_{\infty }=\oplus _{i=1}^{l^{^{\prime }}}Q_{\infty ,i}$. By
Theorem \ref{Thm11} the $HN$ types of $E$ and $E_{\infty }$ are the same, so 
$l=l^{^{\prime }}$ and $\mu (E_{1})=\mu (Q_{\infty ,1})>\mu (Q_{\infty ,i})$
for $i=2,\cdots ,l$. By the above proposition there is a non-zero
holomorphic map $g_{\infty }:E_{1}\rightarrow E_{\infty }$. Since we are
assuming $E_{1}$ is stable, and the $Q_{\infty ,i}$ ($i>1$) have slope
strictly smaller than $E_{1}$, the induced map onto these summands is $0$
and hence $g_{\infty }:E_{1}\rightarrow Q_{\infty ,1}$. Again by stability
of $E_{1}$ and $Q_{\infty ,1}$ and the fact that $E_{1}$ and $Q_{\infty ,1}$
have the same rank and degree, this map is an isomorphism. This is the first
step in the induction.

The inductive hypothesis will be that the connections $A_{j}$ restricted to $%
E_{i-1}$ converge to connections on the bundle $Gr(E_{i-1})$, in other words 
$Gr(E_{i-1})\subset E_{\infty }$. Let $E_{\infty ,i}=\oplus _{j\leq
i}Q_{\infty ,j}$ and set: $E_{\infty }=Gr(E_{i-1})\oplus R$, and consider
the short exact sequence of bundles: $0\rightarrow E_{i-1}\rightarrow
E_{i}\rightarrow Q_{i}\rightarrow 0$. Since $Gr(E_{i})=Gr(E_{i-1})\oplus
Q_{i}$, to complete the induction we need only show that $Q_{i}$ is a direct
summand of $R$. The sequence of connections on $E_{i}^{\ast }$ induced by $%
A_{j}$ satisfy the hypotheses of the proposition, so we may apply this
result to the dual exact sequence: $0\rightarrow Q_{i}^{\ast }\rightarrow
E_{i}^{\ast }\rightarrow E_{i-1}^{\ast }\rightarrow 0$, and therefore obtain
a holomorphic map $Q_{i}^{\ast }\rightarrow (E_{\infty ,i})^{\ast }$.
Because $Q_{i}^{\ast }$ is the maximal destabilising subsheaf of $(E_{\infty
,i})^{\ast }$ this implies that $Q_{i}^{\ast }$ is isomorphic to a summand
of $R^{\ast }$. This completes the proof under the assumption that the
quotients are stable.

To extend this to the general case, it suffices to consider the case that
the original bundle $(E_{,}\bar{\partial}_{A_{0}})$ is semi-stable. In other
words the filtration is a Seshadri filtration of $E$. Then as in the above
argument we may conclude that $E_{1}$ is isomorphic to a factor of $%
E_{\infty }$ we also again obtain a non-zero holomorphic map $g_{\infty
}:Q_{i}^{\ast }\rightarrow (E_{\infty ,i})^{\ast }$. However, the Seshadri
quotients all have the same slope, so we do not know via slope
considerations that $Q_{i}^{\ast }$ maps into $R^{\ast }$. On the other hand
we know that the weakly holomorphic projections converge. If $\pi
_{j}^{(i-1)}$ denotes the sequence of projections to $g_{j}(E_{i-1})$ and $%
\pi _{\infty }^{(i-1)}$ the projection onto $E_{\infty ,i-1}$, then $\pi
_{j}^{(i-1)}\rightarrow $ $\pi _{\infty }^{(i-1)}$ by the proof of Lemma $%
4.5 $ of \cite{DW1}. If we denote by $\check{\pi}_{j}^{(i-1)}$ the dual
projection, then for each $j$, the image of $Q_{i}^{\ast }$ is in the kernel
of $\check{\pi}_{j}^{(i-1)}$. In other words the image $g_{\infty
}(Q_{i}^{\ast })$ lies in the kernel of $\check{\pi}_{j}^{(i-1)}$. Therefore
since we have convergence, the image of $g_{\infty }(Q_{i}^{\ast })$ lies in
the kernel of $\check{\pi}_{\infty }^{(i-1)}$ which is in $R^{\ast }$.
Therefore $Q_{i}^{\ast }$ is isomorphic to a factor of $R^{\ast }$ and this
completes the proof.

\subsection{The General Case}

In general the $HNS$ filtration is not given by subbundles. The argument we
have given in $\limfunc{Proposition}$ \ref{Prop23} for the construction of
the holomorphic map $S\rightarrow E_{\infty }$ remains valid if $S$ is an
arbitrary torsion free subsheaf since the connections in question are all
defined a priori on the ambient bundle $E$, and since the second fundamental
form $\beta $ of $S$ drops out of the estimates, there is no problem
obtaining a uniform bound on the Hermitian-Einstein tensors. On the other
hand, when we try to run the inductive argument, the restrictions of the
connections $A_{j}$ to the pieces $E_{i}$ of the $HNS$ filtration only make
sense on the locally free part of these subsheaves. This prevents us from
applying the argument of $\limfunc{Proposition}\ $\ref{Prop23} in the
inductive step because to do so requires global $L^{\infty }$ bounds on the
appropriate Hermitian-Einstein tensors, which we do not have, since the
restrictions of the $A_{j}$ do not extend over the singular set $Z_{\limfunc{%
alg}}$.

The strategy for proving the main theorem in the general case mirrors our
method in Section $4$. Roughly speaking we proceed as follows. Let $%
A_{j}=g_{j}(A_{0})$ be a sequence of connections. First we pass to an
arbitrary resolution $\pi :\tilde{X}\rightarrow X$ of singularities of the $%
HNS$ filtration. Then we construct an isomorphism from the associated graded
object of the filtration for the pullback bundle $\pi ^{\ast }E$ (away from
the exceptional set $\mathbf{E}$) to the Uhlenbeck limit of the sequence $%
\pi ^{\ast }A_{j}$ on the K\"{a}hler manifold $(\tilde{X}-\mathbf{E},\omega
_{0})=(X-Z_{\limfunc{alg}},\omega )$ where $\omega _{0}=\pi ^{\ast }\omega $%
. Then we will use the fact that these bundles extend as reflexive sheaves
over $Z_{\limfunc{alg}}$ to the double dual of the associated graded object
of $E$ and the Uhlenbeck limit of $A_{j}$ respectively, and hence by
normality of these sheaves, the isomorphism extends as well.

The outline of the proof given above has to be modified somewhat for
technical reasons which we will now explain. Just as for the case of
subbundles, by first running the $YM$ flow for finite time we may assume
there is a uniform bound $\left\Vert \Lambda _{\omega }F_{A_{j}}\right\Vert
_{L^{\infty }(X)}$ or equivalently on $\left\Vert \Lambda _{\omega _{0}}F_{%
\tilde{A}_{j}}\right\Vert _{L^{\infty }(\tilde{X}-\mathbf{E})}$ where $%
\tilde{A}_{j}=\pi ^{\ast }A_{j}$. As usual we will denote by $A_{\infty }$
the Uhlenbeck limit of $A_{j}$ on $(X,\omega )$ and we have $%
A_{j}\rightarrow A_{\infty }$ in $L_{1,loc}^{p}(X-Z_{\limfunc{an}})$ for $%
p>n $. The proof of the proposition proves all but $(3)$ of Theorem \ref%
{Thm12}. Let $E_{i}\subset E$ be a factor of the $HNS$ filtration and $%
A_{j}^{(i)}=\pi _{j}^{(i)}A_{j}$ be the connections on $g_{j}(E_{i})$
induced from $A_{j}$, and $A_{\infty }^{(i)}=\pi _{\infty }^{(i)}A_{\infty }$%
. By Lemma \ref{Lemma14} it follows that $A_{j}^{(i)}\rightarrow A_{\infty
}^{(i)}$ weakly in $L_{1,loc}^{p}(X-Z_{\limfunc{an}}\cup Z_{\limfunc{alg}})$.

If $\pi :\tilde{X}\rightarrow X$ is the aforementioned resolution of
singularities then the filtration of $\pi ^{\ast }E=\tilde{E}$ is given by
subbundles $\tilde{E}_{i}\subset \tilde{E}$, isomorphic to $E_{i}$ away from
the exceptional divisor $\mathbf{E}$. Write $\tilde{g}_{j}=g_{j}\circ \pi $
and let $\tilde{A}_{j}^{(i)}$ be the connection induced by $\tilde{A}%
_{j}=\pi ^{\ast }A_{j}$ on $\tilde{g}_{j}(\tilde{E}_{i})$. We will write $%
\tilde{\pi}_{j}$ for the projection to $\tilde{g}_{j}(\tilde{E}_{i})$ and $%
\tilde{\beta}_{j}$ for the second fundamental forms for the connections $%
\tilde{A}_{j}$ with respect to the subbundles $\tilde{E}_{i}$; in other
words these are sections of the bundle $\Omega ^{0,1}\left( \tilde{X},Hom(%
\tilde{Q}_{i},\tilde{E}_{i})\right) $ for an auxiliary bundle $\tilde{Q}_{i}$%
. Then this sequence of connections satisfies the following:%
\begin{equation*}
\end{equation*}

$(1)$ There is a closed subset $\tilde{Z}_{\limfunc{an}}\subset \tilde{X}-%
\mathbf{E}$ of Hausdorff codimension at least $4$

\ \ \ and a Yang-Mills connection $\tilde{A}_{\infty }^{(i)}$ defined on a
reflexive sheaf $\tilde{E}_{\infty ,i}\rightarrow \tilde{X}-\mathbf{E}$
(which is a bundle on $\tilde{X}-(\tilde{Z}_{\limfunc{an}}\cup \mathbf{E)}$%
), such

\ \ \ that $\tilde{A}_{j}^{(i)}\rightarrow \tilde{A}_{\infty }^{(i)}$ weakly
in $L_{1,loc}^{p}\left( \tilde{X}-(\tilde{Z}_{\limfunc{an}}\cup \mathbf{E)}%
\right) $.

$(2)$ We have the standard formula for the curvature:%
\begin{equation*}
\sqrt{-1}\Lambda _{\omega _{0}}F_{\tilde{A}_{j}^{(i)}}=\sqrt{-1}\Lambda
_{\omega _{0}}\left( \tilde{\pi}_{j}F_{\tilde{A}_{j}}\tilde{\pi}_{j}\right) +%
\sqrt{-1}\Lambda _{\omega _{0}}\left( \tilde{\beta}_{j}\wedge \tilde{\beta}%
_{j}^{\ast }\right) .
\end{equation*}

\qquad \qquad Also:

$\qquad \bullet $ The $\tilde{\beta}_{j}$ are locally bounded on $\tilde{X}-(%
\tilde{Z}_{\limfunc{an}}\cup \mathbf{E)}$ uniformly in $j$ (Lemma \ref%
{Lemma4})

$\qquad \bullet $ The $\tilde{\beta}_{j}\rightarrow 0$ in $L^{2}(\omega
_{0}) $. In particular, they are uniformly bounded in $L^{2}(\omega _{0})$
(see the proof of \cite{DW1} Lemma $4.5$). 
\begin{equation*}
\end{equation*}

Note that the term%
\begin{equation*}
\sqrt{-1}\Lambda _{\omega _{0}}\left( \tilde{\pi}_{j}F_{\tilde{A}_{j}}\tilde{%
\pi}_{j}\right)
\end{equation*}%
is bounded in $L^{\infty }(\tilde{X},\omega _{0}\mathbf{)}$ since $\tilde{A}%
_{j}=\pi ^{\ast }A_{j}$. The key point here is that term 
\begin{equation*}
\sqrt{-1}\Lambda _{\omega _{0}}\left( \tilde{\beta}_{j}\wedge \tilde{\beta}%
_{j}^{\ast }\right)
\end{equation*}%
is not bounded in $L^{\infty }(\tilde{X},\omega _{0})$ since it may be
written as%
\begin{equation*}
\sqrt{-1}\frac{\left( \tilde{\beta}_{j}\wedge \tilde{\beta}_{j}^{\ast
}\right) \wedge \omega _{0}^{n-1}}{\omega _{0}^{n}}
\end{equation*}%
which blows up near $\mathbf{E}$. This is a problem because in order to
carry out the induction in the preceding sub-section we had to consider
exact sequences of the form:%
\begin{equation*}
0\longrightarrow \tilde{Q}_{i}^{\ast }\longrightarrow \tilde{E}_{i}^{\ast
}\longrightarrow \tilde{E}_{i-1}^{\ast }\longrightarrow 0
\end{equation*}%
(here $\tilde{Q}_{i}=\tilde{E}_{i}/\tilde{E}_{i-1}$) and apply $\limfunc{%
Proposition}$ \ref{Prop23} to construct a non-zero holomorphic map $\tilde{Q}%
_{i}^{\ast }\rightarrow \tilde{E}_{\infty ,i}^{\ast }$. This involved
knowing that there was a uniform $L^{\infty }$ bound on the
Hermitian-Einstein tensors of the induced connections $(\tilde{A}%
_{j}^{(i)})^{\ast }$ and $(\tilde{A}_{j,Q}^{(i)})^{\ast }$ on $\tilde{E}%
_{i}^{\ast }$ and $\tilde{Q}_{i}^{\ast }$. Since this is not the case we
cannot apply this argument directly. On the other hand we do know that for
all positive times $t>0$, the degenerate Yang-Mills flow of Section $6$
gives connections $\tilde{A}_{j,t}^{(i)}$ such that $\Lambda _{\omega
_{0}}F_{\tilde{A}_{j,t}^{(i)}}$ is uniformly bounded (see Lemma \ref{Lemma18}%
). For each $t$ the deformed sequence of connections has an Uhlenbeck limit $%
\tilde{A}_{\infty ,t}^{(i)}$ on a reflexive sheaf $\tilde{E}_{\infty ,i}^{t}$
which a priori depends on $t$.

There are now two points to address. In parallel to $\limfunc{Proposition}$ %
\ref{Prop23} we will show that after resolving the singularities of the
maximal destabilising subsheaf $S$ to a bundle $\tilde{S}$ there is a
non-zero holomorphic map $\tilde{S}\rightarrow \tilde{E}_{\infty }^{t}$
(where $\tilde{E}_{\infty }^{t}$ is an Uhlenbeck limit of $\tilde{A}_{j,t}$)
away from $\mathbf{E}$. This is not automatic from the proof of Proposition %
\ref{Prop23} because the connections $\tilde{A}_{j,t}$ do not extend
smoothly across $\mathbf{E}$, so the integration by parts involved in the
proof is not valid. We will instead derive this map as a limit of the maps
produced from the corresponding argument for the family of K\"{a}hler
manifolds $(\tilde{X},\omega _{\varepsilon })$. Secondly we need to know
that the Uhlenbeck limits $(\tilde{E}_{\infty }^{t},\tilde{A}_{\infty ,t})$
are independent of $t$ and are all equal to $(\tilde{E}_{\infty },\tilde{A}%
_{\infty })$. Again, this does not follow from our previous argument since,
as we have noted, the second fundamental forms of the restricted connections
are only bounded in $L^{2}$ and therefore the curvatures are only bounded in 
$L^{1}$. In particular we do not have that $\tilde{A}_{j}^{(i)}$ is
minimising for the functional $YM$. Establishing these two facts will
complete the proof of the main theorem, since then we may use induction just
as for the case when the $HNS$ filtration is given by subbundles.

We begin with the first point.

\begin{proposition}
\label{Prop24}Let $\tilde{E}\rightarrow \tilde{X}$ be a vector bundle with
an hermitian metric $\tilde{h}$. Let $\tilde{A}_{j}=\tilde{g}_{j}(\tilde{A}%
_{0})$ be a sequence of unitary connections on $\tilde{E}$, and assume $%
\Lambda _{\omega _{0}}F_{\tilde{A}_{j}}$ is bounded uniformly in $j$ in $%
L^{1}(\tilde{X},\omega _{0})$. Let $\tilde{A}_{j,t}$ be the solution of the
degenerate $YM$ flow at time $t$ with initial condition $\tilde{A}_{j}$, and
suppose that this sequence has an Uhlenbeck limit $(\tilde{E}_{\infty }^{t},%
\tilde{A}_{\infty ,t})$. Finally let $\tilde{S}\subset \tilde{E}$ be a
subbundle of $(\tilde{E},\tilde{A}_{0})$. Then there is a non-zero
holomorphic map $\tilde{g}_{\infty }:\tilde{S}\rightarrow \tilde{E}_{\infty
}^{t}$ on $\tilde{X}-\mathbf{E}$. Furthermore, assume that $(\tilde{E}%
_{\infty }^{t},\tilde{A}_{\infty ,t})$ has an extension $(E_{\infty
}^{t},A_{\infty ,t})$ as a reflexive sheaf over $Z_{\limfunc{alg}}$ to $X$,
assume $\tilde{S}$ also extends to a reflexive sheaf $S$ on $X$. Then $%
\tilde{g}_{\infty }$ induces a non-zero holomorphic map $g_{\infty
}:S\rightarrow E_{\infty }^{t}$.
\end{proposition}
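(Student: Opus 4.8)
The plan is to adapt the argument of Proposition~\ref{Prop23} to the degenerate setting, where the main new feature is that the connections $\tilde{A}_{j,t}$ do not extend smoothly across $\mathbf{E}$, so the usual integration by parts on $\tilde{X}$ is not available. To circumvent this, I would work on the smooth K\"ahler manifolds $(\tilde{X},\omega_{\varepsilon})$ first, run the argument there where everything is legitimate, and then pass to the limit $\varepsilon\to 0$, using the uniform bounds from Section~6. Concretely, fix $t>0$ in the (almost-every) set where Proposition~\ref{Prop22} applies, so that there is a sequence $\varepsilon_{j'}(t)\to 0$ with $\Lambda_{\omega_{\varepsilon_{j'}}}F(\tilde{h}_{\varepsilon_{j'},t})\to \Lambda_{\omega_0}F(\tilde{h}_t)$ in every $L^p$. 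For the outer sequence indexed by $j$, the connections $\tilde{A}_{j,t}$ have Hermitian-Einstein tensors uniformly bounded in $L^\infty(\tilde{X},\omega_0)$ by Lemma~\ref{Lemma18}, and similarly on each $(\tilde{X},\omega_\varepsilon)$ for $\varepsilon$ small by Lemma~\ref{Lemma16}.

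The key steps, in order: (i) For each fixed $\varepsilon$ small, run the Moser iteration of Proposition~\ref{Prop23} for the holomorphic sections $\tilde{g}_{j,t}^{\tilde{S}}: \tilde{S}\to (\tilde{E},\bar{\partial}_{\tilde{A}_{j,t}})$ of $\mathrm{Hom}(\tilde{S},\tilde{E})$ with respect to $\omega_\varepsilon$. Since $\tilde{S}$ is a genuine subbundle of $\tilde{E}$ on all of $\tilde{X}$, the second fundamental form of $\tilde{S}$ drops out of the estimate exactly as in Proposition~\ref{Prop23}, and the Laplacian inequality $\triangle_\partial \mathrm{Tr}\, k_{j,t}^{\tilde{S}} \le (\mathrm{Tr}\, k_{j,t}^{\tilde{S}})(|\Lambda_{\omega_\varepsilon}F_{\tilde{h}_{\varepsilon,t}}| + |\Lambda_{\omega_\varepsilon}F_{\tilde{h}^{\tilde{S}}}|)$ holds with the right-hand coefficient bounded uniformly in $j$ and $\varepsilon$ (here one needs a uniform bound on $|\Lambda_{\omega_\varepsilon}F_{\tilde{h}^{\tilde S}}|$, which follows as in the remark after Lemma~\ref{Lemma10} applied to the fixed background metric). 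The crucial input is that the Sobolev constant of $(\tilde{X},\omega_\varepsilon)$ is bounded away from zero as $\varepsilon\to 0$ (Bando--Siu, used also in Section~6), so the resulting $L^\infty$ bound $\|\mathrm{Tr}\, k_{j,t}^{\tilde{S}}\|_{L^\infty} \le C\|\mathrm{Tr}\, k_{j,t}^{\tilde S}\|_{L^2}$ has $C$ independent of both $j$ and $\varepsilon$. (ii) Normalise $\|\tilde{g}_{j,t}^{\tilde S}\|_{L^4(\tilde X,\omega_\varepsilon)}=1$; the uniform $L^\infty$ bound together with a covering of $\tilde{Z}_{\mathrm{an}}\cup\mathbf{E}$ by small balls (both have the requisite codimension/measure for the $\omega_0$-geometry, hence for $\omega_\varepsilon$ with $\varepsilon$ small) shows the $L^4$ mass of $\tilde{g}_{j,t}^{\tilde S}$ is bounded below on a fixed compact subset of $\tilde X-(\tilde Z_{\mathrm{an}}\cup\mathbf{E})$. (iii) Extract a weak $L^2_{2,loc}$ limit $\tilde{g}_{\infty,t}^{\tilde S}$ on compact subsets of $\tilde X-(\tilde Z_{\mathrm{an}}\cup\mathbf{E})$, which is therefore non-zero; holomorphicity follows from $\bar{\partial}_{\tilde A_0^*\otimes \tilde A_{\infty,t}}\tilde{g}_{j,t}^{\tilde S} = (\tilde A_{j,t}-\tilde A_{\infty,t})\tilde{g}_{j,t}^{\tilde S} \to 0$ in $L^2_{loc}$, using $\tilde A_{j,t}\to\tilde A_{\infty,t}$ in $L^2_{1,loc}$ away from the singular sets. (iv) Since $\tilde{Z}_{\mathrm{an}}$ and $\mathrm{Sing}(\tilde E_\infty^t)$ have codimension $\ge 4$ and $\ge 2$ respectively inside $\tilde X-\mathbf{E}$, apply the Hartogs-type extension (Shiffman's lemma, cited in Proposition~\ref{Prop23}) and normality of reflexive sheaves to extend $\tilde{g}_{\infty,t}^{\tilde S}$ to a non-zero holomorphic map $\tilde S \to \tilde E_\infty^t$ on all of $\tilde X-\mathbf{E}$.

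For the final statement: by hypothesis $(\tilde E_\infty^t,\tilde A_{\infty,t})$ extends over $Z_{\mathrm{alg}}$ to a reflexive sheaf $E_\infty^t$ on $X$, and $\tilde S$ extends to a reflexive sheaf $S$ on $X$; moreover $\pi$ identifies $\tilde X-\mathbf{E}$ with $X-Z_{\mathrm{alg}}$. So $\tilde{g}_{\infty,t}^{\tilde S}$ is a non-zero holomorphic map $S\to E_\infty^t$ defined on $X-Z_{\mathrm{alg}}$, and since $Z_{\mathrm{alg}}$ has codimension at least $2$ and both sheaves are reflexive (hence normal), the map extends uniquely to a map $g_{\infty}:S\to E_\infty^t$ on all of $X$; it is non-zero because it is non-zero on the dense open set $X-Z_{\mathrm{alg}}$.

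The main obstacle I anticipate is Step (i)--(iii): making the Moser iteration and the weak-limit extraction genuinely uniform in $\varepsilon$, and then passing $\varepsilon\to 0$ to obtain a map for $\omega_0$ rather than a family of maps for $\omega_\varepsilon$. One has to be careful that the normalisations are compatible as $\varepsilon$ varies (the $L^4(\omega_\varepsilon)$ norms converge to $L^4(\omega_0)$ norms on the fixed compact set by Lemma~\ref{Lemma15}), that the limiting connection $\tilde A_{\infty,t}$ one compares against is the same $\omega_0$-Uhlenbeck limit independent of $\varepsilon$ (this uses Proposition~\ref{Prop22} to tie the $\omega_\varepsilon$-Hermitian-Einstein tensors to the $\omega_0$ one), and that the lower $L^4$ bound survives the double limit. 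Handling the two limits $\varepsilon\to 0$ and $j\to\infty$ in the correct order — fixing the compact exhaustion first, then diagonalising — is the delicate bookkeeping point, but structurally it parallels the arguments already carried out in Lemmas~\ref{Lemma16}--\ref{Lemma18} and Proposition~\ref{Prop22}.
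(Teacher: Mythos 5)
Your overall strategy matches the paper's: work on the smooth K\"ahler manifolds $(\tilde X,\omega_\varepsilon)$, where the $\omega_\varepsilon$-flow connections $\tilde A_{j,t}^{\varepsilon_i}$ are defined on all of $\tilde X$ and integration by parts is legitimate; run the Moser iteration of Proposition \ref{Prop23} using the $\varepsilon$-uniform lower bound on the Sobolev constant; normalise in $L^4$; diagonalise first in $\varepsilon$ over an exhaustion of $\tilde X-\mathbf{E}$ and then pass $j\to\infty$; and finally invoke normality of reflexive sheaves for the extension. However, there is a genuine gap in your step (i): you assert that $\left\vert\Lambda_{\omega_\varepsilon}F_{\tilde h^{\tilde S}}\right\vert$ is uniformly bounded for a \emph{fixed} background metric $\tilde h^{\tilde S}$ on $\tilde S$, appealing to the remark after Lemma \ref{Lemma10}. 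That remark gives a bound on the \emph{integral} $\int c_1(\cdot)\wedge\tau^{n-k-1}\wedge\eta^k$, not a pointwise $L^\infty$ bound. In fact, for a fixed $\tilde h^{\tilde S}$, $F_{\tilde h^{\tilde S}}$ is a fixed $(1,1)$-form, and
\begin{equation*}
\left\vert\Lambda_{\omega_\varepsilon}F_{\tilde h^{\tilde S}}\right\vert=\left\vert\frac{F_{\tilde h^{\tilde S}}\wedge\omega_\varepsilon^{n-1}}{\omega_\varepsilon^n}\right\vert
\end{equation*}
blows up near $\mathbf{E}$ as $\varepsilon\to 0$, because $\omega_\varepsilon^n\to(\pi^*\omega)^n$ vanishes there. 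Without that uniform $L^\infty$ bound, the Moser iteration constant $C$ depends on $\varepsilon$ and the argument collapses.

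The paper avoids this by choosing an $\varepsilon$-dependent family of metrics $\tilde h_{\varepsilon_i}^{\tilde S}$ on $\tilde S$, converging to a limiting metric on $\tilde X-\mathbf{E}$, such that $\sup\bigl\vert\Lambda_{\omega_{\varepsilon_i}}F_{\tilde h_{\varepsilon_i}^{\tilde S}}\bigr\vert$ is uniformly bounded: concretely, run the $\omega_{\varepsilon_i}$-HYM flow on $\tilde S$ for time $1$ starting from a fixed metric, so that Lemma \ref{Lemma16} furnishes the uniform bound on the source as well as the target. With that substitution, the Laplacian inequality becomes
\begin{equation*}
\Delta_{(\partial,\omega_{\varepsilon_i})}\bigl(\limfunc{Tr}\tilde k_{\varepsilon_i,j}^{\tilde S}\bigr)\leq\bigl(\limfunc{Tr}\tilde k_{\varepsilon_i,j}^{\tilde S}\bigr)\Bigl(\bigl\vert\Lambda_{\omega_{\varepsilon_i}}F_{\tilde A_{j,t}^{\varepsilon_i}}\bigr\vert+\bigl\vert\Lambda_{\omega_{\varepsilon_i}}F_{\tilde h_{\varepsilon_i}^{\tilde S}}\bigr\vert\Bigr),
\end{equation*}
with both coefficients uniformly bounded in $j$ and $\varepsilon_i$, and your remaining steps (ii)--(iv) then go through exactly as written and coincide with the paper's argument. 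One further minor point: the holomorphic sections entering the Moser iteration should be $\tilde g_{\varepsilon_i,j}^{\tilde S}:\tilde S\to(\tilde E,\bar\partial_{\tilde A_{j,t}^{\varepsilon_i}})$ with respect to the $\omega_{\varepsilon_i}$-flow (smooth on all of $\tilde X$), not the degenerate flow $\tilde A_{j,t}$; one obtains the maps for $\tilde A_{j,t}$ only after the $\varepsilon_i\to 0$ diagonalisation, which is the order you in fact intend.
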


\begin{proof}
Let $\omega _{\varepsilon }$ be the standard family of K\"{a}hler metrics on 
$\tilde{X}$ and fix $t>0$. Let $\varepsilon _{i}\rightarrow 0$ be a sequence
as in Section $6$, i.e. if $\tilde{A}_{j,t}^{\varepsilon _{i}}$ is the time $%
t$ $YM$ flow on $(\tilde{X},\omega _{\varepsilon _{i}})$, then $\tilde{A}%
_{j,t}^{\varepsilon _{i}}\rightarrow \tilde{A}_{j,t}$ in $C^{1/0}$ on
compact subsets of $\tilde{X}-\mathbf{E}$. Choose a family of metrics $%
\tilde{h}_{\varepsilon _{i}}^{\tilde{S}}$ on $\tilde{S}$ converging
uniformly on compact subsets of $\tilde{X}-\mathbf{E}$ to a metric $\tilde{h}%
_{0}^{\tilde{S}}$ defined away from $\mathbf{E}$, and such that $\sup
\left\vert \Lambda _{\omega _{\varepsilon i}}F_{\tilde{h}_{\varepsilon
_{i}}^{\tilde{S}}}\right\vert $ is uniformly bounded as $\varepsilon
_{i}\rightarrow 0$ (take for example the time $1$ $HYM$ flow of $\tilde{h}$
with respect $\omega _{\varepsilon }$). For each $j$ and each $\varepsilon
_{i}>0$, we have a non-zero holomorphic map $\tilde{g}_{\varepsilon _{i},j}^{%
\tilde{S}}:\tilde{S}\rightarrow (\tilde{E},\bar{\partial}_{\tilde{A}%
_{j,t}^{\varepsilon _{i}}})$. Just as in Section $7.1$, we set $%
k_{\varepsilon _{i},j}^{\tilde{S}}=$ $\left( \tilde{g}_{\varepsilon _{i},j}^{%
\tilde{S}}\right) ^{\ast }\tilde{g}_{\varepsilon _{i},j}^{\tilde{S}}$. As in 
$\limfunc{Proposition}$ \ref{Prop23} we have the inequality:%
\begin{equation*}
\Delta _{(\partial ,\omega _{\varepsilon })}(\limfunc{Tr}\tilde{k}%
_{\varepsilon _{i},j}^{\tilde{S}})\leq (\limfunc{Tr}\tilde{k}_{\varepsilon
_{i},j}^{\tilde{S}})\left( \left\vert \Lambda _{\omega _{\varepsilon
_{i}}}F_{\tilde{A}_{j,t}^{\varepsilon _{i}}}\right\vert +\left\vert \Lambda
_{\omega _{\varepsilon i}}F_{\tilde{h}_{\varepsilon _{i}}^{\tilde{S}%
}}\right\vert \right) .
\end{equation*}

Both factors on the right are uniformly bounded as $\varepsilon
_{i}\rightarrow 0$ by assumption. It follows that we have the inequality: $%
\bigl\Vert\limfunc{Tr}\tilde{k}_{\varepsilon _{i},j}^{\tilde{S}}\bigr\Vert%
_{L^{\infty }(\tilde{X})}\leq C\bigl\Vert\limfunc{Tr}\tilde{k}_{\varepsilon
_{i},j}^{\tilde{S}}\bigr\Vert_{L^{2}(\tilde{X},\omega _{\varepsilon })}$,
where the constant $C$ depends only on these uniform bounds and the Sobolev
constant of $(\tilde{X},\omega _{\varepsilon _{i}})$ is also uniformly
bounded away from zero by \cite{BS} Lemma $3$. As in the proof of $\limfunc{%
Proposition}\ $\ref{Prop23} we rescale $\tilde{g}_{\varepsilon _{i},j}^{%
\tilde{S}}$ so that $\bigl\Vert\tilde{g}_{\varepsilon _{i},j}^{\tilde{S}}%
\bigr\Vert_{L^{4}(\tilde{X},\omega _{\varepsilon })}=1$. A diagonalisation
argument for an exhaustion of $\tilde{X}-\mathbf{E}$ together with the $\sup 
$ bound gives a sequence of non-zero holomorphic maps $\tilde{g}_{j}^{\tilde{%
S}}:\tilde{S}\rightarrow (\tilde{E},\tilde{\partial}_{\tilde{A}_{j,t}})$
defined on $\tilde{X}-\mathbf{E}$ with $\tilde{g}_{\varepsilon _{i},j}^{%
\tilde{S}}\rightarrow $ $\tilde{g}_{j}^{\tilde{S}}$ uniformly on compact
subsets as $\varepsilon _{i}\rightarrow 0$ such that: $\bigl\Vert\tilde{g}%
_{j}^{\tilde{S}}\bigr\Vert_{L^{\infty }}\leq C$, and $\bigl\Vert\tilde{g}%
_{j}^{\tilde{S}}\bigr\Vert_{L^{4}(\omega _{0})}=1$. Repeating the proof of $%
\limfunc{Proposition}$ \ref{Prop23} yields a nonzero limit $\tilde{g}%
_{\infty }^{\tilde{S}}:\tilde{S}\rightarrow (\tilde{E}_{\infty }^{t},\tilde{A%
}_{\infty ,t})$. The last statement follows from the normality of the
sheaves in question.
\end{proof}

Secondly we have:

\begin{proposition}
\label{Prop25}Let $\tilde{E}\rightarrow \tilde{X}$ be a Hermitian vector
bundle with a unitary integrable connection $\tilde{A}_{0}$. We assume that
the holomorphic bundle $(\tilde{E},\bar{\partial}_{A_{0}})$ restricted to $%
\tilde{X}-\mathbf{E}=X-Z_{\limfunc{alg}}$ extends to a holomorphic bundle $%
(E,\bar{\partial}_{E})$ on $X$ with Harder-Narasimhan type $\mu =(\mu
_{1},\cdots ,\mu _{R}).$ Let $\tilde{A}_{j}=\tilde{g}_{j}(\tilde{A}_{0})$ be
a sequence of unitary connections on $\tilde{E}$, and assume there is a
subset $\tilde{Z}_{\limfunc{an}}\subset \tilde{X}-\mathbf{E}$ of Hausdorff
codimension at least $4$, and a $YM$ connection $\tilde{A}_{\infty }$ on a
bundle $\tilde{E}_{\infty }\rightarrow \tilde{X}-(\tilde{Z}_{\limfunc{an}%
}\cup \mathbf{E})$ such that $\tilde{A}_{j}\rightarrow \tilde{A}_{\infty }$
weakly in $L_{1,loc}^{p}$ (where $p>n$) on compact subsets of $\tilde{X}-(%
\tilde{Z}_{\limfunc{an}}\cup \mathbf{E)}$. We assume that the constant
eigenvalues of $\sqrt{-1}\Lambda _{\omega _{0}}F_{\tilde{A}_{\infty }}$ are
given by the vector $\mu $. Finally assume $\Lambda _{\omega _{0}}F_{\tilde{A%
}_{j}}\rightarrow \Lambda _{\omega _{0}}F_{\tilde{A}_{\infty }}$ in $%
L^{1}(\omega _{0})$. Then there is a subsequence such that for almost all $%
t>0$, $\tilde{A}_{j,t}\rightarrow \tilde{A}_{\infty }$ in $L_{1,loc}^{p}$
away from $\tilde{Z}_{\limfunc{an}}\cup \mathbf{E}$ where $\tilde{A}_{j,t}$
is the time $t$ degenerate $YM$ flow with initial condition $\tilde{A}_{j}$.
\end{proposition}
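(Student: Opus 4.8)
The plan is to show that running the degenerate Yang-Mills flow for a short time does not change the Uhlenbeck limit, by exploiting the energy identity for the degenerate flow together with the hypothesis that the Hermitian-Einstein tensors already converge in $L^1$. The essential mechanism is the same as in the subbundle case inside Proposition \ref{Prop23}: one estimates $\|\tilde A_{j,t}-\tilde A_j\|$ in terms of the drop in a Yang-Mills-type energy along the flow, and shows this drop goes to zero. The difficulty here is that on $(\tilde X,\omega_0)$ the full curvature of $\tilde A_j$ is only bounded in $L^1$, so one cannot directly use the $YM$ functional; instead one must work on the approximating manifolds $(\tilde X,\omega_\varepsilon)$, where the machinery of Section 6 applies, and pass to the limit $\varepsilon\to 0$.

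First I would set up the approximation: for each $j$ let $\tilde A_{j,t}^\varepsilon$ be the time-$t$ $YM$ flow on $(\tilde X,\omega_\varepsilon)$, which by the construction in Section 6 converges in $C^{1/0}$ on compact subsets of $\tilde X-\mathbf{E}$ to $\tilde A_{j,t}$ as $\varepsilon\to 0$. By Lemma \ref{Lemm5}(1) applied on $(\tilde X,\omega_\varepsilon)$, for any $t_0>0$ we have the Minkowski-type bound $\|\tilde A_{j,t_0}^\varepsilon-\tilde A_j\|_{L^2(\omega_\varepsilon)}^2\le \frac{t_0}{2}\bigl(YM^{\omega_\varepsilon}(\tilde A_j)-YM^{\omega_\varepsilon}(\tilde A_{j,t_0}^\varepsilon)\bigr)$. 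The key is that by Corollary \ref{Cor3} and the hypothesis that the eigenvalues of $\sqrt{-1}\Lambda_{\omega_0}F_{\tilde A_\infty}$ are exactly $\mu$, combined with $\Lambda_{\omega_0}F_{\tilde A_j}\to\Lambda_{\omega_0}F_{\tilde A_\infty}$ in $L^1$ (hence, using the $L^\infty$ bounds, in all $L^p$), the $HYM^{\omega_\varepsilon}$ energy of $\tilde A_j$ converges (as $j\to\infty$, then $\varepsilon\to 0$) to $HYM(\mu)$, which is precisely the minimum attainable by Corollary \ref{Cor3}. Since the topological correction term relating $YM$ and $HYM$ is continuous in $\varepsilon$, the same holds for $YM^{\omega_\varepsilon}$. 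Because $YM^{\omega_\varepsilon}$ is non-increasing along the flow and already essentially at its infimum, the drop $YM^{\omega_\varepsilon}(\tilde A_j)-YM^{\omega_\varepsilon}(\tilde A_{j,t_0}^\varepsilon)$ tends to zero as $j\to\infty$ uniformly in $\varepsilon$ small; hence $\tilde A_{j,t_0}^\varepsilon\to\tilde A_j$ in $L^2(\omega_\varepsilon)$ as $j\to\infty$, uniformly in $\varepsilon$. Passing $\varepsilon\to 0$ on a fixed compact $W\subset\subset\tilde X-\mathbf{E}$ using Lemma \ref{Lemma15} gives $\tilde A_{j,t_0}\to\tilde A_j$ in $L^2(W,\omega_0)$.

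Next I would upgrade this $L^2$ convergence to the statement about Uhlenbeck limits. Since $\tilde A_j\to\tilde A_\infty$ weakly in $L^p_{1,loc}$ away from $\tilde Z_{\mathrm{an}}\cup\mathbf{E}$, and $\|\tilde A_{j,t_0}-\tilde A_j\|_{L^2(W)}\to 0$, the sequences $\tilde A_{j,t_0}$ and $\tilde A_j$ have the same Uhlenbeck limit — the standard argument (as in Proposition \ref{Prop23}) that $\tilde A_{j,t_0}$ is gauge-equivalent to a sequence converging to $\tilde A_\infty$ goes through because the difference of connections tends to zero in $L^2$, the curvatures of $\tilde A_{j,t_0}$ are uniformly $L^2$-bounded on compacts by Lemma \ref{Lemma18}, and $\Lambda_{\omega_0}F_{\tilde A_{j,t_0}}$ is uniformly $L^\infty$-bounded there. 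One must also invoke Proposition \ref{Prop22} to see that, along the relevant subsequence $\varepsilon_j(t)\to 0$, $\Lambda_{\omega_{\varepsilon_j}}F(\tilde h_{\varepsilon_j,t})\to\Lambda_{\omega_0}F(\tilde h_t)$ in every $L^p$, so that the $HYM_{\alpha}$ energies of the flowed connections converge to those of the limit and the eigenvalue/type hypothesis is preserved; this is needed so that the limit of $\tilde A_{j,t_0}$ is still Yang-Mills with the correct type, forcing it to coincide with $\tilde A_\infty$ rather than some a priori different $\tilde A_{\infty,t_0}$.

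The main obstacle is the first step: controlling the $YM^{\omega_\varepsilon}$-energy drop \emph{uniformly in $\varepsilon$}. The subtlety is that $YM^{\omega_\varepsilon}(\tilde A_j)$ need not be exactly minimising — only asymptotically so — and the rate of the drop could in principle degenerate as $\varepsilon\to 0$ because the Sobolev and heat-kernel constants are only bounded, not converging. One resolves this by using that the infimum of $YM^{\omega_\varepsilon}$ over the complex gauge orbit, namely $\frac{4\pi^2}{(n-2)!}\int(2c_2-c_1^2)\wedge\omega_\varepsilon^{n-2}+HYM(\mu_{\omega_\varepsilon})$, varies continuously with $\varepsilon$ and that by Remark \ref{R6} the types $\mu_{\omega_\varepsilon}$ converge to $\mu$; so $YM^{\omega_\varepsilon}(\tilde A_j)$ minus its $\omega_\varepsilon$-infimum is, for $\varepsilon$ small and $j$ large, bounded by a quantity tending to zero independently of $\varepsilon$, and the flow (being energy-decreasing) cannot recover more than this. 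The remaining routine points — the Minkowski/Cauchy-Schwarz estimate, the passage to compact subsets, and the identification of the limit — follow the template already laid down in Proposition \ref{Prop23} and the lemmas of Section 6.
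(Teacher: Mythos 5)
There is a genuine gap: your argument hinges on the claim that $HYM^{\omega_\varepsilon}(\tilde A_j)$ already converges (as $j\to\infty$, $\varepsilon\to 0$) to the topological minimum $HYM(\mu)$, and specifically you invoke an ``$L^\infty$ bound'' to upgrade the hypothesis $\Lambda_{\omega_0}F_{\tilde A_j}\to\Lambda_{\omega_0}F_{\tilde A_\infty}$ in $L^1$ to convergence in all $L^p$. But no $L^\infty$ bound on $\Lambda_{\omega_0}F_{\tilde A_j}$ is assumed, and in the intended application it fails: the proposition is applied to connections induced on a piece of the resolved $HNS$ filtration, whose Hermitian--Einstein tensor acquires the term $\sqrt{-1}\Lambda_{\omega_0}(\tilde\beta_j\wedge\tilde\beta_j^{\ast})$ coming from the second fundamental form, and $\tilde\beta_j$ is only bounded in $L^2(\omega_0)$ --- so this term is controlled only in $L^1$ and blows up in $L^\infty$ near $\mathbf{E}$. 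Thus $HYM^{\omega_\varepsilon}(\tilde A_j)$ need not be close to its infimum, and the drop $YM^{\omega_\varepsilon}(\tilde A_j)-YM^{\omega_\varepsilon}(\tilde A_{j,t_0}^\varepsilon)$ need not tend to zero; indeed the entire point of introducing the degenerate flow is that the needed $L^\infty$ control is only available \emph{after} flowing for a positive time (Lemma \ref{Lemma18}, via the uniform heat-kernel bound).

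This is why the paper's Lemma \ref{Lemma20} estimates $\|\tilde A_{j,t}-\tilde A_{j,t_0}\|$ for $t\geq t_0>0$, staying strictly away from the endpoint $t=0$: Lemma \ref{Lemma19} gives $HYM^{\omega_0}(\tilde A_{j,t_0})\to HYM(\mu)$ only for $t_0>0$, using the maximum principle to pinch $|\Lambda_{\omega_0}F_{\tilde A_{j,t_0}}|$ between the lower bound from Corollary \ref{Cor3} and the heat-kernel-smoothed upper bound. Consequently one obtains a single Uhlenbeck limit $\tilde A_{\infty,\ast}$ of the flowed sequences for a.e.\ $t>0$ (Lemma \ref{Lemma21}), but identifying $\tilde A_{\infty,\ast}$ with the original $\tilde A_\infty$ requires a separate argument --- the test-form computation, relying on the locally uniform bound on $\Lambda_{\omega_\varepsilon}F_{\tilde A_{j,t}}$ (Lemma \ref{Lemma22}) and the $L^p_{1,loc}$ bound on $\tilde A_{j,t}-\tilde A_\infty$ (Corollary \ref{Cor8}) --- to show $|\int\langle\phi,\tilde A_{\infty,\delta}-A_\infty\rangle\,dvol_{\omega_0}|\leq C\delta$ for arbitrary $\delta$. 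Your proposal tries to shortcut this by pushing the Minkowski estimate all the way to $t=0$, but that is exactly the step the hypotheses do not permit, and the distributional identification step cannot be dispensed with.
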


This will follow from a sequence of lemmas.

\begin{lemma}
\label{Lemma19}For any $t>0$, $\bigl\Vert \Lambda _{\omega _{0}}F_{\tilde{A}%
_{j,t}}\bigr\Vert _{L^{\infty }(\tilde{X}-\mathbf{E})}$ is uniformly bounded
in $j$. Moreover, for almost all $t>0$, $\lim_{j\to \infty }HYM^{\omega
_{0}}\left( \tilde{A}_{j,t}\right) =HYM(\mu ) $.
\end{lemma}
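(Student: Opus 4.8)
The plan is to prove Lemma \ref{Lemma19} in two parts, handling the uniform $L^\infty$ bound on the Hermitian-Einstein tensors first and then the convergence of the $HYM$ functionals.

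For the uniform bound, I would argue exactly as in the proof of Lemma \ref{Lemma16}(1), but now with the additional input that $\Lambda_{\omega_0}F_{\tilde A_j}\to\Lambda_{\omega_0}F_{\tilde A_\infty}$ in $L^1(\omega_0)$, which forces $\|\Lambda_{\omega_0}F_{\tilde A_j}\|_{L^1(\tilde X,\omega_0)}$ to be uniformly bounded in $j$. First I would pass to the approximating sequence $\tilde A_{j,t}^{\varepsilon}$ coming from the honest $YM$ flow with respect to $\omega_\varepsilon$; by Section $6$ we have $\tilde A_{j,t}^{\varepsilon_i}\to\tilde A_{j,t}$ in $C^{1/0}$ on compact subsets of $\tilde X-\mathbf E$, and (as in the construction preceding Definition \ref{Def7}) one checks $\|\Lambda_{\omega_{\varepsilon}}F_{\tilde A_j}\|_{L^1(\omega_\varepsilon)}$ is uniformly bounded in $\varepsilon$ and $j$ using the same determinant-of-metrics argument that appears just before Lemma \ref{Lemma16}. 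Then Lemma \ref{Lemm5}(2) says $|\Lambda_{\omega_\varepsilon}F_{\tilde A_{j,t}^{\varepsilon}}|$ is a subsolution of the heat equation on $(\tilde X,\omega_\varepsilon)$, and the heat kernel bound $K_t^{\varepsilon}(x,y)\le C(1+1/t^n)$ (with $C$ independent of $\varepsilon$, from \cite{BS} Lemma $4$, which applies because the Sobolev constants of $(\tilde X,\omega_\varepsilon)$ are bounded below) gives a pointwise bound on $|\Lambda_{\omega_\varepsilon}F_{\tilde A_{j,t}^{\varepsilon}}|$ depending only on $t$ and the uniform $L^1$ bound. Letting $\varepsilon_i\to0$ (using the $C^{1/0}$ convergence) transfers this to a uniform bound on $|\Lambda_{\omega_0}F_{\tilde A_{j,t}}|$ on compact subsets of $\tilde X-\mathbf E$, with a constant independent of the exhaustion, hence a genuine $L^\infty(\tilde X-\mathbf E)$ bound uniform in $j$.

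For the convergence $HYM^{\omega_0}(\tilde A_{j,t})\to HYM(\mu)$, I would combine the monotonicity of the $HYM$ functional along the degenerate flow with the fact that $\tilde A_\infty$ already realizes the value $HYM(\mu)$. Since $\Lambda_{\omega_0}F_{\tilde A_j}\to\Lambda_{\omega_0}F_{\tilde A_\infty}$ in $L^1$ and (by the uniform $L^\infty$ bound, which also holds at $t=0^+$, together with interpolation) in fact in $L^2(\omega_0)$, we have $HYM^{\omega_0}(\tilde A_j)\to HYM^{\omega_0}(\tilde A_\infty)=\tfrac{2\pi}{(n-1)!}\sum_i\mu_i^2=HYM(\mu)$, where the middle equality uses that the eigenvalues of $\sqrt{-1}\Lambda_{\omega_0}F_{\tilde A_\infty}$ are the $\mu_i$. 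On the other hand, the $HYM$ energy is non-increasing along the degenerate $YM$ flow: this is Lemma \ref{Lemm5}(1) for each $\omega_\varepsilon$, and passing to the limit (using Lemma \ref{Lemma18} and Proposition \ref{Prop22}, which gives $HYM^{\omega_{\varepsilon_j}}(\tilde A_{j',\varepsilon_j,t})\to HYM^{\omega_0}(\tilde A_{j',t})$ for almost all $t$) yields $HYM^{\omega_0}(\tilde A_{j,t})\le HYM^{\omega_0}(\tilde A_j)$ for almost all $t$. Combined with the lower bound $HYM^{\omega_0}(\tilde A_{j,t})\ge HYM(\mu_0)=HYM(\mu)$ coming from Corollary \ref{Cor3} applied on $X-Z_{\limfunc{alg}}$ — since $\tilde A_{j,t}$ lies in the complex gauge orbit of $(E,\bar\partial_E)$ away from $\mathbf E$ — this squeezes $\lim_{j\to\infty}HYM^{\omega_0}(\tilde A_{j,t})=HYM(\mu)$ for almost all $t$.

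The main obstacle I anticipate is the careful handling of the degenerate metric $\omega_0=\pi^*\omega$ in the lower-bound step: Corollary \ref{Cor3} is stated for connections in a $\mathcal G^{\mathbb C}$-orbit over a compact Kähler manifold, whereas $\omega_0$ degenerates along $\mathbf E$ and $\tilde A_{j,t}$ is only defined (and only in the relevant gauge orbit) on $\tilde X-\mathbf E=X-Z_{\limfunc{alg}}$. I would resolve this by running the argument on $(\tilde X,\omega_{\varepsilon})$, where the bound $HYM^{\omega_\varepsilon}(\tilde A_{j,t}^{\varepsilon})\ge\tfrac{2\pi}{(n-1)!}\sum_i(\mu_i^{\varepsilon})^2$ holds honestly, and then taking $\varepsilon\to0$ using the convergence $\mu^{\varepsilon}\to\mu$ from Proposition \ref{Prop17}/Remark \ref{R6} together with Proposition \ref{Prop22}. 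A secondary technical point is that the monotonicity and the $\varepsilon\to0$ limit are only available for almost every $t$ (the set of good times depends on the Fatou/Proposition \ref{Prop21} argument), which is why the statement is phrased for almost all $t>0$; I would simply carry this "almost all $t$" qualifier through.
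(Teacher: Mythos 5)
Your treatment of the uniform $L^\infty$ bound and of the lower bound $M\leq \tfrac{1}{2\pi}\int_{\tilde X}|\Lambda_{\omega_0}F_{\tilde A_{j,t}}|\,dvol_{\omega_0}$ matches the paper: the first part is precisely Lemma \ref{Lemma18} (the paper simply cites it), and the lower bound is indeed obtained by applying Corollary \ref{Cor3} on $(\tilde X,\omega_\varepsilon)$ and passing $\varepsilon\to 0$ via Proposition \ref{Prop22} and the convergence of $HN$ types. The gap is in your upper bound. You argue $HYM^{\omega_0}(\tilde A_{j,t})\leq HYM^{\omega_0}(\tilde A_j)$ and claim $HYM^{\omega_0}(\tilde A_j)\to HYM(\mu)$, justifying the latter by upgrading the hypothesized $L^1$ convergence $\Lambda_{\omega_0}F_{\tilde A_j}\to\Lambda_{\omega_0}F_{\tilde A_\infty}$ to $L^2$ via ``the uniform $L^\infty$ bound, which also holds at $t=0^+$.'' No such bound at $t=0$ exists. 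Lemma \ref{Lemma18} gives the $L^\infty$ bound only for $t\geq t_0>0$, with constant blowing up as $t_0\to 0$ (the heat kernel estimate $K_t^\varepsilon\leq C(1+1/t^n)$ degenerates). In the application (the induction step feeding Proposition \ref{Prop25}), the $\tilde A_j$ are restrictions of $\pi^\ast A_j$ to pieces of the resolution and their Hermitian-Einstein tensors contain the term $\sqrt{-1}\Lambda_{\omega_0}(\tilde\beta_j\wedge\tilde\beta_j^\ast)$, which is bounded only in $L^1(\omega_0)$, not $L^\infty$. So $HYM^{\omega_0}(\tilde A_j)$ need not tend to $HYM(\mu)$ (it may not even be finite), and your squeeze does not close. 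This is exactly the obstruction the degenerate flow was introduced to circumvent, so you cannot assume it away at $t=0$.

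The paper avoids $HYM^{\omega_0}(\tilde A_j)$ altogether by exploiting the quantitative form of the heat-kernel bound for $t>0$: writing $M^2=\sum_i\mu_i^2$ and using $\int K_t^\varepsilon\,dvol=1$,
\begin{equation*}
\left\vert\Lambda_{\omega_\varepsilon}F_{\tilde A_{j,t}^\varepsilon}\right\vert(x)\leq M+C(t)\left\Vert\,\left\vert\Lambda_{\omega_\varepsilon}F_{\tilde A_j}\right\vert-M\,\right\Vert_{L^1(\tilde X,\omega_\varepsilon)},
\end{equation*}
with $C(t)$ independent of $j$ and $\varepsilon$. Passing $\varepsilon\to 0$ gives the same bound with $\omega_0$, and since the $L^1$ deviation tends to zero by hypothesis, $\limsup_j\sup_x|\Lambda_{\omega_0}F_{\tilde A_{j,t}}|(x)\leq M$. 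Combined with your lower bound and Fatou, this forces $\limsup_j|\Lambda_{\omega_0}F_{\tilde A_{j,t}}|=M$ a.e.; the uniform $L^\infty$ bound for $t>0$ then permits dominated convergence, giving $HYM^{\omega_0}(\tilde A_{j,t})\to HYM(\mu)$ for almost all $t$. The key point you missed is that the flow for positive time does more than produce a uniform $L^\infty$ bound: it produces a bound controlled by $M$ plus an $L^1$-small error, and it is the $L^1$ hypothesis at $t=0$, not an $L^2$ or $L^\infty$ one, that closes the argument.
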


\begin{proof}
The first statement follows from Lemma \ref{Lemma18}. By assumption, we have 
$\Lambda _{\omega _{0}}F_{\tilde{A}_{j}}\rightarrow \Lambda _{\omega _{0}}F_{%
\tilde{A}_{\infty }}$ in $L^{1}$, and $\Lambda _{\omega _{0}}F_{\tilde{A}%
_{\infty }}$ has constant eigenvalues $\mu _{1},\cdots ,\mu _{K}$. Set $%
M^{2}=\sum_{i=1}^{K}\mu _{i}^{2}=\frac{HYM(\mu )}{2\pi }$. Also let $\mu
_{1,\varepsilon },\cdots ,\mu _{K,\varepsilon }$ be the $HN$ type of $(E,%
\bar{\partial}_{\tilde{A}_{0}})$ with respect to $\omega _{\varepsilon }$,
and set $\tilde{M}_{\varepsilon }^{2}=\sum_{i=1}^{K}$ $\mu _{i,\varepsilon
}^{2}$. By Corollary$\ $\ref{Cor3} we know:%
\begin{equation*}
\tilde{M}_{\varepsilon }\leq \frac{1}{2\pi }\int_{\tilde{X}}\left\vert
\Lambda _{\omega _{\varepsilon }}F_{\tilde{A}_{j,t}^{\varepsilon
}}\right\vert dvol_{\omega _{\varepsilon }}.
\end{equation*}%
By $\limfunc{Proposition}\ $\ref{Prop22} , for almost all $t$, we can find a
sequence $\varepsilon _{i}=\varepsilon _{i}(t)\rightarrow 0$ such that $%
\Lambda _{\omega _{\varepsilon _{i}}}F_{\tilde{A}_{j,t}^{\varepsilon
_{i}}}\rightarrow \Lambda _{\omega _{0}}F_{\tilde{A}_{j,t}}$ in any $%
L^{p}(\omega _{0})$. Let $\varepsilon _{i}\rightarrow 0$ and using the
convergence of the $HN$ type:%
\begin{equation*}
M\leq \frac{1}{2\pi }\int_{\tilde{X}}\left\vert \Lambda _{\omega _{0}}F_{%
\tilde{A}_{j,t}}\right\vert dvol_{\omega _{0}}
\end{equation*}%
for all $j$ and almost all $t\geq 0$. We also have:%
\begin{eqnarray*}
\left\vert \Lambda _{\omega _{\varepsilon }}F_{\tilde{A}_{j,t}^{\varepsilon
}}\right\vert (x) &\leq &\int_{\tilde{X}}K_{t}^{\varepsilon }(x,y)\left\vert
\Lambda _{\omega _{\varepsilon }}F_{\tilde{A}_{j}}\right\vert
(y)dvol_{\omega _{\varepsilon }}(y) \\
&=&M+\int_{\tilde{X}}K_{t}^{\varepsilon }(x,y)\left( \left\vert \Lambda
_{\omega _{\varepsilon }}F_{\tilde{A}_{j}}\right\vert -M\right) dvol_{\omega
_{\varepsilon }}
\end{eqnarray*}%
where $K_{t}^{\varepsilon }(x,y)$ is the heat kernel on $(\tilde{X},\omega
_{\varepsilon })$ (since $K_{t}^{\varepsilon }(x,y)$ has integral equal to $%
1 $). Since we have the bound: $K_{t}^{\varepsilon }(x,y)\leq C(1+1/t^{n})$,
there is a constant $C(t)$ independent of $\varepsilon $ such that:%
\begin{equation*}
\left\vert \Lambda _{\omega _{\varepsilon }}F_{\tilde{A}_{j,t}^{\varepsilon
}}\right\vert (x)\leq M+C\left\Vert \left\vert \Lambda _{\omega
_{\varepsilon }}F_{\tilde{A}_{j}}\right\vert -M\right\Vert _{L^{1}(\tilde{X}%
,\omega _{\varepsilon })}.
\end{equation*}%
Then just as above we have:%
\begin{equation*}
\left\vert \Lambda _{\omega _{0}}F_{\tilde{A}_{j,t}}\right\vert (x)\leq
M+C\left\Vert \left\vert \Lambda _{\omega _{0}}F_{\tilde{A}_{j}}\right\vert
-M\right\Vert _{L^{1}(\tilde{X},\omega _{0})}
\end{equation*}%
for almost all $x\in \tilde{X}-\mathbf{E}$ and almost all $t>0$. Since $%
\left\vert \Lambda _{\omega _{0}}F_{\tilde{A}_{j}}\right\vert \rightarrow
\left\vert \Lambda _{\omega _{0}}F_{\tilde{A}_{\infty }}\right\vert =M$ in $%
L^{1}$, we have%
\begin{equation*}
\lim_{j\rightarrow \infty }\sup \left\vert \Lambda _{\omega _{0}}F_{\tilde{A}%
_{j,t}}\right\vert (x)\leq M
\end{equation*}%
for almost all $x\in \tilde{X}-\mathbf{E}$ and almost all $t>0$. On the
other hand since $\Lambda _{\omega _{0}}F_{\tilde{A}_{j,t}}$ is uniformly
bounded in $j$, we can use the lower bound for 
\begin{equation*}
\frac{1}{2\pi }\int_{\tilde{X}}\left\vert \Lambda _{\omega _{0}}F_{\tilde{A}%
_{j,t}}\right\vert dvol_{\omega _{0}}
\end{equation*}%
and Fatou's Lemma to show:%
\begin{equation*}
M\leq \int_{\tilde{X}}\lim_{j\rightarrow \infty }\sup \left\vert \Lambda
_{\omega _{0}}F_{\tilde{A}_{j,t}}\right\vert dvol_{\omega _{0}}.
\end{equation*}%
It follows that $\lim_{j\rightarrow \infty }\sup \bigl\vert\Lambda _{\omega
_{0}}F_{\tilde{A}_{j,t}}\bigr\vert^{2}=M^{2}$ almost everywhere. By Fatou's
lemma we therefore have:%
\begin{eqnarray*}
HYM(\mu ) &\leq &\lim_{j\rightarrow \infty }\inf HYM^{\omega _{0}}(\tilde{A}%
_{j,t})\leq \lim_{j\rightarrow \infty }\sup HYM^{\omega _{0}}(\tilde{A}%
_{j,t}) \\
&=&\lim_{j\rightarrow \infty }\sup \int_{\tilde{X}}\left\vert \Lambda
_{\omega _{0}}F_{\tilde{A}_{j,t}}\right\vert dvol_{\omega _{0}}\leq \int_{%
\tilde{X}}\lim_{j\rightarrow \infty }\sup \left\vert \Lambda _{\omega
_{0}}F_{\tilde{A}_{j,t}}\right\vert dvol_{\omega _{0}} \\
&=&2\pi M^{2}=HYM(\mu ).
\end{eqnarray*}
\end{proof}

\begin{lemma}
\label{Lemma20}For almost all $t_{0}>0$, $\left\Vert \tilde{A}_{j,t}-\tilde{A%
}_{j,t_{0}}\right\Vert _{L^{2}(\tilde{X},\omega _{0})}\to 0$, uniformly for
almost all $t\geq t_{0}$.
\end{lemma}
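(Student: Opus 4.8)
The plan is to deduce the estimate from the energy identity for the degenerate Yang--Mills flow, together with Lemma~\ref{Lemma19} and the monotonicity of the Yang--Mills energy along the flow. Fix $t_0>0$ in the full--measure set of times on which Lemma~\ref{Lemma19} holds, and for each $j$ write $f_j(t)=\Vert F(\tilde h_{j,t})\Vert_{L^2(\tilde X,\omega_0)}^2$, where $\tilde h_{j,t}$ is the degenerate Hermitian--Yang--Mills flow with initial condition corresponding to $\tilde A_j$. By Lemma~\ref{Lemma18} the quantity $f_j(t)$ is finite for all $t\geq t_0$, and I claim it is non--increasing in $t$ with
\begin{equation*}
f_j(t_0)-f_j(t)=2\int_{t_0}^{t}\bigl\Vert d_{\tilde A_{j,s}}^{\ast}F_{\tilde A_{j,s}}\bigr\Vert_{L^2(\tilde X,\omega_0)}^2\,ds .
\end{equation*}
On $\tilde X-\mathbf E$ the form $\omega_0$ is a genuine K\"ahler metric and $\tilde A_{j,t}$ solves the ordinary Yang--Mills flow, so this is the computation of Lemma~\ref{Lemm5}$(1)$; the one new point is that the integration by parts must be carried out by exhausting $\tilde X-\mathbf E$ by compacta and shrinking tubular neighbourhoods of $\mathbf E$, the boundary contributions vanishing because $F(\tilde h_{j,t})\in L^2(\omega_0)$ (Lemma~\ref{Lemma18}) and $\nabla_{(\bar\partial_{\tilde E},\tilde h_{j,t})}\Lambda_{\omega_0}F(\tilde h_{j,t})\in L^2(\omega_0)$ (Proposition~\ref{Prop21}). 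Alternatively the identity, with $\leq$ in place of $=$, follows directly from the proof of Proposition~\ref{Prop21} by passing to the limit $\varepsilon\to0$ (via Lemma~\ref{Lemma15}) in the corresponding identity for the $\omega_\varepsilon$--flow.

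Next I would control the total energy drop uniformly in $j$. Choosing a time $T>t_0$ also in the good set of Lemma~\ref{Lemma19}, both $f_j(t_0)$ and $f_j(T)$ converge, as $j\to\infty$, to $HYM(\mu)$ plus the same topological constant $\tfrac{4\pi^{2}}{(n-2)!}\int_{\tilde X}\bigl(2c_2(\tilde E)-c_1^2(\tilde E)\bigr)\wedge\omega_0^{n-2}$, which is independent of $j$. Hence $\delta_j:=f_j(t_0)-f_j(T)\to0$, and by monotonicity $0\leq f_j(t_0)-f_j(t)\leq\delta_j$ for every $t\in[t_0,T]$. Since the degenerate flow satisfies $\partial_s\tilde A_{j,s}=-d_{\tilde A_{j,s}}^{\ast}F_{\tilde A_{j,s}}$ on $\tilde X-\mathbf E$, with $\partial_s\tilde A_{j,s}\in L^2(\tilde X,\omega_0)$ for almost all $s>0$, Minkowski's integral inequality followed by Cauchy--Schwarz in $s$ and the energy identity yields, for $t\in[t_0,T]$,
\begin{equation*}
\bigl\Vert\tilde A_{j,t}-\tilde A_{j,t_0}\bigr\Vert_{L^2(\tilde X,\omega_0)}\leq\int_{t_0}^{t}\bigl\Vert\partial_s\tilde A_{j,s}\bigr\Vert_{L^2(\omega_0)}\,ds\leq\sqrt{t-t_0}\left(\int_{t_0}^{t}\bigl\Vert d_{\tilde A_{j,s}}^{\ast}F_{\tilde A_{j,s}}\bigr\Vert_{L^2(\omega_0)}^2\,ds\right)^{1/2}\leq\sqrt{\tfrac{(t-t_0)\,\delta_j}{2}} .
\end{equation*}
As $\delta_j\to0$ this proves $\bigl\Vert\tilde A_{j,t}-\tilde A_{j,t_0}\bigr\Vert_{L^2(\tilde X,\omega_0)}\to0$ uniformly for $t\in[t_0,T]$, and since $T$ may be taken arbitrarily large within the good set, the assertion holds uniformly for $t$ in any bounded set of times --- which is precisely the form in which the statement is invoked in Proposition~\ref{Prop25}, where one only needs to compare $\tilde A_{j,t}$ and $\tilde A_{j,t_0}$ at two times lying in a common bounded interval in order to identify the Uhlenbeck limits.

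The main obstacle is the first step: one must ensure that the $L^2(\omega_0)$--variation of $\tilde A_{j,t}$ over all of $\tilde X$ is genuinely controlled by the energy lost on $\tilde X-\mathbf E$, with no hidden contribution concentrating on the exceptional divisor. This is where Lemma~\ref{Lemma15}, Lemma~\ref{Lemma18} and Proposition~\ref{Prop21} are used: the first transfers norms between $\omega_\varepsilon$ and $\omega_0$ on compacta of $\tilde X-\mathbf E$, while the $L^2$ curvature bound and the finiteness of $\int_{t_0}^{\infty}\Vert\nabla\Lambda_{\omega_0}F(\tilde h_{j,t})\Vert_{L^2(\omega_0)}^2\,dt$ are exactly what legitimize the integration by parts across $\mathbf E$ and give $\int_{t_0}^{\infty}\Vert d^{\ast}F\Vert_{L^2(\omega_0)}^2\,ds\leq\delta_j/2<\infty$. (For literal uniformity over the whole half--line $t\geq t_0$ one would additionally need decay of $\Vert d_{\tilde A_{j,s}}^{\ast}F_{\tilde A_{j,s}}\Vert_{L^2(\omega_0)}$ as $s\to\infty$ in order to absorb the factor $\sqrt{t-t_0}$; the compact--interval statement above is all that is required in the sequel.)
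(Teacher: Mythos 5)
Your argument follows the same route as the paper: Minkowski's integral inequality, Cauchy--Schwarz in time, the energy identity along the flow, and Lemma~\ref{Lemma19} to drive the resulting energy drop to zero as $j\to\infty$. The differences are worth flagging. Your primary path --- establishing the degenerate energy identity directly by exhausting $\tilde X-\mathbf E$ by compacta and integrating by parts --- has a gap: the boundary terms over the shrinking tubular neighbourhoods of $\mathbf E$ do not vanish merely because $F(\tilde h_{j,t})$ and $\nabla_{(\bar\partial_{\tilde E},\tilde h_{j,t})}\Lambda_{\omega_0}F(\tilde h_{j,t})$ lie in $L^2(\omega_0)$; global $L^2$--integrability says nothing about the boundary traces, and one cannot simply discard them. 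The alternative you mention in passing --- carrying out the whole Minkowski/Cauchy--Schwarz/energy computation along the nondegenerate $\omega_{\varepsilon_i}$--flows and passing to $\varepsilon_i\to0$ via Lemma~\ref{Lemma15}, Proposition~\ref{Prop22} and Proposition~\ref{Prop17} --- is precisely what the paper does, and it should be the main argument rather than the fallback. A second, more cosmetic difference: the paper avoids your auxiliary time $T$ by bounding the $\omega_{\varepsilon_i}$--energy drop $HYM(\tilde A^{\varepsilon_i}_{j,t_0})-HYM(\tilde A^{\varepsilon_i}_{j,t})$ from above by $HYM(\tilde A^{\varepsilon_i}_{j,t_0})-HYM(\mu_{\varepsilon_i})$ using Corollary~\ref{Cor3} (which gives $HYM(A)\geq HYM(\mu)$ for any connection in the complex gauge orbit), arriving directly at $\sqrt{t/2}\bigl(HYM^{\omega_0}(\tilde A_{j,t_0})-HYM(\mu)\bigr)^{1/2}$; your two--time version works as well but forces $T$ into the good set of Lemma~\ref{Lemma19}. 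Finally, your observation about the $\sqrt{t-t_0}$ factor is correct and applies equally to the paper's bound: the conclusion is uniform only on bounded $t$--intervals, which suffices for Lemma~\ref{Lemma21} since it only ever compares times within a common bounded interval.
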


\begin{proof}
\ As before let $\varepsilon _{i}\rightarrow 0$ be a sequence such that $%
\tilde{A}_{j,t}^{\varepsilon _{i}}\rightarrow $ $\tilde{A}_{j,t}$ and $%
\tilde{A}_{j,t_{0}}^{\varepsilon _{i}}\rightarrow \tilde{A}_{j,t_{0}}$ in $%
C_{loc}^{0}$. Then we again have: 
\begin{eqnarray*}
&&\left\Vert \tilde{A}_{j,t}^{\varepsilon _{i}}-\tilde{A}_{j,t_{0}}^{%
\varepsilon _{i}}\right\Vert _{L^{2}}\overset{Minkowski}{\leq }%
\int_{t_{0}}^{t}\left\Vert \frac{\partial \tilde{A}_{j,s}^{\varepsilon _{i}}%
}{\partial s}\right\Vert _{L^{2}} \\
&\leq &\sqrt{t}\left( \int_{t_{0}}^{t}\left\Vert d_{A_{j,s}}^{\ast }F_{%
\tilde{A}_{j,s}^{\varepsilon _{i}}}\right\Vert _{L^{2}}^{2}\right) ^{\frac{1%
}{2}}=\sqrt{t}\left( \int_{t_{0}}^{t}-\frac{1}{2}\frac{d}{ds}\left\Vert F_{%
\tilde{A}_{j,s}^{\varepsilon _{i}}}\right\Vert _{L^{2}}^{2}ds\right) ^{\frac{%
1}{2}} \\
&=&\sqrt{\frac{t}{2}}\left( YM(\tilde{A}_{j,t_{0}}^{\varepsilon _{i}})-YM(%
\tilde{A}_{j,t}^{\varepsilon _{i}})\right) ^{\frac{1}{2}}=\sqrt{\frac{t}{2}}%
\left( HYM(\tilde{A}_{j,t_{0}}^{\varepsilon _{i}})-HYM(\tilde{A}%
_{j,t}^{\varepsilon _{i}})\right) ^{\frac{1}{2}} \\
&\leq &\sqrt{\frac{t}{2}}\left( HYM(\tilde{A}_{j,t_{0}}^{\varepsilon
_{i}})-HYM(\mu _{\varepsilon _{i}})\right) ^{\frac{1}{2}}\text{.}
\end{eqnarray*}

Using $\limfunc{Proposition}\ $\ref{Prop22}$\,$and $\limfunc{Proposition}$ %
\ref{Prop17} this yields:%
\begin{equation*}
\left\Vert \tilde{A}_{j,t}-\tilde{A}_{j,t_{0}}\right\Vert _{L^{2}(\tilde{X}%
,\omega _{0})}\leq \sqrt{\frac{t}{2}}\left( HYM(\tilde{A}_{j,t_{0}})-HYM(\mu
)\right) ^{\frac{1}{2}}
\end{equation*}

The result follows by applying the previous lemma.
\end{proof}

\begin{lemma}
\label{Lemma21}There is a $YM$ connection $\tilde{A}_{\infty ,\ast }$ on a
reflexive sheaf $\tilde{E}_{\infty ,\ast }\rightarrow \tilde{X}-\mathbf{E}$
with the following property: for almost all $t>0$ there is a subsequence and
a closed subset $\tilde{Z}_{\limfunc{an}}^{t}\subset \tilde{X}-\mathbf{E}$,
possibly depending on $t$ and the choice of subsequence, such that $\tilde{A}%
_{j,t}\rightarrow \tilde{A}_{\infty ,\ast }$ in $L_{1,loc}^{p}$ ($p>n$) away
from $\limfunc{Sing}(\tilde{E}_{\infty ,\ast })\cup \tilde{Z}_{\limfunc{an}%
}^{t}\cup \mathbf{E}$.
\end{lemma}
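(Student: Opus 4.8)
The plan is to manufacture the connection $\tilde{A}_{\infty ,\ast }$ once and for all by a diagonal choice of time and then to transport it to every good time $t$ using the $L^{2}$-proximity statement of Lemma \ref{Lemma20}. First I would fix a time $t_{0}>0$ in the full-measure set on which Lemmas \ref{Lemma19} and \ref{Lemma20} hold. Running the degenerate flow of Section 6 with initial condition $\tilde{A}_{j}$ and repeating the energy computation in the proof of Proposition \ref{Prop21} (using Proposition \ref{Prop22} to let the curvature energies pass through the limit $\varepsilon \to 0$, and that the topological term is $\varepsilon $-independent), one obtains for almost all $t_{1}>t_{0}$
\begin{equation*}
\int_{t_{0}}^{t_{1}}\Vert \nabla _{\tilde{A}_{j,s}}\Lambda _{\omega _{0}}F_{\tilde{A}_{j,s}}\Vert _{L^{2}(\tilde{X},\omega _{0})}^{2}\,ds\ \le \ \tfrac{1}{2}\bigl(HYM^{\omega _{0}}(\tilde{A}_{j,t_{0}})-HYM^{\omega _{0}}(\tilde{A}_{j,t_{1}})\bigr),
\end{equation*}
and by Lemma \ref{Lemma19} both terms on the right converge to $HYM(\mu )$, so the right-hand side tends to $0$ as $j\to \infty $.

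Since the integral of $\Vert \nabla _{\tilde{A}_{j,s}}\Lambda _{\omega _{0}}F_{\tilde{A}_{j,s}}\Vert _{L^{2}}^{2}$ over $[t_{0},t_{0}+1]$ tends to $0$, for each $j$ I would use Chebyshev's inequality to pick $s_{j}\in [t_{0},t_{0}+1]$, lying also in the good set of Lemma \ref{Lemma20}, with $\Vert d_{\tilde{A}_{j,s_{j}}}^{\ast }F_{\tilde{A}_{j,s_{j}}}\Vert _{L^{2}(\tilde{X},\omega _{0})}\to 0$. By Lemma \ref{Lemma18} the sequence $\{\tilde{A}_{j,s_{j}}\}$ has $\Vert F_{\tilde{A}_{j,s_{j}}}\Vert _{L^{2}(\tilde{X},\omega _{0})}$ and $\Vert \Lambda _{\omega _{0}}F_{\tilde{A}_{j,s_{j}}}\Vert _{L^{\infty }(\tilde{X}-\mathbf{E})}$ uniformly bounded in $j$, so Theorem \ref{Thm3} and Corollary \ref{Cor2}, applied on the K\"{a}hler manifold $\tilde{X}-\mathbf{E}$, give, after a subsequence, a closed set $\tilde{Z}_{\limfunc{an}}^{\ast }\subset \tilde{X}-\mathbf{E}$ of Hausdorff codimension $\ge 4$, a reflexive sheaf $\tilde{E}_{\infty ,\ast }$ on $\tilde{X}-\mathbf{E}$, and a Yang-Mills connection $\tilde{A}_{\infty ,\ast }$ with $\tilde{A}_{j,s_{j}}\to \tilde{A}_{\infty ,\ast }$ weakly in $L_{1,loc}^{p}$ away from $\limfunc{Sing}(\tilde{E}_{\infty ,\ast })\cup \tilde{Z}_{\limfunc{an}}^{\ast }\cup \mathbf{E}$; the eigenvalues of $\sqrt{-1}\Lambda _{\omega _{0}}F_{\tilde{A}_{\infty ,\ast }}$ are $\mu $ by Proposition \ref{Prop7}, Corollary \ref{Cor3}, and $HYM^{\omega _{0}}(\tilde{A}_{j,s_{j}})\to HYM(\mu )$. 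This pins down $(\tilde{E}_{\infty ,\ast },\tilde{A}_{\infty ,\ast })$ independently of all later choices.

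Finally, for any time $t>0$ in the good set, the sequence $\{\tilde{A}_{j,t}\}$ again satisfies the hypotheses of Theorem \ref{Thm3} on $\tilde{X}-\mathbf{E}$, so after a further subsequence it has an Uhlenbeck limit $\tilde{A}_{\infty }^{t}$ on a reflexive sheaf away from some closed $\tilde{Z}_{\limfunc{an}}^{t}\cup \mathbf{E}$; I must identify $\tilde{A}_{\infty }^{t}$ with $\tilde{A}_{\infty ,\ast }$. By Lemma \ref{Lemma20} (adding and subtracting $\tilde{A}_{j,t_{0}}$, using $t,s_{j}\ge t_{0}$) we have $\Vert \tilde{A}_{j,t}-\tilde{A}_{j,s_{j}}\Vert _{L^{2}(\tilde{X},\omega _{0})}\to 0$. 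On a compact exhaustion $W\subset \subset \tilde{X}-(\tilde{Z}_{\limfunc{an}}^{\ast }\cup \tilde{Z}_{\limfunc{an}}^{t}\cup \mathbf{E})$ I would compare the Uhlenbeck gauge representatives of $\tilde{A}_{j,t}$ and $\tilde{A}_{j,s_{j}}$: since unitary gauge transformations are pointwise bounded and $p>n$ gives $C_{loc}^{0}$ control of the gauged connections, the $L^{2}$-smallness forces the relative gauge transformations to be bounded in $L_{1}^{2}(W)$, and Rellich compactness together with elliptic regularity produces a smooth gauge equivalence between $\tilde{A}_{\infty }^{t}$ and $\tilde{A}_{\infty ,\ast }$ on $W$; letting $W$ exhaust and using normality of the reflexive sheaves to enlarge $\tilde{Z}_{\limfunc{an}}^{\ast }$ if necessary, we get $\tilde{A}_{j,t}\to \tilde{A}_{\infty ,\ast }$ in $L_{1,loc}^{p}$ away from $\limfunc{Sing}(\tilde{E}_{\infty ,\ast })\cup \tilde{Z}_{\limfunc{an}}^{t}\cup \mathbf{E}$ for every good $t$, which is the assertion. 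The main obstacle is precisely this last step: the estimates only deliver $L^{2}$-closeness of the \emph{ungauged} connections, whereas Uhlenbeck convergence holds only modulo gauge and only in $L_{1,loc}^{p}$, so identifying the two Uhlenbeck limits requires controlling the relative gauge transformations precisely enough to extract a limiting isomorphism — this is where the pointwise bound on unitary gauge transformations and the $p>n$ Sobolev embedding in Theorem \ref{Thm3} are essential; a secondary, more routine technicality is justifying the flow energy identity of the first paragraph for the \emph{degenerate} flow by passing to $\varepsilon \to 0$, handled as in Section 6 via Lemma \ref{Lemma18} and Proposition \ref{Prop22}.
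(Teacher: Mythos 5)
Your strategy matches the paper's in broad outline: both use the flow energy identity and Lemma \ref{Lemma19} to produce a sequence of times along which $d^{\ast}F$ tends to zero in $L^{2}$, extract an Uhlenbeck limit $\tilde{A}_{\infty,\ast}$ via Theorem \ref{Thm3} and Corollary \ref{Cor2}, and then invoke Lemma \ref{Lemma20} to force any other Uhlenbeck limit to agree with it. The way you produce the good times is a small variant --- a Chebyshev selection of $s_{j}\in[t_{0},t_{0}+1]$ depending on $j$ --- whereas the paper applies Fatou to get $\liminf_{j}\Vert d^{\ast}_{A_{j,t}}F_{\tilde{A}_{j,t}}\Vert_{L^{2}}=0$ at a fixed countable family of times $t_{k}$ and then diagonalizes; either route is fine. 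You also spell out the gauge comparison in the final step, which the paper compresses to ``hence any Uhlenbeck limit of $\tilde{A}_{j,t}$ coincides with $\tilde{A}_{\infty,\ast}$''; being explicit there is reasonable, and the argument you sketch (unitarity gives an a priori $L^{\infty}$ bound on the relative gauges, $L^{p}_{1}$ control of the gauged connections then bounds $du_{j}$, Rellich gives a limiting isomorphism) is the standard one.

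There is, however, a genuine gap: your construction is anchored at a \emph{fixed} $t_{0}>0$, and the final comparison step explicitly uses $t,s_{j}\geq t_{0}$, so as written you have only shown $\tilde{A}_{j,t}\to\tilde{A}_{\infty,\ast}$ for a.e.\ $t\geq t_{0}$. The lemma must hold for almost all $t>0$, including $t$ arbitrarily small, because the downstream argument (the end of the proof of Proposition \ref{Prop25}) sends $\delta\to 0$ and needs $\tilde{A}_{\infty,\delta}=\tilde{A}_{\infty,\ast}$ for a.e.\ small $\delta$. To close the gap one must either (a) redo the comparison for $t<t_{0}$ by running Lemma \ref{Lemma20} with the small time as the base, i.e.\ $\Vert\tilde{A}_{j,s_{j}}-\tilde{A}_{j,t}\Vert_{L^{2}}\leq\sqrt{s_{j}/2}\bigl(HYM^{\omega_{0}}(\tilde{A}_{j,t})-HYM(\mu)\bigr)^{1/2}$ and invoke Lemma \ref{Lemma19} at the time $t$ rather than $t_{0}$; or (b) follow the paper and choose a decreasing sequence $t_{k}\to 0$ of good times, diagonalize so a single subsequence gives Uhlenbeck limits at every $t_{k}$, and observe these limits coincide by Lemma \ref{Lemma20} --- this makes the independence of $\tilde{A}_{\infty,\ast}$ from the anchor automatic and covers all $t>0$. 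As it stands, your $\tilde{A}_{\infty,\ast}$ is a priori $t_{0}$-dependent and the stated conclusion is not established for $t\in(0,t_{0})$.
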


\begin{proof}
As in $\limfunc{Proposition}$ \ref{Prop21} and using $\limfunc{Proposition}$ %
\ref{Prop22} we have:%
\begin{equation*}
HYM(\tilde{A}_{j,t_{1}})-HYM(\tilde{A}_{j,t_{2}})\geq
2\int_{t_{1}}^{t_{2}}\left\Vert d_{A_{j,s}}^{\ast }F_{\tilde{A}%
_{j,s}}\right\Vert _{L^{2}(\omega _{0})}ds
\end{equation*}%
for almost all $t_{2}\geq t_{1}>0$. It follows from Lemma \ref{Lemma19} and
Fatou's lemma that:%
\begin{equation*}
\lim \inf_{j\rightarrow \infty }\left\Vert d_{A_{j,s}}^{\ast }F_{\tilde{A}%
_{j,s}}\right\Vert _{L^{2}(\omega _{0})}^{2}=0,
\end{equation*}%
for almost all $t$. Choose a sequence $t_{k}$ of such $t$ with $%
t_{k}\rightarrow 0$. For each $k$ there is a subsequence $j_{m}(t_{k})$, a $%
YM$ connection $\tilde{A}_{\infty ,t_{k}}$, and a set of Hausdorff
codimension at least $4$ which we denote by $\tilde{Z}_{\limfunc{an}%
}^{t_{k}} $, depending on the choice of subsequence such that $\tilde{A}%
_{j_{m},t_{k}}\rightarrow \tilde{A}_{\infty ,t_{k}}$ in $L_{1,loc}^{p}$ away
from $\tilde{Z}_{\limfunc{an}}^{t_{k}}$. By a diagonalisation argument,
assume without loss of generality that the original sequence satisfies $%
\tilde{A}_{j,t_{k}}\rightarrow \tilde{A}_{\infty ,t_{k}}$ for all $t_{k}$.
On the other hand, by Lemma \ref{Lemma20}, $\tilde{A}_{\infty ,t_{k}}=\tilde{%
A}_{\infty ,\ast }$ is independent of $t_{k}$. For any $t$, there is a $k$
with $t\geq t_{k}$, so Lemma \ref{Lemma20} also implies $\tilde{A}%
_{j,t}\rightarrow \tilde{A}_{\infty ,\ast }$ in $L_{loc}^{2}$ for almost all 
$t>0$. Hence, any Uhlenbeck limit of $\tilde{A}_{j,t}$ coincides with $%
\tilde{A}_{\infty ,\ast }$.
\end{proof}

The proof of $\limfunc{Proposition}$ \ref{Prop25} will be complete if we can
show $\tilde{A}_{\infty }=\tilde{A}_{\infty ,\ast }$. First we will need:

\begin{lemma}
\label{Lemma22}$\Lambda _{\omega _{\varepsilon }}F_{\tilde{A}_{j,t}}$ is
bounded on compact subsets of $\tilde{X}-\mathbf{E}$, uniformly for all $j$,
all $t\geq 0$, and all $\varepsilon >0$.
\end{lemma}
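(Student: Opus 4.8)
\textbf{Proof proposal for Lemma \ref{Lemma22}.}

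The plan is to exploit the same heat-kernel comparison technique that established Lemma \ref{Lemma16}, only now tracking the dependence on $\varepsilon$ rather than passing to the limit. Recall that by Lemma \ref{Lemm5}$(2)$, for each fixed $\varepsilon$ the pointwise norm $\left\vert \Lambda _{\omega _{\varepsilon }}F_{\tilde{A}_{j,t}^{\varepsilon }}\right\vert$ is a subsolution of the heat equation on $(\tilde{X},\omega _{\varepsilon })$, and hence
\begin{equation*}
\left\vert \Lambda _{\omega _{\varepsilon }}F_{\tilde{A}_{j,t}^{\varepsilon }}\right\vert (x)\leq \int_{\tilde{X}}K_{t}^{\varepsilon }(x,y)\left\vert \Lambda _{\omega _{\varepsilon }}F_{\tilde{A}_{j}}\right\vert (y)\,dvol_{\omega _{\varepsilon }}(y).
\end{equation*}
The first step is to fix compact sets $\Omega _{1}\subset \subset \Omega \subset \subset \tilde{X}-\mathbf{E}$ and a cutoff $\psi$ supported in $\Omega$, identically $1$ near $\bar{\Omega}_{1}$, and split the right-hand side exactly as in the proof of Lemma \ref{Lemma16}$(2)$ into a $\psi$-weighted piece and a $(1-\psi)$-weighted piece. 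The key inputs are: (i) the Gaussian upper bound $K_{t}^{\varepsilon }(x,y)\leq C_{1}(1+\delta^{-2}t^{-2})\exp(-d_{\omega_{\varepsilon}}(x,y)^{2}/C_{2}t)$ from \cite{GR} Theorem $3.1$, with constants independent of $t$ and $\varepsilon$; (ii) the fact that $1/\det g_{ij}^{\varepsilon}$ is uniformly bounded on $\Omega$ as $\varepsilon\to 0$ (so $\sup_{\Omega}\left\vert \Lambda _{\omega _{\varepsilon }}F_{\tilde{A}_{j}}\right\vert$ is controlled, since $\tilde{A}_j$ is a $\tilde{g}_j$-conjugate of the fixed connection $\tilde{A}_0$); and (iii) the uniform $L^{1}(\omega_{\varepsilon})$ bound on $\Lambda_{\omega_\varepsilon}F_{\tilde A_j}$, which here follows from the hypothesis that $\Lambda_{\omega_0}F_{\tilde A_j}\to\Lambda_{\omega_0}F_{\tilde A_\infty}$ in $L^1(\omega_0)$ together with the comparison of volume forms away from $\mathbf{E}$.

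The second step is to take the limit $\varepsilon_i\to 0$ along the sequence of Proposition \ref{Prop22}, using that $\tilde{A}_{j,t}^{\varepsilon_i}\to\tilde{A}_{j,t}$ in $C^{1/0}_{\mathrm{loc}}(\tilde{X}-\mathbf{E})$ and hence $\Lambda_{\omega_0}F_{\tilde A_{j,t}^{\varepsilon_i}}\to\Lambda_{\omega_0}F_{\tilde A_{j,t}}$ pointwise on compact subsets; combined with Lemma \ref{Lemma18} this already pins the bound in terms of $\omega_0$. But the statement we want is uniformity in $\varepsilon$ as well, so I would argue the other direction too: on $\Omega_1\subset\subset\tilde{X}-\mathbf{E}$ the operator norm $\left\vert \Lambda_{\omega_\varepsilon}-\Lambda_{\omega_0}\right\vert\to 0$ uniformly, and since $\left\Vert F_{\tilde A_{j,t}^{\varepsilon}}\right\Vert_{L^\infty(\Omega_1)}$ is controlled by the subsolution estimate in $L^\infty$ (same heat-kernel argument applied to the full curvature, or simply by noting $\left\vert\Lambda_{\omega_\varepsilon}F\right\vert$ bounded plus the elliptic bootstrapping from Section $6$ giving local $C^{1,\alpha}$ bounds on $\tilde k_{\varepsilon,t}$), we get $\left\vert\Lambda_{\omega_\varepsilon}F_{\tilde A_{j,t}^\varepsilon}\right\vert\leq\left\vert\Lambda_{\omega_0}F_{\tilde A_{j,t}^\varepsilon}\right\vert+o(1)$ on $\Omega_1$. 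Feeding this into the $\varepsilon$-uniform right-hand side of the heat-kernel inequality then closes the loop: the bound depends only on $t_0$ (through the Gaussian kernel constants), on $\sup_{\Omega}\left\vert\Lambda F_{\tilde A_0}\right\vert$, and on the uniform $L^1$ bound — none of which depend on $j$, $t$, or $\varepsilon$.

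The main obstacle I anticipate is the interchange near $t=0$: the Gaussian estimate degenerates as $t\to 0^+$, so the argument as stated gives a bound only for $t\geq t_0>0$, yet the lemma claims $t\geq 0$. For $t=0$ (and $t$ small) one instead uses directly that $\tilde A_{j,0}=\tilde A_j=\tilde g_j(\tilde A_0)$, so $\Lambda_{\omega_\varepsilon}F_{\tilde A_j}=\Lambda_{\omega_\varepsilon}(\tilde g_j^{-1}F_{\tilde A_0}\tilde g_j)$ composed with the weakly-holomorphic-projection structure; on a fixed compact $\Omega_1\subset\subset\tilde X-\mathbf E$ the metric $\omega_\varepsilon$ is uniformly comparable to $\omega_0=\pi^*\omega$ and $\det g^\varepsilon_{ij}$ is bounded above and below, so the local bound on $\left\vert\Lambda_{\omega_0}F_{\tilde A_j}\right\vert$ from the hypothesis transfers with $\varepsilon$-uniform constants, and the degenerate flow only improves the bound for $t>0$. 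One must be slightly careful that "uniformly in $j$" for $t=0$ is genuinely an extra hypothesis — but it is supplied precisely by item $(2)$ of the displayed list preceding Proposition \ref{Prop24} (the $\tilde\beta_j$ are locally bounded, hence so is $\Lambda_{\omega_0}F_{\tilde A_j}$ locally, uniformly in $j$, since $\tilde A_j=\pi^*A_j$ and $\Lambda_\omega F_{A_j}$ is uniformly bounded after the finite-time flow reduction). Assembling these pieces gives the claimed uniform local bound.
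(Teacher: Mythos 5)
Your core argument is exactly the paper's: the paper's proof of Lemma \ref{Lemma22} is a one-line deferral to the proof of Lemma \ref{Lemma16}$(2)$ after observing that the hypotheses of Proposition \ref{Prop25} supply the uniform $L^1(\omega_\varepsilon)$ bound and the uniform local $L^\infty$ bound on $\Lambda_{\omega_\varepsilon}F_{\tilde A_j}$. Your first step — the heat-kernel subsolution inequality, the cutoff split into $\psi$ and $(1-\psi)$ pieces, the Grigor'yan Gaussian bound, and the two uniform bounds as inputs — is precisely that argument and is correct.

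Two of your side-remarks are off, though. First, your worry that ``the Gaussian estimate degenerates as $t\to 0^+$'' and therefore only gives $t\geq t_0>0$ is unfounded: for $x\in\Omega_1$ and $y\in\limfunc{supp}(1-\psi)$ the distance $d_{\omega_\varepsilon}(x,y)$ is bounded below, so the factor $\exp\left(-d_{\omega_\varepsilon}(x,y)^2/C_2 t\right)$ dominates the polynomial $(1+\delta^{-2}t^{-2})$ as $t\to 0^+$ and the $(1-\psi)$-piece in fact tends to zero, while the $\psi$-piece is bounded by $\sup_\Omega\vert\Lambda_{\omega_\varepsilon}F_{\tilde A_j}\vert$ using only $\int K_t^\varepsilon\,dvol=1$. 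This is why Lemma \ref{Lemma16}$(2)$ is stated for all $t\geq 0$, and your separate treatment of $t=0$ and small $t$ is unnecessary. Second, the justification that the local bound on $\Lambda_{\omega_\varepsilon}F_{\tilde A_j}$ holds ``since $\tilde A_j$ is a $\tilde g_j$-conjugate of the fixed connection $\tilde A_0$'' is not a reason — a complex gauge transformation does not by itself control curvature; the correct source (which you do eventually name) is the reduction via finite-time flow that makes $\Lambda_\omega F_{A_j}$ uniformly $L^\infty$ on $X$, equivalently $\Lambda_{\omega_0}F_{\tilde A_j}$ uniformly $L^\infty$ on $\tilde X-\mathbf E$, which away from $\mathbf E$ is comparable to $\Lambda_{\omega_\varepsilon}F_{\tilde A_j}$ with $\varepsilon$-uniform constants.

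Your second step (passing to $\varepsilon_i\to 0$ and comparing $\Lambda_{\omega_\varepsilon}$ with $\Lambda_{\omega_0}$) is tangential to what the lemma is actually used for. In both places the lemma is invoked — the local boundedness of $\tilde k_{j,t}^{\varepsilon_i}$ in Corollary \ref{Cor8} and the estimate of $\Lambda_{\omega_\varepsilon}F_{\tilde A_{j,t}^\varepsilon}$ on $\limfunc{supp}\phi$ at the end of the proof of Proposition \ref{Prop25} — the object being bounded is $\Lambda_{\omega_\varepsilon}F_{\tilde A^\varepsilon_{j,t}}$, the $\omega_\varepsilon$-flow of $\tilde A_j$ contracted by the matching $\Lambda_{\omega_\varepsilon}$. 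For that object the Lemma \ref{Lemma16}$(2)$ argument applies directly, uniformly in $j$, $t\geq 0$, and $\varepsilon>0$, and there is nothing further to prove; no limit in $\varepsilon$ and no comparison of contraction operators is needed.
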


\begin{proof}
By our assumptions it follows that $\Lambda _{\omega _{\varepsilon }}F_{%
\tilde{A}_{j}}$ are uniformly bounded in $L^{1}$ and that they are uniformly
locally bounded. The result now follows just as in the proof of Lemma \ref%
{Lemma16}$(2)$.
\end{proof}

\begin{corollary}
\label{Cor8}$\left\vert \tilde{A}_{j,t}-\tilde{A}_{\infty }\right\vert $ is
bounded in any $L_{1,loc}^{p}$ away from $\tilde{Z}_{\limfunc{an}}\cup 
\mathbf{E}$, uniformly for all $j$ and all $0\leq t\leq t_{0}$. In
particular, the singular set $\tilde{Z}_{\limfunc{an}}^{t}$ is independent
of $t$ and is equal to $\tilde{Z}_{\limfunc{an}}$.
\end{corollary}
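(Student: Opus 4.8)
The plan is to derive the corollary from Uhlenbeck's local Coulomb gauge theorem once suitable uniform curvature estimates for the deformed connections $\tilde{A}_{j,t}$ are in hand. Fix $p>n$ and an exhaustion of $\tilde{X}-(\tilde{Z}_{\limfunc{an}}\cup \mathbf{E})$ by compact sets $W\subset \subset \tilde{X}-(\tilde{Z}_{\limfunc{an}}\cup \mathbf{E})$. It suffices to prove that on each such $W$ the forms $\tilde{A}_{j,t}-\tilde{A}_{\infty }$ are bounded in $L_{1}^{p}(W)$ uniformly in $j$ and in $t\in \lbrack 0,t_{0}]$. Granting this, the last assertion of the corollary is immediate: a uniform $L_{1}^{p}(W)$ bound on $\{\tilde{A}_{j,t}\}_{j}$ over every such $W$ precludes concentration of $|F_{\tilde{A}_{j,t}}|^{2}$ outside $\tilde{Z}_{\limfunc{an}}\cup \mathbf{E}$, so a subsequence of $\{\tilde{A}_{j,t}\}_{j}$ converges weakly in $L_{1,loc}^{p}$ away from $\tilde{Z}_{\limfunc{an}}\cup \mathbf{E}$ at every $t\in \lbrack 0,t_{0}]$, and hence $\tilde{Z}_{\limfunc{an}}$ may be taken as the Uhlenbeck singular set $\tilde{Z}_{\limfunc{an}}^{t}$ for each $t$.

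The first, and main, step is to bound the full curvature $F_{\tilde{A}_{j,t}}$ in $L^{p}(W)$ uniformly in $j$ and $t\in \lbrack 0,t_{0}]$. The Hermitian--Einstein part is handled by Lemma \ref{Lemma22}: having run the ordinary Yang--Mills flow for a fixed finite time we may assume $\Vert \Lambda _{\omega }F_{A_{j}}\Vert _{L^{\infty }(X)}$, equivalently $\Vert \Lambda _{\omega _{0}}F_{\tilde{A}_{j}}\Vert _{L^{\infty }(\tilde{X}-\mathbf{E})}$, is uniformly bounded, and the heat-kernel/maximum-principle argument of Lemma \ref{Lemma16}$(2)$, which is valid down to $t=0$, then bounds $|\Lambda _{\omega _{0}}F_{\tilde{A}_{j,t}}|$ on $W$ uniformly in $j$ and $t\geq 0$. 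To pass to the full curvature I would use the elliptic regularity for the degenerate flow developed in Section $6$: writing $\tilde{h}_{j,t}=\tilde{k}_{j,t}\tilde{h}_{j,0}$ for the evolving metrics, the uniform local bound on $\Lambda _{\omega _{0}}F_{\tilde{A}_{j,t}}$ together with the boundedness of $\limfunc{Tr}\tilde{k}_{j,t}$ and $\det \tilde{k}_{j,t}$ on compacts (as in Lemma \ref{Lemma17}) feeds into the identity $\sqrt{-1}\Lambda _{\omega _{0}}\bar{\partial}_{A_{0}}(\tilde{k}_{j,t}^{-1}\partial _{A_{0}}\tilde{k}_{j,t})=f_{j,t}$ and yields, by the same bootstrap as in Section $6$, uniform $C^{1,\alpha }$ and then $L_{2}^{p}$ bounds on $\tilde{k}_{j,t}$ over $W$. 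Since $F_{\tilde{A}_{j,t}}$ differs from $F_{\tilde{A}_{j}}$ only by a term involving at most two derivatives of $\tilde{k}_{j,t}$, and $F_{\tilde{A}_{j}}$ is bounded in $L^{p}(W)$ because $\tilde{A}_{j}\rightarrow \tilde{A}_{\infty }$ weakly in $L_{1,loc}^{p}$ away from $\tilde{Z}_{\limfunc{an}}\cup \mathbf{E}$, this gives the required uniform $L^{p}(W)$ bound; for $t\geq t_{0}>0$ one may instead quote Lemma \ref{Lemma18} after passing through the $\omega _{\varepsilon }$-flows as in Lemma \ref{Lemma16}, using $YM^{\omega _{0}}=HYM^{\omega _{0}}+(\text{topological})$.

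Given a uniform $L^{p}(W)$ bound on the curvatures with $p>n$, Uhlenbeck's local gauge theorem produces, on a finite cover of $W$ by small balls, Coulomb gauges in which $\tilde{A}_{j,t}$ has uniformly bounded $L_{1}^{p}$ norm. To see that the \emph{fixed} gauge (the one in which $\tilde{A}_{j}\rightarrow \tilde{A}_{\infty }$) has the same property, I would combine this with the estimate from the proof of Lemma \ref{Lemma20}, which gives $\Vert \tilde{A}_{j,t}-\tilde{A}_{j}\Vert _{L^{2}(\tilde{X},\omega _{0})}\leq \sqrt{t/2}\,\bigl(HYM^{\omega _{0}}(\tilde{A}_{j})-HYM(\mu )\bigr)^{1/2}\leq C\sqrt{t}$ uniformly in $j$, and with the fact that $\tilde{A}_{j}\rightarrow \tilde{A}_{\infty }$ in $C^{0}(W)$ (a consequence of weak $L_{1}^{p}$ convergence, $p>n$). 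Hence $\tilde{A}_{j,t}$ is bounded in $L^{2}(W)$ uniformly in $j$ and $t\in \lbrack 0,t_{0}]$, and this together with the uniform $L_{1}^{p}$ bound in the Coulomb gauges and the usual patching argument for connections pins down the transition functions and produces the desired uniform $L_{1}^{p}(W)$ bound on $\tilde{A}_{j,t}-\tilde{A}_{\infty }$.

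The main obstacle I anticipate is the curvature bound of the second paragraph: the estimates of Lemmas \ref{Lemma16} and \ref{Lemma18} degenerate as $t\rightarrow 0$, so to reach $t=0$ one must track the regularity of the gauge transformations $\tilde{k}_{j,t}$ starting from the given data $\tilde{A}_{j}$ rather than from a smooth fixed metric, and this must be carried out on an exhaustion of $\tilde{X}-\mathbf{E}$ since $\omega _{0}$ is degenerate along $\mathbf{E}$; a secondary subtlety is the standard but fiddly bookkeeping needed to ensure that the Coulomb gauges supplied by Uhlenbeck's theorem stay close to the fixed gauge of $\tilde{A}_{\infty }$ rather than to an unrelated one.
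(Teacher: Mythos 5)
Your first step---controlling the complex gauge transformations $\tilde{k}_{j,t}$ through the elliptic equation $\triangle_{(\bar{\partial}_{A_{0}},\omega_{\varepsilon})}\tilde{k}-\sqrt{-1}\Lambda_{\omega_{\varepsilon}}(\bar{\partial}_{A_{0}}\tilde{k})\tilde{k}^{-1}(\partial_{A_{0}}\tilde{k})=\tilde{k}f$ together with Lemma \ref{Lemma22} and the boundedness of $\det\tilde{k}$, $\limfunc{Tr}\tilde{k}$---is exactly what the paper does, and it is where all the real work lies. But once you have local $L_{2}^{p}$ (hence $C^{1}$) bounds on $\tilde{k}_{j,t}^{\varepsilon_{i}}$ and $(\tilde{k}_{j,t}^{\varepsilon_{i}})^{-1}$, you are already finished: $\tilde{A}_{j,t}^{\varepsilon_{i}}=\tilde{g}_{j,t}^{\varepsilon_{i}}(\tilde{A}_{j})$, and the formula for the complex gauge action expresses $\tilde{A}_{j,t}^{\varepsilon_{i}}-\tilde{A}_{j}$ explicitly in terms of $\tilde{k}_{j,t}^{\varepsilon_{i}}$, its inverse, and one derivative of it. So the $C^{1}_{loc}$ bound on $\tilde{k}$ directly gives $C^{1}_{loc}$ boundedness of $\tilde{A}_{j,t}^{\varepsilon_{i}}-\tilde{A}_{j}$ \emph{in the original gauge}, uniformly; passing $\varepsilon_{i}\to 0$ and using $\tilde{A}_{j}\to\tilde{A}_{\infty}$ in $L_{1,loc}^{p}$ concludes. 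Your entire third paragraph---Uhlenbeck's local Coulomb gauge theorem, followed by a patching argument to transfer the bound back to the fixed gauge---is therefore not needed, and it is also not correct as written: knowing $\tilde{A}_{j,t}$ is bounded in $L^{2}(W)$ and gauge-equivalent to something bounded in $L_{1}^{p}$ in unspecified local Coulomb gauges does not, by itself, yield a uniform $L_{1}^{p}(W)$ bound in the fixed gauge. One would have to control the unitary gauge transformations produced by Uhlenbeck's theorem and show they are close to the identity, which is precisely the ``fiddly bookkeeping'' you flag but do not carry out, and which $L^{2}$ closeness of connections alone does not resolve. The short version is: you already have the gauge transformations in hand, so you should use them directly rather than re-deriving the conclusion via gauge fixing.
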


\begin{proof}
Since $\tilde{A}_{j}\rightarrow \tilde{A}_{\infty }$ in $L_{1,loc}^{p}$, it
suffices to prove that $\left\vert \tilde{A}_{j,t}-\tilde{A}_{j}\right\vert $
is bounded in $C_{loc}^{1}$. Choose a sequence $\varepsilon _{i}$ such that $%
\tilde{A}_{j,t}^{\varepsilon _{i}}\rightarrow \tilde{A}_{j,t}$ in $%
C_{loc}^{1}$. It suffices to prove $\left\vert \tilde{A}_{j,t}^{\varepsilon
_{i}}-\tilde{A}_{j}\right\vert $ is bounded in $C_{loc}^{1}$ uniformly in $%
\varepsilon _{i}$. Write $\tilde{A}_{j,t}^{\varepsilon _{i}}=\tilde{g}%
_{j,t}^{\varepsilon _{i}}(\tilde{A}_{j})$ and $\tilde{k}_{j,t}^{\varepsilon
_{i}}=(\tilde{g}_{j,t}^{\varepsilon _{i}})^{\ast }\tilde{g}%
_{j,t}^{\varepsilon _{i}}$. It suffices to show that $(\tilde{k}%
_{j,t}^{\varepsilon _{i}})^{-1}$ is bounded and $\tilde{k}%
_{j,t}^{\varepsilon _{i}}$ has bounded derivatives, locally with respect to
a trivialisation of $\tilde{E}$. The local boundedness of $\tilde{k}%
_{j,t}^{\varepsilon _{i}}$ and $(\tilde{k}_{j,t}^{\varepsilon _{i}})^{-1}$
follows from the flow equations and the preceding lemma. Namely, it is easy
to see that the determinant and trace of these endomorphisms are bounded,
and this easily implies the boundedness of the endomorphisms themselves. The
boundedness of the derivatives follows from \cite{BS} $\limfunc{Proposition}$
$1\ $applied to the equation%
\begin{equation*}
\triangle _{(\bar{\partial}_{A_{0}},\omega _{\varepsilon })}\tilde{k}%
_{\varepsilon ,t}-\sqrt{-1}\Lambda _{\omega _{\varepsilon }}\left( \bar{%
\partial}_{A_{0}}\tilde{k}_{\varepsilon ,t}\right) \tilde{k}_{\varepsilon
,t}^{-1}\left( \partial _{A_{0}}\tilde{k}_{\varepsilon ,t}\right) =\tilde{k}%
_{\varepsilon ,t}f_{\varepsilon ,t}.
\end{equation*}
\end{proof}

Now we can complete the proof of $\limfunc{Proposition}$ \ref{Prop25}. Fix a
smooth test form $\phi \in \Omega ^{1}(\tilde{X},\mathfrak{u}(E))$,
compactly supported away from $\tilde{Z}_{\limfunc{an}}\cup \mathbf{E}$.
Choose $0<\delta \leq 1$. For $\varepsilon >0$ we have:%
\begin{eqnarray*}
\int_{\tilde{X}}\left\langle \phi ,\tilde{A}_{j,\delta }^{\varepsilon
}-A_{j}\right\rangle dvol_{\omega _{\varepsilon }} &=&\int_{0}^{\delta
}dt\int_{\tilde{X}}\left\langle \phi ,\frac{\partial \tilde{A}%
_{j,t}^{\varepsilon }}{\partial t}\right\rangle dvol_{\omega _{\varepsilon }}
\\
(flow\text{ }equations) &=&-\int_{0}^{\delta }dt\int_{\tilde{X}}\left\langle
\phi ,\left( d_{\tilde{A}_{j,t}^{\varepsilon }}\right) ^{\ast }F_{\tilde{A}%
_{j,t}^{\varepsilon }}\right\rangle dvol_{\omega _{\varepsilon }} \\
(K\ddot{a}hler\text{ }identities) &=&\sqrt{-1}\int_{0}^{\delta }dt\int_{%
\tilde{X}}\left\langle \phi ,\left( \partial _{\tilde{A}_{j,t}^{\varepsilon
}}-\bar{\partial}_{\tilde{A}_{j,t}^{\varepsilon }}\right) \Lambda _{\omega
_{\varepsilon }}F_{\tilde{A}_{j,t}^{\varepsilon }}\right\rangle dvol_{\omega
_{\varepsilon }}\text{ } \\
&=&\sqrt{-1}\int_{0}^{\delta }dt\int_{\tilde{X}}\left\langle \left( \partial
_{\tilde{A}_{j,t}^{\varepsilon }}-\bar{\partial}_{\tilde{A}%
_{j,t}^{\varepsilon }}\right) ^{\ast }\phi ,\Lambda _{\omega _{\varepsilon
}}F_{\tilde{A}_{j,t}^{\varepsilon }}\right\rangle dvol_{\omega _{\varepsilon
}}.
\end{eqnarray*}%
By Lemma \ref{Lemma22}, $\Lambda _{\omega _{\varepsilon }}F_{\tilde{A}%
_{j,t}^{\varepsilon }}$ is bounded on the support of $\phi $ for all $j$,
all $\varepsilon >0$, and all $0\leq t\leq \delta $, and the bound may be
taken to be independent of $\delta $. Therefore:%
\begin{equation*}
\int_{\tilde{X}}\left\langle \phi ,\tilde{A}_{j,\delta }^{\varepsilon
}-A_{j}\right\rangle dvol_{\omega _{\varepsilon }}\leq C\int_{0}^{\delta
}dt\left\Vert \left( \partial _{\tilde{A}_{j,t}^{\varepsilon }}-\bar{\partial%
}_{\tilde{A}_{j,t}^{\varepsilon }}\right) ^{\ast }\phi \right\Vert
_{L^{1}(\omega _{0})}.
\end{equation*}%
Applying this inequality to a sequence, $\tilde{A}_{j,t}^{\varepsilon
_{i}}\rightarrow \tilde{A}_{j,t}$ in $C_{loc}^{1}$, 
\begin{equation*}
\left\vert \int_{\tilde{X}}\left\langle \phi ,\tilde{A}_{j,\delta
}-A_{j}\right\rangle dvol_{\omega _{0}}\right\vert \leq C\int_{0}^{\delta
}dt\left\Vert \left( \partial _{\tilde{A}_{j,t}}-\bar{\partial}_{\tilde{A}%
_{j,t}}\right) ^{\ast }\phi \right\Vert _{L^{1}(\omega _{0})}.
\end{equation*}%
By the Corollary \ref{Cor8}, $\left\vert \tilde{A}_{j,t}-\tilde{A}_{\infty
}\right\vert $ is locally bounded in any $L^{p}$ independently of $j$. Then 
\begin{equation*}
\left\vert \int_{\tilde{X}}\left\langle \phi ,\tilde{A}_{j,\delta
}-A_{j}\right\rangle dvol_{\omega _{0}}\right\vert \leq C\delta \text{ }
\end{equation*}%
where $C$ depends only on the $L^{1}$ norm of $\partial _{\tilde{A}_{\infty
}}\phi $, $\bar{\partial}_{\tilde{A}_{\infty }}\phi $ and the bounds on $%
\Lambda _{\omega _{\varepsilon }}F_{\tilde{A}_{j,t}^{\varepsilon }}$ and $%
\left\vert \tilde{A}_{j,t}-\tilde{A}_{\infty }\right\vert $. In particular $%
C $ is independent of $j$. Taking limits as $j\rightarrow \infty $ we have:%
\begin{equation*}
\left\vert \int_{\tilde{X}}\left\langle \phi ,\tilde{A}_{\infty ,\delta
}-A_{\infty }\right\rangle dvol_{\omega _{0}}\right\vert \leq C\delta
\end{equation*}%
and since $\delta $ and was arbitrary and $\tilde{A}_{\infty ,\delta }=%
\tilde{A}_{\infty ,\ast }$ for almost all small $\delta $, this implies $%
\tilde{A}_{\infty ,\ast }=A_{\infty }$. This concludes the proof of $%
\limfunc{Proposition}\ $\ref{Prop25}$\,$and hence the proof of the main
theorem.

\bigskip

\end{document}